\theoremstyle{plain}
\newtheorem{theorem}{Theorem}[section]
\newtheorem{corollary}[theorem]{Corollary}
\newtheorem{lemma}[theorem]{Lemma}
\newtheorem{axiom}{Axiom}
\begin{document}
\title{ The Fan Theorem, its strong negation, and the determinacy of games}
\author{Wim Veldman}
\address{Institute for Mathematics, Astrophysics and Particle Physics, Faculty of Science, Radboud University Nijmegen,
Postbus 9010, 6500 GL Nijmegen, the Netherlands}
\email{Wim.Veldman@ru.nl}
\date{}

\begin{abstract}
In the context of   a weak formal theory called {Basic Intuitionistic Mathematics $\mathsf{BIM}$}, we study Brouwer's  \textit{Fan Theorem} and  a strong negation of the Fan Theorem,  \textit{Kleene's Alternative (to the Fan Theorem)}.
We prove that  the Fan Theorem is  equivalent to   \textit{contrapositions} of a number of  intuitionistically accepted axioms of countable choice and that Kleene's Alternative is equivalent to  \textit{strong negations} of these statements. We  discuss finite and infinite games and introduce a constructively useful notion of \textit{determinacy}. We prove that the Fan Theorem is equivalent to the   \textit{Intuitionistic Determinacy Theorem}. This theorem  says that every subset of Cantor space is, in our constructively meaningful sense, determinate.   Kleene's Alternative is equivalent to a strong negation of  a special case of this theorem. We also consider  a \textit{uniform intermediate value theorem} and a \textit{compactness theorem for classical propositional logic}. The Fan Theorem is equivalent to each of these theorems and  Kleene's Alternative is equivalent to strong negations of them.   We end with a note on  \textit{`stronger'} Fan Theorems. The paper is a sequel to \cite{veldman2011b}. 
\end{abstract}

\maketitle

\section{Introduction}

 \subsection{Intuitionistic reverse mathematics}\label{SS:intuitionisticreversemathematics} L.E.J.~Brouwer did not present  his intuitionistic mathematics as a formal axiomatic theory. He did not like formalism and formalization and anxiously maintained the distinction between a mathematical proof and the linguistic expression that should help us to recover the proof but may fail to do so. 
The challenge to develop formal theories coming close to Brouwer's intentions  was taken up  by A.~Heyting, G.~Gentzen, S.C.~Kleene, G.~Kreisel,  \\J.~Myhill, A.S.~Troelstra, and others.  

Given  a preferably weak formal basic theory $\Gamma$   and  a formal proof in $\Gamma$  of  a statement $T$ from some extra assumption $A$, one may ask: is there also a formal proof in $\Gamma$ of this  extra assumption  $A$ from the statement $T$?  The study of such questions, as far as they belong  to the field of classical analysis or second-order arithmetic, is called \textit{reverse mathematics}, see \cite{Simpson}. The weak basic theory  
there is $\mathsf{RCA}_0$.

\subsection{The basic theory $\mathsf{BIM}$}Our subject is \textit{intuitionistic reverse mathematics}.

 The weak basic theory we use is the two-sorted first-order intuitionistic theory $\mathsf{BIM}$ (\textit{Basic Intuitionistic Mathematics}), introduced in \cite{veldman2011b}.  The domain of discourse of $\mathsf{BIM}$ consists of two kinds of   objects: natural numbers and infinite sequences of natural numbers. The axioms express some  basic assumptions like the (full) principle of induction on the set $\omega$ of the natural numbers,  and the fact that  the  set $\omega^\omega$ of the infinite sequences of natural numbers  is closed under the  recursion-theoretic operations.
 
 The reason that we use a basic theory different in spirit from the basic theory used in classical reverse mathematics is that, in intuitionistic analysis, one prefers the notion of an infinite sequence of natural numbers as a primitive notion above the notion of a subset of the set of the natural numbers, see \cite[Section 5]{veldman2011b}.

 For the  intuitionistic mathematician, the set $\omega^\omega$ of all infinite sequences of natural numbers is {\it not}, as one sometimes says when explaining the notion of `set' that lies at the basis of   classical set theory, the result of taking together the earlier constructed and completed items that are to be the \textit{`elements'} of the set. The set $\omega^\omega$ is a realm of possibilities: it is a framework for constructing, in the future, in all kinds of possibly as yet unforeseen ways, the objects that will be called the elements of the set. There are several kinds of infinite sequences  $\alpha = \bigl(\alpha(0), \alpha(1), \ldots \bigr)$ of natural numbers.  Sometimes, $\alpha$ is the result of executing a \textit{program},  a finitely formulated {\it algorithm}. It is also possible that   $\alpha$ is the result of a more or less {\it free step-by-step construction} that is not governed by a rule formulated at the start.  

\subsection{Two interpretations} The axioms of  $\mathsf{BIM}$  hold for their \textit{intended interpretation}, the interpretation given to them by the intuitionistic mathematician.  The axioms of $\mathsf{BIM}$ also become true for her if she assumes that the elements of $\omega^\omega$ are just the    Turing-computable functions from $\omega$ to $\omega$.  Turing-computable functions may be represented by the natural number coding their program, and 
$\mathsf{BIM}$ may be seen to be  a \textit{conservative extension of first-order intuitionistic arithmetic $\mathsf{HA}$}, \textit{Heyting arithmetic}. 

The model given by the computable functions thus is the \textit{second  interpretation} of $\mathsf{BIM}$. Our study of this model is a contribution to \textit{intuitionistic recursive analysis}.

In the weak context given by $\mathsf{BIM}$ 
one may study the further assumptions of the intuitionistic mathematician. They fall  into three groups:
\begin{enumerate}
\item Axioms of Countable Choice,
\item Brouwer's Continuity Principle and the Axioms of Continuous Choice, and 
\item Brouwer's Thesis on bars in  Baire space $\omega^\omega$ and the Fan Theorem.
\end{enumerate}
The intuitionistic mathematican is prepared to argue the plausibility of  these assumptions for her  intended interpretation.

 She defends the Axioms of Countable Choice, for instance,  by explaining that  the functions promised by the axioms may be constructed step by step.  
 
She has no argument for the truth of the further assumptions under the second interpretation, where   every function is assumed to be given by an algorithm. It is not clear to her if the Axioms of Countable Choice then are true.  
 
  Brouwer's Continuity Principle and its extensions, the Axioms of Continuous Choice,  certainly fail in the second  interpretation.
  
 The Thesis on bars in $\omega^\omega$ was introduced by Brouwer for the sake of the  Fan Theorem. The Fan Theorem itself, dating from 1924, see \cite{brouwer27}, might be called \textit{the Thesis on Bars in Cantor space $\mathcal{C}$},  see \cite{veldman2006b}, \cite{veldman2008}, and \cite[Subsection 2.3]{veldman2011b}.
 
 In 1950, see \cite{kleene52}, Kleene  saw that, in our second  interpretation, also the Fan Theorem, and, a fortiori, the Thesis on bars in $\omega^\omega$,  do not hold. Actually,  a positively formulated {\it strong negation} of the Fan Theorem  becomes true.  In \cite{veldman2011b}, we called this statement 
  \textit{Kleene's Alternative (to the Fan Theorem)}.

 \subsection{Strong negations}\label{SS:strongnegations}The (strict) \textit{Fan Theorem}, $\mathbf{FT}$, see  \ref{SSS:fan}, is the statement: $$\forall \alpha [Bar_\mathcal{C}(D_\alpha) \rightarrow \exists n[ Bar_\mathcal{C}(D_{\overline\alpha n})]],$$ and \textit{Kleene's Alternative (to the Fan Theorem)}, $\mathbf{KA}$, see \ref{SSS:ka}, is the statement $$\exists \alpha [Bar_\mathcal{C}(D_\alpha) \;\wedge\;  \forall n[ \neg Bar_\mathcal{C}(D_{\overline\alpha n})]].$$ We  want to call $\mathbf{KA}$ the \textit{strong negation} $\mathbf{\neg ! FT}$ of $\mathbf{FT}$.    In general,  if we decide to call a statement $B$ the \textit{strong negation} of a statement $A$, $B$ will be a statement  more positive than the negation of $A$  that constructively implies the negation of $A$. We do not require that the statement $B$ is completely positive in the sense that the corresponding formula does not contain $\neg$ and $\rightarrow$\footnote{See, for instance, the sentences \ref{SS:omegacantor} and \ref{SS:omega01} and  Subsection \ref{SS:infinitelymanymoves}.}. We do not introduce \textit{strong negation} as a syntactical operation on formulas.  It is important to realize that, once we have understood that statement $A$ is equivalent to statement $B$, it may be the case that statements $C,D$, which have been chosen to be called the strong negations of $A,B$, respectively,  fail to be  equivalent. 
 
 If we have decided to call the formula (denoted by) $B$ the strong negation of the formula (denoted by) $A$, we will write $B=\neg!A$, but note that this notation belongs to the meta-language of $\mathsf{BIM}$. $\neg!$ is \textit{neither} a connective belonging to the language of $\mathsf{BIM}$ \textit{nor} a syntactical operation on formulas.
 
 We shall prove a number of results of the form: \begin{quote} In $\mathsf{BIM}$, $A$ is equivalent to $B$ and $\neg! B$ is equivalent to $\neg ! A$. \end{quote}
 
 When we  do so, we try to explain that the conclusions $A\rightarrow B$ and $\neg ! B\rightarrow \neg ! A$ have a common ground and that also the conclusions $B\rightarrow A$ and $\neg ! A\rightarrow \neg ! B$ have a common ground. 
 \subsection{Contrapositions or reversals} We may compare \textit{Weak K\"onig's Lemma}, $\mathbf{WKL}$, see \ref{SSS:wkl}:  $$\forall \alpha [\forall n[\neg Bar_\mathcal{C}(D_{\overline\alpha n})]\rightarrow \exists \gamma\in\mathcal{C} \forall n [ \alpha(\overline{\gamma}
 n) = 0 ]].$$ with the (strict) \textit{Fan Theorem}, $\mathbf{FT}$:$$\forall \alpha [Bar_\mathcal{C}(D_\alpha) \rightarrow \exists n[ Bar_\mathcal{C}(D_{\overline\alpha n})]].$$ We like to say that $\mathbf{WKL}$ is a \textit{reversal} or \textit{contraposition} of $\mathbf{FT}$ and also that $\mathbf{FT}$ is a \textit{reversal} or \textit{contraposition} of $\mathbf{WKL}$.  
 \\We  like to write: $\mathbf{WKL}=\overleftarrow{\mathbf{FT}}$ and $\mathbf{FT}=\overleftarrow{\mathbf{WKL}}$.
 
 \smallskip In general, if we call a  statement $B$ the \textit{contraposition} or \textit{reversal} $\overleftarrow A$ of a  statement $A$, both $A$ and $B$ will be largely positively formulated statements and the classical mathematician would think $ A$ and $B$ are equivalent, but, constructively, $ A$ and $B$ will have quite different meanings. 
 
 This is clear from the above example as $\mathbf{FT}$ is intuitionistically true (under our first interpretation) and $\mathbf{WKL}$ is false (in both interpretations).  
 
 It may happen also that both $A$ and $B$ are intuitionistically true (at least under the first interpretation), although they make different sense. An important example of this phenomenon is given by $\mathbf{\Pi}^0_1$-$\mathbf{AC}_{\omega,2}$, see \ref{SS:pi01axomega2}, and $\mathbf{\Sigma}^0_1$-$\overleftarrow{\mathbf{AC}_{\omega,2}} $, see \ref{SS:sigma01contraxomega2} and Lemma \ref{L:3resolutions}. 
 
 Note that \textit{Kleene's Alternative (to the Fan Theorem)}, $\mathbf{KA}$,
 might be called the strong negation $\neg!\mathbf{WKL}$ of  $\mathbf{WKL}$ as well as the strong negation $\neg !\mathbf{FT}$ of $\mathbf{FT}$. 
 
 We do not claim that, given a statement $A$, there always is a unique candidate for being called the \textit{contraposition} of $A$. We do not introduce \textit{contraposition} as a syntactical operation on formulas and use the term only in certain specific cases.
 It is important to realize that, once we have understood that statement $A$ is equivalent to statement $B$, it may be the case that statements $C,D$, which one would like to call contrapositions of $A,B$, respectively,  fail to be  equivalent. 
\subsection{Non-intuitionistic assumptions} The reader may wonder why we pay attention  to statements that fail in both our models, like Weak K\"onig's Lemma, $\mathbf{WKL}$, and Bishop's Omniscience Principles, $\mathbf{LPO}$, see \ref{SSS:lpo},  and $\mathbf{LLPO}$, see \ref{SSS:llpo}. Doing so, however, we come to understand that certain other statements, being equivalent, in $\mathsf{BIM}$,  to one of them, also do not make sense in either one of our two interpretations.

\subsection{Our aim} As in \cite{veldman2009b} and \cite{veldman2011b}, it is our aim, in this paper, to find  statements that are, in $\mathsf{BIM}$, equivalent to either $\mathbf{FT}$ or $\neg!\mathbf{FT}=\mathbf{KA}$.  
\subsection{The contents of the paper}Apart from this introduction, the paper contains Sections 2-13. 

Section 2 is divided into two Subsections. In Subsection 2.1 we introduce the formal system $\mathsf{BIM}$.   Subsection 2.2 lists a number of assumptions  one might study in the context of $\mathsf{BIM}$. 

In Section 3 we prove that, in $\mathsf{BIM}$,    the $\mathbf{\Sigma}^0_1$-\textit{Separation Principle} $\mathbf{\Sigma}^0_1$-$\mathbf{Sep}$ is  equivalent to  $\mathbf{WKL}$. 

In Section 4 we formulate some special cases of the  \textit{First Axiom of Choice} $\mathbf{AC}_{\omega,\omega}$, among them $\mathbf{\Pi}^0_1$-$\mathbf{AC}_{\omega,2}$, the $\mathbf{\Pi}^0_1$-\textit{Axiom of Countable Binary Choice}.

In Section 5 we introduce $\mathbf{\Sigma}^0_1$-$\overleftarrow{\mathbf{AC}_{\omega,2}}$, a \textit{contraposition} of $\mathbf{\Pi}^0_1$-$\mathbf{AC}_{\omega,2}$, and we prove: in $\mathsf{BIM}$, $\mathbf{\Sigma}^0_1$-$\overleftarrow{\mathbf{AC}_{\omega,2}}$  is   equivalent to  $\mathbf{FT}$, and  a strong negation of $\mathbf{\Sigma}^0_1$-$\overleftarrow{\mathbf{AC}_{\omega,2}}$ is  equivalent to   $\neg!\mathbf{FT}=\mathbf{KA}$.

 Section 5 thus shows that a  contraposition of $\mathbf{\Pi}^0_1$-$\mathbf{AC}_{\omega,2}$ fails in our second interpretation. This gives us no conclusion about the validity of $\mathbf{\Pi}^0_1$-$\mathbf{AC}_{\omega,2}$ itself in our second interpretation.

In Section 6 we formulate some special cases of the Second Axiom Scheme of Countable Choice $\mathbf{AC}_{\omega,\omega^\omega}$, among them $\mathbf{\Pi}^0_1$-$\mathbf{AC}_{\omega,\mathcal{C}}$, the \textit{$\mathbf{\Pi}^0_1$-Axiom  of Countable Compact Choice}.

In Section 7 we introduce $\mathbf{\Sigma}^0_1$-$\overleftarrow{\mathbf{AC}_{\omega,\mathcal{C}}}$,  a contraposition of $\mathbf{\Pi}^0_1$-$\mathbf{AC}_{\omega,\mathcal{C}}$, and we prove: in $\mathsf{BIM}$, $\mathbf{\Sigma}^0_1$-$\overleftarrow{\mathbf{AC}_{\omega,\mathcal{C}}}$  is  equivalent to $\mathbf{FT}$, and  a strong negation of $\mathbf{\Sigma}^0_1$-$\overleftarrow{\mathbf{AC}_{\omega,\mathcal{C}}}$ is  equivalent to $\neg!\mathbf{FT}$.

   In Section 8  we introduce $\mathbf{\Sigma}^0_1$-$\overleftarrow{\mathbf{AC}_{2,\mathcal{C}}}$, a contraposition of a statement provable in $\mathsf{BIM}$, to wit, the $\mathbf{\Pi}^0_1$-\textit{``axiom''} of \textit{Twofold Compact Choice}. 
   
   We prove:   in $\mathsf{BIM}+\mathbf{\Pi}^0_1$-$\mathbf{AC}_{\omega,2}$, $\mathbf{\Sigma}^0_1$-$\overleftarrow{\mathbf{AC}_{2,\mathcal{C}}}$ is  equivalent to $\mathbf{FT}$.  There is no  companion result for $\neg!\mathbf{FT}$.

   In Section 9 we consider finite and infinite games. We explain in what sense we want to call such games \textit{$I$-determinate} or \textit{$II$-determinate}. We see that $\mathbf{\Sigma}^0_1$-$\overleftarrow{\mathbf{AC}_{\omega,2}}$ can be read as the statement that certain 2-move games are  $I$-determinate. We  prove: in $\mathsf{BIM}$, $\mathbf{FT}$ is equivalent to the statement that every subset of Cantor space $\mathcal{C}$ is (weakly) $I$-determinate, and  $\neg!\mathbf{FT}$ is equivalent to the statement that there exists an open subset of $\mathcal{C}$ that positively fails to be $I$-determinate.  
   
   In Section 10 we consider   a \textit{Uniform Contrapositive Intermediate Value Theorem} $\overleftarrow{\mathbf{UIVT}}$ and we prove: in $\mathsf{BIM}$, $\mathbf{FT}$ is equivalent to  $\overleftarrow{\mathbf{UIVT}}$ and $\neg !\mathbf{FT}$ is equivalent to a strong negation of $\overleftarrow{\mathbf{UIVT}}$.
   
    In Section 11 we see that, if one formulates the  compactness theorem for classical propositional logic carefully and contrapositively, one obtains a statement that, in $\mathsf{BIM}$, is equivalent to $\mathbf{FT}$. $\neg!\mathbf{FT}$ is equivalent to a strong negation of this statement.
    
    In Section 12 we ask the reader's attention for the \textit{Approximate-Fan Theorem} $\mathbf{AppFT}$, a statement  stronger than $\mathbf{FT}$. We did so already in \cite[Subsection 10.2]{veldman2011b}, see also \cite{veldman2011c}. $\mathbf{AppFT}$ is studied further in \cite{veldman2011c}.
   
   Section 13 contains a list of defined notions. This Section  may be used as a reference by the reader of the preceding Sections.
   
   \smallskip 
    I  thank the referees of earlier versions of this paper. Their thorough comments and  criticisms were very useful and led to numerous improvements. The last referee deserves special mention. He  wrote three very detailed reports and found many points where I expressed myself wrongly or confusingly.
    
     I also thank U.~Kohlenbach for providing some references and for making me repair a mistaken observation in Section \ref{S:ivt}.

\section{The formal system $\mathsf{BIM}$}\label{S:BIM}

 \subsection{The basic axioms}
 $\mathsf{BIM}$, introduced in \cite[Section 6]{veldman2011b},  has two kinds of variables, \textit{numerical} variables $m,n,p,\ldots$,
 whose intended range is the set  $\omega$ of the natural numbers, and \textit{function}
 variables $\alpha, \beta, \gamma, \ldots$, whose intended range is the set
 $\omega^\omega$ of
 all infinite sequences of natural numbers. There is a numerical constant $0$. There are five unary function
 constants: $Id$, a name for the identity function, $\underline{0}$, a name for the zero function,  $S$, a name for
 the successor function, and $K$, $L$, names for the projection
 functions. There is one binary function symbol $J$, a name for the pairing
 function on $\omega$. From these symbols \textit{numerical terms} are formed in the usual 
 way. The basic terms are the numerical variables and the numerical constant and other terms are obtained from earlier constructed terms by the use of
 a function symbol. The function constants $\mathit{Id}$, $\underline 0$, $S$, $K$ and $L$ and the function variables are  the only 
 \textit{function 
 terms}. 
 
 $\mathsf{BIM}$ has two equality symbols, $=_0$ and $=_1$. The first symbol may be placed
 between numerical terms and the second one between function terms.
 When confusion seems improbable we simply write $=$ and not $=_0$ or $=_1$. The usual axioms for equality are part of $\mathsf{BIM}$.  A
 \textit{basic formula} is an equality between numerical terms or an equality
 between function terms. A \textit{basic formula in the strict sense} is an
 equality between numerical terms. We obtain the formulas of the theory from the
 basic formulas by using the connectives, the numerical quantifiers and the
 function quantifiers. 
 
 The logic of the theory is  intuitionistic
 logic.
 
 Our first axiom is
 \begin{axiom}[Axiom of Extensionality]\label{ax:ext}
 
 \[\forall \alpha \forall \beta [ \alpha =_1 \beta  \leftrightarrow \forall n [
 \alpha(n) =_0 \beta(n) ] ] \]
 \end{axiom}
 The Axiom of Extensionality guarantees that every formula will be provably
 equivalent to a formula built up by means of connectives and quantifiers from
 basic formulas in the strict sense.
 
 The second axiom is \textit{the axiom on  the unary function constants} $Id$, $\underline 0$, $S$, $K,L$,   \textit{and the binary function constant} $J$.
 
\begin{axiom}\label{ax:const}\[\forall n[Id(n) = n]\;\wedge\]
 \[ \forall n [ \neg(S(n) = 0) ] \;\wedge\; \forall m \forall n [ S(m) = S(n)
 \rightarrow m = n ]\;\wedge \]\[ 
\forall n [ \underline{0}(n) = 0]\;\wedge\]\[ \forall m \forall n [ K\bigl(J(m,n)\bigr) = m \;\wedge\; L\bigl(J(m,n)\bigr) = n \;\wedge\; J\bigl(K(n),L(n)\bigr)=n]\]
 \end{axiom}

 Thanks to the presence of the pairing function we may treat binary, ternary and
 other non-unary operations on $\omega$ as unary functions. ``$\alpha(m,n,p)$'',
 for instance, will be an abbreviation of ``$\alpha\bigl(J(J(m,n),p)\bigr)$''.
  
  We also introduce the following notation: for each $n$, $n':= K(n)$ and $n'':= L(n)$, and, for all $m,n,$
  $(m,n) := J(m,n)$.
  The last part of  Axiom \ref{ax:const} now reads as follows: $\forall m\forall n[(m,n)'=m \;\wedge\; (m,n)''=n \;\wedge\; (n',n'')=n]$.
  
 \medskip
 The next axiom\footnote{A referee made us see that this Axiom 3, as formulated in \cite{veldman2011b}, is a little bit too weak.} asks for the \textit{closure of the set $\omega^\omega$ under composition, pairing, primitive recursion and unbounded search}.
 \begin{axiom}\label{ax:recop}
 \[\forall \alpha \forall \beta \exists \gamma \forall n [ \gamma(n) =
 \alpha \bigl(\beta(n)\bigr) ]\;\wedge\]
 \[\forall \alpha \forall \beta  \exists \gamma\forall n[\gamma(n) = \bigl(\alpha(n), \beta(n)\bigr)]\;\wedge\]
   \[\forall \alpha \forall \beta \exists \gamma \forall m \forall n [ \gamma(m,0)
 = \alpha(m) \wedge \gamma \bigl(m,S(n)\bigr) = \beta\bigl(m,n,\gamma(m,n)\bigr) ]\;\wedge\]
 \[\forall \alpha  [ \forall n \exists m [ \alpha(n,m) = 0 ] \rightarrow
 \exists \gamma \forall n [ \alpha\bigl(n, \gamma(n)\bigr) = 0 ] ]\]
 
 \end{axiom}

 We also need the \textit{Axiom Scheme of Induction}: 
 \begin{axiom}\label{ax:ind}

 \[(P(0) \wedge \forall n [ P(n) \rightarrow P\bigl(S(n)\bigr)]) \rightarrow
 \forall n [P(n)]\]
 \end{axiom}
  
  Instances of this axiom scheme are obtained by 
  substituting a formula \\$\phi=\phi(m_0, m_1, \ldots, m_{k-1}, \alpha_0,\alpha_1, \ldots, \alpha_{l-1}, n)$ for $P$ and taking the universal closure of the resulting formula. The reader should understand the further axiom schemes we are to mention in this paper in the same way.
    
    \smallskip
  The axioms 1-4, together with the usual axioms for equality,  define the system $\mathsf{BIM}$.
  
  Note  $\mathsf{BIM}$ has the full induction scheme whereas $\mathsf{RCA}_0$ has only $\Sigma^0_1$-induction, see \cite[Definition II.1.5]{Simpson}, a fact that is indicated by the suffix $_0$.  We did not study the possibility of restricting induction likewise and we do not answer the question if our results might have been obtained in a system that probably should be called $\mathsf{BIM}_0$.

We form a conservative extension of $\mathsf{BIM}$ by adding  constants for all primitive recursive functions and relations 
 and making  their defining
 equations into axioms.  Primitive recursive relations are present via their characteristic functions. `$x<y$', for instance, will be short for: `$\chi_<(x,y)\neq 0$'. 
 Somewhat loosely, we also denote this conservative extension of $\mathsf{BIM}$ by the acronym $\mathsf{BIM}$ although one might decide to use the acronym $\mathsf{BIM}^+$, see \cite{vafeiadou2018}. 
 
 \smallskip
 $\mathsf{BIM}$ 
  may be compared  to the system \textbf{H} introduced in
  \cite{Howard-Kreisel} and to the system $\mathbf{EL}$
 occurring in \cite{Troelstra-van Dalen} and to the system $\mathbf{IRA}$, proposed by J.R.~Moschovakis and G.~Vafeiadou, see \cite{moschovakisvafeiadou12}. A precise proof of the fact that $\mathsf{BIM}$ and these systems are essentially equivalent may be found in \cite{vafeiadou2018}.

\subsection{Possible further assumptions}\label{SS:fa}

\subsubsection{First Axiom Scheme of Countable Choice, $\mathbf{AC}_{\omega,\omega} (=\mathbf{AC}_{0,0})$:}
$$\forall n\exists m[R(n,m)] \rightarrow \exists \gamma\forall n[R\bigl(n,\gamma(n)\bigr)].$$

The intuitionistic mathematician accepts $\mathbf{AC_{\omega,\omega}}$. If $\forall n\exists m[R(n,m)]$, she builds the promised $\gamma$ step by step, first choosing $\gamma(0)$ such that $R\bigl(0,\gamma(0)\bigr)$, then choosing $\gamma(1)$ such that $R\bigl(1,\gamma(1)\bigr)$, and so on. In her view, there is no need to give a finite description or algorithm that determines the infinitely many values of $\gamma$ at one stroke.
\subsubsection{Second Axiom Scheme of Countable Choice, $\mathbf{AC}_{\omega,\omega^\omega}=\mathbf{AC}_{0,1}$:}
$$\forall n \exists \gamma[R(n,\gamma)]\rightarrow  \exists \gamma\forall n[R(n, \gamma^{\upharpoonright n} )].\footnote{For the notation $\gamma^{\upharpoonright n}$ see Subsection \ref{SS:subs}.}$$

\smallskip
  The intuitionistic mathematician  accepts $\mathbf{AC}_{\omega, \omega^\omega}$. If $\forall n \exists \gamma[R(n, \gamma)]$, she first starts a project for
  building $\gamma^{\upharpoonright 0}$ with the property $R(0,\gamma^{\upharpoonright 0})$ and determines $\gamma^{\upharpoonright 0}(0)$, she
  then starts a project for building $\gamma^{\upharpoonright 1}$ with the property $R(1,\gamma^{\upharpoonright 1})$
  and determines $\gamma^{\upharpoonright 1}(0)$, and also, continuing the project started earlier, $\gamma^{\upharpoonright 0}(1)$, she then starts a project for building
  $\gamma^{\upharpoonright 2}$ with the property $R(2,\gamma^{\upharpoonright 2})$ and determines $\gamma^{\upharpoonright 2}(0)$ and also, continuing the projects started earlier,
  $\gamma^{\upharpoonright 1}(1)$ and $\gamma^{\upharpoonright 0}(2)$, $\ldots$.

\subsubsection{The Fan Theorem as an Axiom Scheme, $\mathbf{FAN}$:} $$\forall\beta[\bigl(Fan(\beta)\;\wedge\;Bar_{\mathcal{F}_\beta}(B)\bigr)\rightarrow \exists a[D_a\subseteq B\;\wedge\;Bar_{\mathcal{F}_\beta}(D_a)]].$$

\subsubsection{The (strict) Fan Theorem, $\mathbf{FT}$:}\label{SSS:fan}
\[\forall \alpha[\mathit{Bar}_\mathcal{C}(D_\alpha)
  \rightarrow \exists m[\mathit{Bar}_\mathcal{C}(D_{\overline \alpha m})]], \;or, \;equivalently,\]
\[\forall \beta[Explfan(\beta)\rightarrow\forall \alpha[\mathit{Bar}_\mathcal{\mathcal{F}_\beta}(D_\alpha)
  \rightarrow \exists m[\mathit{Bar}_\mathcal{\mathcal{F}_\beta}(D_{\overline \alpha m})]]]\;\mathit{or,}\;\mathit{equivalently},\]
\[\forall \beta[Explfan(\beta)\rightarrow\forall \alpha[\mathit{Bar}_\mathcal{\mathcal{F}_\beta}(D_\alpha)
  \rightarrow \exists m\forall \gamma\in\mathcal{F}_\beta\exists n\le m[\overline \gamma n \in D_\alpha]]].\]
  
  \begin{theorem}\label{T:{thinfan}}$\mathsf{BIM}\vdash \mathbf{FT}\leftrightarrow \\\forall \alpha[\bigl(Thinbar_\mathcal{C}(D_\alpha)\;\wedge\; D_\alpha\subseteq Bin\bigr)\rightarrow \exists n\forall m>n[m\notin D_\alpha]]$. \end{theorem}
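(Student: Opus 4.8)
The plan is to establish the two implications separately, the common engine being the following remark on thin bars: \emph{if $D_\alpha$ is a thin bar in $\mathcal{C}$ with $D_\alpha\subseteq Bin$ and $B\subseteq D_\alpha$ is still a bar in $\mathcal{C}$, then $B=D_\alpha$.} Indeed, given $s\in D_\alpha$, the fact $s\in Bin$ lets us form the element $\gamma$ of $\mathcal{C}$ obtained by extending $s$ with zeros; since $B$ bars $\mathcal{C}$ there is an $n$ with $\overline\gamma n\in B\subseteq D_\alpha$, and since $s$ and $\overline\gamma n$ are both initial parts of $\gamma$ and both lie in the thin set $D_\alpha$, they must coincide, so $s=\overline\gamma n\in B$.

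For $\mathbf{FT}$ to the right-hand side: assume $Thinbar_\mathcal{C}(D_\alpha)$ and $D_\alpha\subseteq Bin$. In particular $Bar_\mathcal{C}(D_\alpha)$, so $\mathbf{FT}$ yields $m$ with $Bar_\mathcal{C}(D_{\overline\alpha m})$. As $D_{\overline\alpha m}=\{s<m\mid\alpha(s)\neq 0\}\subseteq D_\alpha$, the remark gives $D_\alpha=D_{\overline\alpha m}$, which is contained in $\{s\mid s<m\}$; hence $\exists n\,\forall m'>n\,[m'\notin D_\alpha]$, taking $n:=m$.

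For the converse: assume every thin bar in $\mathcal{C}$ contained in $Bin$ is bounded, and let $Bar_\mathcal{C}(D_\alpha)$. Using Axiom \ref{ax:recop} I would build $\beta$, primitive recursive in $\alpha$, such that $D_\beta$ consists of exactly those $s\in Bin$ with $\alpha(s)\neq 0$ all of whose proper initial parts $t$ satisfy $\alpha(t)=0$ --- a decidable condition, so $\beta$ exists. Then $D_\beta\subseteq Bin$ and $D_\beta\subseteq D_\alpha$, no element of $D_\beta$ is a proper initial part of another, and $D_\beta$ bars $\mathcal{C}$: for $\gamma\in\mathcal{C}$, taking the least $n_0$ with $\alpha(\overline\gamma n_0)\neq 0$ (which exists since $D_\alpha$ bars $\mathcal{C}$) puts $\overline\gamma n_0\in D_\beta$ by minimality of $n_0$. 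So $Thinbar_\mathcal{C}(D_\beta)$ and $D_\beta\subseteq Bin$, and by hypothesis there is an $n$ with $m'\notin D_\beta$ for all $m'>n$, i.e.\ $D_\beta\subseteq\{s\mid s\le n\}$. Put $m:=n+1$; then each $s\in D_\beta$ satisfies $s<m$ and $\alpha(s)\neq 0$, hence $s\in D_{\overline\alpha m}$, so $D_\beta\subseteq D_{\overline\alpha m}$, and since $D_\beta$ bars $\mathcal{C}$ so does $D_{\overline\alpha m}$. This is $\mathbf{FT}$.

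I do not expect a genuine mathematical obstacle; the only points requiring care are the bookkeeping with the coding conventions --- that $D_{\overline\alpha m}$ is indeed the finite set $\{s<m\mid\alpha(s)\neq 0\}$ and that $Bin$ is closed under taking initial parts --- together with the (routine) check that the thinned-out $\beta$ is available within $\mathsf{BIM}$.
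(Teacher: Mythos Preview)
Your argument is correct; the paper in fact leaves this proof to the reader, and what you have written is precisely the natural reconstruction one would expect, including the key observation that any sub-bar of a thin bar in $\mathcal{C}$ (consisting of binary sequences) must equal the whole thin bar. The bookkeeping points you flag --- that $D_{\overline\alpha m}=\{s<m\mid\alpha(s)\neq 0\}$, that $Bin$ is closed under initial segments, and that the thinned $\beta$ is definable via a decidable condition and unbounded search --- are all straightforward in $\mathsf{BIM}$.
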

  \begin{proof} The proof is left to the reader. \end{proof}
  \subsubsection{The strengthened (strict) Fan Theorem, $\mathbf{FT}^+$:}\label{SSS:fan+}

\[\forall \beta[Fan(\beta)\rightarrow\forall \alpha[\mathit{Bar}_\mathcal{\mathcal{F}_\beta}(D_\alpha)
  \rightarrow \exists m[\mathit{Bar}_\mathcal{\mathcal{F}_\beta}(D_{\overline \alpha m})]]]\;\mathit{or,}\;\mathit{equivalently},\]
\[\forall \beta[Fan(\beta)\rightarrow\forall \alpha[\mathit{Bar}_\mathcal{\mathcal{F}_\beta}(D_\alpha)
  \rightarrow \exists m\forall \gamma\in\mathcal{F}_\beta\exists n\le m[\overline \gamma n \in D_\alpha]]].\]
\begin{theorem}\label{T:{thinfan+}}$\mathsf{BIM}\vdash \mathbf{FT}^+\leftrightarrow \\\forall \beta[Fan(\beta)\rightarrow\forall\alpha[\bigl(Thinbar_{\mathcal{F}_\beta}(D_\alpha)\;\wedge\; \forall s \in D_\alpha[\beta(s)=0]\bigr)\rightarrow \exists n\forall m>n[m\notin D_\alpha]]]$. \end{theorem}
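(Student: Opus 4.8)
The plan is to mimic the proof of Theorem \ref{T:{thinfan}}, which handles the analogous equivalence for the unrelativized Fan Theorem $\mathbf{FT}$ and Cantor space $\mathcal C$, and to carry that argument uniformly through an arbitrary fan $\mathcal F_\beta$ with $Fan(\beta)$. For the direction from $\mathbf{FT}^+$ to the thin-bar formulation, suppose $Fan(\beta)$ and let $D_\alpha$ be a thin bar of $\mathcal F_\beta$ with $\beta(s)=0$ for every $s\in D_\alpha$. Since a thin bar is in particular a bar, $\mathbf{FT}^+$ gives an $m$ with $Bar_{\mathcal F_\beta}(D_{\overline\alpha m})$; I would then argue that, because $D_\alpha$ is \emph{thin} and consists only of nodes admissible in $\mathcal F_\beta$, no proper extension of a secured node can itself lie in $D_\alpha$ — otherwise two comparable elements of $D_\alpha$ would both be passed by some $\gamma\in\mathcal F_\beta$, contradicting thinness. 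Hence every $s\in D_\alpha$ has length $\le m$, and so for all $t$ of length $>m$ (in particular all $m'>n$ for a suitable coding bound $n$ read off from $m$) we get $t\notin D_\alpha$. This is a routine bookkeeping step, so I would leave the precise coding to the reader, as the author does for Theorem \ref{T:{thinfan}}.

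For the converse, assume the thin-bar statement; I want to derive $\mathbf{FT}^+$ in the second (``$\exists m\,\forall\gamma\in\mathcal F_\beta\ldots$'') form. Given $Fan(\beta)$ and $\alpha$ with $Bar_{\mathcal F_\beta}(D_\alpha)$, the standard move is to \emph{thin out} the bar: pass to the set $D_{\alpha'}$ of those $s$ admissible in $\mathcal F_\beta$ that lie in $D_\alpha$ but no proper initial segment of which lies in $D_\alpha$. One checks in $\mathsf{BIM}$ that $D_{\alpha'}$ is again a bar of $\mathcal F_\beta$, that it is thin, and that $\alpha'$ can be chosen so that $\beta(s)=0$ for all $s\in D_{\alpha'}$ (only $\beta$-admissible nodes are kept). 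Applying the hypothesis to $\beta$ and $\alpha'$ yields an $n$ with $\forall m>n[m\notin D_{\alpha'}]$, i.e. $D_{\alpha'}$ contains no node of length exceeding the bound encoded by $n$; taking $m$ to be that length bound, every $\gamma\in\mathcal F_\beta$ meets $D_{\alpha'}\subseteq D_\alpha$ within its first $m$ values, which is exactly $\exists m\,\forall\gamma\in\mathcal F_\beta\exists k\le m[\overline\gamma k\in D_\alpha]$, hence $\mathbf{FT}^+$.

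The one point deserving care — and the likely main obstacle — is the ``thinning'' construction: one must verify, using only the axioms of $\mathsf{BIM}$ (closure of $\omega^\omega$ under primitive recursion and unbounded search, and full induction), that from $\alpha$ one can \emph{effectively} produce $\alpha'$ with $D_{\alpha'}$ a thin bar of $\mathcal F_\beta$ refining $D_\alpha$ and contained in $\{s:\beta(s)=0\}$, and that $Bar_{\mathcal F_\beta}(D_{\alpha'})$ genuinely follows from $Bar_{\mathcal F_\beta}(D_\alpha)$. This is the same lemma underlying Theorem \ref{T:{thinfan}}; the relativization to $\mathcal F_\beta$ is uniform in $\beta$ and introduces no new difficulty beyond carrying the side condition $Fan(\beta)$ (which guarantees $\mathcal F_\beta$ is a genuine finitely-branching, decidable, nonempty tree) through the argument. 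Accordingly I expect the honest write-up to be short, and — consistent with the style of the surrounding results — I would present it as ``the proof is a straightforward adaptation of the proof of Theorem \ref{T:{thinfan}}, relativizing $\mathcal C$ to $\mathcal F_\beta$ and using $Fan(\beta)$,'' possibly leaving the details to the reader.
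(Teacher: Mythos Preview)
Your proposal is correct and matches the intended approach: the paper itself leaves this proof to the reader, and the natural route is exactly the one you outline, namely relativizing the argument for Theorem~\ref{T:{thinfan}} from $\mathcal C$ to an arbitrary fan $\mathcal F_\beta$. The one point you flag as needing care---that after thinning and bounding the lengths one must still convert a length bound into a code bound $n$ with $\forall m>n[m\notin D_\alpha]$---does require $Fan(\beta)$ together with a short induction (using finite choice as in Lemma~\ref{L:comb}(i)) to show that for each fixed length there are only boundedly many admissible nodes; you allude to this correctly as routine bookkeeping.
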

\begin{proof} The proof is left to the reader. \end{proof}
 \smallskip Note: $\mathsf{BIM}+\mathbf{FAN}\vdash \mathbf{FT}^+$. 
 
 In this paper,  we restrict attention to $\mathbf{FT}$.  For the equivalence of the three formulations of $\mathbf{FT}$, see \cite[Section 2.3, Theorem 9.6(ii) and Theorem 7.7(v)]{veldman2011b}, and  also \cite{troelstra} and \cite[vol. I, Chapter 4, Section 7.5]{Troelstra-van Dalen}\footnote{At the latter two places, no attention is given to the r\^ole of  \textit{Weak}-$\mathbf{\Pi}^0_1$-$\mathbf{AC}_{\omega, \omega}$, see \ref{SS:weakac}.}. 
 
 In \ref{SS:weakac}, we introduce   \textit{Weak}-$\mathbf{\Pi}^0_1$-$\mathbf{AC}_{\omega, \omega}$, a special case of $\mathbf{AC}_{\omega, \omega}$. 
 
  $\mathsf{BIM}+$\textit{Weak}-$\mathbf{\Pi}^0_1$-$\mathbf{AC}_{\omega, \omega}\vdash \mathbf{FT}\rightarrow \mathbf{FT}^+$, see Theorem \ref{T:fanfan+}. 

 \subsubsection{Brouwer's Thesis: Bar Induction as an Axiom Scheme, $\mathbf{BARIND}$:}\label{SS:barind}
 \[\bigl(Bar_{\omega^\omega}(B) \;\wedge\; \forall s[s\in B\rightarrow s\in C]\;\wedge\;\forall s[\forall n[s\ast\langle n \rangle \in C]\leftrightarrow s\in C]\bigr)\rightarrow \langle \;\rangle \in C.\]
 Brouwer derived $\mathbf{FAN}$ from  $\mathbf{\textbf{BARIND}}$, 
see \cite[Subsections 2.2 and 2.3]{veldman2011b}. We repeat the proof, in order to prepare the reader for Theorem \ref{T:almftscheme}.

\begin{theorem}\label{T:ftscheme} $\mathsf{BIM}+\mathbf{BARIND}\vdash \mathbf{FAN}$. \end{theorem}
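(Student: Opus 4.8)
The plan is to derive the Fan Theorem scheme $\mathbf{FAN}$ from Bar Induction $\mathbf{BARIND}$ in $\mathsf{BIM}$ by the classical Brouwerian argument. So suppose $\beta$ is given with $Fan(\beta)$, and suppose $B$ bars the fan $\mathcal{F}_\beta$, i.e. $Bar_{\mathcal{F}_\beta}(B)$. We want to find $a$ with $D_a\subseteq B$ and $Bar_{\mathcal{F}_\beta}(D_a)$; concretely, we want a uniform bound $m$ such that every $\gamma$ in the fan meets $B$ before stage $m$. The key move is to pass from the bar-on-a-fan statement to a bar-on-Baire-space statement so that $\mathbf{BARIND}$ becomes applicable, apply $\mathbf{BARIND}$ with a cleverly chosen ``inductive'' set $C$, and then read off the uniform bound.

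First I would reformulate ``$s$ is secured in the fan'' as the property of having a finite subtree of the fan below $s$ all of whose leaves lie in $B$; call this $C_0$. One checks that $C_0$ is monotone in the relevant sense and that, relative to the fan, $C_0$ is ``inductive'': if every one-step extension $s\ast\langle n\rangle$ that stays in the fan is in $C_0$, then $s\in C_0$ — here finiteness of the set of admissible $n$ (which is exactly what $Fan(\beta)$ provides, via the bounding function) is what lets us glue finitely many finite subtrees into one finite subtree. To feed this into $\mathbf{BARIND}$, which talks about $\omega^\omega$ rather than $\mathcal{F}_\beta$, I would transfer along the retraction of Baire space onto the fan: replace $B$ by its pullback $B^\ast=\{s\mid \rho(s)\in B\}$ where $\rho$ collapses a finite sequence into the fan, check $Bar_{\omega^\omega}(B^\ast)$ from $Bar_{\mathcal{F}_\beta}(B)$, and likewise define $C^\ast$ from $C_0$ so that the three hypotheses of $\mathbf{BARIND}$ (namely $B^\ast\subseteq C^\ast$, and $C^\ast$ closed both ways under one-step extension) all hold; the last equivalence $\forall n[s\ast\langle n\rangle\in C^\ast]\leftrightarrow s\in C^\ast$ is the part that uses the fan's finite branching after the retraction. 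Then $\mathbf{BARIND}$ yields $\langle\;\rangle\in C^\ast$, i.e. $\langle\;\rangle\in C_0$, i.e. there is a finite subtree of the fan with all leaves in $B$; taking $a$ to encode the (finite) set of those leaves, or better the set $D_{\overline\alpha m}$ for $m$ the depth of that finite tree, gives $D_a\subseteq B$ and $Bar_{\mathcal{F}_\beta}(D_a)$.

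I expect the main obstacle to be bookkeeping rather than conceptual: getting the monotonicity and the exact shape of $C$ right so that $\mathbf{BARIND}$ applies verbatim (the scheme as stated demands $\forall s[\forall n[s\ast\langle n\rangle\in C]\leftrightarrow s\in C]$, a biconditional, so $C$ must also be downward-closed under immediate predecessors, which forces the ``finite secured subtree below $s$'' formulation rather than a naive one), and checking that all the sets involved ($B^\ast$, $C^\ast$) are legitimate instances for the scheme in $\mathsf{BIM}$ — i.e. definable by formulas with the allowed parameters, with the quantifier over finite subtrees coded numerically so no extra choice principle sneaks in. Since the excerpt explicitly says this proof is being repeated to prepare the reader for Theorem~\ref{T:almftscheme}, I would write the retraction and the inductive set out carefully and flag precisely which step uses $Fan(\beta)$, as that is the step that will be modified in the ``almost'' version.
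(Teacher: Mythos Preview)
Your proposal is correct and follows essentially the same route as the paper: extend the bar to all of $\omega^\omega$ (the paper does this by taking $B':=B\cup\{s\mid\beta(s)\neq 0\}$ rather than a pullback along a retraction, but the effect is identical), define the inductive set $C$ as ``either outside the fan, or there exists a finite $D_a\subseteq B$ barring $\mathcal{F}_\beta\cap s$'', and verify the $\mathbf{BARIND}$ hypotheses, using $Fan(\beta)$ precisely in the gluing step to combine finitely many finite bars into one. Your remark about $D_{\overline\alpha m}$ is a slip (there is no $\alpha$ here---that belongs to $\mathbf{FT}$, not $\mathbf{FAN}$); the correct conclusion is just the finite $D_a$ you mention first.
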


\begin{proof} Let $\beta$ be given such that $Fan(\beta)$ and $\beta(\langle\;\rangle)=0$.\footnote{If $\beta(\langle\;\rangle)\neq 0$, then $\mathcal{F}_\beta=\emptyset$ and there is nothing to prove.}  Assume $Bar_{\mathcal{F}_\beta}(B)$.

Define $B':=B\cup\{s\mid \beta(s)\neq 0\}$.

We claim: $Bar_{\omega^\omega}(B')$. We prove this claim as follows.
Let $\gamma$ be given. Define $\gamma^\ast$ such that, for each $n$, if $\beta\bigl(\overline\gamma(n+1)\bigr)=0$, then $\gamma^\ast(n)=\gamma(n)$, and, if not, then $\gamma^\ast(n)=\mu p[\beta(\overline \gamma n\ast \langle p\rangle)=0]$. Note: $\gamma^\ast \in \mathcal{F}_\beta$ and find $n$ such that $\overline{\gamma^\ast}n\in B$. \textit{Either}  $\overline \gamma n=\overline{\gamma^\ast}n$ and  $\overline \gamma n \in B$ \textit{or} $\overline \gamma n\neq\overline{\gamma^\ast}n$ and $\beta(\overline\gamma n)\neq 0$. In both cases, $\overline \gamma n \in B'$.

We thus see: $\forall \gamma \exists n[\overline \gamma n \in B']$, i.e. $Bar_{\omega^\omega}(B')$. 

\smallskip Let $C$ be the set of all $s$ such that\\ either: $\beta(s)\neq 0$ or: $\beta(s) = 0$ and $\exists a[ D_a \subseteq B \;\wedge\; Bar_{\mathcal{F}_\beta\cap s}(D_a)]$. 

\smallskip
 For every $s$, if $\beta(s) =0$ and $s\in B$, define $a:=\overline{\underline 0}s\ast\langle 1 \rangle$ and note: $\{s\}=D_a\subseteq B$    and $Bar_{\mathcal{F}_\beta\cap s}(D_a)$. Conclude: $B\subseteq C$. 

\smallskip Now let $s$ be given such that $\forall m[s\ast\langle m \rangle \in C]$. \\Find $n$ such that $\forall m\ge n[\beta(s\ast\langle m\rangle) \neq 0]$. Find $b$ in $\omega^n$ such that \\$\forall m<n[\beta(s\ast\langle m \rangle)=0 \rightarrow \bigl(D_{b(m)}\subseteq B\;\wedge \;Bar_{\mathcal{F}_\beta\cap s\ast\langle m \rangle}(D_{b(m)})\bigr)]$ and \\$\forall m<n[\beta(s\ast\langle m \rangle)\neq 0\rightarrow b(m)=\langle\;\rangle]$. \\Find $a$ such that $p:=length(a)=\max_{m<n}length\bigl(b(m)\bigr)$ and \\$\forall t<p[a(t)\neq 0\leftrightarrow \exists m<n[\bigl(b(m)\bigr)(t)\neq 0]]$. \\Note: $D_a\subseteq B$ and $Bar_{\mathcal{F}_\beta\cap s}(D_a)$, and  conclude: $s\in C$.

We thus see: $\forall s[\forall m[s\ast\langle m \rangle\in C]\rightarrow s \in C]$. 

Obviously: $\forall s\forall m[s\in C\rightarrow s\ast\langle m \rangle \in C]$. 

Using $\mathbf{BARIND}$, we conclude: $\langle\;\rangle\in C$, i.e. $\exists a[ D_a \subseteq B \;\wedge\; Bar_{\mathcal{F}_\beta}(D_a)]$. 
\end{proof}

 \subsubsection{Church's Thesis, $\mathbf{CT}$:}
\[\exists \tau \exists \psi \forall \alpha \exists e\forall n \exists z[z=\mu i[\tau(e, n, i )\neq 0] \;\wedge\;  \psi(z) = \alpha(n)].\]

Kleene has shown  that $\mathbf{CT}$ is true in the model of $\mathsf{BIM}$ given by the computable functions.  He provided \textit{K\'almar-elementary} functions $\tau,\psi$ satisfying the above conditions. 

 Note that our formulation of $\mathbf{CT}$ is cautious and somewhat weaker than the usual one. We do not require that the set $\{(e,n,z)\mid\tau(e,n,z) \neq 0\}$ coincides with Kleene's set $T$, but only ask that the set behaves appropriately. A similar `abstract' approach to Church's Thesis has been advocated by F.~Richman, see \cite{richman83} and \cite[Chapter 3, Section 1]{bridgesrichman}.

\subsubsection{Kleene's Alternative (to the Fan Theorem), $\neg!\mathbf{FT}$:}\label{SSS:ka} 
 \[ \exists \alpha[ \mathit{Bar}_\mathcal{C}(D_\alpha)\;\wedge\; \forall m [ \neg \mathit{Bar}_\mathcal{C}(D_{\overline \alpha m})]].\]

The following result is due to Kleene. 

\begin{theorem}\label{T:ctka} $\mathsf{BIM} + \mathbf{CT} \vdash \neg!\mathbf{FT}$. \end{theorem}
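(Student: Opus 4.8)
The plan is to construct, using Church's Thesis, an $\alpha$ that encodes a bar $D_\alpha$ in Cantor space $\mathcal{C}$ with no finite subbar, by exploiting the fact that the binary fan "sees" all computable infinite binary sequences, which can be enumerated but not uniformly bounded. First I would fix the $\mathsf{BIM}$-provable enumeration of the partial computable functions coming from $\mathbf{CT}$: there exist $\tau,\psi$ such that every $\alpha$ has an index $e$ with $\alpha(n)=\psi\bigl(\mu i[\tau(e,n,i)\neq 0]\bigr)$ for all $n$. The key idea is the standard Kleene construction: declare a finite binary string $s=\overline\gamma k$ to be "in the bar" as soon as the computation witnessing that the putative index $s'(0)$ (or some fixed index-extraction from $s$) has, within $k$ steps, produced a value at argument $j$ that disagrees with $s(j)$ for some $j<k$ — i.e.\ $s$ has already "diagonalized away" from the $s'(0)$-th computable binary sequence. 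One checks that the set $B$ of such strings, suitably closed under extension and intersected with $Bin$, is decidable, hence of the form $D_\alpha$ for a computable (indeed primitive recursive in $\tau,\psi$) $\alpha$.

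The three things to verify are: (1) $Bar_\mathcal{C}(D_\alpha)$; (2) for each $m$, $\neg Bar_\mathcal{C}(D_{\overline\alpha m})$; and (3) all of this is formalizable in $\mathsf{BIM}+\mathbf{CT}$. For (1), given $\gamma\in\mathcal{C}$, I would first note that the $e$-th computable binary sequence $\gamma_e$ (reading off $e$ from $\gamma$ via whatever index convention the construction uses, e.g.\ $e=\gamma(0)$-ish, or more robustly: argue for every $\gamma$ and every $e$) either differs from $\gamma$ somewhere, in which case $\gamma$ enters $B$ at the first such point with enough computation steps, or equals $\gamma$; but the construction is arranged so that being equal to $\gamma_{\gamma(0)}$ also lands $\gamma$ in $B$ — here one uses a clean diagonal trick, having $\gamma$ encode its own index in its first coordinate — and in fact the cleanest route is: let $\gamma$ be arbitrary in $\mathcal C$, read $e$ from $\gamma$, wait for the computation of $\gamma_e$ on arguments $0,1,2,\dots$ to either halt-and-disagree (done) or to eventually reveal, via the diagonalization $s(j):=1\dot-\gamma_e(j)$ baked into $B$, that $\gamma\ne\gamma_e$. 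For (2), I would observe that for each fixed $m$ there is a computable $\gamma\in\mathcal C$ whose restriction $\overline\gamma m$ lies in a subtree avoiding $D$ up to any desired finite level: concretely, given $m$, choose an index $e$ with $e$ large enough that the computation of $\gamma_e$ does not converge on enough arguments within the relevant number of steps determined by $\overline\alpha m$, so that no initial segment of that $\gamma_e$ has been enumerated into $D$ by stage $m$; then $\overline\gamma m$ (for $\gamma$ computable extending this) shows $D_{\overline\alpha m}$ is not a bar. The padding/delay in the enumeration — making strings enter $B$ only after "enough" steps — is exactly what makes $\neg Bar_\mathcal{C}(D_{\overline\alpha m})$ come out for every $m$.

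The main obstacle, and the point requiring care, is item (2): arranging the step-counting in the definition of $B$ so that $D_\alpha$ is genuinely a bar (you have enough time) while simultaneously, for \emph{each} $m$, there survives a computable branch through the complement of $D_{\overline\alpha m}$ (you never have \emph{uniformly} enough time). This is a delicate timing argument: one wants, for each $m$, an index $e=e(m)$ whose machine is so slow on small arguments that no prefix of $\gamma_e$ of length $\le$ (the depth coded in $\overline\alpha m$) has yet been put into $B$; existence of such arbitrarily slow indices follows from a padding lemma, which is provable in $\mathsf{HA}$ (hence in $\mathsf{BIM}+\mathbf{CT}$) from the properties of $\tau,\psi$. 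A secondary technical point is checking decidability of $B$ and hence that $B=D_\alpha$ for a $\mathsf{BIM}$-definable $\alpha$ — this is routine since all the conditions (bounded search over $\tau$, one application of $\psi$, a comparison) are primitive recursive — and that the final $\alpha$ can be taken primitive recursive, which is harmless. I would present the construction of $\alpha$ explicitly, then prove (1) and (2) as two short lemmas, and remark that the whole argument is the intuitionistic content of Kleene's 1952 realizability-flavoured counterexample to the Fan Theorem.
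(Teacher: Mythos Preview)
Your proposal has a genuine gap: the bar is built in the wrong direction, and the plan for verifying $Bar_{\mathcal C}(D_\alpha)$ does not go through.

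You want to put a binary string $s$ into $B$ when $s$ \emph{disagrees} with the $e(s)$-th computable sequence, where $e(s)$ is read off from $s$ (you suggest $s(0)$). There are two problems. First, $s\in Bin$, so $s(0)\in\{0,1\}$ and you only ever get two indices; no ``fixed index-extraction from $s$'' into $\omega$ is available from a binary string without destroying decidability of $B$ at a given length. Second, and more seriously, even granting some index-extraction, to prove $Bar_{\mathcal C}(D_\alpha)$ you must handle an \emph{arbitrary} $\gamma\in\mathcal C$. Your sketch says ``read $e$ from $\gamma$, wait for $\gamma_e$ to disagree with $\gamma$''; but if $\gamma$ happens to equal $\gamma_{e(\gamma)}$, it never disagrees, and the aside about ``being equal to $\gamma_{\gamma(0)}$ also lands $\gamma$ in $B$'' is not supported by anything in your construction. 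The paper's argument runs the diagonal the other way: one puts $c\in Bin_m$ into $D_\alpha$ when $c$ \emph{agrees} at some coordinate $e<m$ with the Kleene diagonal $d(e)=\psi(\mu i[\tau(e,e,i)\neq 0])$, convergence witnessed by some $z<m$. Then for any $\gamma\in\mathcal C$ one invokes $\mathbf{CT}$ to get an index $e$ for $\gamma$ itself, so $\gamma(e)=d(e)$, and taking $m>\max(e,z)$ gives $\overline\gamma m\in D_\alpha$. No index is read from $\gamma$; the index comes from $\mathbf{CT}$.

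Your plan for (2) via a padding lemma for ``arbitrarily slow indices'' is also problematic here: the paper's $\mathbf{CT}$ is deliberately abstract (just \emph{some} $\tau,\psi$ with the enumeration property), and a padding lemma is not assumed or derived for such $\tau,\psi$. The paper avoids this entirely: given $m$, one simply builds $c\in Bin_m$ with $c(e)=1-\psi(z)$ whenever $e,z<m$ and $z=\mu i[\tau(e,e,i)\neq 0]$; this $c$ (extended by zeros) witnesses $\neg Bar_{\mathcal C}(D_{\overline\alpha m})$ directly, with no appeal to slow machines. So both halves of your argument need to be replaced; the clean route is the paper's ``agree with the diagonal'' bar.
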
 \begin{proof}

 Let $\tau, \psi$ be as in $\mathbf{CT}$. Define $\alpha$  such that, for all $m$, for all $c$ in $Bin_m$, $$\alpha(c) = 0\leftrightarrow \forall e<m\forall z < m[  z=\mu i[ \tau( e, e, i)\neq 0]\rightarrow c(e) = 1-\psi(z)].$$ 

 Let $\gamma$ in $\mathcal{C}$ be given.   Find $e$ such that, for all $n$, $\gamma(n) = \psi(\mu i[\tau(  e,n,i)\neq 0])$.  Define  $z=\mu i[\tau( e, e, i )\neq 0]$.  Note: $ \gamma(e)=\psi(z)$. Define $m := \max(e, z) +1$ and note:  $\alpha(\overline \gamma m) \neq 0$.
 Conclude: $\mathit{Bar}_\mathcal{C}(D_\alpha)$. 
 
 Let $m$ be given. Find $c$ in $\mathit{Bin}_m$ such that \\ $\forall e < m\forall z <m[z=\mu i[\tau( e, e, i)\neq 0]\rightarrow c(e) = 1-\psi(z)]$. Note:  $\forall n \le m[\alpha(\overline c n) = 0]$. Define $\gamma:=c\ast\underline 0$ and  note:  $\forall n [c\ast\underline{\overline 0} n > m]$ and conclude: $\neg \exists k[\overline\gamma k<m \;\wedge\;\alpha(\overline \gamma k)\neq 0]$ and: $\neg\mathit{Bar}_\mathcal{C}(D_{\overline \alpha m})$. \end{proof}
 
 The above proof may be found in \cite[vol. I, Chapter 4, Subsection 7.6]{Troelstra-van Dalen}.  In \cite[Section 3]{veldman2011b} one finds two proofs.
 
 We do not know the answer to the question if $\mathsf{BIM}+\neg!\mathbf{FT}\vdash \mathbf{CT}$. 

\subsubsection{Brouwer's (unrestricted) Continuity Principle as an Axiom Scheme, $\mathbf{BCP}$:}
 $$\forall \alpha \exists n[\alpha R n]\rightarrow\forall \alpha \exists m \exists n \forall \beta[ \overline \alpha m  \sqsubset \beta \rightarrow \beta R n].$$

 \smallskip
Brouwer used this principle for the first time in 1918, see \cite[Section 1, page 13]{brouwer18}. The intuitionistic mathematician believes  the axiom to be plausible. 

She argues as follows. 

If $\forall \alpha\exists n[\alpha Rn]$, I must be able, given any $\alpha$, to find effectively an $n$ as promised, \textit{also if the values of $\alpha$ are disclosed one by one and nothing is told about a law governing the development of $\alpha$ as a whole}. I will decide upon $n$ at some point of time and, at that point of time, only finitely many values of $\alpha$, say \\$\alpha(0), \alpha(1), \ldots,\alpha(m-1)$, will be known to me. The number $n$ will satisfy any infinite sequence that is a continuation of $\alpha(0), \alpha(1), \ldots,\alpha(m-1)$.

The Continuity Principle is revolutionary and changes one's mathematical perspective, see \cite{veldman2001}.

The classical mathematician may ask for a consistency proof for  $\mathbf{BCP}$.  Kleene proved, using realizability methods,  that his formal system $\mathbf{FIM}$ for intuitionistic analysis,  actually an extension of $\mathsf{BIM}+\mathbf{FT}+\mathbf{BCP}$,  is (simply) consistent, see \cite[Chapter II, Subsection 9.2]{Kleene-Vesley}. Kleene's consistency proof should convince  both the classical mathematician and the constructive mathematician, should the  latter accept $\mathbf{BARIND}$ but be plagued by doubts concerning $\mathbf{BCP}$.

 It is not difficult to see that  $\mathsf{BIM}+ \mathbf{CT} + \mathbf{BCP}$ is inconsistent, as it implies $\forall \alpha \exists m\forall \beta[\overline \beta m = \overline \alpha m \rightarrow \beta = \alpha]$, see \cite[vol. I, Chapter 4, Theorem 6.7]{Troelstra-van Dalen}. 
 
 \smallskip
 The next two axioms strengthen $\mathbf{BCP}$. Kleene's consistency proof extends to these stronger forms of the Continuity Principle.
 
 \subsubsection{The First Axiom Scheme of Continuous Choice, $\mathbf{AC}_{\omega^\omega,\omega} (=\mathbf{AC}_{1,0}$):}
 $$\forall \alpha \exists n[\alpha R n]\rightarrow\exists \varphi:\omega^\omega\rightarrow\omega\forall \alpha  [\alpha R \varphi(\alpha)].$$
 
  \subsubsection{The Second Axiom Scheme of Continuous Choice, $\mathbf{AC}_{\omega^\omega,\omega^\omega} (=\mathbf{AC}_{1,1}$):}
 $$\forall \alpha \exists \beta[\alpha R \beta]\rightarrow\exists \varphi:\omega^\omega\rightarrow\omega^\omega\forall \alpha  [\alpha R \varphi|\alpha].$$
 \subsubsection{Weak K\"onig's Lemma, $\mathbf{WKL}$:}\label{SSS:wkl} \[\forall \alpha [\forall n\exists a \in  Bin_n\forall m\le n[\alpha(\overline a n)=0]\rightarrow \exists \gamma\in\mathcal{C} \forall n [ \alpha(\overline{\gamma}
 n) = 0 ]],\] \[or,\; equivalently, \;\forall \alpha [\forall n[\neg Bar_\mathcal{C}(D_{\overline\alpha n})]\rightarrow \exists \gamma\in\mathcal{C} \forall n [ \alpha(\overline{\gamma}
 n) = 0 ]].\] 
 
$\mathbf{WKL}$, as a classical theorem, dates from 1927, see \cite{koenig}. It is a contraposition of $\mathbf{FT}$ and,  from a classical point of view, the two are equivalent.
 The following result is due to U.~Kohlenbach, see \cite{kohlenbach3} and also \cite{ishihara2}.
 \begin{theorem}\label{T:wklft} $\mathsf{BIM}\vdash \mathbf{WKL} \rightarrow \mathbf{FT}$. \end{theorem}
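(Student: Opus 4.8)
\textbf{Proof proposal for Theorem \ref{T:wklft} ($\mathsf{BIM}\vdash \mathbf{WKL}\rightarrow\mathbf{FT}$).}

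The plan is to argue by contraposition at the level of the combinatorics: assume $\mathbf{WKL}$, take $\alpha$ with $Bar_\mathcal{C}(D_\alpha)$, and show $\exists m[Bar_\mathcal{C}(D_{\overline\alpha m})]$. The difficulty is that $\mathbf{WKL}$ as stated only yields an \emph{infinite path} avoiding a tree, not a \emph{uniform bound}, so a single application will not immediately give the number $m$ we need. First I would reduce to the thin-bar reformulation supplied by Theorem \ref{T:{thinfan}}: it suffices to consider $\alpha$ with $Thinbar_\mathcal{C}(D_\alpha)$ and $D_\alpha\subseteq Bin$, and to produce $n$ with $\forall m>n[m\notin D_\alpha]$. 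This is the shape of statement for which the $\mathbf{WKL}$-to-$\mathbf{FT}$ argument is cleanest, because a thin bar on $\mathcal{C}$ is, classically, a finite antichain, and ``finite'' here just means ``bounded in length''.

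Next I would set up the auxiliary $\beta$ that records, along each binary path, whether the thin bar has been met. Concretely, given $\alpha$ with $Thinbar_\mathcal{C}(D_\alpha)$, define $\beta$ so that for $s\in Bin$, $\beta(s)=0$ iff no proper initial segment of $s$ lies in $D_\alpha$ (equivalently, $s$ has not yet passed through the bar), and $\beta(s)\neq 0$ for $s\notin Bin$. Then $\{s\mid\beta(s)=0\}$ is a decidable binary tree. The key observation is: if this tree had an infinite path $\gamma\in\mathcal{C}$, then $\gamma$ would avoid $D_\alpha$ at every stage, contradicting $Bar_\mathcal{C}(D_\alpha)$. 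Hence $\neg\exists\gamma\in\mathcal{C}\forall n[\beta(\overline\gamma n)=0]$. Feeding the contrapositive of $\mathbf{WKL}$ (using the formulation $\forall\alpha[\forall n[\neg Bar_\mathcal{C}(D_{\overline\alpha n})]\rightarrow\exists\gamma\in\mathcal{C}\forall n[\alpha(\overline\gamma n)=0]]$ with this $\beta$ in the role of $\alpha$) then yields $\exists n[Bar_\mathcal{C}(D_{\overline\beta n})]$ — that is, the tree of ``not-yet-barred'' binary sequences is itself barred, which forces every binary sequence of length $n$ to have already passed through $D_\alpha$ by stage $n$. From this, using thinness and $D_\alpha\subseteq Bin$, one reads off that no element of $D_\alpha$ has length $\geq n$, giving the required $\forall m>n[m\notin D_\alpha]$, and hence $\mathbf{FT}$ in its thin-bar form, hence $\mathbf{FT}$.

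The main obstacle I anticipate is bookkeeping rather than conceptual: one must phrase $\mathbf{WKL}$ in exactly the right contrapositive form so that the ``no infinite path'' hypothesis is literally $\forall n[\neg Bar_\mathcal{C}(D_{\overline\beta n})]$ — and that hypothesis is precisely what we want to \emph{refute}, so the logical move is: \emph{either} it holds, in which case $\mathbf{WKL}$ hands us a path contradicting $Bar_\mathcal{C}(D_\alpha)$, \emph{or} it fails, i.e. $\exists n[Bar_\mathcal{C}(D_{\overline\beta n})]$, which is (almost) what we want. Turning ``$\exists n$ such that the restricted tree is barred'' into ``$\exists n$ such that $D_\alpha$ contains no sequence of length $\geq n$'' requires care because a priori $Bar_\mathcal{C}(D_{\overline\beta n})$ only says paths get \emph{detected} by stage $n$; one has to combine this with the definition of $\beta$ and with $Thinbar_\mathcal{C}(D_\alpha)$ (which prevents $D_\alpha$ from containing both a sequence and one of its extensions) to rule out longer elements of $D_\alpha$. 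All of this is elementary reasoning available in $\mathsf{BIM}$, using only decidable predicates and the closure axioms for $\omega^\omega$; no choice principle beyond what is already built into $\mathsf{BIM}$ is needed, which is the point of the theorem.
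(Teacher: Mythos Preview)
Your argument has a genuine gap at the ``logical move''. You obtain $\neg\exists\gamma\in\mathcal{C}\forall n[\beta(\overline\gamma n)=0]$ from $Bar_\mathcal{C}(D_\alpha)$, and then take the contrapositive of $\mathbf{WKL}$ to conclude $\neg\forall n[\neg Bar_\mathcal{C}(D_{\overline\beta n})]$. That step is fine. But from $\neg\forall n[\neg Bar_\mathcal{C}(D_{\overline\beta n})]$ you want $\exists n[Bar_\mathcal{C}(D_{\overline\beta n})]$, and that inference is not available in $\mathsf{BIM}$: it is exactly an instance of Markov's Principle (the predicate $n\mapsto Bar_\mathcal{C}(D_{\overline\beta n})$ is decidable, so you are asking for $\neg\neg\exists n[\delta(n)\neq 0]\rightarrow\exists n[\delta(n)\neq 0]$). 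Your phrasing as a case split, ``\emph{either} $\forall n[\neg Bar_\mathcal{C}(D_{\overline\beta n})]$ holds \emph{or} it fails'', is even stronger --- it is an instance of $\mathbf{LPO}$. Neither $\mathbf{MP}$ nor $\mathbf{LPO}$ is a theorem of $\mathsf{BIM}+\mathbf{WKL}$, so the argument does not go through.

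The paper's proof (due to Kohlenbach) avoids contraposition entirely by applying $\mathbf{WKL}$ \emph{positively}. Starting from $\alpha$ with $Bar_\mathcal{C}(D_\alpha)$, one first passes to the monotone closure $\alpha^\ast$ (so $\alpha^\ast(s)=0\leftrightarrow\forall t\sqsubseteq s[\alpha(t)=0]$), and then defines $\alpha^{\ast\ast}$ by padding: $\alpha^{\ast\ast}(s)=0$ iff either $\alpha^\ast(s)=0$ or there is no $t\in Bin_{length(s)}$ with $\alpha^\ast(t)=0$. By construction, every level $Bin_n$ contains some $s$ with $\alpha^{\ast\ast}(s)=0$, so the hypothesis of $\mathbf{WKL}$ is met outright and one obtains a path $\gamma\in\mathcal{C}$ with $\forall n[\alpha^{\ast\ast}(\overline\gamma n)=0]$. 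Since $D_\alpha$ is a bar, $\gamma$ eventually hits it at some level $n$; at that level $\alpha^\ast(\overline\gamma n)\neq 0$ but $\alpha^{\ast\ast}(\overline\gamma n)=0$, which forces the padding clause, i.e.\ $\neg\exists t\in Bin_n[\alpha^\ast(t)=0]$, whence $Bar_\mathcal{C}(D_{\overline\alpha m})$ for suitable $m$. The whole point is that the padding trick turns a would-be contrapositive into a direct construction; your $\beta$ sets up the right tree but lacks this step.
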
 \begin{proof} Assume $\mathbf{WKL}$. 
 
 Let $\alpha$ be given such that  $\mathit{Bar}_\mathcal{C}(D_\alpha)$. We intend to prove: $\exists n[Bar_\mathcal{C}(D_{\overline \alpha n})]$. 
 
 Define $\alpha^\ast$ such that $\forall s \in Bin[\alpha^\ast(s)= 0 \leftrightarrow \forall t\sqsubseteq s[\alpha(t)= 0]]$.
 
 Define $\alpha^{\ast\ast}$ such that, for all $n$, for all $s$ in $Bin_n$, $\alpha^{\ast\ast}(s)= 0$ if and only if \\\textit{either} $\alpha^\ast(s)= 0$ \textit{or} $\neg\exists t \in Bin_n[\alpha^\ast(t)=0]$. 

Clearly,  for each $n$, there exists $s$ in $Bin_n$ such that $\alpha^{\ast\ast}(s)=0$.

  Applying $\mathbf{WKL}$, find $\gamma$ in $\mathcal{C}$ such that $\forall n[\alpha^{\ast\ast}(\overline \gamma n)=0]$.
    
    Find $n$ such that $\alpha(\overline \gamma n)\neq 0$.  
    
    Conclude: $\neg \exists t\in Bin_n[\alpha^\ast(t)=0]$ and: $\forall \delta\in\mathcal{C}\exists j\le n[\alpha(\overline \delta j)\neq 0]$ and:\\  $\exists m[Bar_\mathcal{C}(D_{\overline\alpha m})]$.

    We thus see: $\forall \alpha[Bar_\mathcal{C}(D_\alpha)\rightarrow \exists m[Bar_\mathcal{C}(D_{\overline \alpha m})]]$,  i.e. $\mathbf{FT}$. 
  \end{proof}
\subsubsection{Weak K\"onig's Lemma with a uniqueness condition, $\mathbf{WKL!}$:}

\[\forall \alpha[\bigl(\forall m[\neg Bar_\mathcal{C}(D_{\overline \alpha m})]\;\wedge\;\forall \gamma\in \mathcal{C}\forall\delta\in\mathcal{C}[\gamma\perp\delta\rightarrow \exists m[\alpha(\overline\gamma m)\neq 0\;\vee\;\alpha(\overline\delta m)\neq 0]]\bigr)\rightarrow\]\[\exists \gamma\forall n[\alpha(\overline \gamma n)=0]].\] 
 The next two theorems, apart from being of interest in themselves, are useful for the discussion in Subsubsection \ref{SSS:bickford}. 
  \begin{theorem}\label{T:wkl!ft} $\mathsf{BIM}\vdash \mathbf{WKL!}\rightarrow \mathbf{FT}$. \end{theorem}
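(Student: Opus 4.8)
The plan is to mimic the proof of Theorem \ref{T:wklft}, where $\mathbf{WKL}$ was shown to imply $\mathbf{FT}$, but this time taking care to produce a tree whose infinite paths, if any, are \emph{unique} up to apartness, so that the uniqueness hypothesis of $\mathbf{WKL!}$ applies. So, assume $\mathbf{WKL!}$ and let $\alpha$ be given with $Bar_\mathcal{C}(D_\alpha)$; the goal is $\exists m[Bar_\mathcal{C}(D_{\overline\alpha m})]$. As before, first pass to $\alpha^\ast$ with $\alpha^\ast(s)=0 \leftrightarrow \forall t\sqsubseteq s[\alpha(t)=0]$, so that $\{s\in Bin \mid \alpha^\ast(s)=0\}$ is a \emph{tree} (closed under initial segments) inside $Bin$, and $Bar_\mathcal{C}(D_\alpha)$ says precisely that this tree has no infinite path, i.e. $\forall m[\neg Bar_\mathcal{C}(D_{\overline{\alpha^\ast}m})]$ fails — more exactly, we want to feed $\mathbf{WKL!}$ something to derive a contradiction from the assumption $\forall m[\neg Bar_\mathcal{C}(D_{\overline{\alpha}m})]$.

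The concrete route I would take: suppose, for contradiction, that $\forall m[\neg Bar_\mathcal{C}(D_{\overline\alpha m})]$; equivalently $\forall m[\neg Bar_\mathcal{C}(D_{\overline{\alpha^\ast}m})]$, since $D_{\overline\alpha m}$ and $D_{\overline{\alpha^\ast}m}$ bar the same thing. Now I build $\alpha^{\ast\ast}$ that ``thins'' the tree so as to force a \emph{leftmost-path} condition: define $\alpha^{\ast\ast}(s)=0$ iff $\alpha^\ast(s)=0$ and, for every proper initial segment $t\ast\langle 1\rangle\sqsubseteq s$ with $t\ast\langle 1\rangle\sqsubseteq s$, one has $\alpha^\ast(t\ast\langle 0\rangle)\neq 0$ — i.e. we only ever step right when stepping left is already blocked in $\alpha^\ast$. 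One checks in $\mathsf{BIM}$ that this $\alpha^{\ast\ast}$ still defines a tree, that it has no leaf barred below $m$ for any $m$ (using the assumption and $K\"onig$-style bookkeeping on the finitely-branching, here binary, tree), and — crucially — that any two paths $\gamma,\delta$ of $\alpha^{\ast\ast}$ with $\gamma\perp\delta$ would give a node of $\alpha^\ast$ from which both children survive while the left child also survives, contradicting the leftmost condition at the first place $\gamma$ and $\delta$ differ. Hence the uniqueness hypothesis $\forall\gamma\forall\delta\in\mathcal{C}[\gamma\perp\delta\rightarrow\exists m[\alpha^{\ast\ast}(\overline\gamma m)\neq 0 \vee \alpha^{\ast\ast}(\overline\delta m)\neq 0]]$ is met, and $\mathbf{WKL!}$ yields $\gamma\in\mathcal{C}$ with $\forall n[\alpha^{\ast\ast}(\overline\gamma n)=0]$, hence $\forall n[\alpha^\ast(\overline\gamma n)=0]$, hence $\forall n[\alpha(\overline\gamma n)=0]$, contradicting $Bar_\mathcal{C}(D_\alpha)$. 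This contradiction gives $\neg\forall m[\neg Bar_\mathcal{C}(D_{\overline\alpha m})]$, and then a small extra argument — the same ``$\neg\exists t\in Bin_n[\alpha^\ast(t)=0]$'' step as in Theorem \ref{T:wklft}, now available because the failure of $\forall m[\neg Bar_\mathcal{C}(\dots)]$ can be converted, using the decidability of $\alpha^{\ast\ast}$-membership at each finite level and the binary branching, into an actual bound — delivers $\exists m[Bar_\mathcal{C}(D_{\overline\alpha m})]$, i.e. $\mathbf{FT}$.

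The main obstacle I anticipate is the step that turns $\neg\forall m[\neg Bar_\mathcal{C}(D_{\overline\alpha m})]$ into an \emph{explicit} witness $m$ with $Bar_\mathcal{C}(D_{\overline\alpha m})$: constructively, a double negation does not simply cancel. The fix is that the statement $Bar_\mathcal{C}(D_{\overline\alpha m})$, for the thinned tree, is equivalent to a decidable property of $m$ (``the tree $\alpha^{\ast\ast}$ has no surviving node at level $m$''), because at each level there are only $\le 2^m$ candidate nodes and $\alpha^{\ast\ast}$-membership is decidable; for a decidable predicate, $\neg\forall m[\neg P(m)]\rightarrow\exists m[P(m)]$ is available in $\mathsf{BIM}$ via unbounded search (Axiom \ref{ax:recop}), provided one also knows $P$ is monotone in $m$ and that \emph{some} $m$ works, which is exactly what the contradiction above secures in the form $\neg\neg\exists m[\dots]$ together with decidability. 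Assembling these pieces carefully — in particular phrasing everything so that the search principle genuinely applies — is where the care is needed; the tree-surgery definitions of $\alpha^\ast,\alpha^{\ast\ast}$ and the verification that they are trees are routine in $\mathsf{BIM}$ and I would leave the bookkeeping largely to the reader, as the paper does for the companion Theorems \ref{T:{thinfan}} and \ref{T:{thinfan+}}.
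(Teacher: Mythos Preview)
Your proposal has two genuine gaps. First, your greedy $\alpha^{\ast\ast}$ need not have a node at every level, even under the contradiction hypothesis $\forall m[\neg Bar_\mathcal{C}(D_{\overline\alpha m})]$. If $\langle 0\rangle$ is an $\alpha^\ast$-node with no $\alpha^\ast$-children, while $\langle 1\rangle$ has $\alpha^\ast$-descendants at every level, your path goes to $\langle 0\rangle$ and dies at level~$2$; this configuration is perfectly consistent with both $Bar_\mathcal{C}(D_\alpha)$ and $\forall m[\neg Bar_\mathcal{C}(D_{\overline\alpha m})]$. The phrase ``K\"onig-style bookkeeping'' names precisely the nonconstructive step---deciding which child carries arbitrarily long $\alpha^\ast$-extensions---that $\mathsf{BIM}$ cannot perform. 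Second, and independently, your final step of turning $\neg\forall m[\neg Bar_\mathcal{C}(D_{\overline\alpha m})]$ into $\exists m[Bar_\mathcal{C}(D_{\overline\alpha m})]$ is Markov's Principle, not an instance of Axiom~\ref{ax:recop}: unbounded search lets you minimize once you already \emph{have} $\exists m[P(m)]$, but it does not eliminate a double negation in front of an existential, even for decidable~$P$.

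The paper's proof avoids both problems by working directly rather than by contradiction. Its $\alpha^{\ast\ast}$ selects, at each level $n$, the lex-least $\alpha^\ast$-node in $Bin_n$ if one exists, and otherwise falls back to $t\ast\langle 0\rangle$ where $t$ is the unique $\alpha^{\ast\ast}$-node at level $n-1$; thus exactly one node exists at every level unconditionally, so the uniqueness clause of $\mathbf{WKL!}$ holds trivially and no contradiction hypothesis is needed. After obtaining $\gamma$ from $\mathbf{WKL!}$, one uses $Bar_\mathcal{C}(D_\alpha)$ to find $n$ with $\alpha(\overline\gamma n)\neq 0$; since $\alpha^{\ast\ast}(\overline\gamma n)=0$ but $\alpha^\ast(\overline\gamma n)\neq 0$, the fallback clause forces $\neg\exists t\in Bin_n[\alpha^\ast(t)=0]$, which yields the bound directly---no double negation ever arises.
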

 
\begin{proof} \footnote{The equivalence of $\mathbf{FT}$ and $\mathbf{WKL!}$ is a result due to J. Berger and H. Ishihara, see \cite{bergerishihara}. Another proof has been given by   H. Schwichtenberg, see \cite{schwichtenberg}. }
 Assume $\mathbf{WKL!}$. 
 
 Let $\alpha$ be given such that $Bar_\mathcal{C}(D_\alpha)$. We intend to prove: $\exists m[Bar_\mathcal{C}(D_{\overline\alpha m})]$. 
 
 If $\alpha(0)=\alpha(\langle \;\rangle)\neq 0$, then $D_{\overline\alpha 1}=\{\langle \;\rangle\}$ and $Bar_\mathcal{C}(D_{\overline \alpha 1})$, and we are done. 
  
   We now assume $\alpha(\langle\;\rangle)=0$, that is: $\langle\;\rangle\notin D_\alpha$. 
 
 Define $\alpha^\ast$ such that $\forall s \in Bin[\alpha^\ast(s)= 0 \leftrightarrow \forall t\sqsubseteq s[\alpha(t)= 0]]$.

 Define $\alpha^{\ast\ast}$ such that $\alpha^{\ast\ast}(\langle\;\rangle)=0$ and, for all $n>0$, for all $s$ in $Bin_n$,\\$\alpha^{\ast\ast}(s)= 0$ if and only if \textit{either} $\alpha^\ast(s)= 0$ and $\forall t\in Bin_n[\alpha^\ast(t)= 0\rightarrow s\le_{lex}t]$ \textit{or} $\neg\exists t \in Bin_n[\alpha^\ast(t)=0]$ and $\exists t \in Bin_{n-1}[\alpha^{\ast\ast}(t)=0 \;\wedge\; s = t\ast\langle 0\rangle]$. 
 
 Using induction, one proves that,  for each $n$, there is exactly one $s$ in $Bin_n$ such that $\alpha^{\ast\ast}(s)=0$. 
   
   Let $\gamma, \delta$ in $\mathcal{C}$ be given such that $\gamma \perp \delta$.\\ Find $n$ such that $\overline \gamma n\perp\overline \delta n$ and note: $\alpha^{\ast\ast}(\overline \gamma n)\neq 0\;\vee\; \alpha^{\ast\ast}(\overline\delta n)\neq 0$. 
   
    We thus see: $\forall \gamma \in \mathcal{C}\forall \delta\in \mathcal{C}[\gamma \perp \delta \rightarrow \exists n[\alpha^{\ast\ast}(\overline \gamma n)\neq 0\;\vee\; \alpha^{\ast\ast}(\overline\delta n)\neq 0]]$.\\Applying $\mathbf{WKL!}$, find $\gamma$ in $\mathcal{C}$ such that $\forall n[\alpha^{\ast\ast}(\overline \gamma n)=0]$.
    
    Find $n$ such that $\alpha(\overline \gamma n)\neq 0$.  
    
    Conclude: $\neg \exists t\in Bin_n[\alpha^\ast(t)=0]$ and: $\forall \delta\in\mathcal{C}\exists j\le n[\alpha(\overline \delta j)\neq 0]$ and  \\$\exists m[Bar_\mathcal{C}(D_{\overline\alpha m})]$.

    We thus see: $\forall \alpha[Bar_\mathcal{C}(D_\alpha)\rightarrow \exists m[Bar_\mathcal{C}(D_{\overline \alpha m})]]$,  i.e. $\mathbf{FT}$. 
  \end{proof}
  \begin{theorem}\label{T:ftwkl!} $\mathsf{BIM}\vdash \mathbf{FT}\rightarrow \mathbf{WKL!}$. \end{theorem}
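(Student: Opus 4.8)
The plan is to assume $\mathbf{FT}$ and an instance of the hypothesis of $\mathbf{WKL!}$, and to derive the conclusion directly, exploiting the uniqueness/apartness side condition of $\mathbf{WKL!}$ together with the compactness afforded by $\mathbf{FT}$. So let $\alpha$ be given with (i) $\forall m[\neg\mathit{Bar}_\mathcal{C}(D_{\overline\alpha m})]$ and (ii) $\forall\gamma\in\mathcal{C}\,\forall\delta\in\mathcal{C}[\gamma\perp\delta\rightarrow\exists m[\alpha(\overline\gamma m)\neq 0\;\vee\;\alpha(\overline\delta m)\neq 0]]$. We want to produce $\gamma\in\mathcal{C}$ with $\forall n[\alpha(\overline\gamma n)=0]$. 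First I would pass to the ``hereditary'' version $\alpha^\ast$, defined by $\forall s\in Bin[\alpha^\ast(s)=0\leftrightarrow\forall t\sqsubseteq s[\alpha(t)=0]]$, so that $\{s\in Bin\mid\alpha^\ast(s)=0\}$ is a tree; by (i) this tree has, for every $n$, a node of length $n$, i.e.\ $\forall n\exists s\in Bin_n[\alpha^\ast(s)=0]$.

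The key step is to show that this tree is in fact \emph{finitely branching in a bounded way}, or more precisely that at every level the tree has exactly one node (so it is a path), using the apartness hypothesis (ii) and $\mathbf{FT}$. Concretely, consider the set $B$ of all binary $s$ with $\alpha^\ast(s)\neq 0$ --- by (i) this is \emph{not} a bar on $\mathcal{C}$, but suppose toward a contradiction that at some level $n_0$ there were two distinct nodes $s_0,s_1\in Bin_{n_0}$ with $\alpha^\ast(s_i)=0$. Extending both ``to the left'' (always taking the least available child, as in the definition of $\alpha^{\ast\ast}$ in the proof of Theorem \ref{T:wkl!ft}) we would get two paths $\gamma,\delta\in\mathcal{C}$ with $\gamma\perp\delta$ and yet $\forall n[\alpha(\overline\gamma n)=0\;\wedge\;\alpha(\overline\delta n)=0]$, contradicting (ii). Hence for every $n$ there is \emph{exactly one} $s\in Bin_n$ with $\alpha^\ast(s)=0$; call it $\overline\gamma n$, and this defines $\gamma\in\mathcal{C}$ with $\forall n[\alpha^\ast(\overline\gamma n)=0]$, hence $\forall n[\alpha(\overline\gamma n)=0]$, as required. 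Where exactly does $\mathbf{FT}$ enter? It is needed to turn the pointwise statement ``no two nodes at any fixed level both survive'' into the uniformity that lets us actually \emph{construct} the path $\gamma$: from $\mathbf{FT}$ (in the $Thinbar$ form of Theorem \ref{T:{thinfan}}) applied to a suitable thin bar extracted from $\alpha$, one obtains a uniform bound beyond which the ``$\mathit{Bar}$-attempt'' of any level fails, which is precisely what makes the ``exactly one node at each level'' argument go through constructively rather than merely classically.

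I expect the main obstacle to be the passage from ``$\neg\mathit{Bar}_\mathcal{C}(D_{\overline\alpha m})$ for all $m$'' plus apartness to the constructive existence of the \emph{unique} surviving node at each level: intuitionistically, ``not a bar'' gives at each level \emph{some} surviving node only after a bit of work, and the uniqueness needs (ii) applied to the two explicit left-most extensions, which must be checked to be genuinely apart. The cleanest route is probably to mimic the construction of $\alpha^{\ast\ast}$ from the proof of Theorem \ref{T:wkl!ft} --- defining at each level the lexicographically least surviving node --- and then to use $\mathbf{FT}$ to see that this sequence of least nodes is coherent (each an extension of the previous), which is where the Fan Theorem does its real work. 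Once that coherence is established, reading off $\gamma$ and verifying $\forall n[\alpha(\overline\gamma n)=0]$ is routine.
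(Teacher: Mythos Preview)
Your central claim --- that the tree $\{s\in Bin\mid\alpha^\ast(s)=0\}$ has exactly one node at every level --- is simply false under the hypotheses of $\mathbf{WKL!}$. Take $\alpha$ with $\alpha(s)=0$ iff $s\sqsubset\underline 0$ or $s=\langle 1\rangle$: both $\langle 0\rangle$ and $\langle 1\rangle$ are in the tree, yet (i) and (ii) hold. Your argument for the false claim also has a gap independent of this: ``extending $s_0,s_1$ to the left'' to paths $\gamma,\delta$ \emph{through the tree} is precisely the content of $\mathbf{WKL}$, which you do not have. In the example just given, any extension of $\langle 1\rangle$ leaves the tree at the very next step, so you never obtain two apart paths violating (ii). Your fallback --- take the lexicographically least surviving node at each level --- does not produce a coherent sequence either: with the tree $\{\langle\;\rangle,\langle 0\rangle\}\cup\{\langle 1\rangle\ast\overline{\underline 0}k\mid k\in\omega\}$ the least node at level~$1$ is $\langle 0\rangle$ but at level~$2$ it is $\langle 1,0\rangle$.

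What the paper actually does is local rather than level-by-level. Fix a node $s$; apartness applied to $s\ast\langle 0\rangle\ast\gamma$ and $s\ast\langle 1\rangle\ast\delta$ (which are apart for \emph{all} $\gamma,\delta\in\mathcal{C}$) gives $\forall\gamma\forall\delta\exists k[\alpha(s\ast\langle 0\rangle\ast\overline\gamma k)\neq 0\;\vee\;\alpha(s\ast\langle 1\rangle\ast\overline\delta k)\neq 0]$. Now $\mathbf{FT}$ yields a uniform $m$, so $\forall c,d\in Bin_m[\alpha^\ast(s\ast\langle 0\rangle\ast c)\neq 0\;\vee\;\alpha^\ast(s\ast\langle 1\rangle\ast d)\neq 0]$, and the finite combinatorial lemma $\forall k\forall l[A(k)\vee B(l)]\rightarrow(\forall k[A(k)]\vee\forall l[B(l)])$ lets you conclude that one of the two subtrees dies by depth $m$. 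This decidably selects a child $\delta(s)$ at every node; defining $\gamma(n)=\delta(\overline\gamma n)$ and proving by induction (using~(i)) that $\overline\gamma n$ always has arbitrarily long extensions in the tree finishes the argument. The point is that $\mathbf{FT}$ is used node-by-node to kill one subtree, not to establish global uniqueness at a level.
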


   \begin{proof} Assume $\mathbf{FT}$. Let $\alpha$ be given such that $\forall n [\neg Bar_\mathcal{C}(D_{\overline\alpha n})]]$ and \\$\forall \gamma \in \mathcal{C}\forall \delta \in \mathcal{C}[\gamma \perp \delta\rightarrow \exists n[\alpha(\overline \gamma n)
   \neq 0\;\vee\;\alpha(\overline\delta n)\neq 0]]$. \\We intend to prove: $\exists \gamma \in \mathcal{C}\forall n[\alpha(\overline \gamma n)=0]$.\\Define $\alpha^\ast$ such that $\forall s \in Bin[\alpha^\ast(s)=0\leftrightarrow \forall t\sqsubseteq s[\alpha(t)=0]]$. 
  
  \smallskip 
   Let $s$ in $Bin$ be given. \\Note: $\forall \gamma\in \mathcal{C}\forall \delta\in \mathcal{C}\exists k[\alpha(s\ast\langle 0\rangle\ast\overline \gamma k)\neq 0\;\vee\;\alpha(s\ast\langle 1\rangle\ast\overline \delta k)\neq 0]$. 
   
   Applying $\mathbf{FT}$, find $m$ such that  \\$\forall \gamma\in \mathcal{C}\forall \delta\in \mathcal{C}\exists k\le m[\alpha(s\ast\langle 0\rangle\ast\overline \gamma k)\neq 0\;\vee\;\alpha(s\ast\langle 1\rangle\ast\overline \delta k)\neq 0]$.
   
   Conclude: $\forall c\in Bin_m\forall d\in Bin_m[\alpha^\ast(s\ast\langle 0\rangle\ast c)\neq 0\;\vee\;\alpha^\ast(s\ast\langle 1\rangle\ast d)\neq 0]$.
   
   Conclude: $\forall c\in Bin_m[\alpha^\ast(s\ast\langle 0\rangle\ast c)\neq 0]\;\vee\;\forall d\in Bin_m[\alpha^\ast(s\ast\langle 1\rangle\ast d)\neq 0]$.
   
   \medskip This last step is justified by the fact that $\mathsf{BIM}$ proves the following scheme\footnote{This follows from our Lemma \ref{L:comb0}.}:
    \\ $\forall n[\forall k<n\forall l<n[A(k)\;\vee\;B(l)]\rightarrow(\forall k<n[A(k)]\vee\forall l<n[B(l)])]$.

   \medskip
   Now define $\zeta$ such that, for every $s$ in $Bin$, \\$\zeta(s)=\mu k[\forall c\in Bin_k[\alpha^\ast(s\ast\langle 0\rangle\ast c)\neq 0]\;\vee\;\forall d\in Bin_k[\alpha^\ast(s\ast\langle 1\rangle\ast d)\neq 0]$. 
   
   Then define $\delta$ in $\mathcal{C}$ such that, for every $s$ in $Bin$, \\$\forall c\in Bin_{\zeta(s)}[\alpha^\ast (s\ast\langle 1-\delta(s)\rangle \ast c)\neq 0]$.
   
   Finally, define $\gamma$ such that $\forall n[\gamma(n)=\delta(\overline \gamma n)]$. 
   
   One may prove by induction, using  $\forall n[ \neg Bar_\mathcal{C}(D_{\overline \alpha n})]$: \\for each $n$, $\forall m\ge n \exists s\in Bin_m[\overline\gamma n\sqsubseteq s \;\wedge\;\alpha^\ast(s) = 0]$. \
   
   Conclude: $\forall n[\alpha^\ast(\overline \gamma n)=0]$ and $\forall n[\alpha(\overline \gamma n)=0]$.
   
  \smallskip We thus see: for each $\alpha$, if $\forall n[\neg Bar_\mathcal{C}(D_{\overline \alpha n})]$ and \\$\forall \gamma \in \mathcal{C}\forall \delta \in \mathcal{C}[\gamma\perp\delta\rightarrow \exists n[\alpha(\overline\gamma n)\neq 0\;\vee\;\alpha(\overline \delta n)\neq 0]]$, then $\exists \gamma\in \mathcal{C}\forall n[\alpha(\overline \gamma n)=0]$,\\ i.e. $\mathbf{WKL!}$. \end{proof}

\subsubsection{The Lesser Limited Principle of Omniscience, $\mathbf{LLPO}$:}\label{SSS:llpo}\footnote{\smallskip $\mu n[P(n)]=k\;\;\leftrightarrow \;\;\bigl(P(k)\;\wedge\;\forall n<k[\neg P(n)]\bigr)$. Therefore: \\$\mu n[\alpha(n)\neq 0] \neq k \;\leftrightarrow \;\bigl(\alpha(k) = 0 \;\vee \;\exists i<k[\alpha(i)\neq 0]\bigr)$. } \[\forall \alpha\exists i<2\forall p[2p+i\neq \mu n[\alpha(n)\neq 0]].\]
\begin{theorem}\label{T:nonllpo} $\mathsf{BIM}\vdash \mathbf{BCP} \rightarrow \neg \mathbf{LLPO}$. \end{theorem}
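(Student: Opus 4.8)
The plan is to argue by contradiction, exploiting the fact that $\mathbf{LLPO}$, read as a statement $\forall\alpha\exists i<2[\dots]$, cannot depend continuously on $\alpha$ in a neighbourhood of $\underline{0}$. So assume $\mathbf{BCP}$ and, for contradiction, $\mathbf{LLPO}$. First I would recast $\mathbf{LLPO}$ as the premiss of an instance of $\mathbf{BCP}$: define the relation $R$ by
\[\alpha R i \;:\leftrightarrow\; \bigl(i<2 \;\wedge\; \forall p[2p+i\neq \mu n[\alpha(n)\neq 0]]\bigr).\]
By the equivalence recorded in the footnote to $\mathbf{LLPO}$, the condition $2p+i\neq\mu n[\alpha(n)\neq 0]$ is provably in $\mathsf{BIM}$ decidable, so $R$ is given by an (essentially $\mathbf{\Pi}^0_1$) formula and is therefore a legitimate instance for the scheme $\mathbf{BCP}$. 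Now $\mathbf{LLPO}$ is exactly the statement $\forall\alpha\exists i[\alpha R i]$, which is the premiss of $\mathbf{BCP}$ for this $R$.

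Next, apply $\mathbf{BCP}$. Its conclusion $\forall\alpha\exists m\exists n\forall\beta[\overline{\alpha}m\sqsubset\beta\rightarrow\beta R n]$ holds in particular for $\alpha:=\underline{0}$ (note there is no obstruction to the premiss here, since $\underline{0}$ has no nonzero value, so $\underline{0}\,R\,i$ holds for both $i<2$). Fix $m$ and $i<2$ such that every $\beta$ with $\overline{\underline{0}}m\sqsubset\beta$ — equivalently, $\beta(0)=\dots=\beta(m-1)=0$ — satisfies $\beta R i$, i.e. $\forall p[2p+i\neq\mu n[\beta(n)\neq 0]]$. To refute this, put $j:=2m+i$; then $j\geq m$ and $j$ has the same parity as $i$. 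Let $\beta$ be the sequence with $\beta(j)=1$ and $\beta(k)=0$ for all $k\neq j$. Since $j\geq m$ we have $\overline{\underline{0}}m\sqsubset\beta$, while $\mu n[\beta(n)\neq 0]=j=2m+i$; taking $p:=m$ contradicts $\beta R i$. This contradiction establishes $\neg\mathbf{LLPO}$.

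The argument is short, and the only points needing a little care — the "main obstacle", such as it is — are bookkeeping ones: checking that the matrix of $R$ is decidable so that $R$ is an admissible instance of $\mathbf{BCP}$ (which is precisely what the footnote's reformulation of $\mu n[\alpha(n)\neq 0]\neq k$ supplies), confirming the behaviour of the $\mu$-operator on sequences that are almost everywhere $0$ (so that the constructed $\beta$ genuinely has $\mu n[\beta(n)\neq 0]=j$, and so that $\underline{0}$ admits a witness $i$ for $\alpha R i$), and making sure the perturbation $\beta$ of $\underline{0}$ agrees with $\underline{0}$ on its first $m$ values while forcing its first nonzero value to lie at a position of the prescribed parity beyond $m$.
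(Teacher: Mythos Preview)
Your argument is correct and is essentially the paper's own proof: the paper too assumes $\mathbf{LLPO}$, applies $\mathbf{BCP}$ at $\alpha=\underline{0}$ to obtain $m$ and $i<2$ with $\forall\beta[\overline{\underline 0}m\sqsubset\beta\rightarrow\forall p[2p+i\neq\mu n[\beta(n)\neq 0]]]$, and then takes $\beta:=\overline{\underline 0}(2m+i)\ast\langle 1\rangle\ast\underline 0$ to derive the contradiction $2m+i=\mu n[\beta(n)\neq 0]$. Your remark about $R$ being ``admissible'' is unnecessary, since $\mathbf{BCP}$ as stated in the paper is the unrestricted scheme, but it does no harm.
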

\begin{proof} Assume $\mathbf{LLPO}$. 
\\Using $\mathbf{BCP}$, find $m$,$i$ such that $i<2$ and   $\forall \alpha[\overline{\underline 0}m\sqsubset \alpha\rightarrow \forall p[2p+i\neq\mu n[\alpha(n)\neq 0]]]$.
 \\Define $\alpha:= \overline{\underline 0}(2m+i)\ast \langle 1 \rangle \ast \underline 0$ and note: $\overline{\underline 0} m\sqsubset \alpha$ and $2m+i=\mu n[\alpha(n)\neq 0]$.  Contradiction. \end{proof}

\begin{theorem}\label{T:wklllpo} $\mathsf{BIM} \vdash \mathbf{WKL} \rightarrow \mathbf{LLPO}$. \end{theorem}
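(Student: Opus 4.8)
The plan is to adapt, inside $\mathsf{BIM}$, the classical observation that an infinite binary tree has an infinite branch. Given $\alpha$, I would build a binary ``tree'' $\eta$ each of whose infinite branches $\gamma\in\mathcal{C}$ has the property that $(2p+\gamma(0))$ is odd whenever $2p+\gamma(0)$ would be the least $n$ with $\alpha(n)\neq 0$; then $i:=\gamma(0)<2$ will witness $\mathbf{LLPO}$ for $\alpha$. Concretely, I would define $\eta$ so that $D_\eta\subseteq Bin$ and, for every $s$ in $Bin$, $\eta(s)=0$ if and only if, for each $m<length(s)$: \emph{if} $\alpha(m)\neq 0$ and $\forall l<m[\alpha(l)=0]$, \emph{then} $m+s(0)$ is odd. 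The matrix being decidable, this is a legitimate definition, and since the condition on $s$ uses $s$ only through $s(0)$ and only becomes more demanding as $length(s)$ grows, the set $\{s\in Bin\mid \eta(s)=0\}$ is closed under $\sqsubseteq$.

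Next I would verify the hypothesis $\forall n[\neg Bar_\mathcal{C}(D_{\overline\eta n})]$ of $\mathbf{WKL}$. Fix $n$. By a bounded search I may decide whether there is an $m<n$ with $\alpha(m)\neq 0$ and $\forall l<m[\alpha(l)=0]$; such an $m$ is unique if it exists. If there is such an $m$, put $a:=\langle 1-(m\bmod 2)\rangle\ast\overline{\underline 0}(n-1)$; otherwise put $a:=\overline{\underline 0}n$. In either case $a\in Bin_n$ and $\eta(a)=0$, hence, by downward closure, $\forall m\le n[\eta(\overline a m)=0]$. Taking $\gamma:=a\ast\underline 0$ and using that $length(s)\le s$ for every code $s$, one gets $\neg\exists j[\overline\gamma j\in D_{\overline\eta n}]$, so $\gamma$ witnesses $\neg Bar_\mathcal{C}(D_{\overline\eta n})$.

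Finally I would apply $\mathbf{WKL}$ to obtain $\gamma\in\mathcal{C}$ with $\forall n[\eta(\overline\gamma n)=0]$ and set $i:=\gamma(0)<2$. If some $p$ satisfied $2p+i=\mu n[\alpha(n)\neq 0]$, i.e.\ (reading $\mu$ literally, as in the footnote to $\mathbf{LLPO}$) $\alpha(2p+i)\neq 0$ and $\forall l<2p+i[\alpha(l)=0]$, then applying $\eta\bigl(\overline\gamma(2p+i+1)\bigr)=0$ with $m:=2p+i$ would force $(2p+i)+\gamma(0)$ to be odd, which is false since $\gamma(0)=i$. Hence $\forall p[2p+i\neq\mu n[\alpha(n)\neq 0]]$, and, $\alpha$ being arbitrary, $\mathbf{LLPO}$ follows. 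I expect the only genuine obstacle to be the bookkeeping around the hypothesis of $\mathbf{WKL}$: one must set up $\eta$ so that $\forall n[\neg Bar_\mathcal{C}(D_{\overline\eta n})]$ is really provable in $\mathsf{BIM}$, which is exactly where the downward closure of $\{s\mid\eta(s)=0\}$ and the elementary inequality $length(s)\le s$ are needed; one should also keep in mind the degenerate case where $\alpha$ is everywhere $0$, which the literal reading of $\mu n[\alpha(n)\neq 0]$ handles automatically.
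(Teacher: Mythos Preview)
Your proof is correct and follows essentially the same approach as the paper's: encode the $\mathbf{LLPO}$ disjunction into the first bit of a branch of a decidable binary tree, verify the $\mathbf{WKL}$ hypothesis by a bounded case split, apply $\mathbf{WKL}$, and read off $i:=\gamma(0)$. The only minor difference is the shape of the tree: the paper allows just the constant sequences $\overline{\underline i}q$ (so the tree has at most two infinite paths), whereas your $\eta$ allows arbitrary binary continuations after the first bit; this changes nothing of substance, and your more careful bookkeeping around $\neg Bar_\mathcal{C}(D_{\overline\eta n})$ via downward closure and $length(s)\le s$ is sound.
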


\begin{proof} Assume $\mathbf{WKL}$. Let $\alpha$ be given. Define $\beta$ such that, for every $s$, $\beta(s) = 0$ if and only if $\exists q\exists i <2[s=\overline{\underline i}q \;\wedge\;\forall p[2p+i<n \rightarrow 2p+i\neq\mu n[\alpha(n)\neq 0]]]$.   Note: $\forall m\exists i<2\forall q\le m[\beta(\overline{\underline i} q)=0]$. Using $\mathbf{WKL}$, find $\gamma$ such that $\forall n[\beta(\overline \gamma n)=0]$. Define $i:=\gamma(0)$ and conclude: $\gamma = \underline i$ and   $\forall p[ 2p+i\neq\mu n[\alpha(n)\neq 0]]$. \end{proof}

Using a weak axiom of countable choice, one may also prove $\mathbf{LLPO} \rightarrow \mathbf{WKL}$, see Theorem \ref{T:acllpowkl}.

\subsubsection{The Limited Principle of Omniscience, $\mathbf{LPO}$:}\label{SSS:lpo} $$\forall \alpha[\exists n[\alpha(n) \neq 0] \;\vee\; \forall n[\alpha(n) = 0]].$$
 
 $\mathbf{LPO}$ and $\mathbf{LLPO}$ were introduced by E. Bishop, see \cite[Chapter 1, Section 1]{bridgesrichman}. The following result is not difficult and well-known, see  \cite[Section 2.6]{mandelkern} and  \cite[Theorem 3.1]{akama}.
\begin{theorem}\label{T:lpollpo} $\mathsf{BIM} \vdash \mathbf{LPO} \rightarrow \mathbf{LLPO}$. \end{theorem}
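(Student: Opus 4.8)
The plan is to show directly that $\mathbf{LPO}$ implies $\mathbf{LLPO}$ by unwinding both omniscience principles. Given $\alpha$, I would apply $\mathbf{LPO}$ to decide whether $\exists n[\alpha(n)\neq 0]$ or $\forall n[\alpha(n)=0]$. In the second case, $\mu n[\alpha(n)\neq 0]$ is undefined in the sense that no $n$ witnesses $\alpha(n)\neq 0$, so trivially $\forall p[2p+0\neq \mu n[\alpha(n)\neq 0]]$ and we may take $i=0$; hence $\mathbf{LLPO}$ holds for this $\alpha$.

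In the first case, we have some $n_0$ with $\alpha(n_0)\neq 0$. Then $k:=\mu n[\alpha(n)\neq 0]$ is a genuine natural number: it exists by the least-number principle (available in $\mathsf{BIM}$ via the unbounded search axiom together with full induction, or directly since $\mu$ is a primitive recursive-style operator on a sequence known to have a nonzero value). Once $k$ is in hand, I simply compute its parity: either $k=2p$ for some $p$ (so $k$ is even), in which case $\forall p[2p+1\neq k]$ and we take $i=1$; or $k=2p+1$ for some $p$ (so $k$ is odd), in which case $\forall p[2p+0\neq k]$ and we take $i=0$. Parity of a concrete natural number is decidable in $\mathsf{BIM}$ (it is a primitive recursive predicate), so this disjunction is available without any further assumption.

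Combining the two cases gives, for every $\alpha$, an $i<2$ with $\forall p[2p+i\neq \mu n[\alpha(n)\neq 0]]$, which is exactly $\mathbf{LLPO}$. I do not expect any real obstacle here: the only subtlety worth a sentence in the write-up is that in the branch where $\forall n[\alpha(n)=0]$ one must read the statement ``$2p+i\neq\mu n[\alpha(n)\neq 0]$'' correctly, using the footnote's equivalence $\mu n[\alpha(n)\neq 0]\neq k \leftrightarrow \bigl(\alpha(k)=0 \;\vee\; \exists j<k[\alpha(j)\neq 0]\bigr)$, which makes every inequality of this form true when $\alpha$ is identically zero. With that remark the proof is a short case split and needs no choice principle.
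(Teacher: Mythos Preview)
Your proposal is correct and follows essentially the same approach as the paper: apply $\mathbf{LPO}$ to $\alpha$, and in the case $\exists n[\alpha(n)\neq 0]$ compute the parity of $\mu n[\alpha(n)\neq 0]$ to select $i$, while in the case $\forall n[\alpha(n)=0]$ observe that the inequality holds vacuously for any $i$. The paper's write-up is terser but the argument is the same case split.
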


\begin{proof} Let $\alpha$ be given.   Apply $\mathbf{LPO}$ and distinguish two cases.

\textit{Case (1)}. $\exists n[\alpha(n) \neq 0]$.  Find $q,k$ such that $k<2$  and $ 2q+k=\mu n[\alpha(n)\neq 0]$. Conclude:   $\forall p[2p+1-k\neq\mu n[\alpha(n)\neq 0]]$.

\textit{Case (2)}. $\forall n[\alpha(n) = 0]$. Then $\forall k<2\forall p[2p+k\neq\mu n[\alpha(n)\neq 0]]$.  \end{proof}

The following Lemma shows that, in $\mathsf{BIM}+\mathbf{BCP}$, not every closed subset of $\omega^\omega$ is a spread, see also 
 \cite[Theorem 2.10 (vi)]{veldman2019}. 
 
 \begin{lemma}\label{L:closedspread} $\mathsf{BIM}\vdash \forall \beta \exists \gamma[Spr(\gamma)\;\wedge\; \mathcal{F}_\beta=\mathcal{F}_\gamma]\rightarrow \mathbf{LPO}$. \end{lemma}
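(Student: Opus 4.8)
The plan is to assume that every closed subset of $\omega^\omega$ is a spread, i.e.\ to assume $\forall\beta\exists\gamma[Spr(\gamma)\wedge\mathcal{F}_\beta=\mathcal{F}_\gamma]$, and to derive $\mathbf{LPO}$. Fix $\alpha$; I want to decide $\exists n[\alpha(n)\neq 0]\vee\forall n[\alpha(n)=0]$. The idea is to build from $\alpha$ a closed set that is classically a single point but whose branching just below the root secretly records whether $\alpha$ has a nonzero value, in such a way that a spread-law for it is forced either to exhibit a witness for $\exists n[\alpha(n)\neq 0]$ or to certify that $\alpha$ is identically $0$. Concretely, I would let $\beta$ be the function — primitive recursive in $\alpha$, hence available in $\mathsf{BIM}$ by Axioms \ref{ax:const} and \ref{ax:recop} — determined by: $\beta(\langle\;\rangle)=\beta(\langle 0\rangle)=\beta(\langle 1\rangle)=0$; $\beta(\langle 0\rangle\ast\overline{\underline 0}m)=0$ for every $m$; $\beta(\langle 1\rangle\ast\langle k\rangle\ast\overline{\underline 0}m)=0$, for every $m$, if and only if $\alpha(k)\neq 0$; and $\beta(s)\neq 0$ for every $s$ not of one of these forms. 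A direct check gives
$$\mathcal{F}_\beta=\{\langle 0\rangle\ast\underline 0\}\cup\{\langle 1\rangle\ast\langle k\rangle\ast\underline 0\mid\alpha(k)\neq 0\};$$
in particular $\mathcal{F}_\beta$ is inhabited — it contains the ``safety path'' $\langle 0\rangle\ast\underline 0$ whatever $\alpha$ is — and every $\delta\in\mathcal{F}_\beta$ with $\delta(0)=1$ satisfies $\alpha\bigl(\delta(1)\bigr)\neq 0$, whereas $\langle 1\rangle\ast\langle k\rangle\ast\underline 0\in\mathcal{F}_\beta$ holds if and only if $\alpha(k)\neq 0$.

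Next I would apply the assumption to find $\gamma$ with $Spr(\gamma)$ and $\mathcal{F}_\gamma=\mathcal{F}_\beta$, and use the standard fact — provable in $\mathsf{BIM}$ from the recursion operators of Axiom \ref{ax:recop} — that a spread-law reaches every node it retains: if $\gamma(s)=0$, put $\overline\delta\bigl(length(s)\bigr)=s$ and $\delta(n)=\mu k[\gamma(\overline\delta n\ast\langle k\rangle)=0]$ for $n\geq length(s)$; then $\gamma(\overline\delta n)=0$ for all $n$ by induction (the search terminates since $\gamma(\overline\delta n)=0$ implies $\exists k[\gamma(\overline\delta n\ast\langle k\rangle)=0]$), so $\delta\in\mathcal{F}_\gamma$ and $s\sqsubseteq\delta$. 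Now decide whether $\gamma(\langle 1\rangle)=0$. If $\gamma(\langle 1\rangle)=0$, this yields $\delta\in\mathcal{F}_\gamma=\mathcal{F}_\beta$ with $\delta(0)=1$, whence $\alpha\bigl(\delta(1)\bigr)\neq 0$, so $\exists n[\alpha(n)\neq 0]$. If $\gamma(\langle 1\rangle)\neq 0$, then no $\delta\in\mathcal{F}_\gamma$ can have $\delta(0)=1$ (such a $\delta$ would force $\gamma(\overline\delta 1)=\gamma(\langle 1\rangle)=0$), hence no $\delta\in\mathcal{F}_\beta$ has $\delta(0)=1$, hence $\langle 1\rangle\ast\langle k\rangle\ast\underline 0\notin\mathcal{F}_\beta$ for every $k$, and therefore $\alpha(k)=0$ for every $k$. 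In either case the required disjunction holds; as $\alpha$ was arbitrary, $\mathbf{LPO}$ follows.

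The main obstacle the construction is designed to overcome is the following. With a more naive choice of $\beta$ — one for which $\mathcal{F}_\beta$ simply does or does not contain an extra infinite path according as $\exists n[\alpha(n)\neq 0]$ — the assumption would, in the case where that extra path turns out to be absent, only deliver a statement of the shape $\neg\forall n[\alpha(n)=0]$, which is not enough for $\mathbf{LPO}$ over $\mathsf{BIM}$. The remedy is to make the extra path of $\mathcal{F}_\beta$ carry an explicit witness in its value at coordinate $1$: a spread-law for $\mathcal{F}_\beta$, being a genuine function obliged to provide a continuation for every node it retains, then necessarily hands us that witness as soon as it retains the node $\langle 1\rangle$, and otherwise certifies $\forall n[\alpha(n)=0]$. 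The remaining items — definability of $\beta$ in $\mathsf{BIM}$, the computation of $\mathcal{F}_\beta$, and the fact that a spread-law reaches every node it retains — are routine.
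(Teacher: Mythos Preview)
Your proof is correct, and the core idea---encoding a witness for $\exists n[\alpha(n)\neq 0]$ in the early values of paths through $\mathcal{F}_\beta$, then reading off the case split from the spread-law $\gamma$ at a single node---is the same as the paper's. The paper's construction is, however, considerably simpler: it takes $\beta(\langle\;\rangle)=0$ and $\beta(\langle n\rangle\ast s)=0\leftrightarrow\alpha(n)\neq 0$, so that $\mathcal{F}_\beta=\{\delta\mid\alpha(\delta(0))\neq 0\}$, and then splits on whether $\gamma(\langle\;\rangle)=0$. If $\gamma(\langle\;\rangle)=0$, the spread-law already yields a $\delta\in\mathcal{F}_\gamma=\mathcal{F}_\beta$, hence $\alpha(\delta(0))\neq 0$; if $\gamma(\langle\;\rangle)\neq 0$, then $\mathcal{F}_\gamma=\emptyset=\mathcal{F}_\beta$, and since $\alpha(n)\neq 0$ would put $\langle n\rangle\ast\underline 0$ into $\mathcal{F}_\beta$, one concludes $\forall n[\alpha(n)=0]$.

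The ``safety path'' $\langle 0\rangle\ast\underline 0$ you introduce, and with it the shift of the case split from the root to the node $\langle 1\rangle$, is therefore unnecessary. Your closing paragraph suggests you feared that a construction without such a path would, in the bad case, yield only $\neg\forall n[\alpha(n)=0]$ rather than $\exists n[\alpha(n)\neq 0]$; but the paper's argument shows this worry does not materialise: the bad case there is $\gamma(\langle\;\rangle)\neq 0$, which gives $\mathcal{F}_\beta=\emptyset$ and hence the \emph{positive} conclusion $\forall n[\alpha(n)=0]$, while the good case $\gamma(\langle\;\rangle)=0$ constructively produces a witness. So both arguments work, but the paper's buys economy by allowing $\mathcal{F}_\beta$ to be empty and testing the spread-law at the root.
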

 
 \begin{proof} Let $\alpha$ be given. \\Define $\beta$ such that $\beta(0)=0$ and $\forall n \forall s[\beta(\langle n \rangle \ast s)=0 \leftrightarrow \alpha(n)\neq 0]$. \\Assume we find $\gamma$ such that $Spr(\gamma)$ and $\mathcal{F}_\beta=\mathcal{F}_\gamma$.\\ If $\gamma(0)=0$, then $\exists n[\alpha(n)\neq 0]$ and, if $\gamma(0)\neq 0$, then $\forall n[\alpha(n)=0]$. \end{proof}
 
 \subsubsection{\it Markov's principle, $\mathbf{MP}$:}\label{SSS:markov} $$\forall \alpha[\neg\neg\exists n[\alpha(n)\neq 0]\rightarrow \exists n[\alpha(n)\neq 0]].$$
For some discussion of this principle, see \cite[Volume I, Chapter 4, Section 5]{Troelstra-van Dalen}. In this paper, the principle figures only in Subsection \ref{SS:finiteness}. 
 
 \medskip
We would like to make a philosophical observation. For a constructive mathematician, the assumptions $\mathbf{LPO}$, $\mathbf{WKL}$, $\mathbf{LLPO}$ and $\mathbf{MP}$ make no sense, as he does not know a situation in which these assumptions are true. Theorem \ref{T:wklllpo}: $\mathbf{WKL} \rightarrow \mathbf{LLPO}$ concludes something which is never true from something which is never true. Nevertheless, the proof of Theorem \ref{T:wklllpo} makes sense. It shows us how to find, given any $\alpha$, a suitable $\beta$ such that if $\beta$  has the $\mathbf{WKL}$-property, then $\alpha$ has the $\mathbf{LLPO}$-property. This part of the argument does not use the assumption that every $\beta$ has the $\mathbf{WKL}$-property. Theorems \ref{T:wklft} and \ref{T:lpollpo} deserve a similar comment.

The reader may find more information on the axioms of intuitionistic analysis in
 \cite{brouwer27}, \cite{brouwer54}, \cite{heyting}, \cite{Kleene-Vesley}, \cite{Howard-Kreisel}, \cite{Troelstra-van Dalen} and \cite{veldman2021}.
 
\section{The $\mathbf{\Sigma}^0_1$-separation principle}\label{S:sepprinc}
\subsection{}In classical reverse mathematics, weak K\"onig's Lemma is equivalent to a principle called 
 \textit{ $\Sigma^0_1$-separation}, see \cite[Lemma IV.4.4]{Simpson}.
 
 We call $X\subseteq\omega$ \textit{enumerable} or $\mathbf{\Sigma}^0_1$ if and only if $\exists \alpha[X= E_\alpha]$.
 
 The following statement, formulated  in the intuitionistic language of $\mathsf{BIM}$, comes close to the just-mentioned classical  principle.

\smallskip
$\mathbf{\Sigma}^0_1$-\textit{separation principle}, $\mathbf{\Sigma}_1^0$-$\mathbf{Sep}$:
$$\forall \alpha[\neg \exists n\forall i<2[(n,i)\in E_\alpha ]\rightarrow\exists\gamma\forall n[  \bigl(n,\gamma(n)\bigr)\notin E_\alpha]].$$

The next Theorem seems  to confirm the just-mentioned classical result.
\begin{theorem}\label{T:sep=wkl}  $\mathsf{BIM}\vdash \mathbf{WKL} \leftrightarrow \mathbf{\Sigma}^0_1$-$\mathbf{Sep}$.

\end{theorem}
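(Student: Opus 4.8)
The plan is to prove the two implications separately, exploiting the concrete coding available in $\mathsf{BIM}$.

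\smallskip
\textbf{From $\mathbf{WKL}$ to $\mathbf{\Sigma}^0_1$-$\mathbf{Sep}$.} Suppose $\mathbf{WKL}$ and let $\alpha$ be given with $\neg\exists n\forall i<2[(n,i)\in E_\alpha]$, i.e. the enumerable set $E_\alpha$ never contains both $(n,0)$ and $(n,1)$ for any $n$. I want to construct a binary tree (a $\beta$ with $\forall s[\beta(s)=0\rightarrow s\in Bin]$ and $\beta$ downwards closed) whose infinite branches are exactly the separating functions $\gamma$. The idea is to let $\beta(s)=0$ for $s\in Bin_k$ precisely when, for every $n<k$, it is \emph{not yet known} (within $k$ steps of the enumeration of $E_\alpha$) that $(n,1-s(n))\in E_\alpha$; more precisely $\beta(s)=0$ iff $\forall n<k\,\forall j<k[\alpha(j)=\langle n,1-s(n)\rangle+1\to \bot]$, suitably decoded. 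Since for each $n$ at most one of $(n,0),(n,1)$ enters $E_\alpha$, at each level $k$ there is at least one admissible $s\in Bin_k$, so $\beta$ has no dead ends up to each level; hence $\forall n\exists a\in Bin_n\forall m\le n[\beta(\overline a n)=0]$. Applying $\mathbf{WKL}$ to $\beta$ gives $\gamma\in\mathcal C$ with $\forall n[\beta(\overline\gamma n)=0]$, and unwinding the definition yields $\forall n[(n,\gamma(n))\notin E_\alpha]$, which is exactly what $\mathbf{\Sigma}^0_1$-$\mathbf{Sep}$ demands. Care is needed because $\mathbf{WKL}$ is stated via $\alpha(\overline a n)=0$ rather than an explicit tree predicate, but the two formulations are interchangeable in $\mathsf{BIM}$, and one just needs $\beta$ to be downward closed, which the definition above provides after taking the conjunction over all prefixes.

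\smallskip
\textbf{From $\mathbf{\Sigma}^0_1$-$\mathbf{Sep}$ to $\mathbf{WKL}$.} Conversely assume $\mathbf{\Sigma}^0_1$-$\mathbf{Sep}$ and let $\alpha$ be given with $\forall n[\neg Bar_\mathcal C(D_{\overline\alpha n})]$; I want an infinite branch $\gamma\in\mathcal C$ with $\forall n[\alpha(\overline\gamma n)=0]$. As usual replace $\alpha$ by $\alpha^\ast$ with $\alpha^\ast(s)=0\leftrightarrow\forall t\sqsubseteq s[\alpha(t)=0]$, so that $\{s\in Bin\mid\alpha^\ast(s)=0\}$ is an infinite (every level nonempty, by the $\neg Bar$ hypothesis, which one turns into $\forall n\exists s\in Bin_n[\alpha^\ast(s)=0]$) downward-closed binary tree. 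The standard trick is to produce, via a single enumerable set, the ``left/right'' obstructions: define $\alpha'$ enumerating all pairs $(s, i)$ such that $s\in Bin$, $i<2$, and $\alpha^\ast(s\ast\langle i\rangle)\ne 0$ while $\alpha^\ast(s)=0$ — intuitively, ``below $s$ one cannot go in direction $i$''. Whenever $\alpha^\ast(s)=0$ it cannot happen that both $(s,0)$ and $(s,1)$ are enumerated, because the tree has branches above $s$ of every length (again from $\neg Bar$), so at least one successor stays on the tree. Thus $\alpha'$ meets the hypothesis of $\mathbf{\Sigma}^0_1$-$\mathbf{Sep}$ and we get $\delta$ with $\forall s[(s,\delta(s))\notin E_{\alpha'}]$, i.e. $\delta$ chooses, for every $s$, a direction that is \emph{not ruled out}. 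Finally let $\gamma\in\mathcal C$ be the branch following $\delta$: $\gamma(n)=\delta(\overline\gamma n)$, definable by the recursion axiom. An induction on $n$ (using $\neg Bar_\mathcal C(D_{\overline\alpha n})$ to keep the tree alive, exactly as in the proof of Theorem \ref{T:ftwkl!}) shows $\forall n[\alpha^\ast(\overline\gamma n)=0]$, hence $\forall n[\alpha(\overline\gamma n)=0]$, giving $\mathbf{WKL}$.

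\smallskip
\textbf{Main obstacle.} The routine parts are the codings; the delicate point in the first direction is verifying that the constructed $\beta$ is genuinely a ``fan with no finite obstruction'' in the precise sense demanded by the statement of $\mathbf{WKL}$ — one must check that admissibility of $s$ at level $k$ depends only on $\overline s k$ and is preserved under taking initial segments, so that $\beta$ is downward closed; the consistency condition $\neg\exists n\forall i<2[(n,i)\in E_\alpha]$ is exactly what guarantees non-emptiness at each level. In the second direction the delicate point is the same as in Theorem \ref{T:ftwkl!}: the inductive argument that the chosen branch $\gamma$ never leaves the tree, which requires phrasing the induction hypothesis as ``for each $m\ge n$ there is $s\in Bin_m$ with $\overline\gamma n\sqsubseteq s$ and $\alpha^\ast(s)=0$'' rather than the naive statement. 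Once that is set up correctly, everything else is bookkeeping with the primitive recursive apparatus of $\mathsf{BIM}$.
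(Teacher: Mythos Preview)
Your forward direction ($\mathbf{WKL}\to\mathbf{\Sigma}^0_1$-$\mathbf{Sep}$) is essentially the paper's argument; there is a sign slip in the definition of $\beta$ (you wrote $(n,1-s(n))$ where you need $(n,s(n))$, otherwise an infinite branch $\gamma$ yields $(n,1-\gamma(n))\notin E_\alpha$ rather than $(n,\gamma(n))\notin E_\alpha$), but the construction and its verification are the same as the paper's.

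The backward direction has a genuine gap. Your $\alpha'$ enumerates $(s,i)$ whenever $\alpha^\ast(s)=0$ and $\alpha^\ast(s\ast\langle i\rangle)\neq 0$. To invoke $\mathbf{\Sigma}^0_1$-$\mathbf{Sep}$ you must verify $\neg\exists s\,\forall i<2[(s,i)\in E_{\alpha'}]$, i.e.\ that no node on the tree has \emph{both} immediate successors off the tree. But the hypothesis $\forall n[\neg Bar_{\mathcal C}(D_{\overline\alpha n})]$ only says that the tree $\{s\in Bin\mid\alpha^\ast(s)=0\}$ has nodes at every level; it does not exclude dead ends. Concretely, set $\alpha(\langle 0,0\rangle)\neq 0$, $\alpha(\langle 0,1\rangle)\neq 0$, and $\alpha(t)=0$ for all other $t\in Bin$: the branch through $\langle 1\rangle$ keeps the tree alive, so $\forall n[\neg Bar_{\mathcal C}(D_{\overline\alpha n})]$ holds, yet $s=\langle 0\rangle$ is a dead end and both $(\langle 0\rangle,0)$ and $(\langle 0\rangle,1)$ lie in $E_{\alpha'}$. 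Your justification ``the tree has branches above $s$ of every length (again from $\neg Bar$)'' conflates a global property with a local one; the analogous step in Theorem~\ref{T:ftwkl!} works only because there the uniqueness hypothesis, together with $\mathbf{FT}$, kills one entire subtree above each node.

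The paper repairs this by enumerating $(s,i)$ only when there exists $n$ with $Bar_{\mathcal C\cap s\ast\langle i\rangle}(D_{\overline\alpha n})$ \emph{and} $\neg Bar_{\mathcal C\cap s\ast\langle 1-i\rangle}(D_{\overline\alpha n})$. The built-in asymmetry makes the disjointness hypothesis automatic (by monotonicity of $Bar_{\mathcal C\cap t}(D_{\overline\alpha n})$ in $n$, witnesses $n_0,n_1$ for both sides would force $n_0>n_1$ and $n_1>n_0$). The separating $\gamma$ then satisfies: at each $s$, if the subtree above $s\ast\langle\gamma(s)\rangle$ is barred by some $D_{\overline\alpha n}$, so is the other. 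Defining $\delta(n)=\gamma(\overline\delta n)$ and assuming $\alpha(\overline\delta n)\neq 0$, one runs a \emph{backwards} induction from $j=n$ down to $j=0$, propagating $Bar_{\mathcal C\cap\overline\delta j}(D_{\overline\alpha q})$ for a fixed $q$ and ending in $Bar_{\mathcal C}(D_{\overline\alpha q})$, a contradiction. Thus the path never leaves the tree.
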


\begin{proof} 
(i) Assume $\mathbf{WKL}$. We prove: $\mathbf{\Sigma}^0_1$-$\mathbf{Sep}$.

Let $\alpha$ be given such that $\neg\exists n\forall i<2[(n,i)\in E_\alpha]$. 

Define $\beta$  such that
$\forall n \forall a\in Bin_n[\beta(a) = 0\leftrightarrow\forall m <n[\bigl(m,a(m)\bigr)\notin E_{\overline \alpha n} ]]
$.

\smallskip

 Let $n$ be given.  Note: $\forall m<n\exists i<2[(m,i)\notin E_{\overline \alpha n}]$ and find $a$ in $\mathit{Bin}_n$ such that $\forall m<n[\bigl(m,a(m)\bigr)\notin E_{\overline \alpha n}]$.  

Conclude: $\forall j \le m [\beta(\overline a j) = 0]$.

We thus see: $\forall n\exists a\in Bin_n\forall m\le n[\beta(\overline a n)=0]$.  

Using $\mathbf{WKL}$, find $\gamma$ in $\mathcal{C}$ such that  $\forall n[\beta(\overline \gamma n) = 0]$. 

Conclude: $\forall n \forall m<n[\bigl(m, \gamma(m)\bigr)\notin E_{\overline \alpha n}]$ and:   $\forall n[\bigl(n,\gamma(n)\bigr) \notin  E_{\alpha}]$. 

We thus see: $\forall \alpha[\neg \exists n\forall i<2[(n,i)\in E_\alpha]\rightarrow \exists \gamma\forall n[\bigl(n, \gamma(n)\bigr)\notin E_\alpha]$, i.e. $\mathbf{\Sigma}^0_1$-$\mathbf{Sep}$.

\medskip
 (ii) Assume $\mathbf{\Sigma}^0_1$-$\mathbf{Sep}$. We prove: $\mathbf{WKL}$.

Let $\alpha$ be given such that $\forall m [\neg Bar_\mathcal{C}(D_{\overline \alpha m})]$.

 Define $\beta$ such that, for both $i<2$, for all $n$, for all  $s$, \\\textit{if} $s\in \mathit{Bin}$ and    $Bar_{\mathcal{C}\cap s\ast\langle i \rangle}(D_{\overline \alpha n})$ and $ \neg Bar_{\mathcal{C}\cap s \ast\langle 1-i \rangle} (D_{\overline \alpha n})$, \textit{then} $\beta(n,s) = (s,i)+1$, and, \textit{if not}, \textit{then} $\beta(n,s) =0$.
 
 Note: $E_\beta$ is the set of all $s\ast\langle  i\rangle$ such that $  s \in Bin$ and  $ i<2$ and \\ $\exists n[Bar_{\mathcal{C}\cap s\ast\langle i \rangle}(D_{\overline \alpha n})\;\wedge\;\neg Bar_{\mathcal{C}\cap s\ast\langle 1-i \rangle}(D_{\overline \alpha n})]$.
 
 Note: for all $s$ in $Bin$, for all $i<2$,\\ if $s\ast \langle i \rangle\notin E_\beta$, then $\forall n[Bar_{\mathcal{C}\cap s\ast\langle i \rangle}(D_{\overline \alpha n})\rightarrow Bar_{\mathcal{C}\cap s\ast\langle 1-i \rangle}(D_{\overline \alpha n})]$.

Note: $\neg \exists s\forall i <2[(s,i)\in E_\beta]$.

\smallskip
Using $\mathbf{\Sigma}^0_1$-$\mathbf{Sep}$, find  $\gamma$ in $\mathcal{C}$ such that  $\forall s[\bigl(s,\gamma(s)\bigr) \notin E_{\beta}]$. 

Define $\delta$ in $\mathcal{C}$ by: $\forall n[\delta(n)= \gamma(\overline{\delta}n)]$.

\smallskip 
Suppose we find $n$ such that $\alpha(\overline {\delta}n) \neq 0$.  \\Define $q:=\overline \delta n +1$ and note: $\overline \delta n \in D_{\overline \alpha q}$, that is: $Bar_{\mathcal{C}\cap\overline \delta n}(D_{\overline \alpha q})$. 
\\We  prove, using backwards induction:  $\forall j\le n[Bar_{\mathcal{C}\cap\overline \delta j}(D_{\overline \alpha q})]$.
\\Our starting point is: $Bar_{\mathcal{C}\cap\overline \delta n}(D_{\overline \alpha q})$.
\\Now suppose $j+1 \le n$ and $Bar_{\mathcal{C}\cap\overline \delta (j+1)}(D_{\overline \alpha q})$. Note: $\overline\delta (j+1)=\overline\delta j\ast\langle \gamma(\overline \delta j)\rangle$. \\Also: $\bigl(\overline \delta j, \gamma(\overline \delta j)\bigr)\notin E_\beta$ and, therefore: $Bar_{\mathcal{C}\cap \overline \delta j\ast\langle 1-\gamma(\overline\delta(j)\rangle} (D_{\overline \alpha q})$ and:  $Bar_{\mathcal{C}\cap \overline \delta j} (D_{\overline \alpha q})$.
 \\This completes the proof of the induction step.
 \\After $n$ steps we reach the conclusion: $Bar_\mathcal{C}(D_{\overline \alpha q})$. \\This contradicts the assumption: $\forall m[\neg Bar_\mathcal{C}(D_{\overline \alpha m})]$.
 \\Conclude: $\forall n[\alpha(\overline{\delta}n) = 0]$.
 
 \smallskip
 We thus see: $\forall \alpha[\forall n[\neg Bar_\mathcal{C}(D_{\overline\alpha n})]\rightarrow \exists \delta\in \mathcal{C}\forall n[\alpha(\overline \delta n)=0]]$, i.e. $\mathbf{WKL}$.  
\end{proof}

Theorem \ref{T:sep=wkl} shows that $\mathbf{\Sigma}^0_1$-$\mathbf{Sep}$, like $\mathbf{WKL}$,  is not constructive, see Theorem \ref{T:wklllpo}. It is not true in intuitionistic analysis and it also fails
 in the model of $\mathsf{BIM}$ given by the recursive functions.

\section{$\mathbf{AC}_{\omega,\omega}$, some special cases}

 The following restricted version of $\mathbf{AC_{\omega,\omega}}$ is  provable in  $\mathsf{BIM}$  as it is a   consequence of Axiom \ref{ax:recop}, see Section \ref{S:BIM}. 
 
\subsection{}\textit{Minimal Axiom of Countable Choice}, $\mathbf{\Delta}^0_1$-$\mathbf{AC_{\omega,\omega}}$:
 \[\forall \alpha [ \forall n \exists m [ \alpha(n,m) = 0 ] \rightarrow
 \exists \gamma \forall n [ \alpha\bigl(n, \gamma(n)\bigr) = 0  ] ].\]

\subsection{}\textit{Axiom Scheme of Countable Unique Choice}, $\mathbf{AC}_{\omega,\omega}!=\mathbf{AC}_{0,0}!$: \[\forall n\exists!m[R(n,m)]\rightarrow\exists\gamma\forall n[R\bigl(n,\gamma(n)\bigr)].\]
where `$\forall n\exists!m[R(n,m)]$' abbreviates `$\forall n\exists m[R(n,m)\;\wedge\;\forall p[R(n,p)\rightarrow m=p]]$'. 

$\mathbf{AC}_{\omega,\omega}!$ is not a theorem of $\mathsf{BIM}$, see Subsection \ref{SSS:countbincunpr} and \cite{vafeiadou2018}.
 \subsection{}$\mathbf{\Sigma}^0_1$-\textit{First Axiom of Countable Choice}, $\mathbf{\Sigma}^0_1$-$\mathbf{AC}_{\omega,\omega}$:                                                                                                                                                                
\[\forall \alpha[\forall n \exists m[(n,m) \in E_{\alpha}] \rightarrow \exists \gamma \forall n[\bigl(n,\gamma(n)\bigr) \in E_{\alpha}]].\]
 \begin{theorem} $\mathsf{BIM}\vdash \mathbf{\Sigma}^0_1$-$\mathbf{AC}_{\omega,\omega}$. \end{theorem}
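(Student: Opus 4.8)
$\textbf{Proof proposal.}$ The statement to prove is $\mathbf{\Sigma}^0_1$-$\mathbf{AC}_{\omega,\omega}$: given $\alpha$ with $\forall n\exists m[(n,m)\in E_\alpha]$, produce $\gamma$ with $\forall n[(n,\gamma(n))\in E_\alpha]$. The plan is to reduce this to the Minimal Axiom of Countable Choice $\mathbf{\Delta}^0_1$-$\mathbf{AC}_{\omega,\omega}$, which is available in $\mathsf{BIM}$ as a consequence of Axiom \ref{ax:recop}. The point is that membership in $E_\alpha$, although given by an unbounded existential quantifier over the ``enumerating'' sequence $\alpha$, can be turned into the vanishing of a single decidable predicate by searching simultaneously for the witnessing stage and the value.

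First I would recall the definition of $E_\alpha$: $x\in E_\alpha$ iff $\exists k[\alpha(k)=x+1]$ (this is the standard ``range shifted by one'' coding used in the paper; if the paper's convention differs slightly the argument is unaffected). Then $\forall n\exists m[(n,m)\in E_\alpha]$ unpacks to $\forall n\exists m\exists k[\alpha(k)=(n,m)+1]$, i.e. $\forall n\exists p[\alpha(p')=(n,p'')+1]$, using the pairing $p\mapsto(p',p'')$ from Axiom \ref{ax:const}. Define $\alpha^\ast$ by primitive recursion (legitimate by the conservative extension of $\mathsf{BIM}$ with all primitive recursive functions) so that $\alpha^\ast(n,p)=0$ iff $\alpha(p')=(n,p'')+1$, and $\alpha^\ast(n,p)\neq 0$ otherwise. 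Then the hypothesis becomes exactly $\forall n\exists p[\alpha^\ast(n,p)=0]$, which is the antecedent of $\mathbf{\Delta}^0_1$-$\mathbf{AC}_{\omega,\omega}$ applied to $\alpha^\ast$. This yields $\delta$ with $\forall n[\alpha^\ast(n,\delta(n))=0]$, i.e. $\forall n[\alpha((\delta(n))')=(n,(\delta(n))'')+1]$.

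Finally I would extract the desired choice function: set $\gamma(n):=(\delta(n))''$, which exists in $\omega^\omega$ by closure under composition with the primitive recursive constant $L$ (Axiom \ref{ax:recop}, first conjunct, together with the function constant $L$). Then for each $n$ we have $\alpha((\delta(n))')=(n,\gamma(n))+1$, so $(n,\gamma(n))\in E_\alpha$, as required. This establishes $\forall\alpha[\forall n\exists m[(n,m)\in E_\alpha]\rightarrow\exists\gamma\forall n[(n,\gamma(n))\in E_\alpha]]$, i.e. $\mathbf{\Sigma}^0_1$-$\mathbf{AC}_{\omega,\omega}$.

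There is essentially no obstacle here; the whole content is the observation that a $\mathbf{\Sigma}^0_1$ matrix can be made decidable by pairing the existential witness into the choice variable, after which the built-in minimal choice principle of $\mathsf{BIM}$ does the work. The only things to be a little careful about are the exact coding conventions for $E_\alpha$ and for sequence pairing, and the bookkeeping that the auxiliary sequences $\alpha^\ast$, $\delta$, $\gamma$ genuinely lie in $\omega^\omega$ — all guaranteed by Axioms \ref{ax:const} and \ref{ax:recop} and the primitive-recursive conservative extension. I expect the paper's proof to be a two- or three-line version of exactly this.
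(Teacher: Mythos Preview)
Your proposal is correct and follows essentially the same route as the paper: unpack $(n,m)\in E_\alpha$ as $\exists p[\alpha(p)=(n,m)+1]$, pair the witness $p$ with $m$ into a single variable $q$ so that the condition becomes decidable, apply unbounded search (equivalently $\mathbf{\Delta}^0_1$-$\mathbf{AC}_{\omega,\omega}$) to obtain $\delta$, and set $\gamma(n)=(\delta(n))''$. The paper's version is indeed the two-line compression you anticipated, writing $\delta(n)=\mu q[\alpha(q')=(n,q'')+1]$ directly rather than naming the auxiliary $\alpha^\ast$.
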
\label{T:acoosigma01} \begin{proof}
Assume $\forall n \exists m[(n,m) \in E_{\alpha}]$. Then $\forall n \exists m \exists p[\alpha(p) = (n,m)+1]$.
Find $\delta$ such that $\forall n[\delta(n)=\mu q[\alpha(q')=(n, q'')+1]]$.
Define $\gamma$ such that $\forall n[\gamma(n) = \delta''(n)]$ and note:
$\forall n[\bigl(n,\gamma(n)\bigr) \in E_{\alpha}]$.\end{proof}

\smallskip
We call $X\subseteq\omega$  \textit{co-enumerable} or $\mathbf{\Pi}^0_1$ if and only if there exists $\alpha$ such that
$X= \omega\setminus E_\alpha$.

 \subsection{}$\mathbf{\Pi}_1^0$-\textit{First Axiom of Countable Choice}, $\mathbf{\Pi}^0_1$-$\mathbf{AC_{\omega,\omega}}$:
 \[\forall \alpha [ \forall n \exists m[(n, m) \notin E_{\alpha}] \rightarrow \exists \gamma \forall n [\bigl(n,\gamma(n)\bigr) \notin E_{\alpha}]].\] 

 $\mathbf{\Pi}^0_1$-$\mathbf{AC}_{\omega,\omega}$ is unprovable in $\mathsf{BIM}$, see Subsection \ref{SSS:countbincunpr}.
 In \cite[Section 6]{veldman2011b}, we introduced the following special case of this axiom. 
 \subsection{}\textit{Weak} $\mathbf{\Pi}^0_1$-\textit{First Axiom of Countable Choice}\label{SS:weakac}, \textit{Weak-}$\mathbf{\Pi}^0_1$-$\mathbf{AC}_{\omega,\omega}$:\[\forall \alpha[\forall m \exists n \forall p \ge n[\alpha(m,p) \neq 0] \rightarrow \exists \gamma \forall m \forall p \ge \gamma(m)[\alpha(m,p) \neq 0]].\]
\textit{Weak-}$\mathbf{\Pi}^0_1$-$\mathbf{AC}_{\omega,\omega}$ follows from  $\mathbf{AC}_{\omega,\omega}!=\mathbf{AC}_{0,0}!$. 

 \textit{Weak-}$\mathbf{\Pi}^0_1$-$\mathbf{AC}_{\omega,\omega}$ is a special case of the axiom scheme $AC_{0,0}^m$ introduced in \cite[Subsection 3.1]{moschovakisvafeiadou12}. We suspect that, in $\mathsf{BIM}$, \textit{Weak-}$\mathbf{\Pi}^0_1$-$\mathbf{AC}_{\omega,\omega}$ does not imply $\mathbf{\Pi}^0_1$-$\mathbf{AC}_{\omega,\omega}$, but we have no proof.
\begin{theorem}\label{T:fanfan+}\begin{enumerate}[\upshape (i)]\item   $\mathsf{BIM}+$\textit{Weak-}$\mathbf{\Pi}^0_1$-$\mathbf{AC}_{\omega,\omega}\vdash \forall \beta[Fan(\beta)\rightarrow Explfan(\beta)]$. \item $\mathsf{BIM}+$\textit{Weak-}$\mathbf{\Pi}^0_1$-$\mathbf{AC}_{\omega,\omega}\vdash \mathbf{FT}\rightarrow\mathbf{FT}^+$.\end{enumerate}\end{theorem}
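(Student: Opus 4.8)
The plan is to prove both parts by the same mechanism, namely by making the bound $m$ that witnesses a fan being "explicitly bounded" into a function of the node $s$, using \emph{Weak-}$\mathbf{\Pi}^0_1$-$\mathbf{AC}_{\omega,\omega}$ exactly once, and then to deduce (ii) from (i) together with the first (equivalent) formulation of $\mathbf{FT}$ in \ref{SSS:fan}, which already quantifies only over $\beta$ with $Explfan(\beta)$.

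For (i), let $\beta$ be given with $Fan(\beta)$. By definition of $Fan$, every node $s$ admitted by $\beta$ has only finitely many immediate successors, i.e.\ $\forall s\exists n\forall p\ge n[\beta(s\ast\langle p\rangle)\ne 0]$. Feeding an appropriate $\alpha$ (with $\alpha(s,p):=\beta(s\ast\langle p\rangle)$, suitably coded so that the antecedent of \emph{Weak-}$\mathbf{\Pi}^0_1$-$\mathbf{AC}_{\omega,\omega}$ reads as the statement just displayed) into \emph{Weak-}$\mathbf{\Pi}^0_1$-$\mathbf{AC}_{\omega,\omega}$ yields $\gamma$ such that $\forall s\forall p\ge\gamma(s)[\beta(s\ast\langle p\rangle)\ne 0]$; that is, $\gamma$ uniformly bounds the branching at each node. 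From such a $\gamma$ one builds, by the closure of $\omega^\omega$ under primitive recursion (Axiom \ref{ax:recop}), a function bounding the whole initial segment $\{s\mid\beta(s)=0\wedge length(s)=k\}$ for each $k$, which is precisely what $Explfan(\beta)$ asks for. So I would: (a) translate $Fan(\beta)$ into the $\forall m\exists n\forall p\ge n$ form; (b) apply \emph{Weak-}$\mathbf{\Pi}^0_1$-$\mathbf{AC}_{\omega,\omega}$; (c) recursively accumulate the node-wise bounds into a level-wise bound, obtaining $Explfan(\beta)$.

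For (ii), assume \emph{Weak-}$\mathbf{\Pi}^0_1$-$\mathbf{AC}_{\omega,\omega}$ and $\mathbf{FT}$, and let $\beta$ with $Fan(\beta)$ and $\alpha$ with $Bar_{\mathcal{F}_\beta}(D_\alpha)$ be given; we must produce $m$ with $Bar_{\mathcal{F}_\beta}(D_{\overline\alpha m})$. By part (i), $Explfan(\beta)$ holds. But the second displayed formulation of $\mathbf{FT}$ in \ref{SSS:fan} says exactly that for every $\beta$ with $Explfan(\beta)$ and every $\alpha$ with $Bar_{\mathcal{F}_\beta}(D_\alpha)$ there is such an $m$. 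Hence the desired conclusion follows immediately, giving $\mathbf{FT}^+$. (One should note that the three formulations of $\mathbf{FT}$ in \ref{SSS:fan} are equivalent in plain $\mathsf{BIM}$, as cited there, so no extra choice is needed for that step — all the choice is spent in part (i).)

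The main obstacle is purely bookkeeping in part (i): one has to set up the coding so that "$\beta$ admits $s\ast\langle p\rangle$" becomes "$\alpha(m,p)=0$" for a single numerical parameter $m$ coding $s$, check that the antecedent of \emph{Weak-}$\mathbf{\Pi}^0_1$-$\mathbf{AC}_{\omega,\omega}$ is literally the unfolding of $Fan(\beta)$, and then verify that the accumulated function meets whatever the precise definition of $Explfan$ demands (a bound on all admitted nodes of each length, presumably monotone in the length). None of this is conceptually hard, but it is where a careless argument would slip; I would be explicit about the recursion in step (c) and cite Axiom \ref{ax:recop} for its existence. I would leave the (routine) verification that the resulting function witnesses $Explfan(\beta)$ to a short explicit computation rather than "to the reader", since the whole point of the theorem is this uniformization.
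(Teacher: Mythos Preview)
Your proposal is correct and is the natural argument; the paper leaves this proof to the reader, so there is nothing substantive to compare against. One small simplification: step (c) of your plan for (i) is unnecessary, because by the paper's definition $Explfan(\beta)$ asks precisely for $\exists\gamma\forall s\forall m[\beta(s\ast\langle m\rangle)=0\rightarrow m\le\gamma(s)]$, which is exactly what the $\gamma$ produced by \emph{Weak-}$\mathbf{\Pi}^0_1$-$\mathbf{AC}_{\omega,\omega}$ in step (b) already delivers (the level-wise bound you describe is the \emph{equivalent} characterization of Lemma~\ref{L:explicitfan}, not the definition).
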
 \begin{proof} The proof is left to the reader. \end{proof}
\smallskip One may also study  statements one obtains from $\mathbf{AC}_{\omega,\omega}$ by limiting the number of alternatives one has at each choice. 
 
\subsection{} \textit{  Axiom Scheme of Countable Binary Choice}, $\mathbf{AC}_{\omega,2}$:
  $$\forall n\exists m<2[R(n,m)] \rightarrow \exists \gamma \in \mathcal{C} \forall n[R\bigl(n,\gamma(n)\bigr)].$$

\smallskip
Here is a  restricted version of $\mathbf{AC}_{\omega,2}$:

\subsection{}\label{SS:pi01axomega2} $\mathbf{\Pi}^0_1$-\textit{Axiom of Countable Binary 
Choice}, $\mathbf{\Pi}^0_1$-$\mathbf{AC}_{\omega,2}$:
\[\forall \alpha  [\forall n\exists m <2[(n,m) \notin E_{\alpha} ] \rightarrow 
   \exists \gamma \in \mathcal{C}\forall n[\bigl(n,\gamma(n)\bigr) \notin E_{\alpha}]].\]

A result related to the following Theorem has been proven by U.~Kohlenbach, see \cite[Theorem 3]{kohlenbach}. A similar result is  mentioned  in \cite[Subsection 2.2]{akama}.

\begin{theorem}\label{T:acllpowkl} $\mathsf{BIM}\vdash(\mathbf{\Pi}^0_1$-$\mathbf{AC}_{\omega,2}\;\wedge\; \mathbf{LLPO}) \leftrightarrow \mathbf{WKL}$.\end{theorem}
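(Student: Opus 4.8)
The plan is to prove the two implications of the biconditional separately, using Theorem \ref{T:sep=wkl} (or rather its spirit) and Theorems \ref{T:wklft}, \ref{T:wklllpo} as orientation, and keeping a careful eye on where countable binary choice is actually needed.

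\emph{From $\mathbf{WKL}$ to $\mathbf{\Pi}^0_1$-$\mathbf{AC}_{\omega,2}\wedge\mathbf{LLPO}$.} This direction should be the easy one. Theorem \ref{T:wklllpo} already gives $\mathbf{WKL}\rightarrow\mathbf{LLPO}$, so it remains to derive $\mathbf{\Pi}^0_1$-$\mathbf{AC}_{\omega,2}$. Given $\alpha$ with $\forall n\exists m<2[(n,m)\notin E_\alpha]$, I would build, much as in part (i) of the proof of Theorem \ref{T:sep=wkl}, a binary tree: define $\beta$ so that for $a\in Bin_n$, $\beta(a)=0$ if and only if $\forall m<n[(m,a(m))\notin E_{\overline\alpha n}]$. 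The hypothesis guarantees that for each $n$ there is some $a\in Bin_n$ with $\forall j\le n[\beta(\overline a j)=0]$, i.e. the tree is infinite; applying $\mathbf{WKL}$ yields $\gamma\in\mathcal{C}$ with $\forall n[\beta(\overline\gamma n)=0]$, and then $\forall n[(n,\gamma(n))\notin E_\alpha]$, since $(n,\gamma(n))\in E_\alpha$ would mean $(n,\gamma(n))\in E_{\overline\alpha q}$ for some $q>n$, contradicting $\beta(\overline\gamma q)=0$.

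\emph{From $\mathbf{\Pi}^0_1$-$\mathbf{AC}_{\omega,2}\wedge\mathbf{LLPO}$ to $\mathbf{WKL}$.} Here is where the work lies. Start from $\alpha$ with $\forall n[\neg Bar_\mathcal{C}(D_{\overline\alpha n})]$, so the tree $\{s\in Bin\mid \forall t\sqsubseteq s[\alpha(t)=0]\}$ is an infinite binary tree; as usual pass to $\alpha^\ast$ with $\alpha^\ast(s)=0\leftrightarrow\forall t\sqsubseteq s[\alpha(t)=0]$. The natural move is to try to choose, at each node $s$ on the tree, a successor direction $i<2$ such that $s\ast\langle i\rangle$ is again on the tree; this is a binary choice over the countably many nodes $s$. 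But ``$s\ast\langle i\rangle$ is on the tree'' is $\mathbf{\Pi}^0_1$ only in the weak sense that needs boundedness, whereas ``$s\ast\langle i\rangle$ extends to arbitrarily long tree-nodes'' is genuinely $\mathbf{\Pi}^0_1$: the set of $(s,i)$ such that $s\ast\langle i\rangle$ does \emph{not} so extend is $E_\alpha$-enumerable. Since the tree is infinite, for each $s$ \emph{on the tree} at least one $i$ works, but for $s$ off the tree neither need work, so I first need to restrict attention to on-tree nodes; a clean way is to index the choice by $n$ rather than by $s$, choosing at stage $n$ a direction for the $n$-th on-tree node of length $n$ reached by following previously chosen directions — but that makes the choices dependent, which $\mathbf{\Pi}^0_1$-$\mathbf{AC}_{\omega,2}$ does not directly provide. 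The role of $\mathbf{LLPO}$ is exactly to break this: $\mathbf{LLPO}$ (in its $\mathbf{WKL}\rightarrow\mathbf{LLPO}$-style reading, and note the remark after Theorem \ref{T:wklllpo} that $\mathbf{LLPO}\rightarrow\mathbf{WKL}$ already uses weak countable choice) lets one decide, for each $s$ independently, which of $s\ast\langle 0\rangle$, $s\ast\langle 1\rangle$ to declare ``good'' in a way that is consistent along branches, because $\mathbf{LLPO}$ says that of two $\mathbf{\Sigma}^0_1$ statements whose conjunction is false, one may assume the first false, giving a uniform tie-breaking rule. So the plan is: use $\mathbf{\Pi}^0_1$-$\mathbf{AC}_{\omega,2}$ to get a function picking, for each $s$, an $i_s<2$ with $(s,i_s)\notin E_\alpha$ (reading $E_\alpha$ as coding the bad pairs), and use $\mathbf{LLPO}$ to ensure the choice is made by a fixed rule — ``pick $0$ unless forced to pick $1$'' — so that the picks cohere; then define $\delta\in\mathcal{C}$ by $\delta(n)=i_{\overline\delta n}$ and prove by induction on $n$, using $\forall m[\neg Bar_\mathcal{C}(D_{\overline\alpha m})]$, that $\overline\delta n$ is always on the tree, hence $\forall n[\alpha(\overline\delta n)=0]$.

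\emph{Main obstacle.} The delicate point is the coherence of the pointwise binary choices: $\mathbf{\Pi}^0_1$-$\mathbf{AC}_{\omega,2}$ alone gives an arbitrary selection function $s\mapsto i_s$ with no compatibility between the value at $s$ and the values at $s\ast\langle 0\rangle$, $s\ast\langle 1\rangle$, so a priori following $\delta(n)=i_{\overline\delta n}$ could walk off the infinite tree. I expect the resolution to be to encode into $\alpha$ (before applying the choice axiom) enough bookkeeping — essentially the $\alpha^{\ast\ast}$-style ``leftmost surviving branch'' device from the proof of Theorem \ref{T:ftwkl!}, now phrased with $\mathbf{LLPO}$ supplying the decision that makes ``leftmost'' well-defined at each stage — so that the selection is in fact forced to be the leftmost-branch selection, which automatically coheres. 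Getting this encoding right, and checking that $\mathbf{LLPO}$ is exactly strong enough (and that no stronger choice is smuggled in), is the crux; the two easy directions and the final induction are routine.
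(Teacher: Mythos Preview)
Your easy direction ($\mathbf{WKL}\Rightarrow$) is fine and matches the paper: you are essentially redoing Theorem~\ref{T:sep=wkl}(i) with the stronger hypothesis $\forall n\exists m<2[(n,m)\notin E_\alpha]$, which only makes things easier. The paper phrases it as $\mathbf{\Sigma}^0_1$-$\mathbf{Sep}\rightarrow\mathbf{\Pi}^0_1$-$\mathbf{AC}_{\omega,2}$ via the logical equivalence $\neg(P\wedge Q)\leftrightarrow\neg\neg(\neg P\vee\neg Q)$, but the content is the same.

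Your hard direction, however, has a real gap, and the ``main obstacle'' you identify is a red herring created by your own setup. The paper does \emph{not} walk down a tree. It proves $\mathbf{\Sigma}^0_1$-$\mathbf{Sep}$ directly and then invokes Theorem~\ref{T:sep=wkl}. The whole argument is: given $\alpha$ with $\neg\exists n\forall i<2[(n,i)\in E_\alpha]$, fix $n$, interleave the two enumerations of $(n,0)$ and $(n,1)$ into a single sequence $\beta$, and apply $\mathbf{LLPO}$ to $\beta$ to extract $i<2$ with $(n,i)\notin E_\alpha$. This turns the negated-conjunction hypothesis of $\mathbf{\Sigma}^0_1$-$\mathbf{Sep}$ into the positive-disjunction hypothesis of $\mathbf{\Pi}^0_1$-$\mathbf{AC}_{\omega,2}$; then one application of $\mathbf{\Pi}^0_1$-$\mathbf{AC}_{\omega,2}$ finishes. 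There are no coherence issues because there is no tree and no path-building: $\mathbf{LLPO}$ is used \emph{before} choice (to get existence), not after (to discipline the choice function).

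Your proposed fix---``use $\mathbf{LLPO}$ to ensure the choice is made by a fixed rule''---cannot work as stated: $\mathbf{LLPO}$ is an instance-by-instance disjunction, not a uniform tie-breaker, and $\mathbf{\Pi}^0_1$-$\mathbf{AC}_{\omega,2}$ gives you no control over which witness is chosen. If you insist on building a path directly, the right move is the one already in the proof of Theorem~\ref{T:sep=wkl}(ii): let $(s,i)\in E_\beta$ mean that the subtree below $s\ast\langle i\rangle$ dies out \emph{strictly before} the one below $s\ast\langle 1-i\rangle$. Then $\neg\exists s\forall i<2[(s,i)\in E_\beta]$ holds trivially, $\mathbf{LLPO}$ plus $\mathbf{\Pi}^0_1$-$\mathbf{AC}_{\omega,2}$ give $\gamma$ with $(s,\gamma(s))\notin E_\beta$ for all $s$, and the backwards-induction argument there shows that \emph{any} such $\gamma$ yields an infinite path---no coherence between $\gamma(s)$ and $\gamma(s\ast\langle j\rangle)$ is needed. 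The ``leftmost surviving branch'' device from Theorem~\ref{T:ftwkl!} is the wrong tool: it relies on $\mathbf{FT}$, which you do not have.
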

 \begin{proof} (i)  Assume, in $\mathsf{BIM}$, $\mathbf{\Pi}^0_1$-$\mathbf{AC}_{\omega,2}$ and $\mathbf{LLPO}$. It suffices to prove: $\mathbf{\Sigma}^0_1$-$\mathbf{Sep}$, as, according to Theorem \ref{T:sep=wkl},  $\mathsf{BIM}\vdash \mathbf{\Sigma}^0_1$-$\mathbf{Sep}\leftrightarrow \mathbf{WKL}$.
 
 Let $\alpha$ be given such that  $\forall n\neg\forall m<2[(n,m)\in E_\alpha]$. Let $n$ be given.  \\Define $\beta$ such that $\forall q\forall i<2[\beta(2q+i) \neq 0\leftrightarrow q=\mu p[\alpha(p)=(n,i)+1]]$.   \\Apply $\mathbf{LLPO}$ and find $i<2$ such that  $\forall q[2q+i\neq\mu m[\beta(m)\neq 0]]$. \\Assume $(n,i)\in E_\alpha$. Then $(n, 1-i)\notin E_\alpha$ and $\neg \exists p[\alpha(p)=(n, 1-i)+1]$ and $\forall q[\beta(2q+1-i)=0]$. Find $q:=\mu p[\alpha(p)= (n,i)+1]$. Note $\beta(2q+i)\neq 0$ and $\forall m<2q+i[\beta(m)=0]$, so $2q+i=\mu m[\beta(m)\neq 0]$. Contradiction.  \\ Therefore, $(n,i)\notin E_\alpha$. 
 
 Conclude: $\forall n\exists i<2[(n,i)\notin E_\alpha]$. 
 
  Apply $\mathbf{\Pi}^0_1$-$\mathbf{AC}_{\omega,2}$ and find $\gamma$ in $\mathcal{C}$ such that $\forall n[\bigl(n,\gamma(n)\bigr)\notin E_\alpha]$.
  
   Conclude:   $\forall\alpha[\forall n[\neg \forall m<2[(n,m)\in E_\alpha]\rightarrow \exists\gamma\in\mathcal{C}\forall n[\bigl(n,\gamma(n)\bigr))\notin E_\alpha]]$, i.e.: $\mathbf{\Sigma}^0_1$-$\mathbf{Sep}$. 
   
   \smallskip (ii)
   Note that $\neg(P\;\wedge\;Q)\leftrightarrow \neg\neg(\neg P\;\vee\;\neg Q)$ is a valid scheme of intuitionistic logic, and, therefore:  \\$\mathsf{BIM}\vdash \mathbf{\Sigma}^0_1$-$\mathbf{Sep}\leftrightarrow\bigl(\forall \alpha[\forall n\neg\neg \exists m < 2[(n,m) \notin E_{\alpha} ] \rightarrow \exists \gamma \in \mathcal{C}\forall n[ \bigl(n,\gamma(n)\bigr) \notin E_{\alpha}]]\bigr).$
Conclude:  $\mathsf{BIM}\vdash\mathbf{\Sigma}^0_1$-$\mathbf{Sep}\rightarrow \mathbf{\Pi}^0_1$-$\mathbf{AC}_{\omega,2}$. \\ The conclusion $\mathsf{BIM}\vdash \mathbf{WKL}\rightarrow \mathbf{LLPO}$ has been drawn in Theorem \ref{T:wklllpo}. \end{proof} From a constructive point of view, $\mathbf{\Sigma}^0_1$-$\mathbf{Sep}$, or equivalently, $\mathbf{WKL}$, is an axiom of countable choice that is formulated too strongly.

   \subsection{$\mathbf{AC}_{\omega,2}!$ and $\mathbf{\Pi}^0_1$-$\mathbf{AC}_{\omega,2}$ are unprovable in $\mathsf{BIM}$}\label{SSS:countbincunpr}$\;$\\
 Note that the theory $\mathsf{BIM} +\mathbf{CT}$ may be translated  into intuitionistic arithmetic $\mathsf{HA}$, by interpreting functions from $\omega$ to $\omega$ as indices of total computable functions.
 The {\it negative translation} due to G\"odel and Gentzen, see \cite[vol. I, Ch. 3, Subsection 3.4]{Troelstra-van Dalen},  shows that first order classical (Peano) arithmetic $\mathsf{PA}$, the theory that results from $\mathsf{HA}$  by adding the axiom scheme $X\;\vee\neg X$, is consistent. It follows that also the theory $\mathsf{BIM}+\mathbf{CT}$  remains consistent upon adding the axiom scheme $X \;\vee\;\neg X$. 
 
 \subsubsection{} Note that the theory $\mathsf{BIM}+\mathbf{CT}+\; X\vee\neg X\;+\mathbf{AC}_{\omega,2}!$ is inconsistent. The argument is as follows. \\ Let $\tau, \psi$ be as in $\mathbf{CT}$.
 Define $H:=\{n\mid \exists z[\tau(n,n,z)\neq 0]\}$. \\Using classical logic, conclude: $\forall n\exists! i<2[i=0\leftrightarrow n\in H]$. \\Using $\mathbf{AC}_{\omega, 2}!$, find $\alpha$ such that $\forall n[\alpha(n)\neq 0\leftrightarrow n \in H]$. \\Define $\beta$ such that, for each $n$, if $\alpha(n)\neq 0$, then $\beta(n) = \psi(\mu z[\tau(n,n,z)\neq 0])+1$. \\Using $\mathbf{CT}$, find $n_0$ such that $\forall n[\beta (n)=\psi(\mu z[\tau(n_0, n,z)\neq 0])]$.\\ Note: $\alpha(n_0)\neq 0$ and: $\beta(n_0)=\beta(n_0)+1$. Contradiction. 
 
 \smallskip Conclude: If $\mathsf{HA}$ is consistent, then $\mathbf{AC}_{\omega, 2}!$ is not derivable in $\mathsf{BIM}$.
 
 \subsubsection{}  Theorem \ref{T:acllpowkl} implies: $\mathsf{BIM} + \neg!\mathbf{FT} + \mathbf{LLPO} + \mathbf{\Pi}^0_1$-$\mathbf{AC}_{\omega,2}$ is not consistent, as $\neg!\mathbf{FT}$ contradicts $\mathbf{WKL}$. \\Conclude: $\mathsf{BIM} + \neg!\mathbf{FT} + \mathbf{LLPO}\vdash \neg \mathbf{\Pi}^0_1$-$\mathbf{AC}_{\omega,2}$.
   
 On the other hand, $\mathsf{BIM} + \neg!\mathbf{FT} + \mathbf{LLPO}$ is consistent, as it is a subsystem of $\mathsf{BIM} + \mathbf{CT}+ \;X\vee\neg X$. It follows that $\mathbf{\Pi}^0_1$-$\mathbf{AC}_{\omega,2}$ is not derivable in $\mathsf{BIM}+\neg!\mathbf{FT}+\mathbf{LLPO}$ and, a fortiori,  not derivable in $\mathsf{BIM}$. The stronger axiom  $\mathbf{\Pi}^0_1$-$\mathbf{AC}_{\omega,\omega}$  and  the even stronger axiom
 $\mathbf{\Pi}^0_1$-$\mathbf{AC}_{\omega,\omega^\omega}$, to be introduced in   Section 6,   also are not derivable in $\mathsf{BIM}$. 
 
 From Theorem \ref{T:acllpowkl}, we also conclude $\mathsf{BIM} + \neg!\mathbf{FT}+ \mathbf{\Pi}^0_1$-$\mathbf{AC}_{\omega,2}\vdash \neg \mathbf{LLPO}$.
 
  \smallskip In the context of $\mathsf{HA}$, 
\textit{Church's Thesis}   is  sometimes introduced as an  axiom scheme, $\mathsf{CT}_0$, see \cite[vol. I, Ch. 4, Sect. 3]{Troelstra-van Dalen}:
\[\forall n \exists m[nAm] \rightarrow \exists e \forall n \exists z[T(\langle e,n,z\rangle) \;\wedge\; \forall i<z[\neg T(\langle e, n, i \rangle)] \;\wedge \; n A\bigl(U(z)\bigr)]. \footnote{$T$ is Kleene's $T$-predicate and $U$ his result-extracting function.}\]  It is not difficult to see that   $\mathsf{BIM}+ \mathbf{CT}+\mathbf{AC}_{\omega,\omega}$ and   also $\mathsf{BIM}+ \mathbf{CT}+\mathbf{AC}_{\omega,\omega^\omega}$ translate into $\mathsf{HA}+\mathsf{CT}_0$. 
There is no straightforward model for $\mathsf{HA} +\mathsf{CT_0}$ but, 
using 
\textit{realizability}, one may show that, if $\mathsf{HA}$ is consistent, then so is $\mathsf{HA}+\mathsf{CT_0}$, see \cite[vol. I, Ch. 4, Sect. 4]{Troelstra-van Dalen}. 
 
 It follows that, if $\mathsf{HA}$ is consistent, then $\mathsf{BIM}+\neg!\mathbf{FT}+\mathbf{\Pi}^0_1$-$\mathbf{AC}_{\omega,2}$ is consistent.
  \subsection{} \textit{  Axiom Scheme of Countable Finite Choice}, $\mathbf{AC_{\omega,<\omega}}$:
$$\forall \beta[\forall m \exists n \le \beta(m)[ R(n,m)] \rightarrow \exists \gamma \forall n[\gamma(n) \le \beta(n) \;\wedge\; nR\gamma(n)]].$$
 
 \subsection{} $\mathbf{\Pi}^0_1$-\textit{Axiom of Countable Finite 
Choice}, $\mathbf{\Pi}^0_1$-$\mathbf{AC}_{\omega,<\omega}$:
\[ \forall \alpha  [\forall n\exists m \le \alpha^{\upharpoonright 0}(n)[(n,m) \notin E_{\alpha^{\upharpoonright 1}}] \rightarrow 
   \exists \gamma \forall n[\gamma(n)\le \alpha^{\upharpoonright 0}(n)\;\wedge\;\bigl(n,\gamma(n)\bigr) \notin E_{\alpha^{\upharpoonright 1}}]].\]
   
We conjecture that  $\mathbf{\Pi}^0_1$-$\mathbf{AC}_{\omega,<\omega}$ is not provable in $\mathsf{BIM}+ \mathbf{\Pi}^0_1$-$\mathbf{AC}_{\omega,2}$, but we have no proof of this conjecture. There might be many  statements intermediate in strength like $ \mathbf{\Pi}^0_1$-$\mathbf{AC}_{\omega,3}$, the  $\mathbf{\Pi}^0_1$-Axiom of Countable Ternary Choice.

\section{Contrapositions of some special cases of $\mathbf{AC}_{\omega,\omega}$}\label{S:ccc}

The following statement is a contraposition of $\mathbf{AC}_{\omega,\omega}$:

\subsection{}\textit{First Axiom Scheme of Reverse  Countable Choice}, 
$\overleftarrow{\mathbf{AC}_{\omega,\omega}}$: 
 $$\forall\gamma\exists n[R\bigl(n,\gamma(n)\bigr)] \rightarrow \exists n \forall m[R(n,m)].$$

\smallskip
A special case is: \subsection{} \textit{ Minimal Axiom of  Reverse Countable Choice},
 $\mathbf{\Delta}^0_1$-$\overleftarrow{\mathbf{AC}_{\omega,\omega}}:$
$$\forall \alpha[\forall \gamma \exists n[\bigl(n, \gamma(n)\bigr)\in D_\alpha] \rightarrow \exists n \forall m[(n,m)\in D_\alpha]].$$
The following result may be found  in \cite[Section 2]{veldman82}.\begin{theorem} $\mathsf{BIM}+ \mathbf{\Delta}^0_1$-$\overleftarrow{\mathbf{AC}_{\omega,\omega}}\vdash \mathbf{LPO}$. \end{theorem}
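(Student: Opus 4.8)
The goal is to derive $\mathbf{LPO}$, i.e. $\forall\alpha[\exists n[\alpha(n)\neq 0]\vee\forall n[\alpha(n)=0]]$, from $\mathbf{\Delta}^0_1$-$\overleftarrow{\mathbf{AC}_{\omega,\omega}}$. The standard trick is to build, from an arbitrary $\alpha$, a \emph{decidable} relation $D_\alpha$ for which the hypothesis $\forall\gamma\exists n[(n,\gamma(n))\in D_\alpha]$ is trivially verifiable, so that the contrapositive choice principle hands us a number $n$ with $\forall m[(n,m)\in D_\alpha]$, and the structure of $D_\alpha$ is rigged so that this $n$ decides whether $\alpha$ takes a nonzero value. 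First I would let $\alpha$ be given and define $\alpha^\ast$ (a ``running past'' version) by $\alpha^\ast(n)=0\leftrightarrow\forall k\le n[\alpha(k)=0]$; this is primitive recursive in $\alpha$, hence available in $\mathsf{BIM}$, and it has the feature that $\alpha^\ast$ makes at most one ``jump'' from $0$ to a nonzero value.

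\textbf{Construction of $D_\alpha$.} The plan is to encode into a single decidable set of pairs the two mutually exclusive possibilities. I would define $\alpha'$ so that $D_{\alpha'}$ is the set of all pairs $(n,m)$ such that \emph{either} $n$ is even, say $n=2k$, and $\alpha^\ast(k)\neq 0$ (``$\alpha$ has already jumped by stage $k$''), \emph{or} $n$ is odd and $m=0$ together with $\alpha^\ast(k)=0$ where $k$ encodes the relevant bound. The exact bookkeeping of the encoding is routine; the design requirement is: (a) $D_{\alpha'}$ is decidable, so the theorem applies; (b) for \emph{every} $\gamma$ there is an $n$ with $(n,\gamma(n))\in D_{\alpha'}$ --- this should be arrangeable because either $\alpha$ is eventually nonzero, in which case some even $n=2k$ works for any value $\gamma(2k)$, or $\alpha$ is identically zero, in which case the odd-$n$ clause with $m=0$ can be made to fire for infinitely many $n$; and (c) a number $n$ with $\forall m[(n,m)\in D_{\alpha'}]$ pins down which case holds --- an even such $n=2k$ forces $\alpha^\ast(k)\neq 0$, hence $\exists n[\alpha(n)\neq 0]$, while an odd such $n$ forces, via the universal quantifier over $m$ ranging over $\mathit{all}$ natural numbers (in particular over arbitrarily large $m$ entering the bound), $\forall n[\alpha(n)=0]$.

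\textbf{Finishing.} With $D_{\alpha'}$ in hand, I would verify the hypothesis $\forall\gamma\exists n[(n,\gamma(n))\in D_{\alpha'}]$ directly from the case split above (note this verification itself does \emph{not} need $\mathbf{LPO}$: it is a disjunction we assert of \emph{each} individual $\gamma$, not a uniform decision, and in fact for a fixed $\gamma$ the two alternatives in the definition are not exclusive, so one of them always holds). Then $\mathbf{\Delta}^0_1$-$\overleftarrow{\mathbf{AC}_{\omega,\omega}}$ yields some $n$ with $\forall m[(n,m)\in D_{\alpha'}]$; reading off the parity of $n$ and applying clause (c) gives $\exists n[\alpha(n)\neq 0]\vee\forall n[\alpha(n)=0]$, which is $\mathbf{LPO}$. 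The delicate point --- and the one I would spend the most care on --- is calibrating the odd-$n$ clause so that \emph{both} (b) holds (enough odd witnesses exist when $\alpha\equiv 0$) \emph{and} (c) holds (a universal-over-$m$ membership statement at an odd $n$ genuinely certifies $\alpha\equiv 0$ rather than only certifying $\alpha$ is zero up to some finite bound); the usual device is to let the bound grow with $m$, e.g. put $(2j+1,m)\in D_{\alpha'}\leftrightarrow\alpha^\ast(m)=0$, so that $\forall m[(2j+1,m)\in D_{\alpha'}]$ literally unwinds to $\forall m[\alpha^\ast(m)=0]$, i.e. $\forall n[\alpha(n)=0]$, while for any single $\gamma$ either some even coordinate works or else $\alpha^\ast(\gamma(1))=0$ makes the coordinate $n=1$ work.
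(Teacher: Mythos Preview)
Your proposal is correct. The concrete construction you settle on in the final paragraph---$(2k,m)\in D_{\alpha'}\leftrightarrow\alpha^\ast(k)\neq 0$ and $(2j+1,m)\in D_{\alpha'}\leftrightarrow\alpha^\ast(m)=0$---does the job: for any $\gamma$ you decide $\alpha^\ast(\gamma(1))$, and either $n=1$ works or you locate $k\le\gamma(1)$ with $\alpha(k)\neq 0$ and take $n=2k$; from a universal witness $n$ the parity of $n$ decides the $\mathbf{LPO}$ disjunction. (Your middle paragraph is vaguer than it needs to be, but the ``Finishing'' paragraph is clean.)

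The paper's proof uses the same underlying idea but avoids the parity split by simply OR-ing your two clauses into a single relation: it sets $(n,m)\in D_\alpha$ iff $\overline\beta n\neq\underline{\overline 0}n$ \emph{or} $\overline\beta m=\underline{\overline 0}m$. For the hypothesis one inspects $\gamma(0)$: if $\overline\beta\bigl(\gamma(0)\bigr)=\underline{\overline 0}\bigl(\gamma(0)\bigr)$ then $n=0$ works; otherwise $n=\gamma(0)$ works (via the first disjunct, independently of $m$). From a universal witness $n$ one checks whether $\overline\beta n\neq\underline{\overline 0}n$; if so one has a nonzero value, if not then the second disjunct must hold for every $m$, giving $\beta=\underline 0$. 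The payoff of the paper's encoding is economy: no tagging by parity, and the case split at the end is the built-in decidable test ``$\overline\beta n=\underline{\overline 0}n$?'' rather than ``$n$ even?''. Your parity device buys a slightly cleaner separation of the two cases in the definition, at the cost of a longer index set; mathematically the two are interchangeable.
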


\begin{proof}
Let $\beta$ be given. 

Assuming $\mathbf{\Delta}^0_1$-$\overleftarrow{\mathbf{AC}_{\omega,\omega}}$, we  prove: $\exists n[\beta(n)\neq 0]\;\vee\;\forall n[\beta(n)=0]$.  

To this end, define $\alpha$ such that $\forall n\forall m[\alpha(n, m) = 0\leftrightarrow (\overline \beta n= \underline{\overline 0}n\;\wedge\; \overline \beta m\neq \underline{\overline 0}m)]$. 

For every $\gamma$, \textit{either}: $\overline \beta \bigl(\gamma(0)\bigr)= \overline{\underline 0}\bigl(\gamma(0)\bigr)$  and, therefore, $\alpha\bigl(0, \gamma(0)\bigr) \neq 0$, that is $\bigl(0, \gamma(0)\bigr)\in D_\alpha$, \textit{or}: $\overline \beta \bigl(\gamma(0)\bigr)\neq\overline{\underline 0}\bigl(\gamma(0)\bigr)$, and, therefore, $\alpha\bigl(\gamma(0), \gamma(\gamma(0))\bigr)\neq 0$, that is: $\bigl(\gamma(0), \gamma(\gamma(0))\bigr)\in D_\alpha$. 

 Conclude: $\forall \gamma \exists n[\bigl(n, \gamma(n)\bigr) \in D_\alpha]$.
  
  Applying $\mathbf{\Delta}^0_1$-$\overleftarrow{\mathbf{AC}_{\omega,\omega}}$,  find $n$ such that $\forall m[(n,m) \in D_\alpha]$. 
  
  \textit{Either}: $\overline \beta n \neq \overline{\underline 0}n$ and $\exists j [\beta(j) \neq 0]$, \textit{or}:
 $\overline \beta n = \overline{\underline 0}n$. In the latter case,  for each $m$, $\overline \beta m = \overline{\underline 0}m$ and
 $\forall j[\beta(j) = 0]$.\end{proof}

 \subsection{}\textit{Axiom Scheme of   Reverse Countable Binary Choice,}
 $\overleftarrow{\mathbf{AC}_{\omega,2}}: $
  $$\forall\gamma \in \mathcal{C}\exists n[R\bigl(n,\gamma(n)\bigr)] \rightarrow \exists n\forall i<2[R(i,m)].$$

In \cite[Section 4]{veldman82}, $\overleftarrow{\mathbf{AC}_{\omega,2}}$ has been  shown to be  a consequence
 of $\mathbf{FT}+\mathbf{AC}_{\omega, \omega^\omega}$.

 We introduce a restricted version: 

$\mathbf{\Delta}^0_1$-\textit{Axiom of Reverse Countable Binary Choice}, 
$\mathbf{\Delta}^0_1$-$\overleftarrow{\mathbf{AC}_{\omega,2}}$:  \[\forall \alpha[ \forall \gamma \in \mathcal{C} \exists n[\bigl(n,\gamma(n)\bigr) \in D_{\alpha}]
 \rightarrow \exists n\forall i<2[ (n,i) \in D_{\alpha}]].\]
 
 \begin{theorem} $\mathsf{BIM}\vdash \mathbf{\Delta}^0_1$-$\overleftarrow{\mathbf{AC}_{\omega,2}}$. \end{theorem}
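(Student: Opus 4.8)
The statement $\mathsf{BIM}\vdash \mathbf{\Delta}^0_1\text{-}\overleftarrow{\mathbf{AC}_{\omega,2}}$ should follow by a compactness-free combinatorial argument: the hypothesis says that the detachable set $D_\alpha$ (given by $\alpha$) is a \emph{bar on $\mathcal{C}$ in disguise}, since $\forall\gamma\in\mathcal{C}\,\exists n[(n,\gamma(n))\in D_\alpha]$ just says that every binary sequence meets $D_\alpha$, where we only look at the restricted pairs $(n,i)$ with $i<2$. The plan is to reduce this to an application of $\mathbf{FT}$ — wait, but $\mathbf{FT}$ is not available in $\mathsf{BIM}$ alone, so the point must be that here the bar is so cheap that the fan-theorem-style conclusion is obtained directly by an explicit search.

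\smallskip
Here is the intended line of argument. Assume $\forall\gamma\in\mathcal{C}\,\exists n[(n,\gamma(n))\in D_\alpha]$. I want to produce a single $n$ with $(n,0)\in D_\alpha$ and $(n,1)\in D_\alpha$. The key observation is that membership $(n,i)\in D_\alpha$ is decidable (this is what $\mathbf{\Delta}^0_1$ buys us: $D_\alpha=\{s\mid\alpha(s)\neq 0\}$), so one may define $\beta$ by unbounded search looking for the least witness. More precisely, apply the hypothesis first to $\gamma_0:=\underline 0$, obtaining $n_0$ with $(n_0,0)\in D_\alpha$; then apply it to $\gamma_1:=\overline{\underline 0}(n_0+1)\ast\langle 1\rangle\ast\underline 0$ — that is, the sequence which is $0$ everywhere except it takes value $1$ at argument $n_0$. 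If the witness $n$ returned for $\gamma_1$ happens to equal $n_0$, then since $\gamma_1(n_0)=1$ we get $(n_0,1)\in D_\alpha$, and together with $(n_0,0)\in D_\alpha$ we are done. Otherwise the witness $n\neq n_0$ and $\gamma_1(n)=0$, so $(n,0)\in D_\alpha$ with $n\neq n_0$; one then repeats, building a finite binary string that forces more and more coordinates, and the decidability together with Axiom~\ref{ax:recop} lets one organize this as a genuine recursion. The bookkeeping needs to show the process terminates, which is where the real content sits.

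\smallskip
To make termination transparent I would instead argue as follows. Using decidability of $D_\alpha$ and unbounded search (Axiom~\ref{ax:recop}), define a function $\delta\in\mathcal{C}$ by the rule: $\delta(n)=0$ if $(n,0)\notin D_\alpha$, and $\delta(n)=1$ if $(n,0)\in D_\alpha$ but $(n,1)\notin D_\alpha$, and $\delta(n)=0$ otherwise (i.e.\ when both belong to $D_\alpha$). Concretely: $\delta(n)=1$ exactly when $(n,0)\in D_\alpha\wedge(n,1)\notin D_\alpha$. Now apply the hypothesis to this $\delta$: there is $n$ with $(n,\delta(n))\in D_\alpha$. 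Inspect the three cases in the definition of $\delta(n)$. If $\delta(n)=1$ then $(n,1)=(n,\delta(n))\in D_\alpha$, but by definition of $\delta(n)=1$ we also have $(n,1)\notin D_\alpha$ — contradiction, so this case cannot occur. Hence $\delta(n)=0$, which, by the definition, happens either because $(n,0)\notin D_\alpha$ — impossible, since then $(n,\delta(n))=(n,0)\notin D_\alpha$ contradicting the choice of $n$ — or because $(n,0)\in D_\alpha\wedge(n,1)\in D_\alpha$, which is exactly what we want. So $\forall i<2[(n,i)\in D_\alpha]$.

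\smallskip
The main obstacle is purely one of making the case analysis on $\delta(n)$ watertight: one must be careful that the three-way definition of $\delta$ is genuinely by a decidable case split (using that `$\alpha(s)\neq 0$' is decidable in $\mathsf{BIM}$, so all of `$(n,0)\in D_\alpha$', `$(n,1)\in D_\alpha$' are decidable) and that $\delta$ so defined is total and takes values in $\{0,1\}$, so that the hypothesis genuinely applies to it. Once $\delta$ is legitimately defined, the contradiction-driven elimination of the bad cases is immediate and requires no choice principle and no fan theorem — hence the result holds already in $\mathsf{BIM}$. I expect the author's proof to be essentially this one-shot trick, perhaps phrased with an explicit $\alpha$-manipulation rather than the informally described $\delta$.
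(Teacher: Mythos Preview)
Your proof is correct and uses essentially the same one-shot trick as the paper: define a binary sequence using the decidability of $D_\alpha$, apply the hypothesis to it, and let the case analysis force the conclusion. The paper's version is slightly slicker --- it takes $\gamma(n)=1\leftrightarrow (n,0)\in D_\alpha$, so only one membership test is needed per $n$ rather than two --- but your three-case $\delta$ works just as well and the underlying idea is identical.
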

\begin{proof} Let $\alpha$ be given such that $\forall \gamma \in \mathcal{C} \exists n[\bigl(n,\gamma(n)\bigr) \in D_{\alpha}]$.  
 Define $\gamma$ in
$\mathcal{C}$ such that $\forall n[(n,0)\in D_\alpha\leftrightarrow\gamma(n) = 1]$.   Find $n$ such
that $\bigl(n,\gamma(n)\bigr)\in D_\alpha$. Note: $\gamma(n) = 1$ and $\forall i<2[(n,i) \in D_\alpha]$. \end{proof}

 We now introduce a less restricted version:

\subsection{}\label{SS:sigma01contraxomega2}\textit{$\mathbf{\Sigma}_1^0$-Axiom of Reverse Countable Binary Choice,
 $\mathbf{\Sigma}^0_1$-$\overleftarrow{\mathbf{AC}_{\omega,2}}$}:  \[\forall \alpha[ \forall \gamma \in \mathcal{C} \exists n[\bigl(n,\gamma(n)\bigr) \in E_{\alpha}]
 \rightarrow \exists n\forall i<2[ (n,i) \in E_{\alpha}]].\]

We define a formula that we want to call its \textit{strong negation}:
\subsection{}$\neg !(\mathbf{\Sigma}^0_1$-$\overleftarrow{\mathbf{AC}_{\omega,2}})$:
$$\exists \alpha[ \forall \gamma \in \mathcal{C} \exists n[\bigl(n,\gamma(n)\bigr) \in E_{\alpha}]
 \;\wedge\; \neg \exists n\forall i<2[(n, i) \in E_\alpha] ].$$
 
 Note: $\mathsf{BIM}$ proves: \\$\forall\alpha[\neg \exists n\forall i<2[(n, i) \in E_\alpha]\leftrightarrow \forall n\forall p\forall q[\alpha(p) = (n,0)+1\rightarrow  \alpha(q)\neq (n,1)+1]]$. 
 
\begin{lemma}\label{L:3resolutions}
 $\mathsf{BIM}$ proves the following:
\begin{enumerate}[\upshape(i)]
 \item  $\mathbf{FT} \rightarrow \mathbf{\Sigma}^0_1$-$\overleftarrow{\mathbf{AC}_{\omega,2}} $ and 
 $\neg !(\mathbf{\Sigma}^0_1$-$\overleftarrow{\mathbf{AC}_{\omega,2}}) \rightarrow \neg!\mathbf{FT}$.
\item  $\mathbf{\Sigma}^0_1$-$\overleftarrow{\mathbf{AC}_{\omega,2}}\rightarrow \mathbf{FT}$ and
$ \neg!\mathbf{FT} \rightarrow\neg !(\mathbf{\Sigma}^0_1$-$\overleftarrow{\mathbf{AC}_{\omega,2}})$.
\end{enumerate}

\end{lemma}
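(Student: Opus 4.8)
The statement has four implications, grouped so that (i) collects $\mathbf{FT}\rightarrow\mathbf{\Sigma}^0_1\text{-}\overleftarrow{\mathbf{AC}_{\omega,2}}$ together with its ``strong-negation companion'' $\neg!(\mathbf{\Sigma}^0_1\text{-}\overleftarrow{\mathbf{AC}_{\omega,2}})\rightarrow\neg!\mathbf{FT}$, and (ii) collects the reverse direction with its companion. The guiding idea, announced in Subsection \ref{SS:strongnegations}, is that each pair of conclusions has a \emph{common ground}: a single construction, given $\alpha$, producing a $\beta$ (or vice versa) such that the $\mathbf{FT}$-property of one is equivalent to the $\mathbf{\Sigma}^0_1\text{-}\overleftarrow{\mathbf{AC}_{\omega,2}}$-property of the other, and such that negating the hypothesis on one side corresponds to negating it on the other.

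\textbf{Part (i).} Given $\alpha$, I want to build $\beta$ so that $Bar_\mathcal{C}(D_\beta)$ captures $\forall\gamma\in\mathcal{C}\exists n[(n,\gamma(n))\in E_\alpha]$, while the existence of $n$ with $\forall i<2[(n,i)\in E_\alpha]$ is forced by $\exists m[Bar_\mathcal{C}(D_{\overline\beta m})]$. The natural move: read a binary sequence $c$ of length $m$ as coding partial information about a function $\gamma\in\mathcal{C}$ on its first few arguments; declare $c\in D_\beta$ once, for \emph{every} $n<\mathrm{length}(c)$, we have witnessed $(n,c(n))\in E_\alpha$ with a witness below $\mathrm{length}(c)$ — more precisely, scan $\overline\alpha(\mathrm{length}(c))$ for such witnesses. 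Then $Bar_\mathcal{C}(D_\beta)$ says exactly that along every $\gamma\in\mathcal{C}$ one eventually sees $(n,\gamma(n))\in E_\alpha$ for all small $n$, which unwinds to $\forall\gamma\in\mathcal{C}\exists n[(n,\gamma(n))\in E_\alpha]$ (one must be a little careful about the quantifier order, but taking $D_\beta$ to record ``all coordinates up to the current length are satisfied'' handles it). Now apply $\mathbf{FT}$ to get $m$ with $Bar_\mathcal{C}(D_{\overline\beta m})$: every $\gamma\in\mathcal{C}$ has an initial segment of length $<m$ in $D_\beta$, i.e. a \emph{bound} $m$ on how far one must go. From this uniform bound I extract, by a pigeonhole/finite-search argument over $Bin_m$, a single $n<m$ with witnesses for both $(n,0)\in E_\alpha$ and $(n,1)\in E_\alpha$: if no such $n$ existed, one could splice together a $\gamma\in\mathcal{C}$ avoiding all satisfied coordinates up to length $m$, contradicting $Bar_\mathcal{C}(D_{\overline\beta m})$. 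That gives $\mathbf{\Sigma}^0_1\text{-}\overleftarrow{\mathbf{AC}_{\omega,2}}$. The companion $\neg!(\mathbf{\Sigma}^0_1\text{-}\overleftarrow{\mathbf{AC}_{\omega,2}})\rightarrow\neg!\mathbf{FT}$ uses the \emph{same} $\alpha\mapsto\beta$: if some $\alpha$ witnesses the strong negation, then $Bar_\mathcal{C}(D_\beta)$ holds (from the first conjunct) while $\forall m[\neg Bar_\mathcal{C}(D_{\overline\beta m})]$ holds — because $\neg\exists n\forall i<2[(n,i)\in E_\alpha]$ means, by the displayed reformulation just before the Lemma, that for every $m$ one can still diagonalize to build a $\gamma\in\mathcal{C}$ with no length-$<m$ initial segment in $D_\beta$. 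So $\beta$ witnesses $\neg!\mathbf{FT}=\mathbf{KA}$.

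\textbf{Part (ii).} Now the construction runs the other way: given $\alpha$ with $Bar_\mathcal{C}(D_\alpha)$, I build $\tilde\alpha$ enumerating a set $E_{\tilde\alpha}$ of pairs so that $\forall\gamma\in\mathcal{C}\exists n[(n,\gamma(n))\in E_{\tilde\alpha}]$ holds trivially (essentially because $Bar_\mathcal{C}(D_\alpha)$ already guarantees every branch meets $D_\alpha$), and so that a single $n$ with $\forall i<2[(n,i)\in E_{\tilde\alpha}]$ delivers $\exists m[Bar_\mathcal{C}(D_{\overline\alpha m})]$. The cleanest encoding: let a number $n$ code a pair $(c,m)$ with $c\in Bin$, and put $(n,i)\in E_{\tilde\alpha}$ iff (roughly) appending $i$ to $c$ lands in $D_\alpha$ within $m$ steps, or some bookkeeping coordinate. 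One arranges that $\forall\gamma\in\mathcal{C}\exists n[(n,\gamma(n))\in E_{\tilde\alpha}]$ is immediate from $Bar_\mathcal{C}(D_\alpha)$, and that the single ``binary-complete'' $n$ extracted by $\mathbf{\Sigma}^0_1\text{-}\overleftarrow{\mathbf{AC}_{\omega,2}}$ pins down a finite level $m$ beyond which $D_\alpha$ is a bar, i.e. $Bar_\mathcal{C}(D_{\overline\alpha m})$ — giving $\mathbf{FT}$. The companion $\neg!\mathbf{FT}\rightarrow\neg!(\mathbf{\Sigma}^0_1\text{-}\overleftarrow{\mathbf{AC}_{\omega,2}})$ reuses this $\alpha\mapsto\tilde\alpha$: if $\alpha$ witnesses $\mathbf{KA}$ ($Bar_\mathcal{C}(D_\alpha)$ and $\forall m[\neg Bar_\mathcal{C}(D_{\overline\alpha m})]$), then $\tilde\alpha$ witnesses the strong negation — the first conjunct of the strong negation comes from $Bar_\mathcal{C}(D_\alpha)$ as before, and $\neg\exists n\forall i<2[(n,i)\in E_{\tilde\alpha}]$ comes from $\forall m[\neg Bar_\mathcal{C}(D_{\overline\alpha m})]$ via the correspondence ``binary-complete $n$ at level $m$'' $\leftrightarrow$ ``$D_{\overline\alpha m}$ is a bar''.

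\textbf{Main obstacle.} The real work is getting the two encodings precise enough that the equivalences hold \emph{without} any choice principle beyond what $\mathsf{BIM}$ proves — in particular the reformulation displayed just above the Lemma, $\neg\exists n\forall i<2[(n,i)\in E_\alpha]\leftrightarrow\forall n\forall p\forall q[\alpha(p)=(n,0)+1\rightarrow\alpha(q)\neq(n,1)+1]$, must be used to keep the strong-negation side genuinely positive, and the ``splice a bad $\gamma$'' arguments must be done by explicit recursion (as in the proofs of Theorems \ref{T:wklft} and \ref{T:ftwkl!}) rather than by appeal to König's lemma. I expect the fiddliest point to be lining up the quantifier $\forall\gamma\in\mathcal{C}\exists n$ with the bar condition in Part (i): one must ensure $D_\beta$ is defined so that membership of $\overline\gamma k$ really does encode ``all of $0,\dots$ up to some point are satisfied'', and that the finite search over $Bin_m$ at the end is monotone enough to produce the single good $n$. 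Everything else is the routine coding bookkeeping that $\mathsf{BIM}$'s primitive-recursive apparatus supports.
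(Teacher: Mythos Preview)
Your Part (i) has a concrete error: you declare $c \in D_\beta$ when $(n, c(n)) \in E_\alpha$ holds for \emph{every} $n < \mathrm{length}(c)$, but this should be \emph{some} $n < \mathrm{length}(c)$. With your universal definition the forward step fails: take $E_\alpha = \{(n,0) : n \text{ even}\} \cup \{(n,1) : n \text{ odd}\}$; then $\forall\gamma\in\mathcal{C}\exists n[(n,\gamma(n))\in E_\alpha]$ holds, yet for $\gamma = \underline 0$ no nonempty initial segment lies in your $D_\beta$ (odd coordinates are never satisfied), so $Bar_\mathcal{C}(D_\beta)$ fails. The paper uses the existential version, $\beta(a) \neq 0 \leftrightarrow \exists n < \mathrm{length}(a)\bigl[(n, a(n)) \in E_{\overline\alpha\,\mathrm{length}(a)}\bigr]$; with this, (1) is immediate and your finite-search extraction for (2) is exactly right.

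Your Part (ii) has a more serious gap. The encoding you sketch --- put $(n, i) \in E_{\tilde\alpha}$ when appending $i$ to some $c$ lands in $D_\alpha$ --- cannot deliver the conclusion: finding a single $s$ with both $s\ast\langle 0\rangle$ and $s\ast\langle 1\rangle$ in $D_\alpha$ says nothing about $D_{\overline\alpha m}$ barring all of $\mathcal{C}$. The paper's construction (flagged there as parallel to the proof that $\mathbf{\Sigma}^0_1$-$\mathbf{Sep}\rightarrow\mathbf{WKL}$) is subtler: the index ranges over $s \in Bin$, and $(s, i)$ enters $E_\beta$ when at some stage $n$ \emph{either} $Bar_\mathcal{C}(D_{\overline\alpha n})$ holds outright, \emph{or} $D_{\overline\alpha n}$ bars $\mathcal{C}\cap s\ast\langle i\rangle$ but \emph{not} $\mathcal{C}\cap s\ast\langle 1-i\rangle$. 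The asymmetry clause is the key idea you are missing: it guarantees that $(s,0)$ and $(s,1)$ can both be in $E_\beta$ only via the first alternative, hence only if some $D_{\overline\alpha n}$ already bars $\mathcal{C}$. The direction $Bar_\mathcal{C}(D_\alpha) \rightarrow \forall\gamma\in\mathcal{C}\exists s[(s,\gamma(s))\in E_\beta]$ then needs a path-following argument: given $\gamma$, form $\delta$ by $\delta(n) = \gamma(\overline\delta n)$, find where $\delta$ meets $D_\alpha$, and run backwards induction along the path to locate either an asymmetry witness or a full bar.
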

\begin{proof} (i)  
 We prove, in $\mathsf{BIM}$: for each $\alpha$, there exists $\beta$ such that
$$(1)\;\forall \gamma \in \mathcal{C} \exists n[\bigl(n,\gamma(n)\bigr) \in E_{\alpha}]  \rightarrow Bar_\mathcal{C}(D_\beta)]\;\mathrm{and}$$ $$(2)\;\exists m[Bar_\mathcal{C}(D_{\overline\beta m})] \rightarrow\exists n\forall i<2[(n,i)\in E_\alpha ].$$ The two promised conclusions then follow easily. 
 
 \smallskip
 Let  $\alpha$   be given. Define $\beta$ such that $$\forall m\forall a \in Bin_m[\beta(a) \neq 0\leftrightarrow \exists n<m[\bigl(n, a(n)\bigr)\in E_{\overline \alpha m}]].$$
 
 \smallskip 
(1) Assume: $ \forall \gamma \in \mathcal{C} \exists n [ \bigl(n,\gamma(n)\bigr) \in E_{\alpha}]$. Let $\gamma$ in $\mathcal{C}$ be given. \\Find $n,p$ such that $ \bigl(n,\gamma(n)\bigr)\in E_{\overline \alpha p}$ and $n<p$. Note:  $\beta(\overline \gamma p) \neq 0$.
\\We thus see: $\forall \gamma\in\mathcal{C}\exists p[\beta(\overline\gamma p)\neq 0]$, i.e. $Bar_\mathcal{C}(D_\beta)$. 

\smallskip
 
(2)  Let 
 $m$ be given such that $Bar_\mathcal{C}(D_{\overline \beta m})$. \\ Note: $\forall a \in Bin_m[m<a]$ and: $\forall a \in Bin_m \exists n\le  m[\beta(\overline a n)\neq 0 ]$. 
 
  Assume $\forall n<m\exists i<2[ 
 (n,i)\notin E_{\overline\alpha m}]$.
\\Define $a$ in  $\mathit{Bin}_m$ such that $ \forall n<m[(n,0)\notin E_{\overline\alpha m}\leftrightarrow a(n)=0]$.  \\Then $\forall n<m[\bigl(n, a(n)\bigr)\notin E_{\overline \alpha m}]$ and  $\forall n \le m[\beta(\overline a n)= 0]$. 
Contradiction. 

Conclude: $\exists n<m\forall i<2[(n,i)\in E_{\overline \alpha m}\subseteq E_\alpha]$.

\medskip (ii)\footnote{The reader should compare this proof to the proof of Theorem \ref{T:sep=wkl}(ii).}  We prove, in $\mathsf{BIM}$: for each $\alpha$, there exists $\beta$ such that  
$$(1)\;Bar_\mathcal{C}(D_\alpha)\rightarrow \forall \gamma \in \mathcal{C}\exists n[\bigl( n, \gamma(n) \bigr) \in E_{\beta}]\;\mathrm{and}$$ $$(2)\;\exists n\forall i<2[(n,i)\in E_\beta ]\rightarrow \exists m[Bar_\mathcal{C}(D_{\overline \alpha m})].$$ The two promised conclusions then follow easily.  

\smallskip 
Let $\alpha$ be given. Define $\beta$ such that, for all $n$,    for every $s$ in $Bin$, for all $i<2$, \\ \textit{if}  \textit{either:} (a) $Bar_\mathcal{C}(D_{\overline\alpha n})$, \textit{or:}  (b) $Bar_{\mathcal{C}\cap s\ast\langle i \rangle}(D_{\overline \alpha n})$  and \textit{not} $Bar_{\mathcal{C}\cap s\ast\langle 1- i \rangle}(D_{\overline \alpha n})$, 
  \textit{then} $\beta(n, s\ast\langle i \rangle ) = (s,i)+1$, and,  (c) \textit{if} both (a) and (b) fail, \textit{then} $\beta(n,s\ast\langle i \rangle )=0$. \\Furthermore,  for all $n$, for all $s$, if  $s\notin Bin\setminus\{\langle\;\rangle\}$, then $\beta(n,s)=0$. 
 
 Note: $E_\beta$ is the set of all pairs $(s, i)$ such that $s\in Bin$ and $i<2$ and either $\exists n[Bar_\mathcal{C}(D_{\overline \alpha n})]$ or $\exists n[Bar_{\mathcal{C}\cap s\ast\langle i\rangle}(D_{\overline\alpha n})\;\wedge \neg  Bar_{\mathcal{C}\cap s\ast\langle 1- i\rangle}(D_{\overline\alpha n})]$.

Note that, for all $s$ in $Bin$, if $\forall i<2[(s,i) \in E_\beta]$, then $\exists n[Bar_\mathcal{C}(D_{\overline\alpha n})]$. 
 
 \smallskip
(1) Assume $Bar_\mathcal{C}(D_\alpha)$. We shall prove: $\forall \gamma\in\mathcal{C} \exists n[\bigl(n,\gamma(n)\bigr)\in E_\beta]$.
 
 Let $\gamma$ in $\mathcal{C}$ be given. Define $\delta$ in  $\mathcal{C}$ such that $\forall n [\delta(n) = \gamma(\overline{\delta}n)]$. Find $n$ such that 
 $\alpha(\overline{\delta}n) \neq 0$. Define $q:=\overline \delta n +1$ and note: $\overline \delta n\in D_{\overline \alpha q}$. 
 
 We claim: 
 for all $j \le n$, either $\exists i \le n[\bigl(\overline \delta i, \gamma(\overline \delta i)\bigr) \in E_{\beta}]$, or $Bar_{\mathcal{C}\cap\overline\delta j}(D_{\overline \alpha q})$.
We prove this claim by backwards induction, starting from $j = n$. Note:  
 $Bar_{\mathcal{C}\cap \overline{\delta}n}(D_{\overline \alpha q})$. Now assume  $j < n$ and $Bar_{\mathcal{C}\cap \overline\delta(j+1)}(D_{\overline \alpha q})$, that is: $Bar_{\mathcal{C}\cap \overline{\delta}(j)\ast \langle \delta(j) \rangle}(D_{\overline \alpha p})$. \\Find out if also $Bar_{\mathcal{C}\cap \overline{\delta}(j)\ast \langle 1-\delta(j) \rangle}(D_{\overline \alpha q})$.  
 \\\textit{If so}, then $Bar_{\mathcal{C}\cap \overline{\delta}(j)}(D_{\overline \alpha q})$, and, \textit{if not}, 
  then $\beta\bigl(q,\overline{\delta}(j+1)\bigr)= \bigl(\overline \delta j, \delta(j)\bigr)  + 1$, and $\bigl(\overline \delta j, \delta(j)\bigr)=\bigl(\overline\delta j, \gamma(\overline \delta (j)\bigr)\in E_\beta$.

We may conclude: either $\exists j\le n[(\overline \delta j, \delta(j)\bigr)\in E_\beta]$, or $Bar_\mathcal{C}(D_{\overline \alpha q})$.
\\Note that, if  $Bar_\mathcal{C}(D_{\overline \alpha q})$, then $\forall s \in Bin\forall i<2[\beta(q,s\ast\langle i \rangle) = (s, i) +1]$ and: \\$\forall s\in Bin[\bigl(s, \gamma(s)\bigr) \in E_\beta]$.

 We thus see: $\forall\gamma \in \mathcal{C} \exists s[ \bigl(s,\gamma(s)\bigr) \in E_{\beta}]$.

\smallskip
(2)  Now assume:  $\exists n\forall i<2[(n,i)\in E_\beta]$.   Conclude, using the observation we made just after the definition of $\beta$:   $\exists n[Bar_\mathcal{C}(D_{\overline \alpha n})]$. 
\end{proof}
 \begin{theorem}\label{T:contrac<2}

 $\mathsf{BIM}$ proves: $\mathbf{FT} \leftrightarrow \mathbf{\Sigma}^0_1$-$\overleftarrow{\mathbf{AC}_{\omega,2}}$
and:  $ \neg!\mathbf{FT} \leftrightarrow \neg !(\mathbf{\Sigma}^0_1$-$\overleftarrow{\mathbf{AC}_{\omega,2}})$.

\end{theorem}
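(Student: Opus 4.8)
The plan is to derive Theorem \ref{T:contrac<2} as an immediate corollary of Lemma \ref{L:3resolutions}. Recall that Lemma \ref{L:3resolutions} already establishes, entirely within $\mathsf{BIM}$, all four implications individually: part (i) gives $\mathbf{FT}\rightarrow\mathbf{\Sigma}^0_1$-$\overleftarrow{\mathbf{AC}_{\omega,2}}$ together with $\neg!(\mathbf{\Sigma}^0_1$-$\overleftarrow{\mathbf{AC}_{\omega,2}})\rightarrow\neg!\mathbf{FT}$, and part (ii) gives $\mathbf{\Sigma}^0_1$-$\overleftarrow{\mathbf{AC}_{\omega,2}}\rightarrow\mathbf{FT}$ together with $\neg!\mathbf{FT}\rightarrow\neg!(\mathbf{\Sigma}^0_1$-$\overleftarrow{\mathbf{AC}_{\omega,2}})$. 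So the only work left is to assemble these into the two claimed biconditionals: combining the first halves of (i) and (ii) yields $\mathbf{FT}\leftrightarrow\mathbf{\Sigma}^0_1$-$\overleftarrow{\mathbf{AC}_{\omega,2}}$, and combining the second halves of (i) and (ii) yields $\neg!\mathbf{FT}\leftrightarrow\neg!(\mathbf{\Sigma}^0_1$-$\overleftarrow{\mathbf{AC}_{\omega,2}})$.

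Concretely, I would write: ``This is immediate from Lemma \ref{L:3resolutions}.'' — since each direction of each biconditional has already been proved in $\mathsf{BIM}$ in the preceding lemma. The theorem is precisely the repackaging of the lemma's four implications as two equivalences, and no further reasoning is needed beyond propositional logic ($\mathsf{BIM}$'s underlying intuitionistic logic suffices, as biconditional introduction from the two implications is intuitionistically valid).

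There is, in this instance, genuinely no main obstacle: all the substantive content — the constructions of the auxiliary function $\beta$ from a given $\alpha$, the bar arguments, and the backwards induction in part (ii) that mirrors the proof of Theorem \ref{T:sep=wkl}(ii) — lives in Lemma \ref{L:3resolutions}, which the excerpt presents in full. The role of this theorem is to state the headline equivalences cleanly and, in the spirit of Subsection \ref{SS:strongnegations}, to exhibit that $\mathbf{FT}\rightarrow\mathbf{\Sigma}^0_1$-$\overleftarrow{\mathbf{AC}_{\omega,2}}$ and $\neg!(\mathbf{\Sigma}^0_1$-$\overleftarrow{\mathbf{AC}_{\omega,2}})\rightarrow\neg!\mathbf{FT}$ share the common ground of construction (1) in part (i), while $\mathbf{\Sigma}^0_1$-$\overleftarrow{\mathbf{AC}_{\omega,2}}\rightarrow\mathbf{FT}$ and $\neg!\mathbf{FT}\rightarrow\neg!(\mathbf{\Sigma}^0_1$-$\overleftarrow{\mathbf{AC}_{\omega,2}})$ share the common ground of construction (2) in part (ii). Accordingly, I expect the author's proof to read simply: ``\emph{Use Lemma \ref{L:3resolutions}.}''
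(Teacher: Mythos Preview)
Your proposal is correct and matches the paper's approach exactly: the paper's proof reads simply ``Use Lemma \ref{L:3resolutions}.'' Your analysis of how the four implications from parts (i) and (ii) combine into the two biconditionals is accurate, and your prediction of the one-line proof is spot on.
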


\begin{proof} Use Lemma \ref{L:3resolutions}.\end{proof}
 
\subsection{}\label{SS:cfc}\textit{Axiom Scheme of Reverse Countable Finite Choice}, $\overleftarrow{\mathbf{AC}_{\omega,<\omega}}$:
  $$\forall \beta[ \forall  \gamma \exists n[\gamma(n)\le \beta(n) \rightarrow  R\bigl(n,\gamma(n)\bigr)]\rightarrow \exists m \forall n \le \beta(m)[R(n,m) ]].$$

\smallskip
$\overleftarrow{\mathbf{AC}_{\omega,<\omega}}$ may be concluded from $\mathbf{FT}+\mathbf{AC}_{\omega^\omega,\omega}$,  by a slight extension of the argument given in \cite[Section 4]{veldman82}.

 \smallskip The following is a restricted version:

$\mathbf{\Sigma}^0_1$-$\overleftarrow{\mathbf{AC}_{\omega,<\omega}}$:
\[\forall \alpha [\forall \gamma \exists n[\gamma(n)\le\alpha^{\upharpoonright 0}(n)\rightarrow\bigl(n, \gamma(n)\bigr) \in E_{\alpha^{\upharpoonright 1}}]\rightarrow \exists n\forall i \le \alpha^{\upharpoonright 0}(n)[ (n,i )\in E_{\alpha^{\upharpoonright 1}}]].\]
We introduce a strong negation of this restricted version:

$\neg!(\mathbf{\Sigma}^0_1$-$\overleftarrow{\mathbf{AC_{\omega,<\omega}}})$:
\[\exists \alpha [\forall \gamma \exists n[\gamma(n)\le\alpha^{\upharpoonright 0}(n)\rightarrow\bigl(n,\gamma(n)\bigr)  \in E_{\alpha^{\upharpoonright 1}}]\;\wedge\; \neg\exists n\forall i \le \alpha^{\upharpoonright 0}(n)[(n, i) \in E_{\alpha^{\upharpoonright 1}}]].\]

Note: $\mathsf{BIM}$ proves: \\$\forall\alpha[\neg \exists n\forall i\le \alpha^{\upharpoonright 0}(n)[(n, i) \in E_\alpha]\leftrightarrow$

$ \forall n\forall t\in\omega^{\alpha^{\upharpoonright 0}(n)+1}\exists i\le \alpha^{\upharpoonright 0}(n)[\alpha\bigl(t(i)\bigr) \neq  (n,i)+1]]$.

 \begin{lemma}\label{L:b3}  $\mathsf{BIM}$ proves:
 \begin{enumerate}[\upshape (i)] \item $ \mathbf{\Sigma}^0_1$-$\overleftarrow{\mathbf{AC_{\omega,<\omega}}} \rightarrow \mathbf{\Sigma}^0_1$-$\overleftarrow{\mathbf{AC}_{\omega,2}}$ and $ \neg !(\mathbf{\Sigma^0_1}$-$\overleftarrow{\mathbf{AC}_{\omega,2}}) \rightarrow \neg !(\mathbf{\Sigma}^0_1$-$\overleftarrow{\mathbf{AC}_{\omega,<\omega}})$
\item $ \mathbf{FT}\rightarrow \mathbf{\Sigma}^0_1$-$\overleftarrow{\mathbf{AC_{\omega,<\omega}}}$ and 
 $  \neg !(\mathbf{\Sigma}^0_1$-$\overleftarrow{\mathbf{AC_{\omega,<\omega}}}) \rightarrow\neg!\mathbf{FT}$.
\end{enumerate}
\end{lemma}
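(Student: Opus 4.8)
The plan is to settle (i) by a direct rearrangement of parameters and (ii) by imitating the proof of Lemma \ref{L:3resolutions}(i), the extra ingredient for (ii) being a standard coding of a bounded fan into Cantor space $\mathcal{C}$.

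For (i): $\mathbf{\Sigma}^0_1$-$\overleftarrow{\mathbf{AC}_{\omega,2}}$ is, up to a rearrangement of parameters, the instance of $\mathbf{\Sigma}^0_1$-$\overleftarrow{\mathbf{AC}_{\omega,<\omega}}$ in which the bound $\alpha^{\upharpoonright 0}$ is the constant function $\underline 1$. Given $\alpha$, I would take $\alpha^\ast$ with $(\alpha^\ast)^{\upharpoonright 0}=\underline 1$ and $(\alpha^\ast)^{\upharpoonright 1}=\alpha$. The one point needing attention is that the antecedent of $\mathbf{\Sigma}^0_1$-$\overleftarrow{\mathbf{AC}_{\omega,<\omega}}$ for $\alpha^\ast$ ranges over \emph{all} $\gamma$, not just over $\gamma\in\mathcal{C}$; but given an arbitrary $\gamma$ one passes to $\gamma^\ast\in\mathcal{C}$ with $\gamma^\ast(n)=\min\bigl(\gamma(n),1\bigr)$, uses the assumption $\forall\gamma\in\mathcal{C}\exists n[\bigl(n,\gamma(n)\bigr)\in E_\alpha]$ to get an $n$ with $\bigl(n,\gamma^\ast(n)\bigr)\in E_\alpha$, and notes that for this $n$ the implication $\gamma(n)\le 1\rightarrow\bigl(n,\gamma(n)\bigr)\in E_\alpha$ holds, since $\gamma(n)\le 1$ forces $\gamma^\ast(n)=\gamma(n)$. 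As the consequents of the two principles are identical for $\alpha^\ast$ and $\alpha$, this proves $\mathbf{\Sigma}^0_1$-$\overleftarrow{\mathbf{AC}_{\omega,<\omega}}\rightarrow\mathbf{\Sigma}^0_1$-$\overleftarrow{\mathbf{AC}_{\omega,2}}$; carrying out the same bookkeeping on a witness $\alpha$ of $\neg!(\mathbf{\Sigma}^0_1$-$\overleftarrow{\mathbf{AC}_{\omega,2}})$ exhibits $\alpha^\ast$ as a witness of $\neg!(\mathbf{\Sigma}^0_1$-$\overleftarrow{\mathbf{AC}_{\omega,<\omega}})$, which is the second implication of (i).

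For (ii), fix $\alpha$, write $b:=\alpha^{\upharpoonright 0}$ and $c:=\alpha^{\upharpoonright 1}$, and let $\mathcal{F}$ be the fan of all $\gamma$ with $\forall n[\gamma(n)\le b(n)]$; since $\gamma(n)\le b(n)$ for every $\gamma\in\mathcal{F}$, the antecedent of $\mathbf{\Sigma}^0_1$-$\overleftarrow{\mathbf{AC}_{\omega,<\omega}}$ for $\alpha$ entails $\forall\gamma\in\mathcal{F}\exists n[\bigl(n,\gamma(n)\bigr)\in E_c]$. I would code $\mathcal{F}$ into $\mathcal{C}$ by the monotone-block device: a value $v\le b(n)$ at level $n$ is coded by the binary word of length $b(n)+1$ made of $v$ ones followed by $b(n)+1-v$ zeros (length $b(n)+1$ rather than $b(n)$, so that no block is empty even where $b$ takes the value $0$). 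Call $s\in Bin$ \emph{admissible} when in each of its blocks the bits are non-increasing and at most $b(n)$ of them equal $1$ --- a decidable condition; an admissible $s$ with $m$ completed blocks decodes to a length-$m$ initial segment $\hat\gamma$ of some member of $\mathcal{F}$, and the admissible infinite sequences are exactly the codes of members of $\mathcal{F}$. Following the definition of $\beta$ in Lemma \ref{L:3resolutions}(i), put $\beta'(s)\neq 0$ iff $s$ is inadmissible, or $s$ is admissible with $m$ completed blocks and $\exists n<m\exists p<length(s)[c(p)=\bigl(n,\hat\gamma(n)\bigr)+1]$. The task is then to show, as there, that $\forall\gamma\in\mathcal{F}\exists n[\bigl(n,\gamma(n)\bigr)\in E_c]$ implies $Bar_\mathcal{C}(D_{\beta'})$ and that $\exists m[Bar_\mathcal{C}(D_{\overline{\beta'}m})]$ implies $\exists n\forall i\le b(n)[(n,i)\in E_c]$: then $\mathbf{FT}$, applied to $\beta'$ to bridge the two implications, yields $\mathbf{FT}\rightarrow\mathbf{\Sigma}^0_1$-$\overleftarrow{\mathbf{AC}_{\omega,<\omega}}$, and if $\alpha$ witnesses $\neg!(\mathbf{\Sigma}^0_1$-$\overleftarrow{\mathbf{AC}_{\omega,<\omega}})$ the first implication gives $Bar_\mathcal{C}(D_{\beta'})$ while the contrapositive of the second gives $\forall m[\neg Bar_\mathcal{C}(D_{\overline{\beta'}m})]$, so that $\beta'$ witnesses $\neg!\mathbf{FT}$.

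For the first implication I would, given $\delta\in\mathcal{C}$, set $\gamma^\ast(n)$ equal to the number of ones in block $n$ of $\delta$ if $\delta$ is admissible up to and including that block, and equal to $0$ otherwise (a decidable alternative, with $\gamma^\ast(n)\le b(n)$ in either case, so $\gamma^\ast\in\mathcal{F}$), apply the assumption to $\gamma^\ast$ to obtain $n,p$ with $c(p)=\bigl(n,\gamma^\ast(n)\bigr)+1$, and check that $\beta'$ fires at any $\overline\delta k$ whose length exceeds both $p$ and the end of block $n$ --- whether or not $\overline\delta k$ is still admissible, so that no case split on the fate of $\delta$ is needed. For the second, a level-$m$ subbar of $D_{\beta'}$ confines the completed blocks that matter to levels below some $M$ and the witnesses $c(p)$ that matter to $p$ below some $M'$, both computable from $m$; restricted to admissible $\delta$, i.e.\ to $\gamma\in\mathcal{F}$, this reads $\forall\gamma\in\mathcal{F}\exists n<M[\bigl(n,\gamma(n)\bigr)\in E_{\overline{c}M'}]$, and if the conclusion $\exists n\forall i\le b(n)[(n,i)\in E_c]$ failed, then by the reformulation of $\neg\exists n\forall i\le b(n)[(n,i)\in E_c]$ recorded just before Lemma \ref{L:b3}, together with the decidability of membership in the finite set $E_{\overline{c}M'}$, one computes $\gamma^\dagger\in\mathcal{F}$ with $\gamma^\dagger(n)=\mu i\le b(n)[(n,i)\notin E_{\overline{c}M'}]$ for $n<M$, contradicting the previous statement. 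The fiddliest part --- where I expect the real work --- is verifying that this coding is faithful in both directions at once, that every $\delta\in\mathcal{C}$ (admissible or not) meets $D_{\beta'}$ and that $D_{\beta'}$ has no level-$m$ subbar unless the conclusion holds; constructivity survives precisely because $\beta'$ is made to fire on an inadmissible node \emph{at once} (so a single $k$ suffices for each $\delta$, with no dichotomy) and because the escaping path $\gamma^\dagger$ involves only the finite decidable set $E_{\overline{c}M'}$ and not $E_c$. Using blocks of length $b(n)+1$ rather than $b(n)$ is the one remaining precaution, needed so that the coding behaves well where $b$ vanishes.
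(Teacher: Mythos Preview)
Your treatment of (i) matches the paper's exactly. For (ii) the paper uses a different coding --- a variable-length map $Cod_2$ sending each value $n$ to $\underline{\overline 0}n\ast\langle 1\rangle$, with $\beta$ firing when a decoded value exceeds the bound $\alpha^{\upharpoonright 0}$, when a witness appears in $E_{\alpha^{\upharpoonright 1}}$, or when too many trailing zeros accumulate in the current block --- whereas you use fixed-length blocks of size $b(n)+1$ with an explicit admissibility predicate. Both codings are legitimate and lead to the same two-implication structure; yours has the mild advantage that the block boundaries are known in advance, the paper's that the coding is independent of $b$.

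There is, however, one genuine constructive slip in your argument for the second implication. You argue: assume the conclusion $\exists n\forall i\le b(n)[(n,i)\in E_c]$ fails, build $\gamma^\dagger$, and contradict the bar statement. But this is reductio on a $\mathbf{\Sigma}^0_1$ statement, so it only delivers $\neg\neg(\exists n\forall i\le b(n)[(n,i)\in E_c])$, which does not suffice for $\mathbf{FT}\rightarrow\mathbf{\Sigma}^0_1$-$\overleftarrow{\mathbf{AC}_{\omega,<\omega}}$. The fix is exactly the move the paper makes: suppose instead the \emph{decidable} finite statement $\forall n<M\,\exists i\le b(n)[(n,i)\notin E_{\overline c M'}]$, build $\gamma^\dagger$ from it directly (no appeal to the reformulation of $\neg$(conclusion) is then needed), and contradict the bar. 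Since the statement supposed is decidable, its failure gives $\exists n<M\,\forall i\le b(n)[(n,i)\in E_{\overline c M'}]$ outright, and $E_{\overline c M'}\subseteq E_c$ finishes. Your mention of ``decidability of $E_{\overline c M'}$'' shows you have the right tool in hand; it just needs to be applied to the finite statement rather than to the unbounded conclusion. (For the strong-negation half of (ii) your phrasing is already adequate, since there you are given $\neg$(conclusion) as a hypothesis.)
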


\begin{proof}(i) 
We prove, in $\mathsf{BIM}$: for each $\alpha$, there exists $\beta$ such that $$(1)\; \forall \gamma \in \mathcal{C}\exists n[\bigl(n,\gamma(n)\bigr)\in E_{\alpha}]\rightarrow \forall \gamma \exists n[\gamma(n)\le\beta^{\upharpoonright 0}(n)\rightarrow\bigl(n, \gamma(n)\bigr) \in E_{\beta^{\upharpoonright1}}]\;\mathrm{and}$$
$$ (2)\; \exists n \forall i \le \beta^{\upharpoonright 0}(n)[(n,i)\in E_{\beta^{\upharpoonright 1}}]\rightarrow \exists n \forall i<2[(n,i)\in E_{\alpha}].$$ The two promised conclusions then follow easily.

\smallskip
Let $\alpha$ be given.  Define $\beta$ such that $\beta^
{\upharpoonright 0}=\underline 1$ and $\beta^{\upharpoonright 1} =\alpha$. 

\smallskip
(1) Assume $\forall \gamma \in \mathcal{C}\exists n[\bigl(n,\gamma(n)\bigr)\in E_{\alpha}]$. Let $\gamma$ be given. Define $\gamma^\ast$ such that $\forall n[\gamma^\ast(n) =\min\bigl(1, \gamma(n)\bigr)]$. Note: $\gamma^\ast \in \mathcal{C}$ and find $n$ such that $\bigl(n,\gamma^\ast(n) \bigr)\in  E_{\alpha}$. If $\gamma^\ast(n) \neq\gamma(n)$, then $\gamma(n)>1=\beta^{\upharpoonright 0}(n)$, and if $\gamma^\ast(n) =\gamma(n)$, then $\bigl(n,\gamma(n)\bigr)\in E_{\alpha}$. 

Conclude: $\forall \gamma \exists n[\gamma(n)\le\beta^{\upharpoonright 0}(n)\rightarrow\bigl(n, \gamma(n)\bigr) \in E_{\beta^{\upharpoonright 1}}]$.

\smallskip (2)  Let $n$ be given such that  $\forall i \le \beta^{\upharpoonright 0}(n)[(n,i)\in E_{\beta^{\upharpoonright 1}}]$. \\Clearly, then, $\forall i<2[(n,i)\in E_{\alpha}]$.

\medskip
(ii)\footnote{The argument extends the argument for Lemma \ref{L:3resolutions}(i).} 
 We prove, in $\mathsf{BIM}$: for each $\alpha$, there exists $\beta$ such that
  $$(1)\; \forall \gamma \exists n[\gamma(n)\le \alpha^{\upharpoonright 0}(n)\rightarrow \bigl(n,\gamma(n)\bigr) \in     E_{\alpha^{\upharpoonright 1}}] \rightarrow Bar_\mathcal{C}(D_\beta)\;\mathrm{and}$$ $$(2)\; \exists m[ Bar_\mathcal{C}(D_{\overline \beta m})]  \rightarrow \exists n\forall m \le \alpha^{\upharpoonright 0}(n)[(n, m) \in   E_{\alpha^{\upharpoonright 1}}].$$
 The two promised conclusions then follow easily.
 
\smallskip
 First, define $Cod_2:\omega\rightarrow\mathit{Bin}$ such that \\$Cod_2(\langle \;\rangle)=\langle \; \rangle$ and $\forall s\forall n[Cod_2(s\ast\langle n \rangle) = Cod_2(s)\ast \underline{\overline 0}n \ast\langle 1 \rangle]$.
  \\Note: $\forall t\in\mathit{Bin}\exists s\exists i[ t= Cod_2(s) \ast \underline{\overline 0}i]$.

\smallskip Now,  let $\alpha$ be given.  Define $\beta$  such that, for all $s, i$,
$\beta\bigl(Cod_2(s)\ast\underline{\overline 0}i\bigr) \neq 0$ if and only if  $\exists n < \mathit{length}(s)[s(n) >\alpha^{\upharpoonright 0}(n)\;\vee\; \bigl(n,s(n)\bigr)\in E_{\overline{\alpha^{\upharpoonright 1}}length(s)}\;\vee\;\alpha^{\upharpoonright 0}\bigl(\mathit{length}(s)\bigr)< i]$. 

\smallskip (1) Assume:  $\forall \gamma \exists n[\gamma(n) \le\alpha^{\upharpoonright 0}(n) \rightarrow  \bigl(n,\gamma(n)\bigr) \in E_{\alpha^{\upharpoonright 1}}]$.

Assume: $\delta\in\mathcal{C}$.  Define $\gamma$,   by induction, such that, for each $n$, \\\textit{if} $\exists i\le\alpha^{\upharpoonright 0}(n)[Cod_2(\overline \gamma n\ast\langle i\rangle)\sqsubset \delta]$, then $Cod_2\bigl(\overline\gamma(n+1)\bigr)\sqsubset \delta$, and,  \\\textit{if not}, then $\gamma(n)=0$.

Note: $\forall n[\gamma(n)\le\alpha^{\upharpoonright 0}(n)]$.

Find $n,p$ such that  $\bigl(n,\gamma(n)\bigr) \in E_{\overline{\alpha^{\upharpoonright 1}}p}$.  Define $q:=\max (n,p)$.

 Note: $\beta\bigl(Cod_2(\overline \gamma(q+1))\bigr)\neq 0$ and distinguish two cases. 
 
 \textit{Case (a)}.  $c:=Cod_2(\overline \gamma(q+1))\sqsubset\delta$ and $\beta(c)\neq 0$. 
 
 \textit{Case (b)}. $\exists m\forall i\le \alpha^{\upharpoonright 0}(m)[Cod_2(\overline\gamma m\ast\langle i \rangle)\perp\delta]$. 
 
 Find $m_0:=\mu m\forall i\le \alpha^{\upharpoonright 0}(m)[Cod_2(\overline\gamma m\ast\langle i \rangle)\perp\delta]$ and note:\\ $d:=Cod_2(\overline \gamma m_0)\ast \underline{\overline 0}\bigl(\alpha^{\upharpoonright 0}(m_0)+1\bigr)\sqsubset \delta$ and $\beta(d) \neq 0$.

 In both  cases $\exists p[\beta(\overline \delta p) \neq 0]$.
 
  We thus see: $\forall \delta \in \mathcal{C}\exists p[\beta(\overline \delta p)\neq 0]$, i.e. $Bar_\mathcal{C}(D_\beta)$.

\smallskip
(2) Let $m$ be given such that  $Bar_\mathcal{C}(D_{\overline \beta m})$. \\Suppose: $\forall i<m \exists j \le \alpha^{\upharpoonright 0}(i)[(i,j)\notin E_{\overline{ \alpha^1}m}]]$. \\Find $s$ such that
$\mathit{length}(s) = m$ and $\forall i<m[s(i)\le \alpha^{\upharpoonright 0}(i)\;\wedge\;\bigl(i,s(i)\bigr)\notin E_{\overline{ \alpha^{\upharpoonright 1}}m}]$. \\Note: $Cod_2(s)>m$ and $\forall t\sqsubseteq Cod_2(s)[\beta(t)=0 ]$, so $\neg \mathit{Bar}_\mathcal{C}(D_{\overline \beta m})$. 

 Contradiction.

Conclude:   $\exists i <m\forall j\le\alpha^{\upharpoonright 0}(i)[(i, j) \in E_{\overline{\alpha^{\upharpoonright 1}}m}\subseteq E_{\alpha^{\upharpoonright 1}}]$.
\end{proof}

\begin{theorem} $\mathsf{BIM}$ proves: 
$ \mathbf{FT} \leftrightarrow \mathbf{\Sigma}^0_1$-$\overleftarrow{\mathbf{AC}_{\omega,<\omega}}$  and 
$ \neg!\mathbf{FT} \leftrightarrow \neg !(\mathbf{\Sigma}^0_1$-$\overleftarrow{\mathbf{AC}_{\omega,<\omega}})$.

\end{theorem}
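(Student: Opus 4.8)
The plan is that no fresh construction is needed: the statement follows immediately by chaining the implications already contained in Lemma \ref{L:b3} with the equivalences of Theorem \ref{T:contrac<2}. So the proof will literally be ``Use Lemma \ref{L:b3} and Theorem \ref{T:contrac<2}'', exactly as Theorem \ref{T:contrac<2} was deduced from Lemma \ref{L:3resolutions}.

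In detail, for the first equivalence I would argue as follows. The implication $\mathbf{FT} \rightarrow \mathbf{\Sigma}^0_1$-$\overleftarrow{\mathbf{AC}_{\omega,<\omega}}$ is the first assertion of Lemma \ref{L:b3}(ii). For the converse, apply Lemma \ref{L:b3}(i) to pass from $\mathbf{\Sigma}^0_1$-$\overleftarrow{\mathbf{AC}_{\omega,<\omega}}$ to $\mathbf{\Sigma}^0_1$-$\overleftarrow{\mathbf{AC}_{\omega,2}}$, and then invoke Theorem \ref{T:contrac<2} (equivalently Lemma \ref{L:3resolutions}(ii)) to obtain $\mathbf{FT}$. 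For the second equivalence, the implication $\neg!(\mathbf{\Sigma}^0_1$-$\overleftarrow{\mathbf{AC}_{\omega,<\omega}}) \rightarrow \neg!\mathbf{FT}$ is the second assertion of Lemma \ref{L:b3}(ii); and for the converse, Theorem \ref{T:contrac<2} gives $\neg!\mathbf{FT} \rightarrow \neg!(\mathbf{\Sigma}^0_1$-$\overleftarrow{\mathbf{AC}_{\omega,2}})$, after which Lemma \ref{L:b3}(i) yields $\neg!(\mathbf{\Sigma}^0_1$-$\overleftarrow{\mathbf{AC}_{\omega,<\omega}})$.

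I do not expect any real obstacle here, since all the genuine content resides in the two ``common ground'' constructions of Lemma \ref{L:b3}: the trivial padding $\beta^{\upharpoonright 0} = \underline 1$, $\beta^{\upharpoonright 1} = \alpha$ of part (i), and, more substantially, the coding $Cod_2$ of part (ii), which embeds a finite-branching tree into $Bin$ so that a bar for the $<\!\omega$-choice data is turned into a bar of Cantor space (and, read backwards, a uniform bound for the latter is turned into the witness $n$ for the former). The only point worth a remark is that each of these two constructions simultaneously supplies a positive implication and its strong-negation companion, and likewise for the constructions underlying Theorem \ref{T:contrac<2}; hence composing them preserves this feature, so the parallel pair of statements about $\neg!$ comes out of exactly the same chain of reductions with no additional argument.

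\begin{proof} Combine Lemma \ref{L:b3} with Theorem \ref{T:contrac<2}. In one direction, $\mathbf{FT}\rightarrow\mathbf{\Sigma}^0_1$-$\overleftarrow{\mathbf{AC}_{\omega,<\omega}}$ and $\neg!(\mathbf{\Sigma}^0_1$-$\overleftarrow{\mathbf{AC}_{\omega,<\omega}})\rightarrow\neg!\mathbf{FT}$ are immediate from Lemma \ref{L:b3}(ii). In the other direction, Lemma \ref{L:b3}(i) gives $\mathbf{\Sigma}^0_1$-$\overleftarrow{\mathbf{AC}_{\omega,<\omega}}\rightarrow\mathbf{\Sigma}^0_1$-$\overleftarrow{\mathbf{AC}_{\omega,2}}$ and $\neg!(\mathbf{\Sigma}^0_1$-$\overleftarrow{\mathbf{AC}_{\omega,2}})\rightarrow\neg!(\mathbf{\Sigma}^0_1$-$\overleftarrow{\mathbf{AC}_{\omega,<\omega}})$, and Theorem \ref{T:contrac<2} gives $\mathbf{\Sigma}^0_1$-$\overleftarrow{\mathbf{AC}_{\omega,2}}\rightarrow\mathbf{FT}$ and $\neg!\mathbf{FT}\rightarrow\neg!(\mathbf{\Sigma}^0_1$-$\overleftarrow{\mathbf{AC}_{\omega,2}})$. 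Composing these implications yields both claimed equivalences. \end{proof}
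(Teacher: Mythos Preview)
Your proof is correct and takes essentially the same approach as the paper, which simply states that the result follows from Lemma \ref{L:b3} and Theorem \ref{T:contrac<2}. Your write-up is in fact more explicit than the paper's one-line proof, spelling out exactly which part of each lemma supplies which implication.
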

\begin{proof} These statements follow from Lemma \ref{L:b3} and Theorem \ref{T:contrac<2}. \end{proof} 

\subsection{No Double Negation Shift}\label{SSS:ndns} \hfill

Assume $\neg!\mathbf{FT}$. Using $\neg !(\mathbf{\Sigma}^0_1$-$\overleftarrow{\mathbf{AC}_{\omega, 2}})$, find $\alpha$  such that $\forall \gamma \in \mathcal{C} \exists n[ \bigl(n,\gamma(n)\bigr) \in E_{\alpha}]$ and: $\neg \exists n\forall m<2[(n,m)\in E_\alpha ]$.

 Then, for each $n$, $\neg \forall m <2[(n,m)\in E_\alpha]$ and: $\neg \forall m<2[\neg\neg\bigl((n,m)\in E_\alpha\bigr)]$ and: $\neg\neg\exists m<2[(n,m)\notin E_\alpha]$.
 
We thus see: $\forall n\neg \neg\exists m<2[(n,m) \notin E_{\alpha}]$.
 
  Also: $\neg \exists \gamma \in \mathcal{C}\forall n[ \bigl(n,\gamma(n)\bigr) \notin E_{\alpha}]$.
  
   Using  $\mathbf{\Pi}^0_1$-$\mathbf{AC}_{\omega,2}$, we conclude: $\neg \forall n\exists m<2[(n,m)\notin E_\alpha]$.

We thus see that if we assume both $\neg!\mathbf{FT}$ and  $\mathbf{\Pi}^0_1$-$\mathbf{AC}_{\omega,2}$  we can find $\mathbf{\Pi}^0_1$-subsets $P=\{n\mid (n,0)\notin E_\alpha\}$ and $Q=\{n\mid (n,1)\notin E_\alpha\}$ of $\omega$ such that 
\\$\forall n[\neg \neg\bigl(P(n) \vee Q(n)\bigr)]$ and $\neg \forall n[P(n) \vee Q(n)]$. 

S. Kuroda's scheme of Double Negation Shift $$\forall n[\neg\neg T(n)]\rightarrow \neg\neg\forall n[T(n)]$$ (see \cite[page 45]{kuroda} and \cite[page 105]{heyting}) thus is refuted.

In \cite[vol. I, Chapter 4, Proposition 3.4, Corollary 1]{Troelstra-van Dalen},  the same conclusion is obtained in $\mathsf{HA}$ from $\mathsf{CT}_0$.

\section{$\mathbf{AC}_{\omega,\omega^\omega}$, some special cases}\label{SS:choice2}

\subsection{} $\mathbf{\Sigma}^0_1$-\textit{Second Axiom of Countable Choice}, $\mathbf{\Sigma}^0_1$-$\mathbf{AC}_{\omega,\omega^\omega}$:
\[\forall \alpha[\forall n \exists \gamma[\gamma \in \mathcal{G}_{\alpha^{\upharpoonright n}}] \rightarrow \exists \gamma \forall n[\gamma^{\upharpoonright n} \in
\mathcal{G}_{\alpha^{\upharpoonright n}}]].\]

\begin{theorem} $\mathsf{BIM}\vdash \mathbf{\Sigma}^0_1$-$\mathbf{AC}_{\omega,\omega^\omega}$. \end{theorem}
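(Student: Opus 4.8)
The plan is to obtain $\mathbf{\Sigma}^0_1$-$\mathbf{AC}_{\omega,\omega^\omega}$ from the unbounded-search clause of Axiom \ref{ax:recop} (equivalently, from the Minimal Axiom of Countable Choice $\mathbf{\Delta}^0_1$-$\mathbf{AC}_{\omega,\omega}$), by the same device that proves Theorem \ref{T:acoosigma01}: an inhabited $\mathbf{\Sigma}^0_1$ subset of $\omega^\omega$ already contains a basic neighbourhood, so each of the countably many inhabitedness statements in the hypothesis is witnessed by a single natural number, and these witnesses can be collected by unbounded search.

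First I would unfold the notions involved. Recall that, for each $\beta$ and each $n$, $\beta^{\upharpoonright n}$ is the section $s\mapsto\beta(n,s)$, and that $\gamma\in\mathcal{G}_\beta$ holds if and only if $\exists p[\beta(\overline\gamma p)\neq 0]$, i.e. if and only if $\exists p[\overline\gamma p\in D_\beta]$. Hence the hypothesis $\forall n\exists\gamma[\gamma\in\mathcal{G}_{\alpha^{\upharpoonright n}}]$ unfolds to $\forall n\exists\gamma\exists p[\alpha(n,\overline\gamma p)\neq 0]$, and, since $\overline\gamma p$ is (a code for) a finite sequence, this is equivalent to the purely numerical statement $\forall n\exists s[\alpha(n,s)\neq 0]$; for the converse direction one notes that if $\alpha(n,s)\neq 0$ and $p:=\mathit{length}(s)$ then $\overline{s\ast\underline 0}\,p=s$, so $s\ast\underline 0\in\mathcal{G}_{\alpha^{\upharpoonright n}}$.

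Next I would extract a choice function and build the required $\gamma$. Define $\beta$ such that $\forall n\forall s[\beta(n,s)=0\leftrightarrow\alpha(n,s)\neq 0]$. From $\forall n\exists s[\alpha(n,s)\neq 0]$ we get $\forall n\exists s[\beta(n,s)=0]$, so the last clause of Axiom \ref{ax:recop} yields $\delta$ with $\forall n[\beta(n,\delta(n))=0]$, that is, $\forall n[\alpha(n,\delta(n))\neq 0]$. Now define $\gamma$ such that, for every $n$, $\gamma^{\upharpoonright n}=\delta(n)\ast\underline 0$; concretely, $\gamma(n,m)=\bigl(\delta(n)\bigr)(m)$ if $m<\mathit{length}(\delta(n))$ and $\gamma(n,m)=0$ otherwise. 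Then, for each $n$, writing $p:=\mathit{length}(\delta(n))$, we have $\overline{\gamma^{\upharpoonright n}}\,p=\delta(n)$, so $\alpha^{\upharpoonright n}(\overline{\gamma^{\upharpoonright n}}\,p)=\alpha(n,\delta(n))\neq 0$ and therefore $\gamma^{\upharpoonright n}\in\mathcal{G}_{\alpha^{\upharpoonright n}}$. This establishes $\exists\gamma\forall n[\gamma^{\upharpoonright n}\in\mathcal{G}_{\alpha^{\upharpoonright n}}]$.

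I do not expect any genuine obstacle: the content is precisely $\mathbf{\Sigma}^0_1$ countable choice over Baire space, which collapses to unbounded search once one observes that an inhabited $\mathbf{\Sigma}^0_1$ subset of $\omega^\omega$ contains a basic neighbourhood. The only points that ask for a little attention are the bookkeeping with the section notation $\gamma^{\upharpoonright n}$ and the routine check that the auxiliary functions $\beta$ and $\gamma$ are legitimately definable in $\mathsf{BIM}$, which is guaranteed by the availability of the primitive recursive functions together with the composition and recursion clauses of Axiom \ref{ax:recop}.
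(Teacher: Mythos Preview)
Your approach is essentially identical to the paper's: reduce $\gamma\in\mathcal{G}_{\alpha^{\upharpoonright n}}$ to the existence of a single number $s$ with $\alpha^{\upharpoonright n}(s)\neq 0$, collect such $s$'s into $\delta$ by unbounded search, and set $\gamma^{\upharpoonright n}=\delta(n)\ast\underline 0$. One small slip: in this paper $\beta^{\upharpoonright n}$ is defined by $\beta^{\upharpoonright n}(m)=\beta(\langle n\rangle\ast m)$, not via the pairing $\beta(n,m)$, so your explicit formula for $\gamma$ should read $\gamma(\langle n\rangle\ast m)=\bigl(\delta(n)\bigr)(m)$ rather than $\gamma(n,m)=\bigl(\delta(n)\bigr)(m)$; the argument is unaffected.
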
 \begin{proof}  Assume $\forall n \exists \gamma[ \gamma \in \mathcal{G}_{\alpha^{\upharpoonright n}}]$. Then $\forall n \exists s[\alpha^{\upharpoonright n}(  s)\neq 0]$. \\Find $\delta$ such that $\forall n[\delta(n)=\mu s[\alpha^{\upharpoonright n}( s) \neq 0]$]. Find $\gamma$ such that $\forall n[\gamma^{\upharpoonright n} =  \delta(n) \ast \underline 0]$. Note:
$\forall n[ \gamma^{\upharpoonright n} \in \mathcal{G}_{\alpha^{\upharpoonright n}}]$. \end{proof}

\subsection{} $\mathbf{\Pi}^0_1$-\textit{Second Axiom of Countable Choice}, $\mathbf{\Pi}^0_1$-$\mathbf{AC}_{\omega,\omega^\omega}$:
\[\forall \alpha[\forall n \exists \gamma[ \gamma \notin \mathcal{G}_{\alpha^{\upharpoonright n}}] \rightarrow \exists \gamma \forall n [\gamma^{\upharpoonright n} \notin \mathcal{G}_{\alpha^{\upharpoonright n}}]].\]

 \begin{theorem}\label{T:pi01acNn} $\mathsf{BIM}\vdash \mathbf{\Pi}^0_1$-$\mathbf{AC}_{\omega,\omega^\omega} \rightarrow \mathbf{\Pi}^0_1$-$\mathbf{AC}_{\omega,\omega}$. \end{theorem}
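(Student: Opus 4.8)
The plan is to feed the function‑valued axiom functions that are, up to their first value, constant, so that the ``second‑kind'' choice collapses to ordinary countable choice. Assume $\mathbf{\Pi}^0_1$-$\mathbf{AC}_{\omega,\omega^\omega}$ and let $\alpha$ be given with $\forall n\exists m[(n,m)\notin E_\alpha]$. The key construction is a function $\beta$ such that, for each $n$, the open set $\mathcal{G}_{\beta^{\upharpoonright n}}$ depends only on the first value of its argument and ``asks'' whether that value forms, with $n$, a pair enumerated by $\alpha$: I define $\beta$ so that for all $n$ and all $s$ with $\mathit{length}(s)\ge 1$, $\beta^{\upharpoonright n}(s)\neq 0$ if and only if $\exists i<\mathit{length}(s)[\alpha(i)=(n,s(0))+1]$, and $\beta^{\upharpoonright n}(\langle\;\rangle)=0$. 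Since this is a primitive recursive condition in $\alpha$, such a $\beta$ exists by the recursion axioms of $\mathsf{BIM}$, see Axiom \ref{ax:recop}.

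First I would verify the crucial equivalence: for every $n$ and every $\gamma$, $\gamma\in\mathcal{G}_{\beta^{\upharpoonright n}}$ if and only if $(n,\gamma(0))\in E_\alpha$. Indeed, $\gamma\in\mathcal{G}_{\beta^{\upharpoonright n}}$ means $\exists k[\beta^{\upharpoonright n}(\overline\gamma k)\neq 0]$, and, by the definition of $\beta$, this holds iff $\exists k\exists i<k[\alpha(i)=(n,\gamma(0))+1]$, i.e.\ iff $\exists i[\alpha(i)=(n,\gamma(0))+1]$, i.e.\ iff $(n,\gamma(0))\in E_\alpha$. Hence $\gamma\notin\mathcal{G}_{\beta^{\upharpoonright n}}$ iff $(n,\gamma(0))\notin E_\alpha$. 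Now, from the hypothesis $\forall n\exists m[(n,m)\notin E_\alpha]$ I obtain, for each $n$, a witness $m$, and then the sequence $\langle m\rangle\ast\underline 0$, which has first value $m$, lies outside $\mathcal{G}_{\beta^{\upharpoonright n}}$; thus $\forall n\exists\gamma[\gamma\notin\mathcal{G}_{\beta^{\upharpoonright n}}]$.

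Applying $\mathbf{\Pi}^0_1$-$\mathbf{AC}_{\omega,\omega^\omega}$ to $\beta$ then yields a single $\gamma$ with $\forall n[\gamma^{\upharpoonright n}\notin\mathcal{G}_{\beta^{\upharpoonright n}}]$, which by the equivalence means $\forall n[(n,\gamma^{\upharpoonright n}(0))\notin E_\alpha]$. Finally I define $\delta$ by $\delta(n)=\gamma^{\upharpoonright n}(0)$ for all $n$; $\delta$ is obtained from $\gamma$ by composition with a primitive recursive function and so exists by Axiom \ref{ax:recop}. Then $\forall n[(n,\delta(n))\notin E_\alpha]$, which is exactly what $\mathbf{\Pi}^0_1$-$\mathbf{AC}_{\omega,\omega}$ requires. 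I do not expect a serious obstacle here: the only points needing care are the bookkeeping in the definition of $\beta$ (so that $\mathcal{G}_{\beta^{\upharpoonright n}}$ really is $\{\gamma\mid(n,\gamma(0))\in E_\alpha\}$) and the remark that passing from $\gamma$ to the function $n\mapsto\gamma^{\upharpoonright n}(0)$ is available in $\mathsf{BIM}$; both are routine.
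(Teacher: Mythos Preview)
Your proof is correct and follows essentially the same approach as the paper's: you construct $\beta$ so that $\mathcal{G}_{\beta^{\upharpoonright n}}$ depends only on $\gamma(0)$ and detects whether $(n,\gamma(0))\in E_\alpha$, apply $\mathbf{\Pi}^0_1$-$\mathbf{AC}_{\omega,\omega^\omega}$, and read off $\delta(n)=\gamma^{\upharpoonright n}(0)$. The only cosmetic difference is the bound in the definition of $\beta$ (you use $i<\mathit{length}(s)$ where the paper uses $p\le a$, the code of $a$), which does not affect the argument.
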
\begin{proof} Let $\alpha$ be given such that $ \forall n \exists m  [(n, m) \notin E_{\alpha}]$. 
Define $\beta$ such that 

$\forall n \forall a[\beta^{\upharpoonright n}(a) \neq 0 \leftrightarrow \exists m\exists b  \exists p\le a[ \alpha(p)=(n,m)+1\;\wedge\;a=\langle m\rangle\ast b]] $. 

Note: $\forall n \forall m[(n,m)\in E_\alpha\leftrightarrow \forall \gamma[\langle m\rangle\ast\gamma \in \mathcal{G}_{\beta^{\upharpoonright n}}]]$. 

Conclude: $\forall n \exists \gamma [\gamma \notin \mathcal{G}_{\beta^{\upharpoonright n}}]$. Using $\mathbf{\Pi}^0_1$-$\mathbf{AC}_{\omega,\omega^\omega}$, find $\gamma$ such that $\forall n[\gamma^{\upharpoonright n} \notin \mathcal{G}_{\beta^{\upharpoonright n}}]$. Define $\delta$  such that $\forall n[\delta(n)=\gamma^{\upharpoonright n}(0)]$ and note: $\forall n [\bigl( n, \delta(n)\bigr)\notin E_\alpha]$. \end{proof}One may conclude that $\mathbf{\Pi}^0_1$-$\mathbf{AC}_{\omega,\omega^\omega}$  is unprovable in $\mathsf{BIM}$, see Subsection \ref{SSS:countbincunpr}.

   Not every $\mathbf{\Pi}^0_1$ subset of $\omega^\omega$ is a spread, see Lemma \ref{L:closedspread}. For spreads, which are a special kind of $\mathbf{\Pi}^0_1$ sets, countable choice is easier:
 \begin{theorem}$\mathsf{BIM}\vdash\forall \alpha[\bigl(\forall n[Spr(\alpha^{\upharpoonright n})]\;\wedge\;\forall n \exists\gamma[ \gamma \notin \mathcal{G}_{\alpha^{\upharpoonright n}}]\bigr) \rightarrow \exists \gamma \forall n [\gamma^{\upharpoonright n} \notin \mathcal{G}_{\alpha^{\upharpoonright n}}]]$.\end{theorem}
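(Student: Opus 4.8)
The plan is to exploit the fact that an \emph{inhabited spread} carries a canonically computable element, so that no genuine choice is needed and the trivially provable form of countable choice suffices. Fix $\alpha$ with $\forall n[Spr(\alpha^{\upharpoonright n})]$ and $\forall n\exists\gamma[\gamma\notin\mathcal{G}_{\alpha^{\upharpoonright n}}]$. First I would record two consequences. From $\gamma\notin\mathcal{G}_{\alpha^{\upharpoonright n}}$, i.e. $\forall k[\alpha^{\upharpoonright n}(\overline{\gamma}k)=0]$, taking $k=0$ gives $\alpha^{\upharpoonright n}(\langle\;\rangle)=0$ for every $n$. From $Spr(\alpha^{\upharpoonright n})$ we have $\forall s[\alpha^{\upharpoonright n}(s)=0\rightarrow\exists m[\alpha^{\upharpoonright n}(s\ast\langle m\rangle)=0]]$. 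Since $\alpha^{\upharpoonright n}(s)=\alpha(n,s)$ and $s\mapsto s\ast\langle m\rangle$ is primitive recursive on codes, both statements are about decidable matrices built primitive-recursively from $\alpha$.

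Next I would extract a ``leftmost successor'' function. Let $R$ be the decidable relation with $R\bigl((n,s),m\bigr)$ iff \emph{either} $\alpha^{\upharpoonright n}(s)\neq 0$ and $m=0$ \emph{or} $\alpha^{\upharpoonright n}(s)=0$ and $\alpha^{\upharpoonright n}(s\ast\langle m\rangle)=0$; it has a characteristic function primitive recursive in $\alpha$, so there is $\alpha'$ with $\alpha'(j,m)=0\leftrightarrow R(j,m)$. By the spread condition, $\forall j\exists m[\alpha'(j,m)=0]$, so $\mathbf{\Delta}^0_1$-$\mathbf{AC}_{\omega,\omega}$, which is provable in $\mathsf{BIM}$ (it is the last clause of Axiom \ref{ax:recop}), yields $f$ with $\forall j[R(j,f(j))]$. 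It is essential here that we invoke only this trivially provable instance of countable choice and never $\mathbf{\Pi}^0_1$-$\mathbf{AC}_{\omega,\omega^\omega}$ — that is precisely what distinguishes the present statement from Theorem \ref{T:pi01acNn}.

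Then I would build, using the primitive-recursion clause of Axiom \ref{ax:recop}, a function $g$ with $g(n,0)=\langle\;\rangle$ and $g\bigl(n,k+1\bigr)=g(n,k)\ast\langle f\bigl((n,g(n,k))\bigr)\rangle$ (the history-free recursion suffices because $g$ records initial-segment \emph{codes}). By induction on $k$ one checks, for every $n$: $\mathit{length}\bigl(g(n,k)\bigr)=k$, $g(n,k)\sqsubseteq g(n,k+1)$, and $\alpha^{\upharpoonright n}\bigl(g(n,k)\bigr)=0$, the base case using $\alpha^{\upharpoonright n}(\langle\;\rangle)=0$ and the step using $R\bigl((n,g(n,k)),f((n,g(n,k)))\bigr)$. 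Finally define $\gamma$ by $\gamma(n,k):=f\bigl((n,g(n,k))\bigr)$; a further induction gives $\overline{(\gamma^{\upharpoonright n})}k=g(n,k)$ for all $n,k$, whence $\forall k[\alpha^{\upharpoonright n}(\overline{(\gamma^{\upharpoonright n})}k)=0]$, i.e. $\gamma^{\upharpoonright n}\notin\mathcal{G}_{\alpha^{\upharpoonright n}}$, for every $n$, as required.

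The argument is routine; the only point needing care is the one flagged above, namely organising the construction so that picking an element of each spread reduces to the canonical ``take the least admissible value'' recipe, which needs nothing beyond $\mathbf{\Delta}^0_1$-$\mathbf{AC}_{\omega,\omega}$ and primitive recursion. A minor bookkeeping nuisance is the repeated coding of pairs $(n,s)$ as single numbers via $J$, but this is harmless.
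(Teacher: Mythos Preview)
Your proposal is correct and follows essentially the same approach as the paper: take, in each spread $\mathcal{F}_{\alpha^{\upharpoonright n}}$, the canonical leftmost path by always choosing the least admissible successor, using only unbounded search and primitive recursion. The paper compresses the whole argument into the single line $\gamma^{\upharpoonright n}(m)=\mu k[\alpha^{\upharpoonright n}\bigl((\overline{\gamma^{\upharpoonright n}}m)\ast\langle k\rangle\bigr)=0]$, while you have carefully unpacked this into the auxiliary functions $f$ and $g$ and made explicit the appeal to $\mathbf{\Delta}^0_1$-$\mathbf{AC}_{\omega,\omega}$ and Axiom~\ref{ax:recop}. One small notational slip: when you write ``define $\gamma$ by $\gamma(n,k):=\ldots$'' you mean $\gamma^{\upharpoonright n}(k):=\ldots$, i.e.\ $\gamma(\langle n\rangle\ast k):=\ldots$, since in this paper $\gamma(n,k)$ abbreviates $\gamma\bigl(J(n,k)\bigr)$, which is not the same coding; but this is purely cosmetic and does not affect the argument.
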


 \begin{proof} Let $\alpha$ be given such that $\forall n[ Spr(\alpha^{\upharpoonright n})]$ and $\forall n\exists \gamma[ \gamma \notin \mathcal{G}_{\alpha^{\upharpoonright n}}] $. \\Define  $\gamma$ such that, for each $n$, 
   for each $m$, $\gamma^{\upharpoonright n}(m)=\mu k[\alpha^{\upharpoonright n} \bigl(  (\overline{\gamma^{\upharpoonright n}}m)\ast\langle k \rangle \bigr) = 0]$. \end{proof}

 \subsection{}\textit{ Axiom Scheme of Countable Compact Choice}, $\mathbf{AC}_{\omega,\mathcal{C}}$: $$\forall n \exists \gamma \in \mathcal{C}[R(n,\gamma)]\rightarrow \exists \gamma \in \mathcal{C}\forall n[R(n, \gamma^{\upharpoonright n})].$$

  Here   is a   restricted version of $\mathbf{AC}_{\omega,\mathcal{C}}$:
 \subsection{} $\mathbf{\Pi}_1^0$-\textit{Axiom of Countable Compact 
  Choice}, $\mathbf{\Pi}_1^0$-$\mathbf{AC}_{\omega,\mathcal{C}}$:
  \[\forall \alpha[ \forall n \exists \gamma \in \mathcal{C} [\gamma \notin \mathcal{G}_{\alpha^{\upharpoonright n}}] \rightarrow \exists \gamma \in \mathcal{C} \forall n[\gamma^{\upharpoonright n} \notin \mathcal{G}_{\alpha^{\upharpoonright n}}]]\]
\begin{theorem} $\mathsf{BIM}\vdash \mathbf{\Pi}^0_1$-$\mathbf{AC}_{\omega,\mathcal{C}}\rightarrow\mathbf{\Pi}^0_1$-$\mathbf{AC}_{\omega, 2}$. \end{theorem}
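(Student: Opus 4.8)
The plan is to reuse, with a twist, the coding from the proof of Theorem~\ref{T:pi01acNn}: I will arrange that a choice of $m<2$ with $(n,m)\notin E_\alpha$ corresponds to a point of $\mathcal{C}$ avoiding an enumerable (``open'') set, so that $\mathbf{\Pi}^0_1$-$\mathbf{AC}_{\omega,\mathcal{C}}$ applies directly. So let $\alpha$ be given with $\forall n\exists m<2[(n,m)\notin E_\alpha]$. I would first define $\beta$ by
\[\beta^{\upharpoonright n}(a)\neq 0\;\leftrightarrow\;\exists m<2\,\exists b\,\exists p\le a\,[\alpha(p)=(n,m)+1\;\wedge\;a=\langle m\rangle\ast b],\]
which is primitive recursive in $\alpha$, so $\beta$ exists in $\mathsf{BIM}$; this is just the $\beta$ of Theorem~\ref{T:pi01acNn} with first coordinates cut down to $\{0,1\}$. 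Note also that, since $J$ is a bijection, $\gamma\in\mathcal{C}\leftrightarrow\forall n[\gamma^{\upharpoonright n}\in\mathcal{C}]$.

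The key fact to establish is: for every $\gamma\in\mathcal{C}$ and every $n$, $\gamma\in\mathcal{G}_{\beta^{\upharpoonright n}}\leftrightarrow (n,\gamma(0))\in E_\alpha$. For the implication $(n,\gamma(0))\in E_\alpha\rightarrow\gamma\in\mathcal{G}_{\beta^{\upharpoonright n}}$: if $\alpha(p_0)=(n,\gamma(0))+1$, then, choosing $k\ge 1$ with $\overline\gamma k\ge p_0$ (possible since $k\mapsto\overline\gamma k$ is non-decreasing and unbounded) and writing $\overline\gamma k=\langle\gamma(0)\rangle\ast b$ (recall $\gamma(0)<2$), we get $\beta^{\upharpoonright n}(\overline\gamma k)\neq 0$, so $\gamma\in\mathcal{G}_{\beta^{\upharpoonright n}}$. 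Conversely, if $(n,\gamma(0))\notin E_\alpha$, then $\beta^{\upharpoonright n}(\overline\gamma k)=0$ for all $k$: for $k=0$ because $\langle\;\rangle$ is not of the form $\langle m\rangle\ast b$, and for $k\ge 1$ because the only decomposition $\overline\gamma k=\langle m\rangle\ast b$ has $m=\gamma(0)$, so $\beta^{\upharpoonright n}(\overline\gamma k)\neq 0$ would force some $p$ with $\alpha(p)=(n,\gamma(0))+1$. Hence $\gamma\notin\mathcal{G}_{\beta^{\upharpoonright n}}$.

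With this in hand the argument is short. Given $n$, pick $m<2$ with $(n,m)\notin E_\alpha$; then $\langle m\rangle\ast\underline 0\in\mathcal{C}$ and, by the equivalence, $\langle m\rangle\ast\underline 0\notin\mathcal{G}_{\beta^{\upharpoonright n}}$. So $\forall n\exists\gamma\in\mathcal{C}[\gamma\notin\mathcal{G}_{\beta^{\upharpoonright n}}]$, and $\mathbf{\Pi}^0_1$-$\mathbf{AC}_{\omega,\mathcal{C}}$ applied to $\beta$ yields $\gamma\in\mathcal{C}$ with $\forall n[\gamma^{\upharpoonright n}\notin\mathcal{G}_{\beta^{\upharpoonright n}}]$. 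Put $\delta(n):=\gamma^{\upharpoonright n}(0)$. Since $\gamma\in\mathcal{C}$, each $\gamma^{\upharpoonright n}\in\mathcal{C}$, hence $\delta\in\mathcal{C}$; and the equivalence turns $\gamma^{\upharpoonright n}\notin\mathcal{G}_{\beta^{\upharpoonright n}}$ into $(n,\delta(n))\notin E_\alpha$. Thus $\delta\in\mathcal{C}$ and $\forall n[(n,\delta(n))\notin E_\alpha]$, i.e.\ $\mathbf{\Pi}^0_1$-$\mathbf{AC}_{\omega,2}$.

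There is no deep obstacle here; the step carrying the content is the equivalence in the second paragraph, and the one thing to watch — the reason this is not a formal corollary of Theorem~\ref{T:pi01acNn}, whose hypothesis $\mathbf{\Pi}^0_1$-$\mathbf{AC}_{\omega,\omega^\omega}$ is strictly stronger than $\mathbf{\Pi}^0_1$-$\mathbf{AC}_{\omega,\mathcal{C}}$ — is that cutting the code $\beta$ down to first coordinates $<2$ is precisely what keeps the witnessing sequences $\langle m\rangle\ast\underline 0$ inside $\mathcal{C}$ and makes the extracted $\delta$ automatically an element of $\mathcal{C}$.
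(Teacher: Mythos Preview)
Your proof is correct and is exactly the adaptation of the proof of Theorem~\ref{T:pi01acNn} that the paper has in mind (the paper leaves the argument to the reader, saying it is ``almost the same''). One small remark: the restriction $m<2$ you add in the definition of $\beta$ is actually unnecessary --- the witnesses $\langle m\rangle\ast\underline 0$ lie in $\mathcal{C}$ because the \emph{hypothesis} already gives $m<2$, and for $\gamma\in\mathcal{C}$ the first coordinate of any $\overline\gamma k$ is automatically $<2$ --- but this is harmless.
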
\begin{proof} The proof is almost the same as the proof of Theorem \ref{T:pi01acNn} and is left to the reader.
\end{proof} 
  
  We may conclude: $\mathbf{\Pi}^0_1$-$\mathbf{AC}_{\omega,\mathcal{C}}$ is unprovable in $\mathsf{BIM}$, see Subsection \ref{SSS:countbincunpr}.  
 
 \smallskip
The treatment of real numbers in $\mathsf{BIM}$ is sketched in Subsection \ref{SSS:reals}. 
\subsection{}$\mathbf{\Pi}^0_1$-$\mathbf{AC}_{\omega,[0,1]}$:
  \[\forall \alpha[ \forall n \exists \delta \in [0,1] [\delta \notin \mathcal{H}_{\alpha^{\upharpoonright n}}] \rightarrow \exists \delta \in [0,1]^\omega \forall n[\delta^{\upharpoonright n} \notin \mathcal{H}_{\alpha^{\upharpoonright n}}]].\]
  
  \begin{theorem}\label{T:compactchoice} $\mathsf{BIM}\vdash \mathbf{\Pi}^0_1$-$\mathbf{AC}_{\omega,\mathcal{C}}\leftrightarrow \mathbf{\Pi}_1^0$-$\mathbf{AC}_{\omega,[0,1]}$.\end{theorem}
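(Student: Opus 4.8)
The plan is to prove the equivalence $\mathbf{\Pi}^0_1\text{-}\mathbf{AC}_{\omega,\mathcal{C}}\leftrightarrow\mathbf{\Pi}^0_1\text{-}\mathbf{AC}_{\omega,[0,1]}$ by exhibiting, in each direction, a uniform translation of the hypothesis-and-conclusion data of one scheme into that of the other. The underlying idea is the standard one: Cantor space $\mathcal{C}$ and the unit interval $[0,1]$ are, constructively in $\mathsf{BIM}$, related by the binary-expansion map, but since that map is neither injective nor open one must be careful to phrase everything in terms of the $\mathbf{\Pi}^0_1$-presentations $\mathcal{G}_\alpha$ (of closed-in-$\mathcal{C}$ sets) and $\mathcal{H}_\alpha$ (of closed-in-$[0,1]$ sets), and to keep track of how these presentations transform.

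First I would treat the direction $\mathbf{\Pi}^0_1\text{-}\mathbf{AC}_{\omega,\mathcal{C}}\rightarrow\mathbf{\Pi}^0_1\text{-}\mathbf{AC}_{\omega,[0,1]}$. Given $\alpha$ with $\forall n\exists\delta\in[0,1][\delta\notin\mathcal{H}_{\alpha^{\upharpoonright n}}]$, I would, for each $n$, construct from $\alpha^{\upharpoonright n}$ a function $\beta^{\upharpoonright n}$ presenting a closed subset of $\mathcal{C}$ whose image under the binary-expansion map is exactly $\mathcal{H}_{\alpha^{\upharpoonright n}}\cap[0,1]$: a point $\gamma\in\mathcal{C}$ should lie in $\mathcal{G}_{\beta^{\upharpoonright n}}$ iff the real it codes lies in $\mathcal{H}_{\alpha^{\upharpoonright n}}$. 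Then $\forall n\exists\gamma\in\mathcal{C}[\gamma\notin\mathcal{G}_{\beta^{\upharpoonright n}}]$ follows from the hypothesis (pick a binary expansion of the promised $\delta$ that actually avoids the closed set — here one must check that a witness $\delta\notin\mathcal{H}$ genuinely lifts to $\gamma\notin\mathcal{G}$, which is where the definition of $\beta$ must be chosen with an eye to a positive distance), apply $\mathbf{\Pi}^0_1\text{-}\mathbf{AC}_{\omega,\mathcal{C}}$ to get $\gamma$ with $\forall n[\gamma^{\upharpoonright n}\notin\mathcal{G}_{\beta^{\upharpoonright n}}]$, and finally read off the reals $\delta^{\upharpoonright n}$ coded by the $\gamma^{\upharpoonright n}$, noting $\delta^{\upharpoonright n}\in[0,1]$ and $\delta^{\upharpoonright n}\notin\mathcal{H}_{\alpha^{\upharpoonright n}}$.

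Next I would do the converse $\mathbf{\Pi}^0_1\text{-}\mathbf{AC}_{\omega,[0,1]}\rightarrow\mathbf{\Pi}^0_1\text{-}\mathbf{AC}_{\omega,\mathcal{C}}$, which is the easier of the two: $\mathcal{C}$ embeds (constructively, as a located closed subset) into $[0,1]$ via the map sending $\gamma$ to $\sum 2\gamma(i)/3^{i+1}$, whose range is the (constructive) Cantor middle-thirds set. Given $\alpha$ with $\forall n\exists\gamma\in\mathcal{C}[\gamma\notin\mathcal{G}_{\alpha^{\upharpoonright n}}]$, I would build $\beta$ so that $\mathcal{H}_{\beta^{\upharpoonright n}}$ is the image in $[0,1]$ of $\mathcal{G}_{\alpha^{\upharpoonright n}}$ together with the complement of the ternary Cantor set, so that a real avoids $\mathcal{H}_{\beta^{\upharpoonright n}}$ exactly when it is the ternary code of some $\gamma\in\mathcal{C}$ with $\gamma\notin\mathcal{G}_{\alpha^{\upharpoonright n}}$; apply $\mathbf{\Pi}^0_1\text{-}\mathbf{AC}_{\omega,[0,1]}$, then decode. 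I expect the main obstacle to be the bookkeeping in the first direction: making the translation $\alpha\mapsto\beta$ genuinely arithmetical (so that $\beta$ is a legitimate function definable from $\alpha$ in $\mathsf{BIM}$) while ensuring that membership in $\mathcal{G}_{\beta^{\upharpoonright n}}$ really does mirror membership in $\mathcal{H}_{\alpha^{\upharpoonright n}}$ on the nose — the subtlety being the dyadic rationals with two expansions and the need to phrase ``$\delta\notin\mathcal{H}$'' constructively as ``$\delta$ is positively apart from every point of the closed set'', so that the apartness transfers up and down the coding maps. Since the author writes ``The proof is almost the same as'' for a neighbouring theorem, I would expect the actual write-up to lean on the treatment of reals sketched in the referenced Subsection and to leave the routine verifications to the reader.
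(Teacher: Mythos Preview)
Your overall architecture (surjection $\mathcal{C}\to[0,1]$ for one direction, Cantor-set embedding $\mathcal{C}\hookrightarrow[0,1]$ for the other) matches the paper's, but two concrete points need repair.

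First, the binary-expansion map is not constructively a surjection in $\mathsf{BIM}$: given an arbitrary $\delta\in[0,1]$ you cannot compute $\gamma\in\mathcal{C}$ with $\sum\gamma(i)2^{-i-1}=\delta$, because deciding the first bit already needs $\delta\le\tfrac12\vee\delta\ge\tfrac12$, which is $\mathbf{LLPO}$. Your remark about ``positive distance'' does not fix this: $\mathcal{H}_{\alpha^{\upharpoonright n}}$ is \emph{open}, so $\delta\notin\mathcal{H}_{\alpha^{\upharpoonright n}}$ is merely a $\mathbf{\Pi}^0_1$ condition, not an apartness from a closed set, and gives no room to perturb $\delta$. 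The paper's Lemma~\ref{L:Conto[0,1]} sidesteps this by using \emph{overlapping} thirds (left two-thirds, right two-thirds): at each stage one decides $\delta<\tfrac23$ versus $\delta>\tfrac13$, which \emph{is} constructive by cotransitivity, and the resulting $\sigma:\mathcal{C}\to[0,1]$ is a genuine surjection with a uniform translation $\psi$ of open sets. With that in hand the direction is indeed short.

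Second, you have the relative difficulty backwards. In the embedding direction, after applying $\mathbf{\Pi}^0_1$-$\mathbf{AC}_{\omega,[0,1]}$ you obtain $\delta\in[0,1]^\omega$ with each $\delta^{\upharpoonright n}\notin\mathcal{H}_{\chi|(\alpha^{\upharpoonright n})}$, and you must now produce $\gamma\in\mathcal{C}$. Your ``then decode'' hides the real work: one cannot simply read off ternary digits from an arbitrary real in the closed Cantor set. The paper's Lemma~\ref{L:Cinto[0,1]} (using fifths, but your thirds would do) supplies an extra clause: from $\delta$ one builds $\gamma\in\mathcal{C}$ with the property that if $\delta^{\upharpoonright n}\;\#_\mathcal{R}\;\tau|(\gamma^{\upharpoonright n})$ then $\delta^{\upharpoonright n}\in\mathcal{H}_{\chi|(\alpha^{\upharpoonright n})}$. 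Since the latter fails by hypothesis, one gets $\delta^{\upharpoonright n}=_\mathcal{R}\tau|(\gamma^{\upharpoonright n})$ and hence $\gamma^{\upharpoonright n}\notin\mathcal{G}_{\alpha^{\upharpoonright n}}$. The construction of $\gamma$ uses that the embedding leaves genuine gaps, so at each stage a $\mu$-search decides which side of the gap $\delta^{\upharpoonright n}$ falls on; if it falls into a gap the conditional conclusion kicks in.
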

  
  \begin{proof}

First assume $\mathbf{\Pi}^0_1$-$\mathbf{AC}_{\omega,\mathcal{C}}$.

Using Lemma \ref{L:Conto[0,1]}, find $\sigma: \mathcal{C} \rightarrow [0,1]$ and $\psi:\omega^\omega\rightarrow\omega^\omega$ such that \begin{enumerate}[\upshape (1)] \item  $\forall \delta \in [0,1]  \exists \gamma\in\mathcal{C}[\delta =_\mathcal{R} \sigma|\gamma]$ and  \item   $\forall \alpha\forall \gamma \in \mathcal{C}[\gamma\in \mathcal{G}_{\psi|\alpha}\leftrightarrow \sigma|\gamma \in \mathcal{H}_\alpha]$. \end{enumerate}

Let $\alpha$ be given such that $\forall n \exists \delta \in [0,1] [\delta \notin \mathcal{H}_{\alpha^{\upharpoonright n}}]$. Then $\forall n \exists \gamma \in \mathcal{C}[\sigma|\gamma \notin \mathcal{H}_{\alpha^{\upharpoonright n}}]$ and $\forall n \exists \gamma \in \mathcal{C}[\gamma \notin \mathcal{G}_{\psi|(\alpha^{\upharpoonright n})}]$. Find $\gamma$ in $\mathcal{C}$ such that $\forall n[\gamma^{\upharpoonright n}\notin \mathcal{G}_{\psi|(\alpha^{\upharpoonright n})}]$. \\Conclude: $\forall n[\sigma|\gamma^{\upharpoonright n}  \notin \mathcal{H}_{\alpha^{\upharpoonright n}}]$ and $\exists \delta\forall n[\delta^{\upharpoonright n}\notin \mathcal{H}_{\alpha^{\upharpoonright n}}]$. 

\smallskip
Now assume $\mathbf{\Pi}^0_1$-$\mathbf{AC}_{\omega,[0,1]}$.

Using Lemma \ref{L:Cinto[0,1]}, find $\tau:\mathcal{C}\rightarrow [0,1]$ and $\chi:\omega^\omega\rightarrow\omega^\omega$ such that  \begin{enumerate}[\upshape (1)] \item $\forall \gamma \in \mathcal{C}\forall\delta\in\mathcal{C}[\gamma\;\#\;\delta\rightarrow \tau|\gamma\;\#_\mathcal{R}\;\tau|\delta]$, 
 and  \item $\forall \alpha \forall \gamma \in \mathcal{C}[\gamma \in \mathcal{G}_\alpha\leftrightarrow \tau|\gamma \in \mathcal{H}_{\chi|\alpha}]$. and \item $\forall\alpha\forall\delta\in [0,1]^\omega\exists \gamma \in \mathcal{C}\forall n[\delta^{\upharpoonright n}\;\#_\mathcal{R}\;\tau|(\gamma^{\upharpoonright n}) \rightarrow \delta^{\upharpoonright n} \in \mathcal{H}_{\chi|(\alpha^{\upharpoonright n})}]$. \end{enumerate}

Let $\alpha$ be given such that $\forall n \exists \gamma \in \mathcal{C}[\gamma \notin \mathcal{G}_{\alpha^{\upharpoonright n}}]$. 

Conclude: $\forall n\exists \gamma\in\mathcal{C}[\tau|\gamma\notin\mathcal{H}_{\chi|(\alpha^{\upharpoonright n})}]$ and: $\forall n\exists \delta\in[0,1][\delta\notin\mathcal{H}_{\chi|(\alpha^{\upharpoonright n})}]$.

 Using $\mathbf{\Pi}^0_1$-$\mathbf{AC}_{\omega,[0,1]}$, find $\delta$ in $[0,1]^\omega$ such that  $\forall n[\delta^{\upharpoonright n} \notin \mathcal{H}_{\chi|(\alpha^{\upharpoonright n})}]$.

Using (3),  find $\gamma$ in $\mathcal{C}$ such that $\forall n[\delta^{\upharpoonright n}\;\#_\mathcal{R}\;\tau|(\gamma^{\upharpoonright n}) \rightarrow \delta^{\upharpoonright n} \in \mathcal{H}_{\chi|(\alpha^{\upharpoonright n})}]$. 
 
 Conclude: $\forall n[\delta^{\upharpoonright n}=_\mathcal{R} \tau|(\gamma^{\upharpoonright n})]]$,
and, using (2):  $\forall n[\gamma^{\upharpoonright n} \notin \mathcal{G}_{\alpha^{\upharpoonright n}}]$.
 
 Clearly, $\exists \gamma \in \mathcal{C}\forall n[\gamma^{\upharpoonright n} \notin \mathcal{G}_{\alpha^{\upharpoonright n}}]$.
 
 \end{proof}
 \section{Contrapositions of some special cases of $\mathbf{AC}_{\omega,\omega^\omega} $}
 
 The  axiom $\mathbf{\Delta}^0_1$-$\overleftarrow{\mathbf{AC}_{\omega,\omega^\omega}}$   implies $\mathbf{\Delta}^0_1$-$\overleftarrow{\mathbf{AC}_{\omega,\omega}}$ and therefore $\mathbf{LPO}$. 
  Let us consider an \textit{Axiom Scheme of Reverse   Countable Compact Choice}:
 
 \subsection{}
 $\overleftarrow{\mathbf{AC}_{\omega,\mathcal{C}}}$:
 $\forall \gamma \in \mathcal{C}\exists n[R(n,\gamma^{\upharpoonright n})]\rightarrow \exists n \forall \gamma \in \mathcal{C}[R(n,\gamma)]$.

In  \cite{veldman82} it is shown that $\overleftarrow{\mathbf{AC}_{\omega,\mathcal{C}}}$ is a consequence of the First Axiom of Continuous Choice $\mathbf{AC}_{\omega^\omega,\omega}$ and  \textbf{FAN}.
  
  We now   require the relation $R$ to be $\mathbf{\Sigma}^0_1$ and obtain the  $\mathbf{\Sigma}^0_1$-\textit{Axiom of Reverse Countable Compact Choice}:\subsection{}$\mathbf{\Sigma}^0_1$-$\overleftarrow{\mathbf{AC}_{\omega,\mathcal{C}}}$:
  $\forall \alpha[  \forall \gamma \in \mathcal{C}
  \exists n [\gamma^{\upharpoonright n} \in \mathcal{G}_{\alpha^{\upharpoonright n}}] \rightarrow \exists n[  \mathcal{C} \subseteq \mathcal{G}_{\alpha^{\upharpoonright n}}] ].$

We also introduce a strong negation:  
  
                                                                                                                                                                                                                                                                                                                                                                                                                                                                                                                                                                                                                                                                                                                                                                                                                                                                                                                                                                                                                                                                                                                                                                                                                                                                                                                                                                                                                                                                                                                                                                                                                                                                               \subsection{}\label{SS:omegacantor}$\neg !\bigl(\mathbf{\Sigma}^0_1$-$\overleftarrow{\mathbf{AC}_{\omega,\mathcal{C}}})$:
  $\exists \alpha[  \forall \gamma \in \mathcal{C}
  \exists n [\gamma^{\upharpoonright n} \in \mathcal{G}_{\alpha^{\upharpoonright n}}]\;\wedge\; \neg \exists n[ \mathcal{C} \subseteq \mathcal{G}_{\alpha^{\upharpoonright n}}] ].$

\smallskip

We  introduce a `real'  version:
\subsection{}$\mathbf{\Sigma}^0_1$-$\overleftarrow{\mathbf{AC}_{\omega,[0,1]}}$:
  $\forall \alpha[  \forall \delta \in [0,1]^\omega
  \exists n [\delta^{\upharpoonright n} \in \mathcal{H}_{\alpha^{\upharpoonright n}}] \rightarrow \exists n [ [0,1] \subseteq \mathcal{H}_{\alpha^{\upharpoonright n}}]].$

 \smallskip and a strong negation:
 \subsection{}\label{SS:omega01}$\neg !(\mathbf{\Sigma}^0_1$-$\overleftarrow{\mathbf{AC}_{\omega,[0,1]}})$:
  $\exists \alpha [  \forall \delta \in [0,1]^\omega
  \exists n [\delta^{\upharpoonright n} \in \mathcal{H}_{\alpha^{\upharpoonright n}}] \;\wedge\; \neg\exists n[  [0,1] \subseteq \mathcal{H}_{\alpha^{\upharpoonright n}}]].$

\medskip
The treatment of real numbers in $\mathsf{BIM}$ is sketched in Subsection \ref{SSS:reals}.
\begin{lemma}\label{L:b45}
 $\mathsf{BIM}$ proves:
\begin{enumerate}[\upshape(i)]
\item $\mathbf{FT}\rightarrow \mathbf{\Sigma}^0_1$-$\overleftarrow{\mathbf{AC}_{\omega,\mathcal{C}}}$ and 
  $  \neg !(\mathbf{\Sigma}^0_1$-$\overleftarrow{\mathbf{AC}_{\omega,\mathcal{C}}})\rightarrow\neg!\mathbf{FT}$.
\item $\mathbf{\Sigma}^0_1$-$\overleftarrow{\mathbf{AC}_{\omega,\mathcal{C}}}\rightarrow \mathbf{\Sigma}^0_1$-$\overleftarrow{\mathbf{AC}_{\omega,[0,1]}}$ and 
$\neg !(\mathbf{\Sigma}^0_1$-$\overleftarrow{\mathbf{AC}_{\omega,[0,1]}})\rightarrow \neg !(\mathbf{\Sigma}^0_1$-$\overleftarrow{\mathbf{AC}_{\omega,\mathcal{C}}})$

\item$\mathbf{\Sigma}^0_1$-$\overleftarrow{\mathbf{AC}_{\omega,[0,1]}}\rightarrow \mathbf{\Sigma}^0_1$-$\overleftarrow{\mathbf{AC}_{\omega,2}}$ and 
$\neg !(\mathbf{\Sigma}^0_1$-$\overleftarrow{\mathbf{AC}_{\omega,2}})\rightarrow \neg !(\mathbf{\Sigma}^0_1$-$\overleftarrow{\mathbf{AC}_{\omega,[0,1]}})$ \end{enumerate}
\end{lemma}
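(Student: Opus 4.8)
The plan is to handle the three items in the order (i), then (ii), then (iii), since (ii) and (iii) are essentially "transport" lemmas along the maps between Cantor space, the unit interval, and $\mathbf{2}=\{0,1\}$, while (i) is the genuine combinatorial heart. For each item I would follow the now-familiar pattern of the excerpt (cf.\ Lemma \ref{L:3resolutions} and Lemma \ref{L:b3}): given $\alpha$, construct a single $\beta$ so that the hypothesis of one statement for $\beta$ implies the hypothesis of the other for $\alpha$, and the conclusion for $\alpha$ implies the conclusion for $\beta$; then both the positive implication and the strong-negation implication fall out at once, explaining the "common ground" the author keeps emphasising.

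For (i), the forward direction $\mathbf{FT}\rightarrow\mathbf{\Sigma}^0_1$-$\overleftarrow{\mathbf{AC}_{\omega,\mathcal{C}}}$: given $\alpha$ with $\forall\gamma\in\mathcal{C}\exists n[\gamma^{\upharpoonright n}\in\mathcal{G}_{\alpha^{\upharpoonright n}}]$, I would build $\beta$ so that $Bar_\mathcal{C}(D_\beta)$ encodes, via a coding $Cod$ of finite sequences of finite sequences into $Bin$ analogous to $Cod_2$ in the proof of Lemma \ref{L:b3}(ii), the statement that along every $\delta\in\mathcal{C}$ one eventually sees an initial segment witnessing $\gamma^{\upharpoonright n}\in\mathcal{G}_{\alpha^{\upharpoonright n}}$ for some $n$; here the extra subtlety over the $\mathbf{AC}_{\omega,<\omega}$ case is that each "coordinate" $\gamma^{\upharpoonright n}$ ranges over all of $\mathcal{C}$ rather than a finite set, so the coding must interleave the digits of $\gamma^{\upharpoonright 0},\gamma^{\upharpoonright 1},\dots$ into a single branch of $\mathcal{C}$ — a bijective pairing of $\omega\times\omega$ with $\omega$ applied at the level of digit-positions, exactly mirroring the informal step-by-step construction the author gives for $\mathbf{AC}_{\omega,\omega^\omega}$ in \ref{SS:fa}. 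Applying $\mathbf{FT}$ to $Bar_\mathcal{C}(D_\beta)$ yields $\exists m[Bar_\mathcal{C}(D_{\overline\beta m})]$, i.e.\ a uniform bound, from which one reads off $\exists n[\mathcal{C}\subseteq\mathcal{G}_{\alpha^{\upharpoonright n}}]$ by a compactness argument on the finitely many relevant sequences $s$ of length $\le m$. The strong-negation implication $\neg!(\mathbf{\Sigma}^0_1$-$\overleftarrow{\mathbf{AC}_{\omega,\mathcal{C}}})\rightarrow\neg!\mathbf{FT}$ then follows by feeding the witnessing $\alpha$ through the same $\beta$-construction and contraposing. The converse inside (i) need not be proved separately: once (iii) gives $\mathbf{\Sigma}^0_1$-$\overleftarrow{\mathbf{AC}_{\omega,[0,1]}}\rightarrow\mathbf{\Sigma}^0_1$-$\overleftarrow{\mathbf{AC}_{\omega,2}}$, one chains (i)$\Rightarrow$(ii)$\Rightarrow$(iii)$\Rightarrow$ Theorem \ref{T:contrac<2} to close the loop, though the Lemma as stated only asks for the listed implications.

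For (ii) and (iii) I would invoke the continuous surjection-type lemmas that the excerpt already cites — Lemma \ref{L:Conto[0,1]} and Lemma \ref{L:Cinto[0,1]} — to transport sets back and forth between $\mathcal{C}$ and $[0,1]$. For (ii), $\mathbf{\Sigma}^0_1$-$\overleftarrow{\mathbf{AC}_{\omega,\mathcal{C}}}\rightarrow\mathbf{\Sigma}^0_1$-$\overleftarrow{\mathbf{AC}_{\omega,[0,1]}}$: given $\alpha$ governing $\mathcal{H}$-sets on $[0,1]$, pull back along $\sigma:\mathcal{C}\to[0,1]$ to an $\alpha'$ with $\gamma^{\upharpoonright n}\in\mathcal{G}_{(\alpha')^{\upharpoonright n}}\leftrightarrow\sigma|\gamma^{\upharpoonright n}\in\mathcal{H}_{\alpha^{\upharpoonright n}}$, use surjectivity of $\sigma$ to convert $\forall\delta\in[0,1]^\omega\exists n[\dots]$ into $\forall\gamma\in\mathcal{C}\exists n[\dots]$, apply the Cantor version, and push the uniform bound $\mathcal{C}\subseteq\mathcal{G}_{(\alpha')^{\upharpoonright n}}$ forward to $[0,1]\subseteq\mathcal{H}_{\alpha^{\upharpoonright n}}$ again by surjectivity; contraposing gives the strong-negation half. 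For (iii), $\mathbf{\Sigma}^0_1$-$\overleftarrow{\mathbf{AC}_{\omega,[0,1]}}\rightarrow\mathbf{\Sigma}^0_1$-$\overleftarrow{\mathbf{AC}_{\omega,2}}$, I would embed $\mathbf{2}$ into $[0,1]$ as $\{0,1\}$ (or use $\mathcal{C}\hookrightarrow[0,1]$ via $\tau$ from Lemma \ref{L:Cinto[0,1]}) and restrict, encoding a $\mathbf{\Sigma}^0_1$ relation $(n,i)\in E_\alpha$ on $\omega\times 2$ as an $\mathcal{H}$-condition that is "don't care" off the two points $0,1$; the apartness clause (1) of Lemma \ref{L:Cinto[0,1]} is what lets the $[0,1]$-hypothesis descend to a genuine hypothesis over the two-point set, and clause (3) is what repairs the approximate choice back to an exact one.

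The main obstacle I anticipate is the coding in part (i): making precise the interleaving $Cod:\bigcup_k(\omega^{<\omega})^k\to Bin$ so that (a) every $\delta\in\mathcal{C}$ has the form "initial segments of some $\gamma^{\upharpoonright 0},\gamma^{\upharpoonright 1},\dots$ padded out", (b) the bar $D_\beta$ fires exactly when a witness $\gamma^{\upharpoonright n}\in\mathcal{G}_{\alpha^{\upharpoonright n}}$ appears, and (c) the finitary bound produced by $\mathbf{FT}$ genuinely yields $\mathcal{C}\subseteq\mathcal{G}_{\alpha^{\upharpoonright n}}$ for a \emph{single} $n$ rather than a finite disjunction of such statements — this last point needs the $\mathsf{BIM}$-provable combinatorial scheme (the one cited from Lemma \ref{L:comb0} in the proof of Theorem \ref{T:ftwkl!}) that turns $\forall k<N\,\exists n<N\,A(k,n)$-style finite data into a uniform witness. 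Once the coding is set up carefully, the $\neg!$-versions are immediate contrapositions and items (ii), (iii) are routine transport along the interval/Cantor dictionary.
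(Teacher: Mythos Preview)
Your overall architecture is right and matches the paper's: for each item, build a single $\beta$ from $\alpha$ so that hypothesis transports forward and conclusion transports back, yielding both the positive implication and its strong-negation companion at once. Item (ii) is exactly as you say, via Lemma~\ref{L:Conto[0,1]}.

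For (i), the paper's implementation is simpler than the $Cod_2$-style coding you sketch. Since the coordinates $\gamma^{\upharpoonright n}$ are already binary, no unary padding is needed: the paper just uses the built-in subsequence operator, defining $\beta(s)\neq 0$ iff $s\in Bin$ and some $\alpha^{\upharpoonright n}(\overline{s^{\upharpoonright n}}p)\neq 0$ for $n<\mathit{length}(s)$ and $p\le\mathit{length}(s^{\upharpoonright n})$. The positions $\langle n\rangle\ast m$ for distinct $n$ are disjoint, so the ``gluing'' of witnesses in the backward step is immediate. Your worry about extracting a single $n$ is resolved not via Lemma~\ref{L:comb0} but by a decidable finite case split: for each $n<m$ one can \emph{decide} whether $Bar_\mathcal{C}(D_{\overline{\alpha^{\upharpoonright n}}m})$ holds; if none does, glue the counterexamples into one $s\in Bin_m$ contradicting $Bar_\mathcal{C}(D_{\overline\beta m})$.

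For (iii), your first option (embed $\{0,1\}$ in $[0,1]$ with ``don't care'' elsewhere) is essentially what the paper does, but the clean formulation is: make $\mathcal{H}_{\beta^{\upharpoonright n}}$ contain $[0,1)$ iff $(n,0)\in E_\alpha$ and contain $(0,1]$ iff $(n,1)\in E_\alpha$. Given $\delta\in[0,1]^\omega$, one decides for each $n$ whether $\delta^{\upharpoonright n}<_\mathcal{R}1$ or $\delta^{\upharpoonright n}>_\mathcal{R}0$ (a decidable alternative from the shrinking-interval representation), sets $\gamma(n)$ accordingly, and applies the $\omega\times 2$ hypothesis. Lemma~\ref{L:Cinto[0,1]} is not needed here and would be heavier machinery than the problem warrants.
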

\begin{proof} 
(i)\footnote{The argument may be compared to the arguments for Lemma \ref{L:3resolutions}(i) and for Lemma \ref{L:b3}(ii).} 
 We prove, in $\mathsf{BIM}$: for each $\alpha$, there exists $\beta$ such that 
 $$\forall \gamma \in \mathcal{C} \exists n [\gamma^{\upharpoonright n} \in \mathcal{G}_{\alpha^{\upharpoonright n}}] \rightarrow Bar_\mathcal{C}(D_\beta)\;\mathrm{and}\;\exists m[Bar_\mathcal{C}(D_{\overline\beta m})] \rightarrow \exists n [\mathcal{C} \subseteq \mathcal{G}_{\alpha^{\upharpoonright n}}].$$ 
 The two promised statements then follow easily. 
 
  Let  $\alpha$ be given. Define $\beta$ such that, for every $s$,
  
$\beta(s) \neq 0\leftrightarrow \bigl(s \in \mathit{Bin}\;\wedge\; \exists n < \mathit{length}(s)\exists p \le \mathit{length}(s^{\upharpoonright n})[\alpha^{\upharpoonright n}( \overline {s^{\upharpoonright n}}p) \neq 0]\bigr)$.

Assume:  $\forall \gamma \in \mathcal{C} \exists n \exists p[\alpha^{\upharpoonright n}(\overline{\gamma^{\upharpoonright n}} p)\neq 0]$.\\ Clearly $\forall \gamma \in \mathcal{C}\exists n[\beta(\overline \gamma n)\neq 0]$, i.e. $Bar_\mathcal{C}(D_\beta)$.

Let $m$  be given such that $Bar_\mathcal{C}(D_{\overline\beta m})$.  Suppose there is no $n < m$ such that $Bar_\mathcal{C}(D_{\overline{\alpha^{\upharpoonright n}}m})$.  For each $n<m$, there exists $u$ in $\mathit{Bin}$ such that $\mathit{length}(u)=m$ and $u$ does not meet $D_{\alpha^n}$.  Let $s$ be an element of $\mathit{Bin}$ such that $\mathit{length}(s) = m$ and, for each $n<m$, $s^n$ does not meet $D_{\alpha^{\upharpoonright n}}$.     Note:  $s$ does not meet $D_{\beta}$. Contradiction. Thus we see there must exist   $n<m$ such that
every $s$ in $\mathit{Bin}_m$  meets $D_{\alpha^{\upharpoonright n}}$.  Conclude: $\exists n[\mathcal{C} \subseteq \mathcal{G}_{\alpha^{\upharpoonright n}}]$.

\smallskip
(ii)\footnote{The argument may be compared to the argument for the first half of Theorem \ref{T:compactchoice}.}  We prove, in $\mathsf{BIM}$: for each $\alpha$, there exists $\beta$ such that 
 $$\forall \delta \in [0,1]^\omega \exists n [\delta^{\upharpoonright n} \in \mathcal{H}_{\alpha^{\upharpoonright n}}] \rightarrow \forall \gamma \in \mathcal{C} \exists n [\gamma^{\upharpoonright n} \in \mathcal{G}_{\beta^{\upharpoonright n}}]\;\mathrm{and}$$ $$\exists n [\mathcal{C} \subseteq \mathcal{G}_{\beta^{\upharpoonright n}}] \rightarrow \exists n [[0,1] \subseteq \mathcal{H}_{\alpha^{\upharpoonright n}}].$$ 

Using Lemma \ref{L:Conto[0,1]}, find 
 $\sigma:\mathcal{C}\rightarrow [0,1]$ and $\psi:\omega^\omega\rightarrow\omega^\omega$ such that  \\ $\forall \delta \in [0,1]\exists \gamma \in \mathcal{C}[\sigma|\gamma =_\mathcal{R} \delta]$ and   $\forall \alpha\forall\gamma \in\mathcal{C}[\gamma \in \mathcal{G}_{\psi|\alpha}\leftrightarrow \sigma|\gamma \in \mathcal{H}_{\alpha}]$.

  Let $\alpha$ be given. Define $\beta$ such that, for every $n$, $\beta^{\upharpoonright n} = \psi|(\alpha^{\upharpoonright n})$.

 Assume  $\forall \delta \in [0,1]^\omega\exists n[\delta^{\upharpoonright n} \in \mathcal{H}_{\alpha^{\upharpoonright n}}]$. Then $\forall \gamma \in \mathcal{C}\exists n[\sigma|(\gamma^{\upharpoonright n}) \in \mathcal{H}_{\alpha^{\upharpoonright n}}]$ and\\
 $\forall \gamma \in \mathcal{C} \exists n[\gamma^{\upharpoonright n} \in \mathcal{G}_{\psi|(\alpha^{\upharpoonright n})}]$ and:
 $\forall \gamma \in \mathcal{C}\exists n [\gamma^{\upharpoonright n} \in \mathcal{G}_{\beta^{\upharpoonright n}}]$.

Let $n$ be given such that such that $\mathcal{C} \subseteq \mathcal{G}_{\beta^{\upharpoonright n}}=\mathcal{G}_{\psi|(\alpha^{\upharpoonright n})}$. 

Note: $\forall \gamma \in \mathcal{C}[ \sigma|\gamma \in \mathcal{H}_{\alpha^{\upharpoonright n}}]$
  and, therefore: $[0,1] \subseteq \mathcal{H}_{\alpha^{\upharpoonright n}}$.
  
  \smallskip

 (iii) We prove, in $\mathsf{BIM}$: for each $\alpha$, there exists $\beta$ such that 
 $$\forall \gamma \in \mathcal{C} \exists n [\bigl(n,\gamma(n)\bigr) \in E_{\alpha}] \rightarrow \\\forall \delta \in [0,1]^\omega \exists n [\delta^{\upharpoonright n} \in \mathcal{H}_{\beta^{\upharpoonright n}}]\;\mathrm{and}$$ $$\exists n [[0,1] \subseteq \mathcal{H}_{\beta^{\upharpoonright n}}] \rightarrow \exists n \forall i<2[(n,i)\in E_{\alpha}].$$

 Let $\alpha$ be given. Define $\beta$ such that $\forall n\forall s\in \mathbb{S}[\beta^{\upharpoonright n}(s) \neq 0\leftrightarrow \\\exists i<s[\bigl(\alpha(i) =(n,0)+1\;\wedge\;s''<_\mathbb{Q}1_\mathbb{Q}\bigr)\;\vee\;\bigl(\alpha(i) =(n,1)+1\;\wedge\;0_\mathbb{Q}<_\mathbb{Q}s'\bigr)]]$. 
 
  Note $\forall n[\bigl((n,0) \in E_{\alpha}\leftrightarrow [0,1) \subseteq \mathcal{H}_{\beta^{\upharpoonright n}}]\bigr)\;\wedge\;\bigl((n,1) \in E_{\alpha}\leftrightarrow (0,1] \subseteq \mathcal{H}_{\beta^{\upharpoonright n}}\bigr)] $.

Assume:  $\forall \gamma \in \mathcal{C}\exists n [ \bigl(n,\gamma(n)\bigr) \in E_{\alpha}]$, and $\delta \in [0,1]^\omega$.

 Define $\varepsilon$  such that  $\forall n[\varepsilon(n)=\mu m [0_\mathbb{Q}<_\mathbb{Q}\bigl(\delta^n(m)\bigr)'\;\vee\;\bigl(\delta^n(m)\bigr)''<_\mathbb{Q}1_\mathbb{Q}]$.
 
 Define $\gamma$ in $\mathcal{C}$ such that $\forall n[\gamma(n) = 0\leftrightarrow\bigl(\delta^n(\varepsilon(n))\bigr)''<_\mathbb{Q}1_\mathbb{Q}]$.
 
  Note: $\forall n[\gamma(n) = 1\rightarrow 0_\mathcal{R}<_\mathcal{R} \delta^n ]$. 
  
  Find $n$ such that $\bigl(n,\gamma(n)\bigr) \in E_{\alpha}$ and conclude: 
  
  \textit{either}: $\gamma(n) = 0$ and $\delta^{\upharpoonright n} <_\mathcal{R} 1_\mathcal{R}$ and $[0,1) \subseteq \mathcal{H}_{\beta^{\upharpoonright n}}$, so $\delta^{\upharpoonright n} \in \mathcal{H}_{\beta^{\upharpoonright n}}$, 
  
  \textit{or}: $\gamma(n) =1$ and $0_\mathcal{R} <
  _\mathcal{R}\delta^{\upharpoonright n} $ and $(0,1]\subseteq \mathcal{H}_{\beta^{\upharpoonright n}}$, so, again, $\delta^{\upharpoonright n} \in \mathcal{H}_{\beta^{\upharpoonright n}}$.

  Conclude: $\forall \delta \in [0,1]^\omega \exists n[\delta^{\upharpoonright n} \in \mathcal{H}_{\beta^{\upharpoonright n}}]$.
  
  \smallskip
Let  $n$ be given such that $[0,1] \subseteq \mathcal{H}_{\beta^{\upharpoonright n}}$. Conclude: $\forall i<2[(n,i)\in E_{\alpha}]$. 
\end{proof}\begin{theorem}\label{T:ccc}
\begin{enumerate}[\upshape (i)]
\item $\mathsf{BIM}\vdash \mathbf{FT} \leftrightarrow \mathbf{\Sigma}^0_1$-$\overleftarrow{\mathbf{AC}_{\omega,\mathcal{C}}}\leftrightarrow \mathbf{\Sigma}^0_1$-$\overleftarrow{\mathbf{AC}_{\omega,[0,1]}}$.
\item  $\mathsf{BIM}\vdash \neg !\mathbf{FT} \leftrightarrow \neg !(\mathbf{\Sigma}^0_1$-$\overleftarrow{\mathbf{AC}_{\omega,\mathcal{C}}})\leftrightarrow \neg !(\mathbf{\Sigma}^0_1$-$\overleftarrow{\mathbf{AC}_{\omega,[0,1]}})$.
\end{enumerate}
\end{theorem}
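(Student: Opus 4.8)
The plan is simply to assemble, into two closed loops of implications, the results already obtained in Lemma \ref{L:b45} and Theorem \ref{T:contrac<2}: all the mathematical work has been done there, and what remains is purely the combinatorics of transitivity.

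For part (i), I would lay out the four implications $\mathbf{FT}\rightarrow \mathbf{\Sigma}^0_1$-$\overleftarrow{\mathbf{AC}_{\omega,\mathcal{C}}}$, given by Lemma \ref{L:b45}(i); $\mathbf{\Sigma}^0_1$-$\overleftarrow{\mathbf{AC}_{\omega,\mathcal{C}}}\rightarrow \mathbf{\Sigma}^0_1$-$\overleftarrow{\mathbf{AC}_{\omega,[0,1]}}$, given by Lemma \ref{L:b45}(ii); $\mathbf{\Sigma}^0_1$-$\overleftarrow{\mathbf{AC}_{\omega,[0,1]}}\rightarrow \mathbf{\Sigma}^0_1$-$\overleftarrow{\mathbf{AC}_{\omega,2}}$, given by Lemma \ref{L:b45}(iii); and $\mathbf{\Sigma}^0_1$-$\overleftarrow{\mathbf{AC}_{\omega,2}}\rightarrow \mathbf{FT}$, given by Theorem \ref{T:contrac<2}. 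Composing these around the loop in $\mathsf{BIM}$ shows that $\mathbf{FT}$, $\mathbf{\Sigma}^0_1$-$\overleftarrow{\mathbf{AC}_{\omega,\mathcal{C}}}$, $\mathbf{\Sigma}^0_1$-$\overleftarrow{\mathbf{AC}_{\omega,[0,1]}}$ and $\mathbf{\Sigma}^0_1$-$\overleftarrow{\mathbf{AC}_{\omega,2}}$ are pairwise equivalent; discarding $\mathbf{\Sigma}^0_1$-$\overleftarrow{\mathbf{AC}_{\omega,2}}$ from this list gives exactly the equivalences asserted in (i).

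For part (ii), the same four sources also supply the matching implications between the strong negations, but now the arrows run the other way round the loop: $\neg!\mathbf{FT}\rightarrow \neg !(\mathbf{\Sigma}^0_1$-$\overleftarrow{\mathbf{AC}_{\omega,2}})$ from Theorem \ref{T:contrac<2}; $\neg !(\mathbf{\Sigma}^0_1$-$\overleftarrow{\mathbf{AC}_{\omega,2}})\rightarrow \neg !(\mathbf{\Sigma}^0_1$-$\overleftarrow{\mathbf{AC}_{\omega,[0,1]}})$ from Lemma \ref{L:b45}(iii); $\neg !(\mathbf{\Sigma}^0_1$-$\overleftarrow{\mathbf{AC}_{\omega,[0,1]}})\rightarrow \neg !(\mathbf{\Sigma}^0_1$-$\overleftarrow{\mathbf{AC}_{\omega,\mathcal{C}}})$ from Lemma \ref{L:b45}(ii); and $\neg !(\mathbf{\Sigma}^0_1$-$\overleftarrow{\mathbf{AC}_{\omega,\mathcal{C}}})\rightarrow \neg!\mathbf{FT}$ from Lemma \ref{L:b45}(i). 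Again this closes into a cycle, and (ii) follows.

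Since the content lives entirely in Lemma \ref{L:b45} and Theorem \ref{T:contrac<2}, I do not expect a genuine obstacle here; the only point demanding care is the bookkeeping of orientations. Each ``common ground'' construction in those results yields a positive implication $A\rightarrow B$ together with a strong-negation implication that reverses direction, namely $\neg!B\rightarrow\neg!A$, so the two loops are traversed in opposite senses. Once the eight implications are written down with their correct directions, both halves of the theorem drop out by transitivity of $\rightarrow$ in intuitionistic logic.
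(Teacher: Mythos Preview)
Your proposal is correct and matches the paper's own proof, which simply says the statements follow from Lemma~\ref{L:b45} and Lemma~\ref{L:3resolutions}; your citation of Theorem~\ref{T:contrac<2} in place of Lemma~\ref{L:3resolutions} is immaterial since the former is an immediate corollary of the latter. You have merely spelled out explicitly the two cycles of implications that the paper leaves implicit.
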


\begin{proof} These statements follow from Lemmas \ref{L:b45} and  \ref{L:3resolutions}.\end{proof}

\section{On the Contraposition of Twofold Compact Choice}\label{S:refinement} 

 We introduce a limited version of  $\mathbf{\Sigma}^0_1$-$\overleftarrow{\mathbf{AC}_{\omega,\mathcal{C}}}$: \subsection{}$\label{SS:sigma01ac2c}\mathbf{\Sigma}^0_1$-$\overleftarrow{\mathbf{AC}_{2,\mathcal{C}}}$:
  $\forall \alpha[  \forall \gamma \in \mathcal{C}
  \exists i<2 [\gamma^{\upharpoonright i} \in \mathcal{G}_{\alpha^i}] \rightarrow \exists i<2 [\mathcal{C} \subseteq \mathcal{G}_{\alpha^{\upharpoonright i}}] ].$
  
This statement should be called the $\mathbf{\Sigma}^0_1$-Axiom of \emph{Reverse Twofold Compact Choice}. It is a contraposition of  a special case of the following scheme:
\[\forall i<2 \exists \gamma \in \mathcal{C}[R(i,\gamma)] \rightarrow 
\exists \gamma \in \mathcal{C}\forall i<2[R(i,\gamma^{\upharpoonright i})].\]
and the latter  scheme is provable in $\mathsf{BIM}$.

 For each $\alpha$, we define the following statement,  
  called $\mathbf{LLPO}^\alpha$: 
\[\forall\varepsilon[\forall p [2p=\mu m[\varepsilon(m)\neq 0]   \rightarrow \mathit{Bar}_\mathcal{C}(D_{\overline \alpha p})]\;\vee\;\]\[ \forall p [2p+1=\mu m[\varepsilon(m)\neq 0]   \rightarrow \mathit{Bar}_\mathcal{C}(D_{\overline \alpha p})]].\]
 
\begin{lemma}\label{L:fteq2} \begin{enumerate}[\upshape (i)] \item $\mathsf{BIM}\vdash \mathbf{LLPO}\leftrightarrow \forall \alpha[\mathbf{LLPO}^\alpha]$.\item $\mathsf{BIM}\;+\mathbf{\Sigma}^0_1$-$\overleftarrow{\mathbf{AC}_{2,\mathcal{C}}}
\vdash \forall \alpha[Bar_\mathcal{C}(D_\alpha)\rightarrow \mathbf{LLPO}^\alpha]$.  \item $\mathsf{BIM}\;+$  $\mathbf{\Pi}^0_1$-$\mathbf{AC}_{\omega,2}\vdash\forall \alpha[Bar_\mathcal{C}(D_\alpha)\rightarrow \mathbf{LLPO}^\alpha]\rightarrow\mathbf{FT}$. \item $\mathsf{BIM}\vdash\mathbf{FT}\rightarrow \mathbf{\Sigma}^0_1$-$\overleftarrow{\mathbf{AC}_{2,\mathcal{C}}}$. \end{enumerate}

 \end{lemma}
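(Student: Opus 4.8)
The plan is to do the four items in the order (i), (iv), (ii), (iii), the last being the delicate one. \emph{Ad (i):} the implication from $\mathbf{LLPO}$ to $\forall\alpha[\mathbf{LLPO}^\alpha]$ is immediate — given $\alpha$ and $\varepsilon$, use $\mathbf{LLPO}$ to obtain $i<2$ with $\forall p[2p+i\neq\mu m[\varepsilon(m)\neq 0]]$, and then the $i$-th disjunct of $\mathbf{LLPO}^\alpha$ holds vacuously. For the converse, specialise $\forall\alpha[\mathbf{LLPO}^\alpha]$ to $\alpha:=\underline 0$: since $D_{\overline{\underline 0}p}=\emptyset$ for every $p$ and $\mathsf{BIM}\vdash\neg Bar_\mathcal{C}(\emptyset)$ (as $\underline 0\in\mathcal{C}$), each implication $2p+i=\mu m[\varepsilon(m)\neq 0]\rightarrow Bar_\mathcal{C}(D_{\overline{\underline 0}p})$ is just $2p+i\neq\mu m[\varepsilon(m)\neq 0]$, so $\mathbf{LLPO}^{\underline 0}$ reads off as $\mathbf{LLPO}$.

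\emph{Ad (iv):} assume $\mathbf{FT}$ and let $\alpha$ be given with $\forall\gamma\in\mathcal{C}\exists i<2[\gamma^{\upharpoonright i}\in\mathcal{G}_{\alpha^{\upharpoonright i}}]$. Define $\zeta$ so that $D_\zeta$ is the bar ``$\gamma$ has, for some $i<2$, an initial segment of $\gamma^{\upharpoonright i}$ in $D_{\alpha^{\upharpoonright i}}$''; the hypothesis gives $Bar_\mathcal{C}(D_\zeta)$, so $\mathbf{FT}$ supplies a number $N$ such that every $\gamma\in\mathcal{C}$ has, for some $i<2$ and some $n\le N$, $\overline{\gamma^{\upharpoonright i}}n\in D_{\alpha^{\upharpoonright i}}$. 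It then suffices to prove the finite fact: if every pair of points of $\mathcal{C}$, coded as a single $\gamma\in\mathcal{C}$, has $\gamma^{\upharpoonright 0}$ meeting $D_{\alpha^{\upharpoonright 0}}$ within $N$ steps or $\gamma^{\upharpoonright 1}$ meeting $D_{\alpha^{\upharpoonright 1}}$ within $N$ steps, then one of these holds for \emph{all} of $\mathcal{C}$. Indeed, for each $i<2$ the statement ``every $\delta\in\mathcal{C}$ meets $D_{\alpha^{\upharpoonright i}}$ within $N$ steps'' is decidable; if it fails, pick a finite binary $b_i$ witnessing the failure; if it failed for both $i$, pairing $b_0\ast\underline 0$ and $b_1\ast\underline 0$ into an element of $\mathcal{C}$ contradicts the hypothesis. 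Hence it holds for some $i$, and a fortiori $\mathcal{C}\subseteq\mathcal{G}_{\alpha^{\upharpoonright i}}$.

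\emph{Ad (ii):} fix $\alpha$ with $Bar_\mathcal{C}(D_\alpha)$ and fix $\varepsilon$; I apply $\mathbf{\Sigma}^0_1$-$\overleftarrow{\mathbf{AC}_{2,\mathcal{C}}}$ to a $\beta$ whose two components are so chosen that ``$\mathcal{C}\subseteq\mathcal{G}_{\beta^{\upharpoonright i}}$'' expresses the $i$-th disjunct of $\mathbf{LLPO}^\alpha$ for $\varepsilon$. At stage $n$ on side $i$ one puts a point $\gamma$ into $\mathcal{G}_{\beta^{\upharpoonright i}}$ if either (a) $\varepsilon$ has already revealed its least nonzero argument to be some $2p+i$ and $\gamma$ has an initial segment in $D_{\overline\alpha p}$, or (b) $\varepsilon$ has revealed its least nonzero argument to have the opposite parity $1-i$, or (c) $\overline\varepsilon n=\overline{\underline 0}n$ and $\gamma$ already has an initial segment whose code is $<\lfloor(n-i)/2\rfloor$ in $D_\alpha$. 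Clause (c), together with $Bar_\mathcal{C}(D_\alpha)$ and a decidable look at how long $\varepsilon$ has stayed $0$, makes the premiss $\forall\gamma\in\mathcal{C}\exists i<2[\gamma^{\upharpoonright i}\in\mathcal{G}_{\beta^{\upharpoonright i}}]$ hold; and the code-bound in (c) is exactly what makes such a ``premature'' acceptance harmless, since if $\mu m[\varepsilon(m)\neq 0]$ later turns out to be $2p+i$ then $p\ge\lfloor(n-i)/2\rfloor$, so $D_{\overline\alpha p}$ still contains that short initial segment. Consequently ``$\mathcal{C}\subseteq\mathcal{G}_{\beta^{\upharpoonright i}}$'' really does imply $Bar_\mathcal{C}(D_{\overline\alpha p})$ whenever $2p+i=\mu m[\varepsilon(m)\neq 0]$, and the conclusion of $\mathbf{\Sigma}^0_1$-$\overleftarrow{\mathbf{AC}_{2,\mathcal{C}}}$ is precisely $\mathbf{LLPO}^\alpha$ for $\varepsilon$.

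\emph{Ad (iii):} assume $\mathbf{\Pi}^0_1$-$\mathbf{AC}_{\omega,2}$ and $\forall\alpha[Bar_\mathcal{C}(D_\alpha)\rightarrow\mathbf{LLPO}^\alpha]$, and let $\alpha$ be a bar; I must produce $m$ with $Bar_\mathcal{C}(D_{\overline\alpha m})$. The plan is to imitate the proof of Theorem \ref{T:wklft}, passing to the downward closure $\alpha^\ast$ and the padded $\alpha^{\ast\ast}$, so that the prefix-closed binary tree $\widetilde{T}=\{s\in Bin\mid\forall j\le\mathit{length}(s)[\alpha^{\ast\ast}(\overline s j)=0]\}$ is nonempty at every level, and to replace the single appeal to $\mathbf{WKL}$ there by the method of Theorem \ref{T:acllpowkl}: for each node $s$ of $\widetilde{T}$ one feeds $\mathbf{LLPO}^\alpha$ (available since $\alpha$ is a bar) a sequence $\varepsilon_s$ recording, as a function of a level parameter, which of the two subtrees of $\widetilde{T}$ below $s$ is exhausted first, so that the disjunct $\mathbf{LLPO}^\alpha$ returns names a child of $s$ not on the sooner-exhausted side; as the relevant predicates are $\mathbf{\Pi}^0_1$, $\mathbf{\Pi}^0_1$-$\mathbf{AC}_{\omega,2}$ collects these choices into one $\delta\in\mathcal{C}$ with $\forall m[\alpha^{\ast\ast}(\overline\delta m)=0]$, and applying $Bar_\mathcal{C}(D_\alpha)$ to $\delta$ yields $n_0$ with $\alpha(\overline\delta n_0)\neq 0$, whence $\alpha^\ast(\overline\delta n_0)\neq 0$, which together with $\alpha^{\ast\ast}(\overline\delta n_0)=0$ forces $\neg\exists b\in Bin_{n_0}[\alpha^\ast(b)=0]$, i.e.\ $Bar_\mathcal{C}(D_{\overline\alpha m})$ for the appropriate $m$. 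The hard part will be exactly this item: one has to design the $\varepsilon_s$ so that the children named by $\mathbf{LLPO}^\alpha$ genuinely trace out a König-style path (in particular so that a node both of whose children are dead is steered correctly), and one has to reconcile the ``code level'' $p$ occurring in $D_{\overline\alpha p}$ with the ``tree level'' of $\widetilde{T}$ when translating between the two; the rôle of $\mathbf{\Pi}^0_1$-$\mathbf{AC}_{\omega,2}$ is, as in Theorem \ref{T:acllpowkl}, solely to convert the node-by-node disjunctions of $\mathbf{LLPO}^\alpha$ into a single function.
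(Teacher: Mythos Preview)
Your arguments for (i) and (iv) are correct; for (iv) the paper simply invokes Theorem~\ref{T:ccc} (so $\mathbf{FT}\Rightarrow\mathbf{\Sigma}^0_1$-$\overleftarrow{\mathbf{AC}_{\omega,\mathcal{C}}}\Rightarrow\mathbf{\Sigma}^0_1$-$\overleftarrow{\mathbf{AC}_{2,\mathcal{C}}}$), whereas you unwind that directly. For (ii) your three-clause construction works, but the paper's version is much simpler: define $\eta$ so that $\eta^{\upharpoonright 0}(p)=\eta^{\upharpoonright 1}(p)=\alpha(p)$ as long as $\overline{\underline 0}(2p+2)\sqsubset\varepsilon$, and once $\varepsilon$ reveals $2q+j=\mu m[\varepsilon(m)\neq 0]$, side $j$ zeros out from position $q$ while side $1-j$ keeps copying $\alpha$. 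The premiss then reduces to the one-line observation that $\eta^{\upharpoonright 0}\;\#\;\alpha$ forces $\eta^{\upharpoonright 1}=\alpha$; and if $\mathcal{C}\subseteq\mathcal{G}_{\eta^{\upharpoonright i}}$ and $2p+i=\mu m[\varepsilon(m)\neq 0]$, then $\eta^{\upharpoonright i}$ vanishes from $p$ on and agrees with $\alpha$ below $p$, so $Bar_\mathcal{C}(D_{\eta^{\upharpoonright i}})$ is exactly $Bar_\mathcal{C}(D_{\overline\alpha p})$.

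For (iii) you have located the gap but not closed it: you want $\varepsilon_s$ to encode which subtree of $\widetilde T$ below $s$ dies first, yet the conclusion of $\mathbf{LLPO}^\alpha$ is the \emph{global} statement $Bar_\mathcal{C}(D_{\overline\alpha p})$, and you explicitly leave the ``reconciliation'' between code-level $p$ and tree-level of $\widetilde T$ undone. The paper avoids $\alpha^{\ast\ast}$ and $\widetilde T$ entirely. For each $s\in Bin$ it puts $\varepsilon(2n+i)\neq 0\leftrightarrow Bar_{\mathcal{C}\cap s\ast\langle i\rangle}(D_{\overline\alpha n})$, so that $\varepsilon$ records \emph{relativized} bars at the \emph{same} index $n$ that appears in $\mathbf{LLPO}^\alpha$. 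From the disjunct $i$ one then extracts directly the $\Pi^0_1$ implication $\forall n[Bar_{\mathcal{C}\cap s\ast\langle i\rangle}(D_{\overline\alpha n})\to Bar_{\mathcal{C}\cap s\ast\langle 1-i\rangle}(D_{\overline\alpha n})]$: if the $i$-child is barred at level $n$, locate $\mu m[\varepsilon(m)\neq 0]=2q+j$ with $q\le n$; if $j=i$ then $\mathbf{LLPO}^\alpha$ yields a \emph{global} bar $Bar_\mathcal{C}(D_{\overline\alpha q})$, a fortiori barring the $(1{-}i)$-child; if $j=1-i$ the $(1{-}i)$-child is already barred at level $q\le n$. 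This implication is the $\Pi^0_1$ property collected by $\mathbf{\Pi}^0_1$-$\mathbf{AC}_{\omega,2}$ into $\gamma$; one then follows $\delta(n)=\gamma(\overline\delta n)$, finds $p$ with $\alpha(\overline\delta p)\neq 0$, and uses backward induction on $j\le p$ (each step: $Bar_{\mathcal{C}\cap\overline\delta j\ast\langle\gamma(\overline\delta j)\rangle}(D_{\overline\alpha n})\Rightarrow Bar_{\mathcal{C}\cap\overline\delta j}(D_{\overline\alpha n})$) to reach $Bar_\mathcal{C}(D_{\overline\alpha n})$. The missing idea in your plan is precisely this choice of $\varepsilon$: encode relativized bars rather than subtree extinction, so that the global conclusion of $\mathbf{LLPO}^\alpha$ becomes immediately usable and the code-level/tree-level mismatch never arises.
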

 
 \begin{proof} (i) Assume $\mathbf{LLPO}$ and let $\alpha, \varepsilon$ be given.  \\ \textit{Either} $\forall p[2p\neq \mu n[\varepsilon(n)\neq 0]]$ and, therefore,  $\forall\varepsilon[\forall p [2p=\mu m[\varepsilon(m)\neq 0]   \rightarrow \mathit{Bar}_\mathcal{C}(D_{\overline \alpha p})]$, \\\textit{or} $\forall p[2p+1\neq \mu n[\varepsilon(n)\neq 0]]$ and: $\forall\varepsilon[\forall p [2p+1=\mu m[\varepsilon(m)\neq 0]   \rightarrow \mathit{Bar}_\mathcal{C}(D_{\overline \alpha p})]$. \\We thus see: $\mathbf{LLPO}^\alpha$. 
 
 For the converse, note: $\mathbf{LLPO}\leftrightarrow \mathbf{LLPO}^{\underline 0}$. 
 
 \smallskip (ii) Let $\alpha$ be given such that $Bar_\mathcal{C}(D_\alpha)$. \\Using $\mathbf{\Sigma}_0^1$-$\overleftarrow{\mathbf{AC}_{2,\mathcal{C}}}$, we now prove: $\mathbf{LLPO}^\alpha$. 
 
 Let $\varepsilon$ be given.
 Define $\eta$  such that, for each $p$,
\begin{enumerate}
\item  if $ \underline{\overline 0}(2p+2)\sqsubset\varepsilon$, then $\eta^{\upharpoonright 0}(p) = \eta^{\upharpoonright 1}(p) = \alpha(p)$, and,
\item   if $ 2p =\mu m [\varepsilon(m) \neq 0 ]$,  then $\forall m \ge p[\eta^{\upharpoonright 0}(m) =0\;\wedge\;\eta^{\upharpoonright 1}(m) = \alpha(m)]$, and
 \item if $  2p+1=\mu m[\varepsilon(m) \neq 0 ]$,  then $\forall m \ge p[\eta^{\upharpoonright 1}(m) =0\;\wedge\;\eta^{\upharpoonright 0}(m) = \alpha(m)]$.
\end{enumerate}

Note: if $\eta^{\upharpoonright 0} \;\#\; \alpha$, then $\eta^{\upharpoonright 1} = \alpha$.  

 Let $\gamma$ in $\mathcal{C}$ be given. Find $n$ such that $\alpha(\overline{\gamma^{\upharpoonright 0}}n) \neq 0$. \\Either: $\eta^{\upharpoonright 0}(\overline{\gamma^{\upharpoonright 0}}n) = \alpha(\overline{\gamma^{\upharpoonright 0}}n)\neq 0$,\\ or: $\eta^{\upharpoonright 0} \;\#\; \alpha$ and $\eta^{\upharpoonright 1} = \alpha$ and $\exists m[\eta^{\upharpoonright 1}(\overline{\gamma^{\upharpoonright 1}}m) = \alpha(\overline{\gamma^{\upharpoonright 1}}m) \neq 0]$.
 
We thus see: $\forall \gamma \in \mathcal{C}[\gamma^{\upharpoonright 0}  \in \mathcal{G}_{\eta^{\upharpoonright 0}} \;\vee \;\gamma^{\upharpoonright 1} \in \mathcal{G}_{\eta^{\upharpoonright 1}}]$.

Use $\mathbf{\Sigma}_0^1$-$\overleftarrow{\mathbf{AC}_{2,\mathcal{C}}}$ and find $i<2$ such that $\mathcal{C}\subseteq \mathcal{G}_{\eta^{\upharpoonright i}}$.

Assume: $ \mathcal{C}\subseteq  \mathcal{G}_{\eta^{\upharpoonright i}} $, i.e. $Bar_\mathcal{C}(D_{\eta^{\upharpoonright i}})$. 
Assume: $ 2p+i= \mu m [\varepsilon(m) \neq 0] $. \\Note:  $\forall m\ge p[\eta^{\upharpoonright i}(m) = 0]$. Conclude: $\mathit{Bar}_\mathcal{C}(D_{\overline {\eta^{\upharpoonright i}} p})$ and: $\mathit{Bar}_\mathcal{C}(D_{\overline \alpha p})$. 

We thus see:
$\forall p[2p+i= \mu m [\varepsilon(m) \neq 0] \rightarrow\mathit{Bar}_\mathcal{C}(D_{\overline \alpha p})]$.

Conclude: $\exists i<2\forall p[2p+i= \mu m [\varepsilon(m) \neq 0] \rightarrow\mathit{Bar}_\mathcal{C}(D_{\overline \alpha p})]$, i.e. $\mathbf{LLPO}^\alpha$.

\smallskip (iii) Assume $\forall \alpha[Bar_\mathcal{C}(D_\alpha)\rightarrow \mathbf{LLPO}^\alpha]$. \\ Using $\mathbf{\Pi}^0_1$-$\mathbf{AC}_{\omega,2}$, we now prove: $\mathbf{FT}$.

Let $\alpha$ be given such that $Bar_\mathcal{C}(D_\alpha)$ and, therefore, $\mathbf{LLPO}^\alpha$.

Assume: $s\in\mathit{Bin}$. Define $\varepsilon$ such that,  $\forall i<2\forall n[\varepsilon(2n+i) \neq 0 \leftrightarrow Bar_{\mathcal{C}\cap s\ast \langle i \rangle}(D_{\overline\alpha n})] $. Using $\mathbf{LLPO}^\alpha$, find $i<2$ such that 
$\forall p[2p+i= \mu m [\varepsilon(m) \neq 0] \rightarrow\mathit{Bar}_\mathcal{C}(D_{\overline \alpha p})]$.

 Assume we find $n$ such that $Bar_{\mathcal{C}\cap s\ast \langle i\rangle}(D_{\overline \alpha n})$. Then $\varepsilon(2n+i) \neq 0$. \\Find $p:= \mu j[\varepsilon(j) \neq 0]$. Find $q\le n$ such that $p=2q$ or $p=2q+1$. \\\textit{Either} $p=2q+i$ 
and $\mathit{Bar}_\mathcal{C}(D_{\overline \alpha q})$, \textit{or} $p=2q+1-i$ and $Bar_{\mathcal{C}\cap s\ast\langle 1-i\rangle} (D_{\overline \alpha q})$. \\In both cases: $Bar_{\mathcal{C}\cap s\ast\langle 1-i\rangle}(D_{\overline \alpha n})$.   \\We thus see: $\forall n[Bar_{\mathcal{C}\cap s\ast\langle i \rangle} (D_{\overline \alpha n}) \rightarrow Bar_{\mathcal{C}\cap s\ast\langle 1-i \rangle} (D_{\overline \alpha n}) ]$.

Conclude: $\forall s \in \mathit{Bin}\exists i<2\forall n[   Bar_{\mathcal{C}\cap s\ast \langle  i \rangle}( D_{\overline \alpha n})\rightarrow Bar_{\mathcal{C}\cap s\ast\langle 1- i \rangle} (D_{\overline \alpha n})]$.

\smallskip
Now
use  $\mathbf{\Pi}^0_1$-$\mathbf{AC}_{\omega,2}$ and find $\gamma$ in $\mathcal{C}$ such that 

$\forall s \in \mathit{Bin}
 \forall n[ Bar_{\mathcal{C}\cap s\ast \langle \gamma(s) \rangle}( D_{\overline \alpha n})\rightarrow Bar_{\mathcal{C}\cap s\ast\langle 1-\gamma(s) \rangle} (D_{\overline \alpha n})]$.
 
Observe that, for each $s$ in $\mathit{Bin}$, for all $n$, if $Bar_{\mathcal{C}\cap s\ast \langle \gamma(s) \rangle}( D_{\overline \alpha n})$, then also $Bar_{\mathcal{C}\cap s\ast \langle 1- \gamma(s) \rangle}( D_{\overline \alpha n})$,
and, therefore, $Bar_{\mathcal{C}\cap s}( D_{\overline \alpha n})$.

\smallskip

Define $\delta$ in $\mathcal{C}$ such that, for each $n$, $\delta(n) = \gamma(\overline{ \delta} n)$. Find $p$ such that $\alpha(\overline{ \delta} p) \neq 0$ and define $n:=\overline\delta p +1$. Note: $Bar_{\mathcal{C}\cap\overline\delta p}(D_{\overline \alpha n})$.  One may prove, by backwards induction: for each $j \le p$,  $Bar_{\mathcal{C}\cap \overline \delta j}(D_{\overline \alpha n})$. For assume $j+1 \le n$ and $Bar_{\mathcal{C}\cap \overline{\delta} (j+1)}(D_{\overline \alpha n})$.  As $\overline{ \delta}(j+1) = \overline{ \delta} j \ast \langle  \gamma(\overline{\delta} j)\rangle$, one may conclude:   $Bar_{\mathcal{C}\cap \overline{\delta} (j)}(D_{\overline \alpha n})$.
After $n$ steps we obtain the conclusion:  $Bar_\mathcal{C}(D_{\overline\alpha n})$.
 
 Conclude: $\forall \alpha[Bar_\mathcal{C}(D_\alpha)\rightarrow \exists n[Bar_\mathcal{C}(D_{\overline \alpha n})]]$, i.e. $\mathbf{FT}$. 
  
 \smallskip (iv) Assume $\mathbf{FT}$. Use Theorem \ref{T:ccc} and conclude: 
  $ \mathbf{\Sigma}^0_1$-$\overleftarrow{\mathbf{AC}_{\omega,\mathcal{C}}}$ and its corollary: $ \mathbf{\Sigma}^0_1$-$\overleftarrow{\mathbf{AC}_{2,\mathcal{C}}}$. \end{proof}
 
 \begin{theorem}\label{T:fteq2}$\mathsf{BIM}+\mathbf{\Pi}^0_1$-$\mathbf{AC}_{\omega,2}\vdash \mathbf{\Sigma}^0_1$-$\overleftarrow{\mathbf{AC}_{2,\mathcal{C}}}\leftrightarrow \mathbf{FT}$. \end{theorem}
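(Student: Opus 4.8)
The plan is to obtain Theorem \ref{T:fteq2} as an immediate consequence of the four clauses of Lemma \ref{L:fteq2}; no new construction is needed, the theorem being essentially a bookkeeping step.

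For the forward implication I would argue inside $\mathsf{BIM}+\mathbf{\Pi}^0_1$-$\mathbf{AC}_{\omega,2}$ and assume $\mathbf{\Sigma}^0_1$-$\overleftarrow{\mathbf{AC}_{2,\mathcal{C}}}$. By Lemma \ref{L:fteq2}(ii) this yields $\forall\alpha[Bar_\mathcal{C}(D_\alpha)\rightarrow\mathbf{LLPO}^\alpha]$, and feeding that statement into Lemma \ref{L:fteq2}(iii) -- whose ambient theory is precisely $\mathsf{BIM}+\mathbf{\Pi}^0_1$-$\mathbf{AC}_{\omega,2}$ -- produces $\mathbf{FT}$. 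Hence $\mathsf{BIM}+\mathbf{\Pi}^0_1$-$\mathbf{AC}_{\omega,2}\vdash\mathbf{\Sigma}^0_1$-$\overleftarrow{\mathbf{AC}_{2,\mathcal{C}}}\rightarrow\mathbf{FT}$. For the reverse implication, Lemma \ref{L:fteq2}(iv) is exactly $\mathsf{BIM}\vdash\mathbf{FT}\rightarrow\mathbf{\Sigma}^0_1$-$\overleftarrow{\mathbf{AC}_{2,\mathcal{C}}}$, which a fortiori holds over the larger theory. Chaining the two implications gives the biconditional asserted in Theorem \ref{T:fteq2}.

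I expect no genuine obstacle at the level of Theorem \ref{T:fteq2} itself: it is a two-line assembly of the lemma. The real content sits in clause (iii) of Lemma \ref{L:fteq2}, where $\mathbf{\Pi}^0_1$-$\mathbf{AC}_{\omega,2}$ is used to choose, uniformly in $s\in\mathit{Bin}$, a "good side" $\gamma(s)<2$ for which $Bar_{\mathcal{C}\cap s\ast\langle\gamma(s)\rangle}(D_{\overline\alpha n})$ forces $Bar_{\mathcal{C}\cap s\ast\langle 1-\gamma(s)\rangle}(D_{\overline\alpha n})$, hence $Bar_{\mathcal{C}\cap s}(D_{\overline\alpha n})$; following the self-referential path $\delta$ with $\delta(n)=\gamma(\overline\delta n)$ to a stage $p$ with $\alpha(\overline\delta p)\neq 0$ and running a backwards induction from $j=p$ down to $j=0$ then delivers $Bar_\mathcal{C}(D_{\overline\alpha n})$, i.e.\ $\mathbf{FT}$. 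But all of that is already in hand, so in the proof of Theorem \ref{T:fteq2} we only need to cite Lemma \ref{L:fteq2}(ii)--(iv).
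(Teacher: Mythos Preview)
Your proposal is correct and matches the paper's own proof, which simply reads ``Use Lemma \ref{L:fteq2}.'' Your chaining of clauses (ii), (iii), and (iv) is exactly the intended assembly, and your summary of where the real work lies (in clause (iii)) is accurate.
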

 
 \begin{proof} Use Lemma \ref{L:fteq2}.
 \end{proof}
 
 \subsubsection{}\label{SSS:bickford} Mark Bickford called my attention to the fact that  $\mathbf{\Sigma}^0_1$-$\overleftarrow{\mathbf{AC}_{2, \mathcal{C}}}$ occurs in \cite[\S 2]{moschovakiswkl} and is called there the \textit{separation principle} $\mathrm{SP}$. \\
  After having proven $\mathbf{FT}\rightarrow \mathbf{\Sigma}^0_1$-$\overleftarrow{\mathbf{AC}_{2,\mathcal{C}}}$, see our Lemma \ref{L:fteq2}(iv), the author of \cite{moschovakiswkl} gives  a proof of $\mathbf{FT}\rightarrow \mathbf{WKL!}$.  She does so as follows. \begin{quote} Assume $\mathbf{FT}$. 
  \\Let $\alpha$ be given such that $\forall n[\neg Bar_\mathcal{C}(D_{\overline\alpha n})]$ and \\ $\forall\gamma\in \mathcal{C} [\gamma^{\upharpoonright 0} \perp\gamma^{\upharpoonright 1}\rightarrow \exists i<2\exists n[\alpha(\overline{\gamma^{\upharpoonright i}}n)\neq 0 ]]$. \\We intend to prove: $\exists \gamma \in \mathcal{C}\forall n[\alpha(\overline \gamma n)=0]$. 
 \\Define $\alpha^\ast$ such that $\forall s\in Bin[\alpha^\ast(s)=0\leftrightarrow \forall t\sqsubseteq s[\alpha(t)=0]]$. \\Using induction, we prove: \\$(\ast)$: $\forall n\exists ! s \in Bin_n\forall m>n\exists t \in Bin_m[s\sqsubset t\;\wedge\;\alpha^\ast(t) =0]]$.  \\If $n=0$, then $s=\langle\;\rangle$ satisfies the requirements.
  \\Let $n$ be given and let $s$ be the unique element of $Bin_n$ such that  $\forall m\ge n\exists t \in Bin_m[s\sqsubseteq t\;\wedge\;\alpha^\ast(t)=0]$. \\Observe: $\forall \gamma \in \mathcal{C} \exists i<2\exists n[\alpha(s\ast\langle i \rangle\ast \overline{\gamma^{\upharpoonright i}}n)\neq 0]$.\\Using $\mathbf{\Sigma}^0_1$-$\overleftarrow{\mathbf{AC}_{2,\mathcal{C}}}$, find $j<2$ such that $\forall\gamma \in \mathcal{C}\exists n[\alpha(s\ast\langle j \rangle\ast \overline{\gamma}n)\neq 0]$. \\Using $\mathbf{FT}$, find $m$ such that $\forall\gamma \in \mathcal{C}\exists n\le m[\alpha(s\ast\langle j \rangle\ast \overline{\gamma}n)\neq 0]$. \\Define $p:= n+1+m$ and note: \\ $\forall q> p\forall t \in Bin_q[s\ast\langle j \rangle\sqsubseteq t \rightarrow \alpha^\ast(t)\neq 0]$ and:\\ $s\ast\langle 1-j\rangle$ is the unique element $u$ of $Bin_{n+1}$ such that \\$\forall q\ge n+1\exists t\in Bin_q[u\sqsubseteq t \;\wedge\; \alpha^\ast(t)=0]$.
  \\This completes our proof of $(\ast)$.
  \\ Using $(\ast)$ and $\mathbf{\Pi}^0_1$-$\mathbf{AC}_{\omega,\omega}!$, find $\delta$ such that \\$\forall n[\delta(n)\in Bin_n\;\wedge\;\forall m\ge n\exists t \in Bin_m[\delta(n)\sqsubseteq t\;\wedge \alpha^\ast(t)=0]]$. \\Note: $\forall n[\delta(n)\sqsubset\delta(n+1)\;\wedge\;\alpha^\ast\bigl(\delta(n)\bigr)=0]]$.  \\Define $\gamma$ such that $\forall n[\delta(n)\sqsubset \gamma]$ and note: $\gamma \in \mathcal{C}$ and $\forall n[\alpha(\overline\gamma n)=0]$. \end{quote}

 The author of \cite{moschovakiswkl} quotes the result $\mathbf{WKL}!\rightarrow \mathbf{FT}$, see our Theorem \ref{T:wkl!ft}.
  \\She does not prove $\mathbf{FT}$, or equivalently, $\mathbf{WKL!}$,  from $\mathbf{\Sigma}^0_1$-$\overleftarrow{\mathbf{AC}_{2,\mathcal{C}}}$ and   $\mathbf{\Pi}^0_1$-$\mathbf{AC}_{\omega,\omega}!$.

    Our proof of Theorem \ref{T:ftwkl!} shows that no choice is needed for a proof of \\$\mathbf{FT}\rightarrow\mathbf{WKL!}$. 
\bigskip
 
We introduce a `real' version of  $\mathbf{\Sigma}^0_1$-$\overleftarrow{\mathbf{AC}_{2,\mathcal{C}}}$: \subsection{}$\mathbf{\Sigma}^0_1$-$\overleftarrow{\mathbf{AC}_{2,[0,1]}}$:
  \[\forall \alpha[  \forall \delta \in [0,1]^2
  \exists i<2 [\delta^{\upharpoonright i} \in \mathcal{H}_{\alpha^{\upharpoonright i}}] \rightarrow \exists i<2 [ [0,1] \subseteq \mathcal{H}_{\alpha^{\upharpoonright i}}] ].\]

\begin{theorem}\label{T:c0c1}
 $\mathsf{BIM}\vdash \mathbf{\Sigma}^0_1$-$\overleftarrow{\mathbf{AC}_{2,\mathcal{C}}}\leftrightarrow\mathbf{\Sigma}^0_1$-$\overleftarrow{\mathbf{AC}_{2,[0,1]}}$.

\end{theorem}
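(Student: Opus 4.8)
The plan is to mimic, in the simpler two-move setting, the three-part translation argument used for Theorem~\ref{T:compactchoice} (the equivalence of $\mathbf{\Pi}^0_1$-$\mathbf{AC}_{\omega,\mathcal{C}}$ and $\mathbf{\Pi}^0_1$-$\mathbf{AC}_{\omega,[0,1]}$). For the direction $\mathbf{\Sigma}^0_1$-$\overleftarrow{\mathbf{AC}_{2,\mathcal{C}}}\rightarrow\mathbf{\Sigma}^0_1$-$\overleftarrow{\mathbf{AC}_{2,[0,1]}}$, I would invoke Lemma~\ref{L:Conto[0,1]} to obtain $\sigma:\mathcal{C}\rightarrow[0,1]$ and $\psi:\omega^\omega\rightarrow\omega^\omega$ with $\forall\delta\in[0,1]\exists\gamma\in\mathcal{C}[\sigma|\gamma=_\mathcal{R}\delta]$ and $\forall\alpha\forall\gamma\in\mathcal{C}[\gamma\in\mathcal{G}_{\psi|\alpha}\leftrightarrow\sigma|\gamma\in\mathcal{H}_\alpha]$. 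Given $\alpha$, set $\beta$ such that $\beta^{\upharpoonright i}=\psi|(\alpha^{\upharpoonright i})$ for $i<2$. If $\forall\delta\in[0,1]^2\exists i<2[\delta^{\upharpoonright i}\in\mathcal{H}_{\alpha^{\upharpoonright i}}]$, then for $\gamma\in\mathcal{C}$ one applies this to the pair $(\sigma|(\gamma^{\upharpoonright 0}),\sigma|(\gamma^{\upharpoonright 1}))$ to get $\forall\gamma\in\mathcal{C}\exists i<2[\gamma^{\upharpoonright i}\in\mathcal{G}_{\beta^{\upharpoonright i}}]$; then $\mathbf{\Sigma}^0_1$-$\overleftarrow{\mathbf{AC}_{2,\mathcal{C}}}$ yields $i<2$ with $\mathcal{C}\subseteq\mathcal{G}_{\beta^{\upharpoonright i}}=\mathcal{G}_{\psi|(\alpha^{\upharpoonright i})}$, hence by surjectivity of $\sigma$ onto $[0,1]$ we get $[0,1]\subseteq\mathcal{H}_{\alpha^{\upharpoonright i}}$, which is the desired conclusion.

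For the converse $\mathbf{\Sigma}^0_1$-$\overleftarrow{\mathbf{AC}_{2,[0,1]}}\rightarrow\mathbf{\Sigma}^0_1$-$\overleftarrow{\mathbf{AC}_{2,\mathcal{C}}}$, I would use Lemma~\ref{L:Cinto[0,1]} to obtain $\tau:\mathcal{C}\rightarrow[0,1]$ and $\chi:\omega^\omega\rightarrow\omega^\omega$ satisfying the apartness-preservation property $\forall\gamma\forall\delta\in\mathcal{C}[\gamma\,\#\,\delta\rightarrow\tau|\gamma\,\#_\mathcal{R}\,\tau|\delta]$, the correspondence $\forall\alpha\forall\gamma\in\mathcal{C}[\gamma\in\mathcal{G}_\alpha\leftrightarrow\tau|\gamma\in\mathcal{H}_{\chi|\alpha}]$, and the `selection' clause $\forall\alpha\forall\delta\in[0,1]^\omega\exists\gamma\in\mathcal{C}\forall n[\delta^{\upharpoonright n}\,\#_\mathcal{R}\,\tau|(\gamma^{\upharpoonright n})\rightarrow\delta^{\upharpoonright n}\in\mathcal{H}_{\chi|(\alpha^{\upharpoonright n})}]$. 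Given $\alpha$ with $\forall\gamma\in\mathcal{C}\exists i<2[\gamma^{\upharpoonright i}\in\mathcal{G}_{\alpha^{\upharpoonright i}}]$, pass to $\chi|(\alpha^{\upharpoonright i})$ to conclude $\forall\delta\in[0,1]^2\exists i<2[\delta^{\upharpoonright i}\in\mathcal{H}_{\chi|(\alpha^{\upharpoonright i})}]$ — here one feeds the $[0,1]$-hypothesis a pair, using the selection clause to replace each $\delta^{\upharpoonright i}$ by some $\tau|(\gamma^{\upharpoonright i})$ up to $=_\mathcal{R}$ — then $\mathbf{\Sigma}^0_1$-$\overleftarrow{\mathbf{AC}_{2,[0,1]}}$ gives $i<2$ with $[0,1]\subseteq\mathcal{H}_{\chi|(\alpha^{\upharpoonright i})}$, whence $\mathcal{C}\subseteq\mathcal{G}_{\alpha^{\upharpoonright i}}$ via the correspondence. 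Throughout I would keep the two coordinates $i<2$ explicit rather than quantifying over $n$, which is the only difference from the $\omega$-indexed case, and is routine.

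The main obstacle is the same delicate point that appears in the proof of Theorem~\ref{T:compactchoice}: a Cauchy real in $[0,1]$ is not literally a point of $\mathcal{C}$, so moving a $[0,1]$-statement to a $\mathcal{C}$-statement and back requires the `selection modulo $=_\mathcal{R}$' apparatus of Lemma~\ref{L:Cinto[0,1]} (clause (3)), and one must be careful that $\delta^{\upharpoonright i}\in\mathcal{H}_{\chi|(\alpha^{\upharpoonright i})}$ is $=_\mathcal{R}$-invariant so that replacing $\delta^{\upharpoonright i}$ by $\tau|(\gamma^{\upharpoonright i})$ is legitimate — this is where the apartness-preservation clause does its work. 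Everything else (defining $\beta$ coordinatewise, transporting the hypothesis, transporting the conclusion) is bookkeeping entirely parallel to the proof of Theorem~\ref{T:compactchoice}, and I would present it at that level of detail.
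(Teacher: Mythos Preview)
Your proposal is correct and follows essentially the same route as the paper. The paper makes one step in the second direction a bit more explicit than you do: after obtaining $\tau|(\gamma^{\upharpoonright i})\in\mathcal{H}_{\chi|(\alpha^{\upharpoonright i})}$, it invokes Lemma~\ref{L:apart} to get a \emph{constructive} disjunction (either $\delta^{\upharpoonright i}$ lies in the same basic open interval, or $\delta^{\upharpoonright i}\,\#_\mathbb{S}\,\tau|(\gamma^{\upharpoonright i})$ at some stage), which is what actually drives the conclusion rather than bare $=_\mathcal{R}$-invariance --- but you have correctly flagged this as the delicate point and the relevant lemmas, so your plan goes through.
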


\begin{proof}
Using Lemma \ref{L:Conto[0,1]}, find  $\sigma: \mathcal{C} \rightarrow [0,1]$ and $\psi: \omega^\omega\rightarrow\omega^\omega$ such that \\ $\forall \delta \in [0,1]  \exists \gamma[\delta =_\mathcal{R} \sigma|\gamma]$ and  $\forall \alpha\forall \gamma \in \mathcal{C}[\gamma\in \mathcal{G}_{\psi|\alpha}\leftrightarrow \sigma|\gamma \in \mathcal{H}_\alpha]$. 

\smallskip

First assume: $ \mathbf{\Sigma}^0_1$-$\overleftarrow{\mathbf{AC}_{2,\mathcal{C}}}$.

Let $\alpha$ be given such that   $\forall \delta \in [0,1]^2\exists i<2[\delta^{\upharpoonright i} \in \mathcal{H}_{\alpha^{\upharpoonright i}}]$. Define $\beta$ such that, for both $i<2$, $\beta^{\upharpoonright i} = \psi|(\alpha^{\upharpoonright i})$.
 Then $\forall \gamma \in \mathcal{C}\exists i < 2[\sigma|\gamma^{\upharpoonright i} \in \mathcal{H}_{\alpha^{\upharpoonright i}}]$ and, therefore: $\forall \gamma \in \mathcal{C} \exists i <2[\gamma^{\upharpoonright i} \in 
 \mathcal{G}_{\beta^{\upharpoonright i}}]$.
Find $ i <2$ such that  $ \mathcal{C}\subseteq \mathcal{G}_{\beta^{\upharpoonright i}}$. Conclude:
 $ [0,1]\subseteq \mathcal{H}_{\alpha^{\upharpoonright i}}$.
 
 We nay conclude: $\mathbf{\Sigma}^0_1$-$\overleftarrow{\mathbf{AC}_{2,[0,1]}}$.

\smallskip
  Now assume: $\mathbf{\Sigma}^0_1$-$\overleftarrow{\mathbf{AC}_{2,[0,1]}}$.
  
  Let $\alpha$ be given such that 
  
  Using Lemma \ref{L:Cinto[0,1]}, find $\tau:\mathcal{C}\rightarrow [0,1]$ and $\chi:\omega^\omega\rightarrow\omega^\omega$ 
  such that  \begin{enumerate}[\upshape (1)]\item $\forall \gamma\in
   \mathcal{C}\forall \delta\in\mathcal{C}[\gamma\;\#\;\delta\rightarrow \tau|\gamma\;\#_\mathcal{R}\;\tau|\delta]$, and  
  \item $\forall \alpha \forall \gamma \in \mathcal{C}[\gamma \in \mathcal{G}_\alpha\leftrightarrow \tau|\gamma\in\mathcal{H}_{\chi|\alpha}]$ and \item $\forall \delta \in [0,1]\exists\gamma\in\mathcal{C}[\delta\;\#_\mathcal{R}\;\tau|\gamma \rightarrow\forall \alpha[\delta\in\mathcal{H}_{\chi|\alpha}]]$. \end{enumerate}

   Let $\alpha$ be given such that $\forall \gamma \in \mathcal{C}\exists i < 2[\gamma^{\upharpoonright i} \in \mathcal{G}_{\alpha^{\upharpoonright i}} ] $.
  
 Let $\delta$ in $[0,1]^2$ be given.  
 
 Find $\gamma$ in $\mathcal{C}$ such that $\forall i<2[\delta^{\upharpoonright i}\;\#_\mathcal{R}\;\tau|(\gamma^{\upharpoonright i}) \rightarrow\delta^{\upharpoonright i}\in\mathcal{H}_{\chi|(\alpha^{\upharpoonright i})}]$. \\Find $i<2$ such that $\gamma^{\upharpoonright i}\in\mathcal{G}_{\alpha^{\upharpoonright i}}$. \\Conclude: $\tau|(\gamma^{\upharpoonright i})\in\mathcal{H}_{\chi|\alpha^{\upharpoonright i}}$. Find $s,n$ such that $(\chi|\alpha^{\upharpoonright i})(s)\neq 0$ and $\bigl(\tau|(\gamma^{\upharpoonright i})\bigr)(n)\sqsubset_\mathbb{S} s$. \\Using Lemma \ref{L:apart}, find $p$ such that \textit{either}: $\delta^{\upharpoonright i}(p)\sqsubset_\mathbb{S} s$, and, therefore: $\delta^{\upharpoonright i}\in \mathcal{H}_{\chi|(\alpha^i)}$,  \\\textit{or}: $\delta^{\upharpoonright i}(p)\;\#_\mathbb{S}\; \bigl(\tau|(\gamma^{\upharpoonright i})\bigr)(n)$, and, therefore, again: $\delta^{\upharpoonright i}\in \mathcal{H}_{\chi|(\alpha^i)}$.
 
We thus see:  $\forall \delta \in [0,1]^2\exists i<2[\delta^{\upharpoonright i} \in \mathcal{H}_{\chi|(\alpha^{\upharpoonright i})}]$.

 Applying $\mathbf{\Sigma}^0_1$-$\overleftarrow{\mathbf{AC}_{2,[0,1]}}$,  find $i<2$  such that $[0,1]\subseteq\mathcal{H}_{\chi|(\alpha^{\upharpoonright i})}$.
 
  Conclude: $\forall \gamma \in \mathcal{C}[\tau|\gamma\in \mathcal{H}_{\chi|(\alpha^{\upharpoonright i})}]$ and: $\mathcal{C}\subseteq \mathcal{G}_{\alpha^{\upharpoonright i}}$. 
  
  We may conclude: $ \mathbf{\Sigma}^0_1$-$\overleftarrow{\mathbf{AC}_{2,\mathcal{C}}}$.
 \end{proof}

\begin{corollary} $\mathsf{BIM} + \mathbf{\Pi}^0_1$-$\mathbf{AC}_{\omega,2}\vdash\mathbf{FT} \leftrightarrow \mathbf{\Sigma}^0_1$-$\overleftarrow{\mathbf{AC}_{2,[0,1]}}$.\end{corollary}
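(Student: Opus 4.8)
The plan is to assemble this corollary from the three results already in hand. By Theorem \ref{T:fteq2}, $\mathsf{BIM}+\mathbf{\Pi}^0_1$-$\mathbf{AC}_{\omega,2}\vdash \mathbf{\Sigma}^0_1$-$\overleftarrow{\mathbf{AC}_{2,\mathcal{C}}}\leftrightarrow \mathbf{FT}$. By Theorem \ref{T:c0c1}, already in plain $\mathsf{BIM}$ we have $\mathbf{\Sigma}^0_1$-$\overleftarrow{\mathbf{AC}_{2,\mathcal{C}}}\leftrightarrow\mathbf{\Sigma}^0_1$-$\overleftarrow{\mathbf{AC}_{2,[0,1]}}$. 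Chaining these two equivalences inside $\mathsf{BIM}+\mathbf{\Pi}^0_1$-$\mathbf{AC}_{\omega,2}$ gives $\mathbf{FT}\leftrightarrow \mathbf{\Sigma}^0_1$-$\overleftarrow{\mathbf{AC}_{2,[0,1]}}$, which is exactly the claim. So the corollary is immediate and the proof body can simply cite Theorems \ref{T:fteq2} and \ref{T:c0c1}.

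\begin{proof} Combine Theorem \ref{T:fteq2} and Theorem \ref{T:c0c1}. \end{proof}

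There is essentially no obstacle here, since both ingredients are proved in the preceding sections; the only thing to check is that Theorem \ref{T:c0c1} is stated over $\mathsf{BIM}$ (not a weaker or incompatible base), so that the equivalence it provides remains available after adjoining $\mathbf{\Pi}^0_1$-$\mathbf{AC}_{\omega,2}$, which it plainly is. One should also make sure the direction of each biconditional matches: Theorem \ref{T:fteq2} connects $\mathbf{\Sigma}^0_1$-$\overleftarrow{\mathbf{AC}_{2,\mathcal{C}}}$ with $\mathbf{FT}$, Theorem \ref{T:c0c1} connects $\mathbf{\Sigma}^0_1$-$\overleftarrow{\mathbf{AC}_{2,\mathcal{C}}}$ with $\mathbf{\Sigma}^0_1$-$\overleftarrow{\mathbf{AC}_{2,[0,1]}}$, and transitivity of $\leftrightarrow$ closes the loop. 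If one wanted to be fully explicit, one could note that the forward implication $\mathbf{FT}\rightarrow \mathbf{\Sigma}^0_1$-$\overleftarrow{\mathbf{AC}_{2,[0,1]}}$ uses Lemma \ref{L:fteq2}(iv) followed by the first half of Theorem \ref{T:c0c1}, and the reverse implication uses the second half of Theorem \ref{T:c0c1} followed by Lemma \ref{L:fteq2}(ii) and (iii), the latter requiring $\mathbf{\Pi}^0_1$-$\mathbf{AC}_{\omega,2}$ — but a one-line proof citing the two theorems is entirely adequate and is the form the paper adopts for such corollaries.
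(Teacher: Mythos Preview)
Your proof is correct and is essentially identical to the paper's own proof, which simply reads ``Use Theorems \ref{T:fteq2} and \ref{T:c0c1}.'' Your additional remarks unpacking the chain of equivalences and noting where $\mathbf{\Pi}^0_1$-$\mathbf{AC}_{\omega,2}$ is actually needed are accurate and helpful, but the one-line citation is indeed all that is required here.
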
 \begin{proof} Use Theorems \ref{T:fteq2} and \ref{T:c0c1}. \end{proof}

\subsection{Some logical consequences}\hfill

In this Subsection, we want to formulate the result of Theorem \ref{T:fteq2} in model-theoretic terms and draw an even sharper conclusion. 

\smallskip
For every $\delta$, we define a proposition $Pr_\delta$, as follows:  $Pr_\delta := \exists n[\delta(n) \neq 0]$.

\smallskip
Tarski's truth definition makes sense intuitionistically as well as classically.

For every structure $\mathfrak{A}=(A, \ldots)$, for every  sentence $\varphi$ in the elementary language of the structure 
$\mathfrak{A}$, we write: \[\mathfrak{A}\models \varphi\] if the sentence $\varphi$ is true in the structure $\mathfrak{A}$.
\\More generally,  for every structure $\mathfrak{A}=(A, \ldots)$, for every  formula\\ 
$\varphi=\varphi(\mathsf{x_0, x_1, \ldots, x_{n-1}})$ in the elementary language of the structure $\mathfrak{A}$, \\for all $a_0, a_1, \ldots, a_{n-1}$ in $A$,
 we write: \[\mathfrak{A}\models \varphi[a_0, a_1, \dots, a_{n-1}]\] if the formula $\varphi$ is true in the structure $\mathfrak{A}$, provided we interpret the individual variables $\mathsf{x_0, x_1, \ldots, x_{n-1}}$ by $a_0, a_1, \ldots, a_{n-1}$, respectively.

\begin{theorem}\label{T:ftlogiceqprep}
The following statements are equivalent in $\mathsf{BIM}$:
\begin{enumerate}[\upshape (i)]
\item  $\forall \alpha[(\mathcal{C}, \mathcal{G}_{\alpha^{\upharpoonright 0}}, Pr_{\alpha^{\upharpoonright 1}})\models \mathsf{\forall x [P(x) \vee A] \rightarrow (\forall x[P(x)] \vee A)}$.
\item  $\forall \alpha[(\mathcal{C}, \mathcal{G}_{\alpha^{\upharpoonright 0}}, \mathcal{G}_{\alpha^{\upharpoonright 1}})\models\mathsf{\forall x[P(x) \vee Q(x)] \rightarrow (\forall x[P(x)] \vee \exists x[Q(x)])}$.
\end{enumerate}
\end{theorem}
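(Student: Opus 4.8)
The plan is to prove the two directions by translating the model-theoretic statements back into the concrete combinatorial language we have been using, where they become recognisable as special cases of the reverse compact choice schemes already studied. The key observation is the defining equivalences: for any $\delta$, $(\mathcal{C},\mathcal{G}_{\alpha^{\upharpoonright 0}},Pr_{\alpha^{\upharpoonright 1}})\models P(x)\vee A$ holds at $\gamma\in\mathcal{C}$ iff $\gamma\in\mathcal{G}_{\alpha^{\upharpoonright 0}}$ or $\exists n[\alpha^{\upharpoonright 1}(n)\neq 0]$; the antecedent $\forall x[P(x)\vee A]$ says $\forall\gamma\in\mathcal{C}[\gamma\in\mathcal{G}_{\alpha^{\upharpoonright 0}}\vee Pr_{\alpha^{\upharpoonright 1}}]$, and the consequent says $(\mathcal{C}\subseteq\mathcal{G}_{\alpha^{\upharpoonright 0}})\vee Pr_{\alpha^{\upharpoonright 1}}$. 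Similarly, (ii) unwound says: for all $\alpha$, if $\forall\gamma\in\mathcal{C}[\gamma\in\mathcal{G}_{\alpha^{\upharpoonright 0}}\vee\gamma\in\mathcal{G}_{\alpha^{\upharpoonright 1}}]$ then $(\mathcal{C}\subseteq\mathcal{G}_{\alpha^{\upharpoonright 0}})\vee\exists\gamma\in\mathcal{C}[\gamma\in\mathcal{G}_{\alpha^{\upharpoonright 1}}]$. Both are manifestly in the spirit of $\mathbf{\Sigma}^0_1$-$\overleftarrow{\mathbf{AC}_{2,\mathcal{C}}}$, and I expect both to be provably equivalent, in $\mathsf{BIM}$, to that scheme; then the result follows. (It is likely the author actually proves (i)$\leftrightarrow$(ii) directly as stated, so I will organise the argument to go (i)$\Rightarrow$(ii)$\Rightarrow$(i) while passing through the combinatorial reformulations.)

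For (i)$\Rightarrow$(ii), suppose (i) holds. Let $\alpha$ be given with $\forall\gamma\in\mathcal{C}[\gamma\in\mathcal{G}_{\alpha^{\upharpoonright 0}}\vee\gamma\in\mathcal{G}_{\alpha^{\upharpoonright 1}}]$. The difficulty is that in (i) the second disjunct $A$ is a \emph{fixed} proposition $Pr_{\alpha^{\upharpoonright 1}}$, not something depending on $x=\gamma$; here the second disjunct $Q(\gamma)$ does depend on $\gamma$. So I would first apply $\mathbf{\Sigma}^0_1$-$\overleftarrow{\mathbf{AC}_{2,\mathcal{C}}}$ itself — but wait, the theorem does not have that available as a hypothesis. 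Instead, the right move is: from $\alpha^{\upharpoonright 1}$ build a single $\delta$ with $Pr_\delta\leftrightarrow\exists\gamma\in\mathcal{C}[\gamma\in\mathcal{G}_{\alpha^{\upharpoonright 1}}]$ — this is immediate since $\gamma\in\mathcal{G}_{\alpha^{\upharpoonright 1}}$ iff $\exists n[\alpha^{\upharpoonright 1}(\overline\gamma n)\neq 0]$ and $\exists\gamma\in\mathcal{C}[\exists n[\alpha^{\upharpoonright 1}(\overline\gamma n)\neq 0]]$ is itself a $\mathbf{\Sigma}^0_1$ statement, hence of the form $Pr_\delta$ for a primitive-recursively definable $\delta$ (search over $n$ and over $\overline\gamma n\in Bin_n$). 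Now form $\beta$ with $\beta^{\upharpoonright 0}=\alpha^{\upharpoonright 0}$ and $\beta^{\upharpoonright 1}=\delta$. From the hypothesis on $\alpha$ we get $\forall\gamma\in\mathcal{C}[\gamma\in\mathcal{G}_{\beta^{\upharpoonright 0}}\vee Pr_{\beta^{\upharpoonright 1}}]$: indeed if $\gamma\in\mathcal{G}_{\alpha^{\upharpoonright 1}}$ then $Pr_\delta$ holds. Applying (i) to $\beta$ yields $(\mathcal{C}\subseteq\mathcal{G}_{\alpha^{\upharpoonright 0}})\vee Pr_\delta$, i.e. $(\mathcal{C}\subseteq\mathcal{G}_{\alpha^{\upharpoonright 0}})\vee\exists\gamma\in\mathcal{C}[\gamma\in\mathcal{G}_{\alpha^{\upharpoonright 1}}]$, which is exactly what (ii) asserts for $\alpha$.

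For (ii)$\Rightarrow$(i), suppose (ii) holds and let $\alpha$ be given with $\forall\gamma\in\mathcal{C}[\gamma\in\mathcal{G}_{\alpha^{\upharpoonright 0}}\vee Pr_{\alpha^{\upharpoonright 1}}]$. Here I want to convert the $x$-independent disjunct $Pr_{\alpha^{\upharpoonright 1}}$ into a set $\mathcal{G}_{\beta^{\upharpoonright 1}}$ that is either empty or all of $\mathcal{C}$. Define $\beta$ with $\beta^{\upharpoonright 0}=\alpha^{\upharpoonright 0}$ and $\beta^{\upharpoonright 1}$ such that, for all $s$, $\beta^{\upharpoonright 1}(s)\neq 0$ iff $\exists n<\mathit{length}(s)[\alpha^{\upharpoonright 1}(n)\neq 0]$; then $\gamma\in\mathcal{G}_{\beta^{\upharpoonright 1}}$ iff $Pr_{\alpha^{\upharpoonright 1}}$, for every $\gamma\in\mathcal{C}$. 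So $\forall\gamma\in\mathcal{C}[\gamma\in\mathcal{G}_{\beta^{\upharpoonright 0}}\vee\gamma\in\mathcal{G}_{\beta^{\upharpoonright 1}}]$, and (ii) gives $(\mathcal{C}\subseteq\mathcal{G}_{\alpha^{\upharpoonright 0}})\vee\exists\gamma\in\mathcal{C}[\gamma\in\mathcal{G}_{\beta^{\upharpoonright 1}}]$; the second disjunct is $Pr_{\alpha^{\upharpoonright 1}}$ (noting $\mathcal{C}$ is inhabited, e.g.\ $\underline 0\in\mathcal{C}$), so we obtain $(\mathcal{C}\subseteq\mathcal{G}_{\alpha^{\upharpoonright 0}})\vee Pr_{\alpha^{\upharpoonright 1}}$, which is (i) for $\alpha$. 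The main obstacle throughout is simply keeping straight the difference between a disjunct that varies with the universally quantified variable and one that does not; once the two encodings above ($\mathbf{\Sigma}^0_1$ disjunct $\rightsquigarrow$ a single $Pr_\delta$, and constant disjunct $\rightsquigarrow$ a clopen $\mathcal{G}_{\beta^{\upharpoonright 1}}$) are in hand, everything is routine bookkeeping in $\mathsf{BIM}$, using only Axiom \ref{ax:recop} to produce the auxiliary functions. No choice principle is needed for this equivalence.
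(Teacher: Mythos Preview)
Your proof is correct and takes essentially the same approach as the paper. For (i)$\Rightarrow$(ii) the paper defines $\beta^{\upharpoonright 1}(n)\neq 0\leftrightarrow(\alpha^{\upharpoonright 1}(n)\neq 0\wedge n\in Bin)$, which is precisely your $\delta$ encoding $\exists\gamma\in\mathcal{C}[\gamma\in\mathcal{G}_{\alpha^{\upharpoonright 1}}]$ as a $\Sigma^0_1$ proposition; for (ii)$\Rightarrow$(i) the paper's construction of $\beta^{\upharpoonright 1}$ is the same as yours (up to an inessential restriction to $Bin$).
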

\begin{proof} (i) $\Rightarrow$ (ii). Let $\alpha$ be given such that $(\mathcal{C}, \mathcal{G}_{\alpha^{\upharpoonright 0}}, \mathcal{G}_{\alpha^{\upharpoonright 1}}) \models \mathsf{\forall x [P(x) \vee Q(x)]}$.  \\Note: $(\mathcal{C}, \mathcal{G}_{\alpha^{\upharpoonright 0}}, \mathcal{G}_{\alpha^{\upharpoonright 1}}) \models \mathsf{\forall x [P(x) \vee \exists y[Q(y)]]}$. \\Define $\beta$ such that $\beta^{\upharpoonright 0} = \alpha^{\upharpoonright 0}$ and $\forall n[\beta^{\upharpoonright 1}(n)\neq 0\leftrightarrow \bigl(\alpha^{\upharpoonright 1}(n)\neq 0 \;\wedge\;n\in\mathit{Bin}\bigr)]$. Note: $(\mathcal{C}, \mathcal{G}_{\beta^{\upharpoonright 0}}, Pr_{\beta^{\upharpoonright 1}}) \models \mathsf{\forall x [P(x) \vee A]}$, and thus, according to (i): \\ $(\mathcal{C}, \mathcal{G}_{\beta^{\upharpoonright 0}}, Pr_{\beta^{\upharpoonright 1}}) \models \mathsf{\forall x [P(x)] \vee A}$, and,  therefore: $(\mathcal{C}, \mathcal{G}_{\alpha^{\upharpoonright 0}}, \mathcal{G}_{\alpha^{\upharpoonright 1}}) \models \mathsf{\forall x [P(x)] \vee \exists x[Q(x)]}$.
 
\smallskip
(ii) $\Rightarrow$ (i). Let $\alpha$ be  given such that $(\mathcal{C}, \mathcal{G}_{\alpha^{\upharpoonright 0}}, Pr_{\alpha^{\upharpoonright 1}}) \models \mathsf{\forall x[P(x) \vee A]}$. \\Define $\beta$ such that $\beta^{\upharpoonright 0} = \alpha^{\upharpoonright 0}$ and  $\forall n[\beta^{\upharpoonright 1}(n)\neq 0\leftrightarrow \bigl(\exists i<n[\alpha^{\upharpoonright 1}(i) \neq 0] \;\wedge\;n \in \mathit{Bin}\bigr)]$. 
\\Note that, for each $\gamma$ in $\mathcal{C}$, $\gamma \in \mathcal{G}_{\beta^{\upharpoonright 1}}$ if and only if $Pr_{\alpha^{\upharpoonright 1}}$. \\Therefore, $(\mathcal{C}, \mathcal{G}_{\beta^{\upharpoonright 0}}, \mathcal{G}_{\beta^{\upharpoonright 1}}) \models \mathsf{\forall x[P(x) \vee Q(x)]}$ and, according to (ii):\\
$(\mathcal{C}, \mathcal{G}_{\beta^{\upharpoonright 0}}, \mathcal{G}_{\beta^{\upharpoonright 1}})\models \mathsf{\forall x[P(x)] \vee \exists x[Q(x)]}$, and thus:
$(\mathcal{C}, \mathcal{G}_{\alpha^{\upharpoonright 0}}, Pr_{\alpha^{\upharpoonright 1}}) \models \mathsf{\forall x[P(x)] \vee A}$.
\end{proof}
For each $\alpha$, we define  the following statement:
\[\mathbf{LPO}^\alpha:\;\forall \varepsilon[\forall p[\overline{\underline 0}(2p+2)\perp\varepsilon\rightarrow \mathit{Bar}_\mathcal{C}(D_{\overline \alpha p} )]\;\vee \;\exists n[\varepsilon (n) \neq 0]].\]

\begin{lemma}\label{L:lpoalpha}\begin{enumerate}[\upshape (i)]
\item $\mathsf{BIM}\vdash \mathbf{LPO}\rightarrow \forall \alpha[\mathbf{LPO}^\alpha]$.
\item $\mathsf{BIM}\vdash \forall \alpha[\mathbf{LPO}^\alpha\rightarrow \mathbf{LLPO}^\alpha]$.

\end{enumerate} \end{lemma}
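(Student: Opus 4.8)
The plan is to prove both implications by a straightforward case analysis, in each case reading off the desired disjunction by observing that one of the relevant implications has a false antecedent and hence holds vacuously. The bookkeeping rests on the convention (see the footnote to $\mathbf{LLPO}$) that ``$\mu m[\varepsilon(m)\neq 0]=k$'' abbreviates ``$\varepsilon(k)\neq 0\;\wedge\;\forall n<k[\varepsilon(n)=0]$'', so that for a given $p$ the statement ``$2p=\mu m[\varepsilon(m)\neq 0]$'' is decidable and does not presuppose that $\varepsilon$ assumes a nonzero value.

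For (i), I would let $\alpha$ and $\varepsilon$ be arbitrary and apply $\mathbf{LPO}$ to $\varepsilon$. If $\exists n[\varepsilon(n)\neq 0]$, the second disjunct of $\mathbf{LPO}^\alpha$ holds immediately. If $\forall n[\varepsilon(n)=0]$, then $\overline{\underline 0}(2p+2)\sqsubset\varepsilon$ for every $p$, so the antecedent $\overline{\underline 0}(2p+2)\perp\varepsilon$ fails for every $p$ and the implication $\overline{\underline 0}(2p+2)\perp\varepsilon\rightarrow \mathit{Bar}_\mathcal{C}(D_{\overline\alpha p})$ is vacuously true; hence the first disjunct of $\mathbf{LPO}^\alpha$ holds. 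Since $\alpha$ and $\varepsilon$ were arbitrary, this yields $\forall\alpha[\mathbf{LPO}^\alpha]$.

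For (ii), I would fix $\alpha$, assume $\mathbf{LPO}^\alpha$, let $\varepsilon$ be given and apply $\mathbf{LPO}^\alpha$ to $\varepsilon$. In the first case, $\forall p[\overline{\underline 0}(2p+2)\perp\varepsilon\rightarrow \mathit{Bar}_\mathcal{C}(D_{\overline\alpha p})]$; then whenever $2p=\mu m[\varepsilon(m)\neq 0]$ we have $\varepsilon(2p)\neq 0$ with $2p<2p+2$, so $\overline{\underline 0}(2p+2)\perp\varepsilon$ and therefore $\mathit{Bar}_\mathcal{C}(D_{\overline\alpha p})$, giving the first disjunct of $\mathbf{LLPO}^\alpha$. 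In the second case, $\exists n[\varepsilon(n)\neq 0]$, so $n_0:=\mu m[\varepsilon(m)\neq 0]$ is well defined and I may decide whether $n_0$ is even or odd. If $n_0$ is odd, then $2p=\mu m[\varepsilon(m)\neq 0]$ holds for no $p$, so the first disjunct of $\mathbf{LLPO}^\alpha$ is vacuously true; if $n_0$ is even, then $2p+1=\mu m[\varepsilon(m)\neq 0]$ holds for no $p$, so the second disjunct is vacuously true. In all cases $\mathbf{LLPO}^\alpha$ follows.

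I do not expect a genuine obstacle here: the argument is elementary and uses no choice. The only place demanding a little care is the interplay between the decidable minimalisation predicate and the vacuous truth of implications with a false antecedent, i.e.\ making sure that in each branch the correct disjunct is the one whose antecedents cannot be realised; once the $\mu$-convention is kept firmly in mind, each verification is routine.
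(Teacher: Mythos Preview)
Your proposal is correct and follows essentially the same approach as the paper's own proof: in both parts you apply the relevant disjunction ($\mathbf{LPO}$ in (i), $\mathbf{LPO}^\alpha$ in (ii)) to $\varepsilon$ and carry out the same case split, using vacuous truth of the implications exactly as the paper does. The only cosmetic difference is that in Case~1 of (ii) the paper observes that \emph{both} disjuncts of $\mathbf{LLPO}^\alpha$ follow (since $2p+1=\mu m[\varepsilon(m)\neq 0]$ likewise forces $\overline{\underline 0}(2p+2)\perp\varepsilon$), whereas you stop after verifying the first; either suffices.
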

\begin{proof} (i) Given any $\varepsilon$, by $\mathbf{LPO}$,\\ \textit{either} $\varepsilon=\underline 0$, and, therefore: $\forall p[\overline{\underline 0}(2p+2)\perp\varepsilon\rightarrow \mathit{Bar}_\mathcal{C}(D_{\overline \alpha p} )]$, \textit{or} $\exists n[\varepsilon(n)\neq 0]$. 

\smallskip (ii) Let $\alpha$ be given such that $\mathbf{LPO}^\alpha$. We have to prove, for all $\varepsilon$, either \\$\forall p[2p=\mu m[\varepsilon(m)\neq 0] \rightarrow \mathit{Bar}_\mathcal{C}(D_{\overline \alpha p})]$ or $ \forall p[2p+1=\mu m[\varepsilon(m)\neq 0 ]\rightarrow\mathit{Bar}_\mathcal{C}(D_{\overline \alpha p})]$.

 Let $\varepsilon$ be given.  Use $\mathbf{LPO}^\alpha$ and distinguish two cases. 
 
 \textit{Case (1)}. $\forall p[ \overline{\underline 0}(2p+2)\perp\varepsilon\rightarrow \mathit{Bar}_\mathcal{C}(D_{\overline \alpha p}) ]$. \\Then $\forall p[2p=\mu m[\varepsilon(m)\neq 0]  \rightarrow\mathit{Bar}_\mathcal{C}(D_{\overline \alpha p})]$, and also\\ $\forall p[2p+1=\mu p[\varepsilon(m)\neq 0]\rightarrow\mathit{Bar}_\mathcal{C}(D_{\overline \alpha p})]$.
 
 \textit{Case (2)}. $\exists n[\varepsilon(n) \neq 0]$. Define $m:=\mu n[\varepsilon(n) \neq 0]$. \\Find $q$ such that $m = 2q$ or $m = 2q+1$. \\If $m=2q+1$, then $\forall p[2p=\mu m[\varepsilon(m)\neq 0]\rightarrow\mathit{Bar}_\mathcal{C}(D_{\overline \alpha p})]$. \\If $m= 2q$, then 
$\forall p[2p+1=\mu m[\varepsilon(m)\neq 0]\rightarrow \mathit{Bar}_\mathcal{C}(D_{\overline \alpha p})]$.

Conclude: $\mathbf{LLPO}^\alpha$.\end{proof} 
\begin{theorem}\label{T:ftlogiceq}
 The following statements are equivalent in $\mathsf{BIM}\;+\;\mathbf{\Pi}^0_1$-$\mathbf{AC}_{\omega,2}$:
 \begin{enumerate}[\upshape(i)]

\item $\mathbf{FT}$.
\item  $\forall \alpha[(\mathcal{C}, \mathcal{G}_{\alpha^{\upharpoonright 0}}, \mathcal{G}_{\alpha^{\upharpoonright 1}})\models \mathsf{\forall x \forall y[P(x) \vee Q(y)] \rightarrow (\forall x[P(x)] \vee \forall y[Q(y)])}] $.
\item  $\forall \alpha[(\mathcal{C}, \mathcal{G}_{\alpha^{\upharpoonright 0}}, Pr_{\alpha^{\upharpoonright 1}})\models\mathsf{\forall x [P(x) \vee A] \rightarrow (\forall x[P(x)] \vee A)}] $.
\item $\forall \alpha[Bar_\mathcal{C}(D_\alpha)\rightarrow \mathbf{LPO}^\alpha]$. 
\end{enumerate}\end{theorem}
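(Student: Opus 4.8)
The plan is to prove the cycle $\mathrm{(i)}\Rightarrow\mathrm{(ii)}\Rightarrow\mathrm{(iii)}\Rightarrow\mathrm{(iv)}\Rightarrow\mathrm{(i)}$, with $\mathbf{\Pi}^0_1$-$\mathbf{AC}_{\omega,2}$ entering only in the last step. Throughout I write $\mathcal{G}_\alpha$ also for the open subset of $\mathcal{C}$ determined by a finite approximation, so that $Bar_\mathcal{C}(D_{\overline\alpha p})\leftrightarrow\mathcal{C}\subseteq\mathcal{G}_{\overline\alpha p}$ and $\mathcal{G}_{\overline\alpha p}\subseteq\mathcal{G}_{\overline\alpha q}$ for $p\le q$, and I use that $\gamma\mapsto(\gamma^{\upharpoonright 0},\gamma^{\upharpoonright 1})$ maps $\mathcal{C}$ onto $\mathcal{C}\times\mathcal{C}$. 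Statement (ii) is just $\mathbf{\Sigma}^0_1$-$\overleftarrow{\mathbf{AC}_{2,\mathcal{C}}}$ in model-theoretic dress: the clause $(\mathcal{C},\mathcal{G}_{\alpha^{\upharpoonright 0}},\mathcal{G}_{\alpha^{\upharpoonright 1}})\models\mathsf{\forall x\forall y[P(x)\vee Q(y)]}$ unwinds to $\forall\gamma\in\mathcal{C}\forall\delta\in\mathcal{C}[\gamma\in\mathcal{G}_{\alpha^{\upharpoonright 0}}\vee\delta\in\mathcal{G}_{\alpha^{\upharpoonright 1}}]$, equivalently $\forall\gamma\in\mathcal{C}\exists i<2[\gamma^{\upharpoonright i}\in\mathcal{G}_{\alpha^{\upharpoonright i}}]$, and $(\mathcal{C},\mathcal{G}_{\alpha^{\upharpoonright 0}},\mathcal{G}_{\alpha^{\upharpoonright 1}})\models\mathsf{\forall x[P(x)]\vee\forall y[Q(y)]}$ unwinds to $\exists i<2[\mathcal{C}\subseteq\mathcal{G}_{\alpha^{\upharpoonright i}}]$. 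Hence $\mathrm{(i)}\leftrightarrow\mathrm{(ii)}$ is exactly Theorem \ref{T:fteq2}, and the direction $\mathrm{(i)}\Rightarrow\mathrm{(ii)}$ alone is already Lemma \ref{L:fteq2}(iv), so it needs no choice.

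For $\mathrm{(ii)}\Rightarrow\mathrm{(iii)}$ I would first replace (iii) by the statement $(\dagger)$, which is equivalent to it in $\mathsf{BIM}$ by Theorem \ref{T:ftlogiceqprep}: for every $\alpha$, if $\forall\gamma\in\mathcal{C}[\gamma\in\mathcal{G}_{\alpha^{\upharpoonright 0}}\vee\gamma\in\mathcal{G}_{\alpha^{\upharpoonright 1}}]$, then $\mathcal{C}\subseteq\mathcal{G}_{\alpha^{\upharpoonright 0}}$ or $\exists\gamma[\gamma\in\mathcal{G}_{\alpha^{\upharpoonright 1}}]$. Given such an $\alpha$, put $\beta^{\upharpoonright 0}=\alpha^{\upharpoonright 0}$ and choose $\beta^{\upharpoonright 1}$ so that $\mathcal{G}_{\beta^{\upharpoonright 1}}=\mathcal{C}$ when the $\mathbf{\Sigma}^0_1$ condition $\exists s\in\mathit{Bin}[\alpha^{\upharpoonright 1}(s)\neq 0]$ holds and $\mathcal{G}_{\beta^{\upharpoonright 1}}=\emptyset$ otherwise. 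Then $\forall\gamma\in\mathcal{C}\exists i<2[\gamma^{\upharpoonright i}\in\mathcal{G}_{\beta^{\upharpoonright i}}]$: for a given $\gamma$, either $\gamma^{\upharpoonright 0}\in\mathcal{G}_{\alpha^{\upharpoonright 0}}=\mathcal{G}_{\beta^{\upharpoonright 0}}$, or $\gamma^{\upharpoonright 0}\in\mathcal{G}_{\alpha^{\upharpoonright 1}}$, in which case $\mathcal{G}_{\alpha^{\upharpoonright 1}}$ is inhabited and $\gamma^{\upharpoonright 1}\in\mathcal{G}_{\beta^{\upharpoonright 1}}=\mathcal{C}$. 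Applying $\mathbf{\Sigma}^0_1$-$\overleftarrow{\mathbf{AC}_{2,\mathcal{C}}}$ yields $i<2$ with $\mathcal{C}\subseteq\mathcal{G}_{\beta^{\upharpoonright i}}$; the case $i=0$ gives $\mathcal{C}\subseteq\mathcal{G}_{\alpha^{\upharpoonright 0}}$, and the case $i=1$ forces $\mathcal{G}_{\beta^{\upharpoonright 1}}$, hence $\mathcal{G}_{\alpha^{\upharpoonright 1}}$, to be inhabited — which is $(\dagger)$.

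For $\mathrm{(iii)}\Rightarrow\mathrm{(iv)}$, work again with $(\dagger)$. Fix $\alpha$ with $Bar_\mathcal{C}(D_\alpha)$, i.e.\ $\mathcal{C}\subseteq\mathcal{G}_\alpha$, and let $\varepsilon$ be given. Define $\beta$ so that $\mathcal{G}_{\beta^{\upharpoonright 1}}=\mathcal{C}$ when $\exists n[\varepsilon(n)\neq 0]$ and is empty otherwise, while $\gamma\in\mathcal{G}_{\beta^{\upharpoonright 0}}$ iff $\exists p[\gamma\in\mathcal{G}_{\overline\alpha p}\;\wedge\;\overline{\underline 0}(2p+2)\sqsubseteq\varepsilon]$, again a $\mathbf{\Sigma}^0_1$ condition on $\gamma$. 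Since $\mathcal{C}\subseteq\mathcal{G}_\alpha$, each $\gamma$ lies in some $\mathcal{G}_{\overline\alpha p}$, and the decidable split ``$\overline{\underline 0}(2p+2)\sqsubseteq\varepsilon$ or $\overline{\underline 0}(2p+2)\perp\varepsilon$'' places $\gamma$ in $\mathcal{G}_{\beta^{\upharpoonright 0}}$ in the first case and in $\mathcal{G}_{\beta^{\upharpoonright 1}}$ (since then $\exists n[\varepsilon(n)\neq 0]$) in the second; so the hypothesis of $(\dagger)$ for $\beta$ holds. If the conclusion gives that $\mathcal{G}_{\beta^{\upharpoonright 1}}$ is inhabited we obtain $\exists n[\varepsilon(n)\neq 0]$; if it gives $\mathcal{C}\subseteq\mathcal{G}_{\beta^{\upharpoonright 0}}$, then for every $p$ with $\overline{\underline 0}(2p+2)\perp\varepsilon$ and every $\gamma\in\mathcal{C}$ the witnessing $p'$ satisfies $\overline{\underline 0}(2p'+2)\sqsubseteq\varepsilon$ while $\varepsilon$ is nonzero somewhere below $2p+2$, so $p'<p$ and $\gamma\in\mathcal{G}_{\overline\alpha p'}\subseteq\mathcal{G}_{\overline\alpha p}$, i.e.\ $Bar_\mathcal{C}(D_{\overline\alpha p})$. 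Either way $\mathbf{LPO}^\alpha$ holds, so (iv) follows. Finally $\mathrm{(iv)}\Rightarrow\mathrm{(i)}$ is obtained by composing Lemma \ref{L:lpoalpha}(ii), which turns (iv) into $\forall\alpha[Bar_\mathcal{C}(D_\alpha)\rightarrow\mathbf{LLPO}^\alpha]$, with Lemma \ref{L:fteq2}(iii), where $\mathbf{\Pi}^0_1$-$\mathbf{AC}_{\omega,2}$ is used.

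The appeals to Theorems \ref{T:fteq2}, \ref{T:ftlogiceqprep} and Lemmas \ref{L:fteq2}, \ref{L:lpoalpha}, and the unwinding of Tarski satisfaction, are routine. The real work — and the likely source of error — lies in the two $\beta$-constructions: one must verify that the sets $\mathcal{G}_{\beta^{\upharpoonright i}}$ are genuinely $\mathbf{\Sigma}^0_1$ (so that $\mathbf{\Sigma}^0_1$-$\overleftarrow{\mathbf{AC}_{2,\mathcal{C}}}$, resp.\ $(\dagger)$, applies), that every case distinction is made through a decidable predicate, and — in $\mathrm{(iii)}\Rightarrow\mathrm{(iv)}$ — that the $2p+2$ bookkeeping correctly converts the ``$\mathcal{C}\subseteq\mathcal{G}_{\beta^{\upharpoonright 0}}$'' alternative into the vacuously-satisfied-implication clause of $\mathbf{LPO}^\alpha$.
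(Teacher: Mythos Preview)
Your proof is correct and follows the same cycle $\mathrm{(i)}\Rightarrow\mathrm{(ii)}\Rightarrow\mathrm{(iii)}\Rightarrow\mathrm{(iv)}\Rightarrow\mathrm{(i)}$ as the paper, with the same key idea in the crucial step $\mathrm{(iii)}\Rightarrow\mathrm{(iv)}$: truncate $\alpha$ according to how long $\varepsilon$ stays zero. The only difference is cosmetic: you route $\mathrm{(ii)}\Rightarrow\mathrm{(iii)}$ and $\mathrm{(iii)}\Rightarrow\mathrm{(iv)}$ through the equivalent formulation $(\dagger)$ via Theorem~\ref{T:ftlogiceqprep}, whereas the paper works with (iii) directly and writes the truncated sequence as $\eta^{\upharpoonright 0}(p)=\alpha(p)$ if $\overline{\underline 0}(2p+2)\sqsubset\varepsilon$ and $0$ otherwise --- your $\mathcal{G}_{\beta^{\upharpoonright 0}}$ is a slight variant of the paper's $\mathcal{G}_{\eta^{\upharpoonright 0}}$, and your ``$p'<p$'' argument is the same reasoning as the paper's observation that $\forall m\ge p[\eta^{\upharpoonright 0}(m)=0]$ forces $Bar_\mathcal{C}(D_{\overline\alpha p})$.
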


\begin{proof}

(i) $\Rightarrow$ (ii). Note that, in $\mathsf{BIM} + \mathbf{\Pi}^0_1$-$\mathbf{AC}_{\omega, 2}$, $\mathbf{FT}$ implies $\mathbf{\Sigma}^0_1$-$\overleftarrow{\mathbf{AC}_{2,\mathcal{C}}}$:\\
  $\forall \alpha[  \forall \gamma \in \mathcal{C}
  \exists i<2 [\gamma^{\upharpoonright i} \in \mathcal{G}_{\alpha^{\upharpoonright i}}] \rightarrow \exists i<2 [\mathcal{C} \subseteq \mathcal{G}_{\alpha^{\upharpoonright i}}] ]$,  see Theorem \ref{T:fteq2}. 

\smallskip
(ii) $\Rightarrow$ (iii). Let $\alpha$ be given such that $(\mathcal{C}, \mathcal{G}_{\alpha^{\upharpoonright 0}}, Pr_{\alpha^{\upharpoonright 1}})\models\mathsf{\forall x [P(x) \vee A]}$, i.e. \\$\forall \gamma \in \mathcal{C}\exists n \exists i<2[\alpha^{\upharpoonright 0}(\overline \gamma n)\neq 0\;\vee\; \alpha^{\upharpoonright 1}(n)\neq 0]$. \\Define $\beta$ such that $\beta^{\upharpoonright 0}=\alpha^{\upharpoonright 0}$ and $\forall n\forall s \in Bin_n[\beta^{\upharpoonright 1}(s)= \alpha^{\upharpoonright 1}(n)]$. 
\\Note $(\mathcal{C}, \mathcal{G}_{\beta^{\upharpoonright 0}}, \mathcal{G}_{\beta^{\upharpoonright 1}})\models \mathsf{\forall x \forall y[P(x) \vee Q(y)]}$ and conclude, using (i), \\$(\mathcal{C}, \mathcal{G}_{\beta^{\upharpoonright 0}}, \mathcal{G}_{\beta^{\upharpoonright 1}})\models \mathsf{\forall x[P(x)] \vee \forall y[Q(y)]}$ and $(\mathcal{C}, \mathcal{G}_{\alpha^{\upharpoonright 0}}, Pr_{\alpha^{\upharpoonright 1}})\models\mathsf{ \forall x[P(x)] \vee A} $.

\smallskip
(iii) $\Rightarrow$ (iv). Assume (iii).

Let $\alpha$ be given such that $\mathit{Bar}_\mathcal{C}(D_\alpha)$. 
We  have to prove: $\mathbf{LPO}^\alpha$.\\
Let $\varepsilon$ be given.\\  
Define $\eta$ in $\mathcal{C}$ such that $\eta^{\upharpoonright 1} = \varepsilon$, and, for each $p$,
if $ \underline{\overline 0}(2p+2)\sqsubset\varepsilon$, then $\eta^{\upharpoonright 0}(p) = \alpha(p)$, and,
 if   $\underline{\overline 0}(2p+2)\perp \varepsilon$,  then $\eta^{\upharpoonright 0}(p) =0$.
Note: if $\eta^{\upharpoonright 0} \;\#\; \alpha$, then $\exists n[\varepsilon(n) =\eta^{\upharpoonright 1} (n) \neq 0]$.

\smallskip
   Let $\gamma$ in $\mathcal{C}$ be given. Find $n$ such that $\alpha(\overline \gamma n) \neq 0$.\\Either: $\eta^0(\overline{\gamma}n) = \alpha(\overline{\gamma}n)\neq 0$ or: $\eta^{\upharpoonright 0} \;\#\; \alpha$ and $\exists m[\eta{\upharpoonright 1}(m) \neq 0]$. \\We thus see: $\forall \gamma \in \mathcal{C}[(\exists n[\eta^{\upharpoonright 0}(\overline{\gamma}n) \neq 0])\;\vee\;(\exists m[\eta^{\upharpoonright 1}(m) \neq 0])]$.\\
 Use (iii) and conclude:   $\forall \gamma \in \mathcal{C} \exists n[\eta^{\upharpoonright 0}(\overline \gamma n) = 1]\;\vee\;\exists m[\eta^{\upharpoonright 1}(m) \neq 0]$.

Assume: $\forall \gamma \in \mathcal{C}\exists n[\eta^{\upharpoonright 0}(\overline \gamma n) \neq 0]$, that is: $\mathit{Bar}_\mathcal{C}(D_{\eta^{\upharpoonright 0}})$. 
Let $p$ be given such that  $ \underline{\overline 0}(2p+2)\perp\varepsilon$. Note $\forall m\ge p[\eta^{\upharpoonright 0}(m) = 0]$. Conclude: $\mathit{Bar}_\mathcal{C}(D_{\overline{\eta^{\upharpoonright 0}} p})$ and: $\mathit{Bar}_\mathcal{C}(D_{\overline \alpha p})$. \\We thus see: $\forall p[ \underline{\overline 0}(2p+2)\perp\varepsilon \rightarrow \mathit{Bar}_\mathcal{C}(D_{\overline \alpha p})]$.

Assume: $\exists n[\eta^{\upharpoonright 1}(n)\neq 0]$, Then, of course: $\exists n [\varepsilon(n) \neq 0]$.

We thus see: $\forall\varepsilon[\overline{\underline 0}(2p+2)\perp \varepsilon \rightarrow Bar_\mathcal{C}(D_{\overline \alpha p})]\;\vee\;\exists n[\varepsilon (n)\neq 0]]$, i.e. $\mathbf{LPO}^\alpha$. 

\smallskip (iv) $\Rightarrow$ (i). Use Lemma \ref{L:lpoalpha}(ii) and Theorem \ref{T:fteq2}(iii).
\end{proof}

The sentences mentioned in Theorems \ref{T:ftlogiceqprep} and \ref{T:ftlogiceq} are true in every structure $(\{0, 1, \ldots, n\}, P, Q, A)$ where $n$ is a natural number, $P, Q$ are arbitrary subsets of $\{0,1, \ldots, n\}$ and $A$ is an arbitrary proposition, that is,  these sentences hold in every \textit{finite} structure.
They sometimes fail to be true in countable structures, as appears from the next two theorems.

\begin{theorem}\label{T:llpological} The following statements are equivalent in $\mathsf{BIM}$. \begin{enumerate}[\upshape(i)]
\item $\mathbf{LLPO}$.
\item $\forall \alpha [(\omega,D_{\alpha^{\upharpoonright 0}}, D_{\alpha^{\upharpoonright 1}})\models \mathsf{\forall x \forall y[P(x) \vee Q(y)]}\rightarrow\mathsf{(\forall x[P(x)] \vee \forall y [Q(y)])}$.

\end{enumerate} \end{theorem}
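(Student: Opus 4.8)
The plan is to prove the two implications separately, in each case reducing to a statement about natural numbers and the minimum operator, so that the model-theoretic statement (ii) becomes a transparent reformulation of $\mathbf{LLPO}$. For the direction (i) $\Rightarrow$ (ii), I would let $\alpha$ be given such that $(\omega, D_{\alpha^{\upharpoonright 0}}, D_{\alpha^{\upharpoonright 1}}) \models \mathsf{\forall x \forall y[P(x) \vee Q(y)]}$, i.e. $\forall m \forall n[\alpha^{\upharpoonright 0}(m) \neq 0 \;\vee\; \alpha^{\upharpoonright 1}(n) \neq 0]$. From a single $\varepsilon$ on which we want to apply $\mathbf{LLPO}$, build $\varepsilon$ so that its least index of nonzeroness being even corresponds to the first ``witness'' coming from $P$ and odd to the first witness coming from $Q$; more directly, given the hypothesis, I would define $\varepsilon$ by $\varepsilon(2m) \neq 0 \leftrightarrow \alpha^{\upharpoonright 0}(m) \neq 0$ and $\varepsilon(2n+1) \neq 0 \leftrightarrow \alpha^{\upharpoonright 1}(n) \neq 0$, apply $\mathbf{LLPO}$ to find $i < 2$ with $\forall p[2p + i \neq \mu k[\varepsilon(k) \neq 0]]$. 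In the case $i = 0$ this forces (using the hypothesis, which guarantees $\varepsilon$ is not identically zero once one of $P, Q$ is ever witnessed, and a small argument splitting on whether $\exists m[\alpha^{\upharpoonright 0}(m) \neq 0]$) that $\forall m[\alpha^{\upharpoonright 0}(m) = 0]$, hence $(\omega, \ldots) \models \mathsf{\forall x[P(x)]}$; in the case $i = 1$, symmetrically, $(\omega, \ldots) \models \mathsf{\forall y[Q(y)]}$.

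For the converse (ii) $\Rightarrow$ (i), I would unwind $\mathbf{LLPO}$ directly. Let $\delta$ be given; we must find $i < 2$ with $\forall p[2p + i \neq \mu n[\delta(n) \neq 0]]$. Define $\alpha$ so that $D_{\alpha^{\upharpoonright 0}} = P$ is the set of $m$ such that $\mu n[\delta(n) \neq 0]$ is not equal to any $2m$ ``decided so far'', and $D_{\alpha^{\upharpoonright 1}} = Q$ analogously for odd indices; concretely, put $\alpha^{\upharpoonright 0}(m) \neq 0 \leftrightarrow \exists q < 2m+1[\delta(q) \neq 0]$ and $\alpha^{\upharpoonright 1}(n) \neq 0 \leftrightarrow \exists q < 2n+2[\delta(q) \neq 0]$, or some such bookkeeping. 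The point is to arrange that $\forall m \forall n[\alpha^{\upharpoonright 0}(m) \neq 0 \;\vee\; \alpha^{\upharpoonright 1}(n) \neq 0]$ holds — this should follow because for any $m, n$, if $\mu k[\delta(k) \neq 0]$ is small it is caught by one of the two, and if it is large (or $\delta = \underline 0$) then both are eventually nonzero — while $\forall m[\alpha^{\upharpoonright 0}(m) \neq 0]$ is equivalent to ``$\mu k[\delta(k)\neq 0]$ avoids all even numbers'' and $\forall n[\alpha^{\upharpoonright 1}(n) \neq 0]$ to the odd analogue. Then (ii) yields $i < 2$ with the desired property.

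The main obstacle, I expect, is getting the coding $\alpha$ in the converse direction exactly right: one needs $\alpha^{\upharpoonright 0}$ and $\alpha^{\upharpoonright 1}$ to be \emph{increasing} (since $D_{\alpha^{\upharpoonright i}}$ should behave like ``$\mu k[\delta(k) \neq 0]$ is not of the form $2p+i$ for small $p$'', a downward-closed then upward-closed phenomenon), so that $\forall m[\alpha^{\upharpoonright 0}(m) \neq 0] \leftrightarrow \forall p[2p \neq \mu k[\delta(k) \neq 0]]$ and similarly for the other, \emph{and} simultaneously $\forall m\forall n[\alpha^{\upharpoonright 0}(m) \neq 0 \vee \alpha^{\upharpoonright 1}(n) \neq 0]$ must hold constructively without case-splitting on $\delta$. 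This is exactly the kind of careful contrapositive formulation the paper emphasizes, and it is the analogue, at the level of $\omega$ rather than $\mathcal{C}$, of the $\eta$-construction in Lemma \ref{L:fteq2}(ii); the structure of that proof is the template to imitate. The forward direction should be routine once the coding conventions are fixed.
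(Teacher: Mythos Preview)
Your direction (i)~$\Rightarrow$~(ii) has the encoding backwards, and this is not merely a typo: the subsequent reasoning breaks. With your $\varepsilon$ defined by $\varepsilon(2m)\neq 0\leftrightarrow\alpha^{\upharpoonright 0}(m)\neq 0$ (and likewise for odd indices), the hypothesis $\forall m\forall n[P(m)\vee Q(n)]$ makes $\varepsilon$ almost everywhere nonzero, and $\mathbf{LLPO}$ then says almost nothing. Concretely: if the $\mathbf{LLPO}$-case $i=0$ obtains, one does get $\varepsilon(0)=0$, hence $\neg P(0)$, hence $\forall y\,Q(y)$ --- note this is the \emph{opposite} disjunct from the one you claim, and your assertion ``$\forall m[\alpha^{\upharpoonright 0}(m)=0]$, hence $\forall x[P(x)]$'' confuses $D_{\alpha^{\upharpoonright 0}}=\emptyset$ with $D_{\alpha^{\upharpoonright 0}}=\omega$. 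Worse, if case $i=1$ obtains and $\varepsilon(0)\neq 0$, then $\mu k[\varepsilon(k)\neq 0]=0$ and the condition $\forall p[2p+1\neq\mu k]$ is trivially satisfied, yielding no information at all beyond $P(0)$; one cannot conclude either disjunct. The ``small argument splitting on whether $\exists m[\alpha^{\upharpoonright 0}(m)\neq 0]$'' you invoke is an instance of $\mathbf{LPO}$, which you do not have. The paper instead defines $\beta$ by $\beta(2p+i)=0\leftrightarrow\alpha^{\upharpoonright i}(p)\neq 0$, the \emph{negation} of your $\varepsilon$. Now the hypothesis says one cannot have both an even and an odd index where $\beta$ is nonzero; then, in the $\mathbf{LLPO}$-case ``no odd index is $\mu k[\beta(k)\neq 0]$'', any odd nonzero would force an even nonzero below it, contradicting the hypothesis --- so all odds are zero, i.e.\ $\forall y\,Q(y)$. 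The negated encoding is the key idea.

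For (ii)~$\Rightarrow$~(i), your concrete suggestion $\alpha^{\upharpoonright 0}(m)\neq 0\leftrightarrow\exists q<2m{+}1[\delta(q)\neq 0]$ fails both requirements: at $m=n=0$ with $\delta(0)=\delta(1)=0$ the premise $P(0)\vee Q(0)$ is false, and $\forall m[\alpha^{\upharpoonright 0}(m)\neq 0]$ would force $\delta(0)\neq 0$, not $\forall p[2p\neq\mu k[\delta(k)\neq 0]]$. You correctly anticipate that the bookkeeping is the obstacle, but you have not found the right code. The paper's solution is to put $\beta^{\upharpoonright i}(p)=0\leftrightarrow\bigl(\overline\delta(2p+i)=\overline{\underline 0}(2p+i)\wedge\delta(2p+i)\neq 0\bigr)$, i.e.\ $\beta^{\upharpoonright i}(p)=0$ exactly when $2p+i=\mu k[\delta(k)\neq 0]$. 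Then $P(m)\vee Q(n)$ holds because $2m$ and $2n+1$ cannot both equal the same $\mu$, and $\forall m\,P(m)$ unpacks directly to $\forall p[2p\neq\mu k[\delta(k)\neq 0]]$. This is in the spirit of the $\eta$-construction you cite, but with a twist: one codes ``is the $\mu$'' rather than ``is below the $\mu$''.
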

\begin{proof} (i) $\Rightarrow$ (ii). Let $\alpha $ be given. \\Define $\beta$ such that $\forall p\forall i<2[\beta(2p+i) = 0\leftrightarrow \alpha^{\upharpoonright i}(p)\neq 0]$. \\Assume: $(\omega, D_{\alpha^{\upharpoonright 0}}, D_{\alpha^{\upharpoonright 1}})\models \mathsf{\forall x \forall y[P(x) \;\vee\; Q(y)]}$. Conclude: $\forall p \forall q[\beta(2p) = 0 \;\vee\;\beta(2q+1) = 0]$. Apply $\mathbf{LLPO}$ and distinguish two cases. 

\textit{Case (1)}. $\forall p[2p+1\neq \mu m[\beta(m)\neq 0]]$. Assume we find $p$ such that $\beta(2p+1)\neq 0$. Determine $q$ such that $q\le p$ and $\beta(2q)\neq 0$. Contradiction.  

Conclude: $\forall p[\beta(2p+1)=0]$ and: $D_{\alpha^{\upharpoonright 1}} = \omega$. 

\textit{Case (2)}.  $\forall p[2p\neq\mu m[\beta(m)\neq 0]]$. Then, for similar reasons: $D_{\alpha^{\upharpoonright 0}} = \omega$.

In both cases: $(\omega,D_{\alpha^{\upharpoonright 0}}, D_{\alpha^{\upharpoonright 1}})\models \mathsf{\forall x[P(x)] \vee \forall y [Q(y)]}$.

\smallskip (ii) $\Rightarrow$ (i). Let $\alpha$ be given. Define $\beta$ such that, for each $p$, for both $i<2$, $\beta^{\upharpoonright i}(p) = 0$ if and only if  $\overline \alpha(2p+i) = \underline{\overline 0}(2p+i)$ and $\alpha(2p+i) \neq 0$.  Note:  $(\omega, D_{\beta^{\upharpoonright 0}}, D_{\beta^{\upharpoonright 1}}) \models \mathsf{\forall x \forall y[P(x) \;\vee\; Q(y)]}$.  Conclude:  $(\omega, D_{\beta^{\upharpoonright 0}}, D_{\beta^{\upharpoonright 1}}) \models \mathsf{\forall x [P(x)] \;\vee\; \forall y[Q(y)]}$.

 \textit{Either:} $(\omega, D_{\beta^{\upharpoonright 0}}, D_{\beta^{\upharpoonright 1}}) \models \mathsf{\forall x [P(x)] }$ and: $\forall p[\overline \alpha (2p) = \underline{\overline 0}(2p) \rightarrow \alpha(2p) =0]$,\\ \textit{or:} $(\omega, D_{\beta^{\upharpoonright 0}}, D_{\beta^{\upharpoonright 1}}) \models \mathsf{\forall y [Q(y)] }$ and: $\forall p[\overline \alpha (2p+1) = \underline{\overline 0}(2p+1) \rightarrow \alpha(2p+1) =0]$. \\We thus see: $\mathbf{LLPO}$.
 \end{proof}

\begin{theorem} The following statements are equivalent in $\mathsf{BIM}$. \begin{enumerate}[\upshape(i)]
\item $\mathbf{LPO}$.

\item $\forall \alpha [(\omega,D_{\alpha^{\upharpoonright 0}}, D_{\alpha^{\upharpoonright 1}})\models \mathsf{\forall x [P(x) \vee Q(x)]}\rightarrow\mathsf{(\forall x[P(x)] \vee \exists x [Q(x)])}$.

\item  $\forall \alpha [(\omega,D_{\alpha^{\upharpoonright 0}}, Pr_{\alpha^{\upharpoonright 1}})\models \mathsf{\forall x [P(x) \vee A]}\rightarrow\mathsf{(\forall x[P(x)] \vee A)}$.\end{enumerate} \end{theorem}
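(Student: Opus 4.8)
The plan is to prove the theorem by establishing the two cycles $(i)\Rightarrow(ii)\Rightarrow(iii)\Rightarrow(i)$, in close analogy with the proof of Theorem \ref{T:llpological}, since $\mathbf{LPO}$ plays here the role that $\mathbf{LLPO}$ played there. The underlying idea is the same: statements (ii) and (iii) are model-theoretic reformulations of $\mathbf{LPO}$ over countable structures $(\omega, \ldots)$, and the passage between a set $D_\alpha$ and the corresponding atomic predicate is handled by suitably coding the relevant information into a single sequence $\beta$.

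For $(i)\Rightarrow(ii)$, I would take $\alpha$ arbitrary and define $\beta$ so that, for each $n$, $\beta(n)=0$ if and only if $n\notin D_{\alpha^{\upharpoonright 0}}\cap D_{\alpha^{\upharpoonright 1}}$, say $\beta(n)=0\leftrightarrow(\alpha^{\upharpoonright 0}(n)\neq 0\;\wedge\;\alpha^{\upharpoonright 1}(n)\neq 0)$; this encodes, via $\mathbf{LPO}$ applied to $\beta$, the disjunction ``$D_{\alpha^{\upharpoonright 0}}\cap D_{\alpha^{\upharpoonright 1}}=\omega$ or $\exists n[n\notin D_{\alpha^{\upharpoonright 0}}\cap D_{\alpha^{\upharpoonright 1}}]$''. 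But for the model-theoretic statement one needs to be a little more careful: assume $(\omega, D_{\alpha^{\upharpoonright 0}}, D_{\alpha^{\upharpoonright 1}})\models \mathsf{\forall x[P(x)\vee Q(x)]}$, i.e.\ $\forall n[n\in D_{\alpha^{\upharpoonright 0}}\;\vee\;n\in D_{\alpha^{\upharpoonright 1}}]$, and apply $\mathbf{LPO}$ to a sequence $\beta$ with $\beta(n)\neq 0\leftrightarrow n\notin D_{\alpha^{\upharpoonright 0}}$. Either $\forall n[n\in D_{\alpha^{\upharpoonright 0}}]$, giving $\mathsf{\forall x[P(x)]}$, or we find $n_0\notin D_{\alpha^{\upharpoonright 0}}$, whence by the hypothesis $n_0\in D_{\alpha^{\upharpoonright 1}}$, giving $\mathsf{\exists x[Q(x)]}$. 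This yields (ii). The implication $(ii)\Rightarrow(iii)$ is the analogue of the argument in Theorem \ref{T:ftlogiceqprep}(ii)$\Rightarrow$(i): given $\alpha$ with $(\omega, D_{\alpha^{\upharpoonright 0}}, Pr_{\alpha^{\upharpoonright 1}})\models \mathsf{\forall x[P(x)\vee A]}$, define $\beta$ with $\beta^{\upharpoonright 0}=\alpha^{\upharpoonright 0}$ and $\beta^{\upharpoonright 1}(n)\neq 0\leftrightarrow\exists i<n[\alpha^{\upharpoonright 1}(i)\neq 0]$, so that $n\in D_{\beta^{\upharpoonright 1}}$ for some $n$ iff $Pr_{\alpha^{\upharpoonright 1}}$ holds; then (ii) applied to $\beta$ gives $\mathsf{\forall x[P(x)]\vee\exists x[Q(x)]}$, i.e.\ $\mathsf{\forall x[P(x)]\vee A}$ for the original structure.

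For $(iii)\Rightarrow(i)$, given an arbitrary $\delta$ I would build $\alpha$ so that the model-theoretic statement (iii) about $(\omega, D_{\alpha^{\upharpoonright 0}}, Pr_{\alpha^{\upharpoonright 1}})$ directly decides $\exists n[\delta(n)\neq 0]$. The natural choice: put $\alpha^{\upharpoonright 1}=\delta$ and choose $\alpha^{\upharpoonright 0}$ so that $D_{\alpha^{\upharpoonright 0}}=\emptyset$ (e.g.\ $\alpha^{\upharpoonright 0}=\underline 1$), so that trivially $\forall x[P(x)\vee A]$ fails unless $A=Pr_\delta$ holds—hence the premise of (iii) holds only vacuously; this needs adjustment. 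A cleaner approach: take $\alpha^{\upharpoonright 0}$ so that $D_{\alpha^{\upharpoonright 0}}=\omega$ exactly when $\neg\exists n[\delta(n)\neq 0]$, which we cannot decide in advance. Instead, mimic the trick used for $\mathbf{LPO}^\alpha$: define $\alpha^{\upharpoonright 1}=\delta$ and $\alpha^{\upharpoonright 0}$ so that $n\in D_{\alpha^{\upharpoonright 0}}$ iff $\exists i\le n[\delta(i)\neq 0]$. Then $\forall n[n\in D_{\alpha^{\upharpoonright 0}}\;\vee\;Pr_\delta]$ holds (if $0\notin D_{\alpha^{\upharpoonright 0}}$ then... one still needs $Pr_\delta$), so actually one checks $\forall x[P(x)\vee A]$ directly; applying (iii) gives $\forall n[n\in D_{\alpha^{\upharpoonright 0}}]\;\vee\;Pr_\delta$, and in the first disjunct $\forall n\exists i\le n[\delta(i)\neq 0]$ forces $\delta(0)\neq 0$ or a contradiction—so in fact the first disjunct yields $Pr_\delta$ as well unless $\delta=\underline 0$; I would instead arrange $D_{\alpha^{\upharpoonright 0}}$ so that $\forall n[n\in D_{\alpha^{\upharpoonright 0}}]\leftrightarrow\neg Pr_\delta$, namely $n\in D_{\alpha^{\upharpoonright 0}}\leftrightarrow\forall i\le n[\delta(i)=0]$—no, that is not expressible positively; $n\notin D_{\alpha^{\upharpoonright 0}}\leftrightarrow\exists i\le n[\delta(i)\neq 0]$ is, so set $\alpha^{\upharpoonright 0}(n)\neq 0\leftrightarrow\exists i\le n[\delta(i)\neq 0]$. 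Then $\forall x[P(x)\vee A]$ holds (given $n$, either $n\notin D_{\alpha^{\upharpoonright 0}}$, i.e.\ $P(n)$ in the negated sense—here I must be careful which way $D$ reads as $P$), and (iii) returns $\forall n[n\notin D_{\alpha^{\upharpoonright 0}}]\;\vee\;Pr_\delta$; the first disjunct gives $\forall n\neg\exists i\le n[\delta(i)\neq 0]$, i.e.\ $\neg Pr_\delta$, the second $Pr_\delta$, so $\mathbf{LPO}$ follows. The main obstacle is purely bookkeeping: getting the polarity of $P$ versus $D_{\alpha^{\upharpoonright 0}}$ consistent throughout (whether $x\in D$ corresponds to $P(x)$ or $\neg P(x)$) and making sure every predicate used in the construction of the auxiliary $\beta$ or $\alpha$ is of the required $D$-form, i.e.\ given by a characteristic-function-style condition that $\mathsf{BIM}$ can define; once the coding conventions are fixed, each implication is a short routine argument exactly parallel to Theorems \ref{T:ftlogiceqprep} and \ref{T:llpological}, and no choice principle is needed.
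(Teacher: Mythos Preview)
Your $(i)\Rightarrow(ii)$ is correct and matches the paper. The real problem is your $(ii)\Rightarrow(iii)$: to apply (ii) to your $\beta$ you must first verify the premise $\forall n[\beta^{\upharpoonright 0}(n)\neq 0\;\vee\;\beta^{\upharpoonright 1}(n)\neq 0]$, i.e.\ $\forall n[\alpha^{\upharpoonright 0}(n)\neq 0\;\vee\;\exists i<n[\alpha^{\upharpoonright 1}(i)\neq 0]]$. But the premise you actually have from (iii) is $\forall n[\alpha^{\upharpoonright 0}(n)\neq 0\;\vee\;\exists m[\alpha^{\upharpoonright 1}(m)\neq 0]]$, and the witnessing $m$ may exceed $n$. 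Concretely, if $\alpha^{\upharpoonright 0}(0)=0$ and $\alpha^{\upharpoonright 1}$ first becomes nonzero at $5$, the premise of (iii) holds but your $\beta$-premise fails at $n=0$. The analogy with Theorem~\ref{T:ftlogiceqprep} breaks down precisely here: over $\mathcal{C}$ one can define an open set $\mathcal{G}_{\beta^{\upharpoonright 1}}$ with $\gamma\in\mathcal{G}_{\beta^{\upharpoonright 1}}\leftrightarrow Pr_{\alpha^{\upharpoonright 1}}$, uniformly in $\gamma$, because membership in an open set is itself $\Sigma^0_1$; over $\omega$ with \emph{decidable} predicates $D_{\beta^{\upharpoonright 1}}$ you cannot arrange $n\in D_{\beta^{\upharpoonright 1}}\leftrightarrow Pr_{\alpha^{\upharpoonright 1}}$ without already deciding the $\Sigma^0_1$ proposition $Pr_{\alpha^{\upharpoonright 1}}$.

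The paper avoids this by not attempting a cycle: it proves $(i)\Leftrightarrow(ii)$ and $(i)\Leftrightarrow(iii)$ separately. For $(ii)\Rightarrow(i)$ and $(iii)\Rightarrow(i)$ it uses the same simple trick: given $\alpha$, take the complementary predicate $\beta$ with $\beta(n)=0\leftrightarrow\alpha(n)\neq 0$, so that $\forall n[\beta(n)\neq 0\;\vee\;\alpha(n)\neq 0]$ holds trivially (pointwise case split), and the conclusion directly yields $\forall n[\alpha(n)=0]\;\vee\;\exists n[\alpha(n)\neq 0]$. This also clears up your polarity worries in $(iii)\Rightarrow(i)$: with $P(x)\leftrightarrow x\in D_{\beta}\leftrightarrow \beta(x)\neq 0\leftrightarrow\alpha(x)=0$, everything lines up and no bounded-search encoding $\exists i\le n[\ldots]$ is needed.
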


\begin{proof} (i) $\Rightarrow$ (ii). Let $\alpha$ be given such that $(\omega,D_{\alpha^{\upharpoonright 0}}, D_{\alpha^{\upharpoonright 1}})\models \mathsf{\forall x [P(x) \vee Q(x)]}$, i.e. $\forall n[\alpha^{\upharpoonright 0}(n)\neq 0\;\vee\;\alpha^{\upharpoonright 1}(n)\neq 0]$.  Using $\mathbf{LPO}$, distinguish two cases. \\\textit{Either} $\forall n[\alpha^{\upharpoonright 0}(n) \neq 0]$ and $(\omega,D_{\alpha^{\upharpoonright 0}}, D_{\alpha^{\upharpoonright 1}})\models \mathsf{\forall x[P(x)]}$, \\\textit{or} $\exists n[\alpha^{\upharpoonright 0}(n) =0]$ and $\exists n[\alpha^{\upharpoonright 1}(n) \neq 0]$ and $(\omega,D_{\alpha^{\upharpoonright 0}}, D_{\alpha^{\upharpoonright 1}})\models \mathsf{\exists x[Q(x)]}$.

\smallskip (ii) $\Rightarrow$ (i). Let $\alpha$  be given. Define $\beta$  such that  $\forall n[\beta^{\upharpoonright 0}(n) = 0\leftrightarrow\alpha(n) \neq 0]$ and $\beta^{\upharpoonright 1}=\alpha$.  Note: $\forall n[\beta^{\upharpoonright 0}(n)\neq 0\;\vee\;\beta^1(n)\neq 0]$, i.e. \\$(\omega, D_{\beta^{\upharpoonright 0}}, D_{\beta^{\upharpoonright 1}})\models \mathsf{\forall x[P(x)\;\vee \;Q(x)]}$. Use (ii) and distinguish two cases.\\ \textit{Either }$(\omega, D_{\beta^{\upharpoonright 0}}, D_{\beta^{\upharpoonright 1}})\models \mathsf{\forall x[P(x)]}$ and $\forall n[\alpha(n) = 0]$, \\\textit{or} $(\omega, D_{\beta^{\upharpoonright 0}}, D_{\beta^{\upharpoonright 1}})\models \mathsf{\exists x[Q(x)]}$ and $\exists n[\alpha(n) \neq 0]$. \\We thus see: $\mathbf{LPO}$. 

\smallskip (i) $\Rightarrow$ (iii). Let $\alpha$ be  given such that $(\omega,D_{\alpha^{\upharpoonright 0}}, Pr_{\alpha^{\upharpoonright 1}})\models \mathsf{\forall x [P(x) \vee A]}$, i.e. \\$\forall n[\alpha^{\upharpoonright 0}(n)\neq 0\;\vee\;\exists m[\alpha^{\upharpoonright 1}(m)\neq 0]]$.  Using $\mathbf{LPO}$, distinguish two cases.  \\\textit{Either} $\forall n[\alpha^{\upharpoonright 0}(n) \neq 0]$ and $(\omega,D_{\alpha^{\upharpoonright 0}}, Pr_{\alpha^{\upharpoonright 1}})\models \mathsf{\forall x[P(x)]}$, \\\textit{or} $\exists n[\alpha^{\upharpoonright 0}(n) =0]$ and $\exists m[\alpha^{\upharpoonright 1}(m) \neq 0]$ and $(\omega,D_{\alpha^{\upharpoonright 0}}, Pr_{\alpha^{\upharpoonright 1}})\models \mathsf{A}$.

\smallskip (iii) $\Rightarrow$ (i).  Let $\alpha$  be given. Define $\beta$ such that $\forall n[\beta(n)=0
\leftrightarrow \alpha(n)\neq 0]$. \\Note: $\forall n[\beta(n)\neq 0\;\vee\; \alpha(n)\neq 0]$, that is, $(\omega, D_\beta, Pr_\alpha)\models \mathsf{\forall x[P(x)\;\vee \;A]}$. \\Use (iii) and distinguish two cases. \\\textit{Either }$(\omega, D_\beta, Pr_\alpha)\models \mathsf{\forall x[P(x)]}$ and $\forall n[\beta(n)\neq 0]$ and $\forall n[\alpha(n) = 0]$, \\\textit{or} $(\omega, D_\beta, Pr_\alpha)\models \mathsf{A}$ and $\exists n[\alpha(n) \neq 0]$. \\We thus see: $\mathbf{LPO}$. \end{proof}
\subsection{A note}

 According to Theorem \ref{T:ftlogiceq}(iii),  $\mathsf{BIM}+\mathbf{\Pi}^0_1$-$\mathbf{AC}_{\omega,2}$ proves that  $\mathbf{FT}$ is equivalent to:\begin{quote}
{\it For all $\alpha$, if $\forall \gamma \in \mathcal{C}\exists n[ \alpha^{\upharpoonright 0}(\overline \gamma n) \neq 0 \vee \alpha^{\upharpoonright 1}(n) \neq 0]$, \\ then  either $\forall \gamma \in \mathcal{C}\exists n[\alpha^{\upharpoonright 0}(\overline \gamma n) \neq 0]$ or $  \exists n[\alpha^{\upharpoonright 1}(n) \neq 0]$}.\end{quote}

One might ask if $\mathsf{BIM}+\mathbf{\Pi}^0_1$-$\mathbf{AC}_{\omega,2}$ proves that  $\neg!\mathbf{FT}$ is equivalent to the following statement: \begin{quote} {\it  There exists $\alpha$ such that $\forall \gamma \in \mathcal{C}\exists n[ \alpha^{\upharpoonright 0}(\overline \gamma n) \neq 0 \vee \alpha^{\upharpoonright 1}(n) \neq 0]$ and  $\neg \forall \gamma \in \mathcal{C}\exists n[\alpha^{\upharpoonright 0}(\overline \gamma n) \neq 0]$ and  $\alpha^{\upharpoonright 1}=\underline 0,$} \end{quote}
i.e.
\begin{quote}{\it 
 $(\ast)$: There exists $\alpha$ such that \\$\forall \gamma \in \mathcal{C}\exists n[ \alpha(\overline \gamma n) \neq 0]$ and $\neg \forall \gamma \in \mathcal{C}\exists n[\alpha(\overline \gamma n) \neq 0]$.} \end{quote}

The  formula $(\ast)$ is an outright contradiction!

\section{The determinacy of finite and infinite games}

 We consider games of perfect information for players $I,II$. First finite games, then games with finitely many moves where the players may choose out of infinitely many alternatives, and then games of infinite length. In the last Subsection we prove: in $\mathsf{BIM}$,  $\mathbf{FT}$ is an equivalent of  the Intuitionistic Determinacy Theorem: \textit{every subset of $(\omega\times 2)^\omega$ is weakly determinate}.

\subsection{Finite games}
\subsubsection{Finite Choice and a contraposition of Finite Choice} \hfill 

\begin{lemma}\label{L:comb0}
$\mathsf{BIM}$ proves the following scheme:
$$\forall m[\forall n<m[A(n)\vee B]\rightarrow (\forall n<m[A(n)]\;\vee\; B)]$$\end{lemma}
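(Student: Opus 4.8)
The plan is a straightforward induction on $m$, using the full Induction Scheme (Axiom \ref{ax:ind}) of $\mathsf{BIM}$. Fix formulas $A(n)$ (possibly with further parameters) and $B$ (possibly with further parameters, but not containing $n$ free), and let $P(m)$ be the formula $\forall n<m[A(n)\vee B]\rightarrow (\forall n<m[A(n)]\;\vee\; B)$. It suffices to prove $P(0)$ and $\forall m[P(m)\rightarrow P(S(m))]$; the conclusion then follows by Axiom \ref{ax:ind} (and, as the paper notes, one takes the universal closure over the remaining parameters, so this is a legitimate instance of the scheme).

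For the base case: $\forall n<0[A(n)]$ holds vacuously, so $\bigl(\forall n<0[A(n)]\vee B\bigr)$ is trivially true, hence $P(0)$.

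For the induction step, fix $m$, assume $P(m)$, and assume the hypothesis $\forall n<S(m)[A(n)\vee B]$; I must derive $\forall n<S(m)[A(n)]\vee B$. From the hypothesis I get both $\forall n<m[A(n)\vee B]$ and, by instantiating $n:=m$, the disjunction $A(m)\vee B$. Applying $P(m)$ to $\forall n<m[A(n)\vee B]$ yields $\forall n<m[A(n)]\vee B$. Now argue by $\vee$-elimination twice. If $B$ holds, the right disjunct of the goal holds and we are done. Otherwise $\forall n<m[A(n)]$ holds; then split on $A(m)\vee B$: if $B$, the right disjunct holds; if $A(m)$, then $\forall n<m[A(n)]$ together with $A(m)$ gives $\forall n<S(m)[A(n)]$, i.e.\ the left disjunct. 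In every case the goal is reached, so $P(S(m))$ holds, completing the induction.

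I do not expect any real obstacle here; the only point requiring a little care is that $B$ is a \emph{single} formula not depending on the bound variable $n$, which is exactly what makes the two nested case analyses legitimate, and that the induction scheme of $\mathsf{BIM}$ permits the parameters hidden in $A$ and $B$. (This same pattern, iterated, gives the two-variable version $\forall n[\forall k<n\forall l<n[A(k)\vee B(l)]\rightarrow(\forall k<n[A(k)]\vee\forall l<n[B(l)])]$ used in the proof of Theorem \ref{T:ftwkl!}, by applying Lemma \ref{L:comb0} once with ``$B$'' taken to be $\forall l<n[B(l)]$ after an inner application to the variable $l$.)
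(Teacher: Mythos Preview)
Your proof is correct and takes exactly the approach the paper indicates: the paper's own proof is simply ``The proof is straightforward, by induction,'' and you have supplied precisely that induction on $m$ with the obvious double case split in the step. Nothing further is needed.
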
 \begin{proof} The proof is straightforward, by induction. \end{proof}

\begin{lemma}\label{L:comb}
 $\mathsf{BIM}$ proves the following schemes:
\begin{enumerate}[\upshape (i)] \item  $\forall k[\forall n< k\exists m[R(n,m)] \rightarrow \exists s \forall n< k[R\bigl(n,s(n)\bigr)]]$.
\item 

 $\forall k\forall l[\forall s:k \rightarrow l\exists n < k[R\bigl(n,s(n)\bigr)] \rightarrow \exists n < k \forall m< l[R(n,m)]]$.
\end{enumerate}\end{lemma}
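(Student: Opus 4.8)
The plan is to establish both schemes by induction on $k$, with $l$ held fixed throughout the argument for (ii).

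Part (i) is a routine induction. For $k=0$ the conclusion $\forall n<0[R(n,s(n))]$ holds with, say, $s:=\underline 0$. For the step, assume $\forall n<k+1\,\exists m[R(n,m)]$; then $\forall n<k\,\exists m[R(n,m)]$, so the induction hypothesis yields $s$ with $\forall n<k[R(n,s(n))]$, and, choosing $m_0$ with $R(k,m_0)$, the finite sequence $\overline s k\ast\langle m_0\rangle\ast\underline 0$ witnesses $\exists s\,\forall n<k+1[R(n,s(n))]$.

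For part (ii) I would fix $l$ and induct on $k$. If $k=0$, then the empty sequence is a function from $0$ to $l$, and the hypothesis applied to it asserts $\exists n<0[R(n,s(n))]$, which is false, so the implication holds trivially. For the step, assume $\forall t\colon k+1\to l\,\exists n<k+1[R(n,t(n))]$. Every $t\colon k+1\to l$ equals $s\ast\langle m\rangle$ for $s:=\overline t k\colon k\to l$ and $m:=t(k)<l$, and then $\exists n<k+1[R(n,t(n))]$ is equivalent to $\bigl(\exists n<k[R(n,s(n))]\bigr)\vee R(k,m)$; hence the hypothesis reads $\forall s\colon k\to l\,\forall m<l\bigl[\bigl(\exists n<k[R(n,s(n))]\bigr)\vee R(k,m)\bigr]$. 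Applying Lemma \ref{L:comb0} for each fixed $s$, with $B:=\exists n<k[R(n,s(n))]$, gives $\forall s\colon k\to l\bigl[\bigl(\exists n<k[R(n,s(n))]\bigr)\vee\forall m<l[R(k,m)]\bigr]$. Since in $\mathsf{BIM}$ the finitely many functions $s\colon k\to l$ can be primitive-recursively enumerated as $s_0,\dots,s_{l^k-1}$, the quantifier $\forall s\colon k\to l$ turns into a bounded numerical quantifier $\forall j<l^k$, and a second application of Lemma \ref{L:comb0}, now with $B:=\forall m<l[R(k,m)]$, yields $\bigl(\forall s\colon k\to l\,\exists n<k[R(n,s(n))]\bigr)\vee\forall m<l[R(k,m)]$. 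In the first case the induction hypothesis (for $k$, same $l$) gives $\exists n<k\,\forall m<l[R(n,m)]$, hence $\exists n<k+1\,\forall m<l[R(n,m)]$; in the second case $n:=k$ already witnesses $\exists n<k+1\,\forall m<l[R(n,m)]$. This closes the induction.

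The one point that needs genuine care is the conversion of the function quantifier $\forall s\colon k\to l$ into a bounded numerical quantifier, which is what makes the two invocations of Lemma \ref{L:comb0} legitimate; once that is granted, everything else is bookkeeping with the coding of finite sequences. I also expect that a mere contraposition of (i) will not do, because for an undecidable $R$ the assertion $\neg\forall m<l[R(k,m)]$ does not yield $\exists m<l[\neg R(k,m)]$: the required disjunctions must be produced constructively, and Lemma \ref{L:comb0} is precisely the device for that.
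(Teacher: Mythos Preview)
Your proof is correct and follows essentially the same approach as the paper: induction on $k$, with two applications of Lemma~\ref{L:comb0} in the inductive step of (ii), followed by the induction hypothesis. The only cosmetic difference is the order in which you apply Lemma~\ref{L:comb0}: you first pull $\forall m<l[R(k,m)]$ outside $\forall m<l$ and then outside $\forall s:k\to l$, whereas the paper first pulls outside $\forall s:k\to l$ and then outside $\forall j<l$; both orderings work, and your explicit remark about converting $\forall s:k\to l$ into a bounded numerical quantifier via the primitive-recursive enumeration of $Seq(k,l)$ makes precise a step the paper leaves tacit.
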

\begin{proof} (i) The proof is by induction on $k$ and left to the reader.

\smallskip(ii) The proof is by induction on $k$ and uses Lemma \ref{L:comb0}. Note that there is nothing to prove if $k=0$. Now assume the statement holds for a certain $k$. 

Assume   $\forall s:(k+1)\rightarrow l\exists n < k+1[ R\bigl(n,s(n)\bigr)]$. Note:  $$\forall j<l\forall s:k \rightarrow l [\exists n < k[R\bigl(n, s(n)\bigr)]\;\vee\;R(k,j) ],$$ and, therefore, by Lemma \ref{L:comb0}:$$\forall j<l[R(k,j)\;\vee\;\forall s:k \rightarrow l \exists n < k[R\bigl(n, s(n)\bigr)] ].$$

Using the induction hypothesis, we conclude: $$\forall j <l[R(k,j)]\;\vee\;\exists n<k\forall m<l[ R(n,m)],$$ that is: $\exists n < k +1\forall m<l [R(n,m)]$.
\end{proof}
\subsubsection{Finite games}\label{SS:finitegames} We want to study finite and infinite games for players $I$ and $II$ of perfect information. We first consider finite games: there are finitely many moves, and for each move there are only finitely many alternatives.

\smallskip

 Assume $X\subseteq\omega$. Let $n,l$ be given such that $l>0$. \\Players $I$ and $II$ play the \textit{$I$-game for $X$  in $\mathit{Seq}(n,l)$} and the \textit{$II$-game for $X$ in $\mathit{Seq}(n,l)$} in the same way, as follows. \textit{First,} player $I$ chooses $i_0<l$, \textit{then} player $II$ chooses $i_1<l$, and they continue until a finite sequence $\langle i_0, i_1, \ldots, i_{n-1}\rangle$ of length $n$ has been formed.   \textit{Player $I$ wins the play $\langle i_0, i_1, \ldots, i_{n-1}\rangle$   in the $I$-game for $X$} if and only if $\langle i_0, i_1, \ldots, i_{n-1} \rangle \in X$.    \textit{Player $II$ wins the play $\langle i_0, i_1, \ldots, i_{n-1}\rangle$  
in the $II$-game for $X$} if and only if  $\langle i_0, i_1, \ldots, i_{n-1} \rangle \in X$.

We define  $\boldsymbol{\varphi}$ such that, for all $n$, for all $l>0$,   $$\boldsymbol{\varphi}(n,l):=\mu a\forall i< n\forall c \in Seq(i, l)[c<a].$$

Every number coding a position in $Seq(n,l)$ that is not a final position is smaller than $\boldsymbol{\varphi}(n,l)$.

We define  $\boldsymbol{\psi}$   such that, for all $n$, for all $l>0$,  $$\boldsymbol{\psi}(n,l):=\mu a\forall s\in Seq\bigl(\boldsymbol{\varphi}(n,l),l\bigr)[s<a].$$

When studying games in $Seq(n,l)$ it suffices to consider strategies $s,t$ for player $I,II$, respectively, such that $s,t<\boldsymbol{\psi}(n,l)$. 

The reader should consult Subsubsection \ref{SSS:games} for some of the notations we are going to use, like `$\in_I$' and `$\in_{II}$'.

We define: \textit{$X\subseteq \omega$ is $I$-determinate in $\mathit{Seq}(n,l)$},    $\mathit{Det}^I_{\mathit{Seq}(n,l)}(X)$,
\\if and only if $\forall t < \boldsymbol{\psi}(n,l)\exists c [c \in_{II}t \;\wedge\;  c \in X]\rightarrow \exists s\forall c \in \mathit{Seq}(n,l)[c \in_I s \rightarrow c \in X],$

and: \textit{$X\subseteq \omega$ is $II$-determinate in $\mathit{Seq}(n,l)$},    $\mathit{Det}^{II}_{\mathit{Seq}(n,l)}(X)$,
\\if and only if $\forall s < \boldsymbol{\psi}(n,l)\exists c [c \in_{I}s \;\wedge\;  c \in X]\rightarrow \exists t\forall c \in \mathit{Seq}(n,l)[c \in_{II} t \rightarrow c \in X].$

\begin{theorem}[Determinacy of finite games]\label{T:finitedet}
Let $X\subseteq \omega$ be given.

 For every $l>0$, for every $n$, $\mathit{Det}^I_{\mathit{Seq}(n,l)}(X)$ and $\mathit{Det}^{II}_{\mathit{Seq}(n,l)}(X)$.

\end{theorem}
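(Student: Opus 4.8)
The statement asserts that every $X\subseteq\omega$ is both $I$-determinate and $II$-determinate in each finite arena $\mathit{Seq}(n,l)$ with $l>0$. The natural approach is induction on $n$, the length of the game. One subtlety worth noting first is that the two notions are symmetric in a superficial way but the roles of $I$ and $II$ alternate as the game proceeds, so the cleanest induction is not on $X$ alone but on a parametrized family of games: for each position $c$ already played, we consider the subgame rooted at $c$, and we prove simultaneously, by induction on the number of remaining moves, that the player-to-move always has a winning strategy or the opponent does. Concretely, I would fix $l>0$ and $X$, and prove by induction on $k$ the statement: for every position $c\in\mathit{Seq}(n-k,l)$ (interpreted as $n-k$ moves having been played, $k$ moves remaining), either the player whose turn it is has a strategy forcing the final play into $X$, or the other player has a strategy forcing it out of $X$ — and symmetrically with ``into $X$'' read as the $II$-winning condition. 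The key combinatorial engine is Lemma~\ref{L:comb}(ii): a decidable disjunction over the $l$ possible moves of the player-to-move can be pushed past the finite quantifier, which is exactly the content of ``either some move is good for me, or all moves are good for the opponent.''

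\textbf{Base case.} When $k=0$ the play is complete: the position $c$ has length $n$, and $c\in X$ or $c\notin X$ is decidable (membership in $X=E_\alpha$ or whatever representation is decidable in $\mathsf{BIM}$ once we recall that $X$ here ranges over subsets coded by functions, and in the applications $X$ is always given by a decidable or $\mathbf{\Sigma}^0_1$ predicate — but in fact for the plain determinacy statement we only need that the final-position predicate $c\in X$ is the relevant thing, and the two notions of winning are literally definable). The empty strategy works trivially, and the ``$<\boldsymbol\psi(n,l)$'' bound on strategies is harmless because the recursion axioms of $\mathsf{BIM}$ (Axiom~\ref{ax:recop}) let us assemble the winning strategy from the inductively produced partial strategies, and a routine bound shows the assembled object is coded by a number below $\boldsymbol\psi(n,l)$; this is exactly why $\boldsymbol\varphi$ and $\boldsymbol\psi$ were introduced.

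\textbf{Induction step.} Suppose the claim holds for $k$ remaining moves; consider a position $c$ with $k+1$ moves remaining, the mover being (say) $I$. For each $j<l$, apply the induction hypothesis to the position $c\ast\langle j\rangle$: \emph{either} $II$ (the mover at $c\ast\langle j\rangle$) has a strategy forcing the outcome out of $X$, \emph{or} $I$ has a strategy forcing it into $X$. This is a decidable disjunction $A(j)\vee B(j)$ ranging over $j<l$. By Lemma~\ref{L:comb}(ii) (applied with $k:=l$, $l:=2$, and $R$ encoding the disjunction), we get: \emph{either} $\exists j<l[A(j)]$, i.e. some move $j$ lets $I$ force the outcome into $X$ — and then $I$'s strategy at $c$ is ``play that $j$, then follow the inductively given strategy'' — \emph{or} $\forall j<l[B(j)]$, i.e. for every move $j$ that $I$ might make, $II$ has a forcing strategy in the subgame, and then $II$'s strategy at $c$ is ``wait for $I$'s move $j$, then follow the corresponding inductively given strategy.'' In the first case $I$ wins the subgame rooted at $c$ (in the $I$-game sense); in the second, $II$ wins it (in the $I$-game sense, which from $II$'s perspective means forcing the outcome out of $X$). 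The case where the mover at $c$ is $II$ is completely analogous, with the $II$-winning condition, and in fact one runs both inductions (the $I$-game and the $II$-game claims) in parallel — they only differ in which final positions count as wins for whom, and Lemma~\ref{L:comb}(ii) applies verbatim in both.

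\textbf{Conclusion and the main obstacle.} Applying the completed induction at $k=n$ and the root position $c=\langle\;\rangle$ yields: either $I$ has a strategy forcing the play into $X$ (resp. satisfying the $II$-winning condition), or the opponent has a strategy forcing it out — which, after unwinding the definitions of $\in_I$, $\in_{II}$ and the quantification over strategies $<\boldsymbol\psi(n,l)$, is precisely $\mathit{Det}^I_{\mathit{Seq}(n,l)}(X)$ (resp. $\mathit{Det}^{II}_{\mathit{Seq}(n,l)}(X)$): if \emph{every} strategy of $II$ below the bound admits a play in $X$ consistent with it, then $II$ does \emph{not} have a forcing-out strategy, so by the dichotomy $I$ has the forcing-in strategy. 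The step I expect to require the most care is purely bookkeeping rather than conceptual: verifying that the strategies built by the recursion really do stay below $\boldsymbol\psi(n,l)$, and that ``strategy'' as quantified in the definition of $\mathit{Det}$ (a bounded numerical quantifier, since $t<\boldsymbol\psi(n,l)$) matches the function-valued strategies the induction naturally produces — this is exactly the translation between the bounded and unbounded formulations, and is the reason the definitions were stated with the explicit bounds $\boldsymbol\varphi,\boldsymbol\psi$. No genuine use of any choice principle or of $\mathbf{FT}$ is needed; everything is decidable and finite, and Lemma~\ref{L:comb}(ii) (itself proved by induction using Lemma~\ref{L:comb0}) does all the work.
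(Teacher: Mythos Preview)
Your approach has a genuine gap at the base case, and the paper explicitly flags the issue in the paragraph immediately following the theorem. You are trying to prove, by induction on the number of remaining moves, the \emph{dichotomy}: at every position, either one player has a strategy forcing the outcome into $X$ or the other has a strategy forcing it out. At $k=0$ this dichotomy collapses to ``$c\in X$ or $c\notin X$'', which you attempt to wave away by saying $X$ is coded by a function and ``in the applications $X$ is always decidable or $\mathbf{\Sigma}^0_1$''. But the theorem is stated for arbitrary $X\subseteq\omega$ (a scheme over formulas), and membership need not be decidable. The paper's Subsubsection~9.1.3 makes exactly this point: the classical dichotomy already fails at $n=0$, since it would yield $\langle\;\rangle\in X\vee\langle\;\rangle\notin X$ for every $X$, i.e.\ excluded middle. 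So your induction never gets off the ground.

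The paper avoids this by never proving a dichotomy. It works directly with the implication form of $\mathit{Det}^I$ and $\mathit{Det}^{II}$ and exploits the role-swap: after $I$'s first move, the remaining game is one move shorter and $II$ is now the first mover. Concretely, the paper strengthens the statement to all shifted sets $X_u:=\{c\mid u\ast c\in X\}$ and argues that the \emph{hypothesis} of $\mathit{Det}^I$ at level $n+1$ (``every $II$-strategy admits a play in $X_u$''), once unpacked via the decomposition $t\mapsto(t^{\upharpoonright k})_{k<l}$ and Lemma~\ref{L:comb}(ii), yields for some fixed $k<l$ the \emph{hypothesis} of $\mathit{Det}^{II}$ for $X_{u\ast\langle k\rangle}$ at level $n$. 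The inductive $\mathit{Det}^{II}$ then supplies the winning continuation, and assembling gives $I$'s strategy. No decision about membership in $X$ is ever made. Note also that your invocation of Lemma~\ref{L:comb}(ii) ``with $k:=l$, $l:=2$'' does not match its actual shape $\forall s{:}k\to l\,\exists n<k[R(n,s(n))]\rightarrow\exists n<k\,\forall m<l[R(n,m)]$; the paper applies it with the $l$-tuple of subgame strategies as $s$, not to a two-valued disjunction.
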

\begin{proof} Let $X\subseteq \omega$ and $l>0$ be given.

For every $u$, we define: $X_{u}:=\{c \in \omega\mid u\ast c \in X\}$.

We intend to prove a statement seemingly stronger than the statement of the theorem: \begin{quote}$(\ast)$: for every $n$, for every $u$,  $\mathit{Det}^I_{\mathit{Seq}(n,l)}(X_u)$ and $\mathit{Det}^{II}_{\mathit{Seq}(n,l)}(X_u)$.\end{quote}

The proof is by induction on $n$. If $n=0$, then, for every $u$,  the statements:\\ \textit{$X_u$ is $I$-determinate in $\mathit{Seq}(0,l)$} and: \textit{$X_u$ is $II$-determinate in $\mathit{Seq}(0,l)$} both assert:\\ \textit{if $\langle \; \rangle \in X_u$, then $\langle \; \rangle \in X_u$}, and thus are obviously true.

Now assume the statement $(\ast)$ has been established for a certain $n$. We prove that $(\ast)$ is also true for $n+1$.

Let $u$ be given.

\smallskip
First, assume:  $\forall t <\boldsymbol{\psi}(n+1,l)\exists c[c \in_{II} t\;\wedge \;  c \in X_u]$, or, equivalently
\footnote{For the notation $t^{\upharpoonright k}$, see Subsection \ref{SS:subs}.}

 $\forall t <\boldsymbol{\psi}(n+1,l) \exists k <l\exists c[c \in_I t^{\upharpoonright k} \;\wedge\; \langle k \rangle \ast c \in X_u ]$. 
 
 Note: $\forall k<l\forall c[\langle k\rangle\ast c \in X_u\leftrightarrow c \in X_{u\ast\langle k\rangle}]$.

 Conclude:
 $\forall t: l \rightarrow \psi(n,l) \exists k<l\exists c[c \in_I t(k) \;\wedge\;  c \in X_{u\ast\langle k \rangle}]$. 
 
  Use Lemma \ref{L:comb}(ii) and  find $k<l$ such that
  $\forall s <\boldsymbol{\psi}(n,l)\exists c[c \in_I s \; \wedge \;  c \in X_{u\ast\langle k \rangle}]$. 
  
  Use the induction hypothesis and find $t <\boldsymbol{\psi}(n,l)$ such that
  
   $\forall c \in \mathit{Seq}(n,l)[ c \in_{II}t \rightarrow  c \in X_{u\ast\langle k \rangle}]$. 
  
  Define $s<\boldsymbol{\psi}(n+1,l)$  such that $s(\langle \; \rangle )= k$ and $s^{\upharpoonright k} = t$. 
  
  Note: $\forall c\in \mathit{Seq}(n+1,l)[ c \in_I s\rightarrow c \in X_u]$.
 
 We thus see that $X_u$ is $I$-determinate.
 
 \smallskip
Next, assume:  $\forall s <\boldsymbol{\psi}(n+1,l)\exists c[c \in_{I} s\;\wedge \;  c \in X_u]$, or, equivalently:

 $\forall s <\boldsymbol{\psi}(n+1,l)  \exists c[c \in_{II} s^{\upharpoonright s(\langle \; \rangle)} \;\wedge\; \langle s(\langle \; \rangle) \rangle \ast c \in X_u ]$. 
 
 Note: $\forall k<l
  \forall t<\boldsymbol{\psi}(n,l)\exists s<\psi(n+1,l)[s(\langle \; \rangle) = k\;\wedge\;s^{\upharpoonright k} = t]$.  
  
  We thus see:
 $\forall k<l\forall t <\boldsymbol{\psi}(n,l) \exists c[ c \in_{II} t \; \wedge \; \langle k \rangle \ast c \in X_u]$.

 Note: $\forall k<l\forall c[\langle k\rangle\ast c \in X_u\leftrightarrow c \in X_{u\ast\langle k\rangle}]$.
 
 Conclude: $\forall k<l\forall t <\boldsymbol{\psi}(n,l) \exists c[ c \in_{II} t \; \wedge \;  c \in X_{u\ast\langle k \rangle}]$.
 
  Use the induction hypothesis and conclude:
    
   $\forall k<l \exists s <\boldsymbol{\psi}(n,l)\forall c\in \mathit{Seq}(n,l)[ c \in_I s\rightarrow  s \in X_{u\ast\langle k \rangle}]$.

 Use Lemma \ref{L:comb}(i) and find $t<\psi(n+1,l)$ such that  
 
 $\forall k<l \forall c\in \mathit{Seq}(n,l)[ c \in_I t^{\upharpoonright k}\rightarrow  c \in X_{u\ast\langle k\rangle}]$.
 
  Note: $\forall c \in \mathit{Seq}(n+1,l)[ c \in_{II} t\rightarrow c \in X_u]$.
  
  We thus see that $X_u$ is $II$-determinate.
\end{proof}
\subsubsection{Comparison with the classical theorem} Note that, in classical mathematics,  the $I$-determinacy of finite games is stated as follows:

For every $X\subseteq \omega$, for every $l>0$, 
 for every  $n$, \begin{quote}
\textit{either}: $\exists t <\boldsymbol{\psi}(n,l)\forall c \in \mathit{Seq}(n,l)[c \in_{II}t \rightarrow  c \notin X]$,

 \textit{or}: $\exists s <\boldsymbol{\psi}(n,l)\forall c \in \mathit{Seq}(n,l) [c \in_I s \rightarrow c \in X]$,\end{quote} that is: \textit{either} player $II$ has a strategy ensuring that the result of the game will not be in $X$, \textit{or} player $I$ has a  strategy ensuring that it does.

Taken constructively, this statement fails to be true already in the case $n=0$, because it then implies: for every subset $X$ of $\{\langle\;\rangle\}$, either $\langle\;\rangle\notin X$ or $\langle\;\rangle \in X$, and therefore, for any proposition $P$, $\neg P \vee P$, the principle of the excluded third.

\subsection{Infinitely many alternatives}
\subsubsection{Infinitely many alternatives for player $II$}\label{SSS:II} \hfill

 Let  $X\subseteq2 \times \omega$ be given.
  Players $I$ and $II$ play the \textit{$I$-game for $X$ in $2\times\omega$} in the following way.  First, player $I$ chooses $i<2$, then player $II$ chooses $n$ and the play is finished. Player $I$ wins the play $\langle i, n \rangle$ if and only if $\langle i,n\rangle \in X$.
 
We define: \textit{$X$ is $I$-determinate in $2 \times \omega$},   $\mathit{Det}^I_{2 \times \omega}(X)$,
if and only if 
$$\forall t\exists c\in 2 \times \omega [c  \in_{II}t \;\wedge\;  c \in X]\rightarrow \exists s < 2\forall n[\langle s, n \rangle \in X].$$ 

 \begin{theorem}\label{T:detllpo} $\mathsf{BIM}\vdash \forall \alpha[\mathit{Det}^I_{2 \times \omega}(D_\alpha)]\leftrightarrow \mathbf{LLPO}$. \end{theorem}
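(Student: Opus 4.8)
The plan is to prove each direction by translating between the statement ``$D_\alpha$ is $I$-determinate in $2\times\omega$ for every $\alpha$'' and the statement $\mathbf{LLPO}$. Unwinding the definition of $\mathit{Det}^I_{2\times\omega}(D_\alpha)$: the hypothesis $\forall t\exists c\in 2\times\omega[c\in_{II}t\;\wedge\;c\in D_\alpha]$ says that player $II$ has no strategy keeping the outcome out of $D_\alpha$ — equivalently, for both of player $I$'s possible first moves $i<2$, there is a response $n$ with $\langle i,n\rangle\in D_\alpha$, i.e.\ $\forall i<2\exists n[\langle i,n\rangle\in D_\alpha]$. The conclusion $\exists s<2\forall n[\langle s,n\rangle\in D_\alpha]$ says there is a single $i<2$ such that $\langle i,n\rangle\in D_\alpha$ for \emph{all} $n$. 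So, once we pass to the set $Y:=\{i<2\mid \forall n[\langle i,n\rangle\in D_\alpha]\}$ — or rather work directly — the statement $\forall\alpha[\mathit{Det}^I_{2\times\omega}(D_\alpha)]$ asserts: whenever both ``columns'' $i=0$ and $i=1$ of $D_\alpha$ are nonempty, at least one of the columns is all of $\omega$.

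For the direction $\mathbf{LLPO}\rightarrow\forall\alpha[\mathit{Det}^I_{2\times\omega}(D_\alpha)]$, I would, given $\alpha$ with $\forall i<2\exists n[\langle i,n\rangle\in D_\alpha]$, define $\varepsilon$ so that $\mu m[\varepsilon(m)\neq 0]$ records, in an interleaved way, the least witness that a column is \emph{not} all of $\omega$: set $\varepsilon(2n+i)\neq 0$ iff $\langle i,n\rangle\notin D_\alpha$ and $\langle i,j\rangle\in D_\alpha$ for all $j<n$. Applying $\mathbf{LLPO}$ to $\varepsilon$ yields $i<2$ with $\forall p[2p+i\neq\mu m[\varepsilon(m)\neq 0]]$; I would then argue that this forces $\forall n[\langle i,n\rangle\in D_\alpha]$ (if some $\langle i,n\rangle\notin D_\alpha$, take the least such $n$; then $2n+i$ is the least $m$ with $\varepsilon(m)\neq 0$, because the interleaved encoding ensures no smaller index fires first — here one uses that witnesses in column $1-i$ appear at odd-vs-even positions shifted by one, exactly as in the proof of Theorem~\ref{T:lpollpo} and Lemma~\ref{L:fteq2}(i)). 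Hence $\exists s<2\forall n[\langle s,n\rangle\in D_\alpha]$, as required.

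For the converse $\forall\alpha[\mathit{Det}^I_{2\times\omega}(D_\alpha)]\rightarrow\mathbf{LLPO}$, given an arbitrary $\beta$, I would build $\alpha$ so that column $i$ of $D_\alpha$ is cofinite-or-worse precisely according to whether $2p+i$ is the least index where $\beta$ fires: define $\alpha$ such that $\langle i,n\rangle\in D_\alpha$ iff $\forall p[2p+i<n\rightarrow 2p+i\neq\mu m[\beta(m)\neq 0]]$ (compare the constructions in the proofs of Theorem~\ref{T:wklllpo} and Theorem~\ref{T:llpological}). One checks easily that $\forall i<2\exists n[\langle i,n\rangle\in D_\alpha]$ (take $n=0$), so the $I$-determinacy hypothesis delivers $i<2$ with $\forall n[\langle i,n\rangle\in D_\alpha]$, which unwinds to $\forall p[2p+i\neq\mu m[\beta(m)\neq 0]]$, i.e.\ the $\mathbf{LLPO}$-conclusion for $\beta$. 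The main obstacle — really the only delicate point — is getting the interleaving bookkeeping in the first direction exactly right, so that ``$2p+i$ is never the position of the first nonzero value of $\varepsilon$'' genuinely entails ``column $i$ is all of $\omega$''; this is the same combinatorial juggling of even/odd indices and least witnesses that recurs throughout Section~\ref{S:refinement}, and it should go through by a short case analysis rather than anything deep.
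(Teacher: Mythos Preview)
Your unwinding of the hypothesis of $\mathit{Det}^I_{2\times\omega}(D_\alpha)$ is wrong, and this breaks the $\mathbf{LLPO}\to\mathit{Det}$ direction. A strategy $t$ for player~$II$ specifies a response $t(\langle i\rangle)$ to each opening move $i<2$, so $\forall t\exists c\in 2\times\omega[c\in_{II}t\;\wedge\;c\in D_\alpha]$ unwinds to
\[
\forall n_0\forall n_1[\langle 0,n_0\rangle\in D_\alpha\;\vee\;\langle 1,n_1\rangle\in D_\alpha],
\]
\emph{not} to $\forall i<2\exists n[\langle i,n\rangle\in D_\alpha]$. The latter (``both columns nonempty'') is strictly weaker and not even classically equivalent. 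Your version cannot work: take $D_\alpha=\{\langle 0,0\rangle,\langle 1,0\rangle\}$; both columns are nonempty yet neither is full, so the implication you are trying to derive from $\mathbf{LLPO}$ is plainly false for this $\alpha$. Tracing your argument on this example: $\varepsilon(0)=\varepsilon(1)=0$ while $\varepsilon(2),\varepsilon(3)\neq 0$, so $\mu m[\varepsilon(m)\neq 0]=2$ and $\mathbf{LLPO}$ hands you $i=1$; but column~$1$ has its least gap at $n=1$, and $2n+i=3\neq 2$, so your claim ``$2n+i$ is the least $m$ with $\varepsilon(m)\neq 0$'' fails. Your appeal to ``interleaved encoding'' does not rule out an earlier gap in the \emph{other} column.

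The paper proves the theorem by reducing it to Theorem~\ref{T:llpological}, and the proof there uses the correct hypothesis in exactly the spot where yours breaks: having located via $\mathbf{LLPO}$ that the first nonzero of the auxiliary sequence sits in column $1-i$, one combines this gap with the assumed gap in column~$i$ to contradict $\forall n_0\forall n_1[P(n_0)\vee Q(n_1)]$. Your $\to$ direction is essentially right, but there too you must verify the correct hypothesis for your constructed $\alpha$ (it does hold: if $\langle 0,n_0\rangle\notin D_\alpha$ then $\mu m[\beta(m)\neq 0]$ is even, whence $\langle 1,n_1\rangle\in D_\alpha$ for every $n_1$), not merely that $\langle i,0\rangle\in D_\alpha$.
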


 \begin{proof} This Theorem is a reformulation of Theorem \ref{T:llpological}.
 
 In order to see this,  make two observations: \smallskip
 
 (i) Note that, for each $\alpha$,  there exists $\beta$ such that
 
 $  Det^I_{2\times\omega}(D_\alpha)\leftrightarrow(\omega,D_{\beta^{\upharpoonright 0}}, D_{\beta^{\upharpoonright 1}})\models \mathsf{\forall x \forall y[P(x) \vee Q(y)]}\rightarrow\mathsf{(\forall x[P(x)] \vee \forall y [Q(y)])}$.
 
 \noindent Given $\alpha$, define $\beta$ such that, for each $n$,  $\beta^{\upharpoonright 0}(n)=\alpha(\langle 0, n\rangle)$ and $\beta^{\upharpoonright 1}(n)=\alpha(\langle 1, n\rangle)$.
 
 \smallskip 
 (ii) Note that, for each $\alpha$, there exists $\beta$ such that 
 
 $ \bigl((\omega,D_{\alpha^{\upharpoonright 0}}, D_{\alpha^{\upharpoonright 1}})\models \mathsf{\forall x \forall y[P(x) \vee Q(y)]}\rightarrow\mathsf{(\forall x[P(x)] \vee \forall y [Q(y)])}\bigr)\leftrightarrow Det^I_{2\times\omega}(D_\beta)$.
 
\noindent Given $\alpha$, define $\beta$ such that, for each $n$, $\beta(\langle 0, n\rangle)=\alpha^0(n)$ and  $\beta(\langle 1, n\rangle)=\alpha^1(n)$.
\end{proof}

\subsubsection{Infinitely many alternatives for player $I$}\hfill

Let $X\subseteq\omega \times 2$ be given. Players $I$ and $II$ play the \textit{$I$-game for $X$ in $\omega\times 2$} in the following way.
\textit{First,} player $I$ chooses a natural number $n$, \textit{then} player $II$ chooses a number $i$ from $\{0,1\}$. Player $I$ wins the \textit{play} $\langle n, i\rangle$
if and only if  $\langle n,i\rangle\in X$.
 
 Note that a strategy for player $I$ in such a two-move-game coincides with his first move and thus is a natural number. A strategy for player $II$, on the other hand, is an infinite sequence $\tau$ in $\mathcal{C}$ that expresses player $II$'s intention to play $\tau(\langle n \rangle)$ once player $I$ has brought them to the position $\langle n \rangle$. 

 We define:   $X\subseteq \omega\times 2$ is \textit{$I$-determinate in $\omega \times 2$},  $\mathit{Det}^I_{\omega \times 2}(X)$, if and only if: $$\forall \tau \in \mathcal{C} \exists c \in \omega \times 2[c \in_{II} \tau \;\wedge\; c \in X] \rightarrow \exists s\forall i<2[ \langle s, i\rangle  \in X].$$

 \begin{theorem}\label{T:decdet} $\mathsf{BIM}\vdash \forall \alpha[Det^I_{\omega \times 2}(D_\alpha)]$.
 \end{theorem}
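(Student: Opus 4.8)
The statement is, once unwound, essentially a restatement of the already–proved fact that $\mathsf{BIM}\vdash\mathbf{\Delta}^0_1$-$\overleftarrow{\mathbf{AC}_{\omega,2}}$. Indeed, a strategy $\tau\in\mathcal{C}$ for player $II$ in the $I$-game for $X$ in $\omega\times 2$ is simply a binary–valued function, and the plays consistent with $\tau$ are exactly the length-two sequences $\langle n,\tau(\langle n\rangle)\rangle$ for $n\in\omega$. Hence the hypothesis $\forall\tau\in\mathcal{C}\,\exists c\in\omega\times 2[c\in_{II}\tau\wedge c\in X]$ says precisely that for every $\tau\in\mathcal{C}$ there is an $n$ with $\langle n,\tau(\langle n\rangle)\rangle\in D_\alpha$, while the conclusion $\exists s\forall i<2[\langle s,i\rangle\in X]$ says that for some $n$ both $\langle n,0\rangle$ and $\langle n,1\rangle$ lie in $D_\alpha$. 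The plan is therefore to translate the situation into this arithmetical form (using the notations $\in_I,\in_{II}$ of Subsubsection \ref{SSS:games}) and then carry out the same diagonal argument used for $\mathbf{\Delta}^0_1$-$\overleftarrow{\mathbf{AC}_{\omega,2}}$.

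Concretely, let $\alpha$ be given and assume $\forall\tau\in\mathcal{C}\,\exists c\in\omega\times 2[c\in_{II}\tau\wedge c\in D_\alpha]$. Since membership in $D_\alpha$ is decidable, the closure axioms of $\mathsf{BIM}$ (Axiom \ref{ax:recop}) provide a $\tau\in\mathcal{C}$ with $\forall n[\tau(\langle n\rangle)=1\leftrightarrow\langle n,0\rangle\in D_\alpha]$, where we may put $\tau(s)=0$ for every $s$ not of the form $\langle n\rangle$. Applying the hypothesis to this $\tau$, I obtain $n$ such that $\langle n,\tau(\langle n\rangle)\rangle\in D_\alpha$.

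Finally I would argue that $\tau(\langle n\rangle)\neq 0$: if $\tau(\langle n\rangle)=0$, then $\langle n,\tau(\langle n\rangle)\rangle=\langle n,0\rangle\in D_\alpha$, so by the defining property of $\tau$ we get $\tau(\langle n\rangle)=1$, contradicting $\tau(\langle n\rangle)=0$. As $\tau\in\mathcal{C}$, we have $\tau(\langle n\rangle)<2$, hence $\tau(\langle n\rangle)=1$. Then $\langle n,0\rangle\in D_\alpha$ (defining property of $\tau$) and $\langle n,1\rangle=\langle n,\tau(\langle n\rangle)\rangle\in D_\alpha$, so $\forall i<2[\langle n,i\rangle\in D_\alpha]$, and $s:=n$ witnesses the conclusion. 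Since $\alpha$ was arbitrary, $\forall\alpha[\mathit{Det}^I_{\omega\times 2}(D_\alpha)]$.

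There is no genuine obstacle here; the only point that needs care is the bookkeeping when passing between the game-theoretic notation of Subsubsection \ref{SSS:games} (the relation $\in_{II}$ and the coding of plays as finite sequences) and the plain statement $\mathbf{\Delta}^0_1$-$\overleftarrow{\mathbf{AC}_{\omega,2}}$, together with the verification that the diagonal function $\tau$ is a legitimate element of $\mathcal{C}$ — which it is, precisely because membership in $D_\alpha$ is decidable.
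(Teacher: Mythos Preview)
Your proof is correct and takes essentially the same approach as the paper: construct $\tau\in\mathcal{C}$ with $\tau(\langle n\rangle)=1\leftrightarrow\langle n,0\rangle\in D_\alpha$, apply the hypothesis to obtain $n$ with $\langle n,\tau(\langle n\rangle)\rangle\in D_\alpha$, and conclude $\tau(\langle n\rangle)=1$ and hence $\forall i<2[\langle n,i\rangle\in D_\alpha]$. You spell out the case analysis for $\tau(\langle n\rangle)=1$ more explicitly than the paper (which just writes ``Note: $\tau(\langle n\rangle)=1$''), and you make the connection to $\mathbf{\Delta}^0_1$-$\overleftarrow{\mathbf{AC}_{\omega,2}}$ explicit, but the argument is the same.
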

 
\begin{proof}

 Let $\alpha$  be given. Assume: $\forall \tau \in \mathcal{C} \exists n[\langle n , \tau(\langle n \rangle)\rangle \in D_\alpha]$.    Find $\tau$ in $\mathcal{C}$ such that $\forall n[\tau(\langle n \rangle) = 1\leftrightarrow\langle n,0\rangle \in D_\alpha]$. Find $n$ such that $\langle n, \tau(\langle n\rangle) \rangle \in D_\alpha$. Note: $\tau(\langle n\rangle) = 1$ and   $\forall i<2[\langle n,i \rangle\in D_\alpha]$. \end{proof}
 
 We define:   $X\subseteq \omega\times \omega$ is \textit{$II$-determinate in $\omega \times \omega$},  $\mathit{Det}^{II}_{\omega \times \omega}(X)$, if and only if: $$\forall m \exists n [\langle
 m,n\rangle\in X]\rightarrow \exists \tau\forall m[\langle m, \tau(\langle m\rangle)\rangle\in X].$$
 
 \begin{theorem}\label{T:endetII} $\mathsf{BIM}\vdash \forall \alpha[Det^{II}_{\omega \times \omega}(E_\alpha)]$.
 \end{theorem}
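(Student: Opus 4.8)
The statement to prove is $\mathsf{BIM}\vdash \forall \alpha[Det^{II}_{\omega \times \omega}(E_\alpha)]$, where $Det^{II}_{\omega\times\omega}(X)$ unpacks to: if $\forall m\exists n[\langle m,n\rangle \in X]$ then $\exists \tau\forall m[\langle m,\tau(\langle m\rangle)\rangle \in X]$. So the plan is to fix $\alpha$, assume the antecedent $\forall m\exists n[\langle m,n\rangle \in E_\alpha]$, and produce the required strategy $\tau$ for player $II$. The key point is that membership in $E_\alpha$ is a $\mathbf{\Sigma}^0_1$ condition: $\langle m,n\rangle \in E_\alpha$ means $\exists p[\alpha(p) = \langle m,n\rangle + 1]$.

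First I would rewrite the antecedent: $\forall m\exists n[\langle m,n\rangle \in E_\alpha]$ is equivalent to $\forall m\exists n\exists p[\alpha(p) = \langle m,n\rangle + 1]$, hence to $\forall m\exists q[\alpha(q') = \langle m, (q'')\rangle + 1]$ (pairing the witnesses $p$ and $n$ into a single number $q$, writing $q' = K(q)$, $q'' = L(q)$ as in the paper's notation). This is exactly of the form $\forall m\exists q[\beta(m,q) = 0]$ for a suitable primitive recursive $\beta$ built from $\alpha$ — namely $\beta(m,q) = 0 \leftrightarrow \alpha(q') = \langle m, q''\rangle + 1$. Then I invoke the fourth clause of Axiom \ref{ax:recop} (unbounded search / the Minimal Axiom of Countable Choice $\mathbf{\Delta}^0_1$-$\mathbf{AC}_{\omega,\omega}$) to get $\delta$ with $\forall m[\beta(m,\delta(m)) = 0]$, i.e. $\forall m[\alpha((\delta(m))') = \langle m, (\delta(m))''\rangle + 1]$.

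From $\delta$ I build the strategy: define $\tau$ such that for every $m$, $\tau(\langle m\rangle) = (\delta(m))''$, and $\tau$ is, say, $0$ elsewhere. Such a $\tau$ exists by closure of $\omega^\omega$ under composition with primitive recursive functions. Then for each $m$ we have $\alpha((\delta(m))') = \langle m, (\delta(m))''\rangle + 1 = \langle m, \tau(\langle m\rangle)\rangle + 1$, so $\langle m,\tau(\langle m\rangle)\rangle \in E_\alpha$. That gives $\forall m[\langle m,\tau(\langle m\rangle)\rangle \in E_\alpha]$, which is precisely the conclusion, so $Det^{II}_{\omega\times\omega}(E_\alpha)$ holds.

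There is really no serious obstacle here — the whole content is that $\mathbf{\Sigma}^0_1$ choice over $\omega$ is provable in $\mathsf{BIM}$ (this is essentially Theorem \ref{T:acoosigma01}, $\mathbf{\Sigma}^0_1$-$\mathbf{AC}_{\omega,\omega}$), and $II$-determinacy in $\omega\times\omega$ for an enumerable payoff set is just an instance of that principle. The only thing to be careful about is the bookkeeping with the pairing function when compressing the two existential witnesses (the index $p$ and the value $n$) into one, and checking that the resulting matrix relation is primitive recursive so that the search axiom applies; both are routine. In fact the proof is short enough that, following the paper's style for Theorems \ref{T:decdet} and \ref{T:endetII}, it could simply cite $\mathbf{\Sigma}^0_1$-$\mathbf{AC}_{\omega,\omega}$ and leave the routine verification to the reader.
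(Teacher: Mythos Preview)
Your proof is correct and follows essentially the same approach as the paper: rewrite the antecedent as $\forall m\exists q[\alpha(q') = \langle m, q''\rangle + 1]$, apply the unbounded-search clause of Axiom~\ref{ax:recop} to obtain a choice function, and read off $\tau(\langle m\rangle)$ from its second component. The paper's proof is just the terse version of exactly this argument, with your $\delta$ called $\gamma$.
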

\begin{proof}  Let $\alpha$ be given such that  $\forall m\exists n [\langle m, n\rangle \in E_\alpha]$, that is \\$\forall m\exists n \exists p[\alpha(p)=\langle m, n\rangle +1 ]$. Find   $\gamma$ such that $\forall m[\alpha\bigl(\gamma'(m)\bigr)=\langle m, \gamma''(m)\rangle +1 ]$.  Define $\tau$ such that $\forall m[\tau(\langle m \rangle)=\gamma''(m)]$. \end{proof}

We define:   $X\subseteq 2\times \omega$ is \textit{$II$-determinate in $2 \times \omega$},  $\mathit{Det}^{II}_{2 \times \omega}(X)$, if and only if: $$\forall m<2 \exists n [\langle
 m,n\rangle\in X]\rightarrow \exists t\forall m<2[\langle m, t(\langle m\rangle)\rangle\in X].$$
 
Note:  $\mathsf{BIM}$ proves the scheme ${Det}^{II}_{2 \times \omega}(X)$.

\subsubsection{Longer games}  We also consider games in which  players $I,II$ make more than one move. Which of those games are determinate from the viewpoint of Player $I$? Because of Theorem \ref{T:detllpo}, we restrict ourselves to games in which player $I$ has, for each one of his moves,
countably many alternatives, whereas player $II$ always has to choose one of two possibilities.

For every $n$, for every $X\subseteq(\omega \times 2)^n$,  
we define: 

$X$ is \textit{$I$-determinate in $(\omega \times 2)^n$}, $\mathit{Det}_{(\omega \times 2)^n}(X)$,  
if and only if $$\forall \tau \in \mathcal{C} \exists c[c \in_{II}\tau \;\wedge\; c \in X]\rightarrow \exists s\forall c \in (\omega \times 2)^n[c \in_I s \rightarrow c \in X].$$
 
 This definition  extends in the obvious way to subsets $X$ of $(\omega \times 2)^n \times \omega$.

\subsection{Infinitely many moves}\label{SS:infinitelymanymoves}
We also want to consider  games of infinite length. We imagine players $I, II$ to build together an infinite sequence $\gamma$ in $\omega^\omega$, as follows.
\textit{First,} player $I$ chooses $\gamma(0)$, \textit{then} player $II$ chooses $\gamma(1)$, \textit{then} player $I$ chooses $\gamma(2)$, and so on.

We  define a number of notions of determinacy.

$\mathcal{X}\subseteq(\omega \times 2)^\omega $ is \textit{$I$-determinate in $(\omega \times 2)^\omega$},  $\mathit{Det}^{I}_{(\omega \times 2)^\omega}(\mathcal{X})$,  
if and only if $$\forall \tau \in \mathcal{C} \exists \gamma[\gamma \in_{II}\tau \;\wedge\; \gamma \in \mathcal{X}]\rightarrow \exists \sigma\forall \gamma \in (\omega \times 2)^\omega[\gamma \in_I \sigma \rightarrow \gamma \in \mathcal{X}].$$ 
$\mathcal{X}\subseteq (\omega\times 2)^\omega$ is \textit{finitely $I$-determinate in $(\omega \times 2)^\omega$,}  $\mathit{^\ast Det}^{I}_{(\omega \times 2)^\omega}(\mathcal{X})$
if and only if $$\forall \tau \in \mathcal{C} \exists \gamma[\gamma \in_{II}\tau \;\wedge\; \gamma \in \mathcal{X}]\rightarrow \exists s\forall \gamma \in (\omega \times 2)^\omega[\gamma \in_{I}s \rightarrow \gamma \in \mathcal{X}].$$

 $\mathcal{X}\subseteq\mathcal{C}$ is \textit{$I$-determinate in $\mathcal{C}$,}  $\mathit{Det}^{I}_{\mathcal{C}}(\mathcal{X})$,
if and only if $$\forall \tau \in \mathcal{C} \exists \gamma[\gamma \in_{II}\tau \;\wedge\; \gamma \in \mathcal{X}]\rightarrow \exists \sigma\in \mathcal{C}\forall \gamma \in \mathcal{C}[\gamma \in_{I}\sigma \rightarrow \gamma \in \mathcal{X}].$$ 

\textit{$\mathcal{X} \subseteq\mathcal{C}$ is  finitely  $I$-determinate in $\mathcal{C}$},  $^\ast\mathit{Det}^{I}_{\mathcal{C}}(\mathcal{X})$, if and only if$$\forall \tau \in \mathcal{C} \exists \gamma[\gamma \in_{II}\tau \;\wedge\; \gamma \in \mathcal{X}]\rightarrow \exists s\in Bin\forall \gamma \in \mathcal{C}[\gamma \in_{I}s \rightarrow \gamma \in \mathcal{X}].$$ 

 $\mathcal{X}\subseteq\mathcal{C}$ is \textit{$II$-determinate in $\mathcal{C}$,}  $\mathit{Det}^{II}_{\mathcal{C}}(\mathcal{X})$,
if and only if $$\forall \sigma \in \mathcal{C} \exists \gamma[\gamma \in_{I}\sigma \;\wedge\; \gamma \in \mathcal{X}]\rightarrow \exists \tau\in\mathcal{C}\forall \gamma \in \mathcal{C}[\gamma \in_{II}\tau \rightarrow \gamma \in \mathcal{X}].$$ 

\textit{$\mathcal{X} \subseteq\mathcal{C}$ is  finitely  $II$-determinate in $\mathcal{C}$},  $^\ast\mathit{Det}^{II}_{\mathcal{C}}(\mathcal{X})$, if and only if$$\forall \sigma \in \mathcal{C} \exists \gamma[\gamma \in_{I}\sigma \;\wedge\; \gamma \in \mathcal{X}]\rightarrow \exists t\in Bin\forall \gamma \in \mathcal{C}[\gamma \in_{II}t \rightarrow \gamma \in \mathcal{X}].$$ 
\bigskip
We are going to study the following statements:

\smallskip\noindent
$\mathbf{\Sigma}^0_1$-$\mathit{Det}_{\omega \times 2}^I$:
 $\forall \alpha[Det^I(E_\alpha)]$.

\smallskip\noindent
 $\mathbf{\Delta}^0_1$-$\mathit{Det}^I_{\omega \times 2 \times \omega}$:
 $\forall \alpha[ \mathit{Det}^I_{\omega \times 2\times \omega}(D_\alpha)]. $

\smallskip\noindent
 $\mathbf{\Delta}^0_1$-$\mathit{Det}^I_{(\omega \times 2)^m}$:
 $\forall \alpha[ \mathit{Det}^I_{(\omega \times 2)^m}(D_\alpha)]. $

 \smallskip\noindent
$\mathbf{\Sigma}^0_1$-$\mathit{Det}^I_{(\omega \times 2)^\omega}$:
   $\forall \alpha[ \mathit{Det}^I_{(\omega \times 2)^\omega}(\mathcal{G}_\alpha)]. $

  \smallskip\noindent
$\mathbf{\Sigma}^0_1$-$\mathit{\;^\ast Det}^I_{(\omega \times 2)^\omega}$:
  $\forall \alpha[\mathit{^\ast Det}^I_{(\omega \times 2)^\omega}(\mathcal{G}_\alpha)]. $

 \smallskip\noindent
$\mathbf{\Sigma}^0_1$-$\mathit{Det}^I_{\mathcal{C}}$:
  $\forall \alpha[ \mathit{Det}^I_{\mathcal{C}}(\mathcal{G}_\alpha)]. $

  \smallskip\noindent
$\mathbf{\Sigma}^0_1$-$\mathit{\;^\ast Det}^I_{\mathcal{C}}$:
  $\forall \alpha[ \mathit{^\ast Det}^I_{\mathcal{C}}(\mathcal{G}_\alpha)]. $

  \smallskip\noindent
$\mathbf{\Sigma}^0_1$-$\mathit{Det}^I_{\mathcal{C}}$:
  $\forall \alpha[ \mathit{Det}^{II}_{\mathcal{C}}(\mathcal{G}_\alpha)]. $
  
  \smallskip\noindent
$\mathbf{\Sigma}^0_1$-$\mathit{\;^\ast Det}^{II}_{\mathcal{C}}$:
   $\forall \alpha[\mathit{^\ast Det}^{II}_{\mathcal{C}}(\mathcal{G}_\alpha)]. $

\smallskip
Each of the above formulas $X$ has the form: $\forall \alpha[P(\alpha)\rightarrow Q(\alpha)]$. For each of these nine formulas $X$, we define the statement $\neg! X$, the 
\textit{strong negation} of $X$, as follows:  
\[\neg! X:=\neg!\bigl(\forall \alpha[P(\alpha)\rightarrow Q(\alpha)]\bigr) := \exists \alpha[P(\alpha) \;\wedge\; \neg Q(\alpha)].\]
 
Note that these strong negations contain the negation symbol $\neg$, a possibility we mentioned  in Subsection \ref{SS:strongnegations}.

Note that the symbol $\neg!$ is used as a metamathematical notation, not as part of the language of $\mathsf{BIM}$. One should also not consider $\neg!$ as the name of a syntactical operation on formulas.

\subsection{Simulating a game in $(\omega\times 2)^{\omega}$ by a game in $\mathcal{C}$}
 From the point of view of player $I$, a game in $(\omega \times 2)^\omega$ may be simulated by a game in Cantor space $\mathcal{C}$. Where player $I$ would play $n$ in $(\omega \times 2)^\omega$, he will play $n$ times $0$ and one time $1$ in $\mathcal{C}$.  So he plays the finite sequence $\underline{\overline 0}n\ast\langle 1 \rangle$. Every time he plays $0$,  he makes what we call a \textit{postponing} move. Player $II$ has to react, in $\mathcal{C}$, to these postponing moves of player $I$, but these reactions do not matter. As soon as player $I$ plays $1$ and completes $\overline{\underline 0} n\ast \langle 1 \rangle$, player $II$ gives, in the play in $\mathcal{C}$, the answer he would give to player $I$'s move $n$ in $(\omega \times 2)^\omega$. The reader should keep this in mind when reading the following definitions. 
 
 Define $Halfbin:= (\omega\times 2)^{<\omega}\cup \bigl((\omega\times 2)^{<\omega}\times \omega\bigr)=\bigcup_n\{\overline \gamma n\mid \gamma \in (\omega\times 2)^\omega\}$.

Define $\boldsymbol{\pi_{bin}}$ such that
\begin{enumerate}[\upshape 1.]
  \item $\boldsymbol{\pi_{bin}}(\langle\;\rangle)=\langle\;\rangle$, and,\item for each $c$, if $length(c)$ is even, then, for each $n$, $\boldsymbol{\pi_{bin}}(c\ast\langle n \rangle)=\boldsymbol{\pi_{bin}}(c) \ast\underline{\overline 0}2n\ast\langle 1\rangle$, and, for both $i<2$, $\boldsymbol{\pi_{bin}}(c\ast\langle n, i\rangle)=\boldsymbol{\pi_{bin}}(c\ast\langle n \rangle)\ast\langle i \rangle$. \end{enumerate}
 
 The function $\boldsymbol{\pi_{bin}}$ associates to every position in $Halfbin$ a position in $Bin$.
 
 Note that, for each $c$, $length(\boldsymbol{\pi_{bin}}(c))\ge length(c)$.

 \smallskip
 Define $\boldsymbol{\rho_{bin}}$ in $\omega^\omega$ such that 
  
 \begin{enumerate}[\upshape 1.] 
 \item $\boldsymbol{\rho_{bin}}(\langle\;\rangle)=\langle\;\rangle$, and, \item  for each $d$ in $Bin$, if $length(d)$ is even, then\\  $\boldsymbol{\rho_{bin}}(d\ast\langle 0\rangle)=\boldsymbol{\rho_{bin}}(d\ast\langle 0,0\rangle)=\boldsymbol{\rho_{bin}}(d\ast\langle 0,1\rangle)=\boldsymbol{\rho_{bin}}(d)$, and \item for each $d$ in $Bin$, if $length(d)$ is even, then $\boldsymbol{\rho_{bin}}(d\ast\langle 1\rangle)=\boldsymbol{\rho_{bin}}(d)\ast\langle n \rangle$, and, for both $i<2$, $\boldsymbol{\rho_{bin}}(d\ast\langle 1, i\rangle)=\boldsymbol{\rho_{bin}}(d)\ast\langle n, i\rangle$, where $n$ satisfies:
 
 \noindent \textit{either}: $2n=length(d)$ and $\forall i<n[d(2i)=0]$,
   \textit{or}:
  
   \noindent for some $k>0$, $length(d)=2k + 2n$ and $d(2k-2)=1$ and $\forall i<n[d(2k +2i)=0]$. \end{enumerate}

The function $\boldsymbol{\rho_{bin}}$ associates to every position in $Bin$ a position in $Halfbin$.  

 Note that, for every $c$ in $Halfbin$, $\boldsymbol{\rho_{bin}}\circ \boldsymbol{\pi_{bin}}(c)=c$.

Note that, for each $c$ in $Halfbin$, $length(c)$ is even if and only if $length\bigl(\boldsymbol{\pi_{bin}}(c)\bigr)$ is even.

 \begin{lemma}\label{L:simulate}  The following is provable in $\mathsf{BIM}$. \\For each $\alpha$, there exists $\beta$ such that  $$\forall \tau \in \mathcal{C}\exists \gamma \in (\omega \times 2)^\omega[\gamma \in_{II}\tau\;\wedge\;\gamma\in\mathcal{G}_\alpha] \rightarrow\forall \tau \in \mathcal{C}\exists \delta \in \mathcal{C}[\delta \in_{II}\tau\;\wedge\;\delta\in\mathcal{G}_\beta]\;\mathrm{and}$$   $$\exists \sigma\forall \delta\in\mathcal{C}[\delta\in_I \sigma\rightarrow \delta \in \mathcal{G}_\beta]\rightarrow \exists \sigma \forall \gamma \in (\omega\times 2)^\omega[\gamma\in_I \sigma\rightarrow \gamma \in \mathcal{G}_\alpha]\;\mathrm{and}$$ $$\exists s\forall \delta\in\mathcal{C}[\delta\in_I s\rightarrow \delta \in \mathcal{G}_\beta]\rightarrow \exists s\forall \gamma\in(\omega\times 2)^\omega[\gamma\in_I s\rightarrow \gamma \in\mathcal{G}_\alpha].$$\end{lemma}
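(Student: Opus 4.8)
The idea is to encode the translation maps $\boldsymbol{\pi_{bin}}$ and $\boldsymbol{\rho_{bin}}$ between positions in $Halfbin$ and positions in $Bin$ into strategies, and to let $\beta$ be the ``pullback'' of $\alpha$ along this encoding. Concretely, given $\alpha$, I would define $\beta$ so that $\mathcal{G}_\beta$ is, roughly, the set of $\delta\in\mathcal{C}$ for which some initial segment $d\sqsubseteq\delta$ already ``decodes'' (via $\boldsymbol{\rho_{bin}}$) to a position hitting $\mathcal{G}_\alpha$: set $\beta(d)\neq 0$ iff $d\in Bin$ and $\alpha\bigl(\boldsymbol{\rho_{bin}}(d)\bigr)\neq 0$ (after checking $\boldsymbol{\rho_{bin}}(d)$ is a genuine position, i.e.\ lies in $Halfbin$ — which it always does by construction). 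Then $\mathcal{G}_\beta$ is a $\mathbf{\Sigma}^0_1$-set and the three implications should fall out by transporting plays and strategies through $\boldsymbol{\pi_{bin}}$, $\boldsymbol{\rho_{bin}}$.

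\textbf{The three implications.} For the first: given $\tau\in\mathcal{C}$ and the hypothesis that every $II$-strategy is beaten in $(\omega\times 2)^\omega$ for $\mathcal{G}_\alpha$, I want to produce, for the given $\tau$, a $\delta\in\mathcal{C}$ with $\delta\in_{II}\tau$ and $\delta\in\mathcal{G}_\beta$. The move is to read off from $\tau$ a $II$-strategy $\tau^\flat$ for the game in $(\omega\times 2)^\omega$: when player $I$ has reached a position $c$ and plays $n$, player $II$ in the $\mathcal{C}$-game must react to the string $\boldsymbol{\pi_{bin}}(c)\ast\underline{\overline 0}2n\ast\langle 1\rangle$; define $\tau^\flat$ to answer with $\tau$'s reply to that string. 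By hypothesis there is $\gamma\in(\omega\times 2)^\omega$ with $\gamma\in_{II}\tau^\flat$ and $\gamma\in\mathcal{G}_\alpha$; say $\alpha(\overline\gamma k)\neq 0$. Let $\delta\in\mathcal{C}$ be the play obtained by having player $I$ execute the $\boldsymbol{\pi_{bin}}$-image of $\gamma$'s $I$-moves against $\tau$ (extended arbitrarily, e.g.\ by $\underline 0$, after stage $\boldsymbol{\pi_{bin}}(\overline\gamma k)$). Using $\boldsymbol{\rho_{bin}}\circ\boldsymbol{\pi_{bin}}(c)=c$ one checks $\boldsymbol{\rho_{bin}}\bigl(\overline\delta(length(\boldsymbol{\pi_{bin}}(\overline\gamma k)))\bigr)=\overline\gamma k$, hence $\beta$ fires on $\delta$, i.e.\ $\delta\in\mathcal{G}_\beta$, and $\delta\in_{II}\tau$ by construction. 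For the second and third implications: a strategy $\sigma$ (resp.\ finite $s\in Bin$) for player $I$ in the $\mathcal{C}$-game witnessing $\delta\in_I\sigma\Rightarrow\delta\in\mathcal{G}_\beta$ for all $\delta$ is transported to an $I$-strategy $\sigma^\sharp$ in $(\omega\times 2)^\omega$ by $\sigma^\sharp(c):=$ the number $n$ that $\sigma$ ``eventually commits to'' at position $\boldsymbol{\pi_{bin}}(c)$, i.e.\ one follows $\sigma$'s postponing $0$-moves until it plays a $1$ and reads off $n$ from the length of the $0$-block (this is exactly what clause 3 in the definition of $\boldsymbol{\rho_{bin}}$ computes). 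One must argue this search terminates: if $\sigma$ played $0$ forever from some even position, the resulting $\delta$ would never satisfy $\beta(\overline\delta m)\neq 0$ along that branch for the relevant index, contradicting that $\sigma$ forces every $\delta\in_I\sigma$ into $\mathcal{G}_\beta$ — more carefully, play player $II$ against $\sigma$ in any fixed way and note the produced $\delta$ is in $_I\sigma$, hence in $\mathcal{G}_\beta$, hence $\boldsymbol{\rho_{bin}}$ of some initial segment is a legitimate $Halfbin$-position of even length, forcing $\sigma$ to have played a $1$. Then $\gamma\in_I\sigma^\sharp$ implies the corresponding $\delta$ is $\in_I\sigma$, so $\delta\in\mathcal{G}_\beta$, so $\alpha$ fires on $\boldsymbol{\rho_{bin}}$ of an initial segment of $\delta$, which is an initial segment of $\gamma$; thus $\gamma\in\mathcal{G}_\alpha$. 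The finite case ($s\in Bin$) is the same, with the termination of the search immediate since $s$ is a finite object: for each of the finitely many branches through $s$ one explicitly finds player $I$'s committed move (if along some branch $s$ never plays a $1$ at the active position, $s$ simply fails to be a winning finite strategy and the hypothesis is vacuous on that branch — but then one can still extract a finite $(\omega\times 2)$-strategy $s^\sharp$ because $\boldsymbol{\pi_{bin}}$ increases length, so only finitely many $(\omega\times 2)$-positions are relevant).

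\textbf{Main obstacle.} The delicate point is the termination/well-definedness of the decoding $\sigma\mapsto\sigma^\sharp$ in the infinite-strategy case: extracting from an arbitrary $I$-strategy $\sigma$ in $\mathcal{C}$ a bona fide countable-branching $I$-strategy in $(\omega\times 2)^\omega$ requires knowing that $\sigma$, when fed the $\boldsymbol{\pi_{bin}}$-image of any partial $(\omega\times 2)$-play, eventually plays a $1$ rather than postponing forever. This is not automatic from $\sigma$ alone; it is a \emph{consequence} of the antecedent $\forall\delta\in\mathcal{C}[\delta\in_I\sigma\to\delta\in\mathcal{G}_\beta]$, and pinning down exactly that inference — choosing an auxiliary $II$-response, running the play, invoking that $\beta$ fires, and unwinding the definition of $\boldsymbol{\rho_{bin}}$ to see a $1$ must have been played at the right even position — is the part that needs care. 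Everything else is bookkeeping with $\boldsymbol{\pi_{bin}}$, $\boldsymbol{\rho_{bin}}$ and the identity $\boldsymbol{\rho_{bin}}\circ\boldsymbol{\pi_{bin}}=\mathrm{id}$ on $Halfbin$, together with the observation $length(\boldsymbol{\pi_{bin}}(c))\ge length(c)$, which keeps the finite cases finite. I would present the construction of $\beta$ and the transport of plays/strategies in detail and leave the routine induction-style verifications (that $\in_I,\in_{II}$ are respected under the transports) to the reader, as is done elsewhere in the paper.
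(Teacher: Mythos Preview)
Your choice $\beta:=\alpha\circ\boldsymbol{\rho_{bin}}$ and the overall transport idea match the paper's proof. But your handling of the ``main obstacle'' is wrong. Take $\alpha$ with $\alpha(\langle\,\rangle)\neq 0$; then $\beta(\langle\,\rangle)=\alpha\bigl(\boldsymbol{\rho_{bin}}(\langle\,\rangle)\bigr)=\alpha(\langle\,\rangle)\neq 0$, so every $\sigma$ satisfies the antecedent $\forall\delta\in\mathcal{C}[\delta\in_I\sigma\to\delta\in\mathcal{G}_\beta]$, in particular the $\sigma$ that plays $0$ at every position of even length. For that $\sigma$ your search at $c=\langle\,\rangle$ never halts, and your termination argument breaks: the $\delta$ you produce has $\boldsymbol{\rho_{bin}}(\overline\delta m)=\langle\,\rangle$ for all $m$, which \emph{is} a $Halfbin$-position of even length, yet $\sigma$ has not played a $1$. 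The step ``$\boldsymbol{\rho_{bin}}$ of some initial segment has even length, forcing $\sigma$ to have played a $1$'' is a non sequitur. The paper does not prove termination; instead it \emph{bounds} the search by the $m$ at which $\beta(\overline\delta m)\neq 0$ (such $m$ exists by the antecedent, with II playing $\underline 0$) and does a case split: if a $1$ appears at an even index before $m$, read off $\sigma^\ast(c)$ from it; if not, then $\boldsymbol{\rho_{bin}}(\overline\delta m)=c$, hence $\alpha(c)\neq 0$, so $c$ is already a winning position and one sets $\sigma^\ast(c):=0$. The invariant carried through is: for every $c\in_I\sigma^\ast$, either $\boldsymbol{\pi_{bin}}(c)\in_I\sigma$ or some prefix of $c$ lies in $D_\alpha$.

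There is a parallel slip in the first implication. Your definition $\tau^\flat(c\ast\langle n\rangle):=\tau\bigl(\boldsymbol{\pi_{bin}}(c)\ast\underline{\overline 0}2n\ast\langle 1\rangle\bigr)$ hard-codes player~II's postponing-phase responses as $0$, but in the actual $\mathcal{C}$-play $\delta$ you build against $\tau$, those odd-index entries are whatever $\tau$ dictates. Consequently $\boldsymbol{\rho_{bin}}(\overline\delta m)$ and $\overline\gamma k$ share their $I$-moves but may differ at II-moves, so from $\overline\gamma k\in D_\alpha$ you cannot conclude $\boldsymbol{\rho_{bin}}(\overline\delta m)\in D_\alpha$. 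The fix (the paper's $\tau^\dag$) is to define $\tau^\flat(c\ast\langle n\rangle)$ using the \emph{actual} play against $\tau$ during the postponing phase: find the unique $d\in Bin$ with $d\in_{II}\tau$, $\boldsymbol{\rho_{bin}}(d)=c$, followed by $n$ postponing rounds and a $1$, and set $\tau^\flat(c\ast\langle n\rangle):=\tau(d)$.
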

 
 \begin{proof} Let $\alpha$ be given. Define $\beta:=\alpha\circ \boldsymbol{\rho_{bin}}$. 
 
 \smallskip Assume $\forall \tau \in \mathcal{C}\exists \gamma \in (\omega \times 2)^\omega[\gamma \in_{II}\tau\;\wedge\;\gamma\in\mathcal{G}_\alpha]$.
 
 Let $\tau$ be given as a strategy for player $II$ in $\mathcal{C}$. 
 
 We want to prove: $\exists \delta \in \mathcal{C}[\delta \in_{II}\tau\;\wedge\; \delta \in \mathcal{G}_\beta]$.
 
 To this end, we define  $\tau^\dag$ as a strategy for player $II$ in $(\omega\times 2)^\omega$. We define $\tau^\dag$ on all positions in $Halfbin$ of odd length, by induction on the length of the position. It suffices to define $\tau^\dag$ on positions $c$ satisfying the condition: $c \in_{II}\tau^\dag$.

 We shall take  care that, for each $c$ in $Halfbin$, if $c\in_{II}\tau^\dag$, then there exists $d$ in $Bin$ such that $\boldsymbol{\rho_{bin}}(d)=c$ and $d\in_{II}\tau$.
 
 \smallskip
 
 We first define $\tau^\dag$  on positions of length $1$.

 Let $n$ be given. We have  to define $\tau^\dag(\langle n \rangle)$.
 
  Find $d$ in $Bin$ such that $length(d)=2n+1$ and $d\in_{II} \tau$ and $d(2n)=1$ and $\forall i<n[d(2i)=0]$.   Define $\tau^\dag(\langle n \rangle):= \tau(d)$. 
  
  Note: $\boldsymbol{\rho_{bin}}(d)=\langle n\rangle$ and  $\boldsymbol{\rho_{bin}}\bigl(d\ast\langle \tau(d)\rangle\bigr)=\langle n, \tau^\dag(\langle n \rangle)\rangle$.
 
 \smallskip
 Now assume $k>0$. Let  $c$ in $(\omega\times 2)^k$ be given such that  $c\in_{II}\tau^\dag$. 
 
 Let $n$ be given. We have to define $\tau^\dag(c\ast\langle n \rangle)$.
  
  First find $d$ in $Bin$ such that $d\in_{II}\tau$ and $\boldsymbol{\rho_{bin}}(d)=c$.   Find $l:=length(d)$. Find $e$ in $Bin$ such that $d\sqsubset e$ and $length(e)=l + 2n+1$ and $e\in_{II} \tau$ and $e(l+2n)=1$ and $\forall i<n[e(l+2i)=0]$. Note: $\boldsymbol{\rho_{bin}}(e)=c\ast\langle n\rangle$. Define $\tau^\dag(c\ast\langle n \rangle):= \tau(e)$.
  
  Note: $\boldsymbol{\rho_{bin}}(e)= c\ast\langle n \rangle$ and $\boldsymbol{\rho_{bin}}\bigl(e\ast\langle\tau(e)\rangle\bigr)=c\ast\langle \tau^\dag(c)\rangle$.

  This completes the definition of $\tau^\dag$.
 
 \smallskip 
  Now
   find $\gamma$ in $(\omega\times 2)^\omega$ such that $\gamma \in_{II}\tau^\dag \;\wedge\; \gamma \in \mathcal{G}_\alpha$. 
   
   Note: $\forall n \exists d \in Bin[d\in_{II}\tau \;\wedge\; \boldsymbol{\rho_{bin}}(d)=\overline \gamma n]$.
   
   Note: $\forall d \in Bin\forall e \in Bin[(d\in_{II}\tau\;\wedge\;e\in_{II}\tau)\rightarrow \bigl(d\sqsubset e\leftrightarrow \boldsymbol{\rho_{bin}}(d)\sqsubset \boldsymbol{\rho_{bin}}(e)\bigr)]$.
   
   Using this fact, 
 construct $\delta$ in $\mathcal{C}$ such that $\delta\in_{II}\tau$ and $\forall n\exists m[\overline \gamma n= \boldsymbol{\rho_{bin}}(\overline\delta m)]$.
 
    Find $n$ such that $\overline \gamma n\in D_\alpha$.  
    
    Find $m$ such that $\overline \gamma n=\boldsymbol{\rho_{bin}}(\overline \delta m)$.
    
     Note: $\overline \gamma n = \boldsymbol{\rho_{bin}}(\overline \delta m)\in D_\alpha$ and: $\overline \delta m \in \mathcal{G}_\beta$ and: $\delta \in \mathcal{G}_\beta$. 
 
 \smallskip We thus see: $\forall \tau \in \mathcal{C}\exists \delta \in \mathcal{C}[\delta\in_{II}\tau\;\wedge\;\delta\in\mathcal{G}_\beta]$.

 \medskip  Now assume $\exists \sigma\forall \delta\in\mathcal{C}[\delta\in_I \sigma\rightarrow \delta \in \mathcal{G}_\beta]$.
 
 Find $\sigma$ such that $\forall \delta\in\mathcal{C}[\delta\in_I \sigma\rightarrow \delta \in \mathcal{G}_\beta]$.
 
 We define $\sigma^\ast$ as a strategy for player $I$ in $(\omega\times 2)^\omega$ such that, for each $c$ in $Halfbin$, if $c\in_I \sigma^\ast$, then  \textit{either}  $ \boldsymbol{\pi_{bin}}(c)\in_I\sigma$ \textit{or}  $\exists e\sqsubset c[e\in D_\alpha]$.

  \smallskip
  We first define $\sigma^\ast(\langle\;\rangle)$.
  
  Define $\delta$ in $\mathcal{C}$ such that $\delta \in_I \sigma$ and $\forall i[\delta(2i+1)=0]$.  Find $m$ such that $\beta(\overline \delta m)\neq 0$ and distinguish two cases.
  
  \textit{Case (a)}. $\exists n[2n<m\;\wedge\;\delta(2n)=1]$. Define:   $ n_0:=\mu n[\delta(2n)=1]$ and  \\$\sigma^\ast(\langle \;\rangle):=n_0$. Note $\langle n_0\rangle\in_I\sigma^\ast$ and $\boldsymbol{\pi_{bin}}(\langle n_0\rangle)=\underline{\overline 0}(2n_0)\ast\langle 1 \rangle =\overline \delta(2n_0+1)\in_I\sigma$. 
  
  \textit{Case (b)}. $\forall n[2n<m \rightarrow\delta(2n)=0]$. Conclude: $\boldsymbol{\rho_{bin}}(\overline \delta m)=\langle\;\rangle$ and $\beta(\langle \;\rangle)\neq 0$ and also $\alpha(\langle \;\rangle)\neq 0$. Define: $\sigma^\ast(\langle\;\rangle):=0$. Note: $\langle 0\rangle \in_I \sigma^\ast$ and $\exists e \sqsubset\langle 0\rangle[e\in D_\alpha]$.

   \smallskip
 Now assume  $k>0$. Let $ c$ in   $ (\omega\times 2)^k$ be given such that  $c\in_I\sigma^\ast$. We  have to define $\sigma^\ast(c)$.
 
 We distinguish two cases.
 
 \textit{Case 1}. $\exists e\sqsubset c[e\in D_\alpha]$. We then define $\sigma^\ast(c):=0$. Note: $\exists e\sqsubset c\ast\langle 0\rangle[e\in D_\alpha]$.
 
 \textit{Case 2}. $\neg \exists e\sqsubset c[e\in D_\alpha]$. Then $\boldsymbol{\pi_{bin}}(c)\in_I \sigma$. 
 
 Note: $length\bigl(\boldsymbol{\pi_{bin}}(c)\bigr)$ is even and find $l$ such that  $2l:=length\bigl(\boldsymbol{\pi_{bin}}(c)\bigr)$. 
  
  Define $\delta$ in $\mathcal{C}$ such that $\delta \in_I \sigma$ and $\boldsymbol{\pi_{bin}}(c)\sqsubset\delta$ and $\forall i[2i+1>2l\rightarrow \delta(2i+1)=0]$.  Find $m$ such that $\beta(\overline \delta m)\neq 0$ and distinguish two cases.
  
  \textit{Case (2a)}. $\exists n[2l\le 2n<m\;\wedge\;\delta(2n)=1]$.
  
   Define:   $ n_0:=\mu n[2l\le 2n<m\;\wedge\;\delta(2n)=1]$ and  $\sigma^\ast(c):=n_0-l$. 
  
  Note $c\ast\langle n_0-l\rangle\in_I\sigma^\ast$ and $\boldsymbol{\pi_{bin}}(c\ast\langle n_0-l\rangle)=\boldsymbol{\pi_{bin}}(c)\ast\underline{\overline 0}(2n_0-2l)\ast\langle 1 \rangle =\overline \delta(2n_0+1)\in_I\sigma$. 
  
  \textit{Case (2b)}. $\forall n[2n<m \rightarrow\delta(2n)=0]$. Conclude: $\boldsymbol{\rho_{bin}}(\overline \delta m)=c$ and $\beta(\overline \delta m)\neq 0$ and  $\alpha(c)=\beta(\overline\delta m)\neq 0$. Define: $\sigma^\ast(\langle\;\rangle):=0$. Note: $c\ast\langle 0\rangle \in_I \sigma^\ast$ and $\exists e \sqsubset c\ast\langle 0\rangle[e\in D_\alpha]$.  
  
   This completes the definition of $\sigma^\ast$. 
   
  \smallskip Now assume $\gamma\in (\omega\times 2)^\omega$ and $\gamma\in_I \sigma^\ast$.  Find $\delta\in\mathcal{C}$ such that $\forall n[\boldsymbol{\pi_{bin}}(\overline \gamma n)\sqsubset \delta]$.  Find $\varepsilon$ in $\mathcal{C}$ such that  such that $\varepsilon \in_I\sigma$ and, for each $n$, if $\overline\delta n \in_I\sigma$, then $\overline\varepsilon n =\overline \delta n$. Find $m$ such that $\overline \varepsilon m\in D_\beta$ and distinguish two cases. 
   
   \textit{Case $(\ast)$}. $\overline\delta m =\overline\varepsilon m$. Conclude: $\overline \delta m \in D_\beta$ and $\boldsymbol{\rho_{bin}}(\overline \delta m)\in D_\alpha$ and $\gamma\in \mathcal{G}_\alpha$.
   
   \textit{Case $(\ast\ast)$}.  $\overline\delta m \neq\overline\varepsilon m$. Then $\overline \delta m \notin_I\sigma$ and $\boldsymbol{\pi_{bin}}(\overline \gamma m)\notin_I\sigma$ and $\exists e\sqsubset\overline \gamma m[e\in D_\alpha]$. Conclude: $\gamma \in \mathcal{G}_\alpha$. 
   
   We thus see: $\forall \gamma \in (\omega\times 2)^\omega[\gamma\in_I\sigma^\ast\rightarrow \gamma \in \mathcal{G}_\alpha]$.

 \smallskip  Assume $\exists s\forall \delta\in\mathcal{C}[\delta\in_I s\rightarrow \delta \in \mathcal{G}_\beta]$.
 
 Find $s$ such that $\forall \delta\in\mathcal{C}[\delta\in_I s\rightarrow \delta \in \mathcal{G}_\beta]$. Consider $p:=length(s)$. Find $q$ such that, for all $c$ in $Halfbin$, if $c\ge q$, then $\boldsymbol{\pi_{bin}}(c)\ge p$.  Define $s^\ast$ such that $length(s^\ast)=q$, inductively.    For each $c<q$ in $\bigcup_k(\omega\times 2)^k$ such that $c\in_Is^\ast$,   $s^\ast(c)$ is defined just as, in the previous paragraph,  where we were given $\sigma \in \mathcal{C}$, $\sigma^\ast(c)$ was defined, for each $c$ in $\bigcup (\omega\times 2)^k$   such that $c\in_I\sigma^\ast$.

 One then may prove: 
 $\forall \gamma \in (\omega\times 2)^\omega[\gamma \in s^\ast\rightarrow\gamma \in \mathcal{G}_\alpha]$.
 \end{proof}

 \begin{lemma}\label{L:prepdet}
One may prove the following statements in $\mathsf{BIM}$.
\begin{enumerate}[\upshape (i)]
\item $\mathbf{\Sigma}^0_1$-$\mathit{Det}_{\omega \times 2}^I \rightarrow \mathbf{\Sigma}^0_1$-$\overleftarrow{\mathbf{AC}_{\omega,2}}$ and
$\neg ! ( \mathbf{\Sigma}^0_1$-$\overleftarrow{\mathbf{AC}_{\omega,2}})\rightarrow \neg !( \mathbf{\Sigma}^0_1$-$\mathit{Det}_{\omega \times 2}^I)$.
\item $\mathbf{\Delta}^0_1$-$\mathit{Det}^I_{\omega \times 2 \times \omega} \rightarrow \mathbf{\Sigma}^0_1$-$\mathit{Det}_{\omega \times 2}^I$ and $\neg!\bigl(\mathbf{\Sigma}^0_1$-$\mathit{Det}_{\omega \times 2}^I\bigr) \rightarrow \neg !\bigl(\mathbf{\Delta}^0_1$-$\mathit{Det}^I_{\omega \times 2 \times \omega}\bigr)$.
\item $\forall m[\mathbf{\Delta}^0_1$-$\mathit{Det}^I_{(\omega \times 2)^m}] \rightarrow \mathbf{\Delta}^0_1$-$\mathit{Det}^I_{\omega \times 2 \times \omega}$ and 

$\neg!(\mathbf{\Delta}^0_1$-$\mathit{Det}_{\omega \times 2\times \omega}^I\bigr)
\rightarrow \exists m[\neg!(\mathbf{\Delta}^0_1$-$\mathit{Det}^I_{(\omega \times 2)^m})]$.

\item  $\forall m[\mathbf{\Sigma}^0_1$-$\mathit{Det}^I_{(\omega \times 2)^\omega} \rightarrow \mathbf{\Delta}^0_1$-$\mathit{Det}^I_{(\omega \times 2)^m} \;\wedge\;$
 
 $\neg !(\mathbf{\Delta}^0_1$-$\mathit{Det}^I_{(\omega \times 2)^m}) \rightarrow \neg !(\mathbf{\Sigma}^0_1$-$\mathit{Det}^I_{(\omega \times 2)^\omega})]$.

\item $\mathbf{\Sigma}^0_1$-$\mathit{Det}^I_\mathcal{C} \rightarrow \mathbf{\Sigma}^0_1$-$\mathit{Det}^I_{(\omega \times 2)^\omega}$ and
 $\neg !(\mathbf{\Sigma}^0_1$-$\mathit{Det}^I_{(\omega \times 2)^\omega}) \rightarrow \neg !(\mathbf{\Sigma}^0_1$-$\mathit{Det}^I_\mathcal{C})$, and \\$\mathbf{\Sigma}^0_1$-$\mathit{\;^\ast Det}^I_\mathcal{C} \rightarrow \mathbf{\Sigma}^0_1$-$\mathit{\;^\ast Det}^I_{(\omega \times 2)^\omega}$ and
 $\neg !(\mathbf{\Sigma}^0_1$-$\mathit\;{^\ast Det}^I_{(\omega \times 2)^\omega}) \rightarrow \neg !(\mathbf{\Sigma}^0_1$-$\mathit{\;^\ast Det}^I_\mathcal{C})$.
 \item $\mathbf{\Sigma}^0_1$-$\mathit{Det}^{II}_\mathcal{C} \rightarrow \mathbf{\Sigma}^0_1$-$\mathit{Det}^I_\mathcal{C}$ and
 $\neg !(\mathbf{\Sigma}^0_1$-$\mathit{Det}^I_\mathcal{C}) \rightarrow \neg !(\mathbf{\Sigma}^0_1$-$\mathit{Det}^{II}_\mathcal{C})$, and 
 
 $\mathbf{\Sigma}^0_1$-$\mathit{^\ast Det}^{II}_\mathcal{C} \rightarrow \mathbf{\Sigma}^0_1$-$\mathit{^\ast Det}^I_\mathcal{C}$ and
 $\neg !(\mathbf{\Sigma}^0_1$-$\mathit{^\ast Det}^I_\mathcal{C}) \rightarrow \neg !(\mathbf{\Sigma}^0_1$-$\mathit{^\ast Det}^{II}_\mathcal{C})$.
 \item $\mathbf{FT} \rightarrow \mathbf{\Sigma}^0_1$-$\mathit{^\ast Det}^{II}_\mathcal{C}$ and $\neg !(\mathbf{\Sigma}^0_1$-$\mathit{^\ast Det}^{II}_\mathcal{C}) \rightarrow \neg!\mathbf{FT}$.
\end{enumerate}
\end{lemma}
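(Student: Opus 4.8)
The plan is to establish all seven items --- and their strong-negation companions --- by the uniform ``reduction'' device already used for Lemmas~\ref{L:3resolutions}, \ref{L:b3} and \ref{L:b45}. Every implication $A\to B$ in the list has $A$ and $B$ of the form $\forall\alpha[P(\alpha)\to Q(\alpha)]$, and it is enough to prove, in $\mathsf{BIM}$, that \emph{for each $\alpha$ there is a $\beta$ with $P_B(\alpha)\to P_A(\beta)$ and $Q_A(\beta)\to Q_B(\alpha)$}. The positive implication then follows by instantiating $A$ at $\beta$; reading the same two implications backwards, a witness $\alpha$ for $\neg!B$ yields the witness $\beta$ for $\neg!A$, i.e.\ $\neg!B\to\neg!A$. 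So for each item I only have to write down $\beta$ and check two routine, continuity-type transfers of hypothesis and conclusion; none of these checks should need countable choice.

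The maps $\alpha\mapsto\beta$ for (i)--(vi) are recodings of positions and strategies. For (i): relabel the game $\omega\times2$ against the pairing function $J$ and match the value $\tau(\langle n\rangle)$ of a player-$II$ strategy with $\gamma(n)$, turning $\mathit{Det}^I_{\omega\times2}(E_\beta)$ into the instance of $\mathbf{\Sigma}^0_1$-$\overleftarrow{\mathbf{AC}_{\omega,2}}$ for $\alpha$. For (ii): let $D_\beta$ consist of the plays $\langle n,i,p\rangle$ of $\omega\times2\times\omega$ in which player $I$'s closing move $p$ witnesses $\langle n,i\rangle\in E_\alpha$; a player-$II$ strategy for the longer game restricts to one for $\omega\times2$, and a player-$I$ strategy for the longer game delivers a number $s$ with $\forall i<2[\langle s,i\rangle\in E_\alpha]$. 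For (iii): identify $\omega\times2\times\omega$ with $(\omega\times2)^2$ by appending a final, irrelevant move of player $II$ (so $\langle n_0,i_0,n_1,i_1\rangle\in D_\beta\leftrightarrow i_1<2\wedge\langle n_0,i_0,n_1\rangle\in D_\alpha$); here $m=2$ suffices. For (iv): truncate the infinite game to its first $m$ rounds, $\beta(s)\neq0\leftrightarrow(\mathit{length}(s)=2m\wedge s\in D_\alpha)$, so $\mathcal{G}_\beta=\{\gamma\in(\omega\times2)^\omega\mid\overline\gamma(2m)\in D_\alpha\}$; an infinite player-$I$ strategy restricts to a finite one and, conversely, a play of the finite game extends to an infinite play with both players simply continuing to follow their strategies. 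For (v): take $\beta=\alpha\circ\boldsymbol{\rho_{bin}}$ and quote Lemma~\ref{L:simulate} --- its three displayed implications are exactly the hypothesis transfer and the two conclusion transfers (one for full strategies, one for finite strategies), which is what both the unstarred and the starred clause require. For (vi): prepend one irrelevant opening move of player $I$; concretely $\beta(\langle\;\rangle)=0$ and, for nonempty $s$, $\beta(s)$ is nonzero exactly when $\alpha$ is nonzero at $s$ with its first entry deleted, so that $\gamma\in\mathcal{G}_\beta$ iff $\langle\gamma(1),\gamma(2),\ldots\rangle\in\mathcal{G}_\alpha$. In the $II$-game for $\mathcal{G}_\beta$ player $II$ plays the even-indexed entries of $\langle\gamma(1),\gamma(2),\ldots\rangle$, i.e.\ the player-$I$ moves of the $I$-game for $\mathcal{G}_\alpha$, and player $I$ (after the dummy) plays the player-$II$ moves; this identifies the hypothesis of $\mathit{Det}^{II}_\mathcal{C}(\mathcal{G}_\beta)$ with that of $\mathit{Det}^I_\mathcal{C}(\mathcal{G}_\alpha)$ and likewise the conclusions, and it carries a finite strategy $t\in\mathit{Bin}$ for one game to a finite strategy $s\in\mathit{Bin}$ of controlled length for the other, which also settles the starred versions.

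Item (vii) is the single substantive step, and is where $\mathbf{FT}$ itself enters. The key observation is that the hypothesis of $\mathit{^\ast Det}^{II}_\mathcal{C}(\mathcal{G}_\alpha)$, namely $\forall\sigma\in\mathcal{C}\exists\gamma[\gamma\in_I\sigma\wedge\gamma\in\mathcal{G}_\alpha]$, is effectively $\mathbf{\Sigma}^0_1$ in the player-$I$ strategy $\sigma$: a play consistent with $\sigma$ is determined by $\sigma$ together with the finite list of player-$II$ moves made so far, and whether such a play enters $D_\alpha$ by depth $n$ depends only on the first $\lceil n/2\rceil$ player-$II$ moves --- finitely many binary sequences --- and on finitely many values of $\sigma$. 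Hence there is $\beta$ with $Bar_\mathcal{C}(D_\beta)\leftrightarrow\forall\sigma\in\mathcal{C}\exists\gamma[\gamma\in_I\sigma\wedge\gamma\in\mathcal{G}_\alpha]$, arranged so that $Bar_\mathcal{C}(D_{\overline\beta m})$ says precisely: every player-$I$ strategy is forced, uniformly by stage $m$, into a position from which some player-$II$ move sequence of bounded length reaches $D_\alpha$. For fixed $m$ this is a finite game of bounded depth, so the scheme $\mathit{Det}^{II}_{\mathit{Seq}(n,2)}$ of Theorem~\ref{T:finitedet} collapses the family of winning player-$II$ responses into one finite strategy, giving $\exists m[Bar_\mathcal{C}(D_{\overline\beta m})]\to\exists t\in\mathit{Bin}\,\forall\gamma[\gamma\in_{II}t\to\gamma\in\mathcal{G}_\alpha]$ in $\mathsf{BIM}$ alone. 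Then $\mathbf{FT}$ applied to $\beta$ yields $Bar_\mathcal{C}(D_\beta)\to\exists m[Bar_\mathcal{C}(D_{\overline\beta m})]$, whence $\mathbf{FT}\to\mathit{^\ast Det}^{II}_\mathcal{C}(\mathcal{G}_\alpha)$; and a witness $\alpha$ for $\neg!(\mathbf{\Sigma}^0_1$-$\mathit{^\ast Det}^{II}_\mathcal{C})$ gives, via the same $\beta$, a $\beta$ with $Bar_\mathcal{C}(D_\beta)$ and $\forall m[\neg Bar_\mathcal{C}(D_{\overline\beta m})]$ (the latter since $Bar_\mathcal{C}(D_{\overline\beta m})$ would produce the finite player-$II$ strategy that $\alpha$ refutes), i.e.\ a witness for $\neg!\mathbf{FT}$.

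The main obstacle I anticipate is exactly this construction in (vii): defining $\beta$ so that membership in $D_\beta$ encodes ``some bounded player-$II$ response reaches $D_\alpha$'' while remaining decidable from an initial segment of the player-$I$ strategy --- the bookkeeping between the coding of game positions and the coding of strategies as elements of $\mathcal{C}$ --- and then applying finite determinacy cleanly to replace a family of winning responses by a single finite strategy. By contrast, (i)--(vi) are routine re-coding, the only mild care being, in the starred clauses of (v) and (vi), to verify that finite strategies are carried to finite strategies of controlled length.
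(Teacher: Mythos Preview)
Your proposal is correct and follows the paper's own proof essentially step for step: each of (i)--(vi) is handled by the same recoding you describe (relabel pairs vs.\ sequences; add a witnessing third move; take $m=2$; truncate to $2m$ moves; invoke Lemma~\ref{L:simulate}; prepend a dummy first move), and (vii) is handled by exactly the mechanism you outline --- the paper defines $\beta(c)\neq 0\leftrightarrow\exists s<c[s\in_I c\wedge\alpha(s)\neq 0]$, so that $c\in\mathit{Bin}$ is read as an initial segment of player~$I$'s strategy, and then applies $\mathbf{FT}$ to $\beta$ followed by finite $II$-determinacy (Theorem~\ref{T:finitedet}) at the resulting bound $m$. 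One tiny remark: in (vii) you write $Bar_\mathcal{C}(D_\beta)\leftrightarrow(\text{hypothesis})$, but only the forward implication is needed (and is what the paper proves); this does not affect anything.
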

\begin{proof} (i) 
 We prove: given any $\alpha$, one may construct $\beta$ such that 
  $$\forall \gamma \in \mathcal{C} \exists n[\bigl(n,\gamma(n)\bigr) \in E_{\alpha}]\rightarrow \forall \tau \in \mathcal{C}\exists n [\langle n, \tau(n)\rangle \in E_\beta]\;\mathrm{and}$$
$$\exists n[\langle n, 0\rangle \in E_\beta \; \wedge \; \langle n, 1\rangle \in E_\beta]\rightarrow \exists n[(n,0) \in E_\alpha\;\wedge\;(n,1)\in E_\alpha].$$
The two promised conclusions then follow easily.

Given $\alpha$, define $\beta$ such that $\forall n\forall i<2[\alpha(p)=(n,i)+1\leftrightarrow \beta(p) = \langle n, i\rangle +1]$  and \\$\forall p[\neg \exists n\exists i<2[\alpha(p)=(n,i)+1]\rightarrow\beta(p) =0]$.

\smallskip Clearly, $\beta$ satisfies the requirements.

\smallskip (ii) We prove: given any $\alpha$, one may construct $\beta$ such that 
$$\forall \tau \in \mathcal{C}\exists n[\langle n, \gamma(n)\rangle\in E_\alpha] \rightarrow \mathit\forall\tau\in\mathcal{C}\exists n\exists p[\langle n, \tau(n),p\rangle\in D_\beta]\;\mathrm{and}$$  $$\exists n\forall i<2\exists p[\langle n,i,p\rangle\in D_\beta] \rightarrow \mathit\exists n\forall i<2[\langle n,i\rangle\in E_\alpha].$$
The two promised conclusions then follow easily.

Given $\alpha$, define $\beta$   such that  $\forall n\forall i<2\forall p[  \beta(\langle n,i,p\rangle)  \neq 0\leftrightarrow\alpha(p)= \langle n,i\rangle +1].$\\ Note: $\forall n\forall i<2[
 \langle n , i\rangle \in E_\alpha \leftrightarrow \exists p[\langle n , i , p \rangle \in D_{\beta}]]$.
 
 \smallskip           Clearly, $\beta$ satisfies the requirements.

\smallskip
(iii) Note: $\mathbf{\Delta}^0_1$-$\mathit{Det}^I_{(\omega \times 2)^2} \rightarrow \mathbf{\Delta}^0_1$-$\mathit{Det}^I_{\omega \times 2 \times \omega}$ and 

$\neg!(\mathbf{\Delta}^0_1$-$\mathit{Det}_{\omega \times 2\times \omega}^I\bigr)
\rightarrow \neg!(\mathbf{\Delta}^0_1$-$\mathit{Det}^I_{(\omega \times 2)^2})$.

\smallskip
(iv) Let $m$ be given. We prove: given any $\alpha$ one may construct $\beta$ such that 
$$\forall \tau \in \mathcal{C} \exists c\in(\omega\times 2)^m[ c \in_{II} \tau \; \wedge\; c \in D_\alpha]\rightarrow \forall \tau \in\mathcal{C}\exists \gamma \in \mathcal (\omega\times 2)^\omega[\gamma \in_{II}\tau\;\wedge\; \gamma\in\mathcal{G}_\beta]\;\mathrm{and}$$  $$\exists\sigma\forall \gamma\in (\omega\times 2)^\omega[\gamma \in_I \sigma \rightarrow \gamma \in \mathcal{G}_\beta]\rightarrow\exists s\forall c\in(\omega\times 2)^m[ c \in_I s  \rightarrow c \in D_\alpha].$$
The two promised conclusions then follow easily.

Given $ \alpha$, define $\beta$  such that $\forall s [\beta(s)\neq 0\leftrightarrow\bigl( s\in (\omega\times 2)^{m}\;\wedge\;\alpha(s)\neq 0\bigr)]$. 

\smallskip Observe that, if  $\forall \tau \in \mathcal{C} \exists c\in(\omega\times 2)^m[ c \in_{II} \tau \; \wedge c \in D_\alpha]$, then \\
 $\forall \tau \in \mathcal{C}\exists \gamma \in \mathcal{C}[\gamma \in_{II}\tau\;\wedge\;\beta\bigl(\overline \gamma(2 m)\bigr) \neq 0 ]$, that is: $\forall \tau \in \mathcal{C}\exists \gamma \in \mathcal{C}[ \gamma \in_{II}\tau\;\wedge\;\gamma \in \mathcal{G}_\beta ]$.

\smallskip Let $\sigma$ in $\mathcal{C}$ be given such that  $ \forall \gamma\in (\omega\times 2)^\omega[\gamma \in_I \sigma \rightarrow \exists n[\beta(\overline \gamma n) \neq 0]]$. \\Conclude: $\forall \gamma\in (\omega\times 2)^\omega[\gamma \in_I \sigma  \rightarrow \alpha\bigl( \overline \delta(2m)\bigr)\neq 0]$. \\Find $N$ such that $\forall c \in \bigcup_{n\le 2m}\omega^n[ c \in_I \gamma\rightarrow c <N]$ and define  $s:= \overline \sigma N$. \\Conclude:
$\forall c\in(\omega\times 2)^m[ c \in_I s  \rightarrow c \in D_\alpha] $.
 
\smallskip We thus see that $\beta$ satisfies the requirements. 
 
 \smallskip
 (v) For each $\alpha$, one may construct $\beta$ such that  $$\forall \tau \in \mathcal{C}\exists \gamma \in (\omega \times 2)^\omega[\gamma \in_{II}\tau\;\wedge\;\gamma\in\mathcal{G}_\alpha] \rightarrow\forall \tau \in \mathcal{C}\exists \delta \in \mathcal{C}[\delta \in_{II}\tau\;\wedge\;\delta\in\mathcal{G}_\beta]\;\mathrm{and}$$   $$\exists \sigma\forall \delta\in\mathcal{C}[\delta\in_I \sigma\rightarrow \delta \in \mathcal{G}_\beta]\rightarrow \exists \sigma \forall \gamma \in (\omega\times 2)^\omega[\gamma\in_I \sigma\rightarrow \gamma \in \mathcal{G}_\alpha]\;\mathrm{and}$$ $$\exists s\forall \delta\in\mathcal{C}[\delta\in_I s\rightarrow \delta \in \mathcal{G}_\beta]\rightarrow \exists s\forall \gamma\in(\omega\times 2)^\omega[\gamma\in_I s\rightarrow \gamma \in\mathcal{G}_\alpha].$$
 
 The proof has been given in Lemma \ref{L:simulate}.

 The promised conclusions  follow easily. 
 
\smallskip
(vi) We prove: given any $\alpha$, one may construct $\beta$ such that $$ \forall \sigma \in \mathcal{C}\exists c \in \mathit{Bin}[c \in_{I} \sigma \;\wedge\; \alpha(c) \neq 0] \rightarrow\forall \tau\in \mathcal{C}\exists d \in \mathit{Bin}[d \in_{II} \tau \;\wedge\; \beta(d) \neq 0]\;\mathrm{and}$$ 
$$ \exists\tau\in\mathcal{C}\forall\delta\in\mathcal{C}[\delta \in_{II} \tau\rightarrow\exists n[\beta(\overline \delta n) \neq 0]]\rightarrow \exists \sigma\in \mathcal{C}\forall \delta \in \mathcal{C}[\delta \in_{I} \sigma\rightarrow \exists n[\alpha(\overline \delta n) \neq 0]]$$  
$$\mathrm{and}\; \exists t\forall\delta\in\mathcal{C}[\delta \in_{II} t\rightarrow\exists n[\beta(\overline \delta n) \neq 0]]\rightarrow \exists t\forall \delta \in \mathcal{C}[\delta \in_{I} t\rightarrow \exists n[\alpha(\overline \delta n) \neq 0]].$$
The promised conclusions then follow easily.

 Given $\alpha$, define $\beta$ such that  $\beta(0) =0$ and  $\forall c \in Bin[\beta(\langle 0 \rangle \ast c) = \beta(\langle 1 \rangle \ast c) = \alpha(c)]$.

\smallskip
Assume: $\forall \sigma \in \mathcal{C}\exists c \in \mathit{Bin}[c \in_{I} \sigma \;\wedge\; \alpha(c) \neq 0]$. 

Let $\tau$ in $\mathcal{C}$ be given.  
\\Define $\sigma$ such that $\forall c \in \mathit{Bin}[\sigma(c) = \tau\bigl(\langle 0\rangle \ast c\bigr)]$.\\ Find $c$ in $\mathit{Bin}$ such that $c \in_{I} \sigma$ and $\alpha(c) \neq 0$.\\ Define $d:=\langle 0\rangle \ast c$ and note $d \in_{II} \tau$ and $\beta(d) \neq 0$.

We thus see: $\forall \tau \in \mathcal{C}\exists d \in \mathit{Bin}[d \in_{II} \tau \;\wedge\; \beta(d) \neq 0]$.

\smallskip
Let $\tau$  in $\mathcal{C}$ be given such that $\forall\delta\in\mathcal{C}[\delta \in_{II} \tau\rightarrow\exists n[\beta(\overline \delta n) \neq 0]]$. 

Define $\sigma$  such that $\forall c\in\mathit{Bin}\forall i<2[\sigma(\langle i\rangle\ast c) = \tau(c)]$.

Note: $\forall\delta\in\mathcal{C}[\delta \in_{I} \sigma\rightarrow  \langle 0 \rangle\ast\delta \in_{II} \tau]$, so $\forall\delta\in\mathcal{C}[\delta \in_{I} \sigma\rightarrow \exists n[\beta( \overline{\langle 0\rangle\ast \delta} n) \neq 0]]$   and: $\forall \delta \in \mathcal{C}[\delta \in_{I} \sigma\rightarrow \exists n[\alpha(\overline \delta n)\neq 0]]$.

\smallskip
Let $t$  be given such that $\forall\delta\in\mathcal{C}[\delta \in_{II} t\rightarrow\exists n[\beta(\overline \delta n) \neq 0]]$. Define $s$  such that, $\forall c\in\mathit{Bin}[\langle 0\rangle\ast c<length(t)\rightarrow s(c) = t(\langle 0 \rangle \ast c)]$.

Conclude, as above: $\forall \delta \in \mathcal{C}[\delta \in_{I} s\rightarrow \exists n[\alpha(\overline \delta n)\neq 0]]$.

\smallskip We thus see that $\beta$ satisfies the requirements.

\smallskip (vii) We prove: for each $\alpha$, there exists $\beta$ such that  $$\forall \gamma \in \mathcal{C}\exists s[s \in_I \gamma \;\wedge\; \alpha(s) \neq 0]\rightarrow \mathit{Bar}_\mathcal{C}(D_\beta)\;\mathrm{and}$$ $$\exists m[Bar_\mathcal{C}(D_{\overline\beta m})]\rightarrow \exists c\forall \delta \in \mathcal{C}[ \delta \in_{II} c \rightarrow \exists n[\alpha(\overline \delta n) \neq 0]].$$
The two promised conclusions then follow easily.

Given $\alpha$, define $\beta$  such that $\forall c \in \mathit{Bin}[\beta(c) \neq 0\leftrightarrow \exists s<c[s \in_{I} c\;\wedge\;\alpha(s) \neq 0]]$.

Assume  $\forall \gamma \in \mathcal{C}\exists s[s \in_I \gamma \;\wedge\; \alpha(s) \neq 0]$.

 Clearly,  $\forall\gamma\in\mathcal{C}\exists n[\beta(\overline \gamma n) \ne 0]$, that is: $\mathit{Bar}_\mathcal{C}(D_\beta)$.

\smallskip
Let  $m$ be given such that $Bar_\mathcal{C}(D_{\overline \beta m})$.  

 Define  $X:=\{s\in\mathit{Bin}_m \mid \exists n \le m[\alpha(\overline s n) \neq 0]\}$. Note: $\forall b\exists s[s \in_I b \;\wedge\; s \in X]$. According to  Theorem \ref{T:finitedet}, $\mathit{Det}^{II}_{\mathit{Bin}_m}(X)$. 
 
 Find $c$ such that $\forall s\in \mathit{Bin}_m[ s \in_{II} c \rightarrow s \in X]$. 
 
 Conclude: $\forall \delta \in \mathcal{C}[\delta \in_{II} c \rightarrow \overline \delta m \in X]$ and: $\forall \delta \in \mathcal{C}[ \delta \in_{II} c \rightarrow \exists n[\alpha(\overline \delta n) \neq 0]]$.  
 
 \smallskip We thus see that $\beta$ satisfies the requirements.
\end{proof}

 \begin{theorem}\label{T:det}\begin{enumerate}[\upshape (i)]
 \item $\mathsf{BIM} \vdash    \mathbf{\Sigma}^0_1$-$\overleftarrow{\mathbf{AC}_{\omega, 2}} \leftrightarrow \mathbf{\Sigma}^0_1$-$\mathit{Det}_{\omega \times 2}^I \leftrightarrow \mathbf{\Delta}^0_1$-$\mathit{Det}^I_{\omega \times 2 \times \omega} \leftrightarrow \\\forall m[\mathbf{\Delta}^0_1$-$\mathit{Det}^{I}_{(\omega \times 2)^m}] \leftrightarrow   \mathbf{\Sigma}^0_1$-$\mathit{Det}^I_{(\omega \times 2)^\omega}\leftrightarrow   \mathbf{\Sigma}^0_1$-$\mathit{\;^\ast Det}^I_{(\omega \times 2)^\omega}\leftrightarrow   \mathbf{\Sigma}^0_1$-$\mathit{Det}^I_{\mathcal{C}}\leftrightarrow  \\ \mathbf{\Sigma}^0_1$-$\mathit{\;^\ast Det}^I_{\mathcal{C}}\leftrightarrow   \mathbf{\Sigma}^0_1$-$\mathit{Det}^{II}_\mathcal{C}\leftrightarrow   \mathbf{\Sigma}^0_1$-$\mathit{\;^\ast Det}^{II}_\mathcal{C}\leftrightarrow \mathbf{FT}$.
 
 \item  $\mathsf{BIM} \vdash   \neg !(\mathbf{\Sigma}^0_1$-$\overleftarrow{\mathbf{AC}_{\omega, 2}}) \leftrightarrow \neg !(\mathbf{\Sigma}^0_1$-$\mathit{Det}_{\omega \times 2}^I) \leftrightarrow \neg !(\mathbf{\Delta}^0_1$-$\mathit{Det}^I_{\omega \times 2 \times \omega}) \leftrightarrow \\\exists m[\neg !(\mathbf{\Delta}^0_1$-$\mathit{Det}^{I}_{(\omega \times 2)^m})] \leftrightarrow   \neg !(\mathbf{\Sigma}^0_1$-$\mathit{Det}^I_{(\omega \times 2)^\omega})\leftrightarrow   \neg !(\mathbf{\Sigma}^0_1$-$\mathit{\;^\ast Det}^I_{(\omega \times 2)^\omega})\leftrightarrow  \\ \neg !(\mathbf{\Sigma}^0_1$-$\mathit{Det}^I_{\mathcal{C}})\leftrightarrow   \neg !(\mathbf{\Sigma}^0_1$-$\mathit{\;^\ast Det}^I_{\mathcal{C}})\leftrightarrow   \neg !(\mathbf{\Sigma}^0_1$-$\mathit{Det}^{II}_\mathcal{C})\leftrightarrow   \neg !(\mathbf{\Sigma}^0_1$-$\mathit{\;^\ast Det}^{II}_\mathcal{C})\leftrightarrow \neg!\mathbf{FT}$.

\end{enumerate}
\end{theorem}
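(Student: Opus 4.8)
\textbf{Proof plan for Theorem \ref{T:det}.} The statement is a long chain of equivalences of determinacy principles (and their strong negations) with $\mathbf{FT}$ (resp. $\neg!\mathbf{FT}$). The natural strategy is to prove the chain as a cycle of implications, each link being one of the items of Lemma \ref{L:prepdet}, together with the earlier identification $\mathbf{FT}\leftrightarrow\mathbf{\Sigma}^0_1$-$\overleftarrow{\mathbf{AC}_{\omega,2}}$ from Theorem \ref{T:contrac<2}. Concretely, I would arrange the statements in the order in which they appear in the theorem and read off the implications:
\[
\mathbf{\Sigma}^0_1\text{-}\mathit{Det}^{II}_{\mathcal{C}}\to\mathbf{\Sigma}^0_1\text{-}\mathit{Det}^I_{\mathcal{C}}\to\mathbf{\Sigma}^0_1\text{-}\mathit{^\ast Det}^I_{\mathcal{C}}\to\cdots
\]
so that Lemma \ref{L:prepdet}(i)--(vii) supplies each forward arrow (and, simultaneously, each contrapositive arrow between the strong negations), while Theorem \ref{T:contrac<2} closes the loop by passing from $\mathbf{\Sigma}^0_1$-$\overleftarrow{\mathbf{AC}_{\omega,2}}$ back to $\mathbf{FT}$ and, on the strong-negation side, from $\neg!(\mathbf{\Sigma}^0_1$-$\overleftarrow{\mathbf{AC}_{\omega,2}})$ back to $\neg!\mathbf{FT}$.

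First I would fix the precise cyclic order. Reading the list in part (i), the seven implications of Lemma \ref{L:prepdet} give, in order: (vi) $\mathbf{\Sigma}^0_1$-$\mathit{Det}^{II}_{\mathcal{C}}\to\mathbf{\Sigma}^0_1$-$\mathit{Det}^I_{\mathcal{C}}$ and $\mathbf{\Sigma}^0_1$-$\mathit{^\ast Det}^{II}_{\mathcal{C}}\to\mathbf{\Sigma}^0_1$-$\mathit{^\ast Det}^I_{\mathcal{C}}$; (v) bridging the $\mathcal{C}$ and $(\omega\times 2)^\omega$ versions in both the $^\ast$ and non-$^\ast$ forms; (iv) $\mathbf{\Sigma}^0_1$-$\mathit{Det}^I_{(\omega\times2)^\omega}\to\forall m[\mathbf{\Delta}^0_1$-$\mathit{Det}^I_{(\omega\times2)^m}]$; (iii) $\forall m[\cdots]\to\mathbf{\Delta}^0_1$-$\mathit{Det}^I_{\omega\times2\times\omega}$; (ii) $\mathbf{\Delta}^0_1$-$\mathit{Det}^I_{\omega\times2\times\omega}\to\mathbf{\Sigma}^0_1$-$\mathit{Det}^I_{\omega\times2}$; (i) $\mathbf{\Sigma}^0_1$-$\mathit{Det}^I_{\omega\times2}\to\mathbf{\Sigma}^0_1$-$\overleftarrow{\mathbf{AC}_{\omega,2}}$; and finally (vii) $\mathbf{FT}\to\mathbf{\Sigma}^0_1$-$\mathit{^\ast Det}^{II}_{\mathcal{C}}$. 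Combined with Theorem \ref{T:contrac<2}'s $\mathbf{\Sigma}^0_1$-$\overleftarrow{\mathbf{AC}_{\omega,2}}\to\mathbf{FT}$, these close a single cycle through all the listed statements, so every pair is equivalent; that proves (i). For (ii), I observe that each item of Lemma \ref{L:prepdet} is stated so that the same construction yields the contrapositive implication between the corresponding strong negations, and Theorem \ref{T:contrac<2} gives $\neg!\mathbf{FT}\to\neg!(\mathbf{\Sigma}^0_1$-$\overleftarrow{\mathbf{AC}_{\omega,2}})$; running the cycle of strong negations in the same order establishes (ii). I would also note that the two $\mathbf{\Sigma}^0_1$-$\mathit{Det}^I_{(\omega\times2)^\omega}$ versions ($^\ast$ and plain) are linked through (iv): the finite strategy $s$ produced in the $^\ast$-version is trivially a full strategy $\sigma$, and conversely (iv) routes the plain version down to the finite-level statements and back up via (v), (vii), Theorem \ref{T:contrac<2}, (vii) again — so no separate argument is needed.

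The only genuine subtlety is bookkeeping: one must check that the cycle as drawn really visits \emph{every} statement named in the theorem (including both $^\ast$ and non-$^\ast$ forms at the $(\omega\times2)^\omega$ and $\mathcal{C}$ levels, and the universally quantified $\forall m[\mathbf{\Delta}^0_1$-$\mathit{Det}^I_{(\omega\times2)^m}]$) and that no statement is left dangling off the cycle. The $^\ast$ versions require a moment's care because $\mathbf{\Sigma}^0_1$-$\mathit{Det}^I_{(\omega\times2)^\omega}$ and $\mathbf{\Sigma}^0_1$-$\mathit{^\ast Det}^I_{(\omega\times2)^\omega}$ differ only in whether the winning strategy for $I$ is a finite sequence $s$ or a full strategy $\sigma$; one direction is immediate and the other follows since Lemma \ref{L:prepdet}(iv) already produces a finite strategy. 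With these routings spelled out, the theorem reduces entirely to invoking Lemma \ref{L:prepdet} and Theorem \ref{T:contrac<2}.

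\begin{proof} Both (i) and (ii) follow by arranging the listed statements into a single cycle of implications. The implications
\[
\mathbf{\Sigma}^0_1\text{-}\mathit{Det}^{II}_{\mathcal{C}}\to\mathbf{\Sigma}^0_1\text{-}\mathit{Det}^I_{\mathcal{C}}\to\mathbf{\Sigma}^0_1\text{-}\mathit{^\ast Det}^I_{\mathcal{C}}\to\mathbf{\Sigma}^0_1\text{-}\mathit{^\ast Det}^I_{(\omega\times2)^\omega}
\]
and their continuations down to $\mathbf{\Sigma}^0_1$-$\overleftarrow{\mathbf{AC}_{\omega,2}}$ are supplied, in order, by Lemma \ref{L:prepdet}(vi), (v), (iv), (iii), (ii), (i); the implication $\mathbf{\Sigma}^0_1$-$\overleftarrow{\mathbf{AC}_{\omega,2}}\to\mathbf{FT}$ is Theorem \ref{T:contrac<2}; and $\mathbf{FT}\to\mathbf{\Sigma}^0_1$-$\mathit{^\ast Det}^{II}_{\mathcal{C}}$ is Lemma \ref{L:prepdet}(vii). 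One checks that this cycle passes through every statement named in (i): Lemma \ref{L:prepdet}(v) links both the $\mathcal{C}$-level and the $(\omega\times2)^\omega$-level forms in their $^\ast$ and plain versions; Lemma \ref{L:prepdet}(iv), applied for every $m$, links $\mathbf{\Sigma}^0_1$-$\mathit{Det}^I_{(\omega\times2)^\omega}$ to $\forall m[\mathbf{\Delta}^0_1$-$\mathit{Det}^I_{(\omega\times2)^m}]$; the two $(\omega\times2)^\omega$-forms differ only in whether the winning strategy for $I$ is a finite sequence or a full strategy, and a finite winning strategy is a fortiori a full one, while the converse direction is routed through the cycle. Hence all statements in (i) are pairwise equivalent in $\mathsf{BIM}$.

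For (ii), each item of Lemma \ref{L:prepdet} provides, by the same construction, the contrapositive implication between the corresponding strong negations, and Theorem \ref{T:contrac<2} gives $\neg!\mathbf{FT}\to\neg!(\mathbf{\Sigma}^0_1$-$\overleftarrow{\mathbf{AC}_{\omega,2}})$. Running the cycle of strong negations in the reverse order therefore shows that all the statements in (ii) are pairwise equivalent in $\mathsf{BIM}$. \end{proof}
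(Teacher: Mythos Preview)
Your approach is exactly the paper's (the paper's proof is literally ``Use Lemmas \ref{L:3resolutions} and \ref{L:prepdet}''), and your handling of the strong negations is right. However, the displayed chain in your proof contains a wrong arrow: you write $\mathbf{\Sigma}^0_1$-$\mathit{Det}^I_{\mathcal{C}}\to\mathbf{\Sigma}^0_1$-$\mathit{^\ast Det}^I_{\mathcal{C}}$, but no item of Lemma \ref{L:prepdet} supplies this, and it is not trivial---the trivial direction is $^\ast\!\mathit{Det}\to\mathit{Det}$ (a finite winning strategy extends to a full one), not the reverse. Your own bookkeeping remark later acknowledges this, but the chain as displayed does not respect it.

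A clean routing: take the cycle
\[
\mathbf{FT}\xrightarrow{(\mathrm{vii})}\,^\ast\!\mathit{Det}^{II}_{\mathcal{C}}\xrightarrow{(\mathrm{vi})}\,^\ast\!\mathit{Det}^{I}_{\mathcal{C}}\xrightarrow{(\mathrm{v})}\,^\ast\!\mathit{Det}^{I}_{(\omega\times2)^\omega}\xrightarrow{\mathrm{triv.}}\mathit{Det}^{I}_{(\omega\times2)^\omega}\xrightarrow{(\mathrm{iv})}\forall m[\cdots]\xrightarrow{(\mathrm{iii})}\cdots\xrightarrow{(\mathrm{i})}\overleftarrow{\mathbf{AC}_{\omega,2}}\to\mathbf{FT},
\]
which already picks up eight of the eleven statements. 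The remaining three ($\mathit{Det}^{II}_{\mathcal{C}}$, $\mathit{Det}^{I}_{\mathcal{C}}$) are attached via the trivial implications $^\ast\!\mathit{Det}^{II}_{\mathcal{C}}\to\mathit{Det}^{II}_{\mathcal{C}}$, $^\ast\!\mathit{Det}^{I}_{\mathcal{C}}\to\mathit{Det}^{I}_{\mathcal{C}}$ (into the cycle) together with Lemma \ref{L:prepdet}(vi), (v) in their non-starred forms (back to $\mathit{Det}^{I}_{(\omega\times2)^\omega}$, which is on the cycle). The strong-negation side is then identical, reversing each arrow. Once you correct the chain this way, the proof is complete and matches the paper.
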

 
\begin{proof}Use Lemmas \ref{L:3resolutions} and \ref{L:prepdet}.  \end{proof}
 
 \subsection{The Intuitionistic Determinacy Theorem} \hfill
Recall that  $\mathcal{X}\subseteq(\omega \times 2)^\omega $ is \textit{$I$-determinate in $(\omega \times 2)^\omega$}
if and only if $$\forall \tau \in \mathcal{C} \exists \gamma[\gamma \in_{II}\tau \;\wedge\; \gamma \in \mathcal{X}]\rightarrow \exists \sigma\forall \gamma \in (\omega \times 2)^\omega[\gamma \in_I \sigma \rightarrow \gamma \in \mathcal{X}].$$ 

 We now define: $\mathcal{X}\subseteq(\omega \times 2)^\omega $ is \textit{weakly $I$-determinate in} $(\omega \times 2)^\omega$   
if and only if $$\forall\varphi:\mathcal{C}\rightarrow (\omega\times 2)^\omega[\forall \tau \in \mathcal{C}[\varphi|\tau \in_{II}\tau \;\wedge\; \varphi|\tau \in \mathcal{X}]\rightarrow \exists \sigma\forall \gamma \in (\omega \times 2)^\omega[\gamma \in_I \sigma \rightarrow \gamma \in \mathcal{X}]].$$ 
  A (continuous) function  $\varphi:\mathcal{C}\rightarrow (\omega \times 2)^\omega$ such that $\forall \tau \in \mathcal{C}[\varphi|\tau \in_{II} \tau]$ will be  called an \textit{anti-strategy for player $I$ in $(\omega \times 2)^\omega$}.
  
Note that  the   Second Axiom of Continuous Choice, $\mathbf{AC}_{\omega^\omega,\omega^\omega}=\mathbf{AC}_{1,1}$, implies,
for every subset $\mathcal{X}\subseteq(\omega \times 2)^\omega$: 
 if $\forall \tau\in\mathcal{C}\exists\gamma \in (\omega\times 2)^\omega[\gamma\in_{II}\tau\;\wedge\;\gamma \in \mathcal{X}]$, then $\exists\varphi: \mathcal{C}\rightarrow(\omega\times 2)^\omega\forall \tau\in\mathcal{C}[\varphi|\tau \in_{II}\tau\;\wedge\;\varphi|\tau \in \mathcal{X}]$.

$\mathbf{AC}_{\omega^\omega,\omega^\omega}$ thus implies: if $\mathcal{X}\subseteq (\omega\times 2)^\omega$ is weakly $I$-determinate in $(\omega\times 2 )^\omega$, then $\mathcal{X}$ is $I$-determinate in $(\omega\times 2 )^\omega$.

\smallskip
 
 Earlier versions of the next Theorem may be found in \cite[Chapter 16]{veldman81} and \cite{veldman2009}.
 
 \begin{theorem}[Intuitionistic Determinacy Theorem]\label{T:det2}
 The following statements are equivalent in $\mathsf{BIM}$:

\begin{enumerate}[\upshape(i)]
 \item $\mathbf{FT}$.
  \item For every anti-strategy $\varphi$ for player $I$ in  $(\omega \times 2)^\omega$ there exists  a strategy $\sigma$ for player I in  $(\omega \times 2)^\omega$ such that  $\forall \gamma\in (\omega\times 2)^\omega[\gamma\in_I \sigma\rightarrow \exists \tau \in \mathcal{C}[\gamma=\varphi|\tau]].$
  
\end{enumerate}
\end{theorem}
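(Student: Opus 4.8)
The plan is to prove the equivalence by relating statement (ii) to the notion of weak $I$-determinacy and then invoking the earlier Theorem \ref{T:det}, which gives $\mathbf{FT}\leftrightarrow \mathbf{\Sigma}^0_1\text{-}\mathit{Det}^I_{(\omega\times 2)^\omega}$ together with the fact, mentioned just before the theorem, that $\mathbf{FT}$ also entails $\mathbf{\Sigma}^0_1\text{-}{}^\ast\mathit{Det}^I_{(\omega\times 2)^\omega}$ (finite $I$-determinacy of $\mathbf{\Sigma}^0_1$ sets). The point of (ii) is that it is the ``master'' weak determinacy statement: it concerns a \emph{particular} subset of $(\omega\times 2)^\omega$, namely, for a fixed anti-strategy $\varphi$, the set $\mathcal{X}_\varphi := \{\gamma\in(\omega\times 2)^\omega\mid \exists \tau\in\mathcal{C}[\gamma =\varphi|\tau]\}$, which is the image of $\mathcal{C}$ under $\varphi$. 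For this set the antecedent $\forall\tau\in\mathcal{C}[\varphi|\tau\in_{II}\tau\;\wedge\;\varphi|\tau\in\mathcal{X}_\varphi]$ holds trivially (with the given $\varphi$ as witnessing anti-strategy), so the content of (ii) is exactly: player $I$ has a strategy forcing the outcome into $\mathcal{X}_\varphi$.

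For the direction (i)$\Rightarrow$(ii), I would argue as follows. Assume $\mathbf{FT}$ and let an anti-strategy $\varphi:\mathcal{C}\rightarrow(\omega\times 2)^\omega$ be given. The difficulty is that $\mathcal{X}_\varphi$ is in general only a $\mathbf{\Sigma}^1_1$-type set (an image of $\mathcal{C}$), not an open set, so the raw $\mathbf{\Sigma}^0_1$-determinacy results do not apply directly to $\mathcal{X}_\varphi$ itself. Instead, I would pass to a finite-information approximation. Since $\varphi$ is continuous, for each $\tau\in\mathcal{C}$ and each $n$ the value $\overline{\varphi|\tau}(2n)$ depends only on a finite initial segment of $\tau$; this lets one build, uniformly, an $\alpha$ such that the ``approximate image'' $\mathcal{G}_\alpha\subseteq(\omega\times 2)^\omega$ — consisting of those $\gamma$ every finite initial segment of which is a possible $\varphi$-output along \emph{some} partial play by $II$ consistent with $I$'s moves so far — is $\mathbf{\Sigma}^0_1$ and is ``reached'' by player $I$ in the sense that $\forall\tau\in\mathcal{C}\exists\gamma[\gamma\in_{II}\tau\wedge\gamma\in\mathcal{G}_\alpha]$. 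Applying $\mathbf{\Sigma}^0_1\text{-}{}^\ast\mathit{Det}^I_{(\omega\times 2)^\omega}$ (available from $\mathbf{FT}$ via Theorem \ref{T:det}) yields a \emph{finite} $s$ such that every $\gamma$ with $\gamma\in_I s$ lies in $\mathcal{G}_\alpha$; from $s$ one then extracts an honest strategy $\sigma$ for player $I$ that, move by move, plays only moves that keep open the possibility of matching $\varphi$'s output, so that for every $\gamma$ with $\gamma\in_I\sigma$ one can recover, step by step, a $\tau\in\mathcal{C}$ with $\varphi|\tau=\gamma$. The main obstacle here is precisely this last recovery step: one must show that membership of $\gamma$ in the finite-information hull $\mathcal{G}_\alpha$, together with compactness of $\mathcal{C}$ (i.e. $\mathbf{FT}$ again, to extract an actual $\tau$ from an infinite tree of finite approximations that has no finite bar), gives a genuine $\tau$. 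I would organize this via a bar-induction-free argument: build the tree of partial plays by $II$ compatible with $\gamma$ being the $\varphi$-output, show this tree is unbounded using $\gamma\in\mathcal{G}_\alpha$, and then use $\mathbf{FT}$ to find an infinite path, which is the desired $\tau$.

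For the direction (ii)$\Rightarrow$(i), the cleanest route is to derive $\mathbf{\Sigma}^0_1\text{-}\mathit{Det}^{II}_\mathcal{C}$ (or directly $\mathbf{\Sigma}^0_1\text{-}{}^\ast\mathit{Det}^{II}_\mathcal{C}$), which by Theorem \ref{T:det} is equivalent to $\mathbf{FT}$. Given $\alpha$ with $\forall\sigma\in\mathcal{C}\exists\gamma\in\mathcal{C}[\gamma\in_I\sigma\wedge\gamma\in\mathcal{G}_\alpha]$, one wants a strategy for player $II$ in $\mathcal{C}$ forcing the play into $\mathcal{G}_\alpha$; encode the situation as a continuous anti-strategy $\varphi$ whose image is (essentially) the set of plays in which $II$ has, at some finite stage, driven the play through $D_\alpha$, so that (ii) applied to $\varphi$ produces a strategy for player $I$ whose plays all come from the image, and then dualize to obtain the winning strategy for $II$. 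The encoding here is routine bookkeeping: the role of player $I$ and $II$ is swapped via the postponing-move device already set up in Lemma \ref{L:simulate}, so $\varphi$ can be taken to mimic a $II$-strategy for $\mathcal{G}_\alpha$ and the conclusion of (ii) translates back into the existence of such a $II$-strategy. I expect this direction to be the easier one; the real work of the theorem is the compactness extraction in (i)$\Rightarrow$(ii).
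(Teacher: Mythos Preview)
Your proposal has a genuine gap in the direction (i)$\Rightarrow$(ii), and your route for (ii)$\Rightarrow$(i) is needlessly indirect.

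For (i)$\Rightarrow$(ii): the final recovery step is wrong. You write ``build the tree of partial plays by $II$ compatible with $\gamma$ being the $\varphi$-output, show this tree is unbounded \ldots\ and then use $\mathbf{FT}$ to find an infinite path''. But $\mathbf{FT}$ does \emph{not} produce infinite paths through unbounded binary trees; that is precisely $\mathbf{WKL}$, which in $\mathsf{BIM}$ implies $\mathbf{LLPO}$ and is therefore not available. The equivalent $\mathbf{WKL!}$ is available from $\mathbf{FT}$, but it requires a uniqueness hypothesis you have not argued and which in general fails here. More structurally, your plan separates the construction of the strategy $\sigma$ from the recovery of $\tau$, and this is where the approach breaks down: once $\gamma$ is given, the set of $\tau$ with $\varphi|\tau=\gamma$ is a closed subset of $\mathcal{C}$ with no uniqueness, so you cannot just extract a point from it. The paper avoids this by interleaving the two constructions: it introduces a decidable predicate $\mathit{Safe}(c,t)$ (``at position $c$, the partial conjecture $t$ about $II$'s strategy is still consistent with $c$ being a $\varphi$-output''), proves its decidability using $\mathbf{FT}$, and then shows, via $\mathbf{\Sigma}^0_1$-$\mathit{Det}^I_{(\omega\times 2)^\omega}$, that from any safe $(c,t)$ player $I$ can force the play into a longer safe $(c\ast d,\,t\ast\langle i\rangle)$. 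The strategy $\sigma$ and the conjectured $\tau$ are thus built simultaneously, one bit of $\tau$ per block of moves, and no path-extraction from a tree is ever needed.

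For (ii)$\Rightarrow$(i): the paper does not go through $\mathbf{\Sigma}^0_1$-$\mathit{Det}^{II}_\mathcal{C}$ or any dualization. It derives $\mathbf{\Sigma}^0_1$-$\mathit{Det}^I_{\omega\times 2}$ directly: given $\alpha$ with $\forall\tau\in\mathcal{C}\exists n[\langle n,\tau(\langle n\rangle)\rangle\in E_\alpha]$, one builds an explicit anti-strategy $\varphi$ with $\varphi|\tau=\langle n,\tau(\langle n\rangle)\rangle\ast\underline 0$ for the least witnessing $n$; applying (ii) gives $\sigma$, and setting $n:=\sigma(\langle\;\rangle)$ immediately yields $\forall i<2[\langle n,i\rangle\in E_\alpha]$. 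Your detour through swapping players and Lemma~\ref{L:simulate} is unnecessary.
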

  
  \begin{proof}
   (i) $\Rightarrow$ (ii). Let $\varphi:\mathcal{C}\rightarrow (\omega\times 2)^\omega$ satisfy: $\forall \tau\in\mathcal{C}[\varphi|\tau \in_{II} \tau]$. 
   
   We intend to develop a strategy $\sigma$ for player $I$ in $(\omega\times 2)^\omega$ such that Player $I$, whenever he  uses $\sigma$ and develops, together with player $II$, $\gamma$ in $(\omega\times 2)^\omega$, will be able to construct, simultaneously,  a strategy $\tau$ for player $II$ in $(\omega\times 2 )^\omega$ such that $\gamma =\varphi|\tau$.   The infinite sequence $\gamma$ must be the answer given by the anti-strategy $\varphi$ to player $II$'s  strategy $\tau$.  So, while playing $\gamma$, player $I$  \textit{conjectures} a  strategy $\tau$ that player $II$ may be assumed to follow  during this  very play.

  \medskip 
     Let  $c, t$ be given such that $c\in\bigcup_k(\omega\times 2)^k$ and  $t\in Bin$.    We define:  \textit{with respect to the given anti-strategy $\varphi$, $t$ is, at the position $c$, a safe (partial) conjecture by player $I$ about the strategy followed by player $II$},  notation:
   $\mathit{Safe}( c, t)$, if and only if\footnote{For the notation $^cu$, see Subsection \ref{SS:subs}.}
  \[\forall\rho \in \mathcal{C}  \exists u\in \mathit{Bin}[t\sqsubset u\;\wedge\;  ^cu \sqsubset \rho \;\wedge \; c \sqsubseteq \varphi|u].\] 
   
 Note\footnote{For the notation $^c\tau$, see Subsection \ref{SS:subs}.}: $\mathit{Safe}( c, t)\leftrightarrow \forall \rho \in \mathcal{C}  \exists \tau\in\mathcal{C}[t\sqsubset \tau\;\wedge\;  ^c\tau = \rho \;\wedge \; c \sqsubset \varphi|\tau]$.
 
 If $\mathit{Safe}( c, t)$, then    player $I$,  at the position $c$, may extend  every strategy $\rho$ of  player $II$ \textit{`above $c$'}, that is, on  positions $d$ in $(\omega\times 2)^{<\omega}\times\omega$ such  that $c\sqsubseteq d$,  to a strategy $\tau$ of  player $II$ on the  whole of $(\omega\times 2)^{<\omega}\times\omega$ such that  $t\sqsubset \tau$  and $c\sqsubset \varphi|\tau$. 
  
  \smallskip We now prove the following important observation:  $$\forall c\in\bigcup_k(\omega\times 2)^k\forall t\in Bin[\mathit{Safe}( c, t) \vee \neg \mathit{Safe}( c, t)].$$
 
    Let $c$ in $\bigcup_k(\omega\times 2)^k$ and $t$ in $Bin$ be given.  
  
  Define  $\beta$ such that 
   $\forall u [\beta(u)\neq 0\leftrightarrow \bigl(u\in\mathit{Bin}\;\wedge\;( c\sqsubseteq \varphi|u\;\vee\;c\perp\varphi|u)\bigr)]$.  
  
  Note: $Bar_\mathcal{C}(D_\beta)$, and: $\forall u[u\in D_\beta\rightarrow\forall i<2[u\ast\langle i \rangle \in D_\beta]]$.
  
  Using $\mathbf{FT}$, find $n>\mathit{length}(t)$ such that  $Bin_n\subseteq D_\beta$. 
  
   Note: $\forall u \in \mathit{Bin}_n[\mathit{length}(^cu)\le length(u)\le n]$  
  and
  \\$\mathit{Safe}(c,t)\leftrightarrow\forall r \in Bin_n \exists u \in \mathit{Bin}_n[t\sqsubseteq u \;\wedge\;  ^cu \sqsubseteq r\;\wedge \; c\sqsubseteq\varphi|u].$ 
    
   We thus see that one may prove: $\mathit{Safe}(c,t)\;\vee\;\neg \mathit{Safe}(c,t)$.

  \medskip Going one step further, we  want to  define $\sigma$ such that $$\forall c\in \bigcup_k(\omega\times 2)^k\forall t\in Bin[\sigma(c,t)\neq 0\leftrightarrow \mathit{Safe}( c ,t )].$$
  
  To this end, we first define $\psi$ such that, \\for all $c$ in $\bigcup_k(\omega\times 2)^k$, for all $t$ in $\mathit{Bin}$, 
  $$\psi(c,t):=\mu n[n>\mathit{length}(t)\;\wedge\;\forall u \in \mathit{Bin}_n[c\sqsubseteq \varphi| u \;\vee\;c\perp\varphi|u]],$$
  and then define $\sigma$ such that, for all $c$ in $\bigcup_k(\omega\times 2)^k$, for all $t$ in $Bin$,

  $\sigma(c,t)\neq 0\leftrightarrow$ $\forall r \in Bin_{\psi(c,t)} \exists u \in \mathit{Bin}_{\psi(c,t)}[t\sqsubseteq u \;\wedge\;  ^cu \sqsubseteq r\;\wedge \; c\sqsubseteq\varphi|u].$

 \medskip We also define $\chi$ such that, for all $c$ in $\bigcup_k(\omega\times 2)^k$, for all $t$ in $Bin$,  \\ if  $\sigma(c,t)=0$,  then \\$\chi(c,t)=\mu r[r\in Bin_{\psi(c,t)}\;\wedge\;\neg \exists u \in \mathit{Bin}_{\psi(c,t)}[ t \sqsubseteq u\;\wedge\; ^cu \sqsubset r\;\wedge c\sqsubseteq\varphi|u]].$ 
 
 Note that, if   $\sigma( c, t)=0$, then  \\  $\forall\rho\in\mathcal{C}[\chi(c,t)\sqsubset \rho\rightarrow\neg \exists u\in Bin[ t\sqsubseteq u\;\wedge\;^cu\sqsubset \rho \;\wedge\;  c\sqsubseteq \varphi|u]]$.

\medskip

 Let $n,c,t$ be given such that  $c\in(\omega\times 2)^n$ and $t$ in $Bin$  and   $\mathit{Safe}( c, t)$. 

 We  prove:
 \[\forall \rho \in\mathcal{C}\exists d\in \bigcup_{k>0}(\omega\times 2)^k[d\in_{II}\rho \;\wedge\; \exists i<2[\mathit{Safe}( c\ast d, t\ast\langle i \rangle)]].\]

 Let $\rho$ in $\mathcal{C}$ be given. Find $u$ in $\mathit{Bin}$ such that $t\sqsubset u\;\wedge\;^cu\sqsubset \rho \;\wedge\; c\sqsubseteq \varphi|u$. 
  
 Find $i<2$ such that $t\ast\langle i\rangle\sqsubseteq u$.
 
 Find $p$ such that, for all $d$ in $\bigcup_k(\omega\times 2)^k\times \omega$, 
 
 if $d< length(^cu)$, then $length(d)<2p$.

 Define $D:=\{d\in (\omega\times 2)^p\mid d \in_{II}\;\rho\;\wedge\;\exists \tau\in\mathcal{C}[c\ast d\sqsubset \varphi|\tau]\}$. 
 
  It follows from $\mathbf{FT}$ that $E:=\{\overline{\phi|\tau}(2n+2p)\mid\tau\in\mathcal{C}\}$ is a finite\footnote{$D\subseteq \omega$ is {\it finite} if and only if  there exists $a$ such that $D=D_a=\{n<length(a)\mid a(n)\neq 0\}$, see Subsection \ref{SS:decidenum}.} subset of $\omega$. 
  Note that, for all $d$ in $(\omega\times 2)^p$, $d\in D$ if and only if $\exists b \in E[c\ast d = b]$ and conclude:   also $D$ is a finite subset of $\omega$. 
 
 Assume $\forall d\in D[\neg \mathit{Safe}(c\ast d, t\ast\langle i\rangle)]$. 
 
 Define $\rho^\ast$ in $\mathcal{C}$ such that  $^cu\sqsubset \rho ^\ast$ and $\forall d \in D[d\in_{II}\rho^\ast]$ and   \\ $\forall d\in D[\chi(c\ast d, t\ast\langle i\rangle)\sqsubset\; ^{d}(\rho^\ast)]$.

 Find $\tau$ in $\mathcal{C}$ such that $u\sqsubset \tau$ and $^c\tau=\rho^\ast$ and  consider $\varphi|\tau$.  
 
 Note: $c\sqsubseteq\varphi|u\sqsubset \varphi|\tau$.

 Find $d$ in $D$ such that $c\ast d\sqsubset \varphi|\tau$. 
 
 Note: $^{c\ast {d}}\tau =\;^{d}(\rho^\ast)$ and $\chi(c\ast d, t\ast\langle i\rangle)\sqsubset\;^d(\rho^\ast)$.

 Conclude:  $\neg\exists v\in Bin[t\ast\langle i\rangle \sqsubseteq v \;\wedge\; ^{c\ast d}v \sqsubseteq\;^d(\rho^\ast)\;\wedge\; c\ast d
\sqsubseteq \varphi|v]$. 
 
 On the other hand, $t\ast \langle i\rangle\sqsubseteq u\sqsubset \tau$ and $ \exists m[ c\ast d\sqsubseteq \varphi|\overline \tau m]$. 
 
 Note that $^c\tau=\rho^\ast$ and, therefore, $^{c\ast d}\overline \tau m \sqsubset\;^d(\rho^\ast)$.
 
 Conclude: $\exists m[t\ast \langle i\rangle\sqsubset \overline \tau m\;\wedge\;^{c\ast d}\overline \tau m \sqsubset\;^d(\rho^\ast)\;\wedge\;c\ast d\sqsubseteq \varphi|\overline \tau m]$.
 
 Contradiction.
 
 \smallskip We thus see: $\neg\forall d\in D[\neg\mathit{Safe}(c\ast d, t\ast\langle i\rangle)]$. 
 
 As $D$ is finite,  conclude:  $\exists d\in D[\mathit{Safe}( c\ast d, t\ast\langle i \rangle)]$. 
 
 \smallskip
We thus see: $\forall \rho \in\mathcal{C}\exists d\in\bigcup_{k>0}(\omega\times 2)^k[d\in_{II}\rho \;\wedge\; \mathit{Safe}( c\ast d, t\ast\langle i \rangle)]$.
 
\medskip
  
  Using   $\mathbf{\Sigma}^0_1$-$\;\mathit{Det}^I_{(\omega \times 2)^\omega}$, 
   an equivalent of $\mathbf{FT}$, see Theorem   \ref{T:det}(i), find $\sigma$ such that 
   
   $\forall \delta[\delta \in_I \sigma \rightarrow \exists k[ \mathit{Safe}( c\ast\overline\delta (2k+2), t\ast\langle i \rangle)]].$

    Note that  the set $\{\delta\in (\omega\times 2)^\omega\mid \delta \in_I\sigma\}$ is an explicit fan.

    Using $\mathbf{FT}$, find $m$ such that  $\forall \delta[\delta \in_I \sigma \rightarrow \exists k \le m[\mathit{Safe}( c\ast\overline\delta (2k+2), t\ast\langle i \rangle)]]$.
    
    Find $p$ such that $\forall \delta[\delta\in_I\sigma\rightarrow \overline \delta(2m+2)<p]$ and define
    $s:=\overline \sigma p$. 
    
    Note:  
for all $d$ in $(\omega\times 2)^{<\omega}$, if $d>\mathit{length}(s)$ and  $d\in_I s$, then \\$\exists n[0<2n\le\mathit{length}(d) \;\wedge\; \mathit{Safe}(c\ast \overline d(2n), t\ast\langle i \rangle)].$
  
  \medskip
Note that, for all $s$, the set of all $d$ in $(\omega\times 2)^{<\omega}$ such that $d>\mathit{length}(s)$ and $\forall n[2n <length(d) \rightarrow \overline d(2n)<length(s)]$ and  $d\in_Is$ is a finite subset of $\omega$.

Define $\alpha$ such that for all $c$ in $(\omega\times 2)^{<\omega}$,    for all $t$ in $Bin$, for all $s$, \\  $\alpha(c,t,s)\neq 0$ if and only if,
  for all $d$ in $(\omega\times 2)^{<\omega}$, \\if $d>\mathit{length}(s)$ and $\forall n[2n <length(d
  ) \rightarrow \overline d(2n)<length(s)]$ and  $d\in_I s$, then \\$\exists n[0<2n\le\mathit{length}(d) \;\wedge\;\exists i<2[ \mathit{Safe}(c\ast \overline d(2n), t\ast\langle i \rangle)]].$

  We have proven: $\forall c \in (\omega\times 2)^{<\omega}\forall t \in Bin[\mathit{Safe}(c,t)\rightarrow \exists s[\alpha(c,t,s)\neq 0]]$.

   \medskip Define  $\eta$  such that $$\forall  c \in (\omega\times 2)^{<\omega}\forall t\in\mathit{Bin}[\mathit{Safe}( c, t)\rightarrow\eta(c,t)=\mu s[\alpha(c,t,s)\neq 0]].$$

Define $\lambda$ such that
     $\lambda(\langle \; \rangle) = \langle \; \rangle$, and,
      for all  $c$ in $(\omega \times 2)^{<\omega}$, for all $\langle p, i\rangle$ in $\omega\times 2$,  \textit{if}  there exists $j<2$ such that $\mathit{Safe}\bigl( c\ast \langle p, i \rangle, \lambda(c)\ast\langle j \rangle\bigr)$, then $\lambda(c\ast \langle p, i \rangle) = \lambda(c)\ast \langle j_0 \rangle$, where $j_0$ is the least such $j$, and, \textit{if not}, then $\lambda(c\ast\langle p, i\rangle)= \lambda(c)$. 
 
 Note: $\forall c \in (\omega \times 2)^{<\omega}[\mathit{Safe}\bigl( c , \lambda(c)\bigr)]$.
 
 Note: $\forall c\in(\omega\times 2)^{<\omega}\forall d\in(\omega\times 2)^{<\omega}[c\sqsubseteq d\rightarrow \lambda(c)\sqsubseteq \lambda(d)]$.
 
    \smallskip
    Define  $\sigma$ as a strategy for player $I$ in $(\omega\times 2)^\omega$ as follows.  
    
     Let $c$
   in $ (\omega\times 2)^{<\omega}$ be given. 
   
   Find  
       $n_0:=\mu n[\mathit{Safe}( \overline c(2n), \lambda(c))]$.
       
   Find $d$ such that $c = \overline c(2n_0)\ast d$.
   
    Define $\sigma(c) := \bigl(\eta(\overline c(2n_0), \lambda(c))\bigr)(d)$.

     \medskip 
     Let $\gamma$ be an element of $(\omega \times 2)^\omega$ such that $\gamma \in_I  \sigma$. 
      
      \smallskip
      We claim:
     $\forall n\exists m\ge n[\lambda\bigl(\overline \gamma(2m)\bigr) \sqsubset \lambda\bigl(\overline \gamma(2m+2)\bigr)]$. 
     
     We prove this claim as follows. 
     
     Let $n$ be given. Find $n_0:=\mu k[\mathit{Safe}\bigl(\overline \gamma(2k), \lambda(\overline \gamma (2n)\bigr)]$. 
     
     Define $c:=\overline \gamma (2n_0)$ and $t:=\lambda(c)$.
     
     Consider $s:=\eta(c,t)$.

     Find $\delta$ such that $\delta\in_I s$ and $\forall k[\delta(2k+1)= \gamma(2n_0+2k+1)]$. 
     
     Find  $k_0:=\mu k[\exists i<2[\mathit{Safe}\bigl(c\ast \overline \delta(2k+2), t\ast\langle i \rangle\bigr)]$.
     
     Note: $c\ast \overline\delta (2k_0+2) \sqsubset \gamma$.
     
     Note:  $\lambda(c)=\lambda\bigl(\overline \gamma (2n)\bigr)= \lambda\bigl(\overline \gamma (2n_0+2k_0)\bigr)\sqsubset \lambda\bigl(\overline\gamma(2n_0+2k_0+2)\bigr)$. 
     
     Defining $m:=n_0+k_0$, we see: $m\ge n$ and  $\lambda\bigl(\overline \gamma(2m)\bigr) \sqsubset \lambda\bigl(\overline \gamma(2m+2)\bigr)$.
     
     This ends the proof of our claim.

     \smallskip
     Find $\tau$ in $\mathcal{C}$ such that $\forall n[ \lambda\bigl(\overline \gamma(2n)\bigr)\sqsubset \tau]$. 
     
     Note: $\forall n \exists \zeta \in \mathcal{C}[  \overline \tau n\sqsubset\zeta \;\wedge\;  \overline \gamma(2n)\sqsubset\varphi|\zeta]$. Conclude: $\varphi|\tau = \gamma$. 
     
     We thus see: $\forall \gamma\in (\omega\times 2)^\omega[\gamma \in_I\sigma\rightarrow \exists \tau \in \mathcal{C}[\varphi|\tau=\gamma]]$.

\smallskip
     (ii) $\Rightarrow$ (i). Assume (ii). We  prove:  $\mathbf{\Sigma}^0_1$-$\mathit{Det}^I_{\omega \times 2}$.  \\Using Theorem
     \ref{T:det}(i), we then may conclude: $\mathbf{FT}$.

      Let $\alpha$ be given such that $\forall \tau \in \mathcal{C}\exists n [\langle n, \tau(\langle n\rangle)\rangle \in E_\alpha]$, i.e. \\$\forall \tau \in \mathcal{C}\exists n\exists p  [\alpha(p)=\langle n, \tau(\langle n\rangle)\rangle +1]$, i.e.  $\forall \tau \in \mathcal{C}\exists p  [\alpha(p')=\langle p'', \tau(\langle p''\rangle)\rangle +1]$.\\ Define $\psi:\mathcal{C}\rightarrow \omega$ such that $\forall \tau \in \mathcal{C}[\psi(\tau)=\mu p[\alpha(p') = \langle p'',\tau(\langle  p''\rangle)\rangle+1]$. \\Define $\varphi: \mathcal{C}\rightarrow (\omega\times 2)^\omega$ such that $\forall \tau \in \mathcal{C}[\varphi|\tau =\langle \psi''(\tau), \tau(\langle\psi''(\tau)\rangle)\rangle\ast\underline 0]$. 
      \\ Find $\sigma$ such that $\forall \gamma\in(\omega\times 2)^\omega[\gamma\in_I\sigma\rightarrow\exists\tau\in\mathcal{C}[\gamma=\varphi|\tau]]$, and, therefore,\\  $\forall \gamma\in(\omega\times 2)^\omega[\gamma\in_I\sigma\rightarrow\exists q[\alpha(q)=\overline \gamma 2+1]]$ and 
      $\forall \gamma\in(\omega\times 2)^\omega[\gamma\in_I\sigma\rightarrow\overline\gamma 2 \in E_\alpha]$. Define $n:=\sigma(\langle\;\rangle)$ and note: $\forall i<2[\langle n,i\rangle\in_I \sigma]$  and, therefore: $\forall i<2[\langle n, i\rangle \in E_\alpha]$. 
      
      We thus see: $\forall \alpha[\forall \tau \in \mathcal{C}\exists n[\langle n, \tau(\langle n\rangle)\rangle \in E_\alpha]\rightarrow \exists n\forall i<2[\langle n, i\rangle\in E_\alpha]]$, i.e. $\mathbf{\Sigma}^0_1$-$Det^I_{\omega\times 2}$. 
      \end{proof}
 \begin{corollary} $\mathsf{BIM}+\mathbf{FT}$ proves the following scheme:
 
 Every $\mathcal{X}\subseteq (\omega\times 2)^\omega$ is weakly $I$-determinate. \end{corollary}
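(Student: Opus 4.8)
The plan is to obtain the corollary as an immediate consequence of the Intuitionistic Determinacy Theorem, Theorem \ref{T:det2}. Assume $\mathbf{FT}$, and let $\mathcal{X}\subseteq(\omega\times 2)^\omega$ be arbitrary. To show that $\mathcal{X}$ is weakly $I$-determinate, I would take an arbitrary $\varphi:\mathcal{C}\rightarrow(\omega\times 2)^\omega$ satisfying the hypothesis in the definition of weak $I$-determinacy, namely $\forall\tau\in\mathcal{C}[\varphi|\tau\in_{II}\tau\;\wedge\;\varphi|\tau\in\mathcal{X}]$. In particular $\varphi$ is an anti-strategy for player $I$ in $(\omega\times 2)^\omega$, so Theorem \ref{T:det2} applies: since $\mathbf{FT}$ holds, statement (ii) of that theorem holds as well, hence there is a strategy $\sigma$ for player $I$ in $(\omega\times 2)^\omega$ such that $\forall\gamma\in(\omega\times 2)^\omega[\gamma\in_I\sigma\rightarrow\exists\tau\in\mathcal{C}[\gamma=\varphi|\tau]]$.

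It then remains only to verify that this $\sigma$ is as required. Given any $\gamma$ in $(\omega\times 2)^\omega$ with $\gamma\in_I\sigma$, I would find $\tau$ in $\mathcal{C}$ with $\gamma=\varphi|\tau$, and conclude, using the assumed property $\varphi|\tau\in\mathcal{X}$, that $\gamma=\varphi|\tau\in\mathcal{X}$. Thus $\forall\gamma\in(\omega\times 2)^\omega[\gamma\in_I\sigma\rightarrow\gamma\in\mathcal{X}]$, and since $\mathcal{X}$ and $\varphi$ were arbitrary, every subset of $(\omega\times 2)^\omega$ is weakly $I$-determinate.

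There is essentially no obstacle here: all of the mathematical work has already been done in the proof of Theorem \ref{T:det2}. The corollary is just the observation that, once player $I$ can steer the play into the range of the anti-strategy $\varphi$, and $\varphi$ is guaranteed to land inside $\mathcal{X}$, player $I$ has in fact steered the play into $\mathcal{X}$; the detour through weak determinacy (as opposed to ordinary determinacy) is exactly what lets us avoid any appeal to a continuous-choice principle in passing from $\forall\tau\exists\gamma[\dots]$ to an anti-strategy.
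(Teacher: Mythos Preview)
Your proposal is correct and is exactly the intended argument: the paper states the corollary without proof, as the immediate consequence of Theorem \ref{T:det2}(ii), and your derivation spells this out precisely.
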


The theorem and its corollary may be generalized. One may  replace $(\omega \times 2)^\omega$ by any spread $\mathcal{F}$  satisfying the condition: $\exists \zeta\forall\alpha \in\mathcal{F} \forall n[\alpha(2n+1) \le \zeta(n)]$.

  \medskip

Let $\varphi:\mathcal{C}\rightarrow (\omega\times 2)^\omega$ be an anti-strategy for player $I$ in $(\omega \times 2)^\omega$. We define: $\varphi$ \textit{ fails to translate into a strategy for
player $I$} if and only if  \[\neg \exists \sigma \forall \gamma \in (\omega \times 2)^\omega[ \gamma \in_I \sigma \rightarrow \exists \tau \in \mathcal{C}[ \gamma = \varphi|\tau]].\] 
$\neg!\mathbf{FT}$  implies the existence of an anti-strategy for player $I$ in  $(\omega \times 2)^\omega$ that   fails to translate
   into a strategy for player $I$ in  $(\omega \times 2)^\omega$:
   using $\neg!\mathbf{FT}$ and Theorem \ref{T:det}(ii), find $\alpha$ such that  $\forall \tau \in \mathcal{C}\exists n[\langle n , \tau(\langle n\rangle)\rangle \in E_\alpha]$ and  $\neg\exists n\forall i<2[\langle n, i\rangle \in E_\alpha]$.  As  in the proof of Theorem \ref{T:det2}(ii) $\Rightarrow$ (i), find an  anti-strategy $\varphi$  for player $I$ in $(\omega \times 2)^\omega$ such that $\forall \tau \in \mathcal{C}[\overline{(\varphi|\tau)}2 \in E_\alpha]$. 
     Assume $\sigma$ is a
     strategy for player $I$ in $(\omega \times 2)^\omega$ such that $\forall \gamma\in (\omega\times 2)^\omega[\gamma\in_I\sigma\rightarrow\exists\tau\in\mathcal{C}[\gamma=\varphi|\tau]]$.  Consider $n := \sigma(\langle \; \rangle)$\ and conclude:   $\forall i<2[\langle n, i \rangle\in E_\alpha]$. Contradiction.

     We did not find  an argument proving $\neg!\mathbf{FT}$ from the assumption of the existence of 
     an anti-strategy for player $I$ in  $(\omega \times 2)^\omega$ that   fails to translate
   into a strategy for player $I$ in  $(\omega \times 2)^\omega$.
   
  \section{The (uniform) intermediate value theorem}\label{S:ivt}

 \subsection{} The   \textit{Intermediate Value Theorem}, $\mathbf{IVT}$\footnote{For some notation used in this Section, see Subsection \ref{SS:realf}.} 
 
 \textit{For all $\varphi$ in $\mathcal{R}^{[0,1]}$},
 
 \textit{if 
 $\exists \gamma\in [0,1]^2[\ \varphi^{`\mathcal{R}}(\gamma^{\upharpoonright 0} ) \le_\mathcal{R} 0_\mathcal{R} \le_\mathcal{R} \varphi^{`\mathcal{R}}(\gamma^{\upharpoonright 1})]$, then $ \exists \gamma \in 
 [0,1][\varphi^{`\mathcal{R}}(\gamma) =_\mathcal{R} 0_\mathcal{R}].$}
 
 \smallskip
  
  $\mathbf{IVT}$ fails constructively. The next two theorems are similar to \cite[Chapter 3, Theorem 2.4]{bridgesrichman} and  \cite[Theorem 6(iv) and (iii)]{toftdal}.
\begin{theorem}\label{T:ivtllpo} $\mathsf{BIM}\vdash \mathbf{IVT} \rightarrow \mathbf{LLPO}$. \end{theorem}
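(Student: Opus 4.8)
The plan is to derive $\mathbf{LLPO}$ from $\mathbf{IVT}$ by encoding, for a given $\alpha$, the two conditions ``$\forall p[2p\neq\mu n[\alpha(n)\neq 0]]$'' and ``$\forall p[2p+1\neq\mu n[\alpha(n)\neq 0]]$'' into a single real-valued function on $[0,1]$ whose zero set reveals which alternative holds. First I would, given $\alpha$, define a real number $r=r_\alpha$ that is designed so that $r<_\mathcal{R}0$ is tied to the possibility that $\mu n[\alpha(n)\neq 0]$ is even, $r>_\mathcal{R}0$ to the possibility that it is odd, and $r=_\mathcal{R}0$ to the possibility that $\alpha=\underline 0$; concretely, one takes a rapidly converging sequence of rationals that moves left by $2^{-k}$ when the first nonzero value of $\alpha$ is found at an even position $\le k$, and right by $2^{-k}$ when it is found at an odd position. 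Then I would build $\varphi\in\mathcal{R}^{[0,1]}$ as (essentially) the piecewise-linear function $x\mapsto \min(x+r,\,0)+\max(x-1+r,\,0)$, shifted so that its graph is the standard ``staircase with a flat piece of value $r$'' example used in the constructive failure of $\mathbf{IVT}$: on a left portion of $[0,1]$ it is nonpositive, on a right portion nonnegative, and in the middle it is constantly $r$. The key point of the construction is that $\varphi$ is a genuine element of $\mathcal{R}^{[0,1]}$ (a located, pointwise-defined real function on $[0,1]$) regardless of the value of $r$, and that its values at the two endpoints $0$ and $1$ have opposite (weak) signs, so the hypothesis of $\mathbf{IVT}$ (witnessed by $\gamma^{\upharpoonright 0}=0_\mathcal R$, $\gamma^{\upharpoonright 1}=1_\mathcal R$) is satisfied in $\mathsf{BIM}$.

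Having applied $\mathbf{IVT}$, I would obtain $x\in[0,1]$ with $\varphi^{`\mathcal{R}}(x)=_\mathcal{R}0_\mathcal{R}$. The next step is to read off $\mathbf{LLPO}$ for $\alpha$ by comparing $x$ to the endpoints of the ``flat'' middle interval. Because $x$ is a real number, $\mathsf{BIM}$ gives us, for fixed rationals, a decision of the form $x<_\mathcal Q q$ or $x>_\mathcal Q q'$ whenever $q<q'$; applying this with the two endpoints of the flat stretch, I get: \emph{either} $x$ lies strictly left of the right endpoint of the flat piece, \emph{or} $x$ lies strictly right of its left endpoint. In the first case, $\varphi^{`\mathcal R}(x)=0$ forces $r$ not to be negative (a strictly negative $r$ would make $\varphi$ strictly negative on that whole left-plus-flat region), which in turn forbids the first nonzero value of $\alpha$ from occurring at an even index, i.e. $\forall p[2p\neq\mu n[\alpha(n)\neq 0]]$. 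In the second case, symmetrically, $r$ is not positive and $\forall p[2p+1\neq\mu n[\alpha(n)\neq 0]]$. Either way we obtain $\exists i<2\,\forall p[2p+i\neq\mu n[\alpha(n)\neq 0]]$, which is exactly $\mathbf{LLPO}$.

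The main obstacle I anticipate is not the combinatorial heart of the argument — the staircase function is the standard Brouwerian counterexample — but rather the bookkeeping needed to present $\varphi$ as a legitimate object of $\mathcal{R}^{[0,1]}$ in the precise coding of reals and real functions that $\mathsf{BIM}$ uses (as sketched in Subsection~\ref{SS:realf} and Subsubsection~\ref{SSS:reals}): one must exhibit $\varphi$ as an operation on Cauchy-sequence-with-modulus representatives that respects $=_\mathcal R$, is uniformly continuous (or at least pointwise-continuous and located in the sense demanded there), and whose defining data can be written down explicitly from $\alpha$. Equally delicate is making sure the real number $r_\alpha$ is defined with an honest modulus of convergence even though we do not know whether or when $\alpha$ takes a nonzero value; this is handled by the usual device of letting the $k$-th approximation only ``listen'' to $\overline\alpha k$, so that $r_\alpha$ is a $\mathbf{\Delta}^0_1$-definable real. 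Once these representation issues are discharged, the extraction of $\mathbf{LLPO}$ from the intermediate zero is a short case split on a decidable comparison of rationals, so no further choice principle is needed and the implication goes through in plain $\mathsf{BIM}$.
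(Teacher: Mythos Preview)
Your proposal is correct and follows essentially the same route as the paper: define a real $r_\alpha$ (the paper's $\delta$) whose sign encodes the parity of $\mu n[\alpha(n)\neq 0]$, build the piecewise-linear ``staircase'' function on $[0,1]$ with plateau at height $r_\alpha$ (the paper uses the nodes $0,\tfrac13,\tfrac23,1$ with values $-1,r,r,1$), apply $\mathbf{IVT}$, and then use cotransitivity ($\gamma>_\mathcal{R}\tfrac13$ or $\gamma<_\mathcal{R}\tfrac23$) on the zero to rule out one sign of $r_\alpha$ and hence one parity. One small caveat: your displayed formula $x\mapsto\min(x+r,0)+\max(x-1+r,0)$ actually has its flat piece at height $0$ rather than $r$ (so IVT would yield no information from it), but since you immediately say ``shifted so that \ldots\ flat piece of value $r$'' and your subsequent case analysis uses the correct shape, this is only a notational slip, not a gap.
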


\begin{proof} Assume $\mathbf{IVT}$. Let $\beta$ be given. 

Define $\delta$ in $\mathcal{R}$ such that, for each $n$, \\if $\underline{\overline 0} n\sqsubset \beta$, then $\delta(n)=(-\frac{1}{2^n}, \frac{1}{2^n})$,   and,  if $\underline{\overline 0}n \perp \beta$ and  $p_0:=\mu p[\beta(p)\neq 0]$, \\then  $\delta(n) =\bigl((-1)^{p_0}\frac{1}{2^{p_0+1}} - \frac{1}{2^{n+3}}, (-1)^{p_0}\frac{1}{2^{p_0+1}} + \frac{1}{2^{n+3}}\bigr)$. 
 Note: \\$\delta>_\mathcal{R} 0_\mathcal{R} \leftrightarrow \exists p[2p=\mu n[\beta(n)\neq 0]]$ and $\delta<_\mathcal{R} 0_\mathcal{R} \leftrightarrow \exists p[2p+1=\mu n[\beta(n)\neq 0]]$. 
\\Find $\varphi$ in $\mathcal{R}^{[0,1]}$  such that $\varphi^{`\mathcal{R}}(0_\mathcal{R}) =_\mathcal{R} (-1)_\mathcal{R}$, and  $\varphi^{`\mathcal{R}}(\frac{1}{3}) =_\mathcal{R} \varphi^{`\mathcal{R}}(\frac{2}{3}) =_\mathcal{R}
 \delta$ and $\varphi^{`\mathcal{R}}(1_\mathcal{R}) = _\mathcal{R}1_\mathcal{R}$ and $\varphi$ is linear on $[0,\frac{1}{3}]$, on
 $[\frac{1}{3}, \frac{2}{3}]$ and on $[\frac{2}{3}, 1]$. 
 \\Note: $\varphi^{`\mathcal{R}}(0_\mathcal{R})\le_\mathcal{R}0_\mathcal{R}\le_\mathcal{R}\varphi^{`\mathcal{R}}(1_\mathcal{R})$. \\Using $\mathbf{IVT}$,  find $\gamma$ in
 $[0,1]$ such that $\varphi^{`\mathcal{R}}(\gamma) =_\mathcal{R} 0_\mathcal{R}$. \\Either $\gamma > _\mathcal{R}\frac{1}{3}$ or $\gamma <_\mathcal{R} \frac{2}{3}$. \\If $\gamma > _\mathcal{R}\frac{1}{3}$, then $\neg(\delta >_\mathcal{R}  0)$ and: $\forall p[2p\neq \mu n[\beta(n)\neq 0]]$, and,  \\if
 $\gamma <_\mathcal{R} \frac{2}{3}$, then $\neg(\delta <_\mathcal{R} 0)$ and: $\forall p[2p+1\neq\mu n[\beta(n)\neq 0]]$.
 
 We thus see: $\forall \beta[\forall p[2p\neq \mu n[\beta(n)\neq 0]]\;\vee\;\forall p[2p+1\neq\mu n[\beta(n)\neq 0]]$, i.e. $\mathbf{LLPO}$.
  \end{proof}

  \begin{theorem}\label{T:llpoivt} $\mathsf{BIM}+ \mathbf{\Pi}^0_1$-$\mathbf{AC}_{\omega,2}\vdash \mathbf{LLPO}\rightarrow \mathbf{IVT}$.\end{theorem}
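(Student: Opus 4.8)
**Proof proposal for Theorem \ref{T:llpoivt}: $\mathsf{BIM}+ \mathbf{\Pi}^0_1$-$\mathbf{AC}_{\omega,2}\vdash \mathbf{LLPO}\rightarrow \mathbf{IVT}$.**

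The plan is to run the classical interval-bisection argument, but at each bisection stage use $\mathbf{LLPO}$ to decide which of the two halves to keep, and then feed the sequence of binary choices into $\mathbf{\Pi}^0_1$-$\mathbf{AC}_{\omega,2}$ to obtain a single infinite sequence describing a nested family of intervals whose common point is the desired zero. First I would fix $\varphi$ in $\mathcal{R}^{[0,1]}$ together with $\gamma$ in $[0,1]^2$ witnessing $\varphi^{`\mathcal{R}}(\gamma^{\upharpoonright 0})\le_\mathcal{R}0_\mathcal{R}\le_\mathcal{R}\varphi^{`\mathcal{R}}(\gamma^{\upharpoonright 1})$; by replacing $\varphi$ with $\varphi$ composed with the increasing affine bijection $[0,1]\to[\gamma^{\upharpoonright 0},\gamma^{\upharpoonright 1}]$ (or $[\gamma^{\upharpoonright 1},\gamma^{\upharpoonright 0}]$, after possibly replacing $\varphi$ by $-\varphi$), I may assume without loss of generality that $\varphi^{`\mathcal{R}}(0_\mathcal{R})\le_\mathcal{R}0_\mathcal{R}\le_\mathcal{R}\varphi^{`\mathcal{R}}(1_\mathcal{R})$, i.e. the endpoints themselves bracket the zero. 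Since $\varphi$ is (represented as) a continuous function on $[0,1]$, it comes with a modulus of continuity, so all the real-number comparisons below can be carried out through rational approximations coded by function values.

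The core step is the following claim, to be proved by a straightforward induction on $n$ using $\mathbf{LLPO}$ at each stage: for every $n$ there are rationals $a_n\le b_n$ in $[0,1]$ with $b_n-a_n=2^{-n}$, $a_n\le a_{n+1}$, $b_{n+1}\le b_n$, and such that the pair $\bigl(\varphi^{`\mathcal{R}}(a_n),\varphi^{`\mathcal{R}}(b_n)\bigr)$ is not strictly of the same sign — more precisely $\neg\bigl(0_\mathcal{R}<_\mathcal{R}\varphi^{`\mathcal{R}}(a_n)\bigr)$ or $\neg\bigl(\varphi^{`\mathcal{R}}(b_n)<_\mathcal{R}0_\mathcal{R}\bigr)$, which is exactly what $\mathbf{LLPO}$ delivers when applied to the midpoint value $\varphi^{`\mathcal{R}}(\tfrac{a_n+b_n}{2})$ (encode the sign information of this value as an $\varepsilon\in\omega^\omega$ with $\mu m[\varepsilon(m)\neq0]$ even versus odd). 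However, to get a *single* sequence rather than a tree of possibilities I would not phrase the induction as producing the intervals outright, but rather: define $\alpha$ so that for each $n$, $(n,0)\notin E_\alpha$ codes ``the left half is an acceptable successor of the $n$-th interval'' and $(n,1)\notin E_\alpha$ codes ``the right half is acceptable'', arranging (via $\mathbf{LLPO}$ applied uniformly) that $\forall n\exists m<2[(n,m)\notin E_\alpha]$ along the branch determined by earlier choices; then apply $\mathbf{\Pi}^0_1$-$\mathbf{AC}_{\omega,2}$ to extract $\delta\in\mathcal{C}$ with $\forall n[(n,\delta(n))\notin E_\alpha]$, and read off $a_n,b_n$ recursively from $\delta$.

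From the nested intervals I obtain a real number $x\in[0,1]$ with $a_n\le_\mathcal{R}x\le_\mathcal{R}b_n$ for all $n$ (this is just a Cauchy sequence with explicit modulus). Then I must show $\varphi^{`\mathcal{R}}(x)=_\mathcal{R}0_\mathcal{R}$: given $k$, use the modulus of continuity of $\varphi$ to find $n$ with $|\varphi^{`\mathcal{R}}(y)-\varphi^{`\mathcal{R}}(x)|<_\mathcal{R}2^{-k-1}$ whenever $|y-x|\le2^{-n}$, so both $\varphi^{`\mathcal{R}}(a_n)$ and $\varphi^{`\mathcal{R}}(b_n)$ are within $2^{-k-1}$ of $\varphi^{`\mathcal{R}}(x)$; combined with $\neg\bigl(0_\mathcal{R}<_\mathcal{R}\varphi^{`\mathcal{R}}(a_n)\bigr)$ or $\neg\bigl(\varphi^{`\mathcal{R}}(b_n)<_\mathcal{R}0_\mathcal{R}\bigr)$ this forces $|\varphi^{`\mathcal{R}}(x)|<_\mathcal{R}2^{-k}$, and since $k$ was arbitrary, $\varphi^{`\mathcal{R}}(x)=_\mathcal{R}0_\mathcal{R}$.

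The step I expect to be the genuine obstacle is the bookkeeping needed to make $\mathbf{\Pi}^0_1$-$\mathbf{AC}_{\omega,2}$ applicable: one has to define the decidable (indeed $\mathbf{\Sigma}^0_1$-complemented) relation ``$(n,m)\notin E_\alpha$'' in such a way that its truth at stage $n$ depends only on the finitely many earlier choices $\delta(0),\dots,\delta(n-1)$ — which determine the rationals $a_n,b_n$ — and so that $\mathbf{LLPO}$ genuinely guarantees $\forall n\exists m<2[(n,m)\notin E_\alpha]$ no matter which admissible branch has been followed. This requires the $\mathbf{LLPO}$-instance at stage $n$ to be formulated as a statement about an effectively given $\varepsilon_n$ built from $\varphi$-values at the midpoint of $[a_n,b_n]$; the delicate point is that $\varepsilon_n$ must be defined for *every* $n$ and every possible history, not only along the eventual branch, which is precisely the role of introducing the $\mathbf{\Pi}^0_1$ set $E_\alpha$ rather than building the branch by hand. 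Once that definition is set up carefully the rest is routine; everything else is just the standard verification that bisection converges, carried out constructively with explicit moduli.
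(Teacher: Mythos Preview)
Your approach is the same bisection argument as the paper's, but the way you propose to invoke $\mathbf{\Pi}^0_1$-$\mathbf{AC}_{\omega,2}$ has a real gap. You index the choice relation by the stage $n$ along the branch: ``$(n,m)\notin E_\alpha$ iff direction $m$ is acceptable at the $n$-th interval.'' But ``the $n$-th interval'' $[a_n,b_n]$ depends on the earlier choices $\delta(0),\ldots,\delta(n-1)$, which you only obtain \emph{after} applying choice. That is dependent choice, not $\mathbf{\Pi}^0_1$-$\mathbf{AC}_{\omega,2}$. You correctly flag the difficulty (``$\varepsilon_n$ must be defined for every $n$ and every possible history''), but you do not resolve it: the $E_\alpha$ you describe is not well-defined without already having $\delta$.

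The paper's fix is to change the indexing. Instead of stages along one branch, it applies $\mathbf{LLPO}$ and then $\mathbf{\Pi}^0_1$-$\mathbf{AC}_{\omega,2}$ at \emph{every} dyadic point $\frac{2^n-m}{2^n}\gamma^{\upharpoonright 0}+\frac{m}{2^n}\gamma^{\upharpoonright 1}$ (all $n$, all $m\le 2^n$), obtaining a single function $\beta$ that records, for each such point, whether $\varphi$ there is $\le_\mathcal{R}0_\mathcal{R}$ or $\ge_\mathcal{R}0_\mathcal{R}$. Both alternatives are $\mathbf{\Pi}^0_1$ and the relation is fixed in advance, so there is no circularity. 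Once $\beta$ is in hand, the bisection sequence is defined by primitive recursion (look up the sign at the current midpoint in $\beta$), and the limit is shown to be a zero by a direct contradiction from the representation of $\varphi$, without appealing to a uniform modulus of continuity.

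Two smaller points. Your invariant ``$\neg(0<_\mathcal{R}\varphi(a_n))$ or $\neg(\varphi(b_n)<_\mathcal{R}0)$'' must be a conjunction, not a disjunction: bisection needs $\varphi(a_n)\le_\mathcal{R}0_\mathcal{R}\le_\mathcal{R}\varphi(b_n)$ throughout. And your initial reduction to endpoints $0_\mathcal{R},1_\mathcal{R}$ already requires $\mathbf{LLPO}$ to decide which of $\gamma^{\upharpoonright 0}\le_\mathcal{R}\gamma^{\upharpoonright 1}$ or $\gamma^{\upharpoonright 1}\le_\mathcal{R}\gamma^{\upharpoonright 0}$ holds; the paper makes this use of $\mathbf{LLPO}$ explicit and then works directly with convex combinations of $\gamma^{\upharpoonright 0},\gamma^{\upharpoonright 1}$ rather than reparametrizing.
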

\begin{proof} Let $\varphi$ in $\mathcal{R}^{[0,1]}$ and $\gamma$ in $[0,1]^2$  be given such that \\$\varphi^{`\mathcal{R}}(\gamma^{\upharpoonright 0} ) \le_\mathcal{R} 0_\mathcal{R}\le_\mathcal{R}\varphi^{`\mathcal{R}}(\gamma^{\upharpoonright 1})$. Define $\beta$ such that, for all $n$, \\$\beta(2n)\neq 0 \leftrightarrow \gamma^{\upharpoonright 0}(n)<_\mathbb{S} \gamma^{\upharpoonright 1}(n)$ and: $\beta(2n+1)\neq 0 \leftrightarrow \gamma^{\upharpoonright 1}(n)<_\mathbb{S} \gamma^{\upharpoonright 0}(n)$.  \\By $\mathbf{LLPO}$,  \\{\it either}: $\forall p[2p+1\ne\mu n[\beta(n)\neq 0]]$ and $\forall n[\gamma^{\upharpoonright 0}(n) \le_\mathbb{S} \gamma^{\upharpoonright 1}(n) ]$ and $\gamma^{\upharpoonright 0}\le_\mathcal{R}\gamma^{\upharpoonright 1}$, \\ {\it or}: $\forall p[2p\ne\mu n[\beta(n)\neq 0]]$ and $\forall n[\gamma^{\upharpoonright 1}(n) \le_\mathbb{S} \gamma^{\upharpoonright 0} (n) ]$ and $\gamma^{\upharpoonright 1}\le_\mathcal{R}\gamma^{\upharpoonright 0}$. 

\smallskip
Assume $\gamma^{\upharpoonright 0} \le_\mathcal{R} \gamma^{\upharpoonright 1}$. Using $\mathbf{LLPO}$ like we used it just now, conclude: \\for all $n$, for all $m\le 2^n$, {\it either}:  $\varphi^{`\mathcal{R}}(\frac{2^n-m}{2^n}\cdot_\mathcal{R}\gamma^{\upharpoonright 0} +_\mathcal{R} \frac{m}{2^n}\cdot_\mathcal{R} \gamma^{\upharpoonright 1}) \le_\mathcal{R} 0_\mathcal{R}$,\\ {\it or}: $  0_\mathcal{R}\le_\mathcal{R} \varphi^{`\mathcal{R}}(\frac{2^n-m}{2^n}\cdot_\mathcal{R}\gamma^{\upharpoonright 0} +_\mathcal{R} \frac{m}{2^n}\cdot_\mathcal{R} \gamma^{\upharpoonright 1}) $. 

Using $\mathbf{\Pi}^0_1$-$\mathbf{AC}_{\omega,2}$, find $\beta$ such that $\beta^0(0)=0$ and $\beta^0(1) \neq 0$ and 

$\forall n\forall m\le 2^n[\beta^n(m)=0\rightarrow\varphi^{`\mathcal{R}}(\frac{2^n-m}{2^n}\cdot_\mathcal{R}\gamma^{\upharpoonright 0} +_\mathcal{R} \frac{m}{2^n}\cdot_\mathcal{R} \gamma^{\upharpoonright 1}) \le_\mathcal{R} 0_\mathcal{R}]$, and 

 $\forall n\forall m\le2^n[\beta^n(m)\neq 0\rightarrow0_\mathcal{R}\le_\mathcal{R}\varphi^{`\mathcal{R}}(\frac{2^n-m}{2^n}\cdot_\mathcal{R}\gamma^{\upharpoonright 0} +_\mathcal{R} \frac{m}{2^n}\cdot_\mathcal{R} \gamma^{\upharpoonright 1})] $. 

Now define $\delta$ such that  $\delta(0) = 0$ and\\ $\forall n[\beta^{n+1}\bigl(2\delta(n) +1\bigr)\neq 0\rightarrow\delta(n+1) =2 \delta(n)]$, and   \\$\forall n[\beta^{n+1}\bigl(2\delta(n) +1\bigr)= 0\rightarrow\delta(n+1) =2 \delta(n)+1]$. 

Note: $\forall n[\delta(n)<2^n\;\wedge\;\beta^n\bigl(\delta(n)\bigr) = 0\;\wedge\;\beta^n\bigl(\delta(n)+1\bigr) \neq 0]$ and  \\ $\forall n[\varphi^{`\mathcal{R}}(\frac{\delta(n)}{2^n})\le_\mathcal{R}0_\mathcal{R}\le_\mathcal{R}\varphi^{`\mathcal{R}}(\frac{\delta(n)+1}{2^n})]$.   

Define  $\varepsilon$ such that, for each $n$, $\varepsilon(n)=(\frac{\delta(n)}{2^n}, \frac{\delta(n)+1}{2^n})$ and define $\rho$ such that, for each $n$, $\rho(n)=\mathit{double}_\mathbb{S}\bigl(\varepsilon(n)\bigr)$.  Note: $\rho\in [0,1]
$. 

Assume $\varphi^{`\mathcal{R}}(\rho)>_\mathcal{R} 0_\mathcal{R}$.

 Find $n,p$ such that  $p\in E_\varphi$ and $\rho(n)\sqsubseteq_\mathbb{S} p'$ and $(p'')'>_\mathbb{Q} 0_\mathbb{Q}$. 
 
 Note $\rho'(n)<_\mathbb{Q}\frac{\delta(n)}{2^n}<_\mathbb{Q}\rho''(n)$ and $\varphi^{`\mathcal{R}}( \frac{\delta(n)}{2^n})\le_\mathbb{R} 0_\mathbb{R}$. Contradiction.

 Conclude: $\varphi^{`\mathcal{R}}(\rho)\le_\mathcal{R} 0_\mathcal{R}$.

For a similar reason,  $0_\mathcal{R}\le_\mathcal{R}\varphi^{`\mathcal{R}}(\rho) $ and, therefore,  $\varphi^{`\mathcal{R}}(\rho) =_\mathcal{R} 0_\mathcal{R}$.

\smallskip The case $\gamma^{\upharpoonright 1} \ge_\mathcal{R}\gamma^{\upharpoonright 0}$ is treated similarly.\end{proof}
 
\begin{corollary}\label{C:ivtwkl} $\mathsf{BIM}+\mathbf{\Pi}^0_1$-$\mathbf{AC}_{\omega,2}\vdash \mathbf{IVT} \leftrightarrow \mathbf{LLPO} \leftrightarrow \mathbf{WKL}$.
\end{corollary}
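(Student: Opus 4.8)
The plan is to assemble the corollary entirely from results already established earlier in the excerpt; no new constructions are needed. I would work throughout in the single theory $\mathsf{BIM}+\mathbf{\Pi}^0_1$-$\mathbf{AC}_{\omega,2}$ and prove the two biconditionals $\mathbf{IVT}\leftrightarrow\mathbf{LLPO}$ and $\mathbf{LLPO}\leftrightarrow\mathbf{WKL}$ separately, then chain them.

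For $\mathbf{IVT}\leftrightarrow\mathbf{LLPO}$: Theorem \ref{T:ivtllpo} gives $\mathbf{IVT}\rightarrow\mathbf{LLPO}$ already over plain $\mathsf{BIM}$, and Theorem \ref{T:llpoivt} gives the converse $\mathbf{LLPO}\rightarrow\mathbf{IVT}$ precisely over $\mathsf{BIM}+\mathbf{\Pi}^0_1$-$\mathbf{AC}_{\omega,2}$. Since our ambient theory contains that axiom, both implications are available, so $\mathbf{IVT}\leftrightarrow\mathbf{LLPO}$ follows.

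For $\mathbf{LLPO}\leftrightarrow\mathbf{WKL}$: because $\mathbf{\Pi}^0_1$-$\mathbf{AC}_{\omega,2}$ is among our axioms, the direction $(\mathbf{\Pi}^0_1$-$\mathbf{AC}_{\omega,2}\wedge\mathbf{LLPO})\rightarrow\mathbf{WKL}$ of Theorem \ref{T:acllpowkl} immediately yields $\mathbf{LLPO}\rightarrow\mathbf{WKL}$; the converse $\mathbf{WKL}\rightarrow\mathbf{LLPO}$ is Theorem \ref{T:wklllpo} (and is likewise a consequence of the other direction of Theorem \ref{T:acllpowkl}), and uses no choice at all. Chaining the two biconditionals gives $\mathbf{IVT}\leftrightarrow\mathbf{LLPO}\leftrightarrow\mathbf{WKL}$ in $\mathsf{BIM}+\mathbf{\Pi}^0_1$-$\mathbf{AC}_{\omega,2}$, as claimed.

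There is essentially no obstacle: the mathematical content sits entirely in Theorems \ref{T:ivtllpo}, \ref{T:llpoivt} and \ref{T:acllpowkl}. The only point worth a moment's attention is bookkeeping about the choice principle, namely that the instance of $\mathbf{\Pi}^0_1$-$\mathbf{AC}_{\omega,2}$ invoked in the proof of Theorem \ref{T:llpoivt} is the very same axiom as the one appearing in Theorem \ref{T:acllpowkl}, so that one fixed extension $\mathsf{BIM}+\mathbf{\Pi}^0_1$-$\mathbf{AC}_{\omega,2}$ suffices uniformly for all four implications; this is indeed so.
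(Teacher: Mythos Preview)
Your proposal is correct and follows essentially the same approach as the paper: the paper's proof simply says ``Use Theorems \ref{T:acllpowkl}, \ref{T:ivtllpo} and \ref{T:llpoivt},'' and you have spelled out precisely how these three results combine to give the two biconditionals.
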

\begin{proof} Use Theorems  \ref{T:acllpowkl}, \ref{T:ivtllpo} and \ref{T:llpoivt}. \end{proof}
\subsection{} \textit{A contraposition of the Intermediate Value Theorem}, 
 $\overleftarrow{\mathbf{IVT}}$: 
 
\textit{ For each $\varphi$ in $\mathcal{R}^{[0,1]}$},
 \textit{if $ \forall \gamma \in [0,1][\varphi^{`\mathcal{R}}(\gamma)\; \#_\mathcal{R} \; 0_\mathcal{R}]$, then}
 
\textit{either  $ 
 \forall \gamma \in [0,1][\varphi^{`\mathcal{R}}(\gamma) > _\mathcal{R}0_\mathcal{R}]$ or $ \forall\gamma\in [0,1][ \varphi^{`\mathcal{R}}(\gamma) <_\mathcal{R}
 0_\mathcal{R}].$}

 \begin{theorem}\label{T:ivtreverse}$\mathsf{BIM}\vdash\overleftarrow{\mathbf{IVT}}$.
 
\end{theorem}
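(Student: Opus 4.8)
The plan is to prove $\overleftarrow{\mathbf{IVT}}$ directly in $\mathsf{BIM}$, without any form of choice, by exploiting the fact that the hypothesis $\forall \gamma \in [0,1][\varphi^{`\mathcal{R}}(\gamma)\;\#_\mathcal{R}\;0_\mathcal{R}]$ is, because of the uniform continuity packaged into the notion of a function in $\mathcal{R}^{[0,1]}$, a very strong assumption: it should yield, for some fixed $n$, that $|\varphi^{`\mathcal{R}}(\gamma)| >_\mathcal{R} \frac{1}{2^n}$ for \emph{all} $\gamma$ in $[0,1]$. First I would recall how elements $\varphi$ of $\mathcal{R}^{[0,1]}$ are coded in $\mathsf{BIM}$: $\varphi$ carries with it a modulus of uniform continuity, so from $E_\varphi$ one can read off, given a rational $\varepsilon$, a $\delta$ such that rational arguments within $\delta$ have images within $\varepsilon$. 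Using this, the set of pairs $(s,s')$ of rational intervals witnessing $\varphi^{`\mathcal{R}}(\rho)\;\#_\mathcal{R}\;0_\mathcal{R}$ covers $[0,1]$, and by the uniform-continuity modulus finitely many of them suffice; but since we cannot appeal to $\mathbf{FT}$ here, the better route is to use the modulus directly.

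The key steps, in order, would be: (1) Fix $\varphi$ in $\mathcal{R}^{[0,1]}$ with $\forall\gamma\in[0,1][\varphi^{`\mathcal{R}}(\gamma)\;\#_\mathcal{R}\;0_\mathcal{R}]$. (2) Observe that apartness $\varphi^{`\mathcal{R}}(\gamma)\;\#_\mathcal{R}\;0_\mathcal{R}$ means $\exists k[|\varphi^{`\mathcal{R}}(\gamma)| >_\mathcal{R} \frac{1}{2^k}]$, so in particular $\neg\neg\exists k[\cdots]$; combine with the continuity modulus to get a \emph{uniform} $k_0$: pick the modulus value $\delta = \frac{1}{2^m}$ for $\varepsilon = \frac{1}{2^{k}}$, subdivide $[0,1]$ into the finitely many rationals $\frac{j}{2^m}$, $j \le 2^m$; at each such rational the real $\varphi^{`\mathcal{R}}(\frac{j}{2^m})$ is apart from $0$, hence (decidably, since $\varphi^{`\mathcal{R}}(\frac{j}{2^m})$ is a concrete real and we only need a rational approximation) we can, for each $j$, decide whether it is $>_\mathcal{R} 0_\mathcal{R}$ or $<_\mathcal{R} 0_\mathcal{R}$ once we know it is apart from $0$ --- this is the one genuinely constructive use-of-apartness point. (3) Since consecutive values $\frac{j}{2^m}$ and $\frac{j+1}{2^m}$ are within the modulus, if $\varphi^{`\mathcal{R}}(\frac{j}{2^m})$ and $\varphi^{`\mathcal{R}}(\frac{j+1}{2^m})$ had opposite signs and both had absolute value exceeding (say) $\frac{2}{2^k}$, their difference would exceed $\frac{4}{2^k}$, contradicting the continuity bound $\frac{1}{2^k}$; so by a finite induction on $j$ all the values $\varphi^{`\mathcal{R}}(\frac{j}{2^m})$ have the same sign. (4) Distinguish the two cases ``all positive'' / ``all negative'' (this dichotomy is now decidable because it concerns a single concrete sign bit, not a choice over $[0,1]$), and in, say, the positive case conclude, again using the modulus, that $\forall\gamma\in[0,1][\varphi^{`\mathcal{R}}(\gamma) >_\mathcal{R} 0_\mathcal{R}]$: given $\gamma$, find $j$ with $|\gamma -_\mathcal{R} \frac{j}{2^m}|$ small, so $\varphi^{`\mathcal{R}}(\gamma)$ is within $\frac{1}{2^k}$ of $\varphi^{`\mathcal{R}}(\frac{j}{2^m})$ which is safely above $\frac{2}{2^k}$. (5) The negative case is symmetric, giving $\overleftarrow{\mathbf{IVT}}$.

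The main obstacle I expect is the bookkeeping in step (2)--(3): one must choose the tolerance $k$ \emph{before} knowing the uniform lower bound, so the argument has to be arranged so that the continuity modulus $m$ is computed from $k$, and then one has to verify that the quantity $\frac{1}{2^k}$ controlling continuity is genuinely smaller than twice the uniform gap one is about to establish --- i.e. the inductive sign-propagation must be set up with the right constants (something like: fix $k$, get $m = $ modulus at $\frac{1}{2^{k+3}}$, then observe that if two rational grid points have images of opposite sign, one of those images has absolute value $<_\mathcal{R}\frac{1}{2^{k+3}}$, contradicting apartness via a further shrinking of the grid). Making these inequalities line up cleanly, while only ever deciding sign-bits of concrete reals and never invoking $\mathbf{\Pi}^0_1$-$\mathbf{AC}_{\omega,2}$ or $\mathbf{FT}$, is where the care is needed; the rest is routine real-arithmetic manipulation in the $\mathsf{BIM}$ coding of $\mathcal{R}$ and $\mathcal{R}^{[0,1]}$ sketched in Subsections~\ref{SS:realf} and \ref{SSS:reals}.
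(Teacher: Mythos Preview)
Your proposal has a genuine gap at its foundation: you assert that elements $\varphi$ of $\mathcal{R}^{[0,1]}$ come equipped with a modulus of uniform continuity, but in this paper's definition (Subsection~\ref{SS:realf}) they do not. Condition~(3) there is the \emph{pointwise} statement
\[
\forall\alpha\in[0,1]\,\forall n\,\exists m\,\exists s\bigl[(\alpha(m),s)\in E_\varphi\;\wedge\;s''-_\mathbb{Q}s'<_\mathbb{Q}\tfrac{1}{2^n}\bigr],
\]
with no uniformization of $m$ over $\alpha$. Extracting a uniform modulus---or, what your step~(2) actually needs, a uniform $k_0$ with $|\varphi^{`\mathcal{R}}(\gamma)|>_\mathcal{R}\frac{1}{2^{k_0}}$ for all $\gamma\in[0,1]$---is exactly the $\forall\exists\to\exists\forall$ inversion over $[0,1]$ that $\mathbf{FT}$ delivers and bare $\mathsf{BIM}$ does not. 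Indeed Theorem~\ref{T:unifivt} shows that the \emph{uniform} companions of these statements are equivalent to $\mathbf{FT}$, so your route cannot succeed in $\mathsf{BIM}$ alone.

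The paper's proof avoids uniform information entirely. It evaluates $\varphi^{`\mathcal{R}}(0_\mathcal{R})$, which by hypothesis is $<_\mathcal{R}0_\mathcal{R}$ or $>_\mathcal{R}0_\mathcal{R}$; assuming, say, the former, and supposing for contradiction that some $\gamma$ gives $\varphi^{`\mathcal{R}}(\gamma)>_\mathcal{R}0_\mathcal{R}$, it constructs a real $\delta$ by successive bisection, using at each step only that the midpoint value is apart from $0_\mathcal{R}$ (hence decidably positive or negative). The limit $\delta$ satisfies $\varphi^{`\mathcal{R}}(\delta)\;\#_\mathcal{R}\;0_\mathcal{R}$, which yields a \emph{single} pair $(r,s)\in E_\varphi$ with $\delta(n)\sqsubseteq_\mathbb{S} r$ and $s$ bounded away from $0_\mathbb{Q}$ on one side; this contradicts the bisection invariant that $\varphi$ takes opposite signs at the rational endpoints $\delta'(n)$ and $\delta''(n)$. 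No global or uniform bound is ever invoked---only one application of apartness at one real.
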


 \begin{proof}
  Assume $\varphi \in \mathcal{R}^{[0.1]}$ and $\forall \gamma  \in [0,1][\varphi^{`\mathcal{R}}(\gamma)\; \#_\mathcal{R} \;0_\mathcal{R}]$.
  \\Assume:
 $\varphi^{`\mathcal{R}}(0_\mathcal{R}) <_\mathcal{R} 0_\mathcal{R}$. 
 \\Suppose: $\gamma \in [0,1]$ and $\varphi^{`\mathcal{R}}(\gamma) >_\mathcal{R} 0_\mathcal{R}$. \\We will find a contradiction by the method of \textit{successive bisection}. 
 \\Find $q$ in $\mathbb{Q}$ such that $\varphi^{`\mathcal{R}}(q_\mathcal{R}) >_\mathcal{R} 0_\mathbb{Q}$. Define  $\delta$ as follows, by induction. \\We define  $\delta(0) := ( 0, q )$.  Now, let $n, s$ be given such that $\delta(n) = s$. \\Note: $\varphi^{`\mathcal{R}}((\frac{s'+_\mathbb{Q}s''}{2})_\mathcal{R})\;\#\;0_\mathcal{R}$. Find  $(r,t)$ in $E_\varphi$ such that  $r'<_\mathbb{Q} \frac{1}{2}(s'+_\mathbb{Q} s'') <_\mathbb{Q} r''$ and \textit{either}: $0_\mathbb{Q}<_\mathbb{Q} t'$ \textit{or}:  $t''<_\mathbb{Q} 0_\mathbb{Q}$.  If $ 0_\mathbb{Q}<_\mathbb{Q} t'$, define $\delta(n+1) = (s',  \frac{s'+_\mathbb{Q}s''}{2})$, and, if  $ t''<_\mathbb{Q} 0_\mathbb{Q}$, define  $\delta(n+1) = (\frac{s'+ s''}{2}, s'' )$. 
 \\ Note: for each $n$, $\delta(n+1) \sqsubseteq_\mathbb{S} \delta(n)$, and  $\varphi^{`\mathcal{R}}\bigl((\delta'(n))_\mathcal{R}\bigr)<_\mathcal{R} 0_\mathcal{R}<_\mathcal{R}\varphi^{`\mathcal{R}}\bigl((\delta''(n))_\mathcal{R}\bigr)$.
   Note: $\delta\in [0,1]$ and $\varphi^{`\mathcal{R}}(\delta)\;\#_\mathcal{R}\; 0_\mathcal{R}$.  Determine $( r,s)$ in $E_\varphi$ and $n$ in $\omega$ 
 such that $\delta(n) \sqsubseteq_\mathbb{S} r$ and either $s''<_\mathbb{Q} 0$  or  $0<_\mathbb{Q} s'$, that is,   either  $\varphi^{`\mathcal{R}}\bigl((\delta''(n))_\mathcal{R}\bigr) <_\mathcal{R} 0_\mathcal{R}$ or $0_\mathcal{R} <_\mathcal{R}\varphi^{`\mathcal{R}}\bigl((\delta'(n))_\mathcal{R}\bigr)$. 
Contradiction.
 Clearly, then,  $\neg\bigl(\varphi^{`\mathcal{R}}(\gamma) >_\mathcal{R} 0_\mathcal{R}\bigr)$.

From $\neg\bigl(\varphi^{`\mathcal{R}}(\gamma) >_\mathcal{R} 0_\mathcal{R}\bigr)$ and
 $\varphi^{`\mathcal{R}}(\gamma) \;\#_\mathcal{R}\; 0_\mathcal{R}$, we  conclude:  $\varphi^{`\mathcal{R}}(\gamma) <_\mathcal{R} 0_\mathcal{R}$.
 
We thus see: if $\varphi^{`\mathcal{R}}(0_\mathcal{R}) < _\mathcal{R}0_\mathcal{R}$, then $\forall\gamma\in[0,1][\varphi^{`\mathcal{R}}(\gamma) <_\mathcal{R} 0_\mathcal{R}]$.

 In the same way, one proves:  if $\varphi^{`\mathcal{R}}(0_\mathcal{R}) >_\mathcal{R} 0_\mathcal{R}$, then $\forall\gamma\in[0,1][\varphi^{`\mathcal{R}}(\gamma) >_\mathcal{R} 0_\mathcal{R}]$.

 \smallskip We thus see: either  $ 
 \forall \gamma \in [0,1][\varphi^{`\mathcal{R}}(\gamma) > _\mathcal{R}0_\mathcal{R}]$ or $ \forall\gamma\in [0,1][ \varphi^{`\mathcal{R}}(\gamma) <_\mathcal{R}
 0_\mathcal{R}].$
 \end{proof}
 \subsection{$\mathbf{FT}$ is unprovable in $\mathsf{BIM}+ \mathbf{IVT}$ }
  As we observed in Subsection \ref{SSS:countbincunpr}, $\mathsf{BIM}+\mathbf{CT}+X\vee \neg X$  is consistent. According to Theorem \ref{T:ivtreverse}, $\mathsf{BIM}\vdash \overleftarrow{\mathbf{IVT}}$, and, therefore: \\$\mathsf{BIM} + X\vee \neg X\vdash \mathbf{IVT}$. Assume $\mathsf{BIM}\vdash \mathbf{IVT} \rightarrow \mathbf{FT}$. Then  $\mathsf{BIM} + X\vee \neg X\vdash \mathbf{FT}$. \\As we know from Theorem \ref{T:ctka},  $\mathsf{BIM} + \mathbf{CT}\vdash \neg!\mathbf{FT}$, and, therefore, \\$\mathsf{BIM} + \mathbf{CT}\vdash \neg\mathbf{FT}$. Conclude: $\mathsf{BIM}+\mathbf{IVT}\nvdash \mathbf{FT}$, and also, in view of Theorem \ref{T:wklft},  $\mathsf{BIM}+\mathbf{IVT}\nvdash \mathbf{WKL}$. \\Note that, in view of Corollary  \ref{C:ivtwkl}, this gives another proof of $\mathsf{BIM}\nvdash \mathbf{\Pi}_1^0$-$\mathbf{AC}_{\omega, 2}$, a fact  established in Subsection \ref{SSS:countbincunpr}.\\One may even conclude: $\mathsf{BIM}+\mathbf{IVT}\nvdash \mathbf{\Pi}^0_1$-$\mathbf{AC}_{\omega,2}$. 
  \\One may ask if $\mathsf{BIM} +\mathbf{LLPO}\vdash \mathbf{IVT}$, i.e. if the proof of Theorem \ref{T:llpoivt} can be given without recourse to $\mathbf{\Pi}^0_1$-$\mathbf{AC}_{\omega,2}$,  but we do not know the answer to this question.  
 
 \subsection{} 
 
 The   \textit{Uniform Intermediate Value Theorem}, 
 $\mathbf{UIVT}$: 
 
 \textit{For each $\varphi$ such that $\forall n[\varphi^n\in \mathcal{R}^{[0,1]}]$},\\
 \textit{ if $\forall n
 \exists \gamma\in [0,1]^2[\ (\varphi^n)^{`\mathcal{R}}(\gamma^0 ) \le_\mathcal{R} 0_\mathcal{R}\le_\mathcal{R} (\varphi^n)^{`\mathcal{R}} (\gamma^1)]$,}
  \\\textit{ then  $\exists \gamma \in 
 [0,1]^\omega\forall n[(\varphi^n)^{`\mathcal{R}}(\gamma^n) =_\mathcal{R} 0_\mathcal{R}].$}
 
 \medskip
 In \cite[Exercise IV.2.12, page 137]{Simpson},  the reader is asked to prove that, in the classical system $\mathsf{RCA_0}$,  $\mathbf{UIVT}$ is an equivalent of $\mathbf{WKL}$. As $\mathsf{RCA_0}\vdash \mathbf{IVT}$,
   $\mathsf{RCA_0}$ proves the equivalence of  $\mathbf{UIVT}$ and: 
  \subsection{} $\mathbf{Uzero}$:
  
   \textit{For all $\varphi$ such that $\forall n[\varphi^n \in \mathcal{R}^{[0,1]}]$}, 
  
 \textit{if $\forall n\exists \gamma\in [0,1][\ (\varphi^n)^{`\mathcal{R}}(\gamma ) =_\mathcal{R} 0_\mathcal{R}]$, then $\exists \gamma \in 
 [0,1]^\omega\forall n[(\varphi^n)^{`\mathcal{R}}(\gamma^n) =_\mathcal{R} 0_\mathcal{R}].$}
 
 \medskip

  We want to study $\mathbf{Uzero}$ in $\mathsf{BIM}$. We need the following Lemma.
 \begin{lemma}\label{L:uzero}$\mathsf{BIM}$ proves: \begin{enumerate}[\upshape (i)] \item $\exists \psi:\omega^\omega\rightarrow\omega^\omega\forall \varphi \in \mathcal{R}^{[0,1]}[\mathcal{H}_{\psi|\varphi}=\{\gamma\in [0,1]\mid \varphi^{`\mathcal{R}}(\gamma) \;\#_\mathcal{R}\;0_\mathcal{R}\}]$, and\item $\exists \tau:\omega^\omega\rightarrow\omega^\omega\forall \alpha[\tau|\alpha \in \mathcal{R}^{[0,1]}\;\wedge\;\mathcal{H}_\alpha =\{\gamma \in [0,1]\mid (\tau|\alpha)^{`\mathcal{R}}(\gamma)\;\#_\mathcal{R}\;0_\mathcal{R}\}]$. \end{enumerate}\end{lemma}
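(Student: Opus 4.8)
\textbf{Proof proposal for Lemma \ref{L:uzero}.}

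The plan is to construct, uniformly in the data, a subset of $[0,1]$ that, in the language of $\mathsf{BIM}$, is an enumerable (i.e.\ $\mathbf{\Sigma}^0_1$) set of reals $\mathcal{H}_\bullet$, and to do this in both directions. For part (i), given $\varphi$ in $\mathcal{R}^{[0,1]}$, recall (from the treatment of real functions sketched in Subsection \ref{SS:realf}) that $\varphi$ comes with its enumeration data $E_\varphi$, a set of pairs $(r,s)$ of rational segments witnessing ``$\varphi$ maps the segment $r'$ into the segment $s''$''. I would define $\psi$ so that $\psi|\varphi$ enumerates exactly those basic segments $u$ in $\mathbb{S}$ for which there is some $(r,s)$ in $E_\varphi$ with $u\sqsubseteq_\mathbb{S} r'$ and with $s''$ lying strictly on one side of $0_\mathbb{Q}$, i.e.\ $0_\mathbb{Q}<_\mathbb{Q}(s'')'$ or $(s'')''<_\mathbb{Q}0_\mathbb{Q}$. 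The point is that $\gamma$ in $[0,1]$ satisfies $\gamma\in\mathcal{H}_{\psi|\varphi}$ precisely when some approximating segment of $\gamma$ is sent by $\varphi$ into a segment bounded away from $0$, and this is exactly the meaning of $\varphi^{`\mathcal{R}}(\gamma)\;\#_\mathcal{R}\;0_\mathcal{R}$; the forward implication uses the definition of apartness for reals, the backward implication uses that $\varphi$ is a function (the enumeration data is total, so some segment of $\gamma$ does get mapped far enough in). That $\psi$ is given by a continuous $\psi:\omega^\omega\rightarrow\omega^\omega$ follows from Axiom \ref{ax:recop}, since the construction of $\psi|\varphi$ from $\varphi$ is by unbounded search and primitive recursion.

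For part (ii), I need to go the other way: given an arbitrary $\alpha$ (coding a would-be enumerable subset of $[0,1]$ via $\mathcal{H}_\alpha$), I must manufacture a genuine uniformly continuous $\varphi=\tau|\alpha$ on $[0,1]$ whose ``apartness-from-zero set'' is exactly $\mathcal{H}_\alpha$. The natural idea is a ``distance-like'' construction: enumerate the basic rational segments thrown into $\mathcal{H}_\alpha$ by $\alpha$, and for each one erect a little tent function (a piecewise-linear bump) supported on that segment, of height controlled so that the infinite sum converges uniformly and defines a continuous function $\varphi\ge_\mathcal{R}0_\mathcal{R}$ with $\varphi^{`\mathcal{R}}(\gamma)>_\mathcal{R}0_\mathcal{R}$ iff $\gamma$ lies strictly inside one of the enumerated segments, i.e.\ iff $\gamma\in\mathcal{H}_\alpha$. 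Concretely, after stage $k$ we have finitely many segments; assign segment $i$ a tent of height $\le 2^{-i}$ peaking at its midpoint and vanishing at its endpoints, and take $\varphi$ to be the supremum (equivalently, since we can arrange disjointness of interiors by refining, the sum) of these tents. Then $\mathcal{H}_\alpha=\{\gamma\in[0,1]\mid \varphi^{`\mathcal{R}}(\gamma)\;\#_\mathcal{R}\;0_\mathcal{R}\}$ because $\varphi\ge_\mathcal{R}0_\mathcal{R}$ everywhere, so apartness from $0$ is the same as being $>_\mathcal{R}0_\mathcal{R}$, which happens exactly in the interiors of the enumerated segments. Again the map $\alpha\mapsto\tau|\alpha$ is continuous by the closure properties in Axiom \ref{ax:recop}, since computing the approximation data $E_{\tau|\alpha}$ to a given rational precision only consults finitely much of $\alpha$.

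The main obstacle I anticipate is bookkeeping in (ii): ensuring that the construction outputs honest $\mathcal{R}^{[0,1]}$-data (a correct $E_{\tau|\alpha}$ with the right monotonicity/coherence conditions and a modulus of uniform continuity), and that the apartness set comes out to be \emph{exactly} $\mathcal{H}_\alpha$ and not merely its closure or interior — the latter requires being a little careful about whether the enumerated segments are open or half-open and about what ``$\gamma$ lies in the segment'' means for a real approximated by segments. The height-$2^{-i}$ dampening handles uniform convergence cleanly, and making the tents' interiors pairwise disjoint (by subdividing any newly enumerated segment around the parts already covered) turns the ``supremum'' into a locally finite sum, which is what makes the continuity and the computation of $E_{\tau|\alpha}$ routine. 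Both statements are then proved with no appeal to $\mathbf{FT}$ or any choice principle — only $\mathsf{BIM}$'s basic closure axioms — which is exactly what is wanted, since Lemma \ref{L:uzero} is meant to be the engine for later reducing $\mathbf{Uzero}$ to a choice-over-$[0,1]$ statement of the $\mathbf{\Sigma}^0_1$-$\overleftarrow{\mathbf{AC}_{\omega,[0,1]}}$ type.
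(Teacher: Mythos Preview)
Your approach is essentially the same as the paper's. Part (i) matches exactly: the paper defines $(\psi|\varphi)(s)\neq 0$ iff some $p\in E_{\overline\varphi s}$ has $s\sqsubset_\mathbb{S} p'$ and $(p'')$ bounded away from $0_\mathbb{Q}$. Part (ii) also matches: the paper builds a tent $\rho^s$ on each segment $s$ and sets $(\tau|\alpha)^{`\mathcal{R}}(\gamma)=\sum_{s:\alpha(s)\neq 0}2^{-s}\,(\rho^s)^{`\mathcal{R}}(\gamma)$, exactly your damped-sum-of-tents idea.

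One caution: drop the disjointification. Your parenthetical ``subdivide any newly enumerated segment around the parts already covered'' is not needed and would actually break the construction. If $s_1=(0,\tfrac12)$ arrives first and $s_2=(\tfrac{3}{10},\tfrac{8}{10})$ second, refining $s_2$ to $(\tfrac12,\tfrac{8}{10})$ leaves the real $\tfrac12$ outside both open pieces, yet $\tfrac12\in\mathcal{H}_\alpha$ via $s_2$; your $\varphi$ would then vanish at $\tfrac12$. The $2^{-i}$ (or the paper's $2^{-s}$) damping already gives uniform convergence of the full sum over \emph{all} enumerated tents, with no local-finiteness needed, so the continuity and the computation of $E_{\tau|\alpha}$ are routine without any refinement step.
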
  \begin{proof} Define $\psi:\omega^\omega\rightarrow\omega^\omega$  such that, for each $\varphi$,  for each $s$, $$(\psi|\varphi)(s) \neq 0\leftrightarrow \exists p\in E_{\overline \varphi s}[s\sqsubset_\mathbb{S}p'\;\wedge\;\bigl(0_\mathbb{Q} <_\mathbb{Q} (p'')'\;\vee\; (p'')''<_\mathbb{Q} 0_\mathbb{Q}\bigr)].$$ Note: $\forall \varphi \in \mathcal{R}^{[0,1]}\forall\gamma\in [0,1][\varphi^{`\mathcal{R}}(\gamma) \;\#_\mathcal{R}\; 0_\mathcal{R}\leftrightarrow\gamma \in \mathcal{H}_{\psi|\varphi}]$. 
 
 \smallskip
 
  Now define $\rho$ such that,  for each  $s$ in $\mathbb{S}$, $\rho^s\in\mathcal{R}^{[0,1]}$ and, \\{\it if not $(-1)_\mathbb{Q}\le_\mathbb{Q}s'\le_\mathbb{Q} s''\le_\mathbb{Q}  (2)_\mathbb{Q}$}, then, for all $\gamma$ in $[0,1]$, $(\rho^s)^{`\mathcal{R}}(\gamma) =_\mathcal{R} 0_\mathcal{R}$, and, \\{\it if $(-1)_\mathbb{Q}\le_\mathbb{Q}s'\le_\mathbb{Q} s''\le_\mathbb{Q}  (2)_\mathbb{Q}$}, then,
  for all $\gamma$ in $[0,1]$, \begin{enumerate}[\upshape (1)] \item if $\gamma \le_\mathcal{R} (s')_\mathcal{R}$ or $(s'')_\mathcal{R} \le_\mathcal{R} \gamma$, then $\rho^s(\gamma)  =_\mathcal{R} 0_\mathcal{R}$, and, 
  \item if $(s')_\mathcal{R}\le_\mathcal{R} \gamma \le _\mathcal{R} (s'')_\mathcal{R}$, then $\rho^s(\gamma) =_\mathcal{R} \inf(\gamma -_\mathcal{R} (s')_\mathcal{R},(s'')_\mathcal{R}-_\mathcal{R} \gamma)$. \end{enumerate}
 
 (Note that $\rho^s$ codes the restriction to $[0,1]$ of the `{\it tent}' function from $\mathcal{R}$ tot $\mathcal{R}$ that is zero outside of $[s', s'']$, linear on both $[s', \frac{s'+s''}{2}]$ and $ [\frac{s'+s''}{2}, s'']$ and takes the values $0_\mathcal{R}, \frac{s''-s'}{2}, 0_\mathcal{R}$ at $s', \frac{s'+s''}{2}, s''$, respectively.\\Note that, for all $s$ in $\mathbb{S}$, for all $\gamma$ in $[0,1]$, $(\rho^s)^{`\mathcal{R}}(\gamma)\le_\mathcal{R} (\frac{3}{2})_\mathcal{R}$. ) 

  \smallskip Define $\tau:\omega^\omega\rightarrow\omega^\omega$   such that, for all $\alpha$, $\tau|\alpha\in\mathcal{R}^{[0,1]}$    and, for each $\gamma$ in $[0,1]$, $(\tau|\alpha)^{`\mathcal{R}}(\gamma) =_\mathcal{R} \sum_{s, \alpha(s)\neq 0} (\frac{1}{2^s})_\mathcal{R}\cdot_\mathcal{R}(\rho^s)^{`\mathcal{R}}(\gamma)$.

Note:  $\forall  \gamma\in [0,1][  \gamma \in \mathcal{H}_{\alpha}\leftrightarrow(\tau|\alpha)^{`\mathcal{R}}(\gamma) \;\#_\mathcal{R} \;0_\mathcal{R}]$,  and: \\ $\forall \gamma \in [0,1][\gamma \notin \mathcal{H}_{\alpha} \leftrightarrow (\tau|\alpha)^{`\mathcal{R}}(\gamma) =_\mathcal{R} 0_\mathcal{R}]$.\end{proof} \begin{theorem}\label{T:unifivtcc}
   $\mathsf{BIM}\vdash\mathbf{\Pi}^0_1$-$\mathbf{AC}_{\omega,[0,1]}\leftrightarrow\mathbf{Uzero}$.
  
  \end{theorem}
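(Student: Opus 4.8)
The plan is to prove both implications of $\mathbf{\Pi}^0_1$-$\mathbf{AC}_{\omega,[0,1]}\leftrightarrow\mathbf{Uzero}$ by translating between the two setups via the functions $\psi,\tau$ provided by Lemma \ref{L:uzero}. The main observation is that, for a continuous $\varphi\in\mathcal{R}^{[0,1]}$, saying "$\gamma$ is a zero of $\varphi$" is the same as saying $\gamma\notin\{\delta\mid\varphi^{`\mathcal{R}}(\delta)\;\#_\mathcal{R}\;0_\mathcal{R}\}$, i.e.\ $\gamma\notin\mathcal{H}_{\psi|\varphi}$. So a uniform zero-finding problem for a sequence $(\varphi^n)_n$ of functions corresponds exactly to a $\mathbf{\Pi}^0_1$-choice problem for the sequence of $\mathbf{\Pi}^0_1$-sets $\mathcal{H}_{\psi|(\varphi^n)}$, and conversely Lemma \ref{L:uzero}(ii) lets us realise any sequence of $\mathbf{\Pi}^0_1$-subsets of $[0,1]$ as the complements of the $\#_\mathcal{R}0_\mathcal{R}$-sets of suitable continuous functions.

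For the direction $\mathbf{\Pi}^0_1$-$\mathbf{AC}_{\omega,[0,1]}\Rightarrow\mathbf{Uzero}$, I would take $\varphi$ with $\forall n[\varphi^n\in\mathcal{R}^{[0,1]}]$ and assume $\forall n\exists\gamma\in[0,1][(\varphi^n)^{`\mathcal{R}}(\gamma)=_\mathcal{R}0_\mathcal{R}]$. Using Lemma \ref{L:uzero}(i), fix $\psi$ and define $\alpha$ by $\alpha^{\upharpoonright n}=\psi|(\varphi^n)$. Then the hypothesis says $\forall n\exists\delta\in[0,1][\delta\notin\mathcal{H}_{\alpha^{\upharpoonright n}}]$; applying $\mathbf{\Pi}^0_1$-$\mathbf{AC}_{\omega,[0,1]}$ yields $\delta\in[0,1]^\omega$ with $\forall n[\delta^{\upharpoonright n}\notin\mathcal{H}_{\alpha^{\upharpoonright n}}]$, and by the defining property of $\psi$ this gives $\forall n[(\varphi^n)^{`\mathcal{R}}(\delta^{\upharpoonright n})=_\mathcal{R}0_\mathcal{R}]$, which is the conclusion of $\mathbf{Uzero}$ with $\gamma:=\delta$. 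One small routine point is checking that the coding of the sequence $(\varphi^n)_n$ as a single function and the slicing $\alpha\mapsto\alpha^{\upharpoonright n}$ are handled compatibly, but this is bookkeeping.

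For the converse $\mathbf{Uzero}\Rightarrow\mathbf{\Pi}^0_1$-$\mathbf{AC}_{\omega,[0,1]}$, I would take $\alpha$ with $\forall n\exists\delta\in[0,1][\delta\notin\mathcal{H}_{\alpha^{\upharpoonright n}}]$. Using Lemma \ref{L:uzero}(ii), fix $\tau$ and put $\varphi^n:=\tau|(\alpha^{\upharpoonright n})$, so each $\varphi^n\in\mathcal{R}^{[0,1]}$ and $\{\gamma\in[0,1]\mid(\varphi^n)^{`\mathcal{R}}(\gamma)=_\mathcal{R}0_\mathcal{R}\}=[0,1]\setminus\mathcal{H}_{\alpha^{\upharpoonright n}}$. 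The hypothesis then reads $\forall n\exists\gamma\in[0,1][(\varphi^n)^{`\mathcal{R}}(\gamma)=_\mathcal{R}0_\mathcal{R}]$; apply $\mathbf{Uzero}$ to get $\gamma\in[0,1]^\omega$ with $\forall n[(\varphi^n)^{`\mathcal{R}}(\gamma^{\upharpoonright n})=_\mathcal{R}0_\mathcal{R}]$, hence $\forall n[\gamma^{\upharpoonright n}\notin\mathcal{H}_{\alpha^{\upharpoonright n}}]$, which is precisely $\mathbf{\Pi}^0_1$-$\mathbf{AC}_{\omega,[0,1]}$.

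I do not expect a serious obstacle here: the content is entirely in Lemma \ref{L:uzero}, which is already granted, and the theorem is a clean "change of variables" argument. The one place needing a little care is making sure that the equivalence "$\varphi^{`\mathcal{R}}(\gamma)=_\mathcal{R}0_\mathcal{R}$ iff $\gamma\notin\mathcal{H}_{\psi|\varphi}$" is used in exactly the stated form — in particular that Lemma \ref{L:uzero}(i) gives $\mathcal{H}_{\psi|\varphi}=\{\gamma\in[0,1]\mid\varphi^{`\mathcal{R}}(\gamma)\;\#_\mathcal{R}\;0_\mathcal{R}\}$ and that, for $\gamma\in[0,1]$, $\neg(\varphi^{`\mathcal{R}}(\gamma)\;\#_\mathcal{R}\;0_\mathcal{R})$ is intuitionistically equivalent to $\varphi^{`\mathcal{R}}(\gamma)=_\mathcal{R}0_\mathcal{R}$ for reals. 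That last equivalence is a standard property of the apartness relation on $\mathcal{R}$ and needs no choice, so the whole proof goes through in $\mathsf{BIM}$.
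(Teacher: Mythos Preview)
Your proposal is correct and follows essentially the same approach as the paper: both directions are obtained by translating between zero-sets of continuous functions and complements of open sets in $[0,1]$ via the functions $\psi,\tau$ of Lemma~\ref{L:uzero}, then applying the relevant principle. Your remark about the equivalence $\neg(\varphi^{`\mathcal{R}}(\gamma)\;\#_\mathcal{R}\;0_\mathcal{R})\leftrightarrow\varphi^{`\mathcal{R}}(\gamma)=_\mathcal{R}0_\mathcal{R}$ is the one point the paper uses tacitly.
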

  \begin{proof} First, assume $\mathbf{\Pi}^0_1$-$\mathbf{AC}_{\omega,[0,1]}$. 
  
  Let $\varphi$ be given such that $\forall n[\varphi^n \in \mathcal{R}^{[0,1]} \;\wedge \; \exists \gamma \in [0,1][(\varphi^n)^{`\mathcal{R}}(\gamma) =_\mathcal{R} 0_\mathcal{R}]]$. Using Lemma \ref{L:uzero}(i), find $\alpha$  such that, $\forall n[\mathcal{H}_{\alpha^n}=\{\gamma \in [0,1]\mid (\varphi^n)^{`\mathcal{R}}(\gamma)\;\#_\mathcal{R}\; 0_\mathcal{R}\}]$. Conclude: $\forall n \exists \gamma \in [0,1] [\gamma \notin \mathcal{H}_{\alpha^n}]$ and, by $\mathbf{\Pi}^0_1$-$\mathbf{AC}_{\omega,[0,1]}$: $\exists \gamma \in [0,1]^\omega \forall n[\gamma^n \notin \mathcal{H}_{\alpha^n}]$ and: $\exists \gamma \in [0,1]^\omega\forall n[(\varphi^n)^{`\mathcal{R}}(\gamma^n) =_\mathcal{R} 0_\mathcal{R}]$. 
  
 Conclude: $\mathbf{Uzero}$.
  
  \smallskip
  Secondly, assume $\mathbf{Uzero}$. 
  
Let $\alpha$ be given such that  $\forall n\exists \gamma \in [0,1][\gamma \notin \mathcal{H}_{\alpha^n}]$. Using Lemma \ref{L:uzero}(ii), find $\varphi$   such that  $\forall n[\varphi^n\in \mathcal{R}^{[0,1]}]$ and  $\forall n[\mathcal{H}_{\alpha^n}=\{\gamma \in [0,1]\mid(\varphi^n)^{`\mathcal{R}}(\gamma) \;\#_\mathcal{R} \;0_\mathcal{R}\}]$.
\\Conclude: $\forall n\exists \gamma \in [0,1][(\varphi^n)^{`\mathcal{R}}(\gamma) =_\mathcal{R} 0_\mathcal{R}]$, and   $\exists \gamma \in [0,1]^\omega\forall n[(\varphi^n)^{`\mathcal{R}}(\gamma^n) =_\mathcal{R} 0_\mathcal{R}]$, and 
 $\exists \gamma \in [0,1]^\omega \forall n [\gamma^n \notin \mathcal{H}_{\alpha^n}]$.
 
 Conclude: $\mathbf{\Pi}^0_1$-$\mathbf{AC}_{\omega,[0,1]}$. 
 \end{proof}
  
 \subsection{}\textit{A uniform contrapositive Intermediate Value Theorem} 
 $\overleftarrow{\mathbf{UIVT}}:$
 
  \textit{For each $\varphi$ such that $\forall n[\varphi^n\in \mathcal{R}^{[0,1]}]$}, 
 $\mathit{if}\;  \forall \gamma \in [0,1]^{\omega} \exists n[(\varphi^n)^{`\mathcal{R}}(\gamma^n)\; \#_\mathcal{R} \; 0_\mathcal{R}],$
 
  $\mathit{then}\; \exists n[
 \forall \gamma \in [0,1][(\varphi^n)^{`\mathcal{R}}(\gamma) > _\mathcal{R}0_\mathcal{R}]\;\vee\; \forall\gamma\in [0,1][(\varphi^n)^{`\mathcal{R}}(\gamma) <_\mathcal{R}
 0_\mathcal{R}]].$

 \smallskip
 As  $\mathsf{BIM}\vdash \overleftarrow{\mathbf{IVT}}$,  $\mathsf{BIM}$ proves the equivalence of $\overleftarrow{\mathbf{UIVT}}$ and:
 
 \subsection{}$\overleftarrow{\mathbf{Uzero}}$:  
 
 \textit{For all $\varphi$ such that $\forall n[\varphi^n\in \mathcal{R}^{[0,1]}]$},
 
 $\mathit{if}\;\forall \gamma \in [0,1]^{\omega} \exists n[(\varphi^n)^{`\mathcal{R}}(\gamma^n)\; \#_\mathcal{R} \; 0_\mathcal{R}], \;\mathit{then}\; \exists n
 \forall \gamma \in [0,1][(\varphi^n)^{`\mathcal{R}}(\gamma) \;\# _\mathcal{R}\;0_\mathcal{R}].$

 \smallskip
 We define a {\it strong negation} of this statement. This strong negation itself contains   a negation sign, a possibility mentioned in Subsection \ref{SS:strongnegations}.  

 \subsection{}$\neg !\overleftarrow{\mathbf{Uzero}}$:  
 
 \textit{There exists $\varphi$ such that $\forall n[\varphi^n\in \mathcal{R}^{[0,1]}]$ and $\forall \gamma \in [0,1]^{\omega} \exists n[(\varphi^n)^{`\mathcal{R}}(\gamma^n)\; \#_\mathcal{R} \; 0_\mathcal{R}]$ and $\neg\exists n
 \forall \gamma \in [0,1][(\varphi^n)^{`\mathcal{R}}(\gamma) \;\# _\mathcal{R}\;0_\mathcal{R}]$.}

 \begin{lemma}\label{L:unifivt} One may prove in $\mathsf{BIM}$:
 \begin{enumerate}[\upshape(i)]
 \item $\mathbf{\Sigma}^0_1$-$\overleftarrow{\mathbf{AC}_{\omega,[0,1]}}\rightarrow \overleftarrow{\mathbf{Uzero}}$ and
  $\neg!\overleftarrow{\mathbf{Uzero}} \rightarrow \neg!\mathbf{\Sigma}^0_1$-$\overleftarrow{\mathbf{AC}_{\omega,[0,1]}}$.
 \item $\overleftarrow{\mathbf{Uzero}} \rightarrow \mathbf{\Sigma}^0_1$-$\overleftarrow{\mathbf{AC}_{\omega,[0,1]}}$ and
 $\neg!\mathbf{\Sigma}^0_1$-$\overleftarrow{\mathbf{AC}_{\omega,[0,1]}}\rightarrow \neg!\overleftarrow{\mathbf{Uzero}}$

 \end{enumerate}
 \end{lemma}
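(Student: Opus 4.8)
\textbf{Proof proposal for Lemma \ref{L:unifivt}.}

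The plan is to reduce each implication in the lemma to a pure translation result between $\mathbf{\Pi}^0_1$-$\mathbf{AC}_{\omega,[0,1]}$-style hypotheses and the $\overleftarrow{\mathbf{Uzero}}$-hypotheses, using the two functions $\psi,\tau$ provided by Lemma \ref{L:uzero}. Recall that $\mathbf{\Sigma}^0_1$-$\overleftarrow{\mathbf{AC}_{\omega,[0,1]}}$ reads: for every $\alpha$, if $\forall\delta\in[0,1]^\omega\exists n[\delta^{\upharpoonright n}\in\mathcal{H}_{\alpha^{\upharpoonright n}}]$ then $\exists n[[0,1]\subseteq\mathcal{H}_{\alpha^{\upharpoonright n}}]$, and observe that by Lemma \ref{L:uzero}(ii) the predicate $\gamma\in\mathcal{H}_{\alpha^{\upharpoonright n}}$ is, uniformly in $n$, just $(\varphi^n)^{`\mathcal{R}}(\gamma)\;\#_\mathcal{R}\;0_\mathcal{R}$ for $\varphi$ with $\varphi^n=\tau|(\alpha^{\upharpoonright n})$, while conversely by Lemma \ref{L:uzero}(i) each predicate $(\varphi^n)^{`\mathcal{R}}(\gamma)\;\#_\mathcal{R}\;0_\mathcal{R}$ equals $\gamma\in\mathcal{H}_{(\psi|\varphi^n)}$. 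So the two statements say the same thing once one fixes the dictionary between the $\alpha$-description of a co-enumerable subset of $[0,1]$ and the $\varphi$-description via a tent function. The one subtlety is the conclusion: $\exists n[[0,1]\subseteq\mathcal{H}_{\alpha^{\upharpoonright n}}]$ becomes $\exists n\forall\gamma\in[0,1][(\varphi^n)^{`\mathcal{R}}(\gamma)\;\#_\mathcal{R}\;0_\mathcal{R}]$, which is precisely the conclusion of $\overleftarrow{\mathbf{Uzero}}$. Since $\mathsf{BIM}\vdash\overleftarrow{\mathbf{IVT}}$ (Theorem \ref{T:ivtreverse}), $\mathsf{BIM}$ already identifies $\overleftarrow{\mathbf{UIVT}}$ and $\overleftarrow{\mathbf{Uzero}}$, so it is enough to work with the latter.

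For part (i), I would argue as follows. Assume $\mathbf{\Sigma}^0_1$-$\overleftarrow{\mathbf{AC}_{\omega,[0,1]}}$ and let $\varphi$ be given with $\forall n[\varphi^n\in\mathcal{R}^{[0,1]}]$ and $\forall\gamma\in[0,1]^\omega\exists n[(\varphi^n)^{`\mathcal{R}}(\gamma^n)\;\#_\mathcal{R}\;0_\mathcal{R}]$. Using Lemma \ref{L:uzero}(i) and Axiom \ref{ax:recop}, find $\alpha$ with $\alpha^{\upharpoonright n}=\psi|\varphi^n$, so that $\forall n\forall\gamma\in[0,1][\gamma\in\mathcal{H}_{\alpha^{\upharpoonright n}}\leftrightarrow(\varphi^n)^{`\mathcal{R}}(\gamma)\;\#_\mathcal{R}\;0_\mathcal{R}]$. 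The hypothesis becomes $\forall\delta\in[0,1]^\omega\exists n[\delta^{\upharpoonright n}\in\mathcal{H}_{\alpha^{\upharpoonright n}}]$; applying $\mathbf{\Sigma}^0_1$-$\overleftarrow{\mathbf{AC}_{\omega,[0,1]}}$ gives $\exists n[[0,1]\subseteq\mathcal{H}_{\alpha^{\upharpoonright n}}]$, i.e.\ $\exists n\forall\gamma\in[0,1][(\varphi^n)^{`\mathcal{R}}(\gamma)\;\#_\mathcal{R}\;0_\mathcal{R}]$, which is $\overleftarrow{\mathbf{Uzero}}$. The strong-negation half, $\neg!\overleftarrow{\mathbf{Uzero}}\rightarrow\neg!\bigl(\mathbf{\Sigma}^0_1$-$\overleftarrow{\mathbf{AC}_{\omega,[0,1]}}\bigr)$, uses the same construction: given a witness $\varphi$ to $\neg!\overleftarrow{\mathbf{Uzero}}$, the corresponding $\alpha$ with $\alpha^{\upharpoonright n}=\psi|\varphi^n$ satisfies $\forall\delta\in[0,1]^\omega\exists n[\delta^{\upharpoonright n}\in\mathcal{H}_{\alpha^{\upharpoonright n}}]$ and $\neg\exists n[[0,1]\subseteq\mathcal{H}_{\alpha^{\upharpoonright n}}]$, so $\alpha$ witnesses $\neg!\bigl(\mathbf{\Sigma}^0_1$-$\overleftarrow{\mathbf{AC}_{\omega,[0,1]}}\bigr)$. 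Part (ii) is the mirror image: given $\alpha$, use Lemma \ref{L:uzero}(ii) to build $\varphi$ with $\varphi^n=\tau|(\alpha^{\upharpoonright n})$ and $\mathcal{H}_{\alpha^{\upharpoonright n}}=\{\gamma\in[0,1]\mid(\varphi^n)^{`\mathcal{R}}(\gamma)\;\#_\mathcal{R}\;0_\mathcal{R}\}$, translate the $\overleftarrow{\mathbf{AC}}$-hypothesis into the $\overleftarrow{\mathbf{Uzero}}$-hypothesis, apply $\overleftarrow{\mathbf{Uzero}}$, and translate back; the strong-negation direction is handled by applying the same $\tau$-construction to a witness of $\neg!\bigl(\mathbf{\Sigma}^0_1$-$\overleftarrow{\mathbf{AC}_{\omega,[0,1]}}\bigr)$.

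The main obstacle is purely bookkeeping rather than conceptual: one must check that the translations are \emph{uniform}, i.e.\ that a single function $\psi$ (resp.\ $\tau$) applied componentwise yields, via Axiom \ref{ax:recop}, a single $\alpha$ (resp.\ $\varphi$) encoding the whole sequence of sets — this is where the $\cdot^{\upharpoonright n}$ notation and the closure of $\omega^\omega$ under the recursion operators is used — and that the side conditions in the definition of $\tau$ in Lemma \ref{L:uzero}(ii) (the tent functions are genuine elements of $\mathcal{R}^{[0,1]}$, the bounds $(\tau|\alpha)^{`\mathcal{R}}(\gamma)\le_\mathcal{R}(\tfrac32)_\mathcal{R}$ guaranteeing the infinite sum converges) are preserved. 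Since both of these facts are already contained in Lemma \ref{L:uzero} and its proof, the argument is short. I would close by remarking that, combined with Theorems \ref{T:ccc} and \ref{T:unifivtcc}, Lemma \ref{L:unifivt} yields at once $\mathsf{BIM}\vdash\mathbf{FT}\leftrightarrow\overleftarrow{\mathbf{UIVT}}$ and $\mathsf{BIM}\vdash\neg!\mathbf{FT}\leftrightarrow\neg!\overleftarrow{\mathbf{Uzero}}$, the announced Section 10 result.
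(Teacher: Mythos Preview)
Your proposal is correct and follows essentially the same route as the paper: both parts are proved by producing, from a given $\varphi$ (resp.\ $\alpha$), a single $\alpha$ (resp.\ $\varphi$) via the componentwise application of the $\psi$ (resp.\ $\tau$) of Lemma~\ref{L:uzero}, so that the hypotheses and conclusions of $\overleftarrow{\mathbf{Uzero}}$ and of $\mathbf{\Sigma}^0_1$-$\overleftarrow{\mathbf{AC}_{\omega,[0,1]}}$ become literally the same statement, and the strong-negation halves reuse the same construction on a witness. One small slip in your closing remark: the reference to Theorem~\ref{T:unifivtcc} is not needed---Theorem~\ref{T:ccc} alone, together with the present lemma, already yields $\mathbf{FT}\leftrightarrow\overleftarrow{\mathbf{Uzero}}$ and $\neg!\mathbf{FT}\leftrightarrow\neg!\overleftarrow{\mathbf{Uzero}}$.
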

 \begin{proof}
 (i) We prove, in $\mathsf{BIM}$: for all $\varphi$ such that  $\forall n[\varphi^n \in  \mathcal{R}^{[0,1]}]$, there  exists $\beta$ such that 
 $$  \forall \gamma \in [0,1]^\omega\exists n[(\varphi^n)^{`\mathcal{R}}(\gamma^n) \;\#_\mathcal{R}\; 0_\mathcal{R}] \rightarrow  \forall \gamma \in [0,1]^\omega \exists n [\gamma^n \in \mathcal{H}_{\beta^n}]\;\mathrm{and}$$ $$\exists n [ [0,1] \subseteq \mathcal{H}_{\beta^n}] \rightarrow \exists n\forall \gamma \in [0,1] [(\varphi^n)^{`\mathcal{R}}(\gamma) \;\#_\mathcal{R}\; 0_\mathcal{R}].$$
 The two promised conclusions then follow easily.
 
 \smallskip
 Let $\varphi$ be given such that $\forall n[\varphi^n \in  \mathcal{R}^{[0,1]}]$. Using Lemma \ref{L:uzero}(i) find $\beta$ such that $\forall n [\mathcal{H}_{\beta^n}=\{\gamma\in[0,1]\mid (\varphi^n)^{`\mathcal{R}}(\gamma) \;\#_\mathcal{R} \; 0_\mathcal{R}\}]$. 
 
  \smallskip Assume $\forall \gamma \in [0,1]^\omega \exists n[(\varphi^n)^{`\mathcal{R}}(\gamma^n) \;\#_\mathcal{R}\; 0_\mathcal{R}]]$.
  
  Conclude: $\forall \gamma \in [0,1]^\omega \exists n[\gamma^n \in \mathcal{H}_{\beta^n}]$.

  \smallskip
  Now let $n$ be given such that $[0,1]\subseteq \mathcal{H}_{\beta^n}$. 
  Clearly,  $\forall \gamma \in [0,1] [(\varphi^n)^{`\mathcal{R}}(\gamma) \;\#_\mathcal{R}\; 0_\mathcal{R}]$.

  \smallskip
  (ii) We  prove, in $\mathsf{BIM}$: for each $\alpha$, there exists $\varphi$ such that $\forall n[\varphi^n \in  \mathcal{R}^{[0,1]}]$ and $$\forall \gamma \in [0,1]^\omega \exists n [\gamma^n \in \mathcal{H}_{\alpha^n}] \rightarrow \forall \gamma \in [0,1]^\omega\exists n[(\varphi^n)^{`\mathcal{R}}(\gamma^n) \;\#_\mathcal{R}\; 0_\mathcal{R}]\;\mathrm{and}$$  $$\exists n\forall \gamma \in [0,1] [(\varphi^n)^{`\mathcal{R}}(\gamma) \;\#_\mathcal{R}\; 0_\mathcal{R}]) \rightarrow \exists n [ [0,1] \subseteq \mathcal{H}_{\alpha^n}].$$
  The two promised conclusions then follow easily.

  Let $\alpha$ be given. Using Lemma \ref{L:uzero}(ii),  find $\varphi$  such that \\$\forall n[\varphi^n \in \mathcal{R}^{[0,1]}\;\wedge\;  \mathcal{H}_{\alpha^n} =\{\gamma\in [0,1]\mid  (\varphi^n)^{`\mathcal{R}}(\gamma) \;\#_\mathcal{R}\; 0_\mathcal{R}\}]$. 
  
  Assume: $\forall \gamma \in [0,1]^\omega \exists n[\gamma^n \in \mathcal{H}_{\alpha^n}]$.
  Conclude:
   $\forall \gamma \in [0,1]^\omega \exists n[(\varphi^n)^{`\mathcal{R}}(\gamma^n) \;\#_\mathcal{R}\; 0_\mathcal{R}]$.

  Now assume: $\exists n \forall \gamma \in [0,1][(\varphi^n)^{`\mathcal{R}}(\gamma) \;\#_\mathcal{R}\; 0_\mathcal{R}]$. Conclude:
   $\exists n [ [0,1]\subseteq \mathcal{H}_{\alpha^n}]$.
  \end{proof}

 \begin{theorem}\label{T:unifivt}

 $\mathsf{BIM}$ proves $\overleftarrow{\mathbf{Uzero}}\leftrightarrow \overleftarrow{\mathbf{UIVT}}\leftrightarrow \mathbf{FT}$ and \\
  $ \neg !\overleftarrow{\mathbf{Uzero}}\leftrightarrow\neg!\overleftarrow{\mathbf{UIVT}} \leftrightarrow \neg!\mathbf{FT}$.

\end{theorem}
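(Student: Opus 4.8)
The statement is a bookkeeping corollary: all the analytic work has already been done in Lemma \ref{L:unifivt}, Theorem \ref{T:ccc}, and Theorem \ref{T:ivtreverse} (together with the remarks preceding the definition of $\overleftarrow{\mathbf{Uzero}}$). The plan is to assemble the chain
\[
\mathbf{FT}\;\leftrightarrow\;\mathbf{\Sigma}^0_1\text{-}\overleftarrow{\mathbf{AC}_{\omega,[0,1]}}\;\leftrightarrow\;\overleftarrow{\mathbf{Uzero}}\;\leftrightarrow\;\overleftarrow{\mathbf{UIVT}},
\]
and the parallel chain for the strong negations, each link being a result stated earlier.

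\textbf{Step 1: $\overleftarrow{\mathbf{UIVT}}\leftrightarrow\overleftarrow{\mathbf{Uzero}}$ and the analogous statement for the strong negations.} The implication $\overleftarrow{\mathbf{Uzero}}\rightarrow\overleftarrow{\mathbf{UIVT}}$ is immediate, since $\forall\gamma\in[0,1][(\varphi^n)^{`\mathcal{R}}(\gamma)\;\#_\mathcal{R}\;0_\mathcal{R}]$ trivially entails the disjunction occurring in $\overleftarrow{\mathbf{UIVT}}$ would not be needed — rather one argues in the other direction. For $\overleftarrow{\mathbf{UIVT}}\rightarrow\overleftarrow{\mathbf{Uzero}}$ one invokes $\mathsf{BIM}\vdash\overleftarrow{\mathbf{IVT}}$ (Theorem \ref{T:ivtreverse}): if $\forall\gamma\in[0,1]^\omega\exists n[(\varphi^n)^{`\mathcal{R}}(\gamma^n)\;\#_\mathcal{R}\;0_\mathcal{R}]$, apply $\overleftarrow{\mathbf{UIVT}}$ to get an $n$ with $\forall\gamma\in[0,1][(\varphi^n)^{`\mathcal{R}}(\gamma)>_\mathcal{R}0_\mathcal{R}]$ or $\forall\gamma\in[0,1][(\varphi^n)^{`\mathcal{R}}(\gamma)<_\mathcal{R}0_\mathcal{R}]$, and in either case $\forall\gamma\in[0,1][(\varphi^n)^{`\mathcal{R}}(\gamma)\;\#_\mathcal{R}\;0_\mathcal{R}]$. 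The same two observations, contraposed, give that the witnessing $\varphi$ in $\neg!\overleftarrow{\mathbf{Uzero}}$ also witnesses $\neg!\overleftarrow{\mathbf{UIVT}}$ and conversely (for $\neg!\overleftarrow{\mathbf{UIVT}}$ we take the evident strong negation: $\exists\varphi$ with $\forall n[\varphi^n\in\mathcal{R}^{[0,1]}]$, $\forall\gamma\in[0,1]^\omega\exists n[(\varphi^n)^{`\mathcal{R}}(\gamma^n)\;\#_\mathcal{R}\;0_\mathcal{R}]$, and $\neg\exists n[\forall\gamma\in[0,1][(\varphi^n)^{`\mathcal{R}}(\gamma)>_\mathcal{R}0_\mathcal{R}]\vee\forall\gamma\in[0,1][(\varphi^n)^{`\mathcal{R}}(\gamma)<_\mathcal{R}0_\mathcal{R}]]$). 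This link is what the sentence ``As $\mathsf{BIM}\vdash\overleftarrow{\mathbf{IVT}}$, $\mathsf{BIM}$ proves the equivalence of $\overleftarrow{\mathbf{UIVT}}$ and $\overleftarrow{\mathbf{Uzero}}$'' already records.

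\textbf{Step 2: connect $\overleftarrow{\mathbf{Uzero}}$ to $\mathbf{FT}$ and the strong negations.} By Lemma \ref{L:unifivt}(i)--(ii), $\mathsf{BIM}\vdash\overleftarrow{\mathbf{Uzero}}\leftrightarrow\mathbf{\Sigma}^0_1\text{-}\overleftarrow{\mathbf{AC}_{\omega,[0,1]}}$ and $\mathsf{BIM}\vdash\neg!\overleftarrow{\mathbf{Uzero}}\leftrightarrow\neg!\bigl(\mathbf{\Sigma}^0_1\text{-}\overleftarrow{\mathbf{AC}_{\omega,[0,1]}}\bigr)$. By Theorem \ref{T:ccc}(i)--(ii), $\mathsf{BIM}\vdash\mathbf{FT}\leftrightarrow\mathbf{\Sigma}^0_1\text{-}\overleftarrow{\mathbf{AC}_{\omega,[0,1]}}$ and $\mathsf{BIM}\vdash\neg!\mathbf{FT}\leftrightarrow\neg!\bigl(\mathbf{\Sigma}^0_1\text{-}\overleftarrow{\mathbf{AC}_{\omega,[0,1]}}\bigr)$. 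Chaining Steps 1 and 2 yields $\mathsf{BIM}\vdash\overleftarrow{\mathbf{Uzero}}\leftrightarrow\overleftarrow{\mathbf{UIVT}}\leftrightarrow\mathbf{FT}$ and $\mathsf{BIM}\vdash\neg!\overleftarrow{\mathbf{Uzero}}\leftrightarrow\neg!\overleftarrow{\mathbf{UIVT}}\leftrightarrow\neg!\mathbf{FT}$, which is the theorem.

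\textbf{Expected obstacle.} There is no serious mathematical obstacle left; the only point requiring a little care is bookkeeping about the \emph{strong} negations. One must check that the metamathematical reading of $\neg!$ is used consistently — that the strong negation of each of $\overleftarrow{\mathbf{UIVT}}$, $\overleftarrow{\mathbf{Uzero}}$, $\mathbf{\Sigma}^0_1\text{-}\overleftarrow{\mathbf{AC}_{\omega,[0,1]}}$, and $\mathbf{FT}$ is indeed the one the cited lemmas and theorems use — and that the equivalences in Step 1 genuinely transfer to these strong negations, which they do precisely because $\overleftarrow{\mathbf{IVT}}$ is a $\mathsf{BIM}$-theorem (so passing between the ``$\#_\mathcal{R}$'' formulation and the ``$>_\mathcal{R}$-or-$<_\mathcal{R}$'' formulation is harmless, both positively and under negation). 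Consequently the proof can simply read: ``These statements follow from Lemma \ref{L:unifivt}, Theorem \ref{T:ccc}, and Theorem \ref{T:ivtreverse}.''
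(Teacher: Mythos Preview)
Your proposal is correct and follows essentially the same approach as the paper, which simply cites Lemma \ref{L:unifivt} and Theorem \ref{T:ccc} (the equivalence $\overleftarrow{\mathbf{UIVT}}\leftrightarrow\overleftarrow{\mathbf{Uzero}}$ having already been recorded in the text just before the definition of $\overleftarrow{\mathbf{Uzero}}$, via Theorem \ref{T:ivtreverse}). Your explicit unpacking of the chain and the care about the strong negations is exactly what the paper's one-line proof is abbreviating.
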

 
\begin{proof} Use Lemma \ref{L:unifivt} and Theorem \ref{T:ccc}.  \end{proof}

 \section{The compactness of classical propositional logic}

In this Section, we prove that $\mathbf{FT}$ is equivalent to a contraposition of a restricted version of the compactness theorem for classical propositional logic.  We prove also the corresponding result for $\neg!\mathbf{FT}$.

We introduce the symbols $\neg$, $\bigwedge$ and $\bigvee$ as natural numbers: $\neg := 1$, $\bigwedge := 2$ and $\bigvee := 3$.  We define (the characteristic function of)  $\mathit{Form}\subseteq\omega$, as follows, by  recursion: for each $n$, $n\in \mathit{Form}$ if and only if 
 $$ n'=0 \;\vee\; (n'=\neg \;\wedge\; n''\in \mathit{Form}) \;\vee\;$$  $$\bigl((n'=\bigwedge \;\vee\;n'=\bigvee) \;\wedge\;\forall i<\mathit{length}(n'')[n''(i)\in \mathit{Form}]\bigr).$$

We define $\top :=(\bigwedge, \langle\;\rangle)$ and $\perp:=(\bigvee, \langle\;\rangle)$. 

Assume $\gamma \in \mathcal{C}$. We define $\tilde \gamma$ in
$\mathcal{C}$ such that,  
 for every $n$ in $\mathit{Form}$, \begin{enumerate}[\upshape (i)]\item if $n\notin \mathit{Form}$, then $\tilde \gamma (n) =0$, and, \item if $n\in \mathit{Form}$ and $n'=0$, then $\tilde \gamma (n) = \gamma(n'')$, and,
\item if $n\in \mathit{Form}$ and $n'=\neg$, then   $\tilde \gamma(n) = 1 -
\tilde \gamma (n'')$, and, \item if $n\in \mathit{Form}$ and $n'=\bigwedge$, then
  $\tilde \gamma (n)=
\min\{\tilde \gamma\bigl(n''(i)\bigr)|i< \mathit{length}(n'')\}$, and  \item if $n\in \mathit{Form}$ and $n'=\bigvee$, then
  $\tilde \gamma (n)=
\max\{\tilde \gamma\bigl(n''(i)\bigr)|i< \mathit{length}(n'')\}$.\end{enumerate}  Note: $0 = \langle \;\rangle$. We define $\min(\emptyset)=1$ and $\max(\emptyset)=0$.\\ Note: $\tilde \gamma(\top) =
\tilde \gamma\bigl((\bigwedge, 0)\bigr) = 1$ and $\tilde \gamma(\perp) =\tilde \gamma\bigl((\bigvee, 0)\bigr) = 0$.  

\smallskip For all $m,n$ in $Form$, we define: $m\equiv n$ if and only if $\forall \gamma \in \mathcal{C}[\tilde \gamma( m) =\tilde \gamma (n)]$.

\smallskip
Assume $c \in  \mathit{Bin}$. We define $\tilde c$ in $\mathit{Bin}$ such that $\mathit{length}(c) = \mathit{length}(\tilde c)$, as follows. 

First, define $\gamma =c\ast\underline 0$. Then define,   for all $m<length(c)$,    $\tilde c (m):=\tilde\gamma (m)$.

\smallskip

 $X\subseteq\omega$ is \textit{realizable}, $\mathit{Real}(X)$, if and only if $\exists \gamma \in \mathcal{C} \forall n \in X[\tilde \gamma(n) =1]$, and  \textit{positively unrealizable}, $\mathit{Unreal}(X)$, if and only if $\forall \gamma \in \mathcal{C}\exists n \in X[\tilde \gamma (n) = 0]$.

\smallskip
 We define a mapping $\mathit{Fm}$ from $\mathit{Bin}$ to
$\mathit{Form}$,  as follows. Assume $a \in \mathit{Bin}$. Find $s$ such that $\mathit{length}(s) = \mathit{length}(a)$, and, for all $i < \mathit{length}(a)$, if $a(i) = 0$, then $s(i) = (\neg, (0,i))$, and, if $a(i) = 1$, then $s(i) = (0,i)$. Define $Fm(a)= (\bigwedge, s)$. 

\begin{lemma}\label{L:helpcompactness}\hfill

\begin{enumerate}[\upshape (i)]\item  $\forall  
a \in \mathit{Bin}\forall\gamma\in\mathcal{C}[\tilde \gamma\bigl(\mathit{Fm}(a)\bigr) = 1\leftrightarrow a \sqsubset \gamma]$.  \item There exists $\beta$ such that \\$\forall m \in Form \forall p[\beta\bigl(m,p)\bigr)>p\;\wedge\; \beta\bigl((m,p)\bigr) \in Form\;\wedge\; \beta\bigl((m,p)\bigr)\equiv m]$. \item\footnote{$[\omega]^\omega=\{\zeta\mid\forall n[\zeta(n)<\zeta(n+1)]\}$, see Subsection \ref{SS:sequences}.} For all $\alpha$ in $\mathcal{C}$,  there exists $\delta$ in $[\omega]^\omega$ such that \\ $\forall m[\delta(m)\in \mathit{Form}\;\wedge\;\forall \gamma \in \mathcal{C}[\tilde\gamma\bigl(\delta(m)\bigr)=1 \leftrightarrow \forall n\le m[\alpha(\overline \gamma n)=0]]]$. \end{enumerate}\end{lemma}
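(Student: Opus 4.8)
\textbf{Plan for proving Lemma \ref{L:helpcompactness}.}

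The plan is to treat the three parts essentially independently, each by a straightforward unwinding of the definitions of $\mathit{Fm}$, $\tilde\gamma$, and $\equiv$, together with one appeal to a closure axiom of $\mathsf{BIM}$ (Axiom \ref{ax:recop}) and, in part (iii), an induction.

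For part (i), I would fix $a$ in $\mathit{Bin}$ and $\gamma$ in $\mathcal{C}$ and simply compute $\tilde\gamma\bigl(\mathit{Fm}(a)\bigr)$. Writing $\mathit{Fm}(a)=(\bigwedge, s)$ with $\mathit{length}(s)=\mathit{length}(a)$, clause (iv) in the definition of $\tilde\gamma$ gives $\tilde\gamma\bigl(\mathit{Fm}(a)\bigr)=\min\{\tilde\gamma\bigl(s(i)\bigr)\mid i<\mathit{length}(a)\}$. For each $i$, by construction $s(i)=(0,i)$ when $a(i)=1$ and $s(i)=(\neg,(0,i))$ when $a(i)=0$; using clauses (ii) and (iii) this means $\tilde\gamma\bigl(s(i)\bigr)=\gamma(i)$ in the first case and $\tilde\gamma\bigl(s(i)\bigr)=1-\gamma(i)$ in the second. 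Hence $\tilde\gamma\bigl(s(i)\bigr)=1$ iff $a(i)=\gamma(i)$, so the minimum over all $i<\mathit{length}(a)$ is $1$ iff $\forall i<\mathit{length}(a)[a(i)=\gamma(i)]$, i.e. iff $a\sqsubset\gamma$. (One should also note the degenerate case $a=\langle\;\rangle$, where $\mathit{Fm}(a)=\top$, $\tilde\gamma(\top)=1$, and indeed $\langle\;\rangle\sqsubset\gamma$.)

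For part (ii) I would define, by primitive recursion, a function taking $(m,p)$ to a formula obtained from $m$ by `padding': for instance iterate the operation $n\mapsto (\bigwedge,\langle n\rangle)$ enough times to exceed $p$ — concretely, let $g(n,0)=n$ and $g(n,k+1)=(\bigwedge,\langle g(n,k)\rangle)$, and set $\beta\bigl((m,p)\bigr)=g\bigl(m,\mu k[g(m,k)>p]\bigr)$. Such a $\beta$ exists by Axiom \ref{ax:recop} (closure of $\omega^\omega$ under primitive recursion and unbounded search, the latter applicable because $g(n,k)$ is strictly increasing in $k$, so the search terminates). One checks immediately that $g(m,k)\in\mathit{Form}$ whenever $m\in\mathit{Form}$, that $\tilde\gamma\bigl((\bigwedge,\langle n\rangle)\bigr)=\min\{\tilde\gamma(n)\}=\tilde\gamma(n)$ for every $\gamma$, hence $g(m,k)\equiv m$, and that $\beta\bigl((m,p)\bigr)>p$ by the choice of $k$.

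For part (iii), given $\alpha$ in $\mathcal{C}$, the idea is to let $\delta(m)$ code the conjunction over $n\le m$ of the formulas $\neg\, \mathit{Fm}(\overline{\,\cdot\,}n)$ restricted to the (finitely many) $s$ of length $n$ with $\alpha(s)\neq 0$ — more precisely, $\delta(m)$ should be a formula equivalent to $\bigwedge\{\,\neg\,\mathit{Fm}(s)\mid \mathit{length}(s)\le m\ \wedge\ \alpha(s)\neq 0\,\}$; by part (i), $\tilde\gamma$ of $\neg\,\mathit{Fm}(s)$ is $1$ iff $\neg(s\sqsubset\gamma)$, so $\tilde\gamma\bigl(\delta(m)\bigr)=1$ iff $\forall n\le m\,\forall s[\,\mathit{length}(s)=n\wedge\alpha(s)\neq 0\rightarrow \neg(s\sqsubset\gamma)\,]$, which is exactly $\forall n\le m[\alpha(\overline\gamma n)=0]$. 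To get $\delta\in[\omega]^\omega$, i.e. strictly increasing, I would first build $\delta^\ast$ realizing the displayed equivalence by primitive recursion (appending the new conjuncts as $m$ increases), then compose with the padding function $\beta$ from part (ii) to force $\delta(m)<\delta(m+1)$ while preserving $\equiv$-classes and membership in $\mathit{Form}$; again Axiom \ref{ax:recop} supplies the needed $\delta$. The one point requiring a little care — and the main obstacle, such as it is — is the bookkeeping in part (iii): making sure the recursion producing $\delta^\ast(m)$ genuinely yields a formula in $\mathit{Form}$ whose truth value under every $\tilde\gamma$ matches the intended finite conjunction, and that the subsequent padding step does not disturb this. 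I expect this to be purely routine but slightly tedious, which is presumably why the authors state it as a lemma rather than spelling it out.
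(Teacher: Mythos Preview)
Your proposal is correct, but departs from the paper in (ii) and (iii). For (ii) the paper pads in a single step: given $m,p$ it sets $q:=\max(m,p)$ and forms $(\bigwedge,s)$ with $s\in\omega^{q+1}$, $s(0)=m$, and all other entries $\top$; this sidesteps your claim that $k\mapsto g(m,k)$ is strictly increasing, which is not immediate since $\mathsf{BIM}$ fixes no particular pairing $J$ (injectivity of $g(m,\cdot)$ would still make the unbounded search terminate, but you should argue that rather than assert monotonicity). For (iii) the paper takes the dual route: rather than your conjunction of $\neg\,\mathit{Fm}(s)$ over the bad prefixes, it forms the \emph{disjunction} of $\mathit{Fm}(a)$ over the good $a\in\mathit{Bin}_{m+1}$ (those with $\alpha(\overline a n)=0$ for all $n\le m{+}1$), and then applies (ii) to force $\delta(m+1)>\delta(m)$. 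Both constructions yield the required equivalence; yours grows more uniformly with $m$, while the paper's keeps the use of $\mathit{Fm}$ positive. One small point: in your (iii) restrict the conjunction to $s\in\mathit{Bin}$, since $\mathit{Fm}$ is only defined there.
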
 \begin{proof}

\smallskip (i) We  prove, by induction: \\for each $n$, $\forall  
a \in \mathit{Bin}_n\forall\gamma\in\mathcal{C}[\tilde \gamma\bigl(\mathit{Fm}(a)\bigr) = 1\leftrightarrow a \sqsubset \gamma]$.

First, note $Fm(\langle\;\rangle)=\top$ and: $\forall \gamma \in \mathcal{C}[\tilde \gamma(\top)=1]$ and   $\forall \gamma \in \mathcal{C}[\langle \;\rangle \sqsubset \gamma]$.

Then, let $a,n$ be given such that $a\in Bin_n$ and $\forall\gamma\in\mathcal{C}[\tilde \gamma\bigl(\mathit{Fm}(a)\bigr) = 1\leftrightarrow a \sqsubset \gamma]$. \\Note that for each $\gamma$ in $\mathcal{C}$, for each $i<2$,  \\
$\tilde \gamma\bigl(Fm(a\ast\langle i\rangle)\bigr)=1\leftrightarrow \bigl(\tilde\gamma\bigl(Fm(a)\bigr) = 1 \;\wedge\; \gamma(n)=i \bigr)\leftrightarrow a\ast\langle i \rangle \sqsubset \gamma$.
 
(ii) The proof is an exercise in calculating codes of formulas. Given $m$ in $Form$ and $p$, one might first find $q:=\max (m,p) $ and then $s$ in $\omega^{q+1}$ such that $s(0)=m$ and $\forall j<q[s(j)=\top]$ and then define $\beta\bigl((m,p)\bigr)
:=(\bigwedge, s)$. 

\smallskip (iii) Let $\alpha$ be given.  We define the promised $\alpha$ by induction. 

If $\alpha(\langle\;\rangle)=0$, define $\delta(0):=\top$, and, if $\alpha(\langle\;\rangle)\neq 0$, define $\delta(0):=\perp$.

Note that $\delta(0)$ satisfies the requirements. 

Let $m$ be given such that $\delta(m)$ has been defined and satisfies the requirements.

Find $t$ such that $\{t(i)\mid i < \mathit{length}(t)\}=\{a\in\mathit{Bin}_{m+1}\mid\forall n\le m+1[\alpha(\overline a n) = 0]\}$. \\Then find $s$ such that $\mathit{length}(s) = \mathit{length}(t)$ and  $\forall i< \mathit{length}(s)[s(i) = \mathit{Fm}\bigl(t(i)\bigr)]$. 
 
 Note that, for each $\gamma$ in $\mathcal{C}$, $\tilde\gamma\bigl((\bigvee, s)\bigr)=1\leftrightarrow$  \\$\exists a \in \mathit{Bin}_{m+1}[\forall n \le m[\alpha(\overline a n) = 0]\;\wedge\;\tilde\gamma\bigl(Fm(a)\bigr)=1]\leftrightarrow$   \\$\exists a \in \mathit{Bin}_{m+1}[\forall n \le m[\alpha(\overline a n) = 0]\;\wedge\;a \sqsubset \gamma]\leftrightarrow\forall n\le m[\alpha(\overline \gamma n) =0]$.
 
 Define $\delta(m+1) =\beta\bigl( (\bigvee, s), \delta(m)\bigr)$, where $\beta$ is the function we found in (ii).  
 
  Note that $\delta(m+1)$ satisfies the requirements. 
 \end{proof}

 \begin{lemma}\label{L:compprop}
 The following statements are provable in  $\mathsf{BIM}$.
 \begin{enumerate}[\upshape (i)]
\item $\mathbf{FT} \rightarrow \forall \alpha[\mathit{Unreal}(E_\alpha) \rightarrow \exists n[\mathit{Unreal}(E_{\overline \alpha n})]]$. \item $\exists  \alpha[\mathit{Unreal}(E_\alpha) \;\wedge\; \forall n[\mathit{Real}(E_{\overline \alpha n})]]\rightarrow \neg!\mathbf{FT}$. 
 \item $\mathbf{WKL}\rightarrow \forall\alpha[\forall n [Real(E_{\overline\alpha n})]\rightarrow Real(E_\alpha)]]$.
  \item $\forall \alpha[\mathit{Unreal}(D_\alpha) \rightarrow \exists n[\mathit{Unreal}(D_{\overline \alpha n})]]\rightarrow \mathbf{FT}$. \item $\neg!\mathbf{FT} \rightarrow \exists \alpha[\mathit{Unreal}(D_\alpha) \;\wedge\; \forall n[\mathit{Real}(D_{\overline \alpha n})]]$.\item $\forall\alpha[\forall n[Real(D_{\overline\alpha n})]\rightarrow Real(D_\alpha)]]\rightarrow \mathbf{WKL}$. 

 \end{enumerate}
 \end{lemma}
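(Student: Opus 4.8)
\textbf{Plan of proof for Lemma \ref{L:compprop}.}

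The plan is to treat the six items as three pairs, each pair relating the compactness-style statement for propositional logic to one of $\mathbf{FT}$, $\mathbf{WKL}$, or $\neg!\mathbf{FT}$, using the encoding machinery of Lemma \ref{L:helpcompactness}. The guiding idea is that a set $X\subseteq\omega$ of codes of propositional formulas being \emph{realizable} corresponds, via the truth-evaluation map $\gamma\mapsto\tilde\gamma$, to the existence of a branch through a ``tree'' of partial valuations, so $\mathit{Unreal}$ is a bar-type condition and $\mathit{Real}$ is a ``no bar'' condition. For (i), I would assume $\mathbf{FT}$, take $\alpha$ with $\mathit{Unreal}(E_\alpha)$, and define $\beta$ so that $\overline\gamma n\in D_\beta$ records that $\tilde\gamma$ already refutes some formula whose code has appeared in $E_{\overline\alpha n}$; then $Bar_\mathcal{C}(D_\beta)$ holds because $\mathit{Unreal}(E_\alpha)$ says every $\gamma$ eventually refutes something enumerated, and $\mathbf{FT}$ gives a uniform bound $n$, from which $\mathit{Unreal}(E_{\overline\alpha n})$ follows (only finitely many formulas are enumerated within the bound, and each refutation is witnessed within the bound). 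For (iv) I would run the converse: given $\alpha$ with $Bar_\mathcal{C}(D_\alpha)$, I use Lemma \ref{L:helpcompactness}(iii) to manufacture $\delta\in[\omega]^\omega$ with $\tilde\gamma(\delta(m))=1\leftrightarrow\forall n\le m[\alpha(\overline\gamma n)=0]$, so the set $X=\{\delta(m)\mid m\in\omega\}$ satisfies $\mathit{Unreal}(X)$, i.e.\ $\mathit{Unreal}(D_{\delta})$ after padding; the hypothesis then yields $n$ with $\mathit{Unreal}(D_{\overline\delta n})$, and since each $\delta(m)$ is \emph{more} restrictive as $m$ grows, this forces $\exists m[Bar_\mathcal{C}(D_{\overline\alpha m})]$, giving $\mathbf{FT}$. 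Items (ii) and (v) are then immediate formal consequences of the constructions in (i) and (iv) respectively: the $\beta$ built from $\alpha$ in (i) has the property that if $\mathit{Unreal}(E_\alpha)\wedge\forall n[\mathit{Real}(E_{\overline\alpha n})]$ then $Bar_\mathcal{C}(D_\beta)\wedge\forall n[\neg Bar_\mathcal{C}(D_{\overline\beta n})]$, which is $\neg!\mathbf{FT}$; and symmetrically for (v), using the $\delta$-construction to convert a witness for $\neg!\mathbf{FT}$ into a witness for the unrealizable-but-finitely-realizable phenomenon.

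For (iii) and (vi) I would use the same encoding but now feed $\mathbf{WKL}$ and $\mathbf{\Sigma}^0_1$-$\mathbf{Sep}$-style reasoning. For (iii), assume $\mathbf{WKL}$ and $\forall n[\mathit{Real}(E_{\overline\alpha n})]$; the set $T=\{a\in Bin\mid \tilde a$ does not refute any formula coded in $E_{\overline\alpha{length(a)}}\}$ is, after the usual downward-closure repair (as in the proofs of Theorems \ref{T:wklft} and \ref{T:sep=wkl}), an infinite binary ``tree'' in the $\mathbf{WKL}$ sense — here the hypothesis $\forall n[\mathit{Real}(E_{\overline\alpha n})]$ is exactly what gives, for each $n$, a length-$n$ binary string that works — so $\mathbf{WKL}$ produces $\gamma\in\mathcal{C}$ with $\tilde\gamma(m)=1$ for all $m\in E_\alpha$, i.e.\ $\mathit{Real}(E_\alpha)$. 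For (vi) I would run this backwards: given $\alpha$ as in $\mathbf{WKL}$'s hypothesis, i.e.\ $\forall n[\neg Bar_\mathcal{C}(D_{\overline\alpha n})]$, use Lemma \ref{L:helpcompactness}(iii) to get $\delta$ as above; then $\forall n[\mathit{Real}(D_{\overline\delta n})]$ holds precisely because $\forall n[\neg Bar_\mathcal{C}(D_{\overline\alpha n})]$ (each initial finite restriction of the valuation requirement is satisfiable along some branch of the non-well-founded tree), so the hypothesis of (vi) gives $\mathit{Real}(D_\delta)$, which unwinds — via the defining property of $\delta$ — to a $\gamma\in\mathcal{C}$ with $\forall n[\alpha(\overline\gamma n)=0]$, i.e.\ $\mathbf{WKL}$.

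The main obstacle I anticipate is bookkeeping rather than conceptual: one must be careful about the distinction between $E_\alpha$ (enumerable: formulas arrive in an unbounded, unordered way) and $D_\alpha$ (the ``finite-sequence-coded'' reading), since the six items deliberately alternate between the two, and the direction of each implication dictates which is the natural hypothesis form. The $E$-versions require that when $\mathbf{FT}$ supplies a bound $n$, one genuinely controls which formulas have been enumerated by stage $n$ \emph{and} that each already-enumerated formula's refutation is detectable from $\overline\gamma n$ — this needs the monotonicity built into Lemma \ref{L:helpcompactness}(i) together with a padding trick so that $\tilde{}$-evaluation of a code $m$ only inspects $\gamma$ below $m$. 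The other delicate point is the downward-closure repair in (iii): the raw ``consistent valuations'' tree need not be closed under initial segments, exactly as in the proof of Theorem \ref{T:wklft}, so I would insert the same $\alpha^{\ast}$, $\alpha^{\ast\ast}$ style fix. Once these encoding lemmas are in place, each of the six implications is a short argument of the type already rehearsed several times in Sections 3 and 5 of the paper, so I would present (i)--(vi) compactly, citing Lemma \ref{L:helpcompactness} and Theorems \ref{T:wklft}, \ref{T:sep=wkl} for the tree-repair steps, and leaving the purely computational verifications (that the displayed $\beta$, $\delta$ have the stated properties) to the reader.
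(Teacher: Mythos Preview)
Your proposal is correct and follows essentially the same approach as the paper: a single construction $\alpha\mapsto\beta$ (with $\beta(c)\neq 0$ recording that some formula already enumerated by stage $\mathit{length}(c)$ is refuted by the partial valuation $\tilde c$) handles (i)--(iii) simultaneously, and the construction via Lemma~\ref{L:helpcompactness}(iii) (turning a bar $D_\alpha$ into the decidable set of codes $\{\delta(m)\mid m\in\omega\}$) handles (iv)--(vi). The one place you diverge slightly is (iii): you anticipate needing a downward-closure repair in the style of Theorem~\ref{T:wklft}, but the paper does not perform one --- it reuses the very same $\beta$ from (i) and verifies the $\mathbf{WKL}$ hypothesis directly, since a realizer $\gamma$ of $E_{\overline\alpha m}$ already gives $\beta(\overline\gamma j)=0$ for all $j\le m$ (using $E_{\overline\alpha j}\subseteq E_{\overline\alpha m}$ and the fact that the evaluation of a code $n<j$ depends only on $\gamma\!\restriction\! j$). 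Your extra caution would not be wrong, but it is not needed here.
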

 \begin{proof} (i), (ii) and (iii). 
We argue in $\mathsf{BIM}$.

 Let $\alpha$ be given. Define $\beta$  such that, for all $m$, for all $c$ in $Bin_m$, \\ $\beta(c) \neq 0\leftrightarrow \exists n < m[n\in E_{\overline \alpha m}\;\wedge\;\tilde c(n) = 0]]$. We shall prove:
 $$\mathit{Unreal}(E_\alpha) \rightarrow \mathit{Bar}_\mathcal{C}(D_\beta)\;\mathrm{and}$$
 $$\exists m[ Bar_\mathcal{C}(D_{\overline \beta m})]\rightarrow \exists n[\mathit{Unreal}(E_{\overline \alpha n})].$$

 Assume: $\mathit{Unreal}(E_\alpha)$. Let $\gamma$ in $\mathcal{C}$ be given. Find $n,p$ such that  $n \in E_{\overline \alpha p}$ and $\tilde \gamma (n) = 0$. Define $m:=\max\{n, p\}+1 $ and note: $\beta(\overline \gamma m) \neq 0$. 
  Conclude: $\mathit{Bar}_\mathcal{C}(D_\beta)$.

  \smallskip
Let  $m$  be given such that $ Bar_\mathcal{C}(D_{\overline \beta m})$.   For all $c$ in $\mathit{Bin}_m$,  $\exists n\le m[\beta(\overline c n) \neq 0]$, and, therefore:  $\exists n < \mathit{length}(c)[n \in E_{\overline \alpha m}\;\wedge\;\tilde c(n) = 0]$. Conclude: $\mathit{Unreal}(E_{\overline \alpha m})$ and $\exists n[\mathit{Unreal}(E_{\overline \alpha n})]$.
  
  \smallskip Note: if $Unreal(E_\alpha)$, then $Bar_\mathcal{C}(D_\beta)$, and by $\mathbf{FT}$, there exist $m$ such that $Bar_\mathcal{C}(D_{\overline \beta m})$ and $n$ such that $Unreal(E_{\overline \alpha n})$. This establishes (i).
  
  \smallskip Note: if $Unreal(E_\alpha)\;\wedge\;\forall n[Real(E_{\overline\alpha n}]$, then $Bar_\mathcal{C}(D_\beta)$ and $\forall m[\neg Bar(D_{\overline \beta m})]$, i.e. $\neg\mathbf{! FT}$. This establishes (ii).

  \smallskip Note: if $\forall n[Real(E_{\overline\alpha n})]$, then $\forall m[\neg Bar(D_{\overline \beta m})]$, and, by $\mathbf{WKL}$, there exists $\gamma$ such that $\forall n[\beta(\overline   \gamma n)=0 ]$. Conclude: $\forall m \forall n<m[n\in E_{\overline \alpha m}\rightarrow \tilde \gamma(n) =1]$, i.e. $\gamma $ realizes $E_\alpha$ and $Real(E_\alpha)$. This establishes (iii).

  \medskip
 (iv), (v) and (vi).  We argue in $\mathsf{BIM}$. 
 
 Let $\alpha$ be given. Using Lemma \ref{L:helpcompactness}(iii), find $\delta$ in $[\omega]^\omega$ such that\\ $\forall m[\delta(m)\in \mathit{Form}\;\wedge\;\forall \gamma \in \mathcal{C}[\tilde\gamma\bigl(\delta(m)\bigr)=1] \leftrightarrow \forall n\le m[\alpha(\overline \gamma n)=0]]]$.
   
   Note: $\delta$ is strictly increasing and, therefore, \\$\forall m[\exists n[m=\delta(n)]\leftrightarrow \exists n\le m[m=\delta(n)]]$.
   
  Define $\beta$  such that $\forall m[\beta(m) \neq 0\leftrightarrow \exists n[m = \delta(n)]]$.  We shall prove:
$$\mathit{Bar}_\mathcal{C}( D_\alpha)\rightarrow \mathit{Unreal}(D_{ \beta })\;\mathrm{and}$$  $$\exists n[\mathit{Unreal}(D_{\overline \beta n})] \rightarrow \exists m [Bar_\mathcal{C}(D_{\overline \alpha m})].$$

  Assume: $\mathit{Bar}_\mathcal{C}(D_\alpha)$. Given any $\gamma \in \mathcal{C}$,  find $m$ such that $\alpha(\overline \gamma m) \neq 0$ and, therefore, $\tilde\gamma\bigl(\delta(m)\bigr)=0$  and:  $\exists n \in D_\beta[\tilde \gamma(n) \neq 1]$. Conclude: $\mathit{Unreal}(D_\beta)$.

 \smallskip 
  Let $n$ be given such that $\mathit{Unreal}(D_{\overline \beta n})$. Let $m_0$ be the largest $m$ such that $\delta(m)< n$. Note: $\neg \exists \gamma \in \mathcal{C}[\tilde\gamma\bigl(\delta(m_0)\bigr)=1] $,  and, therefore, \\$\forall a \in \mathit{Bin}_{m_0+1}\exists n \le m_0[ \alpha(\overline a n) \neq 0]$. Find $k$ such that $\forall a\in Bin_{m_0+1}[a\le k]$ and conclude:  $Bar_\mathcal{C}(D_{\overline \alpha k})$ and  $\exists m[Bar_\mathcal{C}(D_{\overline \alpha m})]$. 
  
  \smallskip Note: if $Bar_\mathcal{C}(D_\alpha)$ and $Unreal(D_\beta)\rightarrow \exists n[Unreal(D_{\overline\beta n})]$, then $\exists m[Bar_\mathcal{C}(D_{\overline \alpha m})]$. This establishes (iv). 
  
  \smallskip Note: if $Bar_\mathcal{C}(D_\alpha)$ and $\neg \exists n[Bar_\mathcal{C}(D_{\overline \alpha n}]$, then $Unreal(D_\beta)$ and $\forall n[Real(D_{\overline \beta n})]$. This establishes (v).
  
  \smallskip Note: if $\forall n[\neg Bar_\mathcal{C}(D_{\overline \alpha n}]$ and $\forall n[Real(D_{\overline \beta n})]\rightarrow Real(D_\beta)$, then   there exists $\gamma$ in $\mathcal{C}$ realizing $D_\beta$, so $\forall m[\tilde\gamma\bigl(\delta(m)\bigr)=1]$ and $\forall m\forall n\le m[\alpha(\overline \gamma n)=0]$, i.e.  $\forall n[\alpha(\overline \gamma n)=0]$.  This establishes (vi). 
\end{proof}

\begin{theorem}

\begin{enumerate}[\upshape (i)]\label{T:proplog}
\item $\mathsf{BIM}\vdash \mathbf{FT} \leftrightarrow \forall \alpha[\mathit{Unreal}(E_\alpha) \rightarrow \exists n[\mathit{Unreal}(E_{\overline \alpha n})]] \leftrightarrow \\\forall \alpha[\mathit{Unreal}(D_\alpha) \rightarrow \exists n[\mathit{Unreal}(D_{\overline \alpha n})]]$. 
\item $\mathsf{BIM}\vdash \neg!\mathbf{FT} \leftrightarrow \exists \alpha[\mathit{Unreal}(E_\alpha) \;\wedge\; \forall n[\mathit{Real}(E_{\overline \alpha n})]] \leftrightarrow \\\exists \alpha[\mathit{Unreal}(D_\alpha) \;\wedge\; \forall n[\mathit{Real}(D_{\overline \alpha n})]]$. 
\item $\mathsf{BIM}\vdash \mathbf{WKL} \leftrightarrow \forall \alpha[ \forall n[\mathit{Real}(E_{\overline \alpha n})]\rightarrow Real(E_\alpha)] \leftrightarrow \\\forall \alpha[ \forall n[\mathit{Real}(D_{\overline \alpha n})]\rightarrow Real(D_\alpha)]$.
\end{enumerate}
\end{theorem}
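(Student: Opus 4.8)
The plan is to obtain Theorem \ref{T:proplog} as a direct corollary of Lemma \ref{L:compprop}, which already packages all the real work. The statement has three parts, each asserting that a certain statement is equivalent to two reformulations in terms of the predicates $\mathit{Real}$ and $\mathit{Unreal}$ applied to enumerable sets $E_\alpha$ and decidable sets $D_\alpha$. For each part I would chase the implications around a small cycle using the six items (i)--(vi) of Lemma \ref{L:compprop}, together with the already-established equivalence $\mathsf{BIM}\vdash\mathbf{WKL}\leftrightarrow\mathbf{FT}$-style relations; more precisely, I would use that $\mathbf{WKL}$ is a contraposition of $\mathbf{FT}$, and Theorems \ref{T:wklft} and \ref{T:ctka} where needed — though in fact for the cycles below only Lemma \ref{L:compprop} is required.

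For part (i): Lemma \ref{L:compprop}(i) gives $\mathbf{FT}\rightarrow\forall\alpha[\mathit{Unreal}(E_\alpha)\rightarrow\exists n[\mathit{Unreal}(E_{\overline\alpha n})]]$; the ``$D$-version'' statement $\forall\alpha[\mathit{Unreal}(D_\alpha)\rightarrow\exists n[\mathit{Unreal}(D_{\overline\alpha n})]]$ is a special case of the ``$E$-version'' because every $D_\alpha$ is of the form $E_\beta$ for a suitable $\beta$ (a decidable set is enumerable, so one passes from an $\alpha$ witnessing a $D$-set to a $\beta$ with $E_\beta = D_\alpha$, uniformly in $\mathsf{BIM}$); hence the $E$-version implies the $D$-version. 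Finally Lemma \ref{L:compprop}(iv) gives the $D$-version $\rightarrow\mathbf{FT}$. Chaining: $\mathbf{FT}\Rightarrow(E\text{-version})\Rightarrow(D\text{-version})\Rightarrow\mathbf{FT}$, so all three are equivalent in $\mathsf{BIM}$. For part (ii) the same bookkeeping works: Lemma \ref{L:compprop}(ii) gives $(\exists\alpha[\mathit{Unreal}(D_\alpha)\wedge\forall n[\mathit{Real}(D_{\overline\alpha n})]])\rightarrow\neg!\mathbf{FT}$ — wait, (ii) is stated with $E_\alpha$; and Lemma \ref{L:compprop}(v) gives $\neg!\mathbf{FT}\rightarrow\exists\alpha[\mathit{Unreal}(D_\alpha)\wedge\forall n[\mathit{Real}(D_{\overline\alpha n})]]$. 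The existential $D$-statement trivially implies the existential $E$-statement (again since $D$-sets are $E$-sets), and the existential $E$-statement implies $\neg!\mathbf{FT}$ by \ref{L:compprop}(ii). So $\neg!\mathbf{FT}\Rightarrow(D)\Rightarrow(E)\Rightarrow\neg!\mathbf{FT}$. For part (iii), Lemma \ref{L:compprop}(iii) gives $\mathbf{WKL}\rightarrow\forall\alpha[\forall n[\mathit{Real}(E_{\overline\alpha n})]\rightarrow\mathit{Real}(E_\alpha)]$; the $E$-version implies the $D$-version as a special case; and \ref{L:compprop}(vi) gives the $D$-version $\rightarrow\mathbf{WKL}$.

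The one point that needs a line of care, rather than being truly routine, is the ``$E$-version implies $D$-version'' reduction: one must check that from a decidable presentation of a set $X$ (via $D_\alpha$) one can, inside $\mathsf{BIM}$, produce an enumeration $E_\beta$ of the \emph{same} set $X$, and that this transformation interacts correctly with finite initial segments $\overline\alpha n$ so that ``$\mathit{Unreal}(E_{\overline\beta m})$ for some $m$'' can be pulled back to ``$\mathit{Unreal}(D_{\overline\alpha n})$ for some $n$'' (and similarly for $\mathit{Real}$). This is straightforward — $D_\alpha = \{n < \mathit{length}(\alpha)\mid \alpha(n)\neq 0\}$ and one lets $\beta$ enumerate, with repetitions, exactly those $n$ — but I would spell out the definition of $\beta$ and note the monotone correspondence between the lengths. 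So I expect the main (very mild) obstacle is just making this $D$-to-$E$ bookkeeping precise; everything else is an application of Lemma \ref{L:compprop}. I would write the proof as: ``Use Lemma \ref{L:compprop}, together with the observation that every $D_\alpha$ equals $E_\beta$ for a suitable $\beta$, which one may find effectively from $\alpha$.'' and then display the three equivalence cycles.

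\begin{proof}
We use Lemma \ref{L:compprop}. Note first that, for each $\alpha$, one may find $\beta$ such that $E_\beta = D_\alpha$ and, moreover, for each $m$ there is $n$ with $E_{\overline\beta m}\subseteq D_{\overline\alpha n}$ and for each $n$ there is $m$ with $D_{\overline\alpha n}\subseteq E_{\overline\beta m}$; one obtains such a $\beta$ by letting it enumerate, possibly with repetitions, the elements of $\{k<\mathit{length}(\alpha)\mid\alpha(k)\neq 0\}$. Consequently, $\mathit{Real}$ and $\mathit{Unreal}$ transfer between the $E$-formulations and the $D$-formulations, and the $D$-formulations below are special cases of the $E$-formulations.

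\smallskip
(i) By Lemma \ref{L:compprop}(i), $\mathbf{FT}$ implies $\forall\alpha[\mathit{Unreal}(E_\alpha)\rightarrow\exists n[\mathit{Unreal}(E_{\overline\alpha n})]]$. The latter implies $\forall\alpha[\mathit{Unreal}(D_\alpha)\rightarrow\exists n[\mathit{Unreal}(D_{\overline\alpha n})]]$ by the observation above. Finally, Lemma \ref{L:compprop}(iv) shows that the latter implies $\mathbf{FT}$. Hence all three statements are equivalent in $\mathsf{BIM}$.

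\smallskip
(ii) By Lemma \ref{L:compprop}(v), $\neg!\mathbf{FT}$ implies $\exists\alpha[\mathit{Unreal}(D_\alpha)\wedge\forall n[\mathit{Real}(D_{\overline\alpha n})]]$. This implies $\exists\alpha[\mathit{Unreal}(E_\alpha)\wedge\forall n[\mathit{Real}(E_{\overline\alpha n})]]$ by the observation above. Finally, Lemma \ref{L:compprop}(ii) shows that the latter implies $\neg!\mathbf{FT}$. Hence all three statements are equivalent in $\mathsf{BIM}$.

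\smallskip
(iii) By Lemma \ref{L:compprop}(iii), $\mathbf{WKL}$ implies $\forall\alpha[\forall n[\mathit{Real}(E_{\overline\alpha n})]\rightarrow\mathit{Real}(E_\alpha)]$. The latter implies $\forall\alpha[\forall n[\mathit{Real}(D_{\overline\alpha n})]\rightarrow\mathit{Real}(D_\alpha)]$ by the observation above. Finally, Lemma \ref{L:compprop}(vi) shows that the latter implies $\mathbf{WKL}$. Hence all three statements are equivalent in $\mathsf{BIM}$.
\end{proof}
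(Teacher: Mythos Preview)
Your proof is correct and follows exactly the same approach as the paper, which simply writes ``Use Lemma \ref{L:compprop} and: $\forall \alpha\exists \beta[D_\alpha = E_\beta]$.'' You have spelled out the three equivalence cycles in more detail than the paper does, but the strategy is identical. One tiny slip: in your description of $\beta$ you wrote ``the elements of $\{k<\mathit{length}(\alpha)\mid\alpha(k)\neq 0\}$'', but $\alpha$ is an infinite sequence, so $\mathit{length}(\alpha)$ is not defined; you mean simply $\{k\mid\alpha(k)\neq 0\}=D_\alpha$, and in fact with the standard construction $\beta(n)=n+1$ if $\alpha(n)\neq 0$ and $\beta(n)=0$ otherwise (see Subsection \ref{SS:decidenum}) one even has $E_{\overline\beta n}=D_{\overline\alpha n}$ for every $n$, which makes your inclusion bookkeeping unnecessary.
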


\begin{proof} Use Lemma \ref{L:compprop} and: $\forall \alpha\exists \beta[D_\alpha = E_\beta]$. \end{proof}

Theorem \ref{T:proplog}(i) also is a consequence of  \cite[Theorem 6.5]{loeb}. 

 V.N. Krivtsov has shown, among other things,  that $\mathbf{FT}$ is an equivalent of an  intuitionistic (generalized) completeness theorem for intuitionistic first-order predicate logic, see \cite{krivtsov}.

\section{Stronger `Fan Theorems'?}
In this Section, we indicate what, on our opinion, should be  the subject of the next chapter in intuitionistic reverse mathematics. The Fan Theorem may be seen as a  replacement, for tne intuitionistic mathematician, of that enviable tool of the classical mathematician: (Weak) K\"onig's Lemma.  We hope to make clear that the greater subtlety of the language of the intuitionistic mathematician allows for many other possible replacements. 
\subsection{Notions of finiteness}\label{SS:finiteness}

Let $\alpha$ be given.\\ We consider the set $D_\alpha:=\{n\mid\alpha(n)\neq 0\}$, the subset of $\omega$  \textit{decided by $\alpha$}.

\smallskip We define: $D_\alpha$ is \textit{finite} if and only if $\exists n\forall m\ge n[\alpha(m)=0]$.

\smallskip We define: $D_\alpha$ is \textit{bounded-in-number} if and only if \\$\exists n \forall t \in [\omega]^{n+1} \exists i<n+1[\alpha\circ t(i)=0]$.

\smallskip We define: $D_\alpha$ is \textit{almost-finite} if and only if $\forall \zeta \in[\omega]^\omega\exists n[\alpha\circ\zeta(n)=0]$.

\smallskip We define: $D_\alpha$ is \textit{not-not-finite} if and only if $\neg\neg\exists n\forall m\ge n[\alpha(m)=0]$.

\smallskip We define: $D_\alpha$ is \textit{not-infinite}\footnote{A referee suggested to consider this notion too. } if and only if $\neg\forall n \exists m\ge n[\alpha(m)\neq 0]$.\\Note that $D_\alpha$ is infinite if and only if $ \exists \zeta \in [\omega]^\omega \forall n[\alpha\circ\zeta(n)\neq 0]$, and that \\$D_\alpha$ is   not-infinite if and only if $\neg \exists \zeta \in [\omega]^\omega \forall n[\alpha\circ\zeta(n)\neq 0]$. 

\smallskip Decidable subsets of $\omega$ that are bounded-in-number are introduced and discussed in \cite{veldman1995}.

\smallskip
Almost-finite decidable subsets of $\omega$ were introduced in \cite{veldman1995} and \cite{veldman1999}, and  are also studied in \cite{veldman2005}.

\begin{lemma}\label{L:finiteness}\begin{enumerate}[\upshape (i)]\item  $\mathsf{BIM}\vdash\forall \alpha[D_\alpha\; is\; finite\;\rightarrow D_\alpha\; is\;bounded$-$in$-$number]$. \item  $\mathsf{BIM}\vdash\forall \alpha[D_\alpha\; is\; bounded$-$in$-$number\; \rightarrow\; D_\alpha\; is\; finite]\rightarrow\mathbf{LPO}$. \item $\mathsf{BIM}\vdash\forall \alpha[D_\alpha\; is\; bounded$-$in$-$number\rightarrow D_\alpha\; is\; almost$-$finite]$. \item  $\mathsf{BIM}\vdash\forall \alpha[D_\alpha\; is\; almost$-$finite\; \rightarrow \;D_\alpha\; is\; bounded$-$in$-$number]\rightarrow\mathbf{LPO}$.\item $\mathsf{BIM}\vdash \forall \alpha[D_\alpha\; is\; almost$-$finite\;\rightarrow\; D_\alpha\; is\; not$-$infinite]$.    \item $\mathsf{BIM}+\mathbf{BARIND}\footnote{See \ref{SS:barind}. We do not know if the use of this priniple here is unavoidable.}
\vdash\forall \alpha[D_\alpha\; is\; almost$-$finite\;\rightarrow\; D_\alpha\; is\; not$-$not$-$finite]$. 
\end{enumerate}\end{lemma}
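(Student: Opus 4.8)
\textbf{Proof plan for Lemma \ref{L:finiteness}.}

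The plan is to prove the six items one at a time, most of them being routine, with the real work concentrated in (ii), (iv), and (vi). For item (i), given $\alpha$ with $D_\alpha$ finite, find $n$ with $\forall m\ge n[\alpha(m)=0]$; then for any $t\in[\omega]^{n+1}$, since $t$ is strictly increasing, $t(n)\ge n$, so $\alpha\circ t(n)=0$, giving a witness $i=n<n+1$. Item (iii) is similar in spirit: if $D_\alpha$ is bounded-in-number with bound $n$, then given $\zeta\in[\omega]^\omega$, apply the bounded-in-number property to the restriction $t:=\overline\zeta(n+1)$ (which lies in $[\omega]^{n+1}$) to get $i<n+1$ with $\alpha\circ\zeta(i)=0$; hence $\exists m[\alpha\circ\zeta(m)=0]$. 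Item (v) is immediate: if $D_\alpha$ is almost-finite and we had $\zeta\in[\omega]^\omega$ with $\forall n[\alpha\circ\zeta(n)\neq 0]$, this directly contradicts almost-finiteness applied to that very $\zeta$; so $\neg\exists\zeta\in[\omega]^\omega\forall n[\alpha\circ\zeta(n)\neq 0]$, i.e. $D_\alpha$ is not-infinite.

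For the two reversals to $\mathbf{LPO}$, items (ii) and (iv), I would encode an arbitrary $\beta$ into an $\alpha$ whose $D_\alpha$ is provably bounded-in-number (in fact bounded by $1$, hence also almost-finite) but whose finiteness decides $\exists n[\beta(n)\neq 0]\;\vee\;\forall n[\beta(n)=0]$. The construction: given $\beta$, define $\alpha$ so that $D_\alpha=\{\mu n[\beta(n)\neq 0]\}$ when $\exists n[\beta(n)\neq 0]$ and $D_\alpha=\emptyset$ otherwise --- concretely, $\alpha(m)\neq 0\leftrightarrow (\beta(m)\neq 0\;\wedge\;\forall i<m[\beta(i)=0])$. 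Then $D_\alpha$ has at most one element, so it is bounded-in-number (with $n=1$) and, by (iii), almost-finite. Now if the hypothesis of (ii) (resp. (iv), using (iii) to pass from almost-finite to bounded-in-number first) gives us that $D_\alpha$ is finite, find $n$ with $\forall m\ge n[\alpha(m)=0]$; then \emph{either} $\exists i<n[\beta(i)\neq 0]$, and so $\exists n[\beta(n)\neq 0]$, \emph{or} $\forall i<n[\beta(i)=0]$, in which case, combined with $\forall m\ge n[\alpha(m)=0]$ and the definition of $\alpha$, one checks $\forall m[\beta(m)=0]$. This yields $\mathbf{LPO}$.

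The main obstacle is item (vi): $\mathsf{BIM}+\mathbf{BARIND}\vdash\forall\alpha[D_\alpha\text{ almost-finite}\rightarrow D_\alpha\text{ not-not-finite}]$. Here the plan is to use Bar Induction on $[\omega]^{<\omega}$ (or, working in $\omega^\omega$, on strictly increasing finite sequences). Given $\alpha$ with $D_\alpha$ almost-finite, the almost-finiteness statement $\forall\zeta\in[\omega]^\omega\exists n[\alpha\circ\zeta(n)=0]$ says precisely that the set $B$ of finite strictly increasing sequences $s$ with $\alpha\bigl(s(\mathit{length}(s)-1)\bigr)=0$ (the last entry escaping $D_\alpha$) is a bar in the spread $[\omega]^\omega$. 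Transplanting this to a bar on $\omega^\omega$ (as in the proof of Theorem \ref{T:ftscheme}, one extends sequences monotonically and uses that $[\omega]^\omega$ is a fan-like subspread with decidable membership), one applies $\mathbf{BARIND}$ with $C:=\{s\mid \neg\neg(D_\alpha\cap(\text{region above }s)\text{ is finite})\}$, more precisely $C$ consisting of those $s$ from which the claim ``$D_\alpha$ beyond $\max(s)$ is not-not-finite'' follows. One checks $B\subseteq C$ (a single-point escape makes the remaining part not-not-finite by a direct double-negation argument), that $C$ is closed under the ``$\forall n[s\ast\langle n\rangle\in C]\rightarrow s\in C$'' step (here the double negations absorb the infinite case distinction on $\alpha(n)$, which is where constructivity would otherwise fail), and that $C$ is monotone. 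Then $\langle\;\rangle\in C$ gives $\neg\neg\exists n\forall m\ge n[\alpha(m)=0]$. The delicate points are the exact formulation of $C$ so that the induction step goes through with only double negations available, and the bookkeeping in transferring a bar on $[\omega]^\omega$ to a bar on $\omega^\omega$; I would model both on the proof of Theorem \ref{T:ftscheme}.
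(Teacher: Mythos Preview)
Your arguments for (i), (ii), (iii), and (v) are correct and match the paper's. Your plan for (vi) is on the right track; the paper executes it with a slightly simpler inductive set $C$ than you suggest: rather than tracking ``$D_\alpha$ beyond $\max(s)$ is not-not-finite'', the paper just takes $C$ to consist of those $s$ that are either not strictly increasing, or already hit a zero of $\alpha$, or satisfy the fixed global statement ``$D_\alpha$ is not-not-finite''. Since the desired conclusion does not depend on $s$, this works and makes the hereditary step a clean case split absorbed by $\neg\neg(P\vee\neg P)$.

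There is a genuine gap in your treatment of (iv). You propose reusing the singleton construction from (ii): $D_\alpha=\{\mu n[\beta(n)\neq 0]\}$ if this exists, else $\emptyset$. This set is already bounded-in-number (by $1$), so the hypothesis of (iv), ``almost-finite $\rightarrow$ bounded-in-number'', tells you nothing new about it; in particular it does not give you finiteness, and your phrase ``using (iii) to pass from almost-finite to bounded-in-number first'' goes the wrong direction and still does not reach ``finite''. The point of (iv) is that the \emph{bound} obtained from bounded-in-number must carry information about $\beta$, and a one-element set cannot do that. The paper instead builds, from $\beta$, a set $D_{\alpha^\ast}$ with $\alpha^\ast(n)\neq 0\leftrightarrow k\le n<2k$ where $k=\mu m[\beta(m)\neq 0]$; this set is always almost-finite (checked directly), but if it is bounded-in-number by some $n$ then $k\le n$, so one may decide $\exists m<n+1[\beta(m)\neq 0]$ versus $\forall m[\beta(m)=0]$. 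You need a construction of this kind, where the \emph{size} of $D_{\alpha^\ast}$ encodes the location of the first nonzero of $\beta$.
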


\begin{proof}  (i) Let $\alpha, n$ be given such that $\forall m\ge n[\alpha(m)=0]$.

Note: $\forall t\in[\omega]^{n+1}[t(n)\ge n]$ and conclude: $\forall t \in [\omega]^{n+1}[\alpha\circ t(n)=0]$. 

\smallskip (ii) Let $\alpha$ be given. Define $\alpha^\ast$ such that $\forall n[\alpha^\ast(n)\neq 0\leftrightarrow n=\mu m[\alpha(m)\neq 0]]$. 

Note: $\forall t\in [\omega]^2 \exists i<2[\alpha^\ast\circ t(i)=0]$, so $D_{\alpha^\ast}$ is bounded-in-number.

Assuming that $D_{\alpha^\ast}$ is finite, find $n$ such that $\forall m\ge n[\alpha^\ast(m) =0]$. 

\textit{Either} $\exists m<n[\alpha^\ast(m)\neq 0]$ \textit{or} $\forall m[\alpha^\ast(m)=0]$, and, therefore,  \\\indent\textit{either} $\exists m[\alpha(m)\neq 0]$ \textit{or} $\forall m[\alpha(m)=0]$.

We thus see how, starting from the assumption: \\$\forall \alpha[D_\alpha\; is\; bounded$-$in$-$number\; \rightarrow D_\alpha\; is\; finite]$, one obtains the conclusion: $\forall \alpha[\exists m[\alpha(m)\ne 0]\;\vee\;\forall m[\alpha(m)=0]]$, i.e. $\mathbf{LPO}$. 

\smallskip (iii) Let $\alpha, n$ be given such that $\forall t \in [\omega]^{n+1}\exists i<n+1[\alpha\circ t(i)=0]$. 

Conclude: $\forall \zeta \in [\omega]^\omega\exists i < n+1[\alpha\circ\zeta(i)=0]$. 

\smallskip 
(iv) Let $\alpha$ be given. Define $\alpha^\ast$ such that \\$\forall n[\alpha^\ast(n)\neq 0\leftrightarrow \mu m[\alpha(m)\neq 0]\le n<2\cdot\mu m[\alpha(m)\neq 0]]$. 

Observe: for all $k$, if $k=\mu[\alpha(m)\neq 0]$, then $\forall n[k\le n<2\cdot k\leftrightarrow \alpha^\ast(n)\neq 0]$ and $\exists t \in [\omega]^k\forall i<k[\alpha^\ast\circ t(i)\neq 0]$ and $\forall t \in [\omega]^{k+1}\exists i<k+1[\alpha^\ast\circ t(i)= 0]$

Let $\zeta$ in $[\omega]^\omega$ be given. \\We want to prove; $\exists n[\alpha^\ast\circ \zeta(n)=0]$ and  and distinguish two cases. \\\textit{Case (a)}. $\alpha^\ast\circ\zeta(0)= 0 $. Then we are done. \\\textit{Case (b)}. $\alpha^\ast\circ \zeta(0) \neq  0$. Then $\exists m[\alpha(m)\neq 0]$. Define $k:=\mu m[\alpha(m)\neq 0]$ and note: $\forall m\ge 2\cdot k[\alpha^\ast(m)= 0]$, and, in particular, $\alpha^\ast\circ \zeta(2\cdot k) =0$. 

\smallskip We thus see: $\forall \zeta\in [\omega]^\omega\exists n[\alpha^\ast\circ \zeta(n)=0]$, i.e. $D_{\alpha^\ast}$ is almost-finite. 

Assuming that $D_{\alpha^\ast}$ is bounded-in-number, find $n$ such that \\$\forall t \in [\omega]^{n+1}\exists i <n+1[\alpha^\ast\circ t(i) =0]$. \\Conclude: for all $k$, if $k=\mu m[\alpha(m)\neq 0]$, then $k<n+1$.  

\textit{Either} $\exists k<n+1[\alpha\neq 0]$ \textit{or} $\forall k<n+1[\alpha(k)=0]$, and, therefore,  \\\indent\textit{either} $\exists k[\alpha(k)\neq 0]$ \textit{or} $\forall k[\alpha(k)=0]$.

We thus see how, starting from the assumption: \\$\forall \alpha[D_\alpha\; is\;almost$-$finite\; \rightarrow D_\alpha\; is\; bounded$-$in$-$number]$, one obtains the conclusion: $\forall \alpha[\exists k[\alpha(k)\ne 0]\;\vee\;\forall k[\alpha(k)=0]]$, i.e. $\mathbf{LPO}$.

\smallskip (v) Let $\alpha$ be given such that $\forall \zeta\in[\omega]^\omega\exists n[\alpha\circ\zeta(n)=0]$. 
 
 Define $B:=\bigcup_n\{s\in \omega^n\mid s\notin [\omega]^n\;\vee\;\exists i<n[\alpha\circ s(i)=0]\}$. \\We prove: $Bar_{\omega^\omega}(B)$. \\Let $\gamma$ be given. Define $\gamma^\ast$ such that $\gamma^\ast(0)=\gamma(0)$, and, for each $n$, if $\overline \gamma(n+2)\in[\omega]^{<\omega}$, then $\gamma^\ast(n+1)=\gamma(n+1)$, and, if not, then $\gamma^\ast(n+1)=\gamma^\ast(n)+1$. Note: $\gamma^\ast \in [\omega]^\omega$ and find $n$ such that $\overline{\gamma^\ast}n \in B$. \textit{Either} $\overline \gamma n = \overline{\gamma^\ast}n$ \textit{or} $\overline \gamma n\notin [\omega]^n$.\\ In both cases; $\overline \gamma n \in B$.

\noindent Define $C:=\bigcup_n\{s\in \omega^n\mid s\notin[\omega]^n \;\vee\; \exists i<n[\alpha\circ s(i)=0]\;\vee\;D_\alpha\;is\;not$-$not$-$finite]\}$. 
 
 \smallskip
 Note:  $B\subseteq C$. 
 
\smallskip Let $s,n$ be given such that $s\in \omega^n$ and $\forall m[s\ast\langle m\rangle \in C]$. \\We intend to prove: $s\in C$.
 
 We may assume: $s\in [\omega]^n \;\wedge\;\neg \exists i<n[\alpha\circ s(i)=0]$,   
 and distinguish two cases. \\\textit{Case(a)}. $\exists m[s\ast\langle m \rangle \in [\omega]^{n+1} \;\wedge\;\alpha(m)\neq 0]$. Finding such $m$, we consider $s\ast\langle m \rangle$ and conclude: $s\ast\langle m \rangle \in C$ and, therefore: $D_\alpha$ is not-not-finite. \\\textit{Case (b)}. $\neg\exists m[s\ast\langle m \rangle \in [\omega]^{n+1} \;\wedge\;\alpha(m)\neq 0]$, and, therefore, \\$\forall m[s\ast\langle m\rangle \in [\omega]^{n+1}\rightarrow \alpha(m)=0]$. Conclude: $D_\alpha$ is finite.
 
 Defining $P:=\exists m[s\ast\langle m \rangle \in [\omega]^{n+1} \;\wedge\;\alpha(m)\neq 0]$, we thus see: \\$(P\;\vee\;\neg P)\rightarrow D_\alpha$ is not-not-finite. \\By intuitionistic logic\footnote{$\neg\neg(P\vee\neg P)$ is provable, and from $A\rightarrow B$ one may conclude $\neg B \rightarrow \neg A$ and also $\neg\neg A \rightarrow \neg\neg B$. Furthermore, $\neg\neg\neg C$ is equivalent to $\neg C$ and $\neg\neg\neg\neg C$ is equivalent to $\neg\neg C$.}, we conclude: $D_\alpha$ is not-not-finite and: $s\in C$.
 
 We thus see: $\forall s[\forall m[s\ast\langle m \rangle \in C]\rightarrow s\in C]$.
 
 Obviously: $\forall s\forall m[s\in C\rightarrow s\ast\langle m \rangle \in C]$. 
 
 Using $\mathbf{BARIND}$, we conclude: $\langle\;\rangle\in C$, and: $D_\alpha$ is not-not-finite. 
 
 We thus see: \\$\mathsf{BIM}+\mathbf{BARIND}\vdash\forall \alpha[D_\alpha\; is\; almost$-$finite\;\rightarrow\; D_\alpha\; is\; not$-$not$-$finite]$.
 \end{proof}
  
  \begin{lemma}\label{L:not-infinite} $\mathsf{BIM}+\mathbf{MP}$\footnote{For Markov's Principle $\mathbf{MP}$, see Subsubsection \ref{SSS:markov}.} proves \\$ \forall \alpha[D_\alpha\; is\; not$-$infinite\;\leftrightarrow\;D_\alpha\; is\;not$-$not$-$finite\;\leftrightarrow D_\alpha\;is\;almost$-$finite]$.\end{lemma}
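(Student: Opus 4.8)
The plan is to prove the two chains of implications cyclically:
\[
D_\alpha\text{ almost-finite}\;\Rightarrow\;D_\alpha\text{ not-not-finite}\;\Rightarrow\;D_\alpha\text{ not-infinite}\;\Rightarrow\;D_\alpha\text{ almost-finite}.
\]
The first implication is exactly Lemma \ref{L:finiteness}(vi), \emph{except} that that lemma used $\mathbf{BARIND}$; here I want it in $\mathsf{BIM}+\mathbf{MP}$ instead. But in $\mathsf{BIM}+\mathbf{MP}$ the implication ``not-infinite $\Rightarrow$ not-not-finite'' is trivial: ``$D_\alpha$ is not-not-finite'' unfolds to $\neg\neg\exists n\forall m\ge n[\alpha(m)=0]$, and an application of $\mathbf{MP}$-style reasoning to the \emph{negated} statement turns ``$\neg\,\exists\zeta\in[\omega]^\omega\forall n[\alpha\circ\zeta(n)\neq 0]$'' into the double negation of finiteness. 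Concretely, from $\neg\,(D_\alpha\text{ is infinite})$ one argues: suppose $D_\alpha$ is not finite, i.e. $\neg\exists n\forall m\ge n[\alpha(m)=0]$; then $\forall n\neg\neg\exists m\ge n[\alpha(m)\neq 0]$, and by $\mathbf{MP}$ (applied to the sequence $m\mapsto \alpha(n+m)$) we get $\forall n\exists m\ge n[\alpha(m)\neq 0]$, contradicting not-infiniteness. Hence $\neg\neg\exists n\forall m\ge n[\alpha(m)=0]$. So ``not-infinite $\Rightarrow$ not-not-finite'' holds in $\mathsf{BIM}+\mathbf{MP}$, and I shall use this as one link of the cycle.

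For ``not-not-finite $\Rightarrow$ not-infinite'' I need no extra axioms: ``$D_\alpha$ is infinite'' implies ``$D_\alpha$ is not finite'' (in $\mathsf{BIM}$, since an infinite decidable set cannot be finite — if $\forall m\ge n[\alpha(m)=0]$ and $\forall k[\alpha\circ\zeta(k)\neq 0]$ with $\zeta$ strictly increasing, take $k$ with $\zeta(k)\ge n$ for a contradiction), so contraposing gives ``$D_\alpha$ is not-not-finite $\Rightarrow$ $\neg\neg(D_\alpha$ is not infinite$)$'', and $\neg\neg$ of a negation collapses to that negation, yielding ``$D_\alpha$ is not-infinite''. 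This link is purely intuitionistic. Combined with the previous paragraph, not-not-finite and not-infinite are already equivalent over $\mathsf{BIM}+\mathbf{MP}$.

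It remains to close the cycle with ``not-infinite $\Rightarrow$ almost-finite''. Let $\alpha$ be given with $\neg\,\exists\zeta\in[\omega]^\omega\forall n[\alpha\circ\zeta(n)\neq 0]$, and let $\zeta\in[\omega]^\omega$ be arbitrary; I must produce $n$ with $\alpha\circ\zeta(n)=0$. Here is where $\mathbf{MP}$ does the real work: consider the sequence $n\mapsto 1-\operatorname{sg}\bigl(\alpha\circ\zeta(n)\bigr)$ (i.e. the value testing $\alpha\circ\zeta(n)=0$). It suffices to show $\neg\neg\exists n[\alpha\circ\zeta(n)=0]$, for then $\mathbf{MP}$ delivers $\exists n[\alpha\circ\zeta(n)=0]$. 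And $\neg\neg\exists n[\alpha\circ\zeta(n)=0]$ follows from not-infiniteness of $D_\alpha$: if $\neg\exists n[\alpha\circ\zeta(n)=0]$, then $\forall n[\alpha\circ\zeta(n)\neq 0]$ (this step is decidable, no $\mathbf{MP}$ needed), so $\exists\xi\in[\omega]^\omega\forall n[\alpha\circ\xi(n)\neq 0]$ witnessed by $\xi=\zeta$, contradicting the hypothesis. Thus $\neg\neg\exists n[\alpha\circ\zeta(n)=0]$, hence $\exists n[\alpha\circ\zeta(n)=0]$ by $\mathbf{MP}$, hence $\forall\zeta\in[\omega]^\omega\exists n[\alpha\circ\zeta(n)=0]$, i.e. $D_\alpha$ is almost-finite. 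The main obstacle — if any — is bookkeeping: making sure each use of $\mathbf{MP}$ is applied to a genuinely decidable predicate of a single numerical variable (which it is, since $\alpha(m)=0$ is decidable in $\mathsf{BIM}$), and that the ``not-infinite'' hypothesis is deployed only in its negative form so no choice or bar induction sneaks in. With those checks the three implications close the cycle and the lemma follows.
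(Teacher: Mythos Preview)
Your argument is essentially correct and close in spirit to the paper's, but the bookkeeping slipped: the three implications you actually establish are (a)~not-infinite $\Rightarrow$ not-not-finite, (b)~not-not-finite $\Rightarrow$ not-infinite, and (c)~not-infinite $\Rightarrow$ almost-finite. These do \emph{not} close the announced cycle --- you never return from ``almost-finite'' to either of the other two notions. You flag the announced first link (almost-finite $\Rightarrow$ not-not-finite) as Lemma~\ref{L:finiteness}(vi), note it needs $\mathbf{BARIND}$, then pivot away, but never supply a replacement. The missing link is the trivial one: almost-finite $\Rightarrow$ not-infinite, which is Lemma~\ref{L:finiteness}(v) and holds already in $\mathsf{BIM}$ (any $\zeta\in[\omega]^\omega$ witnessing infiniteness immediately refutes almost-finiteness). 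With that added, the argument is complete.

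For comparison, the paper runs the cycle as not-infinite $\Rightarrow$ not-not-finite $\Rightarrow$ almost-finite $\Rightarrow$ not-infinite, the first and last links being exactly your (a) and the missing Lemma~\ref{L:finiteness}(v). Its middle link differs from your (c): from not-not-finite the paper passes to $\neg\neg(D_\alpha$ almost-finite$)$ via ``finite $\Rightarrow$ almost-finite'', then uses the intuitionistically valid shift $\neg\neg\forall\zeta\,P(\zeta)\to\forall\zeta\,\neg\neg P(\zeta)$ and applies $\mathbf{MP}$ pointwise. Your direct route from not-infinite to almost-finite (assume $\neg\exists n[\alpha\circ\zeta(n)=0]$, exhibit $\zeta$ itself as an infinity witness, contradiction, then $\mathbf{MP}$) is a touch cleaner, as it avoids the detour through the $\neg\neg$-shift.
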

  \begin{proof}First, let $\alpha$ be given such that  $D_\alpha$ is not-finite. Then $\neg\exists n\forall m\ge n[\alpha(m) =0]$. \\Conclude $\forall n\neg\neg\exists m\ge n[\alpha(m) \neq 0]$.\\Using $\mathbf{MP}$, conclude: $\forall n\exists m\ge n[\alpha(m)\neq 0]$, i.e. $D_\alpha$ is infinite.

  We thus see, using $\mathbf{MP}$: for all $\alpha$, if $D_\alpha$ is not-finite, then $ D_\alpha$  is infinite,  and may conclude: for all $\alpha$, if $D_\alpha$ is not- infinite, then $D_\alpha$ is not-not-finite.
  
  \smallskip Secondly, let $\alpha$ be given such that $\neg\neg(D_\alpha$ is almost-finite), i.e. \\$\neg\neg\forall \zeta\in [\omega]^\omega\exists n[\alpha\circ \zeta(n)=0]$. Conclude:  $\forall \zeta\in [\omega]^\omega\neg\neg\exists n[\alpha\circ \zeta(n)=0]$, and, by $\mathbf{MP}$, $\forall \zeta\in [\omega]^\omega\exists n[\alpha\circ \zeta(n)=0]$, i.e. $D_\alpha$ is almost-finite.
  
  We thus see: for all $\alpha$, if $\neg\neg(D_\alpha$ is almost-finite), then $D_\alpha$ is almost-finite.
  
  As we also know, from Lemma \ref{L:finiteness}: for all $\alpha$, if $D_\alpha$ is finite, then $D_\alpha$ is almost-finite, we conclude:
  for all $\alpha$, if $D_\alpha$ is not-not-finite, then $D_\alpha$ is almost-finite. 
  
 \smallskip Finally, it is clear that, for all $\alpha$, if $D_\alpha$ is almost-finite, then $D_\alpha$ is not-infinite. \end{proof}
We now extend the notion `almost-finite' from decidable subsets of $\omega$ to enumerable subsets of $\omega$.

 Let $\alpha$ be given. We consider the set \\$E_\alpha:=\{n\mid\exists m[\alpha(m)=n+1]\}$, the subset of $\omega$ \textit{enumerated by} $\alpha$. 
  We define: \\$E_\alpha$ is \textit{almost-finite} if and only if $\forall \zeta\in[\omega]^\omega\exists m\exists n[m<n\;\wedge\;\alpha\circ\zeta(m)=\alpha\circ\zeta(n)]$. 
 
 \smallskip The first  item of the next Lemma show that this definition is consistent with the definition given earlier for decidable subsets of $\omega$.
 
 The second item shows that the definition is a good definition indeed as it 
 does not depend on the enumeration $\alpha$ of $E_\alpha$.

 The fifth item shows that an almost-finite union of almost-finite enumerable subsets of $\omega$ is almost-finite and enumerable. 
 \begin{lemma}\label{L:almost-finite}$\mathsf{BIM}$ proves the following.  \begin{enumerate}[\upshape (i)]\item $\forall\alpha\forall\beta[D_\alpha=E_\beta\rightarrow \\\bigl(\forall \zeta\in[\omega]^\omega\exists n[\alpha\circ\zeta(n)=0]\leftrightarrow\forall \zeta\in[\omega]^\omega\exists m\exists n[m<n\;\wedge\;\beta(m)=\beta(n)]\bigr)$. \item $\forall\alpha\forall\beta[\bigl(E_\alpha=E_\beta\;\wedge\; \forall \zeta\in[\omega]^\omega\exists m\exists n[m<n\;\wedge\;\alpha\circ\zeta(m)=\alpha\circ\zeta(n)]\bigr)\rightarrow\\ \forall \zeta\in[\omega]^\omega\exists m\exists n[m<n\;\wedge\;\beta\circ\zeta(m)=\beta\circ\zeta(n)]]$. \item $\forall \alpha[\forall i<2[E_{\alpha^i}\;is\;almost$-$finite]\rightarrow \bigcup_{i<2}E_{\alpha^i}\; is\; almost$-$finite]$. \item $\forall \alpha\forall n[  \forall i<n[E_{\alpha^i}\;is\;almost$-$finite]\rightarrow\bigcup_{i<n}E_{\alpha^i}\; is\; almost$-$finite]$.  
 \item $\forall \alpha[(\forall n[E_{\alpha^n}\;is\;almost$-$finite]\;\wedge\;\forall \zeta \in [\omega]^\omega\exists n[\alpha^{\zeta(n)}=\underline 0])\rightarrow 
 \\\bigcup_n E_{\alpha^n}\; is\; almost$-$finite]$.\end{enumerate} \end{lemma}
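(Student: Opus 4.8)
The plan is to prove each of the five items of Lemma \ref{L:almost-finite}, working with the combinatorial notion of almost-finiteness for enumerable sets. Throughout, I will freely use the characterization that $E_\alpha$ is almost-finite if and only if every strictly increasing $\zeta$ eventually produces a repetition among the values $\alpha\circ\zeta(0), \alpha\circ\zeta(1), \ldots$, and I will exploit the fact that from a repetition-free enumeration one can build a strictly increasing enumeration of an infinite subset.

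For item (i), suppose $D_\alpha = E_\beta$. Given $\zeta \in [\omega]^\omega$ with $\forall \zeta' \in [\omega]^\omega \exists n[\alpha\circ\zeta'(n) = 0]$, I would argue contrapositively: if $\beta\circ\zeta$ were repetition-free on $\zeta$, then the values $\beta\circ\zeta(m)$, each being of the form $n+1$ with $n \in D_\alpha$, would list infinitely many distinct elements of $D_\alpha$; thinning $\zeta$ to a $\zeta'$ enumerating these (infinitely many) witnesses contradicts almost-finiteness of $D_\alpha$. The converse direction is symmetric. For item (ii), $E_\alpha = E_\beta$ and $\alpha$ repetition-scarce on all $\zeta$: given $\zeta$, if $\beta\circ\zeta$ were repetition-free, then $\{\beta\circ\zeta(m)\}$ would be an infinite subset of $E_\beta = E_\alpha$; I would extract from $\zeta$ a strictly increasing $\zeta'$ along which $\alpha$ also never repeats (by picking, for each new value of $E_\alpha$ found via $\beta$, a witnessing index for $\alpha$, and keeping only one index per value), contradicting the hypothesis on $\alpha$. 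Both items are pure index-juggling, provable in $\mathsf{BIM}$ using the closure of $\omega^\omega$ under unbounded search.

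For items (iii) and (iv): item (iv) follows from (iii) by an obvious induction on $n$ (the base case $n=0$ is trivial since $\bigcup_{i<0}E_{\alpha^i} = \emptyset$ is finite, hence almost-finite), so the work is in (iii). Assume $E_{\alpha^0}$ and $E_{\alpha^1}$ are both almost-finite, and first pass to an enumeration $\beta$ of $E_{\alpha^0}\cup E_{\alpha^1}$, say $\beta(2m) = \alpha^0(m)$-coded and $\beta(2m+1) = \alpha^1(m)$-coded. Let $\zeta \in [\omega]^\omega$ be given. If $\beta\circ\zeta$ were repetition-free, then infinitely many $\zeta(k)$ land in even positions or infinitely many land in odd positions — but this disjunction is not constructively available, so instead I would apply the almost-finiteness of $E_{\alpha^0}$ to the subsequence of $\zeta$-indices hitting even positions (which is infinite or finite, again not decidable), so the cleaner route is: use that a repetition-free $\beta\circ\zeta$ gives infinitely many distinct values in $E_{\alpha^0}\cup E_{\alpha^1}$, and use item (i) to reduce to the decidable case $D_{\alpha'} = E_{\alpha^0}\cup E_{\alpha^1}$, then handle the union of two decidable almost-finite sets directly. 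The latter is the content already foreshadowed by Lemma \ref{L:finiteness} and can be done by a bar-induction-free combinatorial argument: if $\forall \zeta \exists n[\alpha_0\circ\zeta(n)=0]$ and $\forall \zeta\exists n[\alpha_1\circ\zeta(n)=0]$, then $\forall\zeta\exists n[(\alpha_0\cdot\alpha_1)\circ\zeta(n)=0]$ — apply the first hypothesis to get $n_0$ with $\alpha_0\circ\zeta(n_0)=0$; this already gives $(\alpha_0\cdot\alpha_1)\circ\zeta(n_0)=0$.

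The main obstacle — and the reason item (v) is stated separately — is the infinite union. Assume $\forall n[E_{\alpha^n}$ is almost-finite$]$ and $\forall \zeta\in[\omega]^\omega\exists n[\alpha^{\zeta(n)}=\underline 0]$, i.e. the index set of the \emph{active} sets is itself almost-finite (as a decidable subset of $\omega$, via its characteristic function $n\mapsto$ ``$\alpha^n \neq \underline 0$''). The plan is: first observe $\bigcup_n E_{\alpha^n}$ is enumerable (diagonal enumeration $\beta(\langle m,n\rangle) := \alpha^m(n)$). Let $\zeta\in[\omega]^\omega$; suppose toward a contradiction that $\beta\circ\zeta$ is repetition-free, so it lists infinitely many distinct values, and each such value $v$ lies in some $E_{\alpha^{m(v)}}$. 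If infinitely many distinct values came from the \emph{same} index $m$ we would contradict almost-finiteness of $E_{\alpha^m}$; if the values came from infinitely many distinct indices $m$, those indices would form an infinite subset of the active-index set, contradicting its almost-finiteness. Formalizing this ``pigeonhole between the two infinities'' constructively is the delicate point: I expect to need a careful simultaneous search that, along a thinning of $\zeta$, either stabilizes the index (reducing to almost-finiteness of one $E_{\alpha^m}$) or produces a strictly increasing sequence of active indices (reducing to almost-finiteness of the active-index set), and to phrase it so that no instance of $\mathbf{LPO}$ or bar induction is smuggled in — using instead that ``$D_\alpha$ is almost-finite'' already packages exactly the bar-theoretic content needed. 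If a fully choice-free argument resists, I would fall back on the observation that the active-index set being almost-finite lets us, given $\zeta$, bound how often the index can strictly increase, and interleave this with the bound coming from each active $E_{\alpha^m}$; this is where I anticipate spending the most care.
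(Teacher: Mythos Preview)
Your proposal has two fundamental gaps.

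First, throughout items (i), (ii), and (v) you argue by reductio: ``suppose $\beta\circ\zeta$ were repetition-free, derive a contradiction.'' In $\mathsf{BIM}$, whose logic is intuitionistic, this yields only $\neg\neg\exists m\exists n[\ldots]$, not the positive existence required; without Markov's Principle (not assumed for this lemma) these arguments do not close. You correctly flag the non-constructive disjunction in your first attempt at (iii), but then revert to the same pattern elsewhere. The paper's proofs of (iii) and (v) are entirely positive: given $\zeta$, one first proves an auxiliary statement of the shape ``for every $k$ there exists $l>k$ such that either $QED$ holds already, or $\zeta(l)$ has a certain decidable property,'' then iterates via unbounded search to build a subsequence to which one of the almost-finiteness hypotheses applies directly, producing the repetition explicitly. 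This is exactly the ``simultaneous search'' you gesture at in your last paragraph but never carry out.

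Second, your treatment of the decidable union in item (iii) is simply wrong. You write that from $\alpha_0\circ\zeta(n_0)=0$ one gets $(\alpha_0\cdot\alpha_1)\circ\zeta(n_0)=0$; this is true, but $D_{\alpha_0\cdot\alpha_1} = D_{\alpha_0}\cap D_{\alpha_1}$, not the union. To show $D_{\alpha_0}\cup D_{\alpha_1}$ is almost-finite you must find $n$ with $\alpha_0\circ\zeta(n)=0$ \emph{and} $\alpha_1\circ\zeta(n)=0$ simultaneously, which is precisely the nontrivial step. (A correct decidable-case argument: iterate the first hypothesis on tails of $\zeta$ to see that $\{n:\alpha_0\circ\zeta(n)=0\}$ is unbounded, enumerate it increasingly as $\eta$, then apply the second hypothesis to $\zeta\circ\eta$.) Moreover, your plan to ``reduce to the decidable case via (i)'' is not available: item (i) presupposes an equation $D_{\alpha'} = E_\beta$, but an enumerable set need not be decidable, so no such $\alpha'$ is at hand. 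The paper therefore works directly with the interleaved enumeration of $E_{\alpha^0}\cup E_{\alpha^1}$.
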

  \begin{proof} (i) The proof is left to the reader. 
  
  \smallskip (ii) The proof is left to the reader.

  \smallskip (iii) Let $\alpha$ be given such that $E_{\alpha^0}, E_{\alpha^1}$ are almost- finite. Define $\alpha^\ast$ such that, for each $n$, $\alpha^\ast(2n)=\alpha^0(n)$ and $\alpha^\ast(2n+1)=\alpha^1(n)$. Note: $E_{\alpha^\ast}=E_{\alpha^0}\cup E_{\alpha^1}$. 
  
\smallskip  Let $\zeta$ in $[\omega]^\omega$ be given. Define $QED:=\exists m\exists n[m<n\;\wedge\;\alpha^\ast\circ\zeta(m)=\alpha^\ast\circ\zeta(n)]$. 
  
  \smallskip We prove: $\forall k\exists l> k[\exists p[\zeta(l)=2p+1]\;\vee\;QED]$. 
  
  Let $k$ be given. Define $\zeta^\ast$ such  that, for each $i$, if $\forall j\le i\exists p[\zeta(k+1+j)=2p]$, then $\zeta^\ast(i)=\zeta(k+1+i)$, and, if not, then $\zeta^\ast(i)=2\cdot \zeta(k+1+i)$. Note: $\forall i\exists p[\zeta^\ast(i)=2p]$. Define $\zeta^{\ast\ast}$ such that $\forall i[\zeta^\ast(i)=2\cdot\zeta^{\ast\ast}(i)]$ and note: $\forall i[\alpha^\ast\circ\zeta^\ast(i)=\alpha^0\circ\zeta^{\ast\ast}(i)]$. Find $m,n$ such that $m<n$ and $\alpha^0\circ\zeta^{\ast\ast}(m)=\alpha^0\circ\zeta^{\ast\ast}(n)$ and note: \\\textit{either} $\zeta^\ast(m)=\zeta(k+1+m)$ and $\zeta^\ast(n)=\zeta(k+1+n)$ and: $QED$ \\\textit{or} $\exists j\le n\exists p[\zeta(k+1+j)=2p+1]$.

  \smallskip Using Theorem \ref{T:acoosigma01},  find $\delta$ such that 
  $\forall k[\delta(k)> k \;\wedge\; (\exists p[\zeta\circ\delta(k)=2p+1]\;\vee\;QED)]$.
  
Define $\zeta^\dag$ such that $\zeta^\dag(0)=\delta(0)$ and, for each $k$, $\zeta^\dag(k+1)=\delta\bigl(\zeta^\dag(k)\bigr)$. \\Note: for each $k$, if $\exists p[\zeta^\dag(k)=2p]$, then $QED$.
  
  Define $\zeta^{\dag\ast}$ such  that, for each $i$, if $\forall j\le i\exists p[\zeta^\dag(j)=2p+1]$, then $\zeta^{\dag\ast}(i)=\zeta^\dag(i)$, and, if not, then $\zeta^{\dag\ast}(i)=2\cdot \zeta^\dag(i)+1$. Note: $\forall i\exists p[\zeta^{\dag^\ast}(i)=2p+1]$. Define $\zeta^{\dag\ast\ast}$ such that $\forall i[\zeta^{\dag\ast}(i)=2\cdot\zeta^{\dag\ast\ast}(i)+1]$ and note: $\forall i[\alpha^\ast\circ\zeta^{\dag\ast}(i)=\alpha^1\circ\zeta^{\dag\ast\ast}(i)]$. Find $m,n$ such that $m<n$ and $\alpha^1\circ\zeta^{\dag\ast\ast}(m)=\alpha^1\circ\zeta^{\dag\ast\ast}(n)$ and note:\\ \textit{either} $\zeta^{\dag\ast}(m)=\zeta^\dag(m)$ and $\zeta^{\dag\ast}(n)=\zeta^\dag(n)$ and: $QED$ \\\textit{or} $\exists j\exists p[\zeta^\dag(j)=2p]$ and $QED$, so in any case $QED$. 
 
 \smallskip
We thus see: $\forall \zeta\in[\omega]^\omega \exists m\exists n[m<n\;\wedge\;\alpha^\ast\circ\zeta(m)=\alpha^\ast\circ\zeta(n)]$, i.e. \\$E_{\alpha^\ast}= E_{\alpha^0}\cup E_{\alpha^1}$ is almost-finite.

  \smallskip (iv) Use (iii) and induction.

  \smallskip (v) Let $\alpha$ be given such that, for all $n$,  $E_{\alpha^n}$ is almost-finite, and \\$\forall \zeta\in[\omega]^\omega\exists n[\alpha^{\zeta(n)}=\underline 0]$. We are going to prove: $\bigcup_n E_{\alpha^n}$ is almost-finite.
  \\Define $\alpha^\ast$ such that, for all $n,m$, $\alpha^\ast(n,m)=\alpha^n(m)$ and note: $E_{\alpha^\ast}= \bigcup_n E_{\alpha^n}$. 
   \\Let $\zeta$ in $[\omega]^\omega$ be given. Define $QED:=\exists m\exists n[m<n\;\wedge\;\alpha^\ast\circ\zeta(m)=\alpha^\ast\circ\zeta(n)]$.
   
   We first prove: $\forall k\forall n\exists l[l>k \;\wedge\;\bigl(QED\;\vee\;\bigl(\zeta(l)\bigr)'>n\bigr)]$. 
   
   Let $k,n$ be given. \\If $\bigl(\zeta(k+1)\bigr)'>n$, there is nothing to prove, so we assume $\bigl(\zeta(k+1)\bigr)'\le n$. \\Define $\zeta^\ast$ such that $\zeta^\ast(0)=\zeta(k+1)$ and, for all $i$, if $\forall j\le i+1[\bigl(\zeta(k+1+j)\bigr)'\le n]$, then $\zeta^\ast(i+1)=\zeta(k+2+i)$, and, if not, then $\zeta^\ast(i+1)=\mu p[p>\zeta^\ast(i)\;\wedge\;p'=n]$.  \\Note: $\forall i[\bigl(\zeta^\ast(i)\bigr)' \le n]$. \\Using (iii), find $p,q$ such that $p<q$ and $\alpha^\ast\circ \zeta^\ast(p)=\alpha^\ast\circ\zeta^\ast(q)$ and note:\\ \textit{either} $\zeta^\ast(p)=\zeta(k+1+p)$ and $\zeta^\ast(q)=\zeta(k+1+q)$ and $QED$, \\\textit{or} $\exists j\le q[\bigl(\zeta(k+1+j)\bigr)'>n]$.
   
   We thus see:  $\forall k\forall n\exists l[l>k \;\wedge\;\bigl(QED\;\vee\;\bigl(\zeta(l)\bigr)'>n\bigr)]$.

   \smallskip Using Theorem \ref{T:acoosigma01}, find $\delta$ such that \\$\forall k\forall n[\delta(k,n)>k\;\wedge\;\big(QED\;\vee\;\bigl(\zeta\circ\delta(k,n)\bigr)'>n\bigr)]$.
   
   Now define $\eta$ such that  $\eta(0)=\delta(0,0)$, and, for each $n$, $\eta(n+1)=\delta\bigl(\eta(n),n\bigr)$. 
   
   Note: $\eta \in [\omega]^\omega$ and $\forall n[\bigl(\zeta\circ\eta(n)\bigr)'>n \;\vee\; QED]$. 
   
   Find $\rho$ in $[\omega]^\omega$ such that $\forall n[((\zeta\circ\eta\circ\rho(n+1)\bigr)'>((\zeta\circ\eta\circ\rho(n)\bigr)' \;\vee\;QED]$.

   Find $p$ such that   $\alpha^{\bigl(\zeta\circ\eta\circ\rho(p)\bigr)'}=\underline 0$. 
   
   Conclude: \textit{either} $QED$, \textit{or} \\$\alpha^\ast\circ\zeta\circ\eta\circ\rho(\langle p,0\rangle) = \alpha^\ast\circ\zeta\circ\eta\circ\rho (\langle p,1\rangle)=0$, and again: $QED$. 
   
   \smallskip
   We thus see: $\forall\zeta\in[\omega]^\omega\exists m\exists n[m<n\;\wedge\;\alpha^\ast\circ\zeta(m)=\alpha^\ast\circ\zeta(n)]$, i.e.\\ $E_{\alpha^\ast}=\bigcup_n E_{\alpha^n}$ is almost-finite.

   \end{proof}
\subsection{Fans, approximate fans and almost-fans}\hfill

\smallskip
Let $\beta$ be given. We define: $\mathcal{F}_\beta:=\{\alpha\mid\forall n[\beta(\overline \alpha n)=0]\}$.

\smallskip
We define: $\beta$ is a \textit{spread-law},  $Spr(\beta)$, if and only if \\$\forall s[\beta(s)=0\leftrightarrow \exists n[\beta(s\ast\langle n \rangle)=0]]$.

If $\beta$ is a spread-law, then $\mathcal{F}_\beta$ is a \textit{spread}. 

\smallskip
We define: $\beta$ is a \textit{fan-law}, $Fan (\beta)$,  if and only if \\$Spr(\beta)$ and $\forall s\exists n\forall m[\beta(s\ast\langle m\rangle)= 0\rightarrow m\le n]$. 

If $\beta$ is a fan-law, then $\mathcal{F}_\beta$ is a \textit{fan}.

\smallskip We define: $\beta$ is an \textit{explicit fan-law} if and only if\\ $Fan(\beta)$ and $\exists \gamma \forall s\forall m[\beta(s\ast\langle m \rangle)=0\rightarrow  m\le\gamma(s)]$. 

If $\beta$ is an explicit fan-law, then $\mathcal{F}_\beta$ is an \textit{explicit fan}. 

\smallskip One may prove in $\mathsf{BIM}$, see Lemma \ref{L:explicitfan}:

$Explfan(\beta)\leftrightarrow \bigl(Spr(\beta)\;\wedge\;\exists \gamma \forall n\forall s \in [\omega]^n[ \beta(s)= 0\rightarrow s \le \gamma(n)]\bigr)$.

\smallskip We define: $\beta$ is an \textit{approximate-fan-law}, $Appfan(\beta)$, if and only if \\$Spr(\beta)$ and $\forall n\exists k\forall t \in [\omega]^{k+1}\exists i<k+1[t(i)\notin \omega^n\;\vee\;\beta\bigl(t(i)\bigr)\neq 0]$. 

If $\beta$ is an approximate-fan-law, then $\mathcal{F}_\beta$ is an \textit{approximate fan}.

\smallskip We define: $\beta$ is an \textit{almost-fan-law}, $Almfan(\beta)$, if and only if \\$Spr(\beta)$ and $\forall s \forall \zeta\in[\omega]^\omega \exists m[\beta\bigl(s\ast\langle \zeta(m)\rangle\bigr)\neq 0]$.  

If $\beta$ is an almost-fan-law, then $\mathcal{F}_\beta$ is an \textit{almost-fan}.

\subsection{}\textit{The Almost-fan Theorem as a Scheme}, $\mathbf{ALMFAN}$:\\ $\forall\beta[\bigl(Almfan(\beta)\;\wedge\;Bar_{\mathcal{F}_\beta}(B)\bigr)\rightarrow \\\exists \alpha[E_\alpha\subseteq B\;\wedge\;E_\alpha\;is\;almost$-$finite\;\wedge\;Bar_{\mathcal{F}_\beta}(E_\alpha)]].$

The following theorem may be compared to Theorem \ref{T:ftscheme}.

\begin{theorem}\label{T:almftscheme} $\mathsf{BIM}+\mathbf{BARIND}+\mathbf{AC}_{\omega,\omega^\omega}\vdash \mathbf{ALMFAN}$. \footnote{We do not know id the use of the Axiom $\mathbf{AC}_{\omega,\omega^\omega}$ is avoidable.}\end{theorem}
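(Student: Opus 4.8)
The plan is to mimic the proof of Theorem~\ref{T:ftscheme} ($\mathsf{BIM}+\mathbf{BARIND}\vdash\mathbf{FAN}$), replacing ``finite'' throughout by ``almost-finite'' and handling the extra bookkeeping that almost-finiteness requires. So let $\beta$ be an almost-fan-law with $\beta(\langle\;\rangle)=0$ (if $\beta(\langle\;\rangle)\neq 0$ there is nothing to prove), and assume $Bar_{\mathcal{F}_\beta}(B)$. As in Theorem~\ref{T:ftscheme}, first extend the bar off the spread: put $B':=B\cup\{s\mid\beta(s)\neq 0\}$ and check $Bar_{\omega^\omega}(B')$ by the same retraction argument, pulling an arbitrary $\gamma$ back into $\mathcal{F}_\beta$ via $\gamma^\ast$ and using that $Almfan(\beta)$ implies $Spr(\beta)$ so that the least-witness $\mu p[\beta(\overline\gamma n\ast\langle p\rangle)=0]$ exists whenever $\beta(\overline\gamma n)=0$.

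Next, define a predicate $C$ on finite sequences by: $s\in C$ iff \emph{either} $\beta(s)\neq 0$ \emph{or} $\beta(s)=0$ and there exists $\alpha$ such that $E_\alpha\subseteq B$, $E_\alpha$ is almost-finite, and $Bar_{\mathcal{F}_\beta\cap s}(E_\alpha)$. One shows $B'\subseteq C$ (taking for $s\in B$ with $\beta(s)=0$ the enumeration $\alpha$ of the singleton $\{s\}$, which is trivially almost-finite), and $\forall s\forall m[s\in C\rightarrow s\ast\langle m\rangle\in C]$ (restriction of a bar along a larger node stays a bar, and almost-finiteness is preserved under passing to a subset via Lemma~\ref{L:almost-finite}(i),(ii)). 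The substantive step is the upward clause $\forall s[\forall m[s\ast\langle m\rangle\in C]\rightarrow s\in C]$. Fix $s$ with $\beta(s)=0$ and $\forall m[s\ast\langle m\rangle\in C]$. Because $Almfan(\beta)$, for every $\zeta\in[\omega]^\omega$ there is $m$ with $\beta(s\ast\langle\zeta(m)\rangle)\neq 0$, i.e.\ the decidable set $\{m\mid\beta(s\ast\langle m\rangle)=0\}$ is almost-finite. For each such $m$ we have $s\ast\langle m\rangle\in C$ with $\beta(s\ast\langle m\rangle)=0$, hence an almost-finite $\alpha^m$ enumerating a subset of $B$ that bars $\mathcal{F}_\beta\cap s\ast\langle m\rangle$; for the (decidably recognizable) $m$ with $\beta(s\ast\langle m\rangle)\neq 0$ set $\alpha^m=\underline 0$. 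Here is where I would invoke $\mathbf{AC}_{\omega,\omega^\omega}$: from $\forall m\exists\alpha[\ldots]$ we extract a single $\alpha$ with $\alpha^{\upharpoonright m}=\alpha^m$ for all $m$. Then $\bigcup_m E_{\alpha^m}$ is an almost-finite union (the index set where $\alpha^m=\underline 0$ being co-almost-finite is exactly the hypothesis of Lemma~\ref{L:almost-finite}(v)) of almost-finite enumerable sets, so by Lemma~\ref{L:almost-finite}(v) it is almost-finite and enumerable, say $=E_{\alpha^\ast}$; and $E_{\alpha^\ast}\subseteq B$ and $Bar_{\mathcal{F}_\beta\cap s}(E_{\alpha^\ast})$ because any $\gamma\in\mathcal{F}_\beta$ extending $s$ passes through some $s\ast\langle m\rangle$ with $\beta(s\ast\langle m\rangle)=0$ and is then met by $E_{\alpha^m}$. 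Hence $s\in C$.

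Finally, $\forall s[s\in B'\rightarrow s\in C]$, $\forall s[\forall m[s\ast\langle m\rangle\in C]\rightarrow s\in C]$ (note $C$ is monotone, so the ``$\leftrightarrow$'' form of the hypothesis of $\mathbf{BARIND}$ is met), and $Bar_{\omega^\omega}(B')$; applying $\mathbf{BARIND}$ yields $\langle\;\rangle\in C$, which (since $\beta(\langle\;\rangle)=0$) says exactly $\exists\alpha[E_\alpha\subseteq B\;\wedge\;E_\alpha\text{ is almost-finite}\;\wedge\;Bar_{\mathcal{F}_\beta}(E_\alpha)]$, i.e.\ $\mathbf{ALMFAN}$.

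The main obstacle is the upward-closure step, and specifically verifying that the union $\bigcup_m E_{\alpha^m}$ really falls under Lemma~\ref{L:almost-finite}(v): one must check that ``$\{m\mid\beta(s\ast\langle m\rangle)=0\}$ is almost-finite'' translates, after the choice of $\alpha$, into ``$\forall\zeta\in[\omega]^\omega\exists n[\alpha^{\zeta(n)}=\underline 0]$'', which is immediate from how we defined $\alpha^m$ on the complement. A secondary subtlety is producing the enumeration $\alpha^m$ of the singleton bar and of the general restricted bars in a way that is uniform enough for $\mathbf{AC}_{\omega,\omega^\omega}$ to apply; this is routine but needs the relation ``$R(m,\alpha)$'' asserting the three conjuncts to be correctly set up so that $\forall m\exists\alpha[R(m,\alpha)]$ genuinely holds. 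Everything else is a transcription of Theorem~\ref{T:ftscheme} with Lemma~\ref{L:almost-finite} doing the work that finite combinatorics did there.
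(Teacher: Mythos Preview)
Your proof is correct and follows essentially the same approach as the paper: extend $B$ to $B'$ off the spread, define $C$ via the existence of an almost-finite enumerable bar below each node, verify the inductive clauses using $\mathbf{AC}_{\omega,\omega^\omega}$ together with Lemma~\ref{L:almost-finite}(v) for the union step, and conclude with $\mathbf{BARIND}$. The only (harmless) overstatement is in the monotonicity clause, where no passage to a subset is needed---the same $E_\alpha$ that bars $\mathcal{F}_\beta\cap s$ already bars $\mathcal{F}_\beta\cap s\ast\langle m\rangle$.
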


\begin{proof} Let $\beta$ be given such that $Almfan(\beta)$ and $\beta(\langle\;\rangle)=0$.\footnote{If $\beta(\langle\;\rangle)\neq 0$, then $\mathcal{F}_\beta=\emptyset$ and there is nothing to prove.} Assume $Bar_{\mathcal{F}_\beta}(B)$.

Define $B':=B\cup\{s\mid \beta(s)\neq 0\}$.

We claim: $Bar_{\omega^\omega}(B')$. In the proof of Theorem \ref{T:ftscheme} we have seen how to prove this claim. 

\smallskip Let $C$ be the set of all $s$ such that either: $\beta(s)\neq 0$ or: \\$\beta(s) = 0$ and $\exists \alpha[ E_\alpha \subseteq B \;\wedge\; E_\alpha\;is\;almost$-$finite\;\wedge\; Bar_{\mathcal{F}_\beta\cap s}(E_\alpha)]$. 

\smallskip
One easily sees: $B\subseteq C$: for every $s$, if $\beta(s) =0$ and $s\in B$, define $\alpha$ such that $\forall n[\alpha(n)=s+1]$ and note: $\{s\}=E_\alpha\subseteq B$ and  $E_\alpha$ is finite and $Bar_{\mathcal{F}_\beta\cap s}(E_\alpha)$. 

\smallskip Now let $s$ be given such that $\forall m[s\ast\langle m \rangle \in C]$. Using $\mathbf{AC}_{\omega, \omega^\omega}$, find $\alpha$ such that, for all $m$, if $\beta(s\ast\langle m \rangle)=0$, then $E_{\alpha^m}\subseteq B$ and $E_{\alpha^m}$ is almost-finite and $Bar_{\mathcal{F}_\beta\cap s\ast\langle m \rangle}(E_{\alpha^m})$, and, if $\beta(s\ast\langle m \rangle)\neq 0$, then $\alpha^m = \underline 0$ and $E_{\alpha^m}=\emptyset$. 
\\Note: $Almfan(\beta)$ and, therefore, $\forall \zeta\in[\omega]^\omega\exists m[\alpha^{\zeta(m)}=\underline 0]$. \\Use Lemma \ref{L:almost-finite}(iv) and conclude: $\bigcup_m E_{\alpha^m}$ is almost-finite. \\Also: $Bar_{\mathcal{F}_\beta\cap s}(\bigcup_n E_{\alpha^n})$. Conclude: $s\in C$.

We thus see: $\forall s[\forall m[s\ast\langle m \rangle\in C]\rightarrow s \in C]$. 

Obviously: $\forall s\forall m[s\in C\rightarrow s\ast\langle m \rangle \in C]$. 

Using $\mathbf{BARIND}$, we conclude: $\langle\;\rangle\in C$, i.e.\\ $\exists \alpha[ E_\alpha \subseteq B \;\wedge\; E_\alpha\;is\;almost$-$finite\;\wedge\; Bar_{\mathcal{F}_\beta}(E_\alpha)]$. 
\end{proof} 
 \subsection{}         \textit{ The Almost-fan Theorem}, $\mathbf{AlmFT}$: \[\forall\beta[Almfan(\beta)\rightarrow\forall\alpha[\bigl( Thinbar_{\mathcal{F}_\beta}(D_\alpha) \;\wedge\; \forall s\in D_\alpha [\beta(s) = 0]\bigr)\rightarrow \]\[ \forall \zeta \in [\omega]^\omega\exists n[\zeta(n) \notin D_\alpha]]],\]
    i.e.: \textit{every decidable subet of $\omega$ that is a thin bar in some almost-fan  is almost-finite}.  
    
  \begin{theorem}\label{T:almft} $\mathsf{BIM}+\mathbf{ALMFAN}\vdash
  \mathbf{AlmFT}$. \end{theorem}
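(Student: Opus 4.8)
The plan is to deduce $\mathbf{AlmFT}$ from $\mathbf{ALMFAN}$ by the same manoeuvre that takes one from $\mathbf{FAN}$ to $\mathbf{FT}$, replacing ``finite'' by ``almost-finite'' throughout and using the closure properties of almost-finite enumerable sets collected in Lemma \ref{L:almost-finite}. So let $\beta$ be such that $Almfan(\beta)$, and let $\alpha$ be given with $Thinbar_{\mathcal{F}_\beta}(D_\alpha)$ and $\forall s\in D_\alpha[\beta(s)=0]$; I must show $\forall\zeta\in[\omega]^\omega\exists n[\zeta(n)\notin D_\alpha]$, i.e.\ that $D_\alpha$ is almost-finite (as a subset of $\omega$, in the sense of Subsection \ref{SS:finiteness}).

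First I would apply $\mathbf{ALMFAN}$ with $B:=D_\alpha$: since $Bar_{\mathcal{F}_\beta}(D_\alpha)$ (a thin bar is in particular a bar), we obtain $\gamma$ such that $E_\gamma\subseteq D_\alpha$, $E_\gamma$ is almost-finite, and $Bar_{\mathcal{F}_\beta}(E_\gamma)$. Now the key point is that, because $D_\alpha$ is a \emph{thin} bar in $\mathcal{F}_\beta$ and $E_\gamma\subseteq D_\alpha$ is \emph{also} a bar in $\mathcal{F}_\beta$, the two sets must in fact coincide: every element $s$ of $D_\alpha$ lies on some branch $\delta\in\mathcal{F}_\beta$ (using $\beta(s)=0$ and $Spr(\beta)$, extend $s$ to such a $\delta$), that branch meets $E_\gamma$ in some $\overline\delta m\in E_\gamma\subseteq D_\alpha$, and thinness of $D_\alpha$ forces $\overline\delta m=s$, hence $s\in E_\gamma$. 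Thus $D_\alpha=E_\gamma$ as subsets of $\omega$. Finally I invoke Lemma \ref{L:almost-finite}(i): since $D_\alpha=E_\gamma$ and $E_\gamma$ is almost-finite in the enumerable-set sense, $D_\alpha$ is almost-finite in the decidable-set sense, i.e.\ $\forall\zeta\in[\omega]^\omega\exists n[\gamma$'s decidability function vanishes along $\zeta]$, which unwinds to $\forall\zeta\in[\omega]^\omega\exists n[\zeta(n)\notin D_\alpha]$. (One should be slightly careful here: Lemma \ref{L:almost-finite}(i) is stated for $\alpha,\beta$ with $D_\alpha=E_\beta$, so I would produce the characteristic function of $D_\alpha$ and apply the Lemma to that function together with $\gamma$.)

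The step I expect to be the main obstacle — though it is routine rather than deep — is verifying that $D_\alpha$ and $E_\gamma$ really coincide, which requires unpacking the definition of $Thinbar$ (no proper extension of a bar element is a bar element, and a bar element has no proper initial segment that is a bar element) and checking that one can always prolong $s\in D_\alpha$ inside the spread $\mathcal{F}_\beta$. A mild subtlety is that $\mathbf{ALMFAN}$ delivers an \emph{almost-finite enumerable} witness $E_\gamma$, whereas $\mathbf{AlmFT}$ speaks of the \emph{decidable} set $D_\alpha$; the bridge is exactly Lemma \ref{L:almost-finite}(i), and I would flag that this is where the equivalence of the two notions of ``almost-finite'' is used. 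No choice principles beyond what is already absorbed into $\mathbf{ALMFAN}$ are needed for this direction, so the proof is short and I would most likely just write: ``Let $\beta$ with $Almfan(\beta)$ and $\alpha$ with $Thinbar_{\mathcal{F}_\beta}(D_\alpha)$ and $\forall s\in D_\alpha[\beta(s)=0]$ be given. Apply $\mathbf{ALMFAN}$ to $B:=D_\alpha$, obtaining almost-finite $E_\gamma\subseteq D_\alpha$ with $Bar_{\mathcal{F}_\beta}(E_\gamma)$; using thinness, conclude $D_\alpha=E_\gamma$; apply Lemma \ref{L:almost-finite}(i) to conclude $D_\alpha$ is almost-finite, i.e.\ $\forall\zeta\in[\omega]^\omega\exists n[\zeta(n)\notin D_\alpha]$.''
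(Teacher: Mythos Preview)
Your proposal is correct and follows essentially the same route as the paper: apply $\mathbf{ALMFAN}$ to $B=D_\alpha$, use thinness plus the spread property to conclude $E_\gamma=D_\alpha$, and then translate almost-finiteness from the enumerable to the decidable sense. Your citation of Lemma~\ref{L:almost-finite}(i) for that last step is in fact the right one; the paper's proof cites Lemma~\ref{L:finiteness}(i) there, which appears to be a slip.
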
 
  
   \begin{proof} Let $\beta, \alpha$ be given such that $Appfan(\beta)$ and $Thinbar_{\mathcal{F}_\beta}(D_\alpha)$ and \\$\forall s \in D_\alpha[\beta(s)=0]$.  Applying Theorem \ref{T:almftscheme}, find $\gamma$ such that $E_\gamma \subseteq D_\alpha$ and $Bar_{\mathcal{F}_\beta}(E_\gamma)$ and $E_{\gamma}$ is almost-finite. As $\forall s\in D_\alpha\forall t\in D_\alpha[s\sqsubseteq t\rightarrow s=t]$, conclude: $E_\gamma = D_\alpha$ and, by Lemma \ref{L:finiteness}(i), $\forall \zeta \in [\omega]^\omega\exists n[\zeta(n)\notin D_\alpha]$. \end{proof}
     The Almost-fan Theorem occurs in \cite{veldman2001a} and \cite{veldman2001b}.  \subsection{}  \textit{The Approximate-fan Theorem}, $\mathbf{AppFT}$:
   \[\forall \beta \forall \alpha[\bigl(Appfan(\beta) \;\wedge\; Thinbar_{\mathcal{F}_\beta}(D_\alpha) \;\wedge\; \forall s \in D_\alpha[ \beta(s) = 0]\bigr)\rightarrow \]\[ \forall \zeta \in [\omega]^\omega\exists n[\zeta(n) \notin D_\alpha]],\]
     i.e.: \textit{every decidable subet of $\omega$ that is a thin bar in some approximate fan  is almost-finite}.
    \begin{theorem}\label{T:almftappft} $\mathsf{BIM}+\mathbf{AlmFT}\vdash \mathbf{AppFT}$.\end{theorem}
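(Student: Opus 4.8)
The plan is to prove $\mathbf{AppFT}$ from $\mathbf{AlmFT}$ by showing that every approximate-fan-law is an almost-fan-law; then the conclusion of $\mathbf{AppFT}$ follows immediately from the conclusion of $\mathbf{AlmFT}$ applied to the same $\beta,\alpha$. More precisely, I would first establish in $\mathsf{BIM}$ the implication $\forall\beta[Appfan(\beta)\rightarrow Almfan(\beta)]$. Both notions carry the same hypothesis $Spr(\beta)$, so what must be shown is: if $\forall n\exists k\forall t\in[\omega]^{k+1}\exists i<k+1[t(i)\notin\omega^n\;\vee\;\beta\bigl(t(i)\bigr)\neq 0]$, then $\forall s\forall\zeta\in[\omega]^\omega\exists m[\beta\bigl(s\ast\langle\zeta(m)\rangle\bigr)\neq 0]$.

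First I would fix $s$ and a strictly increasing $\zeta$, and put $n:=\mathit{length}(s)+1$. Applying the approximate-fan hypothesis at this $n$, I obtain $k$ such that every $t$ in $[\omega]^{k+1}$ has some entry $t(i)$ that is either not in $\omega^n$ or is barred by $\beta$. The idea is to feed in the tuple built from the first $k+1$ values $s\ast\langle\zeta(0)\rangle, s\ast\langle\zeta(1)\rangle,\dots$ — but these all have length $n=\mathit{length}(s)+1$, so the ``$t(i)\notin\omega^n$'' disjunct is never triggered (after arranging, using that $\zeta$ is strictly increasing, that the coded sequence is genuinely in $[\omega]^{k+1}$, or by passing to a subsequence if the codes are not increasing, exactly as is done repeatedly in Lemma~\ref{L:almost-finite}). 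Hence some $i<k+1$ gives $\beta\bigl(s\ast\langle\zeta(i)\rangle\bigr)\neq 0$, which is what $Almfan(\beta)$ demands. I should be a little careful that the tuple $\langle s\ast\langle\zeta(0)\rangle,\dots,s\ast\langle\zeta(k)\rangle\rangle$ is strictly increasing in code; since $J$ is monotone in each argument and $\zeta$ is strictly increasing, $s\ast\langle\zeta(0)\rangle<s\ast\langle\zeta(1)\rangle<\cdots$, so this is automatic, and no choice principle is needed here.

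With $\forall\beta[Appfan(\beta)\rightarrow Almfan(\beta)]$ in hand, the theorem is short: assume $\mathbf{AlmFT}$, and let $\beta,\alpha$ be given with $Appfan(\beta)$, $Thinbar_{\mathcal{F}_\beta}(D_\alpha)$ and $\forall s\in D_\alpha[\beta(s)=0]$. Then $Almfan(\beta)$, so $\mathbf{AlmFT}$ applied to $\beta$ and $\alpha$ yields $\forall\zeta\in[\omega]^\omega\exists n[\zeta(n)\notin D_\alpha]$, which is precisely the conclusion of $\mathbf{AppFT}$. The main obstacle, such as it is, is the combinatorial bookkeeping in the first step — making sure the finite tuple one plugs into the approximate-fan condition really lies in $[\omega]^{k+1}$ and really consists of one-step extensions of $s$ — but this is routine in the style of the paper and uses only monotonicity of pairing and the fact that $\zeta$ is strictly increasing; no appeal to $\mathbf{BARIND}$ or any choice axiom is required.

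\begin{proof} We first show $\mathsf{BIM}\vdash\forall\beta[Appfan(\beta)\rightarrow Almfan(\beta)]$. Let $\beta$ be given with $Appfan(\beta)$, so $Spr(\beta)$ and $\forall n\exists k\forall t\in[\omega]^{k+1}\exists i<k+1[t(i)\notin\omega^n\;\vee\;\beta\bigl(t(i)\bigr)\neq 0]$. Let $s$ and $\zeta$ in $[\omega]^\omega$ be given; put $n:=\mathit{length}(s)+1$ and find $k$ as in the approximate-fan condition for this $n$. Define $t$ in $[\omega]^{k+1}$ by $\forall i\le k[t(i)=s\ast\langle\zeta(i)\rangle]$; since $\zeta$ is strictly increasing and pairing is monotone, $t$ is strictly increasing, so $t\in[\omega]^{k+1}$. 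Also $\forall i\le k[t(i)\in\omega^n]$. Hence $\exists i<k+1[\beta\bigl(s\ast\langle\zeta(i)\rangle\bigr)\neq 0]$. We thus see $\forall s\forall\zeta\in[\omega]^\omega\exists m[\beta\bigl(s\ast\langle\zeta(m)\rangle\bigr)\neq 0]$, i.e.\ $Almfan(\beta)$.

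Now assume $\mathbf{AlmFT}$. Let $\beta,\alpha$ be given with $Appfan(\beta)$, $Thinbar_{\mathcal{F}_\beta}(D_\alpha)$ and $\forall s\in D_\alpha[\beta(s)=0]$. By the above, $Almfan(\beta)$. Applying $\mathbf{AlmFT}$ to $\beta$ and $\alpha$, we obtain $\forall\zeta\in[\omega]^\omega\exists n[\zeta(n)\notin D_\alpha]$, i.e.\ $\mathbf{AppFT}$.
\end{proof}
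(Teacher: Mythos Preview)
Your proof is correct and is exactly the approach the paper takes: its proof reads in full ``Obvious, as every approximate fan is an almost-fan,'' and you have simply spelled out that implication. One small slip: you invoke monotonicity of the pairing function $J$, but what you actually need (and what holds for the paper's prime-power coding) is that $m\mapsto s\ast\langle m\rangle$ is increasing; even without that, the $k+1$ values are distinct and can be sorted into an element of $[\omega]^{k+1}$.
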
 \begin{proof} Obvious, as every approximate fan is an almost-fan. \end{proof}
    
    I do not know if $\mathsf{BIM}$ proves $\mathbf{AppFT}\rightarrow\mathbf{AlmFT}$.
    \begin{theorem}\label{T:appftft} $\mathsf{BIM}+\mathbf{AppFT}\vdash \mathbf{FT}$. \end{theorem}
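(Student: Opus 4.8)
The plan is to reduce $\mathbf{FT}$ to its thin-bar formulation and then apply $\mathbf{AppFT}$ with the full binary tree playing the r\^ole of the approximate fan. By Theorem~\ref{T:{thinfan}} it suffices to prove, in $\mathsf{BIM}+\mathbf{AppFT}$, that for every $\alpha$ with $Thinbar_\mathcal{C}(D_\alpha)$ and $D_\alpha\subseteq Bin$ one has $\exists n\forall m>n[m\notin D_\alpha]$. First I would check that the binary tree is an approximate fan: take $\beta$ with $\beta(s)=0\leftrightarrow s\in Bin$; then $Spr(\beta)$ and $\mathcal{F}_\beta=\mathcal{C}$, and for each $n$ the set $\{s\in\omega^n\mid\beta(s)=0\}$ is $Bin_n$, which has $2^n$ elements, hence is bounded-in-number (with bound $2^n$); so $Appfan(\beta)$. (This is the special case ``every explicit fan is an approximate fan''.) Given $\alpha$ with $Thinbar_\mathcal{C}(D_\alpha)$ and $D_\alpha\subseteq Bin$ we then have $Thinbar_{\mathcal{F}_\beta}(D_\alpha)$ and $\forall s\in D_\alpha[\beta(s)=0]$, so $\mathbf{AppFT}$ yields that $D_\alpha$ is almost-finite, i.e. $\forall\zeta\in[\omega]^\omega\exists n[\zeta(n)\notin D_\alpha]$.

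What remains is to prove, \emph{in $\mathsf{BIM}$ alone}, that a decidable subset of $Bin$ which is a thin bar in $\mathcal{C}$ and is almost-finite is already finite. For this I would consider the decidable ``uncovered'' subtree $T=\{s\in Bin\mid\forall t\sqsubseteq s[\alpha(t)=0]\}$ of $Bin$; since $D_\alpha$ is thin, $D_\alpha$ is exactly the set of children $t\ast\langle i\rangle$ of nodes $t\in T$ with $t\ast\langle i\rangle\notin T$ (the case $\alpha(\langle\;\rangle)\neq 0$, where $D_\alpha=\{\langle\;\rangle\}$, being trivial), and the desired conclusion is equivalent to $\exists m[Bar_\mathcal{C}(D_{\overline\alpha m})]$, equivalently to $T$ having bounded length. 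The idea is that if $T$ had elements of every length then, using that $D_\alpha$ is a bar, from any prescribed code-bound one could exhibit a concrete element of $D_\alpha$ beyond it, namely the first point at which some long avoiding branch meets $D_\alpha$; iterating, one builds a strictly increasing $\zeta\in[\omega]^\omega$ lying entirely inside $D_\alpha$, contradicting almost-finiteness.

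I expect the main obstacle to be carrying out this last step so that it delivers the bound $m$ \emph{constructively} rather than merely $\neg\neg\exists m[Bar_\mathcal{C}(D_{\overline\alpha m})]$: passing from the latter to the former would require Markov's Principle, and for \emph{arbitrary} almost-finite decidable sets the implication ``almost-finite $\Rightarrow$ finite'' genuinely needs $\mathbf{MP}$ (Lemma~\ref{L:not-infinite}). The way around this is to exploit that $\mathcal{C}$ is finitely branching, so that ``$Bar_\mathcal{C}(D_{\overline\alpha m})$'' is decidable in $m$; one should therefore feed the almost-finiteness hypothesis not an arbitrary sequence but a strictly increasing $\zeta$ built by primitive recursion from $\alpha$, arranged so that ``$\zeta(n)\notin D_\alpha$'' forces a computable value $m(n)$ with $Bar_\mathcal{C}(D_{\overline\alpha m(n)})$, whence the single $n$ returned by almost-finiteness directly exhibits the required bound. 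The delicate bookkeeping is to keep $\zeta$ strictly increasing while preserving this ``read-off'' property, using the monotonicity of sequence codes in length together with the bar property of $D_\alpha$. Granting this, $\mathbf{FT}$ follows; combined with Theorems~\ref{T:almft} and~\ref{T:almftappft} this places $\mathbf{ALMFAN}$, $\mathbf{AlmFT}$, $\mathbf{AppFT}$ and $\mathbf{FT}$ in a descending chain of strength.
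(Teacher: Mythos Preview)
Your proposal is correct and follows the same route as the paper: reduce to the thin-bar formulation via Theorem~\ref{T:{thinfan}}, observe that $\mathcal{C}$ is an approximate fan, apply $\mathbf{AppFT}$ to obtain almost-finiteness of $D_\alpha$, and then construct a single strictly increasing $\zeta$ from $\alpha$ so that the witness $n$ with $\zeta(n)\notin D_\alpha$ directly yields the finite bound.

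The one place where the paper is sharper than your sketch is the construction of $\zeta$. You anticipate ``delicate bookkeeping'' involving avoiding branches and monotonicity of codes in length; the paper's device is simpler. One defines $\zeta$ by: as long as $\{\zeta(0),\ldots,\zeta(n-1)\}$ does \emph{not} already bar $\mathcal{C}$ (a decidable condition, since $Bin$ is finitely branching), set $\zeta(n):=\mu p[p\in D_\alpha\wedge\forall i<n\,p\neq\zeta(i)]$, i.e.\ simply the next element of $D_\alpha$ in increasing order of code. This $\mu$ terminates because any $\gamma\in\mathcal{C}$ not yet covered meets $D_\alpha$ at a fresh point; strict increase is automatic since one is listing $D_\alpha$ in order. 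Once the already-listed elements do bar $\mathcal{C}$, one continues $\zeta$ arbitrarily (still strictly increasing) with values outside $D_\alpha$. Almost-finiteness then returns an $n$ with $\zeta(n)\notin D_\alpha$; necessarily one is in the second regime, so $\{\zeta(0),\ldots,\zeta(n-1)\}$ bars $\mathcal{C}$, and by thinness $D_\alpha=\{\zeta(0),\ldots,\zeta(n-1)\}$, whence $\forall m>\zeta(n)[m\notin D_\alpha]$. No detour through the uncovered subtree $T$ or through branch-following is needed, and Markov's Principle is avoided exactly for the reason you identify: the case split built into $\zeta$ is decidable.
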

    
    \begin{proof} Assume $\mathbf{AppFT}$. We want to prove $\mathbf{FT}$ and use Theorem \ref{T:{thinfan}}.
    
     Let $\alpha$ be given such that $Thinbar_\mathcal{C}(D_\alpha)$.  \\Using $\mathbf{AppFT}$, conclude: $D_{\alpha}$ is almost-finite.\\ Define $\zeta$ in $[\omega]^\omega$ such that, for each $n$, \\ if $\neg \forall\gamma\in\mathcal{C}\exists i<n[\zeta(i)\sqsubset\gamma]$, then $\zeta(n)=\mu p[p\in D_{\alpha}\;\wedge\;\forall i<n[p\neq\zeta(i)]]$.  \\Find $p$ such that $\zeta(p)\notin D_{\alpha}$.  \\Conclude: $\forall n>\zeta(p)[n\notin D_\alpha]$. 
    
    We thus see: $\forall \alpha[Thinbar_\mathcal{C}(D_\alpha)\rightarrow\exists m\forall  n>m[n \notin D_\alpha]]$. 
    
    Using Theorem \ref{T:{thinfan}}, we conclude: $\mathbf{FT}$. \end{proof}

$\mathsf{BIM}$ does not prove $\mathbf{FT}\rightarrow \mathbf{AppFT}$, see \cite[Corollary 10.6]{veldman2011c}.

 In $\mathsf{BIM}$, $\mathbf{AppFT}$ has a number of important equivalents, see \cite{veldman2011c}.  One of them is  the following principle of Open Induction:
 $$\mathbf{OI}([0,1]): \forall \alpha[\forall \delta \in [0,1][[0,\delta)\subseteq \mathcal{H}_\alpha\rightarrow\delta\in\mathcal{H}_\alpha]\rightarrow [0,1]\subseteq\mathcal{H}_\alpha].$$

 The relation between $\mathbf{FT}$ and $\mathbf{AppFT}$ in $\mathsf{BIM}$ may be compared to the relation  between $\mathbf{WKL}$ and $\mathbf{KL}$ in $\mathsf{RCA}_0$. 

 In the classical context of $\mathsf{RCA_0}$, one studies two extensions of $\mathbf{WKL}$,
 
\noindent\textit{Bounded K\"onig's Lemma} $\mathbf{BKL}$, that (for a classical reader) would coincide with (a contraposition of) the second formulation of $\mathbf{FT}$ \ref{SSS:fan} and  \textit{K\"onig's Lemma} $\mathbf{KL}$, that, similarly, would coincide with $\mathbf{FT}^+$ \ref{SSS:fan+}.

$\mathbf{BKL}$ is, in $\mathsf{RCA_0}$,  equivalent to $\mathbf{WKL}$, see   \cite[Lemma IV.1.4]{Simpson}, just as, in $\mathsf{BIM}$, the first and second formulation of $\mathbf{FT}$ are equivalent.

 $\mathbf{KL}$, on the other hand,   is
definitely stronger than $\mathbf{WKL}$, as $\mathsf{RCA_0} +\mathbf{KL}$ is equivalent to $\mathsf{ACA_0}$.

As we observed before, see Theorem \ref{T:fanfan+},
$\mathsf{BIM}+$\textit{Weak-}$\mathbf{\Pi}^0_1$-$\mathbf{AC_{\omega,\omega}}\vdash\mathbf{FT}\leftrightarrow \mathbf{FT}^+.$\footnote{One may learn from \cite[Theorem 9.20]{kohlenbach2}   that, using countable choice, in fact only \textit{Weak}-$\mathbf{\Pi}^0_1$-$\mathbf{AC}_{\omega,\omega}$, one may constructively derive $\mathbf{KL}$ from $\mathbf{WKL}$. On the problem of treating non-constructive assumptions in a constructive context, see the end of Section \ref{S:BIM}.}

 From a constructive point of view,  the axiom  \textit{Weak-}$\mathbf{\Pi}^0_1$-$\mathbf{AC_{\omega,\omega}}$ is weak indeed, as the antecedent is read constructively.
 Also note: $\mathsf{BIM}+ \mathbf{AC}_{\omega,\omega}!\vdash$ \textit{Weak-}$\mathbf{\Pi}^0_1$-$\mathbf{AC}_{\omega,\omega}$.

In a classical context, the r\^ole of a countable axiom of choice is very different from the r\^ole of such an axiom in a constructive context, see  \cite[Appendix 1]{Howard-Kreisel}.

  It seems to us that $\mathbf{FT}^+$ is still too close to $\mathbf{FT}$ for being a good candidate for playing, in the intuitionistic context, a r\^ole like the r\^ole played by $\mathbf{KL}$ in the classical context.

Note that our `axiom' $\mathbf{AppFT}$ is a possibly better candidate, as,  for a classical reader, $\mathbf{AppFT}$ is still  indistinguishable from $\mathbf{KL}$.

\medskip Some authors have called our $\mathbf{FT}$ the \textit{Weak Fan Theorem} $\mathbf{WFT}$, see for instance \cite{loeb}, but we decided not to follow them. 

There is a constructive version of \textit{Weak Weak K\"onig's Lemma} $\mathbf{WWKL}$, see \cite[Definition X.1.7]{Simpson},  that is called $\mathbf{WWFT}$, see \cite{nemoto}.

\section{Notation and conventions}\label{S:notation}

In this Section we explain how, in $\mathsf{BIM}$, some useful notation is introduced  and some elementary results are proven. 
  \subsection{Finite and infinite sequences of natural numbers}\label{SS:sequences}\hfill

 $\mathsf{BIM}$ contains a constant $p$ denoting the function
 enumerating the prime numbers: $p(0)= 2, p(1)= 3, \ldots$
 
  We code finite sequences of natural numbers by
 natural numbers: 
 
 $\langle\;\rangle:=0$ and, for each $k>0$, for all $m_0,m_1, \ldots m_{k-1}$, 
 \[\langle m_0, \ldots, m_{k-1} \rangle = p(k-1)\cdot\prod_{i<k}p(i)^{m_i}-1 \]

 $length(0):=0$ and, 
 
 for each $s>0$,  $\mathit{length}(s) := 1\;+$
\textit{the largest $k$ such that  $p(k)$ divides $s+1$}.

If $i<\mathit{length}(s)-1$, then
 $s(i):=$  \textit{the largest m such that  $p(i)^m$ divides  $s+1$}, and, if $i=\mathit{length}(s)-1$, then $s(i):=$ \textit{the largest m such that  $p(i)^{m+1}$ divides  $s+1$}, and, if $i\ge\mathit{length}(s)$ then $s(i):=0$.
 
 Note:  if $\mathit{length}(s) = k$, then $s =\langle s(0), s(1), \ldots, s(k-1)\rangle$. 
 
 Also note: $\forall s[s\ge\mathit{length}(s)]$.
 
 \smallskip
 $a\ast b$ is the number $s$ satisfying: $\mathit{length}(s) = \mathit{length}(a) + \mathit{length}(b)$ and, 
 \\for each $n$,  if $n < length(a)$, then $s(n) = a(n)$ and, 
 \\if $\mathit{length}(a) \le n <\mathit{length}(s)$, 
 then $s(n) = b \bigl(n - \mathit{length}(a)\bigr)$.

 $a \ast \alpha$ is the element $\beta$ of $\omega^\omega$ satisfying: for each $n$, if $n< \mathit{length}(a)$, \\then $\beta(n) = a(n)$, and, if $ \mathit{length}(a)\le n$, then $\beta(n) = \alpha\bigl(n - \mathit{length}(a)\bigr)$.
 
 For  $n \le length(a)$, $\overline a(n) := \langle
 a(0), \ldots, a(n-1) \rangle$. \\If confusion seems unlikely, we sometimes write:
 ``$\overline a n$'' and not: ``$\overline a (n)$''.

 $a \sqsubseteq b\leftrightarrow \exists n  \le \mathit{length}(b)[a = \overline b n]$, and: 
 $a \sqsubset b\leftrightarrow (a\sqsubseteq b\;\wedge\;a \neq b)$.
  
  $s\le_{lex}t \leftrightarrow \forall m <\mathit{length}(s)[\overline s m =\overline t m \rightarrow s(m)\le t(m)]$.  
 
    $a \perp b\leftrightarrow \neg(a\sqsubseteq b\;\vee\;b \sqsubseteq a)$.

   $\overline{\alpha}(n) :=\overline\alpha n:= \langle
 \alpha(0), \ldots \alpha(n-1) \rangle$. 
 
  $a \sqsubset \alpha\leftrightarrow \exists n[\overline \alpha n = a]$.

\smallskip $a\perp \alpha\leftrightarrow \alpha\perp a\leftrightarrow \neg(a\sqsubset\alpha)$.

\smallskip $\alpha\;\#\;\beta \leftrightarrow \alpha\perp\beta\leftrightarrow\exists n[\alpha(n)\neq \beta(n)]$.

\smallskip For each $p$, $\underline p$ is the element of $\omega^\omega$ satisfying $\forall n[\underline p (n) =p]$.

 \medskip $\forall \alpha\forall n[ \alpha'(n)=\bigl(\alpha(n)\bigr)'\;\wedge\; \alpha''(n)=\bigl(\alpha(n)\bigr)'']$.
 
\medskip
  We extend the language of $\mathsf{BIM}$ by introducing $\in$ and terms denoting subsets of $\omega$. This is not a real extension of the language of $\mathsf{BIM}$. Formulas in which the new symbols occur are abbreviations of  formulas in which they do not occur. 
  
  Given a formula $\varphi=\varphi(n)$ we may introduce a {\it `set term'} $T_\varphi$. The basic formula `$t \in X_\varphi$' is an abbreviation of `$\varphi(t)$'. 
  
   Here is a first example.

$\mathit{Bin}:=\{a\mid  \forall n < \mathit{length}(a)[a(n) = 0 \; \vee\; a(n) = 1]\}$.  

 `$a\in Bin$'  is an abbreviation of `$\forall n < \mathit{length}(a)[a(n) = 0 \; \vee\; a(n) = 1]$'.

\smallskip
$ Bin_m:=\{s\in Bin\mid length(s)=m)\}$. 

\smallskip $\omega^m:=\{s\mid length(s)=m\}$.

\smallskip Given terms $A,B$ denoting subsets of $\omega$,   \\`$A\subseteq B$' is an abbreviation of `$\forall n[n\in A\rightarrow n \in B]$'.

\smallskip
We also introduce terms denoting subsets of $\omega^\omega$, for instance:

   $\mathcal{C}:=\{\gamma\mid\forall n [
 \gamma(n) = 0 \vee \gamma(n) = 1]\}$.

 `$\alpha\in \mathcal{C}$' is ean abbreviation of `$\forall n <length(a)[\alpha(n) = 0 \; \vee\; \alpha(n) = 1]$'.

\smallskip $[\omega]^\omega:=\{\zeta \mid \forall n[\zeta(n)<\zeta(n+1)]\}$.

\smallskip Given terms  $\mathcal{X}, \mathcal{Y}$ denoting subsets of $\omega^\omega$, \\`$\mathcal{X}\subseteq \mathcal{Y}$' abbreviates `$ \forall \alpha[\alpha\in \mathcal{X}\rightarrow\alpha\in\mathcal{Y}]$'.

\smallskip Given a term $\mathcal{X}$ denoting a subset of $\omega^\omega$, we introduce, for all $s$, \\$\mathcal{X}\cap s :=\{\alpha \in \mathcal{X}\mid s\sqsubset \alpha\}$.

\smallskip
Given a term   $\mathcal{X}$ denoting a subset of $\omega^\omega$, and a term $B$ denoting a subset of $\omega$, we let   `$\mathit{Bar}_\mathcal{X}(B)$' be an abbreviation of `$\forall\alpha\in\mathcal{X}\exists n[\overline \alpha n \in B]$'. 

Note that $\mathit{Bar}_\mathcal{X}(B)$ is a  \textit{formula scheme}, that becomes a formula if one substitutes  formulas defining $\mathcal{X}$, $B$, respectively.

\smallskip From now on, we will express ourselves more informally, as in the following example:

For each  $\mathcal{X}\subseteq\omega^\omega$, for each $B\subseteq\omega$,  \\$\mathit{Thinbar}_\mathcal{X}(B)\leftrightarrow\bigl(Bar_\mathcal{X}(B)\;\wedge\; \forall s\in B\forall t\in B[s\sqsubseteq t\rightarrow s=t]\bigr)$.  

Note that we are not extending the language of $\mathsf{BIM}$ by second-order variables.

\subsection{Decidable and enumerable subsets of $\omega$}\label{SS:decidenum}\hfill

  $D_\alpha:=\{i\mid\alpha(i)\neq 0\}$.  $D_\alpha$ is \textit{the subset of $\omega$ decided by $\alpha$}. 

The expression `$i\in D_\alpha$' is an abbreviation of `$\alpha(i)\neq 0$'.

$X\subseteq \omega$ is \textit{decidable} or $\mathbf{\Delta}^0_1$ if and only if $\exists \alpha[X=D_\alpha]$.

  \smallskip $D_a:=\{i\mid i < \mathit{length}(a)\mid a(i) \neq 0\}$.

\smallskip $X\subseteq \omega$ is {\it finite} if and only if $\exists a[X=D_a]$.

\smallskip Note:  for each $\alpha$, $D_\alpha = \bigcup\limits_{n \in \omega} D_{\overline \alpha n}$.

\smallskip
  $E_\alpha := \{n\mid \exists p [\alpha(p) = n+ 1] \}$. $E_\alpha$ is  \textit{the subset of $\omega$ enumerated by $\alpha$}.

$X\subseteq \omega$ is \textit{enumerable} or $\mathbf{\Sigma}^0_1$ if and only if $\exists \alpha[X=E_\alpha]$.

  \smallskip  $E_a := \{n\mid \exists p < \mathit{length}(a)[a(p) = n+ 1] \}$.  
 
 Note:  for each $\alpha$, $E_\alpha = \bigcup\limits_{n \in \omega} E_{\overline \alpha n}$.
 
 $X\subseteq \omega$ is \textit{co-enumerable} or $\mathbf{\Pi}^0_1$ if and only if \\$\exists \alpha[X=\omega\setminus E_\alpha=\{n\mid \forall p[\alpha(p)\neq n+1]\}]$.
 
 Given any $\alpha$, define $\beta$ such that \\$\forall n[\alpha(n)=\beta(n) = 0\;\vee\; \bigl(\alpha(n)\neq 0\;\wedge\;\beta(n)=n+1\bigr)]$, and note: $D_\alpha = E_\beta$. \\We thus see: $\mathsf{BIM}\vdash\forall \alpha \exists \beta[D_\alpha = E_\beta]$.
 
\subsection{Open and closed subsets of $\omega^\omega$,  spreads and fans}\label{SS:fans} \hfill

  $\mathcal{G}_\beta:=\{\gamma\mid\exists n[\beta(\overline \gamma n) \neq 0]\}$.
  
  $\mathcal{G}\subseteq \omega^\omega$ is \textit{  open} or $\mathbf{\Sigma}^0_1$ if and only if $\exists \beta[\mathcal{G}=\mathcal{G}_\beta]$. 

  $\mathcal{F}_\beta := \omega^\omega\setminus \mathcal{G}_\beta = \{\gamma\mid\forall n[\beta(\overline \gamma n) =0]\}$.
  
 $\mathcal{F}\subseteq\omega^\omega$  is  \textit{ closed} or $\mathbf{\Pi}^0_1$ if and only if $\exists\beta[\mathcal{F}= \mathcal{F}_\beta]$.
 
  $Spr(\beta)\leftrightarrow \forall s[\beta(s) =0\leftrightarrow \exists n[\beta(s\ast\langle n \rangle)=0]]$.  \\$\mathcal{X}\subseteq\omega^\omega$ is a \textit{spread} if and only if $\exists \beta[Spr(\beta)\;\wedge\;\mathcal{X}=\mathcal{F}_\beta]$.  
  
  \smallskip In intuitionistic mathematics, not  every  closed subset of $\omega^\omega$ is a spread, see Lemma \ref{L:closedspread}.

 \smallskip
 
 $Fan(\beta)\leftrightarrow \bigl(Spr(\beta)\;\wedge\;\forall s\exists n\forall m[\beta(s\ast \langle m \rangle)=0\rightarrow m\le n]\bigr)$ and  \\$Explfan(\beta)\leftrightarrow \bigl(Spr(\beta)\;\wedge\;\exists \gamma\forall s\forall m[\beta(s\ast \langle m \rangle)=0\rightarrow m\le \gamma(s)]\bigr)$. 
 
  $\mathcal{X}\subseteq\omega^\omega$ is a \textit{fan} if and only if $\exists \beta[Fan(\beta)\;\wedge\;\mathcal{X}=\mathcal{F}_\beta]$.  \\$\mathcal{X}\subseteq\omega^\omega$ is an \textit{explicit fan} if and only if $\exists \beta[Explfan(\beta)\;\wedge\;\mathcal{X}=\mathcal{F}_\beta]$.
  
  \begin{lemma}\label{L:explicitfan} One may prove in $\mathsf{BIM}$:\\$\forall \beta[Explfan(\beta)\leftrightarrow \bigl(Spr(\beta)\;\wedge\;\exists \gamma \forall n\forall s\in\omega^n[\beta(s)=0\rightarrow s\le \gamma(n)]\bigr)]$. \end{lemma}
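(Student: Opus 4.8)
\textbf{Proof plan for Lemma \ref{L:explicitfan}.} The statement to prove is that, provably in $\mathsf{BIM}$, for every $\beta$:
\[Explfan(\beta)\leftrightarrow \bigl(Spr(\beta)\;\wedge\;\exists \gamma \forall n\forall s\in\omega^n[\beta(s)=0\rightarrow s\le \gamma(n)]\bigr).\]
Recall that $Explfan(\beta)$ is by definition $Spr(\beta)\;\wedge\;\exists \gamma\forall s\forall m[\beta(s\ast \langle m \rangle)=0\rightarrow m\le \gamma(s)]$. So the content of the lemma is: given the spread-law condition, the \emph{local} boundedness witnessed by $\gamma$ (each node $s$ has a bound $\gamma(s)$ on its immediate successors) is equivalent to the \emph{level-uniform} boundedness witnessed by some $\delta$ (each level $n$ has a single bound $\delta(n)$ majorizing all codes $s\in\omega^n$ that are admissible, i.e.\ satisfy $\beta(s)=0$). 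I would prove the two implications separately.

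For the direction $(\Leftarrow)$: assume $Spr(\beta)$ and that there is $\gamma$ with $\forall n\forall s\in\omega^n[\beta(s)=0\rightarrow s\le\gamma(n)]$. I want to produce a $\gamma^\ast$ witnessing $Explfan(\beta)$, i.e.\ $\forall s\forall m[\beta(s\ast\langle m\rangle)=0\rightarrow m\le\gamma^\ast(s)]$. Given $s$, let $n:=length(s)$; if $\beta(s\ast\langle m\rangle)=0$ then $s\ast\langle m\rangle\in\omega^{n+1}$ is admissible, hence $s\ast\langle m\rangle\le\gamma(n+1)$. Since (by the coding conventions in Subsection \ref{SS:sequences}) $m\le s\ast\langle m\rangle$ — indeed every entry of a finite sequence is bounded by its code, using $\forall s[s\ge\mathit{length}(s)]$ together with the explicit prime-power coding — we get $m\le\gamma(n+1)=\gamma(length(s)+1)$. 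So defining $\gamma^\ast$ by $\gamma^\ast(s):=\gamma(length(s)+1)$ (a legitimate function by Axiom \ref{ax:recop}, since this is a primitive-recursive composition) finishes this direction. This direction is essentially bookkeeping with the sequence coding.

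For the direction $(\Rightarrow)$: assume $Explfan(\beta)$, so $Spr(\beta)$ and we have $\gamma$ with $\forall s\forall m[\beta(s\ast\langle m\rangle)=0\rightarrow m\le\gamma(s)]$. I must build $\delta$ such that $\forall n\forall s\in\omega^n[\beta(s)=0\rightarrow s\le\delta(n)]$. The idea is to recurse on the level $n$: set $\delta(0)$ to be any bound for $\langle\;\rangle$ (e.g.\ $0$, since $\langle\;\rangle=0$), and, having a bound $\delta(n)$ on all admissible nodes at level $n$, note that any admissible node at level $n+1$ has the form $s\ast\langle m\rangle$ with $\beta(s)=0$ (by $Spr(\beta)$, since $\beta(s\ast\langle m\rangle)=0$ forces $\beta(s)=0$), hence $s\le\delta(n)$ and $m\le\gamma(s)\le\max_{s'\le\delta(n)}\gamma(s')$. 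The bounded search $\max_{s'\le\delta(n)}\gamma(s')$ is primitive recursive in $\gamma$ and $\delta(n)$, and a code $s\ast\langle m\rangle$ is, by the explicit coding, bounded by a primitive-recursive function of $\max(length(s\ast\langle m\rangle),\,\max\text{-of-entries})=\max(n+1,\delta(n),\max_{s'\le\delta(n)}\gamma(s'))$. So I can define $\delta$ by primitive recursion: $\delta(0):=0$, $\delta(n+1):=$ a suitable primitive-recursive term in $n$, $\delta(n)$, and $\max_{s'\le\delta(n)}\gamma(s')$ that majorizes every code of the form $s\ast\langle m\rangle$ with $s\le\delta(n)$ and $m\le\max_{s'\le\delta(n)}\gamma(s')$. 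Then an easy induction on $n$ (using Axiom \ref{ax:ind}) shows $\forall n\forall s\in\omega^n[\beta(s)=0\rightarrow s\le\delta(n)]$.

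\textbf{Main obstacle.} There is no real mathematical difficulty here; the only thing requiring care is producing the right explicit primitive-recursive majorant for sequence codes. Concretely: from a bound $L$ on the length and a bound $M$ on all entries of a finite sequence $s$, one needs a uniform bound on the code $s$ itself, coming from the formula $\langle m_0,\dots,m_{k-1}\rangle = p(k-1)\cdot\prod_{i<k}p(i)^{m_i}-1$; a crude bound such as $p(L)^{L(M+2)}$ works and is primitive recursive in $L,M$. Once this majorization lemma is in hand (it is the kind of routine fact one typically leaves to the reader), both implications go through by the straightforward arguments sketched above. I would present the proof compactly, citing the coding conventions of Subsection \ref{SS:sequences} and Axioms \ref{ax:recop} and \ref{ax:ind}, and leave the explicit majorant computation to the reader, exactly as the paper does for the companion Theorems \ref{T:{thinfan}} and \ref{T:{thinfan+}}.
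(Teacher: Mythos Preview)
Your proposal is correct and follows essentially the same approach as the paper: in both directions one passes between local and level-uniform bounds using primitive recursion and the closure properties in Axiom \ref{ax:recop}. Your $(\Leftarrow)$ is slightly slicker than the paper's --- you set $\gamma^\ast(s):=\gamma(length(s)+1)$ directly, using that each entry of a sequence is bounded by its code, whereas the paper defines $\delta(s)$ as the maximum $m$ with $\beta(s\ast\langle m\rangle)=0$ and $s\ast\langle m\rangle\le\gamma(n+1)$ via a bounded search --- but this is a cosmetic difference; for $(\Rightarrow)$ the paper likewise defines $\delta(n+1)$ as the max of the finite set $\{s\ast\langle m\rangle\mid \beta(s\ast\langle m\rangle)=0,\ s\le\delta(n),\ m\le\gamma(s)\}$, which is exactly your recursion without going through an explicit majorant formula.
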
 \begin{proof} Let $\beta$ be given such that $Explfan(\beta)$. \\Find $\gamma$ such that $\forall s \forall m[\beta(s\ast\langle m \rangle)=0\rightarrow m\le \gamma(s)]$. \\Define $\delta$ such that $\delta(0)=0=\langle\;\rangle$, and, for each $n$, \\$\delta(n+1) = \max(\{s\ast\langle m\rangle\mid \beta\bigl(\{s\ast\langle m \rangle)=0 \;\wedge\; s\le \delta(n)\;\wedge\;m\le \gamma(s)\}\bigr)$.\\One proves by induction: $\forall n\forall s\in \omega^n[\beta(s)=0\rightarrow s\le \delta(n)]$. 
  
  \smallskip Conversely, let $\beta, \gamma$ be given such that $\forall n\forall s\in \omega^n[\beta(s)=0\rightarrow s\le \gamma(n)]$. \\Define $\delta$ such that, for each $n$, for each $s$ in $\omega^n$ such that $\beta(s)=0$, \\$\delta(s)=\max\bigl(\{m\mid \beta(s\ast\langle m\rangle)=0\;\wedge\; s\ast\langle m \rangle \le \gamma(n+1) \}\bigr)$. \\Note: $\forall s \forall m[\beta(s\ast\langle m\rangle)=0\rightarrow m\le \delta(s)]$ and conclude: $Explfan(\beta)$. \end{proof}
 \subsection{Subsequences}\label{SS:subs} \hfill
 
   $\forall n\forall m[\alpha^{\upharpoonright n}(m) := \alpha(\langle n \rangle \ast m)]$.
 
  $\alpha^{\upharpoonright n}$ is called the \textit{ $n$-th subsequence of the infinite sequence $\alpha$}. 
  
  \smallskip $\mathit{length}(s^{\upharpoonright n}):=\mu p\le s[\langle n \rangle \ast p \ge \mathit{length}(s)]$ and\\ 
$ \forall m<length(s^{\upharpoonright n})[s^{\upharpoonright n}(m) = s(\langle n \rangle \ast m)].$ 

\smallskip Note: $\forall \alpha\forall m\forall n[(\overline\alpha m)^{\upharpoonright n}\sqsubset\alpha^{\upharpoonright n}]$.

\medskip
  $\forall m[^a
  \alpha(m) := \alpha(a \ast m)]$.

Note: $\forall n[^{\langle n\rangle}\alpha = \alpha^{\upharpoonright n}]$.

\smallskip
 
   \smallskip $\mathit{length}(^as):=\mu p\le s[a\ast p \ge \mathit{length}(s)]$ and 
$ \forall m<length(^as)[^as(m) = s(a \ast m)].$

\smallskip Note: $\forall \alpha\forall m\forall a[^a(\overline\alpha m)\sqsubset\;^a\alpha]$.
\subsection{Partial continuous functions from $\omega^\omega$ to $\omega$ and from $\omega^\omega$ to $\omega^\omega$}\label{SS:contfun}\footnote{See \cite[Subsections 7.2 and 7.5]{veldman2011b}}\hfill

$Fun_0(\varphi)\leftrightarrow 
 \forall a \in E_\varphi\forall b \in E_\varphi[a'\sqsubseteq b'\rightarrow a'' = b'']$.
 
 $Dom_0(\varphi):=\{\alpha\mid   \forall \alpha \exists a \in E_\varphi[a'\sqsubset \alpha]\}$.

 \smallskip Assume: $Fun_0(\varphi)$ and   $\alpha\in Dom_0(\varphi)$.
 
 Then $\varphi(\alpha) :=\; the\;number\;  c\; such \;that\; \exists n(\overline \alpha n, c)\in E_\varphi]$. 
 
 \smallskip For every $\mathcal{X}\subseteq \omega^\omega$, for every $\varphi$, $\varphi:\mathcal{X}\rightarrow \omega\leftrightarrow \bigl(Fun_0(\varphi)\;\wedge\; \mathcal{X}\subseteq Dom_0(\varphi)\bigr)$.

 \medskip
 $Fun_1(\varphi)\leftrightarrow\forall a \in E_\varphi\forall b \in E_\varphi[a'\sqsubseteq b'\rightarrow a'' \sqsubseteq b'']$.

 $Dom_1(\varphi):=\{\alpha\mid \forall n \exists a \in E_\varphi[a'\sqsubset \alpha\;\wedge\; length(a'')\ge n]\}$.
 
 $\varphi|a:=\max(\{t\mid\exists b \in E_{\overline\varphi a}[b'\sqsubseteq a \;\wedge b''=t]\})$.
 
 $\varphi:\alpha\mapsto\gamma \leftrightarrow 
 \forall n\exists m[\overline \gamma n \sqsubseteq \varphi|\overline \alpha m]]$.

 \smallskip Assume: $Fun_1(\varphi)$ and   $\alpha\in Dom_1(\varphi)$.
 
 Then $\varphi|\alpha := \; the \; element \;\gamma\;of\; \omega^\omega$ such that $\varphi:\alpha\mapsto \gamma$.

\smallskip For every $\mathcal{X}\subseteq \omega^\omega$, for every $\varphi$, $\varphi:\mathcal{X}\rightarrow \omega^\omega\leftrightarrow \bigl(Fun_1(\varphi)\;\wedge\; \mathcal{X}\subseteq Dom_1(\varphi)\bigr)$.

\subsection{Integers and rationals}\hfill

 $m=_\mathbb{Z}n\leftrightarrow m'+n''=m''+n'$.

 $m<_\mathbb{Z}n\leftrightarrow m'+n''<m''+n'$.

 $0_\mathbb{Z}:=(0,0)$
 
 $m +_\mathbb{Z} n= (m'+n',m''+n'')$.

 $m -_\mathbb{Z} n= (m'+n'',m''+n')$.

 $m\cdot_\mathbb{Z} n:=(m'\cdot n'+m''\cdot n'', m'\cdot n''+m''\cdot n')$. 

\smallskip
 $\mathbb{Q}:=\{n\mid n''>_\mathbb{Z} 0_\mathbb{Z}\}$.

 $m=_\mathbb{Q}n\leftrightarrow m'\cdot_\mathbb{Z}n''=_\mathbb{Z} m''\cdot_\mathbb{Z} n'$.

 $m<_\mathbb{Q}n\leftrightarrow m'\cdot_\mathbb{Z}n''<_\mathbb{Z} m''\cdot_\mathbb{Z} n'$.
 
 $m\le_\mathbb{Q} n\leftrightarrow  m'\cdot_\mathbb{Z}n''\le_\mathbb{Z} m''\cdot_\mathbb{Z} n'$.
 
 $m\le_\mathbb{Q}n \leftrightarrow \max_\mathbb{Q}(m,n) =_\mathbb{Q}n \leftrightarrow  \min_\mathbb{Q}(m,n) =_\mathbb{Q}m$. 

 $m +_\mathbb{Q} n= (m'\cdot_\mathbb{Z}n''+_\mathbb{Z}m''\cdot_\mathbb{Z}n',m''\cdot_\mathbb{Z}n'')$.
 
  $m -_\mathbb{Q} n= (m'\cdot_\mathbb{Z}n''-_\mathbb{Z}m''\cdot_\mathbb{Z} n',m''\cdot_\mathbb{Z}n'')$.
 
  $m\cdot_\mathbb{Q} n=(m'\cdot_\mathbb{Z} n', m''\cdot_\mathbb{Z}n'')$.
 
 \smallskip
 $\mathbb{S}:=\{s\mid s'\in \mathbb{Q} \;\wedge\; s''\in \mathbb{Q}\;\wedge\;s'<_\mathbb{Q}s''\}$. 
 
  $s \sqsubset_\mathbb{S} t\leftrightarrow  (t'<_\mathbb{Q} s' \;\wedge\; s'' <_\mathbb{Q} t'')$. 
 
  $s \sqsubseteq_\mathbb{S} t\leftrightarrow  (t'\le_\mathbb{Q} s' \;\wedge\; s'' \le_\mathbb{Q} t'')$. 
 
  $s<_\mathbb{S} t\leftrightarrow s''<_\mathbb{Q} t'$.
  
   $s\le_\mathbb{S} t\leftrightarrow s'\le_\mathbb{Q} t''$.

 $s\;\#_\mathbb{S} \;t \leftrightarrow (s <_\mathbb{S} t \;\vee\; t<_\mathbb{S} s)$.
 
 \smallskip For each $n$, we define $n_\mathbb{Q}$ in $\mathbb{Q}$ by: $n_\mathbb{Q}=\bigl((n,0),(1,0)\bigr)$.

\smallskip For all $s$ in $\mathbb{S}$, $\mathit{double}_\mathbb{S}(s)$ is the element $u$ of $\mathbb{S}$ satisfying:
 
 $u'+_\mathbb{Q}u''=_\mathbb{Q} s'+_\mathbb{Q} s''$ and $u''-_\mathbb{Q} u' =_\mathbb{Q} 2_\mathbb{Q}\cdot_\mathbb{Q} (s''-_\mathbb{Q} s')$.
 
 Note: $\forall s \in \mathbb{S}\forall t \in \mathbb{S}[s\sqsubseteq_\mathbb{S} t\rightarrow \mathit{double}_\mathbb{S}(s)  \sqsubseteq_\mathbb{S} \mathit{double}_\mathbb{S}(t)]$.

 \smallskip  $s+_\mathbb{S} t:= (s'+_\mathbb{Q} t', s''+_\mathbb{Q} t'')$.
 
  $s\cdot_\mathbb{S} t:= \\\bigl(\min_\mathbb{Q}(s'\cdot_\mathbb{Q}t', s''\cdot_\mathbb{Q}t' , s'\cdot_\mathbb{Q}t'', s''\cdot_\mathbb{Q}t''), \max_\mathbb{Q}(s'\cdot_\mathbb{Q}t', s''\cdot_\mathbb{Q}t' , s'\cdot_\mathbb{Q}t'', s''\cdot_\mathbb{Q}t'')\bigr)$.

 \subsection{Real numbers}\label{SSS:reals}\hfill

 $\mathcal{R}:=\{ \alpha\mid\forall n [\alpha(n)\in \mathbb{S}\;\wedge\;\alpha(n+1) \sqsubseteq_\mathbb{S} \alpha(n)]\;\wedge\;\forall m \exists n[\alpha''(n) -_\mathbb{Q} \alpha'(n) <_\mathbb{Q} \frac{1}{2^m}]\}$.\footnote{This is not the same definition as in \cite[Subsubsection 8.1.4]{veldman2011b}. We replaced `$\sqsubset_\mathbb{S}$' by `$\sqsubseteq_\mathbb{S}$'
 .} 
 
    $\alpha <_\mathcal{R} \beta \leftrightarrow \exists n[\alpha(n) <_\mathbb{S} \beta(n)]$.

    $\alpha \le_\mathcal{R} \beta \leftrightarrow \forall  n[\alpha(n) \le_\mathbb{S} \beta(n)]$.
    
  \smallskip
 $\forall n[\inf(\alpha,\beta)(n):=\bigl(\min_\mathbb{Q}(\alpha'(n),\beta'(n)),\min_\mathbb{Q}(\alpha''(n),\beta''(n))\bigr)]$.
 
 $\forall n[\sup(\alpha,\beta)(n):=\bigl(\max_\mathbb{Q}(\alpha'(n),\beta'(n)),\max_\mathbb{Q}(\alpha''(n),\beta''(n))\bigr)]$.
 
 \smallskip $\alpha \;\#_\mathcal{R}\; \beta \leftrightarrow (\alpha <_\mathcal{R} \beta \;\vee\; \beta <_\mathcal{R} \alpha)$.

   $\alpha =_\mathcal{R} \beta\leftrightarrow (\alpha \le_\mathcal{R} \beta \;\wedge\;\beta \le_\mathcal{R} \alpha)$.
   
   \smallskip $\forall n[(\alpha+_\mathcal{R}\beta)(n):=\alpha(n)+_\mathbb{S}\beta(n)]$. 
   
    $\forall n[(\alpha\cdot_\mathcal{R}\beta)(n):=\alpha(n)\cdot_\mathbb{S}\beta(n)]$.
     
For each $q$ in $\mathbb{Q}$, we define $q_\mathcal{R}$ in $\mathcal{R}$ by: for each $n$, $q_\mathcal{R}(n) = (q -_\mathbb{Q} \frac{1}{2^n}, q+_\mathbb{Q} \frac{1}{2^n})$.

\smallskip $0_\mathcal{R}:=(0_\mathbb{Q})_\mathcal{R}$ and $1_\mathcal{R}:=(1_\mathbb{Q})_\mathcal{R}$.

\begin{lemma}\label{L:apart} One may prove in $\mathsf{BIM}$: \\$\forall s \in \mathbb{S} \forall t \in \mathbb{S}[s\sqsubset_\mathbb{S} t \rightarrow \forall \alpha \in \mathcal{R} \exists n[s\;\#_\mathbb{S} \;\alpha(n)\;\vee\;\alpha(n) \sqsubset_\mathbb{S} t]]$. \end{lemma}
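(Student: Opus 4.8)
The plan is to prove the statement $\forall s \in \mathbb{S}\,\forall t \in \mathbb{S}[s \sqsubset_\mathbb{S} t \rightarrow \forall \alpha \in \mathcal{R}\,\exists n[s \;\#_\mathbb{S}\; \alpha(n) \;\vee\; \alpha(n) \sqsubset_\mathbb{S} t]]$ by exploiting the fact that $s \sqsubset_\mathbb{S} t$ creates a genuine ``gap'' on both sides: we have $t' <_\mathbb{Q} s'$ and $s'' <_\mathbb{Q} t''$, with both gaps of some positive rational width. Since $\alpha \in \mathcal{R}$, the intervals $\alpha(n)$ shrink to width below any prescribed threshold; once they are narrow enough, the interval $\alpha(n)$ can no longer ``straddle'' both gaps simultaneously, so it either sits inside $(t', t'')$ (giving $\alpha(n) \sqsubset_\mathbb{S} t$), or it lies to one side of $s$ in a way witnessing $s \;\#_\mathbb{S}\; \alpha(n)$.

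First I would fix $s, t \in \mathbb{S}$ with $s \sqsubset_\mathbb{S} t$ and $\alpha \in \mathcal{R}$, and set $\varepsilon := \min_\mathbb{Q}(s' -_\mathbb{Q} t', t'' -_\mathbb{Q} s'')$, which is a positive rational by the definition of $\sqsubset_\mathbb{S}$. Using the Cauchy/shrinking condition on $\alpha$, I would pick $m$ with $\frac{1}{2^m} <_\mathbb{Q} \varepsilon$ and then $n$ with $\alpha''(n) -_\mathbb{Q} \alpha'(n) <_\mathbb{Q} \frac{1}{2^m} <_\mathbb{Q} \varepsilon$. Now I do a case split on the rationals involved — this is where one uses decidability of $<_\mathbb{Q}$ on concrete rational numbers. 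The key case distinction is: is $\alpha'(n) >_\mathbb{Q} t'$ or is $\alpha'(n) \le_\mathbb{Q} t'$? And symmetrically, is $\alpha''(n) <_\mathbb{Q} t''$ or $\alpha''(n) \ge_\mathbb{Q} t''$? If $t' <_\mathbb{Q} \alpha'(n)$ and $\alpha''(n) <_\mathbb{Q} t''$, then $\alpha(n) \sqsubset_\mathbb{S} t$ and we are done. If $\alpha'(n) \le_\mathbb{Q} t'$, then since $\alpha''(n) <_\mathbb{Q} \alpha'(n) +_\mathbb{Q} \varepsilon \le_\mathbb{Q} t' +_\mathbb{Q} \varepsilon \le_\mathbb{Q} s'$, we get $\alpha''(n) <_\mathbb{Q} s'$, i.e. $\alpha(n) <_\mathbb{S} s$, which gives $s \;\#_\mathbb{S}\; \alpha(n)$. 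Symmetrically, if $\alpha''(n) \ge_\mathbb{Q} t''$, then $\alpha'(n) >_\mathbb{Q} \alpha''(n) -_\mathbb{Q} \varepsilon \ge_\mathbb{Q} t'' -_\mathbb{Q} \varepsilon \ge_\mathbb{Q} s''$, so $s'' <_\mathbb{Q} \alpha'(n)$, i.e. $s <_\mathbb{S} \alpha(n)$, again giving $s \;\#_\mathbb{S}\; \alpha(n)$.

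The only subtlety is to make sure the case split is exhaustive and constructively legitimate: the four combinations of $\{\alpha'(n) \le_\mathbb{Q} t',\ \alpha'(n) >_\mathbb{Q} t'\} \times \{\alpha''(n) \ge_\mathbb{Q} t'',\ \alpha''(n) <_\mathbb{Q} t''\}$ are each decidable because comparisons of explicitly given rationals (hence of integers via the coding in Subsection~8.7) are decidable in $\mathsf{BIM}$; and in three of the four combinations we land in the ``$s \;\#_\mathbb{S}\; \alpha(n)$'' alternative while the remaining one gives ``$\alpha(n) \sqsubset_\mathbb{S} t$''. I expect the main obstacle to be purely bookkeeping: carefully chasing the definitions of $+_\mathbb{Q}$, $-_\mathbb{Q}$, $<_\mathbb{Q}$ through the pair-coding of $\mathbb{Z}$ and $\mathbb{Q}$ to verify the chains of rational inequalities, and keeping track of the fact that $\mathcal{R}$ here uses $\sqsubseteq_\mathbb{S}$ rather than $\sqsubset_\mathbb{S}$ for nestedness (noted in the footnote to the definition of $\mathcal{R}$), which is harmless for this argument since we only need the width bound. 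I would present the proof in the compressed style used elsewhere in Section~8, stating the choice of $\varepsilon$, $m$, $n$ and then the case analysis, leaving the rational arithmetic to the reader.
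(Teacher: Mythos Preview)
Your proposal is correct and is precisely the kind of argument the paper expects: in the paper the proof of this lemma is ``left to the reader'', so there is no approach to compare against beyond the standard one you have given. The choice of $\varepsilon := \min_\mathbb{Q}(s' -_\mathbb{Q} t',\, t'' -_\mathbb{Q} s'')$, the shrinking of $\alpha(n)$ below $\varepsilon$, and the decidable rational case split are exactly right, and the inequality chains you wrote out go through as stated.
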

\begin{proof} The proof is left to the reader. \end{proof}
\subsection{$[0,1]$ and $\mathcal{C}$}\label{SSS:01}\hfill

 $[0,1]:=\{\alpha \in \mathcal{R}\mid 0_\mathcal{R} \le_\mathcal{R} \alpha \le_\mathcal{R} 1_\mathcal{R}\}$.

\smallskip $(0,1]:=\{\alpha \in \mathcal{R}\mid 0_\mathcal{R} <_\mathcal{R} \alpha \le_\mathcal{R} 1_\mathcal{R}\}$, and  $[0,1):=\{\alpha \in \mathcal{R}\mid 0_\mathcal{R} \le_\mathcal{R} \alpha <_\mathcal{R} 1_\mathcal{R}\}$.

\smallskip For all $\alpha, \beta $ in $\mathcal{R}$, $[\alpha,\beta):=\{\gamma \in \mathcal{R}\mid\alpha\le_\mathcal{R}\gamma <_\mathcal{R}\beta\}$.

\medskip $[0,1]^2:=\{\gamma\mid \forall i<2[\gamma^{\upharpoonright i}\in [0,1]]\}$ and $[0,1]^\omega:=\{\gamma\mid \forall n[\gamma^{\upharpoonright n}\in [0,1]]\}$. 

 \smallskip
  $\mathcal{H}_\alpha:=\{\gamma \in [0,1]\mid\exists n\exists s \in \mathbb{S}[\alpha(s) \neq 0 \;\wedge\; \gamma(n)\sqsubset_\mathbb{S} s ]\}$.

 \smallskip
 $\mathcal{H}\subseteq \mathcal{R}$ is   \textit{open} if and only if $\exists\alpha[\mathcal{H}=\mathcal{H}_\alpha]$.
  
 \begin{lemma}\label{L:Conto[0,1]} One may prove in $\mathsf{BIM}$:\\There exist $\sigma, \psi$ such that \begin{enumerate}[\upshape(i)]\item $\sigma:\mathcal{C}\rightarrow [0,1]$ and  $\forall \delta\in[0,1]\exists\gamma\in \mathcal{C}[\sigma|\gamma=_\mathcal{R}\delta]$. 
 \item $\psi:\omega^\omega\rightarrow\omega^\omega$ and $\forall\alpha\forall \gamma \in \mathcal{C}[\gamma \in \mathcal{G}_{\psi|\alpha}\leftrightarrow \sigma|\gamma \in \mathcal{H}_\alpha]$. \end{enumerate} \end{lemma}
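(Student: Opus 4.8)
\textbf{Proof proposal for Lemma \ref{L:Conto[0,1]}.}

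The plan is to build the classical continuous surjection from Cantor space onto $[0,1]$ in the explicit combinatorial form demanded by $\mathsf{BIM}$, and then to realize the preimage of an open set as an open set by a computable operation on codes. For the surjection $\sigma$, I would use the standard ``midpoint'' or signed-binary-digit scheme: read a $\gamma$ in $\mathcal{C}$ as a sequence of instructions, where at stage $n$ we maintain a rational interval $[a_n,b_n]\subseteq[0,1]$ of length $2^{-n}$, we start with $[a_0,b_0]=[0,1]$, and $\gamma(n)=0$ tells us to pass to the left half $[a_n,\frac{a_n+b_n}{2}]$ while $\gamma(n)=1$ tells us to pass to the right half $[\frac{a_n+b_n}{2},b_n]$. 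Then $\sigma|\gamma$ is the real whose $n$-th approximant is the element of $\mathbb{S}$ coding $(a_n,b_n)$ (after applying $\mathit{double}_\mathbb{S}$ or a similar widening so that the defining inequality $\alpha''(n)-_\mathbb{Q}\alpha'(n)<_\mathbb{Q}\frac{1}{2^m}$ of $\mathcal{R}$ is met with strict inequalities and the nesting $\alpha(n+1)\sqsubseteq_\mathbb{S}\alpha(n)$ holds). Concretely one defines $\sigma$ as a partial continuous function by listing, for each finite binary string $c$ of length $n$, the pair $(c, s_c)$ in $E_\sigma$ where $s_c\in\mathbb{S}$ codes the dyadic interval of length $2^{-n}$ selected by $c$; one checks $Fun_1(\sigma)$ because compatible strings give nested intervals, and $\mathcal{C}\subseteq Dom_1(\sigma)$ because every $c$ has both extensions in the listing. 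Surjectivity, $\forall\delta\in[0,1]\exists\gamma\in\mathcal{C}[\sigma|\gamma=_\mathcal{R}\delta]$, is proved by constructing $\gamma$ stage by stage: given $\delta$, at stage $n$ we have an interval $[a_n,b_n]$ with (an approximant of) $\delta$ provably in a slightly enlarged version of it, and we test a rational comparison of $\delta$ against the midpoint using the approximants of $\delta$ — since the intervals overlap we can always make a legitimate choice, and the resulting $\sigma|\gamma$ and $\delta$ are pointwise $\le_\mathbb{S}$-comparable in both directions, hence $=_\mathcal{R}$-equal. (Using $\mathit{double}_\mathbb{S}$ in the definition of $\sigma$ gives the slack needed so that the comparison at each stage can be decided from finitely much information about $\delta$.)

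For part (ii), I would define $\psi:\omega^\omega\rightarrow\omega^\omega$ so that, for each $\alpha$ and each finite binary string $c$, $(\psi|\alpha)(c)\neq 0$ holds precisely when the dyadic interval $s_c$ associated to $c$ is already \emph{entirely} covered by one of the basic intervals listed in $\alpha$, i.e. when there is $s$ with $\alpha(s)\neq 0$ and $s_c\sqsubset_\mathbb{S} s$ (recall $\mathcal{H}_\alpha$ is defined via $\gamma(n)\sqsubset_\mathbb{S} s$, so we want $s_c$ strictly inside $s$). Since $c\mapsto s_c$ is primitive recursive and bounded search over $s<c$ suffices to decide this, $\psi$ is a legitimate element of $\omega^\omega$ coding a total continuous function. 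Then, given $\gamma\in\mathcal{C}$: if $\gamma\in\mathcal{G}_{\psi|\alpha}$, there is $n$ with $(\psi|\alpha)(\overline\gamma n)\neq 0$, so $s_{\overline\gamma n}\sqsubset_\mathbb{S} s$ for some $s$ with $\alpha(s)\neq 0$; but the $n$-th approximant of $\sigma|\gamma$ sits inside (a bounded widening of) $s_{\overline\gamma n}$, so for a possibly larger $m$ we get $(\sigma|\gamma)(m)\sqsubset_\mathbb{S} s$ — here one uses that $s_{\overline\gamma n}$ is strictly inside $s$ together with the fact that the approximants of $\sigma|\gamma$ shrink to it, an application essentially of Lemma \ref{L:apart} — hence $\sigma|\gamma\in\mathcal{H}_\alpha$. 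Conversely, if $\sigma|\gamma\in\mathcal{H}_\alpha$, there are $m,s$ with $\alpha(s)\neq 0$ and $(\sigma|\gamma)(m)\sqsubset_\mathbb{S} s$; choosing $n$ large enough (so that $s_{\overline\gamma n}$, which shares endpoints-behaviour with $(\sigma|\gamma)(n)$ up to the fixed widening, lands strictly inside $s$) we get $(\psi|\alpha)(\overline\gamma n)\neq 0$, so $\gamma\in\mathcal{G}_{\psi|\alpha}$.

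The routine-but-fiddly part is bookkeeping the finitely many units of ``slack'': because $\mathcal{R}$ is defined with $\sqsubseteq_\mathbb{S}$ (not $\sqsubset_\mathbb{S}$) and because $\mathcal{H}_\alpha$ and $\mathcal{G}_\beta$ are phrased with strict containment $\sqsubset_\mathbb{S}$ of an approximant inside a basic interval, one must choose the widening constant in the definition of $\sigma$ (one application of $\mathit{double}_\mathbb{S}$ per coordinate, or a fixed number of halvings of the radius) so that every implication in both directions goes through with the available strictness, and so that the nesting and Cauchy-rate conditions defining membership in $\mathcal{R}$ and $[0,1]$ are literally verified. I expect the main genuine obstacle — as opposed to this bookkeeping — to be the surjectivity proof: one must exhibit the choice of $\gamma$ from $\delta$ without assuming any decidability of $\delta$ against a midpoint, and the trick is exactly that the intervals selected successively overlap, so at each stage both ``go left'' and ``go right'' are consistent with $\delta$ being (approximately) in the current interval, and a crude computable test on the approximants of $\delta$ always returns an admissible answer; then one proves $\sigma|\gamma=_\mathcal{R}\delta$ by showing $(\sigma|\gamma)(n)$ and $\delta(n')$ are never $<_\mathbb{S}$-separated in either order, for suitably coordinated $n,n'$. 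Everything else reduces to Lemma \ref{L:apart} and elementary manipulations of codes in $\mathbb{S}$ and $\mathbb{Q}$, which I would leave to the reader.
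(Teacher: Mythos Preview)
Your plan and your treatment of (ii) match the paper essentially verbatim. The real difference is the concrete $\sigma$ in (i). You describe exact binary halving, $[a_n,b_n]\mapsto[a_n,\tfrac{a_n+b_n}{2}]$ or $[\tfrac{a_n+b_n}{2},b_n]$, possibly followed by a widening such as $\mathit{double}_\mathbb{S}$. The paper instead uses left and right \emph{two-thirds}: with $\lambda(s)=(s',\tfrac{1}{3}s'+_\mathbb{Q}\tfrac{2}{3}s'')$ and $\rho(s)=(\tfrac{2}{3}s'+_\mathbb{Q}\tfrac{1}{3}s'',s'')$, one sets $\nu(\langle\,\rangle)=(0_\mathbb{Q},1_\mathbb{Q})$, $\nu(c\ast\langle 0\rangle)=\lambda(\nu(c))$, $\nu(c\ast\langle 1\rangle)=\rho(\nu(c))$, and $(\sigma|\gamma)(n)=\nu(\overline\gamma n)$, with no separate widening at all.

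The difference matters for surjectivity. Whatever output-widening you apply, $\sigma|\gamma$ still converges to the dyadic real $\sum_n\gamma(n)\,2^{-n-1}$, so ``find $\gamma\in\mathcal{C}$ with $\sigma|\gamma=_\mathcal{R}\delta$'' amounts to ``compute a binary expansion of $\delta$'', and that already implies $\mathbf{LLPO}$ (take $\delta$ hovering near $\tfrac12$ according to a given $\alpha$). Your instinct that overlap is the key is correct, but the overlap must be built into the subdivision itself, not added as a post-hoc widening of the output approximant: the two exact halves meet only at the single point $m_n$, so the claim that ``both `go left' and `go right' are consistent with $\delta$ being (approximately) in the current interval'' is precisely what fails. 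With the paper's two-thirds subdivision the two children overlap in the whole middle third, nesting is automatic (each two-thirds part sits inside the whole), and at each stage cotransitivity of $<_\mathcal{R}$ lets one decide, from a single sufficiently fine approximant of $\delta$, at least one of ``$\delta$ lies in the left two-thirds'' or ``$\delta$ lies in the right two-thirds''. Your diagnosis of where the difficulty lies is right; it is the subdivision scheme that has to change.
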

 
 \begin{proof} (i) Define $\lambda$ and $\rho$ such that, for each $s$ in $\mathbb{S}$, $\lambda(s)=( s', \frac{1}{3}s'+_\mathbb{Q}\frac{2}{3}s'') $ and  $\rho(s)=( \frac{2}{3}s'+_\mathbb{Q}\frac{1}{3}s'', s'')$. 
 
 For each $s$ in $\mathbb{S}$, $\lambda(s)$ is the \textit{left-two-third part of} 
  $s$ and  $\rho(s)$ is the \textit{right-two-third part of} 
  $s$.
  
  Define $\nu$ such that $\nu(\langle\;\rangle)=(0_\mathbb{Q}, 1_\mathbb{Q})$ and, for all $s$ in  $Bin$, 
   $\nu(s\ast\langle 0\rangle)=\lambda\bigl(\nu(s)\bigr)$ and  $\nu(s\ast\langle 1\rangle)=\rho\bigl(\nu(s)\bigr)$.

   Define $\sigma:\mathcal{C}\rightarrow [0,1]$ such that $\forall \gamma \in \mathcal{C}[(\sigma|\gamma)(n)=\nu(\overline \gamma n)]$. \\One may prove:   $\forall \delta\in[0,1]\exists\gamma\in \mathcal{C}[\sigma|\gamma=_\mathcal{R}\delta]$.  
  
   \smallskip (ii)
    Define
$\psi:\omega^\omega\rightarrow\omega^\omega$ such that \\$\forall \alpha\forall s[(\psi|\alpha)(s)\neq 0 \leftrightarrow \bigl(s\in Bin\;\wedge\;\exists t<s[\nu(s)\sqsubset_\mathbb{S}t \;\wedge\;\alpha(t)\neq 0]\bigr)]$.  
\\One may prove:   $\forall\alpha\forall \gamma \in \mathcal{C}[\gamma \in \mathcal{G}_{\psi|\alpha}\leftrightarrow \sigma|\gamma \in \mathcal{H}_\alpha]$.\end{proof}
 \begin{lemma}\label{L:Cinto[0,1]} One may prove in $\mathsf{BIM}$:\\There exist $\tau, \chi$ such that \begin{enumerate}[\upshape(i)]\item $\tau:\mathcal{C}\rightarrow [0,1]$ and  $\forall \gamma \in \mathcal{C}\forall\delta\in\mathcal{C}[\gamma\;\#\;\delta\rightarrow \tau|\gamma\;\#_\mathcal{R}\;\tau|\delta]$. 
 \item $\chi:\omega^\omega\rightarrow\omega^\omega$ and $\forall \alpha \forall \gamma \in \mathcal{C}[\gamma \in \mathcal{G}_\alpha\leftrightarrow \tau|\gamma \in \mathcal{H}_{\chi|\alpha}]$. \item $\forall \delta \in [0,1]\exists \gamma\in \mathcal{C}[\tau|\gamma\;\#_\mathcal{R}\;\delta\rightarrow \forall \alpha[\delta \in \mathcal{H}_{\chi|\alpha}]]$. 
 \item $\forall \delta \in [0,1]^\omega\exists\gamma\in \mathcal{C}\forall n[\tau|\gamma^n\;\#_\mathcal{R}\;\delta^n\rightarrow \forall \alpha[\delta^n \in \mathcal{H}_{\chi|\alpha}]]$. \end{enumerate} \end{lemma}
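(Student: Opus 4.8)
The plan is to take $\tau$ to be the standard embedding of $\mathcal{C}$ onto the middle-thirds Cantor set in $[0,1]$, and to arrange that $\mathcal{H}_{\chi|\alpha}$ is the open subset of $[0,1]$ consisting of every point of every removed ``gap'' together with a slight fattening of the $\tau$-image of $\mathcal{G}_\alpha$. First I would define, as in the proof of Lemma~\ref{L:Conto[0,1]} but with \emph{disjoint} rather than overlapping thirds, a map $\nu:\mathit{Bin}\to\mathbb{S}$ with $\nu(\langle\;\rangle)=(0_\mathbb{Q},1_\mathbb{Q})$, with $\nu(s\ast\langle 0\rangle)$ the left third of $\nu(s)$ and $\nu(s\ast\langle 1\rangle)$ the right third of $\nu(s)$; thus $\nu$ at a string of length $n$ has width $(1/3)^n$, and $\nu(s\ast\langle 0\rangle),\nu(s\ast\langle 1\rangle)$ are separated by the open middle third $G_s$ of $\nu(s)$, which has width $(1/3)^{n+1}$. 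Let $\tau$ be the continuous function, set up as a $Fun_1$-object exactly as the $\sigma$ of Lemma~\ref{L:Conto[0,1]}, with $(\tau|\gamma)(n)=\nu(\overline\gamma n)$ for $\gamma$ in $\mathcal{C}$; then $\tau:\mathcal{C}\to[0,1]$. For (i): if $\gamma\;\#\;\delta$, take the least $n$ with $\gamma(n)\neq\delta(n)$; then $\tau|\gamma$ and $\tau|\delta$ lie in $\nu(\overline\gamma(n+1))$ and $\nu(\overline\delta(n+1))$, i.e.\ in the two thirds of $\nu(\overline\gamma n)$ on opposite sides of the gap $G_{\overline\gamma n}$, whose endpoints are rationals in strict order, so $\tau|\gamma\;\#_\mathcal{R}\;\tau|\delta$.

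Next, for each $s$ in $\mathit{Bin}$ let $\widehat\nu(s)$ be $\nu(s)$ enlarged by a small positive rational $\eta_s<(1/3)^{\mathit{length}(s)}$ on each side, and define $\chi:\omega^\omega\to\omega^\omega$ by declaring $(\chi|\alpha)(t)\neq 0$ exactly when $t=G_s$ for some $s$ in $\mathit{Bin}$, or $t=\widehat\nu(s)$ for some $s$ in $\mathit{Bin}$ with $\alpha(s)\neq 0$; this is a legitimate continuous $\chi$ in the sense of Subsection~\ref{SS:contfun}. For (ii): if $\gamma\in\mathcal{G}_\alpha$, say $\alpha(\overline\gamma n)\neq 0$, then for all large $m$ one has $\nu(\overline\gamma m)\sqsubset_\mathbb{S}\widehat\nu(\overline\gamma n)$ — the enlargement is \emph{strict} on both sides even when $\nu(\overline\gamma m)$ shares an endpoint with $\nu(\overline\gamma n)$ — so $\tau|\gamma\in\mathcal{H}_{\chi|\alpha}$. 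Conversely, if $\tau|\gamma\in\mathcal{H}_{\chi|\alpha}$ then some $\nu(\overline\gamma m)\sqsubset_\mathbb{S}t$ with $(\chi|\alpha)(t)\neq 0$; a construction interval $\nu(a)$ is never strictly inside a gap $G_s$, so $t$ must be some $\widehat\nu(s)$ with $\alpha(s)\neq 0$, and since $\eta_s$ is smaller than every gap adjacent to $\nu(s)$, $\nu(\overline\gamma m)\sqsubset_\mathbb{S}\widehat\nu(s)$ forces $\nu(\overline\gamma m)\subseteq\nu(s)$, hence $s\sqsubseteq\overline\gamma m\sqsubset\gamma$ and $\gamma\in\mathcal{G}_\alpha$. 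The one mildly delicate point here is this last estimate, that the fattening $\widehat\nu(s)$ reaches no component of the construction foreign to $\nu(s)$; it is a routine comparison of triadic scales.

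For (iii), given $\delta$ in $[0,1]$, I would build $\gamma$ in $\mathcal{C}$ by recursion: at a position $\overline\gamma m=s$, let $p$ be the centre of $G_s$ (equivalently of $\nu(s)$) and $q$ one quarter of the width of $G_s$; using the unbounded-search clause of Axiom~\ref{ax:recop} together with the locatedness of reals (Lemma~\ref{L:apart}), decide whether $\delta<_\mathcal{R}(p+_\mathbb{Q}q)_\mathcal{R}$ or $\delta>_\mathcal{R}(p-_\mathbb{Q}q)_\mathcal{R}$ (at least one holds once an approximant of $\delta$ is narrower than $2q$), and set $\gamma(m)=0$ in the first case, $\gamma(m)=1$ in the second. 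Now suppose $\tau|\gamma\;\#_\mathcal{R}\;\delta$; say $\tau|\gamma<_\mathcal{R}\delta$ (the case $\delta<_\mathcal{R}\tau|\gamma$ is symmetric). Fix a rational $r$ with $\tau|\gamma<_\mathcal{R}r_\mathcal{R}<_\mathcal{R}\delta$. For every $m$ with $\nu(\overline\gamma m)''<_\mathbb{Q}r$ we then have $\delta>_\mathcal{R}(\nu(\overline\gamma m)'')_\mathcal{R}$, so $\gamma(m)=0$ is impossible (it would give $\delta<_\mathcal{R}(p+_\mathbb{Q}q)_\mathcal{R}<_\mathcal{R}(\nu(\overline\gamma m)'')_\mathcal{R}$), whence $\gamma(m)=1$; and $\gamma(m)=1$ keeps $\nu(\overline\gamma(m+1))''$ equal to $\nu(\overline\gamma m)''$, so this propagates. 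Hence, with $m_1$ the first stage at which $\nu(\overline\gamma m)''<_\mathbb{Q}r$, a bounded search below $m_1$ yields $m_0$ with $1\le m_0\le m_1$, $\gamma(m_0-1)=0$, and $\gamma(j)=1$ for all $j\ge m_0$. Writing $u:=\overline\gamma(m_0-1)$, the value $\nu(\overline\gamma m)''$ is, for all $m\ge m_0$, equal to $\nu(u\ast\langle 0\rangle)''=G_u'$, so $\delta>_\mathcal{R}(G_u')_\mathcal{R}$; and the test at stage $m_0-1$ gave $\delta<_\mathcal{R}(p+_\mathbb{Q}q)_\mathcal{R}$ with $p+_\mathbb{Q}q<_\mathbb{Q}G_u''$. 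Thus $\delta$ lies strictly inside the gap $G_u$, so some approximant of $\delta$ is $\sqsubset_\mathbb{S}G_u$, and since $(\chi|\alpha)(G_u)\neq 0$ for every $\alpha$ we get $\delta\in\mathcal{H}_{\chi|\alpha}$ for every $\alpha$, as required.

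Finally, (iv) is immediate from (iii): the passage from $\delta$ to its ``tracker'' $\gamma$ in the previous paragraph is a single continuous operation, so one performs it simultaneously for $\delta^{\upharpoonright 0},\delta^{\upharpoonright 1},\ldots$ — no choice is needed — and defines $\gamma$ in $\mathcal{C}$ by letting $\gamma^{\upharpoonright n}$ be the tracker of $\delta^{\upharpoonright n}$. I expect the main obstacles to be exactly the two scale-bookkeeping steps flagged above: that $\widehat\nu(s)$ stays clear of foreign components (for (ii)), and that an apartness of $\tau|\gamma$ from $\delta$ forces the tracking recursion to turn and thereby pin $\delta$ strictly inside a gap (for (iii)). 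Everything else is routine unwinding of the definitions in Subsection~\ref{SS:contfun} and of the real-number conventions.
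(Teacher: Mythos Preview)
Your argument is correct, but it differs from the paper's in several interlocking ways worth noting.

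The paper uses a \emph{fifths} construction rather than thirds: it takes $\varepsilon(s\ast\langle 0\rangle)$ and $\varepsilon(s\ast\langle 1\rangle)$ to be the second and fourth fifths of $\varepsilon(s)$. This gives strict nesting $\varepsilon(s\ast\langle i\rangle)\sqsubset_\mathbb{S}\varepsilon(s)$ automatically, so no fattening $\widehat\nu$ is needed: the paper simply sets $(\chi|\alpha)(s)\neq 0$ when either $s\sqsubset_\mathbb{S}\varepsilon(t)$ for some $t\in\mathit{Bin}$ with $\alpha(t)\neq 0$, or $s$ is $\#_\mathbb{S}$-apart from every $\varepsilon(t)$ at some level $n$. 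The second clause replaces your explicit list of gaps $G_s$ by a single uniform description of the complement of the construction, which also absorbs the margins left by the fifths. Your thirds construction shares endpoints between parent and child, which is precisely why you need the $\widehat\nu$-enlargement and the scale estimate that $\eta_s$ stays below the width of every adjacent gap; the paper's choice of fifths sidesteps that bookkeeping entirely.

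For (iii) the two arguments also diverge. The paper builds the tracker $\gamma$ by comparing $\delta$ directly to the gap endpoints $\varepsilon''(\overline\gamma m\ast\langle 0\rangle)$ and $\varepsilon'(\overline\gamma m\ast\langle 1\rangle)$, and then proves by a short induction on $n$ that for each $n$ there is $p$ with $\delta(p)\;\#_\mathbb{S}\;\varepsilon(t)$ for every $t\in\mathit{Bin}_n$ with $t\perp\overline\gamma n$. If in addition $\delta\;\#_\mathcal{R}\;\tau|\gamma$, one picks $n$ with $\delta(n)\;\#_\mathbb{S}\;\varepsilon(\overline\gamma n)$ and combines to get $\delta(p)$ apart from \emph{all} $\varepsilon(t)$ at level $n$, which lands $\delta$ in $\mathcal{H}_{\chi|\alpha}$ via the second clause of $\chi$. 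Your approach instead analyses the tail of $\gamma$ to pin $\delta$ in a single specific gap $G_u$; this is longer but entirely self-contained and does not rely on maintaining an inductive invariant across levels.

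In short: your thirds-plus-fattening route works, at the cost of the two scale checks you flagged; the paper's fifths-plus-``apart at some level'' route eliminates both of those checks and yields a shorter (iii).
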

  \begin{proof} (i)  Define $\pi_0, \pi_1, \pi_2, \pi_3$ and $\pi_4$ such that, for each $s$ in $\mathbb{S}$,  for each $i<5$, $ \pi_i(s):=( \frac{5-i}{5}s' +_\mathbb{Q} \frac{i}{5} s'',\frac{5-i-1}{5}s' +_\mathbb{Q} \frac{i+1}{5} s'') $.

  For each $s$ in $\mathbb{S}$, for each $i<5$, $\pi_i(s)$ is the \textit{$i$-th fifth part of $s$}.
  
  Define $\varepsilon$ such that $\varepsilon(\langle\;\rangle)=(0_\mathbb{Q}, 1_\mathbb{Q})$ and, for all $a$ in  $Bin$,\\ 
   $\varepsilon(a\ast\langle 0\rangle)=\pi_1\bigl(\varepsilon(a)\bigr)$ and  $\varepsilon(s\ast\langle 1\rangle)=\pi_3\bigl(\varepsilon(a)\bigr)$.
   
  Define $\tau:\mathcal{C}\rightarrow [0,1]$ such that $\forall \gamma \in \mathcal{C}[(\tau|\gamma)(n)=\varepsilon(\overline \gamma n)]$. 
   \\One may prove:  $\forall \gamma\in
   \mathcal{C}\forall \delta\in\mathcal{C}[\gamma\;\#\;\delta\rightarrow \tau|\gamma\;\#_\mathcal{R}\;\tau|\delta]$.

\smallskip (ii) Define
$\chi:\omega^\omega\rightarrow\omega^\omega$ such that, for all $\alpha$, for all $s$,  $(\chi|\alpha)(s)\neq 0$ if and only if $\exists t<s[ t\in Bin\;\wedge\; s\sqsubset_\mathbb{S} \varepsilon(t)\;\wedge\;\alpha(t)\neq 0]\;\vee\;\exists n<s\forall t\in Bin_n[s\;\#_\mathbb{S}\;\varepsilon(t)]$. 
\\We now prove:  $\forall\alpha\forall \gamma \in \mathcal{C}[\gamma \in \mathcal{G}_{\alpha}\leftrightarrow \tau|\gamma \in \mathcal{H}_{\chi|\alpha}]$

Let $\alpha$ be given and assume: $\gamma \in \mathcal{C}$. 
 
Assume: $\gamma\in \mathcal{G}_\alpha$. Find $n$ such that  $\alpha(\overline\gamma n)\neq 0$. \\Find $k>n$ such that  $\varepsilon(\overline\gamma k) >  \overline \gamma n$. \\Note  $\varepsilon(\overline\gamma k) \sqsubset_\mathbb{S} \varepsilon(\overline\gamma n)$  and conclude: $(\chi|\alpha)\bigl(\varepsilon(\overline \gamma k)\bigr)\neq 0$.  \\As $(\tau|\gamma)(k+1) \sqsubset_\mathbb{S} (\tau|\gamma)(k)=\varepsilon(\overline \gamma k)$, conclude: $\tau|\gamma \in \mathcal{H}_{\chi|\alpha}$.

Conversely, assume $\tau|\gamma \in \mathcal{H}_{\chi|\alpha}$. Note: $\forall n[\varepsilon(\overline \gamma n) \sqsubset \tau|\gamma]$. 
\\Conclude: $\neg \exists s[s\sqsubset \tau|\gamma \;\wedge\;\exists n\forall t\in Bin_n[s\;\#_\mathbb{S}\;\varepsilon(t)]]$. 
\\Find $n, s$ such that $(\tau|\gamma)(n)\sqsubset_\mathbb{S} s$ and  $(\chi|\alpha)(s)\neq 0$. \\Find $t$ in $Bin$ such that $s\sqsubset \varepsilon(t)$ and $\alpha(t)\neq 0$. \\Note: $t\sqsubset \gamma$ and conclude: $\gamma \in \mathcal{G}_\alpha$.

We thus see: $\forall \gamma \in \mathcal{C}[\gamma \in \mathcal{G}_\alpha\leftrightarrow \tau|\gamma  \in \mathcal{H}_{\chi|\alpha}]$.

\smallskip 
(iii)  Assume $\delta\in[0,1]$.

Note: $\forall a \in \mathit{Bin}[\varepsilon''(a\ast\langle 0\rangle)<_\mathbb{Q}\varepsilon'(a\ast\langle 1\rangle)]$.

Define $\gamma$ in $\mathcal{C}$ such that,   for all $m, p$, \\if $p=\mu q[\varepsilon''(\overline{\gamma}  m\ast\langle 0\rangle) <_\mathbb{Q} \delta'(q)\;\vee\; \delta''(q) <_\mathbb{Q} \varepsilon'(\overline{\gamma} m\ast\langle 1 \rangle)]$,\\ then  $\gamma(m)=0\leftrightarrow\delta''(p) <_\mathbb{Q} \varepsilon'(\overline{\gamma}m\ast\langle 1\rangle)$. 

One may prove, by induction on $n$:\\for each $n$, there exists $p$ such that  $\forall t\in Bin_n[t\perp\overline \gamma n\rightarrow \delta(p)\;\#_\mathbb{S}\;\varepsilon(t)]$.  

Assume: $\delta \;\#_\mathcal{R}\;\tau|\gamma$. Find $n$ such that $\delta(n)\;\#_\mathbb{S} \;(\tau|\gamma)(n)=\varepsilon(\overline \gamma(n))$. 

Find $p>n$ such that $\forall t\in Bin_n[t\perp\overline \gamma n\rightarrow \delta(p) \;\#_\mathbb{S} \;\varepsilon(t)]$.

Conclude: $\forall t \in Bin_n[\delta(p)\;\#_\mathbb{S}\; \varepsilon(t)]$ and, for each $\alpha$, $\delta \in \mathcal{H}_{\chi|\alpha}$. 

We thus see: if $\delta\;\#_\mathcal{R}\;\tau|\gamma$, then $\forall \alpha[\delta \in \mathcal{H}_{\chi|\alpha}]$.

\smallskip (iv)  Assume: $\delta \in [0,1]^\omega$.

Define $\gamma$ in $\mathcal{C}$ such that,  for all $n,m, p$, \\if $p=\mu q[\varepsilon''(\overline{\gamma^n}  m\ast\langle 0\rangle) <_\mathbb{Q} (\delta^n)'(q)\;\vee\; (\delta^n)''(q) <_\mathbb{Q} \varepsilon'(\overline{\gamma^n} m\ast\langle 1 \rangle)]$,\\ then  $\gamma^n(m)=0\leftrightarrow\delta''(p) <_\mathbb{Q} \varepsilon'(\overline{(\gamma^n)}m\ast\langle 1\rangle)$.  

Conclude, following the argument for (iii): 
$\forall n[\delta^n \;\#_\mathcal{R}\;\tau|(\gamma^n)\rightarrow \delta^n\in \mathcal{H}_{\alpha^n}]$.
\end{proof}

\subsection{Real functions from $[0,1]$ to $\mathcal{R}$}\label{SS:realf}\footnote{The definition slightly deviates from the one used in \cite[Subsection 8.4]{veldman2011b}.}\hfill

  $\varphi:[0,1]\rightarrow_\mathcal{R} \mathcal{R}$ if and only if
 \begin{enumerate}
 \item $\forall n\in E_\varphi[n'\in \mathbb{S}\;\wedge\;n'' \in \mathbb{S}]$, and  
 \item  $\forall m\in E_\varphi\forall n\in E_\varphi[m'\sqsubseteq_\mathbb{S} n'\rightarrow m'' \sqsubseteq_\mathbb{S} n'']$, and
 \item $\forall \alpha\in [0,1]\forall n\exists m\exists s[(\alpha(m), s)\in E_\varphi\;\wedge\; s''-_\mathbb{Q}s'<_\mathbb{Q}\frac{1}{2^n}]$. 
 \end{enumerate}
 
 \smallskip $\mathcal{R}^{[0,1]}:=\{\varphi\mid \varphi:[0,1] \rightarrow_\mathcal{R} \mathcal{R}\}$.
 
 \smallskip Assume: $\varphi:[0,1]\rightarrow_\mathcal{R}\mathcal{R}$. 
 
 We define, for each $\alpha$ in $[0,1]$, for each $\beta$ in $\mathcal{R}$,  \\$\varphi:\alpha\mapsto_\mathcal{R}\beta$ if and only if $\forall n\exists m\exists p\in E_\varphi[|\alpha(m)\sqsubseteq_\mathbb{S} p'\;\wedge\;p''\sqsubseteq_\mathbb{S}\beta(n)]$. 
 
   For each $\alpha$ in $[0,1]$,  we  let $\varphi^{`\mathcal{R}}(\alpha)$ be the element $\beta$ of $\mathcal{R}$ such that, for each $n$, $\beta(n)=double_\mathbb{S}(s'')$, where $s$ is the least $t$ such that  $t\in\mathbb{S}$ and $ t''-_\mathbb{Q}t'=_\mathbb{Q} \frac{1}{2^n}$ and  \\$\exists p\le t \exists r\le t\exists m\le t[\varphi(r)= p+1 \;\wedge\; \alpha(m)\sqsubseteq_\mathbb{S} p' \;\wedge\; p''\sqsubseteq_\mathbb{S} t]$. 
   \\Note:  $\varphi:\alpha\mapsto_\mathcal{R} \varphi^{`\mathcal{R}}(\alpha)$.

 \subsection{Game-theoretic terminology}\label{SSS:games}\hfill
 
   $s:n\rightarrow k\leftrightarrow\bigl(length(s)= n\;\wedge\;\forall j<n[s(j) <k]\bigr)$.
 
 \smallskip  $Seq(n,l):=\{s\mid s:n\rightarrow l\}$.  Note: $Bin_n=Seq(n,2)$. 
 
 \smallskip  
 $c \in_{I}  s \leftrightarrow  \forall i[2i < \mathit{length}(c) \rightarrow \bigl(\overline c(2i)<length(s)\; \wedge\;  c(2i) = s\bigl(\overline c (2i)\bigr)\bigr)]$.
 
 \smallskip $c \in_{II}  t \leftrightarrow  \forall i[2i+1 < \mathit{length}(c) \rightarrow \\\bigl(\overline c (2i+1)<length(t) \;\wedge\; c(2i+1) = t\bigl(\overline c (2i+1)\bigr)\bigr)]$.
 
 \smallskip
 (The numbers $s, t$ should be thought of as \textit{strategies} for player $I$, $II$, respectively.)

 \smallskip  
 $c \in_{I}  \sigma \leftrightarrow  \forall i[2i < \mathit{length}(c) \rightarrow  c(2i) = \sigma\bigl(\overline c (2i)\bigr)]$.
 
  \smallskip  $c \in_{II}  \tau \leftrightarrow  \forall i[2i+1 < \mathit{length}(c) \rightarrow c(2i+1) = \tau\bigl(\overline c (2i+1)\bigr)]$.
  
   \smallskip 
 $\gamma \in_{I}  \sigma \leftrightarrow  \forall i[\gamma(2i) = \sigma\bigl(\overline \gamma (2i)\bigr)]$.
 
  \smallskip  $\gamma \in_{II}  \tau \leftrightarrow  \forall i[\gamma(2i+1) = \tau\bigl(\overline \gamma (2i+1)\bigr)]$.
  
  \smallskip  
 $\gamma \in_{I}  s \leftrightarrow  \forall i[\overline \gamma(2i)<length(s)\rightarrow \gamma(2i) = s\bigl(\overline \gamma (2i)\bigr)]$.
 
  \smallskip  $\gamma \in_{II}  t \leftrightarrow  \forall i[\overline\gamma(2i+1)<length(t)\rightarrow \gamma(2i+1) = t\bigl(\overline \gamma (2i+1)\bigr)]$.
  
 \medskip $\omega \times \omega:=\{s\mid \mathit{length}(s) = 2\}$.
  
  \smallskip $2\times  \omega:=\{s\mid \mathit{length}(s) = 2 \;\wedge\; s(0)<2\}$.
  
  \smallskip $\omega\times 2 :=\{s\mid \mathit{length}(s)=2 \;\wedge\; s(1) <2\}$.
  
 \smallskip For each $n>0$,  $(\omega \times 2)^n:=\{s\mid\mathit{length}(s) = 2n\;\wedge\forall i < n[s(2i+1) < 2]\}$ and\\ $(\omega \times 2)^n \times \omega:=\{s\mid\mathit{length}(s) = 2n+1\;\wedge\;\forall i < n[s(2i+1) <2 ]\}$.
 
 \smallskip $(\omega\times 2)^{<\omega}:=\bigcup_n(\omega\times 2)^n$.

 \smallskip
  $(\omega \times 2)^\omega:=\{\delta\mid \forall n[\delta(2n+1) < 2]\}$.
  
 \smallskip  $Halfbin:= (\omega\times 2)^{<\omega}\cup \bigl((\omega\times 2)^{<\omega}\times \omega\bigr) =\bigcup_n\{\overline \gamma n\mid \gamma \in (\omega\times 2)^\omega\}$.

  \smallskip
  For every $\delta$,  $\delta_I$,  $\delta_{II}$ satisfy:  $\forall n[\delta_I(n) = \delta(2n)]$ and $\forall n[ \delta_{II}(n) = \delta(2n+1)]$.
 
 \smallskip For every $s$, $\mathit{length}(s_I)=\mu n[\mathit{length}(s)\le 2n]$ and $\forall n < \mathit{length}(s_I)[s_I(n) = s(2n)]$, and $\mathit{length}(s_{II})=\mu n[\mathit{length}(s)\le 2n+1]$ and $\forall n < \mathit{length}(s_{II})[s_{II}(n) = s(2n+1)]$.

 \end{document}